\theoremstyle{plain}
\newtheorem{proposition}{Proposition}[section]
\newtheorem{theorem}[proposition]{Theorem}
\newtheorem*{theorem*}{Theorem}
\newtheorem{lemma}[proposition]{Lemma}
\newtheorem{corollary}[proposition]{Corollary}
\theoremstyle{definition}
\newtheorem{example}[proposition]{Example}
\newtheorem{definition}[proposition]{Definition}
\newtheorem{introthm}{Theorem}
\newtheorem{introprop}[introthm]{Proposition}
\theoremstyle{remark}
\newtheorem{remark}[proposition]{Remark}
\newtheorem{notation}[proposition]{Notation}
\newtheorem{conjecture}[proposition]{Conjecture}
\numberwithin{equation}{section}
\let\oldtocsection=\tocsection
\let\oldtocsubsection=\tocsubsection
\let\oldtocsubsubsection=\tocsubsubsection
\renewcommand{\tocsection}[2]{\hspace{0em}\oldtocsection{#1}{#2}}
\renewcommand{\tocsubsection}[2]{\hspace{1em}\oldtocsubsection{#1}{#2}}
\renewcommand{\tocsubsubsection}[2]{\hspace{2em}\oldtocsubsubsection{#1}{#2}}
\DeclareMathOperator{\Hom}{Hom}
\DeclareMathOperator{\End}{End}
\DeclareMathOperator{\Gr}{Gr} 
\DeclareMathOperator{\id}{id}
\DeclareMathOperator{\Span}{Span}
\def\co{\colon\thinspace} 
\newcommand{\abs}[1]{\left|#1\right|}
\newcommand{\norm}[1]{\left\|#1\right\|}
\newcommand{\wt}[1]{\widetilde{#1}}
\newcommand{\wh}[1]{\widehat{#1}}
\newcommand{\vertiii}[1]{{\left\vert\kern-0.25ex\left\vert\kern-0.25ex\left\vert #1 
    \right\vert\kern-0.25ex\right\vert\kern-0.25ex\right\vert}}
	\newcounter{notes}
\begin{document}

\title{$d$--Pleated surfaces and their shear-bend coordinates}

\author[Maloni S.]{Sara Maloni}
\address{Department of Mathematics, University of Virginia}
\email{sm4cw@virginia.edu}
\urladdr{sites.google.com/view/sara-maloni/}

\author[Martone G.]{Giuseppe Martone}
\address{Department of Mathematics and Statistics, Sam Houston State University}
\email{gxm120@shsu.edu}
\urladdr{sites.google.com/view/giuseppemartone}

\author[Mazzoli F.]{Filippo Mazzoli}
\address{Department of Mathematics, University of California Riverside}
\email{filippo.mazzoli@ucr.edu}
\urladdr{filippomazzoli.github.io/index.html}

\author[Zhang T.]{Tengren Zhang}
\address{Department of Mathematics, National University of Singapore}
\email{matzt@nus.edu.sg}
\urladdr{sites.google.com/site/tengren85/}

\thanks{S.M. and F.M. were partially supported by U.S. National Science Foundation grant DMS-1848346 (NSF CAREER). G.M. acknowledges partial support by the AMS and the Simons Foundation. T.Z. was partially supported by the NUS-MOE grant R-146-000-270-133 and A-8000458-00-00. The authors also acknowledge support from the GEAR Network, funded by the National Science Foundation under grant numbers DMS 1107452, 1107263, and 1107367 (``RNMS: GEometric structures And Representation varieties").} 

\date{\today}

\begin{abstract}
In this article, we single out representations of surface groups into $\mathsf{PGL}_d(\mathbb{C})$ which generalize the well-studied family of pleated surfaces into $\mathsf{PGL}_2(\mathbb{C})$. Our representations arise as sufficiently generic $\lambda$--Borel Anosov representations, which are representations that are Borel Anosov with respect to a maximal geodesic lamination $\lambda$. For fixed $\lambda$ and $d$, we provide a holomorphic parameterization of the space $\mathcal R_d(\lambda)$ of $d$--pleated surfaces with pleating locus $\lambda$ which extends both work of Bonahon for pleated surfaces and Bonahon and Dreyer for Hitchin representations.  
\end{abstract}

\maketitle

\tableofcontents

\section{Introduction}\label{intro}

Let $S$ be a closed oriented hyperbolic surface and $\Gamma$ its fundamental group. Hitchin's work \cite{hit_lie} led to the discovery and study of a distinguished connected component in the space of conjugacy classes of representations $\Gamma \to \mathsf{PGL}_d(\mathbb{R})$ for any $d \geq 2$, commonly referred to as the \emph{Hitchin component} $\mathrm{Hit}_d(S)$. When $d = 2$, $\mathsf{PGL}_2(\mathbb{R})$ identifies with the group of isometries of the hyperbolic plane $\mathbb{H}^2$ and every element of $\mathrm{Hit}_2(S)$ is represented by the holonomy of a hyperbolic structure on $S$ or, in other words, by a \emph{Fuchsian representation}. Consequently, $\mathrm{Hit}_2(S)$ coincides with the Fricke-Klein model of Teichm\"uller space $\mathcal{T}(S)$. In fact, all group homomorphisms $\Gamma \to \mathsf{PGL}_d(\mathbb{R})$ whose conjugacy classes belong to the Hitchin component share many dynamical and geometric properties with Fuchsian representations. For example, the foundational papers by Labourie \cite{labourie-anosov} and Fock-Goncharov \cite{fock-goncharov-1} prove that such homomorphisms are discrete and faithful.

The natural inclusion of Lie groups $\mathsf{PGL}_2(\mathbb{R}) \subset \mathsf{PGL}_2(\mathbb{C})$ determines a totally geodesic embedding $f_0 : \mathbb{H}^2 \to \mathbb{H}^3$ of the hyperbolic plane inside hyperbolic $3$--space (as the corresponding symmetric spaces). The theory of pleated surfaces, introduced by Thurston \cite{thurston-notes} and further developed by Bonahon \cite{bonahon-toulouse}, allows to deform any Fuchsian representation $r_0 : \Gamma \to \mathsf{PGL}_2(\mathbb{R})$ by bending the map $f_0$ along a suitable collection of disjoint geodesics in $\mathbb{H}^2$. The process is carried out in such a way that the resulting \emph{pleated map} $f : \mathbb{H}^2 \to \mathbb{H}^3$ conjugates the actions of $r_0$ and of a ``bent" representation $r : \Gamma \to \mathsf{PGL}_2(\mathbb{C})$, namely $f \circ r_0(\gamma) = r(\gamma) \circ f$ for any $\gamma \in \Gamma$. The pleating locus of $f$ is the preimage in $\mathbb{H}^2$ of a closed subset $\lambda$ of $S = \mathbb{H}^2/r_0(\Gamma)$ obtained as a union of disjoint, simple (possibly closed) geodesics, called a \emph{geodesic lamination}. When the connected components of $S - \lambda$ are isometric to ideal hyperbolic triangles (or, equivalently, when the lamination $\lambda$ is \emph{maximal} with respect to the inclusion), the subset $\mathcal{R}(\lambda)$ of $\Hom(\Gamma,\mathsf{PGL}_2(\mathbb{C}))$, consisting of all representations that can be obtained by bending Fuchsian representations along $\lambda$, is open and contains the Fuchsian locus. Unlike the Fuchsian locus, the open set $\mathcal{R}(\lambda)$ contains representations that are not discrete (or not faithful).

In this paper, we develop a process of \emph{generalized bending} that, given a Hitchin representation $\rho_0 : \Gamma \to \mathsf{PGL}_d(\mathbb{R})$, deforms $\rho_0$ both along the geodesics and on the complementary regions of a maximal geodesic lamination $\lambda$ of $S$. The representations $\rho : \Gamma \to \mathsf{PGL}_d(\mathbb{C})$ that are obtained with this procedure are holonomies of \emph{$d$--pleated surfaces with pleating locus $\lambda$}, and they describe an open subset $\mathcal{R}_d(\lambda)$ of the representation variety $\Hom(\Gamma,\mathsf{PGL}_d(\mathbb{C}))$. We characterize these representations in terms of a dynamical property of the action of $\rho$ on the space $\mathcal{F}(\mathbb{C}^d)$ of complete flags of $\mathbb{C}^d$: they are \emph{$\lambda$--Borel Anosov}, a weakening of the notion of Anosov representation. When $d = 2$, this provides a new description of the image $\mathcal{R}_2(\lambda) = \mathcal{R}(\lambda)$ of Bonahon’s shear-bend parameterization from \cite{bonahon-toulouse} purely in terms of the dynamics of the action of representations $r : \Gamma \to \mathsf{PGL}_2(\mathbb{C})$ on the Riemann sphere $\mathbb{C}\mathrm{P}^1 \cong \mathcal{F}(\mathbb{C}^2)$. 

Furthermore, we provide analytic coordinates
\[
\mathcal{R}_d(\lambda) \longrightarrow \mathrm{Hit}_d(S) \times \mathcal{Y}_d(\lambda;\mathbb{R}/2\pi \mathbb{Z}) \times \mathsf{PGL}_d(\mathbb{C}),
\]
where $\mathcal{Y}_d(\lambda;\mathbb{R}/2\pi \mathbb{Z})$, called the space of \emph{$\lambda$--cocyclic pairs of dimension $d$ with values in $\mathbb{R}/2\pi \mathbb{Z}$}, is an Abelian Lie group that is homological in nature, and is the $\mathbb{R}/2\pi\mathbb{Z}$ analog of the space of shearing cocycles and triangle invariants from Bonahon-Dreyer \cite{BoD}. The parameterization of $\mathcal{R}_d(\lambda)$ above is equivariant with respect to the action of $\mathsf{PGL}_d(\mathbb{C})$ by conjugation on the left and by multiplication on the third component on the right, and it is a biholomorphism with respect to an appropriate complex structure on $\mathrm{Hit}_d(S) \times \mathcal{Y}_d(\lambda;\mathbb{R}/2\pi \mathbb{Z})$, which we describe below. 

We conjecture that, for every maximal geodesic lamination $\lambda$, the space $\mathcal{R}_d(\lambda)$ of $d$–pleated surfaces with pleating locus $\lambda$ is dense inside $\Hom(\Gamma,\mathsf{PGL}_d(\mathbb{C}))$. We hope to address this in the future.

\subsection{\texorpdfstring{$d$}{d}-pleated surfaces and the main statement}

The first important contribution of the present paper is the definition of \emph{$d$--pleated surfaces}, which we now describe. 

Let $\lambda$ be, as above, a fixed maximal geodesic lamination in the hyperbolic surface $S$, and let $\widetilde S$ denote its universal cover. Each geodesic inside the lift $\widetilde{\lambda} \subset \widetilde{S}$ of $\lambda$, also called a \emph{leaf} of $\widetilde{\lambda}$, is uniquely determined by its endpoints in the visual boundary $\partial\widetilde S$ of $\widetilde S$. Similarly, each complementary region of $\widetilde{\lambda}$, also called a \emph{plaque} of $\widetilde{\lambda}$, is isometric to an ideal triangle and, as such, is uniquely determined by its three ideal vertices in $\partial \widetilde S$. In what follows, we let $\partial \widetilde{\lambda} \subset \partial \widetilde{S}$ denote the set consisting of all endpoints of leaves of $\widetilde{\lambda}$. 

A \emph{$d$--pleated surface with pleating locus $\lambda$} is given by the data of a representation $\rho : \Gamma \to \mathsf{PGL}_d(\mathbb{C})$, and a $\rho$--equivariant map $\xi:\partial\wt\lambda\to\mathcal{F}(\mathbb{C}^d)$, called its \emph{$\lambda$--limit map}, from the set $\partial\widetilde{\lambda}$ into the space $\mathcal{F}(\mathbb{C}^d)$ of complete flags of $\mathbb{C}^d$, that satisfy the following properties:
\begin{description}
	\item[\textbf{$\lambda$--transversality}] for every leaf of $\wt\lambda$ with endpoints $x,y\in\partial \widetilde{\lambda}$, the pair of flags $\xi(x),\xi(y)$ is transverse.\vspace{0.5em}
	\item[\textbf{$\lambda$--hyperconvexity}] for every plaque of $\widetilde{\lambda}$ with vertices $x, y, z \in \partial \widetilde{\lambda}$, the triple of flags $\xi(x),\xi(y),\xi(z)$ is in general position (see Section \ref{sec: transversality} for the notion of general position that we use).\vspace{0.5em}
	\item[\textbf{$\lambda$--Borel Anosov}] Let $T^1 \lambda \subset T^1 S$ denote the set of unit tangent vectors to $S$ that are tangent to $\lambda$. Then the map $\xi$ determines a suitable collection of flat bundles over $T^1 \lambda$ with holonomy $\rho$, on which the natural lift of geodesic flow acts uniformly contracting/expanding across the fibers.
\end{description}
\noindent For a more detailed definition, see Section \ref{ssec:BorelAnosov}. We denote by $\mathcal{R}_d(\lambda)$ the set of representations that arise from some $d$--pleated surface with pleating locus $\lambda$. By classical results of hyperbolic dynamics (see e.g. \cite[Section 19]{Katok_Hasselblatt}, and Wang \cite{wang2021anosov}), $\mathcal{R}_d(\lambda)$ is an open subset of $\Hom(\Gamma,\mathsf{PGL}_d(\mathbb{C}))$ and, for any $\rho \in \mathcal{R}_d(\lambda)$, the map that associates to every leaf of $\widetilde{\lambda}$ with endpoints $x, y \in \partial \widetilde{S}$, the pair of transverse flags $\xi(x), \xi(y) \in \mathcal{F}(\mathbb{C}^d)$, is locally H\"older continuous and is uniquely determined by $\rho$. (For this reason, we will often confuse the data of a $d$--pleated surface $(\rho,\xi)$ with the representation $\rho$ itself.) For a general study of representations satisfying Anosov-like conditions on subflows of the unit tangent bundle $T^1 S$, which include representations arising from $d$--pleated surfaces, we refer to Wang \cite{wang2021anosov}. 

If a representation $\rho : \Gamma \to \mathsf{PGL}_d(\mathbb{C})$ is Borel Anosov and its $\rho$--equivariant limit curve $\partial\widetilde S \to \mathcal{F}(\mathbb{C}^d)$ is both $2$-- and $3$--hyperconvex in the sense of Labourie \cite{labourie-anosov} (for example, if $\rho$ is conjugate to a Hitchin representation in $\mathsf{PGL}_d(\mathbb{R})$), then for \emph{every} maximal geodesic lamination $\lambda$ of $S$, $\rho$ arises as a $d$--pleated surface with pleating locus $\lambda$. Hence, admitting an equivariant $\lambda$--limit map $\xi$ that satisfies the aforementioned properties can be considered as a weakening of the Borel Anosov and hyperconvex conditions from \cite{labourie-anosov} adapted to the lamination $\lambda$ (and to the complex Lie group $\mathsf{PGL}_d(\mathbb{C})$). It follows from work of Weil \cite{wei64} (see also Goldman \cite{goldman-symplectic}) that the set $\mathcal{R}_d(\lambda)$ is contained in the smooth locus of the representation variety (see Proposition \ref{prop: pleated surfaces smooth}), and thus is naturally a complex manifold. 

Our main result describes a biholomorphic parameterization of the space $\mathcal{R}_d(\lambda)$ of $d$--pleated surfaces with pleating locus $\lambda$ that extends the parameterization of the Hitchin component from the work of Bonahon and Dreyer \cite{BoD}. Drawing inspiration from the work of Fock and Goncharov \cite{fock-goncharov-1} on positive framed representations of punctured surface groups, Bonahon and Dreyer generalized Thurston's shearing coordinates of Teichm\"uller space $\mathcal{T}(S)$ (see \cite{thu_stretch,bonahon-toulouse}) to the context of Hitchin representations in $\mathsf{PGL}_d(\mathbb{R})$. In particular, they construct, for any maximal geodesic lamination $\lambda$ on $S$, a continuous and injective map
\[
\mathfrak{s}_d:\mathrm{Hit}_d(S) \longrightarrow \mathcal{Y}_d(\lambda;\mathbb{R})
\]
from the Hitchin component of $\mathsf{PGL}_d(\mathbb{R})$ into a certain real vector space $\mathcal{Y}_d(\lambda;\mathbb{R})$. An element of $\mathcal{Y}_d(\lambda;\mathbb{R})$ is a pair $(\alpha,\theta)$, where $\alpha$ is an $\mathbb{R}$--valued relative cocycle transverse to $\lambda$, and $\theta$ is a collection of $\mathbb{R}$--valued functions associated with the plaques of $\lambda$ that satisfy certain linear conditions involving $\alpha$, see Definition \ref{def_cocycle}. We refer to such pairs as \emph{$\mathbb{R}$--valued $\lambda$--cocyclic pairs of dimension $d$}. The real vector space $\mathcal{Y}_d(\lambda;\mathbb{R})$ has real dimension $(d^2 - 1)|\chi(S)| = \dim_{\mathbb{R}} \mathrm{Hit}_d(S)$, and the image of $\mathfrak s_d$ is fully characterized; it is equal to a distinguished open convex cone $\mathcal{C}_d(\lambda)$ inside $\mathcal{Y}_d(\lambda;\mathbb{R})$ cut out by finitely many linear inequalities. 

In analogy with the shear-bend coordinates for equivariant pleated surfaces from the work of Bonahon \cite{bonahon-toulouse}, our parameterization associates to every $d$--pleated surface $(\rho,\xi)$ with pleating locus $\lambda$, a $\lambda$--cocyclic pair of dimension $d$ with values in $\mathbb{C}/2\pi i \mathbb{Z}$, which we call the \emph{shear-bend coordinates of $(\rho,\xi)$}. The space of $\mathbb{C}/2\pi i \mathbb{Z}$--valued $\lambda$--cocyclic pairs of dimension $d$, denoted by $\mathcal{Y}_d(\lambda;\mathbb{C}/2\pi i \mathbb{Z})$, carries a natural structure of complex Abelian Lie group of \emph{complex} dimension $(d^2 - 1) \abs{\chi(S)}$ (see Section \ref{shear-bend}). The real (resp. imaginary) part of every element in $\mathcal{Y}_d(\lambda;\mathbb{C}/2\pi i \mathbb{Z})$ determines an $\mathbb{R}$--valued (resp. $\mathbb{R}/2 \pi \mathbb{Z}$--valued) $\lambda$--cocyclic pair of dimension $d$ and, hence, an element of $\mathcal{Y}_d(\lambda;\mathbb{R})$ (resp. $\mathcal{Y}_d(\lambda;\mathbb{R}/2 \pi \mathbb{Z})$). Our parameterization result can then be stated as follows:

\begin{introthm}[Theorem \ref{thm: main}]\label{thm: intro main}  
	There exists a $\mathsf{PGL}_d(\mathbb{C})$--equivariant map
	\[
	\mathcal{sb}_d \co \mathcal{R}_d(\lambda)\longrightarrow \mathcal{Y}_d(\lambda;\mathbb{C}/2\pi i\mathbb{Z})\times \mathsf{PGL}_d(\mathbb{C})
	\]
	that is a biholomorphism onto its image, which is equal to
	\[
	(\mathcal{C}_d(\lambda) + i\, \mathcal{Y}_d(\lambda;\mathbb{R}/2\pi \mathbb{Z}) )\times \mathsf{PGL}_d(\mathbb{C}),
	\]
	where $\mathcal{C}_d(\lambda)$ is the image of Bonahon-Dreyer's parameterization.
\end{introthm}

\noindent \emph{Comments on the main statement:}
\begin{enumerate}[wide, labelwidth=!]
	\item In Section \ref{sec: nonHausdorff} we show the following.
	\begin{introprop}[Corollary \ref{cor: GKW} and Proposition \ref{lem: non-Hausdorff 1}] When $d=2,3$, there exists a maximal lamination $\lambda$ on $S$ for which the space $\mathfrak R_d(\lambda)$ of conjugacy classes of $d$--pleated surfaces with pleating locus $\lambda$ intersects the locus of non-Hausdorff points of $\Hom(\Gamma,\mathsf{PGL}_d(\mathbb{C}))/\mathsf{PGL}_d(\mathbb{C})$. 
	\end{introprop}
	Because of this, Theorem \ref{thm: intro main} is phrased in terms of the subset $\mathcal{R}_d(\lambda)$ in the representation variety $\Hom(\Gamma,\mathsf{PGL}_d(\mathbb{C}))$ rather than its image inside the character variety. On the other hand, as a consequence of Theorem \ref{thm: intro main}, we deduce that the map $\mathcal{sb}_d$ descends to a homeomorphism from $\mathfrak R_d(\lambda)$ to $\mathcal{C}_d(\lambda) + i\, \mathcal{Y}_d(\lambda;\mathbb{R}/2\pi \mathbb{Z})$. In particular, $\mathfrak R_d(\lambda)$ admits the structure of a complex manifold. 
	
	\item We prove that, when $d = 2$, the data of a $2$--pleated surface is equivalent to the classical notion of a pleated surface from \cite{bonahon-toulouse,thurston-notes}, where the $\lambda$--limit map $\xi$ and the pleated map $f : \mathbb{H}^2 \to \mathbb{H}^3$ associated with a representation $\rho$ are related by the following property: for any leaf $g$ of $\widetilde{\lambda}$ with endpoints $x, y \in \partial \widetilde{S}$, the image $f(g)$ coincides with the geodesic in $\mathbb{H}^3$ whose endpoints are given by $\xi(x), \xi(y) \in \mathbb{C} \mathrm{P}^1 \cong \mathcal{F}(\mathbb{C}^2)$. Consequently, this provides a new characterization of the set of representations in $\Hom(\Gamma,\mathsf{PGL}_2(\mathbb{C}))$ that admit equivariant pleated surfaces with pleating locus $\lambda$ in the classical sense. Furthermore, Theorem \ref{thm: intro main} reduces to Bonahon's shear-bend parameterization of $\mathcal{R}_2(\lambda)$ from \cite{bonahon-toulouse}.
	
	\item Based on the regularity of the map from Theorem \ref{thm: intro main}, our result implies that Bonahon-Dreyer's parameterization of $\mathrm{Hit}_d(S)$ is, in fact, a real-analytic diffeomorphism (compare with \cite{BoD}).
	
	\item Based on the characterization of the image of the shear-bend parameterization above, the space $\mathcal{R}_d(\lambda)$ admits a real-analytic projection onto the Hitchin component $\mathrm{Hit}_d(S)$, which generalizes the notion of induced hyperbolic metric inherited from the data of an equivariant pleated map when $d = 2$. In geometric terms, Theorem \ref{thm: intro main} implies that any $d$--pleated surface with pleating locus $\lambda$ can be uniquely realized from a Hitchin representation by performing a process of ``generalized bending'' along the leaves and plaques of $\lambda$, according to the data provided by a $\lambda$--cocyclic pair in $\mathcal{Y}_d(\lambda;\mathbb{R}/2\pi\mathbb{Z})$. This process relies on a purely imaginary analog of the eruption flows considered in Wienhard-Zhang \cite{WZ} and Sun-Wienhard-Zhang \cite{SWZ}.
	
	\item In a subsequent work \cite{MMMZ2}, we prove that the space $\mathcal{Y}_d(\lambda;\mathbb{R}/2\pi \mathbb{Z})$ has exactly $d$ connected components, each of which is diffeomorphic to $(\mathbb{S}^1)^{(d^2-1) |\chi(S)|}$. In particular, this shows that, even though $\mathcal{R}_d(\lambda)$ as a subset of $\Hom(\Gamma,\mathsf{PGL}_d(\mathbb{C}))$ does depend on the lamination $\lambda$, its topology does not. Furthermore, each component of the image of $\mathcal{sb}_d$ corresponds to the intersection of $\mathcal{R}_d(\lambda)$ with precisely one of the $d$ connected components of the representation variety $\Hom(\Gamma,\mathsf{PGL}_d(\mathbb{C}))$, see \cite{MMMZ2} for details.
	
	\item In the case when $S$ has at least one puncture, a \emph{framed representation} from $\Gamma$ to $\mathsf{PGL}_d(\mathbb{C})$ is a pair $(\rho,\xi)$, where $\rho:\Gamma\to\mathsf{PGL}_d(\mathbb{C})$ is a representation and $\xi:\Lambda_p(\Gamma)\to\mathcal{F}(\mathbb{C}^d)$ is a $\rho$--equivariant map defined on the set of parabolic points $\Lambda_p(\Gamma)\subset\partial\widetilde S$. Fock and Goncharov considered the space $\mathcal{X}_{\mathsf{PGL}_d(\mathbb{C}),S}$ of framed representations from $\Gamma$ to $\mathsf{PGL}_d(\mathbb{C})$, and constructed, for any choice of ideal triangulation on $S$, a parameterization of an open dense subset of $\mathcal{X}_{\mathsf{PGL}_d(\mathbb{C}),S}$ by $(\mathbb{C}/2\pi i\mathbb{Z})^n$ for some positive integer $n$ \cite[Theorem 1.1]{fock-goncharov-1}. Theorem \ref{thm: intro main} can be viewed as an analog of this result in the case when $S$ is a closed surface, $\mathcal{X}_{\mathsf{PGL}_d(\mathbb{C}),S}$ is replaced with $\Hom(\Gamma,\mathsf{PGL}_d(\mathbb{C}))/\mathsf{PGL}_d(\mathbb{C})$, and the ideal triangulation is a replaced with a geodesic lamination $\lambda$. In our setting, the density of the open set in $\Hom(\Gamma,\mathsf{PGL}_d(\mathbb{C}))/\mathsf{PGL}_d(\mathbb{C})$ parameterized in Theorem \ref{thm: intro main} (after taking the quotient by $\mathsf{PGL}_d(\mathbb{C})$) is still a conjecture. 
\end{enumerate}

\subsection{The definition of the shear-bend parameterization.} The definition of the parameterization of Theorem \ref{thm: intro main} follows the construction of Bonahon and Dreyer's parameterization of the Hitchin component $\mathrm{Hit}_d(S)$, which, in turn, relies on the projective invariants for $k$--tuples of flags developed by Fock and Goncharov \cite{fock-goncharov-1} called {\em triple and double ratios}, see Section \ref{sec: double and triple}.

A key ingredient needed to define the shear--bend coordinates of a $d$--pleated surface $(\rho,\xi)$ with pleating locus $\lambda$ is its \emph{slithering map}. It provides a natural way to relate pairs of transverse flags associated to the endpoints of the leaves of $\widetilde{\lambda}$ and can be described as a function
\[
\Sigma : \widetilde{\Lambda}^2 \longrightarrow \mathsf{SL}_d(\mathbb{C}) ,
\]
where $\widetilde{\Lambda}$ denotes the set of unoriented leaves of $\widetilde{\lambda}$, which satisfies a series of natural properties with respect to the $\lambda$--limit map $\xi$. In particular, for any pair of leaves $g_1, g_2$ of $\widetilde{\lambda}$, the transformation $\Sigma_{g_1,g_2}:=\Sigma(g_1,g_2)$ sends the pair of flags associated to the endpoints of $g_2$ into the pair of flags associated to the endpoints of $g_1$, and it is unipotent whenever $g_1$ and $g_2$ share an endpoint. 
%This notion generalizes the definition of slithering map for Hitchin representations from Bonahon-Dreyer \cite[Section~5.1]{BoD} to the much wider class of representations that arise from $d$--pleated surfaces with pleating locus $\lambda$. 
To establish the existence of the slithering map, we need careful estimates on the chosen family of unipotent elements. These estimates hold since the limit map $\xi$ is locally $\lambda$--H\"older continuous, which is a consequence of $\rho$ being $\lambda$--Borel Anosov. When $d = 2$, the slithering map has a natural interpretation in terms of the geometry of $\widetilde{S}$, involving the \emph{horocyclic foliation} determined by $\widetilde{\lambda}$ and the leaves $g_1, g_2$, see \cite[Section~2]{bonahon-toulouse}. 

%Let us move now to the notions of triple and double ratios. Given an ordered triple of flags $(F_1, F_2, F_3)$ of $\mathbb{C}^d$ in general position (see Section \ref{flag} for the definition of general position), the \emph{triple ratios} of $(F_1,F_2,F_3)$ are a collection of non-zero complex numbers $T^{\bf j}(F_1,F_2,F_3)$ indexed by ${\bf j} = (j_1,j_2,j_3) \in \mathcal{B}$, where $\mathcal{B}$ denotes the set of triples of postive integers that sum up to $d$. The data of the triple ratios $(T^{\bf j}(F_1,F_2,F_3))_{{\bf j} \in \mathcal{B}}$ determines uniquely the $\mathsf{PGL}_d(\mathbb{C})$--orbit of the triple $(F_1,F_2,F_3)$. Moreover, given a quadruple $(G_1,G_2,H_1,H_2)$ of flags of $\mathbb{C}^3$ such that the triples $(G_1,G_2,H_1)$ and $(G_1,G_2,H_2)$ are in general position, the \emph{double ratios} of $(G_1,G_2,H_1,H_2)$ are a collection of non-zero complex numbers $D^{\bf i}(G_1,G_2,H_1,H_2)$ indexed by ${\bf i} \in \mathcal{A}$, where $\mathcal{A}$ denotes the set of pairs positive integers that sum up to $d$. The data of the triple ratios of the triples $(G_1,G_2,H_1)$ and $(G_1,G_2,H_2)$, combined with the collection of double ratios $D^{\bf i}(G_1,G_2,H_1,H_2)$, uniquely determine the $\mathsf{PGL}_d(\mathbb{C})$--orbit of the $4$--tuple of flags $(G_1,G_2,H_1,H_2)$. (See Section~\ref{sec: double and triple} for the precise definitions of the triple and double ratios.)

Then, the shear-bend map $\mathcal{sb}_d$ from Theorem \ref{thm: intro main} send any representation $\rho$ arising from a $d$--pleated surface $(\rho,\xi)$ with pleating locus $\lambda$, into a pair $((\alpha_\rho, \theta_\rho),h_\rho)$, where:
\begin{itemize}
	\item $\theta_\rho$ associates with every ordered triple of points $(x_1,x_2,x_3) \in (\partial\wt\lambda)^3$ that are vertices of some plaque of $\wt\lambda$ and with every triple ${\bf j}$ of positive integers that sum up to $d$ the element
	\[
	\theta_\rho((x_1,x_2,x_3), {\bf j}) := \log T^{\bf j}(\xi(x_1), \xi(x_2), \xi(x_3)) \in \mathbb{C}/2\pi i \mathbb{Z},
	\]
	where $T^{\bf j}$ is a triple ratio (see Section \ref{sec: double and triple} for details).
	\item $\alpha_\rho$ associates with every pair of distinct plaques $T_1, T_2$ of $\widetilde{\lambda}$ and with every pair ${\bf i}$ of positive integers that sum up to $d$, the element
	\[
	\alpha_\rho ((T_1,T_2), {\bf i}) := \log D^{{\bf i}}\left(\xi(y_2), \xi(y_1), \xi(y_3), \Sigma_{g_1, g_2}\left(\xi(z_3)\right)\right) \in \mathbb{C}/2\pi i \mathbb{Z},
	\]
	where $D^{\bf i}$ is a double ratio (see Section \ref{sec: double and triple} for details), $g_1, g_2$ denote the sides of $T_1, T_2$, respectively, that separate the interiors of the two plaques, and $(y_1,y_2,y_3)$, $(z_1,z_2,z_3)$ are the ordered triples of vertices of $T_1, T_2$, respectively, as in Figure~\ref{fig:shearing}. 
	
	\begin{figure}[h!]
		\includegraphics[width=8cm]{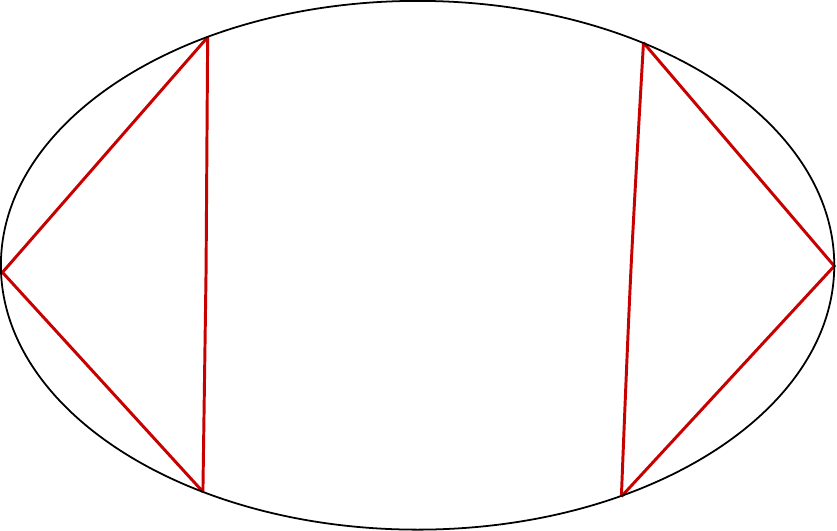}
		\put(-195,64){\small $T_1$}
		\put(-170,80){\small $g_1$}
		\put(-178,137){\small $y_2$}
		\put(-178,5){\small $y_1$}
		\put(-237,68){\small $y_3$}
		\put(-40,70){\small $T_2$}
		\put(-65,80){\small $g_2$}
		\put(-55,136){\small $z_1$}
		\put(-62,4){\small $z_2$}
		\put(1,70){\small $z_3$}
		\caption{\small The notation used in the definition of $\alpha_\rho((T_1,T_2),{\bf i)}$.}
		\label{fig:shearing}
	\end{figure}
	
	\item the projective transformation $h_\rho \in \mathsf{PGL}_d(\mathbb{C})$ is defined by the choice of a fixed base plaque of $\widetilde{\lambda}$  together with an ordering on its vertices, and of a fixed projective frame of $\mathrm{P}(\mathbb{C}^d)$ (which we omit here from the notation). It is designed so that it parameterizes the $\mathsf{PGL}_d(\mathbb{C})$ conjugacy class of the representation $\rho$. (Notice that $\theta_\rho$ and $\alpha_\rho$ are constant on $\mathsf{PGL}_d(\mathbb{C})$--conjugacy classes.)
\end{itemize}

%When $d = 2$, the space of flags $\mathcal{F}(\mathbb{C}^2)$ identifies with the Riemann sphere $\mathbb{C}\mathrm{P}^1$ and the double-ratio associated with $\mathcal{A} = \{{\bf i}=(1,1)\}$ reduces to the standard cross ratio in $\mathbb{C}\mathrm{P}^1$. Since $\mathsf{PGL}_2(\mathbb{C})$ acts simply transitively on the set of triples of distinct points $\mathbb{C}\mathrm{P}^1$, there is no need for triple ratios and the data of $\theta_\rho$ is empty. 

Because of the equivariance of $\xi$ with respect to $\rho$, combined with the properties of triple and double ratios, and of the slithering map $\Sigma$, the pair $(\alpha_\rho, \theta_\rho)$ satisfies a series of symmetries and (quasi-additive) relations, which make $(\alpha_\rho, \theta_\rho)$ a $\mathbb{C}/2\pi i \mathbb{Z}$--valued $\lambda$--cocyclic pair of dimension $d$ and, hence, an element of the space $\mathcal{Y}_d(\lambda;\mathbb{C}/2\pi i \mathbb{Z})$ (see Proposition~\ref{prop: shear bend map}). It follows from the construction that the $\lambda$--cocyclic pair associated with a Hitchin representation coincides with its Bonahon-Dreyer's coordinates from \cite{BoD}.

\subsection{The proof of Theorem \ref{thm: intro main}}\label{subsec:outline of the proof}

The first technical ingredient for the proof of our main result is the existence and uniqueness of a slithering map associated to any $d$--pleated surface $(\rho,\xi)$ with pleating locus $\lambda$, see Theorem \ref{thm: slitherable and slithering}. Given any pair of leaves $g_1, g_2$, the slithering map $\Sigma_{g_1,g_2}$ is obtained as limit of suitable compositions of finitely many unipotent transformations associated with the plaques that separate $g_1$ from $g_2$. Since the same technical estimates play an important role in other aspects of our work, we present the key convergence results in a more general framework: the study of what we call \emph{H\"older extendable maps}. Section~\ref{sec:productable} is dedicated to the study of extensions of such maps, and the existence (and uniqueness) of the slithering map compatible with a limit map $\xi$ is established later in Section~\ref{sec: slithering}. This construction naturally extends the definition of slithering map for Hitchin representations from Bonahon-Dreyer \cite[Section~5.1]{BoD}.

The proof of our main result can then be organized into four main steps:

\begin{itemize}
	\item[Step~1:] The shear-bend map $\mathcal{sb}_{d}$ is injective.
	\item[Step~2:] The image of $\mathcal{sb}_d$ contains $\big(\mathcal{C}_d(\lambda)+i\mathcal{Y}_d(\lambda;\mathbb{R}/2\pi\mathbb{Z})\big)\times \mathsf{PGL}_d(\mathbb{C})$.
	\item[Step~3:] The image of $\mathcal{sb}_d$ lies in $\big(\mathcal{C}_d(\lambda)+i\mathcal{Y}_d(\lambda;\mathbb{R}/2\pi\mathbb{Z})\big)\times \mathsf{PGL}_d(\mathbb{C})$.
	\item[Step~4:] The map $\mathcal{sb}_{d}$ is a biholomorphism onto its image.
\end{itemize}

The proof of the first step relies on the following key observation: the $\lambda$--limit map $\xi$ of a $d$--pleated surface determines the representation $\rho$, and is determined by the slithering map $\Sigma$ compatible with $\xi$. Thus, to prove that the map $\mathcal{sb}_d$ is injective, it suffices to show that the slithering map $\Sigma$ is uniquely determined by the shear-bend coordinates $\mathcal{sb}_d(\rho) = ((\alpha_\rho, \theta_\rho),h_\rho)$. We do so by expressing the slithering map of $(\rho,\xi)$ as a suitable infinite product of linear transformations that depend only on $(\alpha_\rho, \theta_\rho)$ (and the data of a suitable projective frame that depends only on $h_\rho$). In the case of Hitchin representations, a version of this expression was also developed by Bonahon and Dreyer \cite{BoD}. 

For the second main step, we observe that, given any
\[
((\alpha, \theta), h) \in (\mathcal{C}_d(\lambda) + i \mathcal{Y}_d(\lambda;\mathbb{R}/2\pi\mathbb{Z}))\times \mathsf{PGL}_d(\mathbb{C}) ,
\]
there exists a unique representative of the $\mathsf{PGL}_d(\mathbb{C})$--conjugacy class of some $\mathsf{PGL}_d(\mathbb{R})$--Hitchin representation $\rho_0$ with $\lambda$--limit map $\xi_0$ whose shear-bend coordinates $((\alpha_{\xi_0},\theta_{\xi_0}),h_{\rho_0})$ are given by $((\mathrm{Re}\,\alpha,\mathrm{Re}\,\theta),h)$. This relies on the characterization of the image of Bonahon and Dreyer's shearing parameterization of the Hitchin component. To see that the point $((\alpha,\theta), h)$ lies in the image of $\mathcal{sb}_{d}$, we introduce a process of \emph{generalized bending} that takes in input $(\rho_0,\xi_0)$ and the $\lambda$--cocyclic pair $(\mathrm{Im}\,\alpha, \mathrm{Im}\, \theta) \in \mathcal{Y}_d(\lambda;\mathbb{R}/2 \pi \mathbb{Z})$, and produces a $d$--pleated surface $(\rho,\xi)$ whose shear-bend coordinates are given by
\[
\mathcal{sb}_{d}(\rho,\xi) = ((\mathrm{Re}\,\alpha,\mathrm{Re}\,\theta) + i \, (\mathrm{Im}\,\alpha, \mathrm{Im}\, \theta),h) = ((\alpha, \theta),h) .
\]
This process uses ideas and technical tools developed for H\"older extendable maps in Section~\ref{sec:productable}, as well as the notions of complex shears, complex eruptions, and unipotent eruptions defined in Section~\ref{sec:eruption and shear}. The general outline is inspired Bonahon's bending process for classical pleated surfaces from \cite{bonahon-toulouse}, but the more general context of representations in $\mathsf{PGL}_d(\mathbb{C})$ rather than $\mathsf{PGL}_2(\mathbb{C})$ makes the analysis considerably more technical and subtle than the one in \cite{bonahon-toulouse}.

For the proof of the third step, we develop a homological interpretation of $\lambda$--cocyclic pairs that allows us to view $\mathrm{Re} \, \alpha_{\rho}$ as a relative cycle for a relative homology with values in $\mathbb{R}^{d-1}$. Hence, we may consider the function $\mathbb I([\mathrm{Re} \, \alpha_{\rho}],\cdot)$, where $\mathbb{I}$ is the algebraic intersection on relative homology classes with values in $\mathbb{R}^{d-1}$ (see Proposition~\ref{prop: duality}). By showing that the components of $\mathbb I([\mathrm{Re} \, \alpha_{\rho}],\cdot)$ coincide with the $d-1$ length functions associated with the $\lambda$--Borel Anosov representation $\rho$ (see Section \ref{subsec:length_functions}), we confirm that the coordinates of $d$--pleated surfaces satisfy the same positivity conditions as those of Hitchin representations. This, in turn, implies that every $d$--pleated surface with pleating locus $\lambda$ has shear-bend coordinates $\mathcal{sb}_d(\rho) = ((\alpha_\rho,\theta_\rho), h_\rho)$ that satisfy $(\mathrm{Re}\, \alpha_\rho, \mathrm{Re}\, \theta_\rho) \in \mathcal{C}_d(\lambda)$.

While the broad strategy of the proof of the third step follows the framework established by Bonahon and Dreyer \cite{BoD}, the transition to $d$--pleated surfaces introduces significant technical hurdles. Specifically, the $\lambda$--limit maps $\xi$ are defined only on the endpoints of the lamination $\partial\tilde{\lambda}$ and may lack the continuity or hyperconvexity properties inherent to the representations in the Hitchin component. In particular, the construction of a certain topological closed 1-form used by Bonahon and Dreyer to relate the intersection in homology to the length functions is more delicate in our setting. Furthermore, because $\mathbb{C}/2\pi i\mathbb{Z}$ is an additive group rather than a commutative ring with identity, the homological interpretation cannot rely on standard Poincaré duality, requiring subtle adaptations to the intersection pairings to accommodate the non-finitely generated nature of the coefficient group. See Section \ref{subsec:proof structure} for a more detailed discussion of the challenges in our setting.

Since the image $\mathcal{sb}_d(\mathcal{R}_d(\lambda))$ is an open subset of $\mathcal{Y}_d(\lambda;\mathbb{C}/2\pi i \mathbb{Z}) \times \mathsf{PGL}_d(\mathbb{C})$, which carries a natural structure of complex manifold, to prove the fourth step, it suffices to show that the inverse $\mathcal{sb}_{d}^{-1}$ of $\mathcal{sb}_{d}$ is holomorphic. We will prove that the holonomy $\rho$ of a $d$--pleated surface $(\rho,\xi)$ depends holomorphically on the data of its shear-bend parameters. To do so, it is enough to prove that the slithering map of $\rho$ depends holomorphically on $\mathcal{sb}_{d}(\rho)$, which will be deduced in Section~\ref{sec:holomorphicity} using the same explicit expression of the slithering map mentioned in the proof sketch for Step 1 (Lemma~\ref{lem: injectivity}).

\subsection{Organization of the paper}

In Section \ref{back}, we recall some important definitions and results about geodesic laminations and invariants of flags in $\mathbb{C}^d$. In Section~\ref{sec:d_pletaed}, we introduce the notions of $\lambda$--Borel Anosov representations, $d$--pleated surfaces and length associated to $\lambda$--Borel Anosov representations, and prove that classical pleated surfaces in $\mathsf{PGL}_2(\mathbb{C})$ and Hitchin representations are $\lambda$--Borel Anosov. In Section \ref{sec:cocycle_pair}, we define $\lambda$--cocyclic pairs, and how to define a $\lambda$--cocyclic pair associated to a $d$--pleated surface. We are then able to state precisely Theorem \ref{thm: intro main} and a summary  of the proof and the organization of the remaining part of the paper. In Section~\ref{sec:productable} we develop the general theory of H\"older extendable maps, which we then use in Sections \ref{sec: slithering} and \ref{sec:bending}  to construct slithering maps and generalized bending maps, respectively. 

In Section \ref{sec:eruption and shear} we prove the injectivity of the map $\mathcal{sb}_{d}$, while in Section \ref{length} we prove the surjectivity of the map, after having introduced in Section  \ref{sec: intersection} the necessary homological interpretation of $\lambda$--cocyclic pairs, which is necessary to define a length associated to  $\lambda$--cocyclic pairs and to compare it with the length of the associated $d$--pleated surface. In Section \ref{sec:holomorphicity} we prove the biholomorphicity of the map  $\mathcal{sb}_{d}$. Finally, in Section \ref{sec: nonHausdorff}  we discuss the phenomena of $\mathfrak{R}_d(\lambda)=\mathcal{R}_d(\lambda)/\mathsf{PGL}_d(\mathbb{C})$ intersecting the locus of non-Hausdorff points in $\Hom(\Gamma,\mathsf{PGL}_d(\mathbb{C}))/\mathsf{PGL}_d(\mathbb{C})$.

\subsection*{Acknowledgements} We would like to thank Francis Bonahon, Richard Canary, and Sean Lawton for enlightening conversations, and the referees for many useful comments which helped us to considerably improve the exposition. The authors acknowledge the support of the Institut Henri Poincaré (UAR 839 CNRS-Sorbonne Université) and LabEx CARMIN (ANR-10-LABX-59-01).

\section{Preliminaries}\label{back}

In this section, we discuss several preliminaries we will need in this paper. Specifically, in Section \ref{ssec:geodlam}, we collect background from the theory of geodesic laminations, and in Section \ref{flag}, we discuss flags and the notion of hyperconvexity.

\subsection{Geodesic laminations}\label{ssec:geodlam}
For a detailed treatment of geodesic laminations, see Canary-Epstein-Green \cite[Chapter 4]{CEG06} or Casson-Bleiler \cite[Chapter 3]{Casson}. 

Let $S$ be a closed, oriented, hyperbolic surface, and let $\Gamma$ denote its fundamental group. A {\em geodesic lamination} $\lambda$ is a closed subset of $S$ which can be decomposed as a disjoint union of simple (complete) geodesics, called {\em leaves}. A geodesic lamination is {\em maximal} if it is not properly contained in any other geodesic lamination. The complement of a maximal geodesic lamination has finitely many connected components, called \emph{plaques}, each of which is a hyperbolic ideal triangle in $S$. 

Henceforth, let $\lambda$ be a maximal geodesic lamination, let $\Lambda$ denote the collection of leaves of $\lambda$, and let $\Delta$ denote the set of plaques of $\lambda$. Let $\wt\lambda$, $\wt\Lambda$ and $\wt\Delta$ respectively denote the lifts of $\lambda$, $\Lambda$ and $\Delta$ to $\wt S\simeq\mathbb{H}^2$. We also refer to the elements in $\wt \Lambda$ (respectively, $\wt\Delta$) as \emph{leaves} (respectively, \emph{plaques}). 

Denote by $\partial\wt\lambda\subset\partial\wt S$ the set of endpoints of the leaves of $\wt\lambda$. Then define
\[\wt\Lambda^o:=\{(x,y)\in(\partial\wt\lambda)^2 \mid x\text{ and }y \text{ are the two endpoints of a leaf in }\wt\Lambda\},\]
\begin{align*}
	\wt\Delta^o:=\{(x,y,z)\in(\partial\wt\lambda)^3 \mid x,y,z \text{ are the three vertices of a plaque in }\wt\Delta\},
\end{align*}
and let $\Lambda^o:=\wt\Lambda^o/\Gamma$ and $\Delta^o:=\wt\Delta^o/\Gamma$. We will refer to the elements $(x, y, z)$ in $\wt\Delta^o$ as \textit{labelings} of plaques. Observe that the natural projections $\wt\Lambda^o\to\wt\Lambda$ and $\Lambda^o\to\Lambda$ are two-to-one, while $\wt\Delta^o\to\wt\Delta$ and $\Delta^o\to\Delta$ are six-to-one. 

One can think of the objects in $\wt\Delta^o$, $\Delta^o$, $\wt\Lambda^o$, and $\Lambda^o$ as oriented versions of the objects in $\wt\Delta$, $\Delta$, $\wt\Lambda$, and $\Lambda$ respectively. Indeed, the set of pairs of distinct points in $\partial\wt{S}$ is naturally identified with the set of oriented geodesics in $\wt{S}$ by identifying the pair $(x,y)$ with the oriented geodesic $\mathsf{g}$ in $\wt S$ whose forward and backward endpoints in $\partial\wt S$ are $x$ and $y$ respectively. In this case, we often denote by $\mathsf{g}^+:=x$ and $\mathsf{g}^-:=y$ the endpoints of $\mathsf{g}$, and by $g$ the underlying unoriented geodesic. With this point of view, we may then think of $\wt\Lambda^o$ (respectively, $\Lambda^o$) as the set of leaves of $\wt\lambda$ (respectively, $\lambda$) together with a choice of orientation. Similarly, the set of triples of pairwise disjoint points in $\partial\wt{S}$ is naturally identified with the set of ideal triangles in $\wt{S}$ with a labelling of their vertices. Then we may think of $\wt\Delta^o$ (respectively, $\Delta^o$) as the set of plaques in $\wt S\setminus \wt \lambda$ (respectively, $S\setminus \lambda$) with a labeling of their vertices. 

\subsubsection{Separation}\label{section: separation and coherence} 

Let $g_1$ and $g_2$ be any pair of geodesics in $\wt S$ that are disjoint or equal. We say that a geodesic $h$ in $\wt S$ \emph{separates} $g_1$ and $g_2$ if either $h=g_1$, $h=g_2$, or $g_1$ and $g_2$ lie in distinct connected components of $\widetilde S\setminus h$. Also, an ideal triangle $T$ in $\wt S$ \emph{separates} $g_1$ and $g_2$ if two of the three edges of $T$ separate $g_1$ and $g_2$. See Figure \ref{fig:notations1}.

\begin{figure}[h!]
	\includegraphics[width=10cm]{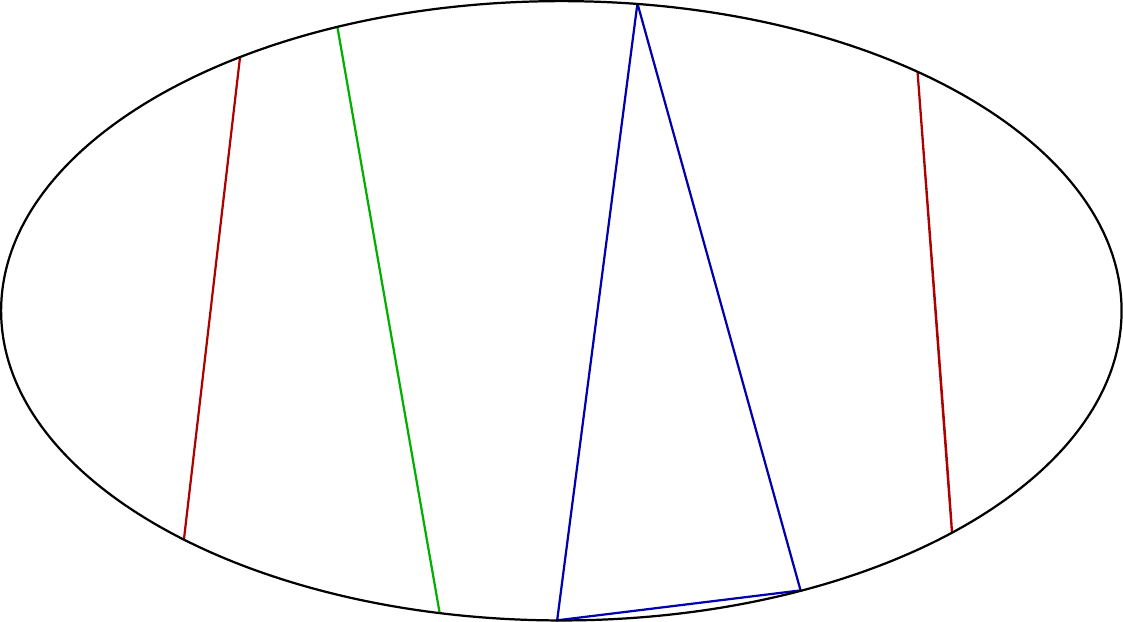}
	\put(-241,70){\small $g_1$}
	\put(-191,70){\small $h$}
	\put(-120,30){\small $T$}
	\put(-57,80){\small $ g_2$}
	\caption{\small The ideal triangle $T$ and the geodesic $h$ separate the geodesics $g_1$ and $g_2$.}
	\label{fig:notations1}
\end{figure}

If $T_1$ and $T_2$ are ideal triangles in $\wt S$ whose interiors do not intersect, then there are edges  $g_1$ and $g_2$ of $T_1$ and $T_2$ respectively such that no other edges of $T_1$ or $T_2$ separate $g_1$ and $g_2$ (it is possible that $g_1=g_2$). We say that $(g_1,g_2)$ is the \emph{separating pair} of edges for $(T_1,T_2)$. An ideal triangle or geodesic \emph{separates} $T_1$ and $T_2$ if it separates the separating pair of edges for $(T_1,T_2)$. 

In the case when $g_1$ and $g_2$ are leaves of $\wt\lambda$, let $Q(g_1,g_2)$ and $P(g_1,g_2)$ respectively denote the set of leaves and the set of plaques of $\wt\lambda$ that separate $g_1$ and $g_2$. We may define a total order $<$ on $Q(g_1,g_2)$ by $h_1\leq h_2$ if $h_1$ separates $g_1$ and $h_2$, or equivalently, $h_2$ separates $h_1$ and $g_2$, see Figure \ref{fig:notations2}. For any $T\in P(g_1,g_2)$, let $h_{T,-}$ and $h_{T,+}$ denote the two edges of $T$ that separate $g_1$ and $g_2$, such that $h_{T,-}<h_{T,+}$ in the total order on $Q(g_1,g_2)$. Then we may define a total order $<$ on $P(g_1,g_2)$ by writing $T<T'$ if $h_{T,+}\le h_{T',-}$. Observe that $Q(g_1,g_2)=Q(g_2,g_1)$ and $P(g_1,g_2)=P(g_2,g_1)$ as sets, but they have opposite total orders. 

\begin{figure}[h!]
	\includegraphics[width=10cm]{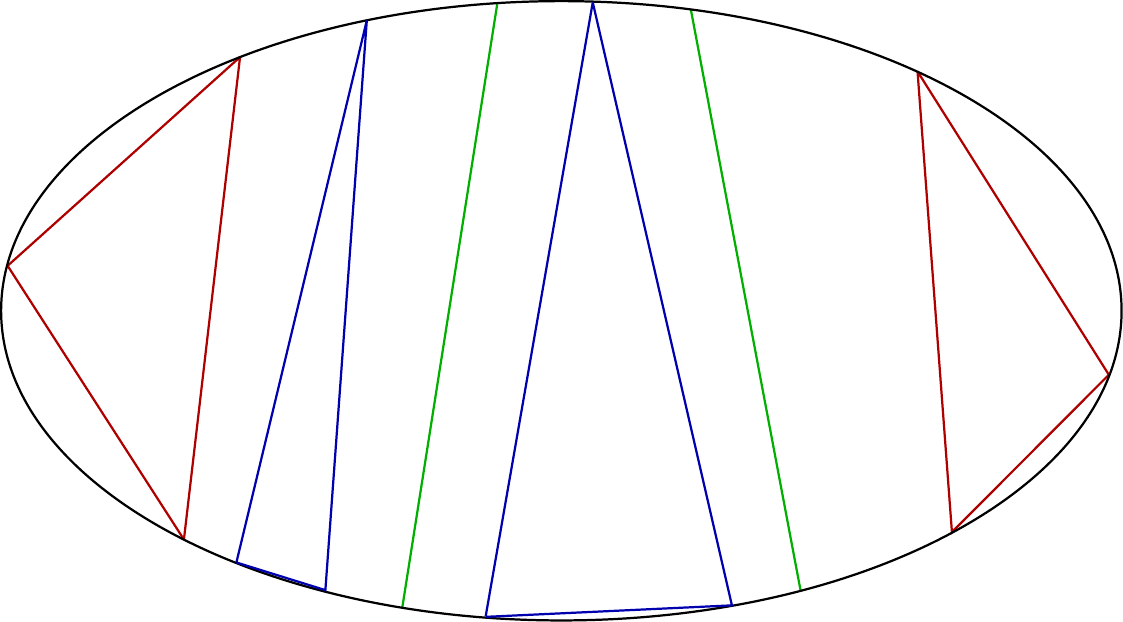}
	\put(-241,70){\small $g_1$}
	\put(-210,50){\small $T$}
	\put(-135,54){\small $T'$}
	\put(-260,75){\small $T_1$}
	\put(-171,70){\small $h_1$}
	\put(-195,93){\small $h_{T,+}$}
	\put(-155,33){\small $h_{T',-}$}
	\put(-103,70){\small $h_2$}
	\put(-57,80){\small $ g_2$}
	\put(-32,73){\small $ T_2$}
	\caption{\small The pair $(g_1,g_2)$ is a separating pair of edges for $(T_1,T_2)$. The geodesics $h_1$ and $h_2$ separate $T_1$ and $T_2$. If $g_1,g_2,h_1$, and $h_2$ are leaves of $\wt\lambda$, then $h_1,h_2\in Q(g_1,g_2)$ and $h_1\leq h_2$. The plaques $T$ and $T'$ in $P(g_1,g_2)$ are such that $T<T'$.}
	\label{fig:notations2}
\end{figure}

The following is a basic observation about the plaques of $\wt\lambda$ that we will use in Section \ref{sec:bending}. 

\begin{proposition}\label{prop: 3 leaves}
	Let $g_1$, $g_2$, and $g_3$ be pairwise distinct leaves of $\wt\lambda$ such that no leaf in $\{g_1,g_2,g_3\}$ separates the other two. Then there is a unique plaque that separates every distinct pair of leaves in $\{g_1,g_2,g_3\}$.
\end{proposition}

\begin{proof} For all $i=1,2,3$, let $h_i$ be the supremum of $Q(g_i,g_{i+1})\cap Q(g_i,g_{i-1})$ in the total order on $Q(g_i,g_{i+1})$ (or equivalently, $Q(g_i,g_{i-1})$). Since $Q(g_i,g_{i+1})$ and $Q(g_{i-1},g_i)$ are both compact subsets of $\wt\Lambda$ that contain $g_i$, $h_i$ exists and lies in $Q(g_i,g_{i+1})\cap Q(g_i,g_{i-1})$. It follows that 
	\begin{equation}\label{fdjkh}Q(g_i,h_i)=Q(g_i,g_{i+1})\cap Q(g_i,g_{i-1}).\end{equation} 
	Also, since $h_i$ separates $g_i$ from both $g_{i-1}$ and $g_{i+1}$, by assumption, $h_i\neq g_{i-1}$ and $h_i\neq g_{i+1}$. Thus, one of the two connected components of $\wt S-h_i$, call it $B_i$, contains $g_{i-1}$ and $g_{i+1}$ but not $g_i$.
	
	We first observe that for all $i=1,2,3$, $Q(g_i,h_i)$ and $Q(g_{i+1},h_{i+1})$ are disjoint. If this were not the case, then $h_{i+1}\le h_i$ in the total order on $Q(g_i,g_{i+1})$. Since $B_{i+1}$ contains $g_i$, and $B_i$ contains $g_{i+1}$, this implies that $B_i\cap B_{i+1}$ is empty. However, this is impossible because $g_{i-1}$ lies in both $B_i$ and $B_{i+1}$.
	
	Next, we show that for all $i=1,2,3$, if $h$ is a leaf of $\widetilde\lambda$ that separates $h_i$ and $h_{i+1}$, then either $h=h_i$ or $h=h_{i+1}$. If this were not the case, then there is some leaf $h$ of $\widetilde\lambda$ that separates $h_i$ and $h_{i+1}$, but is neither $h_i$ nor $h_{i+1}$. Since $Q(g_i,h_i)$ and $Q(g_{i+1},h_{i+1})$ are disjoint, one of the two connected components of $\wt S-h$ contains $Q(g_i,h_i)$, while the other contains $Q(g_{i+1},h_{i+1})$.  In particular, $h\notin Q(g_i,h_i)$ and $h\notin Q(g_{i+1},h_{i+1})$. By assumption, $g_{i-1}$ does not separate $g_i$ and $g_{i+1}$, so $g_{i-1}\neq h$. Thus, by switching $i$ and $i+1$ if necessary, we may assume that $g_{i-1}$ lies in the component of $\wt S-h$ that contains $g_{i+1}$, in which case $h$ separates $g_i$ and $g_{i-1}$. It follows that $h\in Q(g_i,g_{i+1})\cap Q(g_i,g_{i-1})$, but $h\notin Q(g_i,h_i)$, which contradicts \eqref{fdjkh}.
	
	Using this, we will now show that $h_1$, $h_2$, and $h_3$ are the edges of a plaque $T$ of $\widetilde\lambda$, i.e. that for all $i=1,2,3$, $h_i$ and $h_{i+1}$ share a common endpoint, but $h_1$, $h_2$, and $h_3$ do not share a common endpoint. For the former, notice that if $h_i$ and $h_{i+1}$ do not share a common endpoint, then the maximality of $\widetilde\lambda$ implies that there is a leaf $h$ of $\widetilde\lambda$ that separates $h_i$ and $h_{i+1}$, but is neither $h_i$ nor $h_{i+1}$. That contradicts the claim proven in the previous paragraph. For the latter, notice that if $h_1$, $h_2$, and $h_3$ share a common endpoint, then there is some $i\in\{1,2,3\}$ such that $h_i$ separates $h_{i-1}$ and $h_{i+1}$. Then by switching $i+1$ and $i-1$ if necessary, we may assume from the claim proven in the previous paragraph that $h_i=h_{i+1}$. However this means that either $B_i=B_{i+1}$ or $B_i\cap B_{i+1}$ is empty. Neither of these are possible because $g_i\subset B_{i+1}\setminus B_i$ and $g_{i-1}\subset B_i\cap B_{i+1}$. 
	
	It is straightforward to verify that $T$ is the unique plaque of $\widetilde\lambda$ that separates every distinct pair of leaves in $\{g_1,g_2,g_3\}$. 
	\end{proof}
	
	\subsubsection{Coherent orientations and the metric on $Q(g_1,g_2)$.}
	We will now specify, for any pair of leaves $g_1$ and $g_2$ of $\wt\lambda$, a metric $d_\infty$ on the set $Q(g_1,g_2)$ of leaves of $\wt\lambda$ that separate $g_1$ and $g_2$. The induced product metric on $Q(g_1,g_2)^2$, also denoted $d_\infty$, will be used in Sections \ref{sec:cocycle_pair}, \ref{sec:productable}, \ref{sec: slithering}, and \ref{sec:bending} to make sense of H\"older continuity of maps from $Q(g_1,g_2)^2$ to a normed vector space. 
	
	Let $\preceq$ denote the counter-clockwise cyclic order on $\partial\wt\lambda$ (with respect to the orientation on $S$).  If $\mathsf{g}_1$ and $\mathsf{g}_2$ are oriented geodesics in $\widetilde S$ that either do not intersect or are equal, then we say that $\mathsf{g}_1$ and $\mathsf{g}_2$ are \emph{oriented in parallel} if
	\[\mathsf{g}_2^+ \preceq \mathsf{g}_1^+ \prec \mathsf{g}_1^-\preceq \mathsf{g}_2^- \prec \mathsf{g}_2^+\quad\text{ or }\quad \mathsf{g}_1^+ \preceq \mathsf{g}_2^+ \prec \mathsf{g}_2^-\preceq \mathsf{g}_1^- \prec \mathsf{g}_1^+\]
	(recall that for any oriented geodesic $\mathsf{g}$ in $\widetilde S$, $\mathsf{g}^+$ and $\mathsf{g}^-$ respectively denote the forward and backward endpoints of $\mathsf{g}$, which lie in $\partial\wt S$).
	
	Fix a base point in $\widetilde{S}$, and equip $\partial\widetilde{S}$ with the visual metric $d_\infty$ induced by the base point. We abuse notation and also denote the induced product metric on $(\partial\widetilde{S})^2$ by $d_\infty$, i.e.
	\[d_\infty((x,y),(z,w))=\max\{d_\infty(x,z),d_\infty(y,w)\}\]
	for all $x,y,z,w\in\partial\wt S$.
	
	Now, suppose that $g_1$ and $g_2$ are leaves of $\widetilde\lambda$. Define a metric $d_\infty$ on $Q(g_1,g_2)$ by setting
	\[d_\infty(h_1,h_2):=d_\infty(\mathsf{h}_1,\mathsf{h}_2)=d_\infty((\mathsf{h}_1^+,\mathsf{h}_1^-),(\mathsf{h}_2^+,\mathsf{h}_2^-)),\]
	where $\mathsf{h}_1$ and $\mathsf{h}_2$ are some (any) pair of leaves in $\widetilde\Lambda^o$ oriented in parallel obtained by equipping the leaves $h_1$ and $h_2$ with orientations.
	
	\subsection{General position of flags}\label{flag}\label{sec: transversality}
	Let $\mathbb{K}$ be either $\mathbb{R}$ or $\mathbb{C}$, let $\mathcal{F}(\mathbb{K}^d)$ denote the space of complete flags in $\mathbb{K}^d$, and let $\Gr_k(\mathbb{K}^d)$ denote the Grassmannian of $k$--planes in $\mathbb{K}^d$. Fix once and for all a Riemannian metric on $\mathcal{F}(\mathbb{K}^d)$, and let $d_{\mathcal{F}}$ be the induced metric on $\mathcal{F}(\mathbb{K}^d)$. 
	
	For every $j\in\{1,\dots,d-1\}$, there is a natural projection
	\[\pi_j:\mathcal{F}(\mathbb{K}^d)\to\Gr_j(\mathbb{K}^d).\]
	For any flag $F\in\mathcal{F}(\mathbb{K}^d)$ and $j\in\{1,\dots,d-1\}$, we write $F^j:=\pi_j(F)$.
	Given a map $\xi:\partial\wt\lambda\to \mathcal{F}(\mathbb{K}^d)$, we denote
	\[\xi^j:=\pi_j\circ\xi:\partial\wt\lambda\to \Gr_j(\mathbb{K}^d).\]
	
	A pair of flags $F_1,F_2\in\mathcal{F}(\mathbb{K}^d)$ are \emph{transverse} if $F_1^j+F_2^{d-j}=\mathbb{K}^d$ for all $j\in\{1,\dots,d-1\}$. For any integer $m\geq 2$, we say that a collection of $m$ flags $F_1,\dots,F_m\in\mathcal F(\mathbb{K}^d)$ is in \emph{general position} if 
	\[F_1^{k_1}+\dots+F_m^{k_m}=\mathbb{K}^d\]
	for any non-negative integers $k_1,\dots,k_m$ such that $k_1 +\cdots + k_m = d$. Denote the set of $m$--tuples of flags in general position by $\mathcal F(\mathbb{K}^d)^{(m)}$. Note that an $m$--tuple of flags that is in general position is necessarily pairwise transverse. However, when $m>2$, an $m$--tuple of flags which is pairwise transverse is not necessarily in general position.
	
	We may extend these notions to flag maps.
	
	\begin{definition}\label{def: lambda transverse}
		A map $\xi:\partial\wt\lambda\to\mathcal{F}(\mathbb{K}^d)$ is 
		\begin{enumerate}
			\item \emph{$\lambda$--transverse} if $\xi(\mathsf{g})$ is transverse for all $\mathsf{g}\in\wt\Lambda^o$. 
			\item \emph{$\lambda$--hyperconvex} if $\xi({\bf x})$ is in general position for all ${\bf x} \in \wt\Delta^o$.
		\end{enumerate}
	\end{definition}
	
	\begin{remark}
		A map $\xi:\partial\wt\lambda\to\mathcal{F}(\mathbb{K}^d)$ is said to be \emph{transverse} if $\xi(x)$ and $\xi(y)$ are transverse for all distinct $x,y\in\partial\wt\lambda$. Notice that requiring $\xi$ to be $\lambda$--transverse is much weaker than requiring $\xi$ to be transverse. 
	\end{remark}

\section{\texorpdfstring{$d$}{d}--pleated surfaces}\label{sec:d_pletaed}

In this section, we introduce $d$--pleated surfaces, which are the main objects studied in this paper. They simultaneously generalize the classical notion of pleated surfaces, seen as representations from the fundamental group $\Gamma$ of the surface $S$ into $\mathsf{PGL}_2(\mathbb{C})$, and Hitchin representations of $\Gamma$ into $\mathsf{PGL}_d(\mathbb{R})$ (see Sections \ref{ssec:classical_pleated} and \ref{ssec:Hitchin} for details). Then, in Section \ref{subsec:length_functions}, we introduce length functions which will play an important role in the parameterization of the space of $d$--pleated surfaces.

\subsection{\texorpdfstring{$\lambda$}{l}--Borel Anosov representations}\label{ssec:BorelAnosov}
In order to define $d$--pleated surfaces, we first introduce the notion of a $\lambda$--Borel Anosov representation $\rho\colon\Gamma\to\mathsf{PGL}_d(\mathbb{K})$, where $\mathbb{K}$ is either $\mathbb{R}$ or $\mathbb{C}$. This is a weakening of the notion of Borel Anosov representation where we require the existence of an associated family of contracting line bundles over the unit tangent bundle to the lamination $\lambda$, as we now explain.

Let $T^1S$ denote the unit tangent bundle of $S$ (equipped with its auxiliary metric) and consider
\[
T^1\lambda:=\{v\in T^1S \mid v\text{ is tangent to a leaf of }\lambda\}
\] 
and its lift $T^1\wt \lambda\subset T^1\wt S$. Observe that the geodesic flows on $T^1\wt S$ and $T^1S$ restrict to flows on $T^1\wt\lambda$ and $T^1\lambda$ respectively. We will denote all these flows by $\varphi_t$, since the space they act on will be clear from context. Any $v\in T^1\wt\lambda$ is tangent to an oriented geodesic $\mathsf{g}$ and we let $v^\pm:=\mathsf{g}^\pm$ denote its endpoints. 

Now, let $\rho\colon\Gamma\to\mathsf{PGL}_d(\mathbb{K})$ be a representation, let $\xi\colon\partial\wt \lambda\to\mathcal F(\mathbb{K}^d)$ be a $\rho$--equivariant map, and recall that $\wt\Lambda^o$ denotes the set of oriented leaves of $\wt \lambda$.  With a slight abuse of notation, we let $\xi\colon\wt\Lambda^o\to\mathcal F(\mathbb{K}^d)^2$ denote the associated map on oriented leaves defined by $\xi(\mathsf{g})=(\xi(\mathsf{g}^+),\xi(\mathsf{g}^-))$. The $\rho$--equivariant maps that arise naturally in this work are in general not continuous, but their associated maps on oriented leaves satisfy certain regularity and transversality properties. In particular, we say that $\xi$ is
\begin{itemize}
	\item {\em $\lambda$--continuous} if its associated map on oriented leaves is continuous;
	\item {\em $\lambda$--transverse} if $\xi(\mathsf{g})$ is a transverse pair of flags for all oriented leaves $\mathsf{g}\in\wt\Lambda^o$.
\end{itemize}
Note that every $\rho$--equivariant, $\lambda$--continuous, and $\lambda$--transverse map 
\[\xi: \partial \wt \lambda \to \mathcal{F}(\mathbb{K}^d)\]
induces a continuous splitting of the product bundle $T^1\wt \lambda \times \mathbb{K}^d \to T^1\wt \lambda$ as 
\begin{align*}
	T^1\wt\lambda\times\mathbb{K}^d=L_1\oplus\dots\oplus L_d,
\end{align*} 
where $L_i|_v=\xi^i(v^+)\cap \xi^{d-i+1}(v^-)$ for all $v\in T^1\wt\lambda$ and all $i=1,\dots,d$. We refer to this as the \emph{splitting induced by $\xi$}. This in turn induces a splitting of the product bundle $T^1\wt\lambda\times\End(\mathbb{K}^d)\to T^1\wt\lambda$ as
\begin{align}\label{eqn: splitting'}
	T^1\wt\lambda\times\End(\mathbb{K}^d)=\bigoplus_{1\le i,j\le d}\Hom(L_i,L_j),
\end{align} 
which we also refer to as the \emph{splitting induced by $\xi$}. By the $\rho$--equivariance of $\xi$, the natural $\Gamma$--action on $T^1\wt\lambda\times\End(\mathbb{K}^d)$ (by deck transformations in the first factor and conjugation by $\rho(\Gamma)$ in the second factor) leaves the sub-bundle $\Hom(L_i,L_j)$ invariant for all $1\le i,j\le d$. In particular, for all pairs of positive integers ${\bf i}=(i_1,i_2)$ that sum to $d$, the line bundle
\[H_{\bf i}:=\Hom(L_{i_1+1},L_{i_1})\to T^1\wt\lambda\] 
descends to a line bundle $\widehat H_{\bf i}:=H_{\bf i}/\Gamma\to T^1\lambda$. We refer to $\wh H_{\bf i}$ as the \emph{${\bf i}$--th homomorphism bundle induced by $\xi$}.

The flow on $T^1\wt\lambda$ lifts trivially to $T^1\wt\lambda\times\End(\mathbb{K}^d)$. Since the splitting induced by $\xi$ is flow--invariant, we obtain a flow on $H_{\bf i}$ which is a linear map when restricted to each fiber and also commutes with the $\Gamma$--action on $H_{\bf i}$. As such, it descends to a flow on $\wh{H}_{\bf i}$ that covers the flow on $T^1\lambda$, and is linear restricted to each fiber of $\wh{H}_{\bf i}$. We will not emphasize the distinction between the flows introduced here and refer to all of them as $\varphi_t$, since the space they act on will be clear from context.

\begin{definition}\label{def:AnosovBorel}
	A representation $\rho\colon\Gamma\to\mathsf{PGL}_d(\mathbb{K})$ is \emph{$\lambda$--Borel Anosov} if the following holds:
	\begin{enumerate}
		\item There exists a $\rho$--equivariant, $\lambda$--continuous, $\lambda$--transverse map
		\[
		\xi\colon\partial\wt \lambda\to \mathcal{F}(\mathbb{K}^d).
		\]    
		\item For all pairs of positive integers ${\bf i}$ that sum to $d$ and for some (any) continuous family of norms $\norm{\cdot}_{v\in T^1\lambda}$ on the fibers of the ${\bf i}$--th homomorphism bundle $\wh H_{\bf i}$ induced by $\xi$, there exist $C,\alpha>0$ such that 
		\[
		\|\varphi_t(X)\|_{\varphi_t(v)}\leq Ce^{-\alpha t}\|X\|_v
		\]
		for all $X\in\wh H_{\bf i}\big|_v$, all $v\in T^1\lambda$, and all $t>0$.
	\end{enumerate}
	We refer to $\xi$ as a \emph{$\lambda$--limit map} of $\rho$.
\end{definition}
\begin{remark} \label{rem: d pleated} \phantom{a}
	\begin{enumerate}
		\item The contracting property (Definition \ref{def:AnosovBorel} Property (2)) guarantees that the $\lambda$--limit map of a $\lambda$--Borel Anosov representation is unique. See \cite[Theorem 1.1]{wang2021anosov} for details.
		\item The notion of a $\lambda$--Borel Anosov representation is a weakening of the more classical notion of a Borel Anosov representation that was introduced by Labourie \cite{labourie-anosov}. For Borel Anosov representations, the limit map is required to be defined on all of $\partial\wt S$ instead of just $\partial\wt\lambda$, and is also required to be continuous and transverse, i.e. $\xi(x)$ and $\xi(y)$ are transverse for all distinct points $x,y\in\partial\wt S$. This stronger limit map extends the splitting $T^1\wt\lambda\times\mathbb{K}^d=L_1\oplus\dots\oplus L_d$ to a splitting $T^1\wt S\times\mathbb{K}^d=L_1\oplus\dots\oplus L_d$, and so the homomorphism bundles $\wh H_{\bf i}$ can also be extended to bundles over $T^1S$. The contraction property is then required to hold for all vectors $v\in T^1S$. 
		\item Unlike Borel Anosov representations, $\lambda$--Borel Anosov representations do not necessarily have discrete image. 
		\item The notion of $\lambda$--Borel Anosov representation is a special case of the class of representations considered in Wang \cite{wang2021anosov}: in his language, $\lambda$--Borel Anosov representations are ``$k$--Anosov over $T^1\wt\lambda$ for all $k=1,\ldots,d$".
	\end{enumerate}
\end{remark}

	\subsection{\texorpdfstring{$d$}{d}--pleated surfaces: Definition and motivating examples}\label{ssec:d-pleated}
	
	We are now ready to define $d$--pleated surfaces.
	
	\begin{definition}\label{defn:d-pleated}
		A \emph{$d$--pleated surface with pleating locus $\lambda$} is a pair $(\rho,\xi)$, where $\rho:\Gamma\to\mathsf{PGL}_d(\mathbb{C})$ is a representation and $\xi:\partial\wt\lambda\to\mathcal{F}(\mathbb{C}^d)$ is a $\rho$--equivariant map such that 
		\begin{enumerate}
			\item $\rho$ is $\lambda$--Borel Anosov with $\lambda$--limit map $\xi$, and
			\item $\xi$ is {\em $\lambda$--hyperconvex}, that is, for any labeling of the vertices of plaques $\mathbf{x} = (x_1,x_2,x_3) \in\wt\Delta^o$, the triple of flags
			\[
			\xi(\mathbf x):= (\xi(x_1), \xi(x_2), \xi(x_3))
			\]
			is in general position.
		\end{enumerate}
		We denote the set of $d$--pleated surfaces with pleating locus $\lambda$ by $\mathcal{R}_d(\lambda)$.
	\end{definition}
	
	Given Remark \ref{rem: d pleated} item (1), we see that a $d$--pleated surface $(\rho,\xi)$ with pleating locus $\lambda$ is uniquely determined by the representation $\rho\colon\Gamma\to\mathsf{PGL}_d(\mathbb{C})$ (once $\lambda$ is fixed). Therefore, we have an embedding 
	\[\mathcal{R}_d(\lambda)\to\Hom(\Gamma,\mathsf{PGL}_d(\mathbb{C}))\]
	given by $(\rho,\xi)\mapsto\rho$. As such, we often view $\mathcal{R}_d(\lambda)$ as a subspace of $\Hom(\Gamma,\mathsf{PGL}_d(\mathbb{C}))$ with the induced topology.
	
	The two main motivating examples for our study of $d$--pleated surfaces are (abstract) pleated surfaces and Hitchin representation. We discuss these examples in Sections \ref{ssec:classical_pleated} and \ref{ssec:Hitchin} respectively.
	
	\subsubsection{Classical pleated surfaces}\label{ssec:classical_pleated}
	
	An abstract pleated surface with pleating locus $\lambda$ is the data of a hyperbolic metric $m$ on $S$ (which might not agree with the existing hyperbolic metric on $S$) and a pair $(\rho,f)$ where $\rho$ is a homomorphism from $\Gamma$ to $\mathsf{PGL}_2(\mathbb{C})$ and $f\colon\wt S\to\mathbb H^3$ is a $\rho$--equivariant path-isometry, with respect to the lift of $m$ to the universal cover, that sends leaves and plaques of $\wt\lambda\subset\wt S$ into geodesics and ideal triangles of $\mathbb H^3$. This notion was introduced by Thurston \cite{thurston-notes} in the context of hyperbolic 3--manifolds and further developed by Bonahon \cite{bonahon-toulouse}. 
	In this paper, we provide a new dynamical characterization of these classical objects: $(\rho,f)$ is a pleated surface with pleating locus $\lambda$ if and only if there is a $\rho$--equivariant map $\xi:\partial\wt\lambda\to\mathcal{F}(\mathbb{C}^d)$ such that $(\rho,\xi)$ is a $2$--pleated surface with pleating locus $\lambda$ in the sense of Definition \ref{defn:d-pleated}. In Proposition \ref{classical_pleated} we show that every classical pleated surface is 2--pleated and later, in Proposition \ref{prop:2-pleated is pleated}, we prove the converse of this statement.
	
	Observe that $(\rho,\xi)$ is a $2$--pleated surface with pleating locus $\lambda$ if and only if $\rho:\Gamma\to\mathsf{PGL}_2(\mathbb{C})$ is $\lambda$--Borel Anosov with $\lambda$--limit map $\xi:\partial\wt S\to\mathcal{F}(\mathbb{C}^2)$, since $\xi$ is vacuously $\lambda$--hyperconvex when $d=2$. Recall that $\mathsf{PGL}_2(\mathbb{C})$ is the group of orientation preserving isometries of $\mathbb{H}^3$, and so it naturally acts on its unit tangent bundle $T^1\mathbb{H}^3$ and its ideal boundary $\partial\mathbb{H}^3$. Fix a $\mathsf{PGL}_2(\mathbb{C})$--equivariant identification between $\partial\mathbb{H}^3$ and $\mathbb{P}(\mathbb{C}^2)=\mathcal{F}(\mathbb{C}^2)$. 	
	
	\begin{proposition}\label{classical_pleated}
		If $(\rho,f)$ is a pleated surface with pleating locus $\lambda$, then $\rho$ is $\lambda$--Borel Anosov with $\lambda$--limit map $\xi:\partial\wt\lambda\to\mathcal{F}(\mathbb{C}^2)\cong\partial\mathbb{H}^3$. In particular, $(\rho,\xi)$ is a $2$--pleated surface with pleating locus $\lambda$. Furthermore, $\xi$ has the property that for any oriented leaf $\mathsf{g}$ of $\wt\lambda$, $\xi(\mathsf{g}^+)$ and $\xi(\mathsf{g}^-)$ are the forward and backward endpoints of the geodesic $f(\mathsf{g})$ in $\mathbb{H}^3$.
	\end{proposition}	
	
	In order to prove Proposition \ref{classical_pleated}, we will need to record a preliminary result on the dynamics of the geodesic flow of $\mathbb H^3$.
	
	Observe that the trivial complex vector bundle $T^1\mathbb{H}^3\times\End(\mathbb{C}^2) \to T^1\mathbb{H}^3$ admits a natural $\mathsf{PGL}_2(\mathbb{C})$--action (by isometries in the first factor and conjugation in the second factor). Also, the geodesic flow $\psi_t$ on $T^1\mathbb{H}^3$ lifts trivially to a flow, also denoted $\psi_t$, on $T^1\mathbb{H}^3\times\End(\mathbb{C}^2)$, which commutes with the $\mathsf{PGL}_2(\mathbb{C})$--action. Furthermore, this trivial bundle admits a canonical complex line sub-bundle $K$ whose fiber above every $u\in T^1\mathbb{H}^3$ is given by
	\[
	K_{u}:= \Hom(L_{u}^-, L_{u}^+),
	\]
	where $L_u^+$ and $L_u^-$ are the complex lines in $\mathbb{C}^2$ associated to the forward and backward endpoints at infinity of the oriented geodesic in $\mathbb{H}^3$ that $u$ is tangent to. Observe that $K$ is $\mathsf{PGL}_2(\mathbb{C})$--invariant and $\psi_t$--invariant. The following lemma gives the contraction rate of the flow $\psi_t$ on the fibers of $K$.
	
	\begin{lemma}\label{lem:contracting flow classical}\
		\begin{enumerate}
			\item There exists a norm $\norm{\cdot}_{u \in T^1 \mathbb{H}^3}$ on the line bundle $K$ that is $\mathsf{PGL}_2(\mathbb{C})$--invariant, i.e. 
			\[
			\norm{a \cdot X}_{a(u)} = \norm{X}_u
			\]
			for every $a \in \mathsf{PGL}_2(\mathbb{C})$, $u \in T^1 \mathbb{H}^3$, and $X \in K_u$.
			\item Every $\mathsf{PGL}_2(\mathbb{C})$--invariant norm $\norm{\cdot}_{u \in T^1 \mathbb{H}^3}$ on the line bundle $K$ satisfies
			\[
			\norm{\psi_t(X)}_{\psi_t(u)} = e^{-t} \norm{X}_u
			\]
			for every $t \in \mathbb{R}, u \in T^1 \mathbb{H}^3$, and $X \in K_u$.
		\end{enumerate}
	\end{lemma}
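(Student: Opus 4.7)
The plan is to exploit the transitive action of $\PSL_2(\Cb)$ on $T^1\Hb^3$, so that both claims reduce to a single matrix computation at a well-chosen reference vector.

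For Claim (1), I would fix a reference $u_0 \in T^1\Hb^3$, choose any positive norm on the complex line $K_{u_0}$, and then define the norm on other fibers by $\norm{X}_{a \cdot u_0} := \norm{a^{-1} X a}_{u_0}$. Transitivity of $\PSL_2(\Cb)$ on $T^1\Hb^3$ guarantees this prescribes a norm on every fiber, and the consistency check reduces to verifying that the original norm on $K_{u_0}$ is invariant under $\mathrm{Stab}(u_0)$. In the upper half-space model with $u_0$ the upward unit vector at $(0,1)$, one has $L_{u_0}^+ = \Cb e_1$ and $L_{u_0}^- = \Cb e_2$, and the stabilizer is the compact circle $\mathrm{Stab}(u_0) = \{[\mathrm{diag}(e^{i\theta/2}, e^{-i\theta/2})] : \theta \in \Rb\}$; a direct conjugation computation shows it acts on $K_{u_0} = \Hom(\Cb e_2, \Cb e_1)$ through the unitary character $\theta \mapsto e^{i\theta}$. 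Hence the standard modulus on this line is stabilizer-invariant and propagates to a global $\PSL_2(\Cb)$-invariant norm.

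For Claim (2), $\PSL_2(\Cb)$-equivariance lets me verify the contraction only at the reference vector $u_0$. In the upper half-space model, the geodesic flow sends $u_0$ to $a_t \cdot u_0$ for $a_t := [\mathrm{diag}(e^{t/2}, e^{-t/2})]$. Writing a typical element of $K_{u_0}$ as $X_x := \bigl(\begin{smallmatrix} 0 & x \\ 0 & 0\end{smallmatrix}\bigr)$ for $x \in \Cb$, a short matrix computation gives $a_t \cdot X_x = a_t X_x a_t^{-1} = X_{e^t x}$. Since $\psi_t$ is trivial on the $\End(\Cb^2)$-factor and $K_{\psi_t(u_0)} = K_{u_0}$ as subspaces of $\End(\Cb^2)$, one has $\psi_t(X_x) = X_x$, so the invariance of the norm yields
\[
\norm{X_x}_{u_0} = \norm{a_t \cdot X_x}_{a_t \cdot u_0} = \norm{X_{e^t x}}_{\psi_t(u_0)} = e^t \, \norm{\psi_t(X_x)}_{\psi_t(u_0)},
\]
which rearranges to the desired identity $\norm{\psi_t(X)}_{\psi_t(u)} = e^{-t} \norm{X}_u$.

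I do not expect any serious obstacle. The geodesic flow through $u_0$ is diagonalized by a split Cartan subgroup of $\PSL_2(\Cb)$, so the computation of its action on $K_{u_0}$ reduces to reading off a single eigenvalue, and $\PSL_2(\Cb)$-equivariance then promotes the fiberwise conclusion to the whole bundle.
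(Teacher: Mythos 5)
Your proposal is correct and takes essentially the same approach as the paper: both exploit the transitive $\PSL_2(\Cb)$-action on $T^1\Hb^3$, reduce Claim (1) to the invariance of a norm on $K_{u_0}$ under the circle stabilizer (which acts by $e^{i\vartheta}$-scaling), and derive Claim (2) from the conjugation computation $a_t \cdot X = e^t X$ together with the triviality of $\psi_t$ on fibers. The only cosmetic difference is that the paper carries out the Claim (2) computation directly at an arbitrary $u$ using a basis adapted to $u$, whereas you fix a single reference $u_0$ and invoke $\PSL_2(\Cb)$-equivariance to propagate the conclusion; both are the same calculation.
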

	
	\begin{proof}
		Proof of (1). Fix a base point $u_0\in T^1\mathbb{H}^3$ and let $(b_1,b_2)$ be a basis of $\mathbb{C}^2$ that satisfies $b_1 \in L_{u_0}^+$ and $b_2 \in L_{u_0}^-$. Observe that in the basis $(b_1,b_2)$, every element in $\mathsf{PGL}_2(\mathbb{C})$ that fixes $u_0$ is represented by a matrix of the form
		\[
		\varepsilon_{i\vartheta}=
		\begin{bmatrix}
			e^{i\vartheta/2} & 0 \\
			0 & e^{-i\vartheta/2} 
		\end{bmatrix}, \quad\vartheta\in\mathbb{R}.
		\]
		An elementary computation shows that for all $X\in K_{u_0}$	
		\[
		\varepsilon_{i\vartheta} \cdot X = \varepsilon_{i\vartheta} \circ X \circ \varepsilon_{i\vartheta}^{-1} = e^{i\vartheta} X. 
		\]
		
		Choose now some Hermitian norm $\norm{\cdot}_{u_0}$ on the fiber $K_{u_0}$. By the identity above, the stabilizer of $u_0$ preserves the norm $\norm{\cdot}_{u_0}$. In particular, we can define a norm $\norm{\cdot}_u$ on the fiber $K_{u}$ over any point $u \in T^1 \mathbb{H}^3$ by setting
		\[
		\norm{X}_u:= \norm{a \cdot X}_{u_0}
		\]
		for some (any) $a \in \mathsf{PGL}_2(\mathbb{C})$ such that $a(u) = u_0$. By construction, the norm $\norm{\cdot}_{u \in T^1 \mathbb{H}^3}$ is $\mathsf{PGL}_2(\mathbb{C})$--invariant.
		
		Proof of (2). Let $u \in T^1 \mathbb{H}^3$ and let $(b_1,b_2)$ be a basis of $\mathbb{C}^2$ that satisfies $b_1 \in L_{u}^+$ and $b_2 \in L_{u}^-$. Observe that for every $t \in \mathbb{R}$ the element $\varepsilon_t \in \mathsf{PGL}_2(\mathbb{C})$, represented in the basis $(b_1,b_2)$ by
		\[
		\varepsilon_t =
		\begin{bmatrix}
			e^{t/2} & 0 \\
			0 & e^{-t/2} 
		\end{bmatrix}, 
		\]
		satisfies $\varepsilon_t(u) = \psi_t(u)$. Furthermore, we have
		\[
		\varepsilon_t^{-1} \cdot X = \varepsilon_t^{-1} \circ X \circ \varepsilon_t = e^{-t} X  \in K_{\psi_t(u)}
		\]
		for every $t \in \mathbb{R}$ and $X \in K_{u}$. Since the norm $\norm{\cdot}_{u \in T^1 \mathbb{H}^3}$ is $\mathsf{PGL}_2(\mathbb{C})$--invariant and $\psi_t$ acts trivially on the fibers, it follows that
		\[
		\norm{\psi_t(X)}_{\psi_t(u)}=\norm{X}_{\psi_t(u)}=\norm{\varepsilon_t^{-1}\cdot X}_{u}=e^{-t}\norm{X}_{u}.\qedhere
		\]
	\end{proof}
	
	We are now ready to prove Proposition \ref{classical_pleated}.
	\begin{proof}[Proof of Proposition \ref{classical_pleated}]
		Recall that $S$ is a hyperbolic surface, so we may lift the hyperbolic metric to $S$ a $\Gamma$--invariant hyperbolic metric $m_0$ on $\wt S$. On the other hand, by the definition of the pleated surface $(\rho,f)$, there is a $\Gamma$--invariant hyperbolic metric $m$ on $\wt S$ such that $\wt f:\wt S\to\mathbb{H}^3$ is a path-isometry that sends each leaf of $\wt\lambda$ to a complete geodesic in $\mathbb{H}^3$, and is a totally geodesic immersion on $\wt S-\wt\lambda$. While $m$ and $m_0$ might not agree, there is a geodesic lamination $\lambda_m$ on $S$ with respect to $m$, such that $\partial\wt\lambda_m=\partial\wt\lambda$. Then by classical results on the dynamics of the geodesic flow of compact negatively curved Riemannian manifolds (see e.g. \cite{gromov3remarks,matheus91}), $T^1\lambda_m$ and $T^1\lambda$ are H\"older orbit equivalent. Thus, for the purposes of proving that $\rho$ is $\lambda$--Borel Anosov, we may assume without loss of generality that $m_0=m$.
		
		Since $f$ is $1$--Lipschitz and sends leaves of $\widetilde{\lambda}$ to geodesics of $\mathbb{H}^3$, we may define a map 
		\[\xi \co\partial \widetilde{\lambda} \to \partial \mathbb{H}^3\cong\mathcal{F}(\mathbb{C}^2)\] 
		by setting $\xi(z)$ to be the forward endpoint of $f(\mathsf{g})$ for some (any) oriented leaf $\mathsf{g}$ of $\wt\lambda$ whose forward endpoint is $z$. Since geodesics in $\mathbb{H}^3$ have distinct endpoints in $\partial\mathbb{H}^3$, and points in $\mathcal{F}(\mathbb{C}^2)$ are transverse if and only if they are distinct, it follows that $\xi$ is $\lambda$--transverse. 
		
		It now suffices to show that $\rho$ is $\lambda$--Borel Anosov with $\lambda$--limit map $\xi$. Observe that $f$ determines a $\rho$--equivariant continuous map
		\[
		df : T^1 \widetilde{\lambda} \longmapsto T^1 \mathbb{H}^3 ,
		\]
		that sends every $v \in T^1 \widetilde{\lambda}$ into $\frac{\mathrm{d}}{\mathrm{d}t}(f \circ \gamma_v)(t) |_{t = 0}$, where $\gamma_v = \gamma_v(t)$ denotes the parameterized geodesic in $\widetilde{S}$ with initial conditions $v$. Notice that, 
		\begin{equation} \label{eq:pleated and flows}
			df(\varphi_t(v)) = \psi_t(df(v)) 
		\end{equation}
		for every $t \in \mathbb{R}$ and $v \in T^1 \widetilde{\lambda}$.
		
		Consider the pullback bundle $(df)^* K \to T^1 \widetilde{\lambda}$. By definition of $\xi$, $df$, and $K$, this pullback bundle coincides with the $(1,1)$--th homomorphism bundle induced by $\xi$,
		\[
		H = H_{(1,1)} \longrightarrow T^1 \widetilde{\lambda} ,
		\]
		see Section \ref{ssec:BorelAnosov}. Since $f$ is $\rho$--equivariant, the $\Gamma$--action on $H$ coincides with the pullback via $df$ of the $\rho(\Gamma)$--action on $K$. Furthermore, by relation \eqref{eq:pleated and flows} the flow $\varphi_t$ on $H$ is the pullback under $df$ of the flow $\psi_t$ on $K$.
		
		Fix a $\mathsf{PGL}_2(\mathbb{C})$--invariant norm on $K$ (part (1) Lemma \ref{lem:contracting flow classical} ensures the existence of such a norm), and let $\norm{\cdot}_H$ denote its pullback to $H$ via $df$. The $\rho$--equivariance of $df$ ensures that this pullback norm is $\Gamma$--invariant. Then by part (2) of Lemma~\ref{lem:contracting flow classical},
		\[
		\norm{\varphi_t(X)}_{H,\varphi_t(v)} = \norm{\psi_t((df)_*(X))}_{\psi_t(df(v))} = e^{-t} \norm{(df)_*(X)}_{df(v)} = e^{-t} \norm{X}_{H,v}
		\]
		for all $v \in T^1 \widetilde{\lambda}$, $X \in H_v$, and $t \in \mathbb{R}$. This proves that $\rho$ is $\lambda$--Borel Anosov with $\lambda$--limit map $\xi$, as desired.
	\end{proof}
	
	\subsubsection{Hitchin representations}\label{ssec:Hitchin}
	
	Another important example of $d$--pleated surfaces are {\em $d$--Hitchin representations}, which we now define.
	
	Recall that a representation $j:\Gamma\to\mathsf{PGL}_2(\mathbb{R})$ is \emph{Fuchsian} if it is discrete and faithful. The set of conjugacy classes of Fuchsian representations form a single connected component of 
	\[\mathfrak X(\Gamma,\mathsf{PGL}_2(\mathbb{R})):=\Hom(\Gamma,\mathsf{PGL}_2(\mathbb{R}))/\mathsf{PGL}_2(\mathbb{R}),\] 
	called the \emph{Teichm\"uller component}. 
	
	For any $d\geq 2$, there is a natural action of $\mathsf{GL}_2(\mathbb{R})$ on the space of $(d-1)$--th symmetric tensors ${\rm Sym}^{d-1}(\mathbb{R}^2)$ defined by requiring
	\[g\cdot (v_1\odot\dots\odot v_{d-1}):=(g\cdot v_1)\odot (g\cdot v_2)\odot\dots\odot(g\cdot v_{d-1}) ,
	\]
	and extending it by linearity to all elements of ${\rm Sym}^{d-1}(\mathbb{R}^2)$. Since this action is linear, it determines a group homomorphism
	\[\mathsf{GL}_2(\mathbb{R})\to\mathsf{GL}({\rm Sym}^{d-1}(\mathbb{R}^2))\simeq\mathsf{GL}_d(\mathbb{R}),\]
	where the last isomorphism is obtained by identifying the basis of ${\rm Sym}^{d-1}(\mathbb{R}^2)$ induced by the standard basis of $\mathbb{R}^2$, with the standard basis of $\mathbb{R}^d$. This homomorphism descends to an embedding
	\begin{align}\label{eqn: irred}
		\iota:\mathsf{PGL}_2(\mathbb{R})\to\mathsf{PGL}_d(\mathbb{R}),
	\end{align}
	which we may use to define an inclusion
	\[\hat\iota:\mathfrak X(\Gamma,\mathsf{PGL}_2(\mathbb{R}))\to \mathfrak X(\Gamma,\mathsf{PGL}_d(\mathbb{R})):=\Hom(\Gamma,\mathsf{PGL}_d(\mathbb{R}))/\mathsf{PGL}_d(\mathbb{R})\]
	by $[\rho]\mapsto[\iota\circ\rho]$. The connected component of $\mathfrak X(\Gamma,\mathsf{PGL}_d(\mathbb{R}))$ that contains the image of the Teichm\"uller component under $\hat\iota$ is the \emph{$d$--Hitchin component}, denoted $\mathrm{Hit}_d(S)$. The representations from $\Gamma$ to $\mathsf{PGL}_d(\mathbb{R})$ whose conjugacy classes lie in the $d$--Hitchin component are called \emph{$d$--Hitchin representations}.
	
	Often, we will consider a $d$--Hitchin representation $\rho:\Gamma\to\mathsf{PGL}_d(\mathbb{R})$ as a representation into $\mathsf{PGL}_d(\mathbb{C})$ by the natural embedding $\mathsf{PGL}_d(\mathbb{R})\subset\mathsf{PGL}_d(\mathbb{C})$. For this reason, we say that a representation $\rho:\Gamma\to\mathsf{PGL}_d(\mathbb{C})$ is a \emph{$d$--Hitchin representation} if it is conjugate in $\mathsf{PGL}_d(\mathbb{C})$ to a $d$--Hitchin representation $\rho:\Gamma\to\mathsf{PGL}_d(\mathbb{R})\subset\mathsf{PGL}_d(\mathbb{C})$. The following theorem is a seminal result of Labourie \cite[Theorem 1.4, Theorem 4.2]{labourie-anosov}.
	
	\begin{theorem}[\cite{labourie-anosov}]\label{thm: Labourie}
		If $\rho:\Gamma\to\mathsf{PGL}_d(\mathbb{R})$ is a $d$--Hitchin representation then it is a Borel Anosov representation.
	\end{theorem}
	
	While the limit map $\xi\colon\mathbb{S}^1\to\mathcal{F}(\mathbb{R}^d)$ of a Borel Anosov representation $\rho$ is in general $\rho$--equivariant, continuous and transverse, the limit maps of Hitchin representations satisfy an additional property. We say that a map $\xi:\mathbb{S}^1\to\mathcal{F}(\mathbb{R}^d)$ is \emph{hyperconvex} if for any pairwise distinct points $x_1,\dots,x_n\in\mathbb{S}^1$ the flags $\xi(x_1),\dots,\xi(x_n)$ are in general position. 
	
	\begin{theorem}[{\cite{labourie-anosov, gui_com}}]\label{thm: hyperconvex}
		If $\rho:\Gamma\to\mathsf{PGL}_d(\mathbb{R})$ is a Borel Anosov representation, then $\rho$ is a $d$--Hitchin representation if and only if its limit map is hyperconvex.
	\end{theorem}
	
	Theorems \ref{thm: Labourie} and \ref{thm: hyperconvex} give the following Corollary.
	
	\begin{corollary}
		If $\rho$ is a $d$--Hitchin representation with limit map $\xi$, then for any maximal geodesic lamination $\lambda$, the pair $(\rho,\xi|_{\partial\wt \lambda})$ is a $d$--pleated surface with pleating locus $\lambda$.
	\end{corollary}
	
	As a consequence of Theorem \ref{thm: hyperconvex}, one can also deduce that every $d$--Hitchin representation is irreducible. See \cite[Proposition~14]{gui_com} or \cite[Proposition 3.4]{labourie-anosov} for a proof using Higgs bundles. It then follows from \cite[Section 1.3]{goldman-symplectic} and \cite{johnson-millson} that the Hitchin component is a real-analytic manifold.
	
	\subsubsection{Regularity of $d$--pleated surfaces in $\mathsf{Hom}(\Gamma,\mathsf{PGL}_d(\mathbb C))$}
	
	A result of Wang \cite{wang2021anosov} and the finiteness of the set of plaques of $\lambda$ imply that $d$--pleated surfaces are stable under small deformations in $\mathsf{Hom}(\Gamma,\mathsf{PGL}_d(\mathbb C))$.
	
	\begin{theorem}[{\cite[Theorem 1.1]{wang2021anosov}}] The space $\mathcal R_d(\lambda)$ of $d$--pleated surfaces with pleating locus $\lambda$ is open in $\mathsf{Hom}(\Gamma,\mathsf{PGL}_d(\mathbb C))$.
	\end{theorem}
	
	The next proposition implies that $\mathcal{R}_d(\lambda)$ lies in the smooth part of the complex algebraic variety $\Hom(\Gamma,\mathsf{PGL}_d(\mathbb{C}))$, and so it is naturally a complex manifold. 
		
		\begin{proposition}\label{prop: pleated surfaces smooth}
			If $(\rho,\xi)$ is a $d$--pleated surface with pleating locus $\lambda$, then the centralizer in $\mathsf{PGL}_d(\mathbb{C})$ of the image of $\rho$ is trivial. In particular, the space $\mathcal{R}_d(\lambda)$ of $d$--pleated surfaces with pleating locus $\lambda$ lies in the smooth part of $\Hom(\Gamma,\mathsf{PGL}_d(\mathbb{C})).$ 
		\end{proposition}
		
		\begin{proof}
			Let $A \in \mathsf{PGL}_d(\mathbb{C})$ centralize the image of $\rho$, i.e.  
			\[c_A\circ \rho = \rho,\] 
			where $c_A:\mathsf{PGL}_d(\mathbb{C})\to\mathsf{PGL}_d(\mathbb{C})$ is conjugation by $A$. Since $(\rho,\xi)$ is a $d$--pleated surface, so is $(c_A\circ\rho, A \circ \xi)$. At the same time, since $c_A\circ\rho= \rho$, the uniqueness of $\xi$ given by Remark \ref{rem: d pleated} item (1) implies that $A \circ \xi = \xi$. Select now any triple of points ${\bf x}=(x_1,x_2,x_3)$ in $\partial\wt\lambda$ that are vertices of a plaque of $\wt\lambda$. Since $\xi$ is $\lambda$--hyperconvex, the fact that $A$ fixes $\xi(x_1)$, $\xi(x_2)$ and the line $\xi^1(x_3)$ implies that $A$ is the identity.
			
			To deduce the second statement from the first, we apply a result of Goldman \cite[Proposition~1.2]{goldman-symplectic}, who proved that a representation $\rho\in \Hom(\Gamma,\mathsf{PGL}_d(\mathbb{C}))$ lies in the smooth locus if and only if the centralizer of $\rho(\Gamma)$ in $\mathsf{PGL}_d(\mathbb{C})$ has zero dimension.
		\end{proof}
		
		\subsection{Lengths for \texorpdfstring{$\lambda$}{l}--Borel Anosov representations} \label{subsec:length_functions}
		We now define for every $\lambda$--Borel Anosov representation $\rho:\Gamma\to\mathsf{PGL}_d(\mathbb{K})$ and every pair of positive integers that sum to $d$, a length function which will play an important role in our parameterization of the space $\mathcal R_d(\lambda)$. The domain of these length functions is the space of transverse measures supported on the orientation cover of the lamination $\lambda$, which we now define.
		
		The \emph{orientation cover} $\lambda^o$ of the geodesic lamination $\lambda$ is the set of pairs $(x, \ast)$, where $x \in \lambda$ and $\ast$ is an orientation of the leaf of $\lambda$ that contains $x$. 
		
		Note that there is a canonical bijection $\lambda^o\to T^1\lambda$ that assigns every pair $(x,\ast)\in\lambda^o$ to the unit tangent vector based at $x$ that is tangent to the leaf of $\lambda$ that passes through $x$ and is equipped with the orientation $\ast$. This bijection defines a natural topology on $\lambda^o$, and with this topology, the projection $\lambda^o \to \lambda$ given by $(x, \ast) \to x$ is a $2$--fold cover.
		
		A \emph{transverse measure with support in  $\lambda^o$} is a $\Gamma$--invariant Radon measure $\mu$ on the space of oriented geodesics in $\wt S$ whose support lies in $\wt\Lambda^o$, which is the set of oriented leaves of $\wt\lambda^o$. The space of transverse measures with support in $\lambda^o$, denoted by $\mathsf{M}(\lambda^o)$, is a simplex in a finite dimensional real vector space, by a theorem of Katok \cite{Katok_measures}. 
		
		Since the set of flow lines of the geodesic flow $\varphi_t$ on $T^1S$ is exactly the set of oriented geodesics in $S$, we may construct a Borel measure $\mathrm{d}t\times\mu$ on $T^1S$ with the following defining property: Given a small open set $D\subset T^1S$,
		\[(\mathrm{d}t\times\mu)(D)=\int_{\mathsf{g}\in\mathcal{G}_D}\left(\int_{\mathsf{g}\cap D}{\rm d}t\right) \,\mu,\]
		where $\mathcal{G}_D$ denotes the set of flow lines of $T^1S$ that intersect $D$, and $\mathrm{d}t$ is the $1$--form along the flow lines of $T^1 S$ given by the arc length. Observe that $\mathrm{d}t\times\mu$ is invariant under $\varphi_t$, and is supported on $T^1\lambda$.
		
		Let $\xi$ be the $\lambda$--limit map of $\rho$, let ${\bf i}=(i_1,i_2)$ be a pair of positive integers that sum to $d$, and let $\wh H_{\bf i}\to T^1\lambda$ be the ${\bf i}$--th homomorphism bundle induced by $\xi$. Using a smooth partition of unity over $T^1 S$, we select a continuous norm $\norm{\cdot}_{u \in T^1\lambda}$ on the fibers of the line bundle $\wh H_{\bf i}$, whose restriction along the flow lines of the geodesic flow $\mathbb{R} \to T^1 S$ is continuously differentiable, and define the function $f^\rho_{\bf i} : T^1\lambda \to \mathbb{R}$ by setting
		\[
		f^\rho_{\bf i}(v):= - \left. \frac{\mathrm{d}}{\mathrm{d} s} \log \norm{\varphi_s(X)}_{\varphi_s(v)} \right|_{s = 0}
		\]
		for some (any) non-zero $X \in \wh H_{\bf i}|_v$. Since $\varphi_t$ acts linearly on the fibers of $\wh H_{\bf i}$, and since the bundle $\wh H_{\bf i}$ is a line bundle, the value $f^\rho_{\bf i}(v)$ does not depend on the choice of $X$. Then for all $\mu\in\mathsf{M}(\lambda^o)$, define 
		\[
		\ell_{\bf i}^\rho(\mu):=\int_{T^1\lambda} f^\rho_{\bf i} \ \mathrm{d}t\times \mu \in \mathbb R.
		\]
		
		We will see later (Remark \ref{rem: independence from norm}) that $\ell_{\bf i}^\rho$ does not depend on the choice of the continuous norm $\norm{\cdot}_{u \in T^1\lambda}$. Also, in the case when $\lambda^o$ contains a simple closed oriented geodesic $c$ and $\mu$ is the Dirac measure in $c$, then one can verify that 
		\[\ell_{\bf i}^\rho(\mu)=\log\frac{\lambda_{i_1}(\rho(\gamma))}{\lambda_{i_1+1}(\rho(\gamma))},\]
		where for all $g\in\mathsf{PGL}_d(\mathbb{K})$, $\lambda_1(g)\geq \dots\geq \lambda_d(g)$ denote the moduli of the eigenvalues of some linear representative $\bar{g}$ of $g$.
		
		We establish the following lemma by adapting Proposition 7.4 in \cite{BoD} to our context and using the contraction property of $\rho$ (see Definition \ref{def:AnosovBorel} item (2)). 
		\begin{lemma}\label{lem:lengths positive}
			Let $\rho$ be a $\lambda$--Borel Anosov representation, and let ${\bf i}=(i_1,i_2)$ be a pair of positive integers that sum to $d$. For any transverse measure $\mu\in\mathsf{M}(\lambda^o)$ supported on the orientation cover of $\lambda$, we have $\ell_{\bf i}^\rho(\mu) > 0$.
		\end{lemma}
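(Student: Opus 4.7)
\medskip

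\noindent\textbf{Proof plan for Lemma \ref{lem:lengths positive}.}

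The plan is to turn the contraction property of Definition \ref{def:AnosovBorel}(2) into a uniform lower bound on the time-$T$ ergodic average of $f^\rho_{\bf i}$ along every orbit of $\varphi_t$, and then integrate against the flow-invariant measure $\mathrm{d}t\times\mu$ using Fubini.

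First I would fix $v\in T^1\lambda$ and a nonzero $X\in\wh H_{\bf i}|_v$, and set $g(s):=\log\|\varphi_s(X)\|_{\varphi_s(v)}$, which is well defined and $C^1$ by the choice of norm. Because $\varphi_t$ acts linearly on the fibers of the line bundle $\wh H_{\bf i}$, the element $\varphi_u(X)$ is a valid non-zero test vector at $\varphi_u(v)$; using it in the definition of $f^\rho_{\bf i}(\varphi_u(v))$ one gets the chain-rule identity
\[
f^\rho_{\bf i}(\varphi_u(v))=-\left.\frac{\mathrm{d}}{\mathrm{d} s}\log\|\varphi_{s+u}(X)\|_{\varphi_{s+u}(v)}\right|_{s=0}=-g'(u).
\]
Integrating gives, for every $T>0$,
\[
\int_0^T f^\rho_{\bf i}(\varphi_s(v))\,\mathrm{d}s \;=\; \log\|X\|_v - \log\|\varphi_T(X)\|_{\varphi_T(v)}.
\]

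Next I would invoke the $\lambda$-Borel Anosov contraction: there exist constants $C,\alpha>0$ such that $\|\varphi_T(X)\|_{\varphi_T(v)}\le C e^{-\alpha T}\|X\|_v$ for all $v\in T^1\lambda$, $X\in \wh H_{\bf i}|_v$ and $T>0$. Taking logs yields the uniform (in $v$ and $X$) estimate
\[
\int_0^T f^\rho_{\bf i}(\varphi_s(v))\,\mathrm{d}s \;\ge\; \alpha T-\log C,\qquad\forall\, v\in T^1\lambda,\ \forall\, T>0.
\]

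Now I would integrate this pointwise estimate against the measure $\mathrm{d}t\times\mu$ on $T^1\lambda$. The key input is that $\mathrm{d}t\times\mu$ is invariant under the geodesic flow $\varphi_s$, which is immediate from its local product description, so that
\[
\int_{T^1\lambda} f^\rho_{\bf i}(\varphi_s(v))\,\mathrm{d}(\mathrm{d}t\times\mu)(v)\;=\;\int_{T^1\lambda} f^\rho_{\bf i}(v)\,\mathrm{d}(\mathrm{d}t\times\mu)(v)\;=\;\ell^\rho_{\bf i}(\mu)
\]
for every $s\in\Rb$. Since $S$ is compact, the measure $\mathrm{d}t\times\mu$ has finite total mass $M:=(\mathrm{d}t\times\mu)(T^1\lambda)<\infty$, and the function $f^\rho_{\bf i}$ is continuous on the compact set $T^1\lambda$, so Fubini applies to
\[
T\,\ell^\rho_{\bf i}(\mu)\;=\;\int_0^T\!\!\int_{T^1\lambda}f^\rho_{\bf i}(\varphi_s(v))\,\mathrm{d}(\mathrm{d}t\times\mu)(v)\,\mathrm{d}s\;=\;\int_{T^1\lambda}\!\!\int_0^T f^\rho_{\bf i}(\varphi_s(v))\,\mathrm{d}s\,\mathrm{d}(\mathrm{d}t\times\mu)(v).
\]
Combining with the pointwise lower bound gives $T\,\ell^\rho_{\bf i}(\mu)\ge (\alpha T-\log C)\,M$. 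Dividing by $T$ and sending $T\to\infty$ yields $\ell^\rho_{\bf i}(\mu)\ge \alpha M$, and $M>0$ whenever $\mu$ is a non-trivial element of $\mathsf{ML}(\lambda^o)$, so $\ell^\rho_{\bf i}(\mu)>0$.

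The main obstacle is largely bookkeeping: one has to justify (i) that $\varphi_u(X)$ is an admissible test vector at $\varphi_u(v)$, so that $f^\rho_{\bf i}\circ\varphi_u$ is literally the derivative $-g'(u)$ of the single scalar function $g$—this uses crucially that $\wh H_{\bf i}$ is a \emph{line} bundle and that $\varphi_t$ acts by scalars on fibers; (ii) that the estimate obtained is uniform in both $v$ and $X$, so that it survives integration; and (iii) that the flow invariance of $\mathrm{d}t\times\mu$ and the finiteness of $M$ permit the Fubini swap. As a byproduct, this argument also shows independence of $\ell^\rho_{\bf i}(\mu)$ from the choice of continuous norm (Remark \ref{rem: independence from norm}), since changing norms changes $f^\rho_{\bf i}$ by the derivative along $\varphi_t$ of a bounded function on $T^1\lambda$, which integrates to zero against the flow-invariant measure $\mathrm{d}t\times\mu$.
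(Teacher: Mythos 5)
Your proposal is correct and takes essentially the same route as the paper: both rely on the identity $\int_0^{T}f^\rho_{\bf i}(\varphi_s(v))\,\mathrm{d}s=\log\|X\|_v-\log\|\varphi_T(X)\|_{\varphi_T(v)}$, the flow-invariance of $\mathrm{d}t\times\mu$, a Fubini swap, and then the Anosov contraction to force the result positive for $T$ large. The only cosmetic difference is that you establish the uniform pointwise bound first and integrate afterward, whereas the paper applies the contraction estimate under the integral after already having performed the Fubini swap; you also spell out (correctly) that $M=(\mathrm{d}t\times\mu)(T^1\lambda)>0$ requires $\mu\neq 0$, which the paper leaves implicit.
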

		\begin{proof}
			
			Let $\wh H_{\bf i}$ be the ${\bf i}$--th homomorphism bundle induced by the $\lambda$--limit map $\xi$ of $\rho$. Given a (possibly non-continuous) nowhere zero section $X$ of $\widehat{H}_{\bf i}$, we can express the function $f_{\bf i}^\rho$ as $f_{\bf i}^\rho(v) =  -  \frac{\mathrm{d}}{\mathrm{d} s} \log \norm{\varphi_s(X(v))}_{\varphi_s(v)} |_{s = 0}$, for any $v \in T^1 \lambda$. Since the value $f_{\bf i}^\rho(v)$ is independent of the choice of $X(v)$, and since the norm $\| \cdot \|_{v \in T^1 \lambda}$ is continuous and continuously differentiable along the flow lines, the function $f_{\bf i}^\rho$ is continuous. We deduce that for every $t_0 > 0$
			\begin{align*}
				\ell_{\bf i}^\rho(\mu) &= \int_{T^1\lambda} f_{\bf i}^\rho \, \mathrm{d}t\times \mu \\& = \frac{1}{t_0} \int_0^{t_0} \left( \int_{T^1\lambda} f_{\bf i}^\rho \circ \varphi_s \, \mathrm{d}t\times \mu \right) \textrm{d}s \\
				& = \frac{1}{t_0} \int_{T^1\lambda} \left( \int_0^{t_0} f_{\bf i}^\rho \circ \varphi_s \, \textrm{d}s  \right) \mathrm{d}t\times \mu \\
				& = \frac{1}{t_0} \int_{T^1\lambda} \left( - \int_0^{t_0} \frac{\mathrm{d}}{\mathrm{d} s} \log \norm{\varphi_s(X(v))}_{\varphi_s(v)} \, \textrm{d}s  \right) (\mathrm{d}t\times \mu)(v) \\
				& = \frac{1}{t_0} \int_{T^1\lambda} \log \frac{\norm{X(v)}_v}{\norm{\varphi_{t_0}(X(v))}_{\varphi_{t_0}(v)}} \, (\mathrm{d}t\times \mu)(v) .
			\end{align*}
			Since $\rho$ is $\lambda$--Borel Anosov, there exist constants $C, \alpha > 0$ such that
			\[
			\frac{\norm{X(v)}_v}{\norm{\varphi_{t_0}(X(v))}_{\varphi_{t_0}(v)}} \geq C^{-1} e^{\alpha t_0}.
			\]
			Thus, by taking $t_0$ sufficiently large, we see that $\ell_{\bf i}^\rho(\mu)>0$.
		\end{proof}

\section{Main Theorem and applications} \label{sec:cocycle_pair}

In this section, we state our main result (Theorem \ref{thm: main}) which gives a parameterization of the space of $d$--pleated surfaces with pleating locus a fixed maximal geodesic lamination $\lambda$. In Section \ref{sec: application 2--pleated}, we use this result to complete the proof that the notions of 2--pleated surface and classical pleated surface coincide (Proposition \ref{prop:2-pleated is pleated}). Finally, in Section \ref{subsec:proof structure} we outline the main steps of the proof of Theorem \ref{thm: main}.

\subsection{The cocyclic pair of a pleated surface}\label{shear-bend}

We start by defining the notion of a $\lambda$--cocyclic pair, and then associate such an object to a $d$--pleated surface with pleating locus $\lambda$ via its $\lambda$--limit map and projective invariants of tuples of flags.

\subsubsection{$\lambda$--cocyclic pairs}

Recall from Section \ref{back} that $\wt\Delta$ denotes the set of plaques of $\wt\lambda$, and $\wt\Delta^o$ denotes the set of labelings, i.e. ordered triples of points in $\partial\wt\lambda$ that are the vertices of a plaque of $\wt\lambda$. Let $\wt\Delta^{2*}$ denote the set of distinct pairs of plaques of $\wt\lambda$. 
Observe that if $T_1$ and $T_2$ are distinct plaques with separating edges $(g_1,g_2)$, then the set $P(g_1,g_2)$ of plaques that separate $(g_1,g_2)$, coincides with the set of plaques that separate the pair $(T_1,T_2)$. Then, we denote by $h_{T,-}$ and $h_{T,+}$ the two edges of $T\in P(g_1,g_2)$ that separate $g_1$ and $g_2$, so that $h_{T,-}$ separates $g_1$ and $h_{T,+}$.
\begin{definition} Let $g_1$ and $g_2$ be leaves of $\wt\lambda$ and consider a plaque $T\in P(g_1,g_2)$. Then, the triple $(x_{T,1},x_{T,2},x_{T,3})\in\wt\Delta^o$ is a \emph{coherent labeling} of the vertices of $T$ with respect to $(g_1,g_2)$ if $x_{T,1}$, $x_{T,2}$, $x_{T,3}$ are vertices of $T$ and
	\begin{itemize}
		\item $x_{T,2}$ is the common endpoint of $h_{T,-}$ and $h_{T,+}$, 
		\item $x_{T,1}$ is the endpoint of $h_{T,-}$ that is not $x_{T,2}$, and
		\item $x_{T,3}$ is the endpoint of $h_{T,+}$ that is not $x_{T,2}$.
	\end{itemize}
	When $(g_1,g_2)$ is a separating pair of leaves for a pair of plaques $(T_1,T_2)$, we say that the triple $(x_{T,1},x_{T,2},x_{T,3})$ is a coherent labeling of the vertices of $T$ with respect to $(T_1,T_2)$. See Figure \ref{fig:notations3}.
\end{definition}

\begin{figure}[h!]
	\includegraphics[width=10cm]{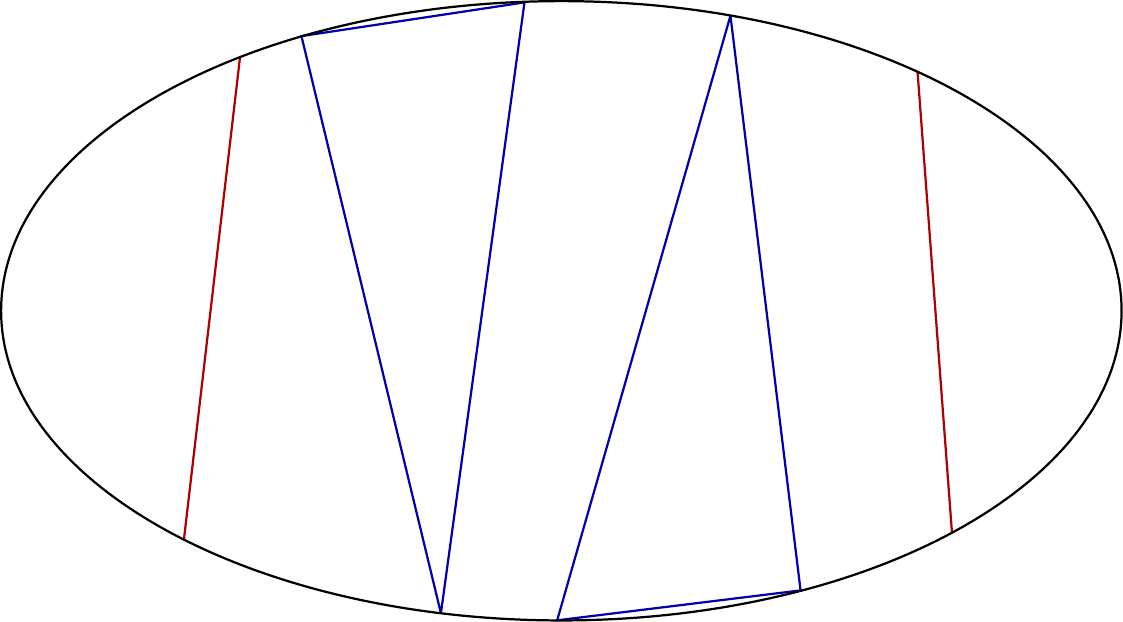}
	\put(-241,70){\small $g_1$}
	\put(-180,90){\small $T$}
	\put(-180,90){\small $T$}
	\put(-210,80){\small $h_{T,-}$}
	\put(-160,90){\small $h_{T,+}$}
	\put(-135,102){\small $h_{T',-}$}
	\put(-90,87){\small $h_{T',+}$}
	\put(-220,152){\small $x_{T,1}$}
	\put(-180,-5){\small $x_{T,2}$}
	\put(-149,-6){\small $x_{T',1}$}
	\put(-85,1){\small $x_{T',3}$}
	\put(-155,160){\small $x_{T,3}$}
	\put(-105,157){\small $x_{T',2}$}
	\put(-110,60){\small $T'$}
	\put(-57,80){\small $ g_2$}
	\caption{\small 
		The plaques $T$ and $T'$ are equipped with their coherent labelings.}
	\label{fig:notations3}
\end{figure}

Let $\mathcal{A}$ denote the set of pairs of positive integers that sum to $d$, and let $\mathcal{B}$ denote the set of triples of positive integers that sum to $d$. For any pair of plaques ${\bf T}=(T_1,T_2)\in\wt\Delta^2$ and any pair of positive integers ${\bf i}=(i_1,i_2)\in\mathcal{A}$ let 
\[\widehat{\bf T}:=(T_2,T_1)\quad\text{and}\quad\widehat{\bf i}:=(i_2,i_1).\] 
Also, for any labeling ${\bf x}=(x_1,x_2,x_3)\in\wt\Delta^o$ and any triple of positive integers ${\bf j}\in\mathcal{B}$, let 
\[\widehat{\bf x}:=(x_2,x_1,x_3),\quad{\bf x}_+:=(x_2,x_3,x_1),\quad{\bf x}_-:=(x_3,x_1,x_2),\]
\[\widehat{\bf j}:=(j_2,j_1,j_3),\quad{\bf j}_+:=(j_2,j_3,j_1),\quad\text{and}\quad{\bf j}_-:=(j_3,j_1,j_2).\]

\begin{definition}[$\mathcal{Y}_d(\lambda;G)$]\label{def_cocycle} For any Abelian group $G$, a $\lambda$--\textit{cocyclic pair of dimension $d$ with values in $G$} is a pair $(\alpha, \theta)$, where 
	$$\alpha \co \wt\Delta^{2*} \times \mathcal{A} \to G, \;\;\text{ and }\;\; \theta \co \wt\Delta^o  \times \mathcal{B} \to G$$
	are such that
	\begin{enumerate}
		\item(Symmetry of $\alpha$) for all distinct pairs of plaques ${\bf T}\in \wt\Delta^{2*}$ and all pairs of positive integers ${\bf i}\in \mathcal{A}$ that sum to $d$  $$\alpha\left({\bf T}, {\bf i}\right) = \alpha(\widehat{\bf T}, \widehat{\bf i});$$ 
		\item(Symmetry of $\theta$) for all triples ${\bf x}\in \wt\Delta^o$ that are the vertices of a plaque of $\wt\lambda$ and all triples ${\bf j} \in \mathcal{B}$ of positive integers that sum to $d$ $$\theta\left({\bf x}, {\bf j}\right) = \theta\left({\bf x}_+, {\bf j}_+\right)= \theta\left({\bf x}_-, {\bf j}_-\right)=-\theta(\widehat{\bf x},\widehat{\bf j});$$
		\item($\Gamma$--invariance of $\alpha$) for all ${\bf T} \in \wt\Delta^{2*}$, ${\bf i}\in \mathcal{A}$, and $\gamma\in\Gamma$, 
		$$\alpha\left(\gamma\cdot{\bf T}, {\bf i}\right) = \alpha({\bf T}, {\bf i});$$
		\item($\Gamma$--invariance of $\theta$) for all ${\bf x}\in \wt\Delta^o$, ${\bf j} \in \mathcal{B}$, and $\gamma\in\Gamma$, 
		$$\theta\left(\gamma\cdot {\bf x}, {\bf j}\right) = \theta({\bf x}, {\bf j});$$
		\item(Cocycle boundary condition) Let $T_1$, $T_2$, and $T$ be pairwise distinct plaques of $\wt\lambda$ such that $T$ separates $T_1$ and $T_2$, and let ${\bf x}_T = (x_{T,1}, x_{T,2}, x_{T,3})$ be a coherent labeling of the vertices of $T$ with respect to $(T_1,T_2)$. Then for all ${\bf i}\in \mathcal{A}$, we have that 
		\[
		\alpha((T_1,T_2), {\bf i}) = \alpha((T_1,T), {\bf i}) + \alpha((T,T_2), {\bf i}) + \sum_{{\bf j} \in \mathcal{B} : j_2 = i_1} \theta({\bf x}_T,{\bf j})
		\]	
		if $x_{T,3}\prec x_{T,2}\prec x_{T,1}$, and
		\[
		\alpha((T_1,T_2), {\bf i}) = \alpha((T_1,T), {\bf i}) + \alpha((T,T_2), {\bf i}) - \sum_{{\bf j} \in \mathcal{B} : j_2 = i_2} \theta({\bf x}_T,{\bf j})
		\]
		if $x_{T,1}\prec x_{T,2}\prec x_{T,3}$.
	\end{enumerate}	
	Let $\mathcal{Y}_d(\lambda;G)$ denote the set of $\lambda$--cocyclic pairs of dimension $d$ with values in $G$.
\end{definition} 

We will often write $\alpha^{\bf i}({\bf T}):=\alpha({\bf T},{\bf i})$ and $\theta^{\bf j}({\bf x}):=\theta({\bf x},{\bf j})$ when convenient. The motivation for the choice of indices and signs in the cocycle boundary condition will become apparent later (see Proposition \ref{prop: shear bend map}).

\begin{remark} Our $\lambda$--cocyclic pairs of dimension $d$ with values in $G$ are a repackaging and a generalization to all Abelian groups of the data that Bonahon and Dreyer use to parameterize the Hitchin component \cite{BoD}. 
\end{remark}

\begin{remark}\label{rmk: lambda cocyclic of dim 2} Note that when $d=2$, the set $\mathcal B$ of triples of positive integers that sum to $d$ is empty. In particular, in this case the map $\theta$ is defined trivially and the cocycle boundary condition becomes an additivity property of the map $\alpha$.
\end{remark}

\begin{remark}\label{rmk: lambda cocyclic modules} Observe that $\mathcal{Y}_d(\lambda;G)$ is naturally an Abelian group. If, in addition, $G$ has the structure of a vector space, then the space $\mathcal{Y}_d(\lambda;G)$ inherits this structure. We will use this observation in Section \ref{sec: intersection}
	when we provide a homological interpretation of $\lambda$--cocyclic pairs of dimension $d$ with values in $G$. 
\end{remark}

Recall that $\mathrm{Hit}_d(S)$ denotes the $d$--Hitchin component. The main goal of this section is to define a biholomorphism $\mathfrak{sb}_d$ between the space $\mathfrak R_d(\lambda)$ of conjugacy classes of $d$--pleated surfaces with pleating locus $\lambda$ and
\[
\mathrm{Hit}_d(S)+i\mathcal Y_d(\lambda;\mathbb R/2\pi\mathbb Z)
\]
seen as a subset of the space $\mathcal Y_d(\lambda;\mathbb C/2\pi\mathbb Z)
$ of $\lambda$--cocyclic pairs of dimension $d$ with values in $\mathbb C/2\pi\mathbb{Z}$.

\subsubsection{Triple and Double ratios of flags}\label{sec: double and triple}\label{xflag}

We associate a $\lambda$--cocylic pair to a $d$--pleated surface using projective invariants for triples or quadruples of flags in general position that were introduced by Fock and Goncharov \cite{fock-goncharov-1}. We now recal their definitions and basic properties.

Let $\mathbb{K}=\mathbb{R}, \mathbb{C}$ and fix a $\mathbb{K}$--linear isomorphism $\mathsf{\Lambda}^{d}(\mathbb{K}^d) \cong \mathbb{K}$. For any flag $F\in\mathcal{F}(\mathbb{K}^d)$ and any $k\in\{1,\dots,d-1\}$, choose a non-zero element $f^{k}\in \mathsf{\Lambda}^kF^{k}.$ With this, we define the following pair of projective invariants:
\begin{itemize}
\item Let ${\bf F}=(F_1,F_2,F_3)$ be an ordered triple of flags in $\mathcal{F}(\mathbb{K}^d)$ that are in general position, and let ${\bf j}=(j_1,j_2,j_3)$ be a triple of positive integers that sum to $d$. The ${\bf j}$--\textit{triple ratio} of ${\bf F}$ is 
$$T^{{\bf j}}({\bf F}):= \frac{f_1^{j_1+1}\wedge f_2^{j_2}\wedge f_3^{j_3-1}}{f_1^{j_1-1}\wedge f_2^{j_2}\wedge f_3^{j_3+1}} \frac{f_1^{j_1}\wedge f_2^{j_2-1}\wedge f_3^{j_3+1}}{f_1^{j_1}\wedge f_2^{j_2+1}\wedge f_3^{j_3-1}}  \frac{f_1^{j_1-1}\wedge f_2^{j_2+1}\wedge f_3^{j_3}}{f_1^{j_1+1}\wedge f_2^{j_2-1}\wedge f_3^{j_3}}.$$
\item Let $\mathcal F(\mathbb K^d)^{[4]}$ denote the set of quadruples $({\bf G},{\bf H})=(G_1, G_2, H_1,H_2)$ of flags in $\mathcal{F}(\mathbb{K}^d)$ such that $G_1$, $G_2$, and $H_k$ are in general position for both $k=1,2$. Then, let ${\bf i}=(i_1,i_2)$ be a pair of positive integers that sum to $d$. The ${\bf i}$--\textit{double ratio} of $({\bf G},{\bf H})$ is 
$$D^{\bf i}({\bf G},{\bf H}):= -\frac{g_1^{i_1}\wedge g_2^{i_2-1}\wedge h_1^{1}}{g_1^{i_1}\wedge g_2^{i_2-1}\wedge h_2^{1}} \frac{g_1^{i_1-1}\wedge g_2^{i_2}\wedge h_2^{1}}{g_1^{i_1-1}\wedge g_2^{i_2}\wedge h_1^{1}}.$$
Notice that $D^{\bf i}({\bf G}, {\bf H})$ depends only on the $1$--dimensional part of the flags $H_1, H_2$.
\end{itemize}

It is straightforward to verify that these invariants are well-defined, and take values in $\mathbb{K}-\{0\}$. As such, we may define
\begin{align}\label{eqn: taudef}\tau^{\bf j}({\bf F}):=\log T^{\bf j}({\bf F})\in\mathbb{R}+i\mathbb{R}/2\pi\mathbb{Z}\end{align}
and 
\begin{align}
\label{eqn: sigmadef}\sigma^{\bf i}({\bf G},{\bf H}):=\log D^{\bf i}({\bf G},{\bf H})\in\mathbb{R}+i\mathbb{R}/2\pi\mathbb{Z}.
\end{align}

The next proposition records some basic symmetries of the invariants $\tau^{\bf j}$ and $\sigma^{\bf i}$ which are straightforward consequences of the definitions. 
For any triple ${\bf F}=(F_1,F_2,F_3)$ of flags in $\mathcal{F}(\mathbb{K}^d)$, denote
\[\quad\wh{\bf F}:=(F_2,F_1,F_3),\quad{\bf F}_+:=(F_2,F_3,F_1),\quad{\bf F}_-:=(F_3,F_1,F_2).\] 
Similarly, for any pair ${\bf H}=(H_1,H_2)$ of flags in $\mathcal{F}(\mathbb{K}^d)$, denote 
\[\wh{\bf H}:=(H_2,H_1).\]
The properties of double and triple ratios, originally due to Goncharov \cite{goncharov94} and Fock-Goncharov \cite{fock-goncharov-1}, that will be used throughout the exposition are summarized in the following statement.
\begin{proposition}\label{prop_invariants}\
\begin{enumerate}
	\item If ${\bf j}$ is a triple of positive integers that sum to $d$ and ${\bf F}\in\mathcal F(\mathbb K^d)^{3}$ is a triple of flags in general position, then 
	\[\tau^{\bf j}({\bf F}) =\tau^{{\bf j}_+}({\bf F}_+)=\tau^{{\bf j}_-}({\bf F}_-)= - \tau^{\wh{\bf j}}(\wh{\bf F}).\]
	
	\item Let ${\bf F} =(F_1,F_2,F_3)$ and ${\bf F}' = (F_1',F_2',F_3') \in\mathcal F(\mathbb K^d)^3$ be triples of flags in general position. There exists a projective transformation $A \in \mathsf{PGL}_d(\mathbb{K})$ such that
	\[
	(A \, F_1, A \, F_2, A \, F_3) = (F_1',F_2',F_3')
	\]
	if and only if $\tau^{\bf j}({\bf F}) = \tau^{\bf j}({\bf F}')$ for every triple ${\bf j}$ of positive integers that sum to $d$.
	\item If ${\bf i}$ is a pair of positive integers that sum to $d$ and $({\bf G},{\bf H})\in\mathcal F(\mathbb K^d)^{[4]}$, then 
	\[\sigma^{\bf i}({\bf G},{\bf H}) = - \sigma^{\bf i}({\bf G},\wh{\bf H}) = - \sigma^{\wh{\bf i}}(\wh{\bf G},{\bf H}) = \sigma^{\wh{\bf i}}(\wh{\bf G},\wh{\bf H}).\]
	\item Let $({\bf G}, {\bf H}) = (G_1,G_2,H_1,H_2)$ and $({\bf G}', {\bf H}') = (G_1',G_2',H_1',H_2') \in \mathcal{F}(\mathbb{K})^{[4]}$. Then there exists a projective transformation  $A \in \mathsf{PGL}_d(\mathbb{K})$ such that
	\[
	(A \, G_1, A \, G_2, A \, H_1^1, A \, H_2^1) = (G_1', G_2', (H_1')^1, (H_2')^1)
	\]
	if and only if $\sigma^{\bf i}({\bf G}, {\bf H}) = \sigma^{\bf i}({\bf G}, {\bf H})$ for every pair ${\bf i}$ of positive integers that sum to $d$.
\end{enumerate}
\end{proposition}
(For a proof of Proposition \ref{prop_invariants}, see e.g. \cite[Lemmas 2.2.8, 2.3.7]{tengren_thesis}.)

\subsubsection{Positivity of Hitchin representations}
A seminal result of Fock and Goncharov \cite{fock-goncharov-1} characterizes Hitchin representations in terms of a certain positivity property of their limit maps, which can be stated in terms of positivity of triple and double ratios. We use the positivity of the limit map of Hitchin representations implicitly via results of Bonahon and Dreyer \cite{BoD}, but also to show existence of bending maps in Section \ref{sec:existence}.

\begin{definition}\label{def: positivity via invariants}
A map $\xi:\mathbb{S}^1\to\mathcal{F}(\mathbb{R}^d)$ is \emph{positive} if for any $\ell\geq 3$, any cyclically ordered tuple of points $x_1\prec\dots\prec x_l$ is mapped to a tuple of flags $(\xi(x_1),\dots,\xi(x_\ell))$ such that
\begin{itemize}
	\item the triple of flags $(\xi(x_{k_1}),\xi(x_{k_2}),\xi(x_{k_3}))$ is in general position and $T^{\bf j}(\xi(x_{k_1}),\xi(x_{k_2}),\xi(x_{k_3}))>0$ for all triples ${\bf j}$ of positive integers that sum to $d$ and all pairwise distinct $k_1,k_2,k_3\in\{1,\dots,\ell\}$,
	\item $D^{\bf i}(\xi(x_{k_1}),\xi(x_{k_2}),\xi(x_{k_3}),\xi(x_{k_4})) >0$ for all pairs ${\bf i}$ of positive integers that sum to $d$ and all $k_1<k_2<k_3<k_4$, where $<$ denotes the cyclic order on $\mathbb{Z}/\ell\mathbb{Z}$.
\end{itemize}
\end{definition}

\begin{theorem}[{Fock-Goncharov \cite[Theorem 1.15 and Theorem 9.1]{fock-goncharov-1}}]\label{thm: FG}
A representation $\rho:\Gamma\to\mathsf{PGL}_d(\mathbb{R})$ is a $d$--Hitchin representation if and only if it admits a $\rho$--equivariant, continuous, positive map $\xi:\partial\wt S\to\mathcal{F}(\mathbb{R}^d)$.
\end{theorem}

\subsubsection{The slithering map of a $d$--pleated surface}

The idea of a \emph{slithering map} was first introduced by Bonahon and Dreyer \cite{BoD} to parameterize the space of Hitchin representations. Later, in Sections \ref{sec:productable}, \ref{sec: slithering}, and \ref{sec:bending}, we show that such slithering maps exist in a much more general setting.

The existence of slithering maps rely on a regularity property of the $\lambda$--limit map of a $d$--pleated surface. We saw in Section \ref{ssec:BorelAnosov} that, unlike the case of Borel Anosov representations, the $\lambda$--limit maps of $\lambda$--Borel Anosov representations are not necessarily continuous, but only $\lambda$--continuous. However, just like for Borel Anosov representations, the contracting dynamics condition (Condition (2) of Definition~\ref{def:AnosovBorel}) in the definition of $\lambda$--Borel Anosov representations implies that their $\lambda$--limit maps satisfy an adapted notion of H\"older-continuity that we now describe precisely. 

Recall that $\wt\Lambda$ denotes the set of leaves of $\wt\lambda$, $\wt\Lambda^o$ denotes the set of leaves of $\wt\Lambda$ equipped with an orientation, and $\pi_{\wt\Lambda}:\wt\Lambda^o\to\wt\Lambda$ is the natural two-to-one map that forgets the orientation. Furthermore, $\mathsf{g}_1,\mathsf{g}_2\in\widetilde\Lambda^o$ are oriented in parallel if either
\[\mathsf{g}_2^+ \preceq \mathsf{g}_1^+ \prec \mathsf{g}_1^-\preceq \mathsf{g}_2^- \prec \mathsf{g}_2^+\quad\text{ or }\quad \mathsf{g}_1^+ \preceq \mathsf{g}_2^+ \prec \mathsf{g}_2^-\preceq \mathsf{g}_1^- \prec \mathsf{g}_1^+,\]
and $d_\infty$ is a metric on $\widetilde\Lambda^o$ induced by a Riemannian metric in $\partial\widetilde S$ (see Section~\ref{ssec:geodlam}). Let $d_{\mathcal{F}}$ be a Riemannian metric on the space of flags $\mathcal{F}(\mathbb{K}^d)$. 
As before, given $\xi:\partial\wt\lambda\to\mathcal{F}(\mathbb{K}^d)$, we will slightly abuse notation and let $\xi:\wt\Lambda^o\to\mathcal{F}(\mathbb{K}^d)^2$ denote the associated map given by $\xi(\mathsf{g})=(\xi(\mathsf{g}^+),\xi(\mathsf{g}^-))$.

\begin{definition}\label{def: lambda continuous}
A map $\xi:\partial\wt\lambda\to\mathcal{F}(\mathbb{K}^d)$ is \emph{locally $\lambda$--H\"older continuous} if its associated map $\xi:\wt\Lambda^o\to\mathcal{F}(\mathbb{K}^d)^2$ is locally H\"older continuous, i.e. for every oriented leaf $\mathsf{h}\in\wt\Lambda^o$, there is an open set $U\subset\widetilde\Lambda^o$ containing $\mathsf{h}$ and constants $A,\nu>0$ such that for all pairs of oriented in parallel leaves $\mathsf{g}_1$and $\mathsf{g}_2$ in $U$, we have
\[
d_{\mathcal{F}}(\xi(\mathsf{g}_1),\xi(\mathsf{g}_2))\leq A \, d_\infty(\mathsf{g}_1,\mathsf{g}_2)^{\nu}.
\]
\end{definition} 

Notice that since $\partial\widetilde S$ and $\mathcal{F}(\mathbb{K}^d)$ are compact, the notion of local $\lambda$--H\"older continuity for $\xi$ does not depend on the choice of Riemannian metrics on $\partial\widetilde S$ and $\mathcal{F}(\mathbb{K}^d)$. The following theorem is a consequence of the contracting property of Anosov representations, see Wang \cite[Theorem 1.2, and Remark~1.3]{wang2021anosov} for a proof.

\begin{theorem}[\cite{wang2021anosov}] \label{thm: Wang}
If $\rho:\Gamma\to\mathsf{PGL}_d(\mathbb{K})$ is $\lambda$--Borel Anosov, then the $\lambda$--limit map $\xi$ of $\rho$ is locally $\lambda$--H\"older continuous.
\end{theorem}
	
Generalizing Bonahon and Dreyer \cite{BoD}, we now define the notion of a slithering map compatible with a $\lambda$--H\"older continuous map $\xi\co\partial\widetilde{\lambda}\to \mathcal F(\mathbb{K}^d)$. Informally, a slithering map compatible with $\xi$ is a certain map that assigns to every pair of leaves of $\widetilde\Lambda$ an element in $\mathsf{SL}_d(\mathbb{K})$ sending the flags associated with the endpoints of the first leaf by $\xi$ to those associated with the endpoints of the second leaf. This is a higher rank version of the map defined by the horocylic foliation when $d=2$.
	
Recall that if $g_1$ and $g_2$ are leaves of $\wt\lambda$, then $Q(g_1,g_2)$ denotes the set of leaves of $\wt\lambda$ that separate $g_1$ and $g_2$.

	\begin{definition}\label{def: slithering map}
		Let $\xi\co\partial\widetilde{\lambda}\to \mathcal F(\mathbb{K}^d)$ be $\lambda$--transverse and locally $\lambda$--H\"older continuous. A map $$\Sigma\co\wt\Lambda^2\to\mathsf{SL}_d(\mathbb{K})$$ is a \emph{slithering map compatible with $\xi$} if the following hold:
		\begin{enumerate}
			\item $\Sigma(g,g)=\id$ for all $g\in\wt\Lambda$, $\Sigma(g_1,g_2)=\Sigma(g_2,g_1)^{-1}$ for all $g_1,g_2\in\wt\Lambda$, and $\Sigma(g_1,g_2)\Sigma(g_2,g_3)=\Sigma(g_1,g_3)$ for all $g_1,g_2,g_3\in \wt\Lambda$ such that $g_2$ separates $g_1$ and $g_3$.
			\item For all leaves $g_1$ and $g_2$ of $\wt\lambda$, $\Sigma|_{Q(g_1,g_2)}$ is H\"older continuous, i.e. there are constants $A,\nu>0$ (depending on $g_1$ and $g_2$) such that 
			\[\norm{\Sigma(\ell_1,\ell_2)-\Sigma(h_1,h_2)}\leq A\,d_\infty((\ell_1,\ell_2),(h_1,h_2))^\nu\]
			for all $\ell_1,\ell_2,h_1,h_2\in Q(g_1,g_2)$. Here, $\norm{\cdot}$ is the operator norm on $\End(\mathbb{K}^d)$ induced by the standard norm on $\mathbb{K}^d$.
			\item If $g_1,g_2\in\wt\Lambda$ share an endpoint, then $\Sigma(g_1,g_2)$ is unipotent. 
			\item If $\mathsf{g}_1, \mathsf{g}_2\in\wt\Lambda^o$ are oriented in parallel and $\pi_{\wt\Lambda}(\mathsf{g}_j) = g_j$ for $j = 1, 2$, then $\Sigma(g_1,g_2)$ sends $\xi(\mathsf{g}_2)$ to $\xi(\mathsf{g}_1)$.
		\end{enumerate}
		We write $\Sigma_{g_1,g_2}:=\Sigma(g_1,g_2)$ when convenient.
	\end{definition}
	
	In Section \ref{sec: slithering} we prove the following key result:
	
	\begin{theorem}\label{thm: slitherable and slithering}
		If $\xi\co\partial\wt\lambda\to\mathcal{F}(\mathbb{K}^d)$ is $\lambda$--transverse and locally $\lambda$--H\"older continuous, then it admits a unique compatible slithering map $\Sigma\co \wt\Lambda^2\to\mathsf{SL}_d(\mathbb{K})$. Furthermore, if there is a representation $\rho\co\Gamma\to\mathsf{PGL}_d(\mathbb{K})$ such that $\xi$ is $\rho$--equivariant, then $\Sigma$ is also $\rho$--equivariant.
	\end{theorem}
	
	In particular, Theorem \ref{thm: slitherable and slithering} implies that the $\lambda$--limit maps of $\lambda$--Borel Anosov representations admit a unique compatible slithering map that is $\rho$--equivariant.
	
	\subsubsection{Shear-bend coordinates in \texorpdfstring{$\mathsf{PGL}_d(\mathbb{C})$}{PSLd(C)}}\label{sec:shear_bend}
	
	Using slithering maps, we describe the main theorem of this paper, which is a parameterization of the space $\mathfrak{R}_d(\lambda)$ of $d$--pleated surfaces with pleating locus $\lambda$ by an explicitly described subset of the group $\mathcal{Y}_d(\lambda;\mathbb{C} /2\pi i\mathbb{Z})$ of $\lambda$--cocyclic pairs of dimension $d$ with values in $\mathbb{C} /2\pi i\mathbb{Z}$, see Theorem \ref{thm: main}. As we mentioned in the introduction, this is a generalization of the main result in Bonahon and Dreyer \cite{BoD}, who proved an analogous parameterization of the $d$--Hitchin component by a subset of $\mathcal{Y}_d(\lambda;\mathbb{R})$, see Theorem~\ref{thm: BD par}. 	
	
	First, we describe how one assigns a $\lambda$--cocyclic pair $(\alpha_\rho,\theta_\rho)\in\mathcal{Y}_d(\lambda;\mathbb{C} /2\pi i\mathbb{Z})$ to every $\rho\in\mathcal{R}_d(\lambda)$. This is a direct generalization of the way Bonahon and Dreyer \cite{BoD} assigned a $\lambda$--cocyclic pair in $\mathcal{Y}_d(\lambda;\mathbb{R})$ to every $d$--Hitchin representation.
	
	Recall that $\wt\Delta^{2*}$ denotes the set of distinct pairs of plaques of $\wt\lambda$ and $\mathcal{A}$ is the set of pairs of positive integers that sum to $d$. Given a locally $\lambda$--H\"older continuous, $\lambda$--transverse, and $\lambda$--hyperconvex map 
	\[\xi:\partial\wt\lambda\to\mathcal{F}(\mathbb{C}^d),\] 
	let $\Sigma$ be the slithering map compatible with $\xi$ (see Theorem \ref{thm: slitherable and slithering}). Then define
	\[\alpha_\xi \co \wt\Delta^{2*} \times \mathcal{A} \to \mathbb{C}/2\pi i\mathbb{Z}\]
	by 
	$$\alpha_\xi ({\bf T}, {\bf i})=\alpha_\xi^{\bf i} ({\bf T}) := \sigma^{{\bf i}}\left(\xi(y_2), \xi(y_1), \xi(y_3), \Sigma(g_1,g_2)\cdot\xi(z)\right),$$
	where $\sigma^{\bf i}$ is the logarithm of the double ratio (see Equation \eqref{eqn: sigmadef}), $(g_1,g_2)$ is the separating pair of edges for the pair of plaques ${\bf T}=(T_1,T_2)$, $z$ is the vertex of $T_2$ that is not an endpoint of $g_2$, and $(y_1,y_2,y_3)$ are the vertices $T_1$, enumerated so that $y_1$ and $y_2$ are the endpoints of $g_1$ and $y_1\prec y_2\prec y_3$.

	Recall that $\wt\Delta^o$ denotes the set of triples of points in $\partial\wt\lambda$ that are vertices of a plaque of $\wt\lambda$, and $\mathcal{B}$ is the set of triples of positive integers that sum to $d$. Then let
	\[\theta_\xi \co \wt\Delta^o  \times \mathcal{B} \to \mathbb{C}/2\pi i\mathbb{Z}\]
	be the map defined by
	$$\theta_\xi ({\bf x}, {\bf j})=\theta_\xi^{\bf j}({\bf x}) := \tau^{\bf j}\left(\xi({\bf x})\right),$$
	where, $\tau^{\bf j}$ is the logarithm of the triple ratio (see Equation \eqref{eqn: taudef}). This is well-defined because the triple of flags $\xi({\bf x})$ is in general position. 
	
	In the case where there is some $d$--pleated surface $\rho\in\mathcal{R}_d(\lambda)$ such that $\xi$ is the $\lambda$--limit map of $\rho$, we denote 
	\[\alpha_\rho({\bf T},{\bf i})=\alpha_\rho^{\bf i}({\bf T}):=\alpha_\xi^{\bf i}({\bf T})\,\,\text{ and }\,\,\theta_\rho({\bf x},{\bf j})=\theta^{\bf j}_\rho({\bf x}):=\theta^{\bf j}_\xi({\bf x})\]
	for all ${\bf i}\in\mathcal{A}$, ${\bf j}\in\mathcal{B}$, ${\bf T}\in\wt\Delta^{2*}$, and ${\bf x}\in\wt\Delta^o$.

	The following proposition verifies that $(\alpha_\rho,\theta_\rho)$ defined above is indeed a $\lambda$--cocyclic pair of dimension $d$ with values in $\mathbb{C}/2\pi i\mathbb{Z}$. The proof adapts ideas from Bonahon-Dreyer \cite[Section 5.2]{BoD} to our setting.
	
	\begin{proposition}\label{prop: shear bend map}
		Let $\xi:\partial\wt\lambda\to\mathcal{F}(\mathbb{C}^d)$ a locally $\lambda$--H\"older continuous, $\lambda$--transverse, and $\lambda$--hyperconvex map. Then the following hold:
		\begin{enumerate}
			\item For all pairs of distinct plaques ${\bf T}=(T_1,T_2)\in \wt\Delta^{2*}$ and all pairs ${\bf i}=(i_1,i_2)\in \mathcal{A}$ of positive integers that sum up to $d$, 
			$$\alpha_{\xi}\left({\bf T}, {\bf i}\right) = \alpha_{\xi}(\widehat{\bf T}, \widehat{\bf i}),$$ 
			where $\widehat{\bf T}:=(T_2,T_1)$ and $\widehat{\bf i}:=(i_2,i_1)$.
			\item For all triples ${\bf x}:=(x_1, x_2, x_3)\in \wt\Delta^o$ that are the vertices of a plaque of $\wt \lambda$ and all triples ${\bf j}:=(j_1, j_2, j_3) \in \mathcal{B}$ of positive integers that sum up to $d$, 
			$$\theta_{\xi}\left({\bf x}, {\bf j}\right) = \theta_{\xi}\left({\bf x}_+, {\bf j}_+\right)= \theta_{\xi}\left({\bf x}_-, {\bf j}_-\right)=-\theta_{\xi}(\widehat{\bf x},\widehat{\bf j}),$$
			where ${\bf x}_+:=(x_2,x_3,x_1)$, ${\bf x}_-:=(x_3,x_1,x_2)$, $\wh{\bf x}:=(x_2,x_1,x_3)$, ${\bf i}_+:=(i_2,i_3,i_1)$, ${\bf i}_-:=(i_3,i_1,i_2)$, and $\wh{\bf i}:=(i_2,i_1,i_3)$.
			\item Let $T_1$, $T_2$, and $T$ be pairwise distinct plaques of $\wt\lambda$ such that $T$ separates $T_1$ and $T_2$, and let ${\bf x}_T = (x_{T,1}, x_{T,2}, x_{T,3})$ be a coherent labeling of the vertices of $T$ with respect to $(T_1,T_2)$. Then for all ${\bf i}\in \mathcal{A}$, we have that 
			\[
			\alpha_\xi((T_1,T_2), {\bf i}) = \alpha_\xi((T_1,T), {\bf i}) + \alpha_\xi((T,T_2), {\bf i}) + \sum_{{\bf j} \in \mathcal{B} : j_2 = i_1} \theta_\xi({\bf x}_T,{\bf j})
			\]	
			if $x_{T,3}\prec x_{T,2}\prec x_{T,1}$, and
			\[
			\alpha_\xi((T_1,T_2), {\bf i}) = \alpha_\xi((T_1,T), {\bf i}) + \alpha_\xi((T,T_2), {\bf i}) - \sum_{{\bf j} \in \mathcal{B} : j_2 = i_2} \theta_\xi({\bf x}_T,{\bf j})
			\]
			if $x_{T,1}\prec x_{T,2}\prec x_{T,3}$.
			\item If $\rho\in \mathcal{R}_d(\lambda)$ is a $d$--pleated surface with pleating locus $\lambda$, then $(\alpha_\rho,\theta_\rho)$ is a $\lambda$--cocyclic pair of dimension $d$ with values in $\mathbb{C}/2\pi i\mathbb{Z}$.
		\end{enumerate}	
	\end{proposition}
	
	\begin{proof}	
		{\em Part (1).} Let $(g_1,g_2)$ be a separating pair of edges for ${\bf T}$. Then for both $\ell=1,2$, let $(y_{\ell,1},y_{\ell,2},y_{\ell,3})$ be the vertices $T_\ell$, enumerated so that $y_{\ell,1}$ and $y_{\ell,2}$ are the endpoints of $g_\ell$, and $y_{\ell,1}\prec y_{\ell,2}\prec y_{\ell,3}$. Let $\Sigma$ be the slithering map compatible with $\xi$, and observe that
		\[\Sigma(g_2,g_1)\cdot(\xi(y_{1,1}),\xi(y_{1,2}))=(\xi(y_{2,2}),\xi(y_{2,1})).\] 
		Thus, we have
		\begin{align*}
			\alpha_\xi^{\bf i}\left({\bf T}\right) &=\sigma^{{\bf i}}\left(\xi(y_{1,2}), \xi(y_{1,1}), \xi(y_{1,3}), \Sigma\left(g_1, g_2\right)\cdot\xi(y_{2,3})\right)\\
			&=\sigma^{{\bf i}}\left(\xi(y_{2,1}), \xi(y_{2,2}), \Sigma\left(g_2,g_1\right)\cdot\xi(y_{1,3}), \xi(y_{2,3})\right)\\
			&=\sigma^{\wh{\bf i}}\left(\xi(y_{2,2}),\xi(y_{2,1}),  \xi(y_{2,3}), \Sigma\left(g_2,g_1\right)\cdot\xi(y_{1,3})\right)= \alpha_\xi^{\wh{\bf i}}(\wh{\bf T})
		\end{align*}	
		for all ${\bf i}\in\mathcal{A}$, where the second and third equalities are parts (4) and (3) of Proposition \ref{prop_invariants} respectively.
		
		{\em Part (2).} This follows immediately from Part (1) of Proposition \ref{prop_invariants}.
		
		{\em Part (3).} We will only prove the case when $x_{T,3}\prec x_{T,2}\prec x_{T,1}$ since the case where $x_{T,1}\prec x_{T,2}\prec x_{T,3}$ is similar.
		
		Let $h_1$ and $h_2$ be the edges of $T$ such that $h_1<h_2$ with respect to $\left(g_1, g_2\right)$. For $\ell=1,2$, choose vectors $b_\ell^1\in \xi(y_{\ell,3})^1$. Also, for all $\ell=1,2,3$ and $k\in\{1,\dots,d-1\}$, choose non-zero vectors 
		\[a_\ell^{k}\in\mathsf{\Lambda}^k\xi(x_{T,\ell})^k\] 
		such that $\Sigma(h_1,h_2)\cdot a_1^{k}=a_3^{k}$.
		Since $\Sigma(h_1,h_2)$ is a unipotent endomorphism that fixes the flag $\xi(x_{T,2})$, it follows that $\Sigma(h_1,h_2)\cdot a_2^{k}=a_2^{k}$ for all $k$.
		
		For any ${\bf i}=(i_1,i_2)\in \mathcal{A}$,
		\begin{align*}
			\alpha^{\bf i}_\xi(T_1, T)&=\sigma^{{\bf i}}\left(\xi(y_{1,2}), \xi(y_{1,1}), \xi(y_{1,3}), \Sigma\left(g_1, h_1\right)\cdot\xi(x_{T,3})\right)\\
			&=\sigma^{{\bf i}}\left(\xi(x_{T,2}), \xi(x_{T,1}), \Sigma\left(h_1,g_1\right)\cdot\xi(y_{1,3}), \xi(x_{T,3})\right)\\
			&=
			\log\left(-\frac{a_2^{i_1}\wedge a_1^{i_2-1}\wedge  \Sigma\left(h_1,g_1\right)\cdot b_1^{1}}{a_2^{i_1}\wedge a_1^{i_2-1}\wedge  a_3^{1}}\frac{a_2^{i_1-1}\wedge a_1^{i_2}\wedge  a_3^{1}}{a_2^{i_1-1}\wedge a_1^{i_2}\wedge  \Sigma\left(h_1,g_1\right)\cdot b_1^{1}}\right)\\
			&=
			\log\left(-\frac{a_2^{i_1}\wedge a_3^{i_2-1}\wedge  \Sigma(h_2, g_1)\cdot b_1^{1}}{a_2^{i_1}\wedge a_1^{i_2-1}\wedge  a_3^{1}}\frac{a_2^{i_1-1}\wedge a_1^{i_2}\wedge  a_3^{1}}{a_2^{i_1-1}\wedge a_3^{i_2}\wedge  \Sigma(h_2, g_1)\cdot b_1^{1}}\right) .
		\end{align*}
		For the last equality, we apply the unipotent transformation $\Sigma(h_2, h_1)\in \mathsf{SL}_d(\mathbb{C})$ to the $d$--forms on the top left and the bottom right of the double ratio. Similarly, we have
		\begin{align*}
			\alpha_\xi^{\bf i}(T, T_2)
			&=
			\log\left(-\frac{a_2^{i_1}\wedge a_3^{i_2-1}\wedge  a_1^{1}}{a_2^{i_1}\wedge a_3^{i_2-1}\wedge  \Sigma(h_2, g_2)\cdot b_2^{1}}\frac{a_2^{i_1-1}\wedge a_3^{i_2}\wedge  \Sigma(h_2, g_2)\cdot b_2^{1}}{a_2^{i_1-1}\wedge a_3^{i_2}\wedge  a_1^{1}}\right).
		\end{align*}
		
		On the other hand, 
		\begin{align*}
			\sum_{\{{\bf  j}\in\mathcal{B}\colon j_2 = i_1\}} \theta_\xi^{\bf j}\left({\bf x}_T\right)=&
			\log\left( \frac{a_1^{i_2}\wedge a_2^{i_1}}{a_2^{i_1}\wedge a_3^{i_2}}\frac{a_1^{i_2-1}\wedge a_2^{i_1}\wedge a_3^{1}}{a_1^{1}\wedge a_2^{i_1}\wedge a_3^{i_2-1}}\frac{a_1^{1}\wedge a_2^{i_1-1}\wedge a_3^{i_2}}{a_1^{i_2}\wedge a_2^{i_1-1}\wedge a_3^{1}} \frac{a_2^{i_1+1}\wedge a_3^{i_2-1}}{a_1^{i_2-1}\wedge a_2^{i_1+1}}\right)\\
			=&\log\left((-1)^{d-1} \frac{a_1^{i_2-1}\wedge a_2^{i_1}\wedge a_3^{1}}{a_1^{1}\wedge a_2^{i_1}\wedge a_3^{i_2-1}}\frac{a_1^{1}\wedge a_2^{i_1-1}\wedge a_3^{i_2}}{a_1^{i_2}\wedge a_2^{i_1-1}\wedge a_3^{1}}
			\right).
		\end{align*}   
		The first equality holds by the cancellations that occur between the triple ratios involved in the sum, 
		and the second equality holds because for all ${\bf i}\in\mathcal{A} $
		\[(-1)^{i_1i_2}a_2^{i_1}\wedge a_3^{i_2}=a_3^{i_2}\wedge a_2^{i_1}=\Sigma(h_2, h_1)\cdot (a_1^{i_2}\wedge a_2^{i_1})=a_1^{i_2}\wedge a_2^{i_1}.\] 
		Combining the relations described above we deduce that
		\begin{align*}
			&\,\alpha_\xi^{\bf i}(T_1, T) + \alpha_\xi^{\bf i}(T, T_2) + \sum_{\{{\bf  j}\in\mathcal{B}\colon j_2 = i_1\}} \theta_\xi^{\bf j}\left({\bf x}_T\right)\\
			=&\,\log\left(-\frac{a_2^{i_1}\wedge a_3^{i_2-1}\wedge  \Sigma(h_2, g_1)\cdot b_1^{1}}{ a_2^{i_1}\wedge  a_3^{i_2-1}\wedge\Sigma(h_2, g_2)\cdot b_2^{1}}\frac{a_2^{i_1-1}\wedge a_3^{i_2}\wedge  \Sigma(h_2, g_2)\cdot b_2^{1}}{a_2^{i_1-1}\wedge a_3^{i_2}\wedge  \Sigma(h_2, g_1)\cdot b_1^{1}}\right)\\
			=&\,\,\sigma^{{\bf i}}\left(\xi(x_{T,2}), \xi(x_{T,3}), \Sigma(h_2, g_1)\cdot \xi(z_1), \Sigma(h_2, g_2)\cdot\xi(z_2)\right)=\alpha_\xi^{\bf i}\left({\bf T}\right).
		\end{align*}
		
		{\rm Part (4).} Let $\xi$ be the $\lambda$--limit map of $\rho$. Parts (1)--(3) imply that $(\alpha_\rho, \theta_\rho)$ satisfies conditions (1), (2), and (5) of Definition~\ref{def_cocycle}. By the $\rho$--equivariance of $\xi$, $(\alpha_\rho, \theta_\rho)$ also satisfies conditions (3) and (4) of Definition~\ref{def_cocycle}.
	\end{proof}

	\subsection{Parameterizing \texorpdfstring{$d$}{d}--pleated surfaces using \texorpdfstring{$\lambda$}{l}--cocyclic pairs}\label{sec: statement of main theorem}
	In the case when $\rho\in\mathcal{R}_d(\lambda)$ is a $d$--Hitchin representation, observe from Theorem \ref{thm: FG}, that $(\alpha_\rho,\theta_\rho)\in\mathcal{Y}_d(\lambda;\mathbb{R})$. Furthermore, if $\rho$ and $\rho'$ are $d$--Hitchin representations that are conjugate, then $(\alpha_\rho,\theta_\rho)=(\alpha_{\rho'},\theta_{\rho'})$. As such, we may define a map between the $d$--Hitchin component and the space of $\lambda$--cocyclic pairs of dimension $d$ with values in $\mathbb{R}$
	\[\mathfrak{s}_d:{\rm Hit}_d(S)\to\mathcal{Y}_d(\lambda;\mathbb{R})\]
	by $\mathfrak s_d([\rho])=(\alpha_\rho,\theta_\rho)$. Bonahon and Dreyer then proved the following theorem.
	
	\begin{theorem}[Bonahon-Dreyer \cite{BoD}]\label{thm: BD par}
		The map $\mathfrak s_{d}$ is a homeomorphism  onto its image with real-analytic inverse. 
		The image is an open polyhedral cone $\mathcal{C}_d(\lambda)$ contained in $\mathcal{Y}_d(\lambda;\mathbb{R})$.
	\end{theorem}
	We describe the cone $\mathcal{C}_d(\lambda)$ explicitly in Section \ref{length} (see Equation \eqref{polytope description}) using a homological interpretation of $\lambda$--cocyclic pairs and the length functions introduced in Section \ref{subsec:length_functions}.

As observed at the end of Section \ref{ssec:Hitchin}, ${\rm Hit}_d(S)$ is a real-analytic manifold, so the regularity claim in Theorem \ref{thm: BD par} makes sense. However, it is not a priori clear that $\mathfrak{R}_d(\lambda):=\mathcal{R}_d(\lambda)/\mathsf{PGL}_d(\mathbb{C})$ is even a Hausdorff topological space. In fact, using Theorem~\ref{thm: main}, in Section \ref{sec: nonHausdorff} we will show that $\mathfrak{R}(\lambda,3)$ intersects the locus of non-Hausdorff points in $\Hom(\Gamma,\mathsf{PGL}_3(\mathbb{C}))/\mathsf{PGL}_3(\mathbb{C})$. Namely, there exists $\rho$ in $\mathcal{R}_d(\lambda)$ whose $\mathsf{PGL}_3(\mathbb{C})$--orbit is not a closed subset of $\Hom(\Gamma,\mathsf{PGL}_3(\mathbb{C}))$.

On the other hand, by Proposition \ref{prop: pleated surfaces smooth}, $\mathcal{R}_d(\lambda)$ is a complex manifold contained in $\Hom(\Gamma,\mathsf{PGL}_3(\mathbb{C}))$. Thus, to state our main theorem, we let
\[\mathcal{N}(\mathbb{C}^d):=\{(F_1,F_2,p)\in\mathcal{F}(\mathbb{C}^d)^2\times\mathbb{P}(\mathbb{C}^d):F_1^{k-1}+F_2^{d-k}+p=\mathbb{C}^d\text{ for all }k=1,\dots,d\}\]
and define for every triple ${\bf x}=(x_1,x_2,x_3)\in\wt\Delta^o$, the map
\begin{align}\label{eqn: main thm}
	\begin{split}\mathcal{sb}_{d,{\bf x}}:\mathcal{R}_d(\lambda)&\to \mathcal{Y}_d(\lambda;\mathbb{C}/2\pi i\mathbb{Z})\times\mathcal{N}(\mathbb{C}^d),\\
		\rho&\mapsto\left((\alpha_\rho,\theta_\rho),(\xi(x_1),\xi(x_2),\xi(x_3)^1)\right),
	\end{split}
\end{align}
where $\xi$ is the $\lambda$--limit map of $\rho$. In other words, the map $\mathcal{sb}_{d,{\bf x}}$ depends on a normalization of the image of a fixed base plaque with (ordered) vertices ${\bf x}$. Note that the space $\mathcal{N}(\mathbb{C}^d)$
of projective frames in $\mathbb{C}^d$ is (non-canonically) isomorphic as $\mathsf{PGL}_d(\mathbb{C})$--spaces to $\mathsf{PGL}_d(\mathbb{C})$. 

As before, note that if $\rho$ and $\rho'$ are conjugate representations in $\mathcal{R}_d(\lambda)$, then $(\alpha_\rho,\theta_\rho)=(\alpha_{\rho'},\theta_{\rho'})$. Hence, $\mathcal{sb}_{d,{\bf x}}$ is equivariant with respect to the $\mathsf{PGL}_d(\mathbb{C})$--action on $\mathcal{R}_d(\lambda)$ by conjugation and the $\mathsf{PGL}_d(\mathbb{C})$--action on $\mathcal{Y}_d(\lambda;\mathbb{C}/2\pi i\mathbb{Z})\times\mathcal{N}(\mathbb{C}^d)$ by 
\[g\cdot ((\alpha,\theta),(F_1,F_2,p))= ((\alpha,\theta),(g\cdot F_1,g\cdot F_2,g\cdot p)).\]
Since the $\mathsf{PGL}_d(\mathbb{C})$--action on $\mathcal{N}(\mathbb{C}^d)$ is simply transitive, $\mathcal{sb}_{d,{\bf x}}$ descends to a homeomorphism 
\[\mathfrak{sb}_d:\mathfrak{R}_d(\lambda)\to \mathcal{Y}_d(\lambda;\mathbb{C}/2\pi i\mathbb{Z})\] 
which we call {\em the shear-bend map}. Then, our main theorem is as follows:

\begin{theorem}\label{thm: main} For any triple ${\bf x}\in\wt\Delta^o$ of vertices of a plaque of $\wt \lambda$, the map $\mathcal{sb}_{d,\bf x}$ is a biholomorphism onto its image, which is
	\[\left(\mathcal{C}_d(\lambda)+i\mathcal{Y}_d(\lambda;\mathbb{R}/2\pi\mathbb{Z})\right)\times\mathcal{N}(\mathbb{C}^d),\]
	where $\mathcal{C}_d(\lambda)$ is the image of the map $\mathfrak s_d$, see Theorem \ref{thm: BD par}.
	In particular, the shear-bend map $\mathfrak{sb}_d$ is a homeomorphism onto $\mathcal{C}_d(\lambda)+i\mathcal{Y}_d(\lambda;\mathbb{R}/2\pi\mathbb{Z})$, and it restricts to $\mathfrak{s}_d$ on the Hitchin component ${\rm Hit}_d(S)$.
\end{theorem}

As an immediate consequence of Theorem \ref{thm: main}, we see that $\mathfrak{R}_d(\lambda)$ is a Hausdorff topological space. 
This means that the $\mathsf{PGL}_d(\mathbb{C})$--action on $\mathcal{R}_d(\lambda)$ is proper, and that the complex structure on $\mathcal{R}_d(\lambda)$ descends to the structure of a complex manifold on $\mathfrak{R}_d(\lambda)$.

\subsection{2--pleated surfaces are classical pleated surfaces}\label{sec: application 2--pleated}

We now discuss the relations between Theorem \ref{thm: main} in the case $d = 2$ and Bonahon's shear-bend parameterization of classical pleated surfaces in $\mathbb{H}^3$. Recall from Proposition \ref{classical_pleated} that every classical pleated surface $(\rho,f)$ with pleating locus $\lambda$ determines a unique $2$--pleated surface $(\rho,\xi)$ with same holonomy and pleating locus. Moreover, $\xi$ and $f$ are related by the following property: for every oriented leaf $\mathsf{g} \in \widetilde{\Lambda}^o$, the geodesic $f(\mathsf{g})$ in $\mathbb{H}^3$ has forward and backward endpoints at infinity $\xi(\mathsf{g}^+), \xi(\mathsf{g}^-)$, respectively. We denote by
\[
\mathfrak{sb}_B : \mathfrak{R}(\lambda) \longrightarrow \mathcal{Y}(\lambda,2;\mathbb{C}/2 \pi i \mathbb{Z}).
\]
the Bonahon's shear-bend parameterization of the space $ \mathfrak{R}(\lambda)$ of classical pleated surfaces with pleating locus $\lambda$ (see Section \ref{intro}, and \cite[Theorem~D]{bonahon-toulouse}). The following proposition combined with Proposition \ref{classical_pleated} shows the equivalence between Theorem \ref{thm: main} and Bonahon's original work in the case $d=2$.

\begin{proposition}\label{prop:2-pleated is pleated}
	For every $2$--pleated surface $(\rho,\xi)$ with pleating locus $\lambda$, there exists a $\rho$--equivariant map $f : \widetilde{S} \to \mathbb{H}^3$ such that $(\rho,f)$ is a classical pleated surface with pleating locus $\lambda$ such that $\mathfrak{sb}_B(\rho,f) = \mathfrak{sb}_2(\rho,\xi)$.
\end{proposition}

\begin{proof}
	Let $(\alpha_\rho,\theta_\rho)= \mathfrak{sb}_2(\rho,\xi) \in \mathcal{Y}(\lambda,2;\mathbb{C}/2 \pi i \mathbb{Z})$. Since $d=2$, the map $\theta_\rho$ is trivial, see Remark \ref{rmk: lambda cocyclic of dim 2}, thus we will simplify notation and write $\alpha_\rho= \mathfrak{sb}_2(\rho,\xi)$. By Theorem \ref{thm: main} and \cite[Theorem D]{bonahon-toulouse}, the maps $\mathfrak{sb}_B$ and $\mathfrak{sb}_2$ have the same image inside $\mathcal{Y}(\lambda,2;\mathbb{C}/2 \pi i \mathbb{Z})$. In particular, the $\mathbb{R}$--valued cocycle $\text{Re}\, \alpha_\rho$ coincides with the shear(-bend) coordinates of a totally geodesic pleated surface $f_0 : \widetilde{S} \to \mathbb{H}^2 \subset \mathbb{H}^3$ with Fuchsian holonomy $\rho_0 : \Gamma \to \mathsf{PGL}_2(\mathbb{R})$. 
	
	In this case, the limit map of $\rho_0$ is a homeomorphism from $\partial \widetilde{S}$ to $\partial \mathbb{H}^2 \cong \mathbb{R}\mathrm{P}^1\subset\mathbb{C}\mathrm{P}^1$, which restricts to the $\lambda$--limit map $\xi_0 : \partial \widetilde{\lambda} \to \mathcal{F}(\mathbb{C}^2) \cong \mathbb{C}\mathrm{P}^1$ of $\rho_0$.  The slithering map $\Sigma^0$ of $\xi_0$ has a very clear geometric interpretation in terms of the maximal geodesic lamination $f_0(\widetilde{\lambda}) \subset \mathbb{H}^2$. Given any pair of distinct leaves $h_1, h_2$ of $\widetilde{\lambda}$, the transformation $\Sigma^0(h_1,h_2) \in \mathsf{SL}_2(\mathbb{R})$ sends the leaf $f_0(h_2)$ isometrically onto $f_0(h_1)$, and it is uniquely determined by a natural horocyclic foliation of the region in $\mathbb{H}^2$ delimited by $f_0(h_1), f_0(h_2)$ (compare in particular with the map $\theta^{h_1 h_2}$ from \cite[Section~2]{bonahon-toulouse}, and with \cite[Section~5.1]{BoD}). Thus, it follows from the definitions of the shear-bend coordinate maps $\mathfrak{sb}_B$ and $\mathfrak{sb}_2$ that
	\[
	\mathfrak{sb}_B(\rho_0,f_0) = \mathfrak{sb}_2(\rho_0,\xi_0) = \text{Re}\, \alpha_\rho .
	\]

	Let us now briefly recall the bending procedure described in \cite[Section 8]{bonahon-toulouse}, by which the author constructs, starting from $(\rho_0,f_0)$ and any cocycle $\beta \in \mathcal{Y}(\lambda,2;\mathbb{R}/2\pi \mathbb{Z})$, a classical pleated surface $(\rho', f)$ satisfying $\mathfrak{sb}_B(\rho',f) = \text{Re}\,\alpha_\rho + i \beta$. We fix ${\bf x} = (x_1,x_2,x_3) \in \widetilde{\Delta}^o$ the vertices of a plaque $T_{\bf x}$ of $\wt\lambda$ such that $x_1 \prec x_2, \prec x_3$. Choose an increasing sequence of finite subsets $(\mathcal{X}_n)_n$ of the set of plaques of $\widetilde{\lambda}$ whose union is equal to $\widetilde{\Delta}$. The map $f : \widetilde{S} \to \mathbb{H}^3$ is then defined as the limit of a family of maps $f_n = f_{\mathcal{X}_n} : \widetilde{S} \to \mathbb{H}^3$, each depending on the finite collection of plaques $\mathcal{X}_n$ and constructed as follows.

	For any leaf $h$ of $\wt\lambda$, let $g_{\bf x}^h$ be the edge of $T_{\bf x}$ that separates $T_{\bf x}$ and $h$, and let $\{T_1, \dots, T_k\}$ be an indexing of the set of plaques $\mathcal{X}_n \cap P(g_{\bf x}^h, h)$ so that $T_1 < \cdots < T_k$ with respect to the order of $P(g_{\bf x}^h, h)$. For every $i \in \{1, \dots, k\}$, denote by $\mathsf{g}_i^{\bf x}$ and $\mathsf{g}_i^h$ the oriented edges of $T_i$ that separate $T_{\bf x}$ and $T_i$ and separate $T_i$ and $h$ respectively, so that both $\mathsf{g}_i^{\bf x}$ and $\mathsf{g}_i^h$ are oriented from left to right of some (any) oriented geodesic segment with backward endpoint in $T_{\bf x}$ and forward endpoint in $h$. If $c_{\mathsf{g}_i^{\bf x}}(\vartheta), c_{\mathsf{g}_i^h}(\vartheta) \in \mathsf{PGL}_2(\mathbb{C})$ denote the counter-clockwise rotations by angle $\vartheta \in \mathbb{R}/2\pi \mathbb{Z}$ with axes $f_0(\mathsf{g}_i^{\bf x}), f_0(\mathsf{g}_i^h)$ respectively, we define
	\begin{align}\label{eqn: bending map dim 2}
		\Psi_n(g_{\bf x}^h, h):=c_{\mathsf{g}_1^{\bf x}}(\beta(T_{\bf x}, T_1)) \,& c_{\mathsf{g}_1^h}(-\beta(T_{\bf x}, T_1)) \, c_{\mathsf{g}_2^{\bf x}}(\beta(T_{\bf x}, T_2)) \, c_{\mathsf{g}_2^h}(-\beta(T_{\bf x}, T_2)) \cdots \nonumber\\
		& \cdots c_{\mathsf{g}_k^{\bf x}}(\beta(T_{\bf x}, T_k)) \, c_{\mathsf{g}_k^h}(-\beta(T_{\bf x}, T_k)) 
	\end{align}
	
	For any plaque $T$ of $\wt\lambda$ different from $T_{\bf x}$, we denote by $(g_{\bf x}^T, g_T^{\bf x})$ the separating pair of edges of $(T_{\bf x},T)$. Then the map $f_n$ is given by
	\[f_n(z)=\left\{\begin{array}{ll}
		f_0(z)&\text{if }z\in T_{\bf x},\\
		\Psi_n(g_{\bf x}^T, g_T^{\bf x})\, c_{\mathsf{g}_T^{\bf x}}(\beta(T_{\bf x}, T)) \cdot  f_0(z)&\text{if }z\in T \text{ for some plaque }T\in\mathcal{X}_n,\, T\neq T_{\bf x},\\
		\Psi_n(g_{\bf x}^T, g_T^{\bf x}) \cdot  f_0(z)&\text{if }z\in T \text{ for some plaque }T\notin\mathcal{X}_n,\, T\neq T_{\bf x},\\
		\Psi_n(g_{\bf x}^h, h) \cdot  f_0(z)&\text{if }z\in h \text{ for some leaf }h \text{ of }\wt\lambda.
	\end{array}\right.
	\]
	In other words, $f_n$ is the map $f_0$ bent along the edges of the finite collection of plaques in $\mathcal{X}_n$ according to the bending data given by $\beta$.	The maps $f_n$, while not equivariant, are still $1$--Lipschitz and send the leaves of $\wt\lambda$ to geodesics in $\mathbb{H}^3$. Thus, for each $n$, we may define the naturally associated limit maps 
	\[\xi_n : \partial \widetilde{\lambda} \to \partial\mathbb{H}^3\cong\mathbb{C} \mathrm{P}^1\]
	by setting $\xi_n(z)$ to be the forward endpoint of $f_n(\mathsf{g})$ for some (any) oriented leaf $\mathsf{g}$ of $\wt\lambda$ whose forward endpoint is $z$. 
	
	Since $\mathcal{X}_n$ is finite, the fact that $\xi_0$ is locally $\lambda$--H\"older continuous and $\lambda$--transverse implies that the same is true for $\xi_n$, so Theorem \ref{thm: slitherable and slithering} implies that $\xi_n$ admits a unique compatible slithering map $\Sigma^n$. Furthermore, it is straightforward to verify using the uniqueness part of Theorem \ref{thm: slitherable and slithering} that $\Sigma^n$ has the following description in terms of the slithering map $\Sigma^0$ of $\xi_0$ and $\Psi_n$: Given leaves $h_1$ and $h_2$ of $\wt\Lambda$,
	\begin{equation}\label{slithering step n}
		\Sigma^n(h_1,h_2) = \Psi_n(g_{\bf x}^{h_1}, h_1)\,\Sigma^0(h_1,h_2) \, \Psi_n(g_{\bf x}^{h_2}, h_2)^{-1}.
	\end{equation}
	In particular, if $T$ is a plaque of $\wt\lambda$ that is not $T_{\bf x}$, then
	\[\Sigma^n(g_{\bf x}^T, g_T^{\bf x}) = \Sigma^0(g_{\bf x}^T, g_T^{\bf x}) \, \Psi_n(g_{\bf x}^T, g_T^{\bf x})^{-1}.\]
	
	Let $T$ be any plaque of $\wt\lambda$ distinct from $T_{\bf x}$. We will now verify that if $n$ is large enough so that $T\in\mathcal{X}_n$, then the map $\xi_n : \partial \widetilde{\lambda} \to \mathbb{C}\mathrm{P}^1$ satisfies
	\begin{equation}\label{correct bending}
		\log(D^{(1,1)}(\xi_n(y_2),\xi_n(y_1), \xi_n(y_3), \Sigma^n(g_{\bf x}^T, g_T^{\bf x}) \, \xi_n(z_T))) = \text{Re}\,\alpha_\rho(T_{\bf x},T) + i \beta(T_{\bf x},T) ,
	\end{equation}
	where $(y_1,y_2,y_3)$ is a counter-clockwise labeling of the vertices of $T_{\bf x}$ such that $\mathsf{g}_{\bf x}^T$ has endpoints $y_1, y_2$, and $z_T$ is the vertex of $T$ that is not an endpoint of $g_{T}^{\bf x}$. Notice that the double ratio $D(a,b,c,d) = D^{(1,1)}(a,b,c,d)$ coincides (up to a sign) with the cross ratio of the quadruple of points $a,b,c,d$ in $\mathbb{C}\mathrm{P}^1$. An elementary computation shows that, if $c_{\mathsf{g}}(\vartheta)$ is the counter-clockwise rotation of angle $\vartheta$ around the oriented geodesic $\mathsf{g}$ in $\mathbb{H}^3$ with forward and backward endpoints $a$ and $b$, respectively, then
	\[
	D(a,b,c, c_{\mathsf{g}}(\vartheta) \, d) = e^{i \vartheta} \, D(a,b,c,d) .
	\]
	Therefore, by applying relation \eqref{slithering step n}, we deduce
	\begin{align*}
		\log(D^{(1,1)}&(\xi_n(y_2),\xi_n(y_1), \xi_n(y_3), \Sigma^n(g_{\bf x}^T, g_T^{\bf x}) \, \xi_n(z_T))) = \\
		& = \log(D^{(1,1)}(\xi_0(y_2),\xi_0(y_1), \xi_0(y_3), \Sigma^0(g_{\bf x}^T, g_T^{\bf x}) \, c_{\mathsf{g}_T^{\bf x}}(\beta(T_{\bf x},T)) \, \xi_0(z_T)) \\
		& = \log(D^{(1,1)}(\xi_0(y_2),\xi_0(y_1), \xi_0(y_3), c_{\mathsf{g}_{\bf x}^T}(\beta(T_{\bf x},T)) \, \Sigma^0(g_{\bf x}^T, g_T^{\bf x}) \, \xi_0(z_T)) \\
		& = \log(D^{(1,1)}(\xi_0(y_2),\xi_0(y_1), \xi_0(y_3), \Sigma^0(g_{\bf x}^T, g_T^{\bf x}) \, \xi_0(z_T)) + i \, \beta(T_{\bf x},T) \\
		& = \text{Re}\,\alpha_\rho(T_{\bf x},T) + i \, \beta(T_{\bf x},T) ,
	\end{align*}
	where in the first identity we noticed that $f_n|_{T_{\bf x}} = f_0|_{T_{\bf x}}$, and in the third identity we applied the relation $c_{\mathsf{g}_{\bf x}^T}(\vartheta) = \Sigma^0(\mathsf{g}_{\bf x}^T,\mathsf{g}_T^{\bf x}) \, c_{\mathsf{g}_T^{\bf x}}(\vartheta) \, \Sigma^0(\mathsf{g}_T^{\bf x},\mathsf{g}_{\bf x}^T)$.

	In \cite[Section~8]{bonahon-toulouse}, Bonahon showed that the maps $f_n$ converge uniformly on compact subsets to a pleated surface $f : \widetilde{S} \to \mathbb{H}^3$ which is equivariant with respect to some representation $\rho' : \Gamma \to \mathsf{PGL}_2(\mathbb{C})$, and such that $\mathfrak{sb}_B(\rho',f) = \text{Re}\,\alpha_\rho + i \beta$. It follows that $\xi_n$ converges to $\xi$ pointwise, and the induced maps 
	\[\wt\Lambda^o\to (\mathbb{C} \mathrm{P}^1)^2\]
	converges uniformly on compact subsets as well. Thus, the slithering maps $\Sigma^n$ converge uniformly on compact subsets to the slithering map $\Sigma$ compatible with $\xi$. Then by taking the limit as $n\to\infty$, we conclude that $\mathfrak{sb}_2(\rho',\xi) = \text{Re}\, \alpha_\rho + i \, \beta$. 
	
	If we apply the procedure described above to $\beta = \text{Im } \alpha_\rho$, then we deduce that there exists a classical pleated surface $(\rho',f)$ with associated $2$--pleated surface $(\rho',\xi)$ that satisfies
	\[
	\mathfrak{sb}_2(\rho',\xi) = \mathfrak{sb}_B(\rho',f) = \alpha_\rho = \mathfrak{sb}_2(\rho,\xi) .
	\] 
	Since the map $\mathfrak{sb}_2$ is injective, up to replacing $(\rho,\xi)$ with $(A \circ \xi,A \circ \rho(\cdot) \circ A^{-1})$ for some $A \in \mathsf{PGL}_2(\mathbb{C})$, we obtain that $(\rho,\xi)$ coincides with the $2$--pleated surface associated the classical pleated surface $(\rho',f)$, as desired.
\end{proof}

\subsection{Outline of the proof of the main theorem}\label{subsec:proof structure}

Recall from Section~\ref{sec:shear_bend} that the definition of the shear-bend coordinates $\mathcal{sb}_{d,{\bf x}}(\rho,\xi)$ of a $d$--pleated surface $(\rho,\xi)$ requires the existence of a slithering map $\Sigma : \widetilde{\Lambda}^2 \to \mathsf{SL}_d(\mathbb{C})$ compatible with the $\lambda$--limit map $\xi$ (see Definition \ref{def: slithering map}). 

For this reason, our first goal will be to prove Theorem \ref{thm: slitherable and slithering} which establishes existence and uniqueness of a slithering map compatible with the $\lambda$--limit map $\xi$.
	
The slithering map $\Sigma(g_1,g_2)$ between two leaves $g_1$, $g_2$ will be obtained as limit of a suitable composition of finitely many unipotent transformations. To establish the convergence of such compositions, we will need careful estimates on a specific family of unipotent elements, each associated with a plaque of $\widetilde{\lambda}$ that separate $g_1$ and $g_2$. This part of our analysis is a natural generalization of the construction of the slithering map for Hitchin representations from Bonahon-Dreyer \cite[Section~5.1]{BoD}. Since the same technical estimates will be needed in other parts of our work (see in particular Steps 1, 2, and 4 described below), we will present the key convergence results in a more general framework. This framework, which involves the study of so called \emph{extensions of H\"older extendable maps} (see the beginning of Section~\ref{sec:productable} for the necessary definitions), will make the results better suited for multiple applications. Section~\ref{sec:productable} is dedicated to the study of extensions of H\"older extendable maps. The existence (and uniqueness) of the slithering map compatible with a limit map $\xi$ will be established later in Section~\ref{sec: slithering}.
	
	The remainder of the paper is dedicated specifically to the proof of Theorem \ref{thm: main}. The structure of the argument can be divided into four main steps, each of which establishes one of the following properties:
	\begin{itemize}
		\item[Step~1:] The map $\mathcal{sb}_{d,{\bf x}}$ is injective.
		\item[Step~2:] The image of $\mathcal{sb}_{d,{\bf x}}$ contains $\big(\mathcal{C}_d(\lambda)+i\mathcal{Y}_d(\lambda;\mathbb{R}/2\pi\mathbb{Z})\big)\times \mathcal{N}(\mathbb{C}^d)$.
		\item[Step~3:] The image of $\mathcal{sb}_{d,{\bf x}}$ is contained in $\big(\mathcal{C}_d(\lambda)+i\mathcal{Y}_d(\lambda;\mathbb{R}/2\pi\mathbb{Z})\big)\times \mathcal{N}(\mathbb{C}^d)$.
		\item[Step~4:] The map $\mathcal{sb}_{d,{\bf x}}$ is a biholomorphism onto its image.
	\end{itemize}
	
	We now summarize the proofs of each step while providing references to the sections and statements where these are discussed.
	
	\subsubsection*{The proof of Step~1 (see Section~\ref{sec:eruption and shear})}
	As we will discuss in Section~\ref{sec: injectivity proof}, the $\lambda$--limit map $\xi$ of a $d$--pleated surface determines the representation $\rho$, and it is determined by the slithering map $\Sigma$ compatible with $\xi$. Hence, in order to prove that the map $\mathcal{sb}_{d,{\bf x}}$ is injective, it suffices to show that the slithering map $\Sigma$ is uniquely determined by the shear-bend coordinates $\mathcal{sb}_{d,{\bf x}}(\rho)$. We will do so by using H\"older extendable maps to find a formula for $\Sigma$ in terms of the shear-bend parameters $(\alpha_\xi,\theta_\xi)$ of $\xi$, the choice of a base plaque with labeling ${\bf x} = (x_1,x_2,x_3) \in \widetilde{\Delta}^o$, and of the projective frame determined by $(\xi(x_1),\xi(x_2),\xi(x_3)^1)$ (see Lemma \ref{lem: injectivity}). In the case of Hitchin representations, a version of this expression was also obtained by Bonahon and Dreyer \cite{BoD}. In the first part of Section~\ref{sec:eruption and shear} (namely, Sections \ref{sec: complex eruption}, \ref{sec: complex shear}, and \ref{sec: unipotent eruption}) we will introduce the notions of complex shears, complex eruptions, and unipotent eruptions. These tranformations will appear in the explicit expession of the slithering map that we mentioned above and will play a crucial role in the process of generalized bending (see Step~2 below).
	
	\subsubsection*{The proof of Step~2 (see Section \ref{sec:bending})}
	
	This step is inspired by ideas that Bonahon \cite{bonahon-toulouse} used to prove the surjectivity of the shear-bend parameterization $\mathfrak{sb}_B$ of the space of classical pleated surfaces with pleating locus $\lambda$. However, the much more general context of representations in $\mathsf{PGL}_d(\mathbb{C})$ rather than $\mathsf{PGL}_2(\mathbb{C})$ makes the analysis considerably more technical and subtle than the one in \cite{bonahon-toulouse}.
	
	Recall that $\mathcal C_d(\lambda)$ denotes the image of Bonahon and Dreyer's shearing map $\mathfrak{s}_d$ for the $\mathsf{PGL}_d(\mathbb{R})$--Hitchin component, see Theorem \ref{thm: BD par}. Hence, once we fix
	\[\left((\alpha,\theta), (F, G, p)\right)\in\mathcal{C}_d(\lambda)+i\mathcal{Y}_d(\lambda;\mathbb{R}/2\pi\mathbb{Z})\subset\mathcal{Y}_d(\lambda;\mathbb{C}/2\pi i\mathbb{Z}) \times \mathcal{N}(\mathbb{C}^d),\]
	there exists a unique representative of the $\mathsf{PGL}_d(\mathbb{C})$--conjugacy class of a $\mathsf{PGL}_d(\mathbb{R})$--Hitchin representation $\rho_0$ with $\lambda$--limit map $\xi_0$ whose shear-bend coordinates are given by
	\[
	(\alpha_{\xi_0},\theta_{\xi_0})=(\mathrm{Re}\,\alpha,\mathrm{Re}\,\theta)\,\,\text{ and }\,\,(\xi_0(x_1), \xi_0(x_2), \xi^1_0(x_3)) =  (F, G, p).
	\]
	To see that the point $((\alpha,\theta), (F, G, p))$ lies in the image of $\mathcal{sb}_{d,{\bf x}}$, we will introduce a suitable process of \emph{generalized bending} that takes in input $(\rho_0,\xi_0)$ and the $\lambda$--cocyclic pair $(\mathrm{Im}\,\alpha, \mathrm{Im}\, \theta) \in \mathcal{Y}_d(\lambda;\mathbb{R}/2 \pi i \mathbb{Z})$, and produces a $d$--pleated surface $(\rho,\xi)$ whose shear-bend parameters are given by
	\begin{equation}\label{eq:identity shear-bend}
		\mathcal{sb}_{d,{\bf x}}(\rho,\xi) = \mathcal{sb}_{d, {\bf x}}(\rho_0,\xi_0) + i \, (\mathrm{Im}\,\alpha, \mathrm{Im}\, \theta) = (\alpha, \theta) ,
	\end{equation}
	and such that $(\xi(x_1), \xi(x_2), \xi(x_3)^1) = (F,G,p)$. We will find such $(\rho,\xi)$ by constructing a function
	\[
	\Psi:\wt\Lambda\to\mathsf{SL}_d(\mathbb{C}) ,
	\] 
	called the \emph{$(\mathrm{Im}\,\alpha, \mathrm{Im}\,\theta)$--generalized bending map} (see Definition \ref{def: bending map}). Note that $\Psi$ depends on $(\rho_0, \xi_0)$ and a choice of labeling ${\bf x}$ of the vertices of the base plaque. The map $\Psi$ is uniquely characterized by a list of natural properties and its construction (described in Section~\ref{sec:existence}) heavily relies on the technical tools on H\"older extendable maps developed in Section~\ref{sec:productable} and on the notions of complex shears, complex eruptions, and unipotent eruptions from Section~\ref{sec:eruption and shear}. Such properties imply in particular that the map
	\[\xi:\partial\wt\lambda\to\mathcal{F}(\mathbb{C}^d) , \] 
	given by $\xi(z) = \Psi(g)\cdot\xi_0(z)$ for some (any) leaf $g$ that has $z$ as an endpoint, is the $\lambda$--limit map of a $d$--pleated surface $(\rho,\xi)$ that verifies condition \eqref{eq:identity shear-bend}. The existence and uniqueness of an $(\mathrm{Im}\,\alpha, \mathrm{Im}\,\theta)$--generalized bending map will be established in Theorem \ref{thm: bending Hitchin} and Proposition \ref{prop: uniqueness of bending}. In fact, we will show that generalized bending maps exist not only for representations $\rho_0$ that are conjugate to $\mathsf{PGL}_d(\mathbb{R})$--Hitchin representations, but also for a larger class of representations in $\mathcal{R}_d(\lambda)$ (see Definition~\ref{def:prop_star} and Theorem~\ref{thm_exist-psi} for details on this).
	
	\subsubsection*{The proof of Step~3}
	
	The proof of Step~3 formally follows the same strategy used by Bonahon and Dreyer \cite{BoD} to show that the image of the shearing map $\mathfrak{s}_d$ lies inside the cone $\mathcal{C}_d(\lambda) \subset \mathcal{Y}_d(\lambda;\mathbb{R})$. This said, it is worth mentioning that the differences between our setting and the case of Hitchin representations introduce several technical difficulties as one tries to adapt the argument from \cite{BoD}. We expand on this aspect after the description of the proof of Step~3.
	
	As we discussed in Section~\ref{subsec:length_functions}, for any $\lambda$--Borel Anosov representation $\rho:\Gamma\to\mathsf{PGL}_d(\mathbb{C})$, the contraction property of $\rho$ allows one to define, for each ${\bf i}\in\mathcal{A}$, a positive valued length function 
	\[\ell^\rho_{\bf i}:\mathsf{M}(\lambda^o)\to\mathbb{R}.\]
	These can be considered as a generalization of the length functions of Hitchin representations studied by Dreyer \cite{dreyer} and later deployed in \cite{BoD}. The crux of the proof of Step 3 will be the construction, for any $d$--pleated surface $(\rho,\xi)\in\mathcal{R}_d(\lambda)$ and for any ${\bf i}\in\mathcal{A}$, of a particular topological closed $1$--form $\Omega_{\bf i}^\rho$ on the oriented double cover $N^o$ of a train track neighborhood $N$ of $\lambda$ (see Section~\ref{sec: construction 1 form}). This topological closed $1$--form will define a cohomology class $[\Omega_{\bf i}^\rho]\in H^1(N^o;\mathbb{R})$. 
	
	On the one hand, the cohomology class $[\Omega_{\bf i}^\rho]\in H^1(N^o;\mathbb{R})$ is constructed so that the quantity $\ell^\rho_{\bf i}(\mu) > 0$ is obtained by integrating $[\Omega_{\bf i}^\rho]$ on a homology class $[\mu] \in H_1(N^o;\mathbb{R})$ naturally associated with the transverse measure $\mu \in \mathsf{M}(\lambda^o)$ (see Proposition~\ref{prop: second_extension_forms}). On the other hand, the same evaluation $\langle [\Omega_{\bf i}^\rho] , [\mu] \rangle$ can be interpreted as a cup pairing between the Poincar\'e dual of $[\mu]$ in $H^1(N^o,\partial N^o;\mathbb{R})$ and the cohomology class $[\Omega_{\bf i}^\rho]$. From a careful study of the integration of $\Omega_{\bf i}^\rho$ along the ties of the train track $N^o$ (see Proposition~\ref{prop: the rise of the shear}), we will relate $\langle [\Omega_{\bf i}^\rho] , [\mu] \rangle$ to the (${\bf i}$--th component of the) $\mathbb{R}^{\mathcal{A}}$--valued algebraic intersection $\mathbb{I}$ (see Proposition \ref{prop: duality}) between $[\mu]$ and a specific homology class $[\mathrm{Re}\,\alpha_\rho]$ associated with the real part of the shear-bend parameters $(\alpha_\rho, \theta_\rho)$ of $(\rho,\xi)$ (see Section~\ref{length_cyclic} for the definition of $[\mathrm{Re}\,\alpha_\rho]$). We will finally conclude that the image of the shear-bend coordinates map $\mathcal{sb}_{d, {\bf x}}$ lies inside
	\[
	\{(\alpha, \theta) \in \mathcal{Y}_d(\lambda;\mathbb{C}/2 \pi i \mathbb{Z}) \mid \mathbb{I}([\mu],[\mathrm{Re}\;\alpha])_{\bf i}> 0 \text{ for all ${\bf i} \in \mathcal{A}$} \} \times \mathcal{N}(\mathbb{C}^d) ,
	\]
	which is precisely equal to $\big(\mathcal{C}_d(\lambda)+i\mathcal{Y}_d(\lambda;\mathbb{R}/2\pi\mathbb{Z})\big)\times \mathcal{N}(\mathbb{C}^d)$. The homological interpretation of $\lambda$--cocyclic pairs and the definition of the algebraic intersection pairing will be discussed in Section~\ref{sec: intersection}, while the construction of the topological closed $1$--form and the proof of the identity $\ell^\rho_{\bf i}(\mu) = \mathbb{I}([\mu],[\mathrm{Re}\;\alpha])_{\bf i}$ are described in Section~\ref{length} (see in particular Theorem~\ref{thm: two lengths}).

	As we briefly mentioned above, there are some key differences between the context of $d$--pleated surfaces and of Hitchin representations that make several parts of the argument in Step~3 technically more challenging than their analogs in \cite{BoD}. In particular, our $\lambda$--limit maps $\xi : \partial \widetilde{\lambda} \to \mathcal{F}(\mathbb{C}^d)$ are not defined on the whole Gromov boundary $\partial \widetilde{S}$ and are not necessarily continuous nor hyperconvex. Hence, the line bundles involved in the definition of the length functions $\ell^\rho_{\bf i}$ are not defined on the whole unit tangent bundle $T^1 S$, but only on $T^1 \lambda$, which is a Hausdorff dimension $1$ subset. This makes the construction of the topological closed $1$--form $\Omega^\rho_{\bf i}$ more delicate than the one of its counterpart in \cite{BoD}. Furthermore, the space of $\lambda$--cocyclic pairs $\mathcal{Y}_d(\lambda;\mathbb{C}/2 \pi i \mathbb{Z})$ does not admit a natural structure of finitely generated module (over a ring), as opposed to the vector space $\mathcal{Y}_d(\lambda;\mathbb{R})$ appearing as target space of the shearing map from \cite{BoD}. Consequently, the homological interpretation of the space $\mathcal{Y}_d(\lambda;\mathbb{C}/2 \pi i \mathbb{Z})$ and the definition of the appropriate algebraic intersection pairing require subtle adaptations. Indeed, the analog arguments in \cite{BoD} often rely on the Poincaré duality theorem, which requires the group of coeffients of the cohomology/homology groups to be a commutative ring with identity and not just an additive group like $\mathbb{C}/2 \pi i \mathbb{Z}$ (which is not even finitely generated as a $\mathbb{Z}$--module).
	
	\subsubsection*{The proof of Step 4} 
	
	Steps 1, 2, and 3 will imply that the shear-bend map $\mathcal{sb}_{d,{\bf x}}$ is a continuous, bijective map from the space $\mathcal{R}_d(\lambda)$ onto
	\[
	\big(\mathcal{C}_d(\lambda)+i\mathcal{Y}_d(\lambda;\mathbb{R}/2\pi\mathbb{Z})\big)\times \mathcal{N}(\mathbb{C}^d) .
	\]
	Since the image of $\mathcal{sb}_{d,{\bf x}}$ carries a natural structure of complex manifold, it will be sufficient to show that the inverse $\mathcal{sb}_{d,{\bf x}}^{-1}$ is holomorphic. In other words, we will prove that the holonomy $\rho : \Gamma \to \mathsf{PGL}_d(\mathbb{C})$ of a $d$--pleated surface $(\rho,\xi)$ depends holomorphically on the data of its shear-bend parameters $(\alpha_\rho,\theta_\rho)$ and on the projective frame determined by $(\xi(x_1),\xi(x_2), \xi(x_3)^1)$. Since the representation $\rho$ can be described using the slithering map compatible with the $\lambda$--limit map $\xi$, it will be enough to prove that the slithering map depends holomorphically on the shear-bend parameters. This will be deduced in Section~\ref{sec:holomorphicity} from the same explicit expression of the slithering map of a $d$--pleated surface $(\rho,\xi)$ that we mentioned in the description of the proof of Step 1 (that is, Lemmas~\ref{lem: injectivity} and \ref{lem: uniform}).

\section{H\"older extendable maps}\label{sec:productable}

In this section, we develop tools needed to study the convergence properties of certain infinite products of matrices. These infinite products are described using the formalism of a \emph{H\"older extendable map} as defined below. This approach leads to a unified theory which encompasses the slithering map, defined in Section \ref{shear-bend} (see also Section \ref{sec: slithering}), and the bending map, defined in Section \ref{sec:bending}.

Let $\norm{\cdot}$ denote the operator norm on $\mathsf{SL}_d(\mathbb{K})$ with respect to the standard (Hermitian) inner product on $\mathbb{K}^d$. Fix a pair of leaves $g_1$ and $g_2$ of $\wt\lambda$, and recall that $P(g_1,g_2)$ denotes the set of plaques of $\wt\lambda$ that separate the leaves $g_1$ and $g_2$. Recall also that for any plaque $T\in P(g_1,g_2)$, we denote by $h_{T,-}$ and $h_{T,+}$ the two edges of $T$ that separate $g_1$ and $g_2$, so that $h_{T,-}$ separates $g_1$ and $h_{T,+}$. Then we say that $(x_{T,1}, x_{T,2}, x_{T,3})$ is a coherent labeling of the vertices of $T$ if $x_{T,2}$ is the common endpoint of $h_{T,-}$ and $h_{T,+}$, and $x_{T,1}$ and $x_{T,3}$ are respectively the endpoints of $h_{T,-}$ and $h_{T,+}$ that are not $x_{T,2}$.

\begin{definition}\label{def_productable}
	A map $$M\co P(g_1,g_2)\to \mathsf{SL}_d(\mathbb{K})$$ is \emph{H\"older extendable} if there exist constants $A,\nu > 0$ such that for all plaques $T\in P(g_1,g_2)$,
	\begin{equation}\label{prod}
		\norm{M(T)-\id}\le A\,d_\infty(x_{T,1},x_{T,3})^\nu,
	\end{equation}
	where $(x_{T,1},x_{T,2},x_{T,3})$ is a coherent labeling of the vertices of $T$ with respect to $(g_1,g_2)$.
\end{definition}

The next proposition is the main convergence result for H\"older extendable maps and it will be established in Section \ref{sec:existence enveloping}. To state it, we use the following notation and terminology:
\begin{itemize}
	\item Given a map $M\co P(g_1,g_2)\to \mathsf{SL}_d(\mathbb{K})$, we may define a map
	\[\overrightarrow{M}^F\co \{\text{finite subsets of }P(g_1,g_2)\}\to\mathsf{SL}_d(\mathbb{K})\]
	by $\overrightarrow{M}^F(\mathcal{X}):= M(T_1)\dots M(T_n)$, where $\mathcal{X}=\{T_1,\dots,T_n\}$ is enumerated so that $T_1<\dots<T_n$ in the total order on $P(g_1,g_2)$, defined in Section \ref{section: separation and coherence}.
	\item Let $h_1$ and $h_2$ be leaves of $\wt\lambda$ that separate $g_1$ and $g_2$. We say that a sequence of finite subsets $(\mathcal{X}_n)_{n=1}^\infty$ in $P(h_1,h_2)$ is \emph{exhausting} if $\mathcal{X}_n\subseteq\mathcal{X}_{n+1}$ for all $n$, and
	\[\bigcup_{n=1}^\infty\mathcal{X}_n=P(h_1,h_2).\]
\end{itemize}

\begin{proposition}\label{prop: existence of envelopes}
	Let $M \co P(g_1,g_2)\to \mathsf{SL}_d(\mathbb{K})$ be a H\"older extendable map. 
	If $h_1$ and $h_2$ are leaves of $\wt\lambda$ that separate $g_1$ and $g_2$, then the limit
	\[\lim_{n\to\infty}\overrightarrow{M}^F(\mathcal{X}_n)\] 
	exists for any exhausting sequence $(\mathcal{X}_n)_{n=1}^\infty$ in $P(h_1,h_2)$. Furthermore, this limit lies in $\mathsf{SL}_d(\mathbb{K})$, and does not depend on the exhausting sequence $(\mathcal{X}_n)_{n=1}^\infty$.
\end{proposition}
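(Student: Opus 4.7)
The plan is to reduce the proposition to a statement about absolutely convergent products. The key geometric input, and the main obstacle, is the summability
\[
\sum_{T \in P(h_1, h_2)} d_\infty(x_{T,1}, x_{T,3})^\nu < \infty .
\]
I would derive this from the compactness of $S$ together with the fact that $\lambda$ has only finitely many complementary components, so that plaques of $\wt\lambda$ fall into finitely many $\Gamma$-orbits. Fixing a geodesic segment $\tau \subset \wt{S}$ transverse to $\wt\lambda$ joining (points close to) $h_1$ to $h_2$, the plaques $T \in P(h_1, h_2)$ partition $\tau \setminus \wt\lambda$ into subsegments of summable length. Combining this with the visual estimate $d_\infty(x_{T,1}, x_{T,3}) \asymp e^{-d(o, g_T)}$, where $g_T$ is the geodesic joining $x_{T,1}$ to $x_{T,3}$, together with standard distortion bounds relating the position of $\tau \cap T$ to the hyperbolic distance $d(o,g_T)$, produces the claim.

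Granted this summability, the productability bound immediately gives
\[
S := \sum_{T \in P(h_1, h_2)} \| M(T) - \id \| \leq A \sum_{T \in P(h_1, h_2)} d_\infty(x_{T,1}, x_{T,3})^\nu < \infty .
\]
Writing $\overrightarrow{M}^F(\Yc) = \prod_k (\id + a_k)$ and $\overrightarrow{M}^F(\Xc) = \prod_k (\id + b_k)$, where the $a_k = M(T_{j_k}) - \id$ are ordered by the total order on $P(h_1,h_2)$ and $b_k = a_k$ if $T_{j_k} \in \Xc$ while $b_k = 0$ otherwise, the standard telescoping identity $\prod_k(\id+a_k) - \prod_k(\id+b_k) = \sum_k \prod_{j<k}(\id+a_j)(a_k-b_k)\prod_{j>k}(\id+b_j)$, followed by $\prod_j (1 + \|a_j\|) \leq e^S$, yields
\[
\| \overrightarrow{M}^F(\Yc) - \overrightarrow{M}^F(\Xc) \| \leq e^S \sum_{T \in \Yc \setminus \Xc} \| M(T) - \id \|
\]
for any finite subsets $\Xc \subseteq \Yc \subseteq P(h_1, h_2)$; in particular $\| \overrightarrow{M}^F(\Xc) \| \leq e^S$ uniformly in $\Xc$.

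Applying this bound to $\Xc_m \subseteq \Xc_n$ shows that $(\overrightarrow{M}^F(\Xc_n))_n$ is Cauchy, since the tail $\sum_{T \notin \Xc_m} \|M(T) - \id\|$ tends to zero by absolute convergence. Hence the limit exists in $\End(\Kb^d)$, and since $\SL_d(\Kb)$ is closed and each partial product lies in $\SL_d(\Kb)$, the limit lies in $\SL_d(\Kb)$. For independence of the exhausting sequence, given two such sequences $(\Xc_n)$ and $(\Yc_n)$, form the common exhausting sequence $\Zc_n := \Xc_n \cup \Yc_n$: the same telescoping estimate applied to $\Xc_n \subseteq \Zc_n$ and $\Yc_n \subseteq \Zc_n$ shows that the three limits coincide.
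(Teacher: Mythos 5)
Your telescoping--Cauchy argument after the summability claim matches the paper's Lemma~\ref{lem: abstract} essentially verbatim (you use $e^S$ where the paper uses $D=\prod_T\|M(T)\|$, but these play the same role), and the union trick for independence of the exhausting sequence is also the paper's argument. The gap is in your derivation of the summability
\[
\sum_{T\in P(h_1,h_2)} d_\infty(x_{T,1},x_{T,3})^\nu<\infty ,
\]
which you correctly flag as the crux. You observe that the subsegments $\tau\cap T$ have summable length since they tile a portion of $\tau$, and that $\ell(\tau\cap T)\asymp d_\infty(x_{T,1},x_{T,3})$. But this only gives $\sum_T d_\infty(x_{T,1},x_{T,3})<\infty$, and for the exponent $\nu$ coming from the productability constant --- which may well be $<1$ --- the bound $\sum c_T<\infty$ does \emph{not} imply $\sum c_T^\nu<\infty$ (e.g.\ $c_n=n^{-2}$, $\nu=1/3$). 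The vague appeal to ``visual estimate together with standard distortion bounds'' does not bridge this. What is actually needed, and what the paper invokes via its Lemma~\ref{lem: lamination property} (Bonahon's Lemmas~4 and~5), is the much stronger \emph{geometric} decay: a function $r\colon P(g_1,g_2)\to\Zb^+$ with uniformly bounded fibers and constants $E',\mu'>0$ such that $\ell(\tau\cap T)\le E' e^{-\mu' r(T)}$. With this, $\sum_T \ell(\tau\cap T)^\nu\le (E')^\nu\, E''\sum_{m\ge 1} e^{-\nu\mu' m}<\infty$ for \emph{every} $\nu>0$, which is the statement you need. The construction of $r$ is where the train-track combinatorics of the maximal lamination enter; your sketch doesn't reproduce this, and without it the key summability does not follow.
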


Recall that $Q(g_1,g_2)$ denotes the set of leaves of $\wt\lambda$ that separate $g_1$ and $g_2$. By Proposition \ref{prop: existence of envelopes}, we may define a map
\[\overrightarrow{M}:Q(g_1,g_2)^2\to\mathsf{SL}_d(\mathbb{K})\]
by
\[\overrightarrow{M}(h_1, h_2):=\left\{\begin{array}{ll}
	\displaystyle\lim_{n\to\infty}\overrightarrow{M}^F(\mathcal{X}_n)&\text{if }h_1\leq h_2,\\
	\displaystyle\left(\lim_{n\to\infty}\overrightarrow{M}^F(\mathcal{X}_n)\right)^{-1}&\text{if }h_2\leq h_1,\\
\end{array}\right.\]
where $\leq$ is the total order on $Q(g_1,g_2)$ and $(\mathcal{X}_n)_{n=1}^\infty$ is any exhausting sequence in $P(h_1,h_2)$. We refer to $\overrightarrow{M}$ as the \emph{extension} of $M$.

\begin{remark}\label{rem: obvious}
	If two leaves $g_3,g_4\in Q(g_1,g_2)$ satisfy $g_1\leq g_3\leq g_4\leq g_2$, then $Q(g_3,g_4)\subseteq Q(g_1,g_2)$ as ordered sets, and any H\"older extendable map $M\colon P(g_1,g_2)\to\mathsf{SL}_d(\mathbb{K})$ restricts to a H\"older extendable map $M'\colon P(g_3,g_4)\to\mathsf{SL}_d(\mathbb{K})$. It follows that $\overrightarrow{M}'(h_1,h_2)=\overrightarrow{M}(h_1,h_2)$ for any pair of leaves $h_1,h_2\in Q(g_3,g_4)$.
\end{remark}

The following proposition records several basic properties of extensions. Their proofs are omitted as they are straightforward consequences of the definitions.

\begin{proposition}\label{prop:cocycle}\label{prop: productable fixed}
	Let $M \co P(g_1,g_2)\to \mathsf{SL}_d(\mathbb{K})$ be a H\"older extendable map with extension $\overrightarrow{M}\co Q(g_1,g_2)\to \mathsf{SL}_d(\mathbb{K})$.
	\begin{enumerate} 
		\item For any leaf $h\in Q(g_1,g_2)$, 
		\[\overrightarrow{M}(h,h)=\id.\]
		\item For any $h_1,h_2,h_3\in Q(g_1,g_2)$,
		\[\overrightarrow{M}(h_1,h_3)=\overrightarrow{M}(h_1,h_2)\overrightarrow{M}(h_2,h_3).\]
		\item Let $g_3\in Q(g_1,g_2)$ and assume that there exists a flag $F\in\mathcal{F}(\mathbb{K}^d)$ that is fixed by $M(T)$ for every plaque $T\in Q(g_3,g_2)$. Then 
		\[\overrightarrow{M}(g_1,g_3)\cdot F=\overrightarrow{M}(g_1,g_2)\cdot F.\]
	\end{enumerate}
\end{proposition}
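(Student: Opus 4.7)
The plan is to derive all three statements directly from the definition of the envelope, together with the convergence established in Proposition~\ref{prop: existence of envelopes}. The only substantive work has already been done in that proposition; what remains is essentially bookkeeping.

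For (1), the key observation is that $P(h,h)=\emptyset$, since no plaque can lie between a leaf and itself. Hence the only exhausting sequence of finite subsets of $P(h,h)$ is the constant sequence $\Xc_n=\emptyset$, and $\overrightarrow{M}^F(\emptyset)=\id$ by the standard convention that an empty product equals the identity. Thus $\overrightarrow{M}(h,h)=\id$.

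For (2), I would first treat the case $h_1\leq h_2\leq h_3$ in the total order on $Q(g_1,g_2)$. Here the set $P(h_1,h_3)$ decomposes as the disjoint union $P(h_1,h_2)\sqcup P(h_2,h_3)$: $h_2$ itself is a leaf and hence not a plaque, and any plaque separating $h_1,h_3$ lies on exactly one side of $h_2$. Moreover, the total order on $P(h_1,h_3)$ restricts to the total orders on the two pieces, with every element of $P(h_1,h_2)$ strictly below every element of $P(h_2,h_3)$. Consequently, for any exhausting sequences $(\Xc_n^{(12)})$ in $P(h_1,h_2)$ and $(\Xc_n^{(23)})$ in $P(h_2,h_3)$, the union $(\Xc_n^{(12)}\cup\Xc_n^{(23)})$ exhausts $P(h_1,h_3)$, and at each finite stage
\[
\overrightarrow{M}^F(\Xc_n^{(12)}\cup\Xc_n^{(23)})=\overrightarrow{M}^F(\Xc_n^{(12)})\cdot\overrightarrow{M}^F(\Xc_n^{(23)}).
\]
Passing to the limit and invoking Proposition~\ref{prop: existence of envelopes} and Remark~\ref{rem: obvious} gives the identity in this case. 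The remaining five orderings of $h_1,h_2,h_3$ reduce to this one by applying the inverse convention $\overrightarrow{M}(h,h')=\overrightarrow{M}(h',h)^{-1}$ built into the definition of the envelope; each case becomes an algebraic rearrangement of the first.

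For (3), since $g_3\in Q(g_1,g_2)$ means $g_1\leq g_3\leq g_2$, part (2) applied to $(g_1,g_3,g_2)$ gives
\[
\overrightarrow{M}(g_1,g_2)=\overrightarrow{M}(g_1,g_3)\,\overrightarrow{M}(g_3,g_2).
\]
Then for any exhausting sequence $(\Xc_n)$ in $P(g_3,g_2)$, every factor $M(T)$ appearing in $\overrightarrow{M}^F(\Xc_n)$ fixes $F$ by hypothesis, so $\overrightarrow{M}^F(\Xc_n)\cdot F=F$ for all $n$; taking $n\to\infty$ yields $\overrightarrow{M}(g_3,g_2)\cdot F=F$, and substitution into the identity above gives $\overrightarrow{M}(g_1,g_2)\cdot F=\overrightarrow{M}(g_1,g_3)\cdot F$. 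The only place any care is needed is in enumerating the cases in (2); there is no analytic obstacle, as all the convergence content is absorbed into Proposition~\ref{prop: existence of envelopes}.
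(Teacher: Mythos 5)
Your proof is correct, and since the paper explicitly omits the proof of this proposition (stating it is a straightforward consequence of the definitions), there is nothing in the paper to compare against; what you have written is exactly the standard argument that was left implicit. One small observation: in part (3) the paper's statement writes ``$M(T)$ for all $T\in Q(g_3,g_2)$'', which should read $P(g_3,g_2)$ since $M$ is defined on plaques, and you have correctly interpreted it that way.
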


We also prove that the extensions of H\"older extendable maps are H\"older continuous:

\begin{proposition}\label{prop: Holder productable} 
	If $M \co P(g_1,g_2)\to \mathsf{SL}_d(\mathbb{K})$ is a H\"older extendable map, then its extension $\overrightarrow{M}$ is H\"older continuous, i.e. there exist constants $A',\nu'>0$ such that, given any two pairs of leaves $(h_1,h_2),(\ell_1,\ell_2)\in Q(g_1,g_2)^2$, 
	\[\norm{\overrightarrow{M}(h_1,h_2)-\overrightarrow{M}(\ell_1,\ell_2)}\leq A' d_\infty((h_1,h_2),(\ell_1,\ell_2))^{\nu'}.\]
\end{proposition}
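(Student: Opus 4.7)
The plan is to reduce the Hölder estimate to a one-variable bound of the form $\|\overrightarrow{M}(h, \ell) - \id\| \leq (\text{Hölder in } d_\infty(h,\ell))$, and then to prove the latter by combining the productability estimate on $\|M(T)-\id\|$ with a geometric control on how plaque widths depend on $d_\infty(h,\ell)$.

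First I would establish a uniform bound $\|\overrightarrow{M}(h_1, h_2)\| \leq M_0$ over $Q(g_1, g_2)^2$. Proposition \ref{prop: existence of envelopes} implicitly gives $S^* := \sum_{T \in P(g_1,g_2)} \|M(T) - \id\| < \infty$, since otherwise the defining infinite products could not converge; combined with the classical estimate $\|\prod_i(\id + X_i)\| \leq \exp(\sum_i \|X_i\|)$ on finite sub-products, this gives the uniform bound. The same telescoping identity $\prod_i M_i - \id = \sum_j (\prod_{i<j} M_i)(M_j - \id)$, passed to the limit together with productability, yields the refined one-variable bound
\[
\|\overrightarrow{M}(h, \ell) - \id\| \leq A\, M_0 \sum_{T \in P(h, \ell)} d_\infty(x_{T,1}, x_{T,3})^\nu.
\]

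Next, using the cocycle identity in Proposition \ref{prop:cocycle}(2) and telescoping through an intermediate pair, I decompose
\[
\overrightarrow{M}(h_1,h_2) - \overrightarrow{M}(\ell_1,\ell_2) = \bigl[\overrightarrow{M}(h_1, \ell_1) - \id\bigr]\overrightarrow{M}(\ell_1, h_2) - \overrightarrow{M}(\ell_1, h_2)\bigl[\overrightarrow{M}(h_2, \ell_2) - \id\bigr],
\]
which by the uniform bound gives
\[
\|\overrightarrow{M}(h_1,h_2) - \overrightarrow{M}(\ell_1,\ell_2)\| \leq M_0\bigl(\|\overrightarrow{M}(h_1, \ell_1) - \id\| + \|\overrightarrow{M}(h_2, \ell_2) - \id\|\bigr).
\]
Thus it suffices to Hölder-bound each right-hand factor. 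The key geometric observation is that for every $T \in P(h, \ell)$, the vertices $x_{T,1}, x_{T,3}$ lie in a common arc of $\partial\widetilde{S}$ bounded by endpoints of $h$ and $\ell$ (while the shared vertex $x_{T,2}$ lies in the opposite arc). Since $d_\infty$ on $\widetilde{\Lambda}^o$ is a product metric, each such arc has $d_\infty$-length $\leq D := d_\infty(h, \ell)$, and hence $d_\infty(x_{T,1}, x_{T,3}) \leq D$. Interpolating with any $0 < \nu' < \nu$ and summing,
\[
\sum_{T \in P(h,\ell)} d_\infty(x_{T,1}, x_{T,3})^\nu \leq D^{\nu - \nu'} \sum_{T \in P(g_1, g_2)} d_\infty(x_{T,1}, x_{T,3})^{\nu'},
\]
which, assembled with the previous displays, yields Hölder continuity of $\overrightarrow{M}$ with exponent $\nu - \nu'$.

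The main obstacle is verifying that $\sum_{T \in P(g_1,g_2)} d_\infty(x_{T,1}, x_{T,3})^{\nu'} < \infty$ for some $\nu' \in (0, \nu)$; this is a strictly stronger input than the summability with exponent $\nu$ that already underlies Proposition \ref{prop: existence of envelopes}. I expect it to follow from the same geometric mechanism used there: the exponential comparison between hyperbolic and visual distances on $\partial\widetilde{S}$, together with cocompactness of the $\Gamma$-action on $\widetilde\lambda$, forces the widths $d_\infty(x_{T,1},x_{T,3})$ to decay geometrically in terms of the ``depth'' of $T$ in the strip between $g_1$ and $g_2$, so that in fact $\sum_T d_\infty(x_{T,1}, x_{T,3})^{\nu_0} < \infty$ for every $\nu_0 > 0$. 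If instead only the productability-exponent summability is available, the same strategy still works after an intermediate step: one partitions $P(h,\ell)$ by the dyadic size of $d_\infty(x_{T,1},x_{T,3})$, controls the number of plaques of each size via a train-track counting argument, and concludes via the same interpolation.
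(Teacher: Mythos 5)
Your proof is correct and uses the same high-level reduction as the paper --- the cocycle identity of Proposition~\ref{prop:cocycle}(2), together with the uniform bound on $\|\overrightarrow{M}\|$ from Lemma~\ref{lem: abstract}(2), reduce the two-variable estimate to a one-variable bound $\|\overrightarrow{M}(h,\ell)-\id\|\lesssim d_\infty(h,\ell)^{\nu''}$ --- but it proves that one-variable bound by a genuinely different route. The paper (Lemma~\ref{lem: Holder estimate}) uses the depth function $r$ of Lemma~\ref{lem: lamination property} to compare $\sum_{T\in P(h,\ell)}\ell(k\cap T)^\nu$ with a power $\nu''=\nu\mu'/\mu$ of the total length $\sum_{T\in P(h,\ell)}\ell(k\cap T)\lesssim d_\infty(h,\ell)$; you instead observe that every width $d_\infty(x_{T,1},x_{T,3})$ for $T\in P(h,\ell)$ is already controlled by $d_\infty(h,\ell)$, peel off a factor $d_\infty(h,\ell)^{\nu-\nu'}$, and appeal to all-exponent summability over $P(g_1,g_2)$. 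Two small points are worth fixing. First, the inequality $d_\infty(x_{T,1},x_{T,3})\le D$ really needs a constant $C''$ (the subarc of $\partial\wt{S}$ containing $x_{T,1},x_{T,3}$ need not a priori be the geodesic arc); the clean way to supply it is via Lemma~\ref{lem: hyperbolic geometry}: $d_\infty(x_{T,1},x_{T,3})\le C\,\ell(k\cap T)\le C\,\ell(k(h,\ell))\lesssim d_\infty(h,\ell)$. Second, the summability $\sum_{T\in P(g_1,g_2)}d_\infty(x_{T,1},x_{T,3})^{\nu_0}<\infty$ for every $\nu_0>0$, which you flag as the ``main obstacle,'' is precisely the content of Lemma~\ref{lem: Cauchy}, so no dyadic backup argument is required. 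With these two ingredients supplied, your interpolation route is a transparent and slightly more elementary alternative to the paper's $r$-function estimate, at the cost of an exponent $\nu-\nu'$ rather than the paper's explicit $\nu\mu'/\mu$.
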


Finally, we prove the following result, which gives us a way to construct new H\"older extendable maps from two existing ones. Given any plaque $T\in P(g_1,g_2)$, let $h_{T,-},h_{T,+}$ be the edges of $T$ that lie in $Q(g_1,g_2)$ such that $h_{T,-}<h_{T,+}$.

\begin{proposition}\label{prop: renormalization} 
	Let $M,N \co P(g_1,g_2)\to \mathsf{SL}_d(\mathbb{K})$ be H\"older extendable maps with extensions $\overrightarrow{M}$ and $\overrightarrow{N}$ respectively. Then the map
	\[N' \co P(g_1,g_2)\to \mathsf{SL}_d(\mathbb{K})\]
	defined by
	\[N'(T):=\overrightarrow{M}(g_1,h_{T,-})N(T)\overrightarrow{M}(h_{T,+},g_1)\]
	is H\"older extendable and its extension $\overrightarrow{N}'$ satisfies
	\[\overrightarrow{N}'(g_1,h)=\overrightarrow{N}(g_1,h)\overrightarrow{M}(h,g_1)\]
	for all $h\in Q(g_1,g_2)$.
\end{proposition}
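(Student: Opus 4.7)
Let me outline the proof strategy. Write $\Phi(h) := \overrightarrow{M}(g_1, h) \in \SL_d(\Kb)$ for $h \in Q(g_1, g_2)$, so that $N'(T) = \Phi(h_{T,-}) N(T) \Phi(h_{T,+})^{-1}$ and, by Proposition \ref{prop:cocycle}(2), $\overrightarrow{M}(a,b) = \Phi(a)^{-1}\Phi(b)$. By Proposition \ref{prop: Holder productable}, $\Phi$ is H\"older continuous on $Q(g_1,g_2)$, hence $\Phi$ and $\Phi^{-1}$ are uniformly bounded there.

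For productability of $N'$, decompose
\[ N'(T) - \id = \Phi(h_{T,-})\bigl(N(T) - \id\bigr)\Phi(h_{T,+})^{-1} + \bigl(\Phi(h_{T,-})\Phi(h_{T,+})^{-1} - \id\bigr). \]
The first summand is controlled using productability of $N$ and the uniform bounds on $\Phi^{\pm 1}$. For the second, the leaves $h_{T,-}$ and $h_{T,+}$ share the common vertex $x_{T,2}$ of $T$, so (with coherent orientations) $d_\infty(h_{T,-}, h_{T,+}) \lesssim d_\infty(x_{T,1}, x_{T,3})$, and H\"older continuity of $\Phi$ produces the desired bound. Combined, this yields constants of productability $(A', \nu')$ for $N'$.

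To compute the envelope, expand a finite product and collapse internal $\Phi^{-1}\Phi$ pairs via the cocycle identity: for $\Xc = \{T_1 < \cdots < T_k\}$,
\[ \overrightarrow{N'}^F(\Xc) = \Phi(h_{T_1,-}) \Bigl[\prod_{i=1}^{k-1} N(T_i)\,\overrightarrow{M}(h_{T_i,+}, h_{T_{i+1},-})\Bigr] N(T_k)\,\Phi(h_{T_k,+})^{-1}. \]
Fix $h_1 < h_2$ in $Q(g_1, g_2)$ and take the exhaustion $\Xc_n := \{T \in P(h_1,h_2) : d_\infty(x_{T,1}, x_{T,3}) \geq 1/n\}$. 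For $n$ large, $\Xc_n$ contains the two plaques of $P(h_1, h_2)$ having $h_1$ and $h_2$ as edges, so the outer $\Phi$-factors stabilize to $\Phi(h_1)$ and $\Phi(h_2)^{-1}$. Each internal bridging factor is itself an envelope of $M$ over a gap, so by the multiplicative estimate underlying Proposition \ref{prop: existence of envelopes},
\[ \bigl\|\overrightarrow{M}(h_{T_i,+}, h_{T_{i+1},-}) - \id\bigr\| \leq \exp\!\Bigl(\sum_{T \in P(h_{T_i,+}, h_{T_{i+1},-})} \|M(T) - \id\|\Bigr) - 1. \]
Summing over $i$, the total bridging error is bounded by $\sum_{T \in P(h_1,h_2) \setminus \Xc_n} \|M(T) - \id\|$, which tends to $0$ as $n \to \infty$ by the absolute summability $\sum_{T \in P(h_1,h_2)} d_\infty(x_{T,1}, x_{T,3})^\nu < \infty$ that is the backbone of Proposition \ref{prop: existence of envelopes}. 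A telescoping estimate, using uniform boundedness of the partial products of $N(T_i)$ (which follows from the convergence to $\overrightarrow{N}(h_1,h_2)$), then gives $\overrightarrow{N'}^F(\Xc_n) \to \Phi(h_1)\,\overrightarrow{N}(h_1,h_2)\,\Phi(h_2)^{-1}$. Rewriting the outer factors as $\overrightarrow{M}(g_1, h_1)$ and $\overrightarrow{M}(h_2, g_1)$, and invoking Proposition \ref{prop: existence of envelopes} to identify the limit with $\overrightarrow{N'}(h_1,h_2)$, yields the claimed identity; in particular, when the base point coincides with $h_1$ the leading factor is trivial and the formula reduces to $\overrightarrow{N}(h_1,h_2)\,\overrightarrow{M}(h_2,h_1)$ as stated.

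The principal obstacle is controlling the \emph{cumulative} bridging error: each bridging factor is individually near the identity, but their number grows with $n$, so pointwise smallness does not suffice. The crucial input is the absolute summability $\sum_{T \in P(h_1, h_2)} \|M(T) - \id\| < \infty$, a geometric consequence of productability of $M$ that is precisely the content behind Proposition \ref{prop: existence of envelopes}; once this is invoked, the telescoping closes cleanly.
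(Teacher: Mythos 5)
Your argument follows the paper's proof strategy very closely: establish productability of $N'$ by decomposing $N'(T)-\id$ and controlling each piece with uniform bounds and the H\"older estimate for $\overrightarrow{M}$, then expand the finite product for the envelope, collapse the internal factors via the cocycle identity, and control the accumulated bridging error using the absolute summability behind Proposition~\ref{prop: existence of envelopes}. The productability decomposition you use (separating $\Phi(h_{T,-})(N(T)-\id)\Phi(h_{T,+})^{-1}$ from $\Phi(h_{T,-})\Phi(h_{T,+})^{-1}-\id$) is slightly different from the paper's (which rewrites $\overrightarrow{M}(h_{T,+},g_1)=M(T)^{-1}\overrightarrow{M}(h_{T,-},g_1)$ to conjugate $N(T)M(T)^{-1}$), and your bridging estimate via $\exp(\cdot)-1$ is cruder than the paper's use of Lemma~\ref{lem: Holder estimate}, but both are sufficient.

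There is, however, one place where your wording papers over a genuine discrepancy. Your telescoping yields
\[
\overrightarrow{N'}(h_1,h_2)\;=\;\overrightarrow{M}(g_1,h_1)\,\overrightarrow{N}(h_1,h_2)\,\overrightarrow{M}(h_2,g_1),
\]
which is \emph{not} the stated $\overrightarrow{N}(h_1,h_2)\,\overrightarrow{M}(h_2,h_1)=\overrightarrow{N}(h_1,h_2)\,\overrightarrow{M}(h_2,g_1)\,\overrightarrow{M}(g_1,h_1)$ unless $\overrightarrow{M}(g_1,h_1)=\id$, i.e.\ unless $h_1=g_1$. Take for instance $P(g_1,g_2)=\{T_1,T_2,T_3\}$ with separating edges $g_1=k_0<k_1<k_2<k_3=g_2$ and $h_1=k_1$, $h_2=k_2$: the left-hand side is $M(T_1)N(T_2)M(T_2)^{-1}M(T_1)^{-1}$, while the right-hand side of the stated identity is $N(T_2)M(T_2)^{-1}$; these are genuinely different. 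Your final sentence says the computation ``yields the claimed identity; in particular, when the base point coincides with $h_1$ the leading factor is trivial\dots'' --- this is evasive. It does not yield the claimed identity; it yields the conjugated formula above, which reduces to the stated one only when $h_1=g_1$. (The paper's own proof has the same issue: it sets $h_{n,0,+}:=h_1$ where the direct expansion of $N'(T_{n,1})\cdots N'(T_{n,m_n})$ produces $\overrightarrow{M}(g_1,h_{n,1,-})$ as the leading factor, not $\overrightarrow{M}(h_1,h_{n,1,-})$.) Since the paper only invokes this proposition with $h_1=g_1$ (as in Lemma~\ref{lem: injectivity}, where $h_1=g^h_{\bf x}$), this is harmless downstream, but you should state explicitly that what you prove is the conjugated identity, and that the proposition as literally stated holds only for $h_1=g_1$.
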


The proofs of Propositions \ref{prop: Holder productable}, and \ref{prop: renormalization} will occupy Sections \ref{sec:Holder enveloping} and \ref{sec:combine productable}, respectively. 

\subsection{Hyperbolic geometry estimates}\label{train_tracks_estimates}
First, we discuss several important estimates that will be crucial for the later proofs. 

For any rectifiable curve $k\subset\wt S$, let $\ell(k)$ denote the hyperbolic length of $k$. The next lemma is a classical estimate due to Bonahon \cite{bonahon-toulouse}, see also Mazzoli-Viaggi \cite[Appendix~C]{MV22} for a proof.

\begin{lemma}[\cite{bonahon-toulouse} Lemma 4 and 5, \cite{BoD} Lemma 5.3]\label{lem: lamination property}
	Let $k$ be a geodesic segment in $\wt S$ that intersects both $g_1$ and $g_2$. Then there exists a function 
	\[r\co P(g_1,g_2)\to\mathbb{Z}^+\] 
	that satisfies the following properties: 
	\begin{enumerate}
		\item There are constants $E,E',\mu,\mu'>0$ such that 
		$$Ee^{-\mu r(T)}\leq\ell(k\cap T)\leq E'e^{-\mu'r(T)}$$
		for every plaque $T\in P(g_1,g_2)$.
		\item There exists some constant $E''>0$ such that $|r^{-1}(m)|<E''$ for all positive integers $m$.
	\end{enumerate}
	In particular, for any $\nu>0$, the sum
	\[\sum_{T\in P(g_1,g_2)}\ell(k\cap T)^\nu\]
	converges.
\end{lemma}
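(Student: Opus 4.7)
My plan is to exploit the train track carrying $\lambda$ to organize the plaques of $P(g_1,g_2)$ combinatorially, and then translate the combinatorics into exponential decay via standard hyperbolic cusp estimates. Fix a (trivalent) train track neighborhood $N$ of $\lambda$ with rectangles $R_1,\dots,R_n$, and lift it to $\wt N \subset \wt S$. For each plaque $T$ of $\wt\lambda$, the intersection $T \cap \wt N$ consists of three disjoint strips, one asymptotic to each ideal vertex of $T$, and each strip lies inside a single rectangle of $\wt N$. Since $k$ is a geodesic segment and $T$ is convex, $k$ meets each plaque $T \in P(g_1,g_2)$ in a single connected arc $k \cap T$, which lies inside one of these three strips, hence inside a single rectangle $R(T)$ of $\wt N$.

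To define $r(T)$, I would use the combinatorial depth of the strip of $T$ that meets $k$. Concretely, every strip of $T$ in $\wt N$ is an infinite union of ``sub-rectangles'' (connected components of the strip cut by the ties passing through the switches of $\wt N$), indexed by how many switches of $\wt N$ one crosses starting from the unique edge of the strip that lies in the horizontal boundary $\partial_h \wt N$. Set $r(T)$ to be this index for the sub-rectangle of $T$'s strip containing $k \cap T$. For the bound $|r^{-1}(m)| \le E''$: the universal cover $\wt N$ contains only finitely many rectangles that $k$ meets (since $k$ is compact and enters each rectangle in only boundedly many ``passes''), and within each such pass the sub-rectangles at combinatorial depth $m$ in different plaques are disjoint cross-sections of $k$; their total number is bounded by $|\{R_1,\dots,R_n\}|$ times a universal constant depending only on the topology of $N$.

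For the exponential decay, the strip of $T$ that contains $k\cap T$ is asymptotic to an ideal vertex of $T$, so upon entering this strip, the geodesic $k$ is cutting across a thin neck whose hyperbolic width decays exponentially with depth into the cusp. More precisely, every extra unit of combinatorial depth $m$ corresponds to crossing an additional switch of $\wt N$, which, by compactness of $N$ and uniform transversality of $\lambda$ to the ties, multiplies the transverse width by a uniform factor $e^{-\mu'}<1$ in one direction and at most $e^{-\mu}$ in the other. Since $k$ makes a definite angle with the leaves of $\wt\lambda$ (bounded below over the finite collection of intersection events between $k$ and $\wt N$), the length $\ell(k \cap T)$ is controlled, up to multiplicative constants, by this transverse width, yielding $E e^{-\mu r(T)}\le \ell(k\cap T)\le E' e^{-\mu' r(T)}$. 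Combining the two bounds produces
\[
\sum_{T \in P(g_1,g_2)} \ell(k\cap T)^\nu \le E'' (E')^\nu \sum_{m=1}^\infty e^{-\mu'\nu m}<\infty.
\]

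The main obstacle is justifying the uniform control of the angle between $k$ and the leaves of $\wt\lambda$ (or equivalently the ties of $\wt N$) when proving the lower bound $E e^{-\mu r(T)} \le \ell(k\cap T)$, since a geodesic segment could in principle become nearly tangent to a leaf. This is handled by observing that $k$ is a fixed geodesic arc meeting only finitely many rectangles of $N$, so a compactness argument over these finitely many rectangles provides the needed uniform lower bound on the transversality. The upper bound on $|r^{-1}(m)|$ is then automatic from the finite combinatorics of passes of $k$ through $\wt N$. Since this lemma essentially repackages the estimates of Bonahon \cite{bonahon-toulouse} and Bonahon--Dreyer \cite{BoD}, I would invoke their arguments at the technical level rather than reprove them in detail.
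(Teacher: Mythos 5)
Your approach is essentially the one the paper relies on: the paper does not prove this lemma itself but cites Bonahon, Bonahon--Dreyer, and Mazzoli--Viaggi, merely recalling the definition of the combinatorial depth $r$, and the two estimates you invoke (exponential thinning of a cusp strip per switch crossed, with a uniform angle bound relating width to $\ell(k\cap T)$, plus bounded multiplicity of each depth from the finitely many cusp passes of $k$) are exactly the ingredients of those cited proofs. Two small corrections to your set-up, neither of which changes the argument: (i) a cusp strip of $T\cap\widetilde{N}$ does not ``lie inside a single rectangle of $\widetilde N$'' --- it runs through infinitely many rectangles and switches on its way toward the ideal vertex, so only the arc $k\cap T$ itself can be assigned a single level; and (ii) the wide end of such a strip is capped by one of the three \emph{switch} arcs bounding the hexagonal complementary cell, not by a piece of $\partial_h\widetilde N$ (the horizontal boundary arcs of the cell run along the edges of $T$ between the cusps), so the reference point from which you count switches should be that mouth switch, exactly as in the paper's definition of $r$; you should also add the convention $r(T)=1$ when $k\cap T$ escapes $\widetilde N$, which is needed for the lower bound to hold with a uniform constant over the finitely many plaques where $k$ crosses a hexagonal cell.
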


The following is a standard hyperbolic geometry fact.

\begin{lemma}\label{lem: hyperbolic geometry}
	Let $k$ be a geodesic arc in $\wt S$ that intersects $g_1$ and $g_2$. Then there exists a constant $C> 0$ with the following property: If $T$ is an ideal triangle in $\wt S$ that separates $g_1$ and $g_2$, $h_1$ and $h_2$ are the edges of $T$ that separate $g_1$ and $g_2$, and $x_1$ and $x_2$ are respectively the endpoints of $h_1$ and $h_2$ that are not the common endpoint of $h_1$ and $h_2$, then
	\[\frac{1}{C}d_\infty(x_1,x_2)\leq \ell(k\cap T)\leq Cd_\infty(x_1,x_2).\]
\end{lemma}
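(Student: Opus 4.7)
I would work in the upper half-plane model of $\Hb^2$. For each triangle $T\in P(g_1,g_2)$ separating $g_1,g_2$, let $\phi_T$ be the orientation-preserving isometry sending the common vertex $v$ of $h_1,h_2$ to $\infty$. Under $\phi_T$, the edges $h_1,h_2$ become vertical half-lines at real positions $\wt{x}_1<\wt{x}_2$, the third edge $h_3$ becomes the Euclidean semicircle from $\wt{x}_1$ to $\wt{x}_2$, and (since $h_1,h_2$ separate $g_1$ from $g_2$ and $k$ joins $g_1$ to $g_2$) the arc $k$ becomes a compact subarc $\wt{k}$ of a Euclidean semicircle with some center $(\wt{c},0)$ and radius $\wt{r}$ satisfying $\wt{c}-\wt{r}<\wt{x}_1<\wt{x}_2<\wt{c}+\wt{r}$. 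The arc $\wt{k}$ crosses $h_i$ at $(\wt{x}_i,\wt{y}_i)$ with $\wt{y}_i=\sqrt{\wt{r}^2-(\wt{x}_i-\wt{c})^2}$.

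A direct computation in the upper half-plane yields
\[
\ell(k\cap T)=\int_{\wt{x}_1}^{\wt{x}_2}\frac{\wt{r}\, d\wt{x}}{\wt{r}^2-(\wt{x}-\wt{c})^2},
\]
and on the other side, using isometry invariance of the visual metric, $d_\infty(x_1,x_2)=d_\infty(\wt{x}_1,\wt{x}_2)$ can be computed from the moved base point $\wt{o}:=\phi_T(o)$ in the upper half-plane by the standard chordal formula. If one can show that $\wt{c},\wt{r},\wt{o}$, as well as $\wt{x}_1,\wt{x}_2$, remain in a bounded region of the upper half-plane uniformly in $T$, then elementary estimates show that both the integral above and the chordal formula for $d_\infty$ are comparable to $|\wt{x}_1-\wt{x}_2|$ with uniform constants, and the desired double inequality follows at once.

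The hard part will be establishing this uniformity. The crucial input is the hypothesis that $h_1,h_2$ separate $g_1,g_2$ without crossing them (see the definition in Section~\ref{section: separation and coherence}), combined with the fact that $k$ joins $g_1$ to $g_2$. Let $k^-,k^+\in\partial\wt{S}$ denote the ideal endpoints of the complete geodesic carrying $k$. A combinatorial analysis of the cyclic order on $\partial\wt{S}$ of the eight points $\{g_1^\pm,g_2^\pm,k^\pm\}$ shows that $v$ is forced to lie in one of two specific open arcs of $\partial\wt{S}$ whose closures are disjoint from $\{k^-,k^+\}$; intuitively, the alternative configurations would put $v$ close to $k^\pm$, which would make the chord $h_i=[v,x_i]$ nearly parallel to $k$ and hence force it to cross $g_1$ or $g_2$, contradicting the separation hypothesis. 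Equivalently, in the Klein disk model the Euclidean distance from $v$ to the Euclidean chord carrying $k$ is bounded below by a positive constant depending only on $k,g_1,g_2$. This lower bound prevents the normalizing isometry $\phi_T$ from distorting $k$ arbitrarily, which in turn confines $\wt{c},\wt{r},\wt{o},\wt{x}_1,\wt{x}_2$ to a bounded region of the upper half-plane uniformly in $T\in P(g_1,g_2)$, completing the proof.
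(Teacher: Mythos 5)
Your route — normalize so that $v$ (the common endpoint of $h_1,h_2$) goes to $\infty$ and estimate explicitly in the upper half-plane — is genuinely different from the paper's argument, which avoids coordinates entirely: there one sets $J = J_0 \cup J_1 \subset \partial\wt S$ to be the two closed arcs of endpoints of geodesics separating $g_1$ and $g_2$, observes that for $y \in J$ the map $f_y$ sending a boundary point $x$ to the intersection of $k$ with the geodesic joining $y$ to $x$ is a diffeomorphism onto $k$, and extracts $C$ from compactness of $(J_0\times J_1)\cup(J_1\times J_0)$ by pinching $\norm{(df_y)_x}_x$ between $1/C$ and $C$; the two-sided inequality is then just integration over the sub-arc $[x_1,x_2]$. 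Your plan can be made to work, but as written there are two gaps.

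First, sending $v$ to $\infty$ does not determine $\phi_T$, since the stabilizer of $\infty$ in $\Isom^+(\Hb^2)$ is $2$-dimensional. Until the normalization is pinned down — for instance by also requiring $\phi_T(o)=i$ — the quantities $\wt c, \wt r, \wt x_1, \wt x_2$ are only defined up to an affine rescaling, so the assertion that they ``remain in a bounded region'' is contentless. Second, the geometric fact you need, that $v$ stays a definite distance from $\{k^-,k^+\}$, is true, but the justification you offer — that $v$ near $k^\pm$ would make $h_i$ nearly parallel to $k$ and force it to cross $g_1$ or $g_2$ — does not hold up. Even if $k^-$ lay in the interior of $J_0$, the geodesic from $k^-$ to a point of $J_1$ would still separate $g_1$ from $g_2$ without crossing either, so there is no contradiction of the sort you invoke. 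The correct reason is simpler and direct: the endpoints of $h_1,h_2$ all lie in the compact set $J$ because $h_1,h_2$ separate $g_1,g_2$, while a short cyclic-order analysis of $\{g_1^\pm,g_2^\pm,k^\pm\}$ (using that $k$ crosses both $g_1$ and $g_2$ transversally) places $k^-$ and $k^+$ in the two complementary arcs of $\partial\wt S\setminus J$, so $k^\pm\notin J$ and the distance from $v$ to $\{k^-,k^+\}$ is bounded below. With the normalization fixed and this argument substituted, the interval $[\wt x_1,\wt x_2]$ stays a compact subinterval of $(\wt c-\wt r, \wt c+\wt r)$, and your integral and visual-metric comparisons do yield the double bound.
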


\begin{proof}
	Let $J\subset\partial\wt S$ denote the set of endpoints of geodesics that separate $g_1$ and $g_2$, and note that $J$ is the union of two disjoint, closed subintervals of $\partial\wt S$, call them $J_0$ and $J_1$. For every $y\in J$, let $a,b\in\partial\wt S$ be the points such that the geodesic in $\wt S$ with endpoints $y$ and $a$ and the geodesic in $\wt S$ with endpoints $y$ and $b$ each contain an endpoint of $k$. Let $I_y$ be the closed subinterval of $\partial\wt S$ that does not contain $y$, and has $a$ and $b$ as its endpoints. Then we may define
	\[f_y: I_y\to k\] 
	to be the map that sends every $x\in I_y$ to the intersection between $k$ and the geodesic with endpoints $y$ and $x$. Observe that $f_y$ is a diffeomorphism. Also, for both $j=0,1$, $y\in J_i$ implies that $J_{1-i}$ is a compact subset that lies in the interior of $I_y$. Thus, if $y\in J_i$, then the norm $\norm{(df_y)_x}_x$ of the differential $(df_y)_x$ has a uniform positive lower bound over all points $x\in J_{1-i}$ (here, the norms $\norm{\cdot}_x$ are with respect to the Riemannian metric on $\partial\wt S$ that induces $d_\infty$, and with respect to the restriction of the hyperbolic metric on $\wt S$ to $k$). Furthermore, for every $x\in J_{1-i}$, $(df_y)_x$ varies continuously over all $y\in J_i$. Thus, by the compactness of $J_0\times J_1$, there exists $C>1$ such that
	\[\frac{1}{C}\leq \norm{(df_y)_x}_x\leq C\]
	for all pairs $(x,y)\in (J_0\times J_1)\cup (J_1\times J_0)$. 
	
	Now, let $y_0$ denote the common endpoint of $h_1$ and $h_2$, and let $L$ be the subinterval in $J$ with endpoints $x_1$ and $x_2$. Then
	\[\frac{1}{C} d_\infty(x_1,x_2)=\frac{1}{C}\int_Ldx\leq\int_L\norm{(df_{y_0})_x}_x dx\le C\int_Ldx=C d_\infty(x_1,x_2).\]
	Since $\ell (k\cap T)=\int_L\norm{(df_{y_0})_x}_x dx$, the lemma follows.
\end{proof}

Lemmas \ref{lem: lamination property} and \ref{lem: hyperbolic geometry} together give the following consequence.
\begin{lemma}\label{lem: Cauchy}
	For all $T\in P(g_1,g_2)$, let $(x_{T,1},x_{T,2},x_{T,3})$ be a coherent labeling of the vertices of $T$ with respect to $(g_1,g_2)$. Then for all $A,\nu>0$ we have that:
	\[\sum_{T\in P(g_1,g_2)} d_\infty(x_{T,1},x_{T,3})^{\nu}<\infty\,\;\;\text{ and }\;\;\,\prod_{T\in P(g_1,g_2)}(1+A\,d_\infty(x_{T,1},x_{T,3})^{\nu})<\infty\]
	
\end{lemma}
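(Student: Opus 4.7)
The plan is to reduce both statements to the estimates already established in Lemma \ref{lem: lamination property} and Lemma \ref{lem: hyperbolic geometry}, after which the second statement is a classical consequence of the first.

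To begin, I would fix any geodesic segment $k$ in $\wt S$ that intersects both $g_1$ and $g_2$ (such a $k$ exists since $g_1, g_2$ are disjoint complete geodesics in $\wt S$). By the definition of the coherent labeling, the vertices $x_{T,1}$ and $x_{T,3}$ are the endpoints of the edges $h_{T,-}$ and $h_{T,+}$ of $T$ different from their common vertex $x_{T,2}$. These are precisely the two edges of $T$ that separate $g_1$ from $g_2$. Hence Lemma \ref{lem: hyperbolic geometry} applies with $x_1 = x_{T,1}$ and $x_2 = x_{T,3}$ and provides a constant $C > 0$, independent of $T \in P(g_1,g_2)$, such that
\[
d_\infty(x_{T,1}, x_{T,3}) \;\leq\; C \, \ell(k \cap T)
\]
for every $T \in P(g_1, g_2)$.

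Raising to the $\nu$-th power and summing over $T \in P(g_1,g_2)$, I obtain
\[
\sum_{T\in P(g_1,g_2)} d_\infty(x_{T,1}, x_{T,3})^{\nu} \;\leq\; C^{\nu} \sum_{T\in P(g_1,g_2)} \ell(k \cap T)^{\nu},
\]
and the right-hand side is finite by the last assertion of Lemma \ref{lem: lamination property}. This gives the first claim.

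For the infinite product, I would invoke the standard fact that for a sequence of non-negative real numbers $(a_n)$, the product $\prod_n (1 + a_n)$ converges (to a finite, positive value) if and only if the series $\sum_n a_n$ converges. Setting $a_T := A\, d_\infty(x_{T,1}, x_{T,3})^{\nu} \geq 0$, the convergence of $\sum_{T} a_T$ established above immediately yields
\[
\prod_{T\in P(g_1,g_2)} \bigl(1 + A\, d_\infty(x_{T,1}, x_{T,3})^{\nu}\bigr) < \infty,
\]
completing the proof. Since both assertions reduce directly to invocations of results already in the paper, there is no real obstacle; the only subtlety worth noting is verifying that the vertices $x_{T,1}, x_{T,3}$ arising from the coherent labeling are indeed the correct endpoints to which Lemma \ref{lem: hyperbolic geometry} applies, which is immediate from the definitions in Section \ref{section: separation and coherence}.
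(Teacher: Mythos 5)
Your proof is correct and follows essentially the same route as the paper: fix a transversal geodesic segment $k$, use Lemma \ref{lem: hyperbolic geometry} to bound $d_\infty(x_{T,1},x_{T,3})$ by $C\,\ell(k\cap T)$, sum via Lemma \ref{lem: lamination property}, and then deduce convergence of the infinite product from convergence of the series. The paper's argument for the product is the same standard fact, phrased via $\log(1+t)\le t$ and the comparison test.
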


\begin{proof} Let $k$ be a geodesic segment that intersects both $g_1$ and $g_2$. By Lemma \ref{lem: lamination property} and Lemma \ref{lem: hyperbolic geometry},
	\[\sum_{T\in P(g_1,g_2)}d_\infty(x_{T,1},x_{T,3})^{\nu}\leq C^\nu\sum_{T\in P(g_1,g_2)}\ell(k\cap T)^{\nu}<\infty,\]
	so the first inequality holds. The second inequality follows from the first by taking logarithms, applying the comparison test and observing that $\log(1+t)\le t$ for all $t>-1$.
\end{proof}

\subsection{Existence of enveloping maps}\label{sec:existence enveloping}

We first prove Proposition \ref{prop: existence of envelopes}. 
The proof relies on some key estimates for H\"older extendable maps given in the next lemma.
\begin{lemma}\label{lem: abstract}
	Let $M \co P(g_1,g_2)\to \mathsf{SL}_d(\mathbb{K})$ be a H\"older extendable map with constants $A,\nu>0$. For each plaque $T\in P(g_1,g_2)$, let $(x_{T,1},x_{T,2},x_{T,3})$ be a coherent labeling of the vertices of $T$ with respect to $(g_1,g_2)$. Then
	\begin{enumerate}
		\item For all $T\in P(g_1,g_2)$, we have
		\[1\leq \norm{M(T)}\leq 1+A\,d_\infty(x_{T,1}, x_{T,3})^\nu.\]
		\item For any finite subset of plaques $\mathcal{X}\subset P(g_1,g_2)$, 
		\[\prod_{T\in\mathcal{X}}\norm{M(T)}\le \prod_{T\in P(g_1,g_2)}\norm{M(T)}=:D<\infty.\] 
		Moreover, $D$ is bounded above by a constant that depends only on $A$ and $\nu$.
		\item For any finite subsets $\mathcal{X}\subset\mathcal{X}'\subset P(g_1,g_2)$,
		\[\norm{\overrightarrow{M}^F(\mathcal{X}')-\overrightarrow{M}^F(\mathcal{X})}\leq AD\sum_{T\in\mathcal{X}'-\mathcal{X}}d_\infty(x_{T,1}, x_{T,3})^{\nu}.\] 
		In particular, for any finite subset $\mathcal{X}\subset P(g_1,g_2)$,
		\[\norm{\overrightarrow{M}^F(\mathcal{X})-\id}\leq AD\sum_{T\in\mathcal{X}}d_\infty(x_{T,1}, x_{T,3})^{\nu}.\] 
	\end{enumerate}
\end{lemma}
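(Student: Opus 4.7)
The plan is to handle the three claims in order, with (1) providing the per-plaque bound, (2) upgrading this to a uniform bound on finite products via Lemma \ref{lem: Cauchy}, and (3) following from a telescoping argument that inserts the missing plaques one at a time.

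For (1), the upper bound is immediate from the triangle inequality: $\norm{M(T)} \le \norm{\id} + \norm{M(T)-\id} \le 1 + Ad_\infty(x_{T,1},x_{T,3})^\nu$ by the productability assumption \eqref{prod}. For the lower bound, recall that $M(T) \in \SL_d(\Kb)$, so the product of its singular values equals $1$, forcing the largest singular value (which equals $\norm{M(T)}$) to be $\geq 1$.

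For (2), define $D$ as the supremum of $\prod_{T \in \Xc}\norm{M(T)}$ over all finite subsets $\Xc \subset P(g_1,g_2)$; since every factor is $\ge 1$ by (1), this supremum coincides with the limit along any exhausting sequence. The bound from (1) gives
\[
\prod_{T\in \Xc}\norm{M(T)} \;\leq\; \prod_{T\in \Xc}\bigl(1 + Ad_\infty(x_{T,1},x_{T,3})^\nu\bigr) \;\leq\; \prod_{T\in P(g_1,g_2)}\bigl(1 + Ad_\infty(x_{T,1},x_{T,3})^\nu\bigr),
\]
and the right-hand side is finite by Lemma \ref{lem: Cauchy} (applied with the same $A,\nu$). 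Hence $D < \infty$ and the displayed inequality in (2) holds.

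For (3), enumerate $\Xc'\setminus\Xc = \{S_1, \dots, S_m\}$ according to the total order on $P(g_1,g_2)$, and set $U_0 := \Xc$ and $U_k := \Xc \cup \{S_1, \dots, S_k\}$, so that $U_m = \Xc'$. Telescoping,
\[
\overrightarrow{M}^F(\Xc') - \overrightarrow{M}^F(\Xc) \;=\; \sum_{k=1}^m \bigl(\overrightarrow{M}^F(U_k) - \overrightarrow{M}^F(U_{k-1})\bigr).
\]
For each $k$, splitting $U_{k-1}$ into its elements below $S_k$ and above $S_k$ (call the corresponding products $A_k$ and $B_k$), we obtain $\overrightarrow{M}^F(U_{k-1}) = A_k B_k$ and $\overrightarrow{M}^F(U_k) = A_k \, M(S_k) \, B_k$, so that
\[
\overrightarrow{M}^F(U_k) - \overrightarrow{M}^F(U_{k-1}) \;=\; A_k \bigl(M(S_k) - \id\bigr) B_k.
\]
Submultiplicativity of $\norm{\cdot}$, together with (1) and (2), gives $\norm{A_k}\norm{B_k} \leq \prod_{T\in U_{k-1}}\norm{M(T)} \le D$ and $\norm{M(S_k)-\id}\le A d_\infty(x_{S_k,1},x_{S_k,3})^\nu$. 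Summing over $k$ yields the first inequality in (3); the second follows from the case $\Xc = \emptyset$, since $\overrightarrow{M}^F(\emptyset) = \id$. No step presents a serious obstacle; the main point is organizational, namely choosing the right telescoping order so that the uniform product bound $D$ from (2) controls the prefactors and suffixes in each difference.
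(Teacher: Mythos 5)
Your proof is correct and follows essentially the same approach as the paper: triangle inequality plus the singular-value argument for (1), appealing to Lemma \ref{lem: Cauchy} for (2), and a telescoping argument for (3) where each difference $\overrightarrow{M}^F(U_k) - \overrightarrow{M}^F(U_{k-1})$ is factored as a prefix times $(M(S_k)-\id)$ times a suffix, with the prefix and suffix norms controlled by the uniform bound $D$ from (2). The paper's telescoping chain is presented with an arbitrary ordering of the inserted plaques while you fix the total-order enumeration; this makes no difference to the argument.
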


\begin{proof}
	Claim (1): The upper bound for $\norm{M(T)}$ follows at once from the definition of H\"older extendable map and the triangle inequality. The lower bound for $\norm{M(T)}$ holds because $M(T)\in\mathsf{SL}_d(\mathbb{K})$, and so it has at least one singular value that is bounded below by $1$.
	
	Claim (2): By Claim (1) and Lemma \ref{lem: Cauchy},
	\[\prod_{T\in\mathcal{X}}\norm{M(T)}\le\prod_{T\in P(g_1,g_2)}\norm{M(T)}\leq\prod_{T\in P(g_1,g_2)}(1+A\,d_\infty(x_{T,1}, x_{T,3})^{\nu})<\infty . \]
	
	Claim (3): Choose $(\mathcal{X}_i)_i$ such that
	\[
	\mathcal{X} = \mathcal{X}_0 \subset \mathcal{X}_1 \subset \dots \subset \mathcal{X}_k = \mathcal{X}'
	\]
	and $\mathcal{X}_{i+1} = \mathcal{X}_i \cup \{ T_i \}$ for some plaque $T_i$ of $\wt\lambda$. Then,
	\begin{align*}\label{eq:finite_sets}
		\begin{split}
			\norm{\overrightarrow{M}^F(\mathcal{X}')-\overrightarrow{M}^F(\mathcal{X})} & \leq \sum_{i=1}^k\norm{\overrightarrow{M}^F(\mathcal{X}_i)-\overrightarrow{M}^F(\mathcal{X}_{i-1})}\\
			&\leq\sum_{i=1}^k\left(\norm{M(T_i)-\id}\prod_{T\in\mathcal{X}_{i-1}}\norm{M(T)}\right)\\
			& \leq AD\sum_{T \in \mathcal{X}' - \mathcal{X}} d_\infty(x_{T,1},x_{T,3})^{\nu}
		\end{split}
	\end{align*}
	where the third inequality follows from Claims (1) and (2).
\end{proof}

\begin{proof}[Proof of Proposition \ref{prop: existence of envelopes}]	
	Let $A,\nu>0$ be constants for the H\"older extendable map $M$. By Lemma \ref{lem: abstract} Claim (2), we may define
	\[D:=\prod_{T\in P(g_1,g_2)}\norm{M(T)}.\] 
	For any $\epsilon>0$, Lemma \ref{lem: Cauchy} ensures that there is a finite subset $\mathcal{X}_\epsilon\subset P(h_1,h_2)$ such that
	\[AD\sum_{T\in P(h_1,h_2)-\mathcal{X}_\epsilon}d_\infty(x_{T,1},x_{T,3})^{\nu}\leq \epsilon.\]
	
	Fix an exhausting sequence $(\mathcal{X}_n)_{n=1}^\infty$ of $P(h_1,h_2)$ and let $0<\epsilon<1$. Choose $N>0$ such that $\mathcal{X}_\epsilon\subseteq\mathcal{X}_n$ for all $n>N$. Let $m,n$ be positive integers such that $m,n>N$. Assume without loss of generality that $m\geq n$, in which case $\mathcal{X}_n\subseteq\mathcal{X}_m$. By Lemma \ref{lem: abstract} Claim (3),
	\begin{align*}
		\norm{\overrightarrow{M}^F(\mathcal{X}_m)-\overrightarrow{M}^F(\mathcal{X}_n)}
		\leq AD \sum_{T \in \mathcal{X}_m - \mathcal{X}_n} d_\infty(x_{T,1},x_{T,3})^{\nu} \leq \epsilon.
	\end{align*}
	This proves that $\left(\overrightarrow{M}^F(\mathcal{X}_n)\right)_{n=1}^\infty$ is a Cauchy sequence in $\End(\mathbb{K}^d)$. Since $\mathsf{SL}_d(\mathbb{K})$ is a closed subset of the complete normed space $(\End(\mathbb{K}^d), \norm{\cdot})$, the limit $\lim_{n\to\infty}\overrightarrow{M}^F(\mathcal{X}_n)$ exists and lies in $\mathsf{SL}_d(\mathbb{K})$. 
	
	Finally, we prove that the limits $\lim_{n\to\infty}\overrightarrow{M}^F(\mathcal{X}_n)$ agree for all exhausting sequences $(\mathcal{X}_n)_{n=1}^\infty$ in $P(h_1,h_2)$. To do so, it is sufficient to show that 
	\begin{align}\label{eqn: cauchy2}
		\lim_{ n \to \infty} \overrightarrow{M}^F(\mathcal{X}_n) = \lim_{ n \to \infty} \overrightarrow{M}^F(\mathcal{X}_n')
	\end{align}
	for any pair of exhausting sequences $(\mathcal{X}_n)_{n=1}^\infty$ and $(\mathcal{X}_n')_{n=1}^\infty$ such that $\mathcal{X}_n \subseteq \mathcal{X}_n'$ for all $n$. Let $\epsilon>0$, and choose $N>0$ such that $\mathcal{X}_\epsilon\subseteq\mathcal{X}_n$ for all $n>N$. Then by Lemma \ref{lem: abstract} Claim (3), 
	\begin{align*}
		\norm{\overrightarrow{M}^F(\mathcal{X}_n')-\overrightarrow{M}^F(\mathcal{X}_n)}
		&\leq AD \sum_{T \in \mathcal{X}_n' - \mathcal{X}_n} d_\infty(x_{T,1},x_{T,3})^{\nu} \leq \epsilon,
	\end{align*}
	so identity \eqref{eqn: cauchy2} holds.
\end{proof}

\subsection{H\"older continuity of enveloping maps}\label{sec:Holder enveloping}
Next, we prove Proposition \ref{prop: Holder productable}. To do so, we use the following estimate.
\begin{lemma}\label{lem: Holder estimate}
	Let $M \co P(g_1,g_2)\to \mathsf{SL}_d(\mathbb{K})$ be a H\"older extendable map. Then there exist constants $A'', \nu''>0$ such that for all  pairs of leaves $h_1,h_2\in Q(g_1,g_2)$,
	\[\norm{\overrightarrow{M}(h_1,h_2)-\id}\leq A''d_\infty(h_1,h_2)^{\nu''}.\]
\end{lemma}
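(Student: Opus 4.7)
The strategy is to extract from Lemma \ref{lem: abstract}(3) the bound
\[
\norm{\overrightarrow{M}(h_1,h_2)-\id}\leq AD\sum_{T\in P(h_1,h_2)}d_\infty(x_{T,1},x_{T,3})^\nu,
\]
and then to show that the tail sum on the right is controlled by $d_\infty(h_1,h_2)^{\nu''}$ for some $\nu''>0$. First I would reduce to the case $h_1\leq h_2$ in the order on $Q(g_1,g_2)$: if instead $h_2\leq h_1$, then $\overrightarrow{M}(h_1,h_2)=\overrightarrow{M}(h_2,h_1)^{-1}$, and since $\overrightarrow{M}(h_2,h_1)\in\SL_d(\Kb)$ has norm at most $D=\prod_{T\in P(g_1,g_2)}\norm{M(T)}<\infty$ (Lemma \ref{lem: abstract}(2)), its inverse has norm at most $D^{d-1}$; writing $\overrightarrow{M}(h_1,h_2)-\id=-\overrightarrow{M}(h_1,h_2)(\overrightarrow{M}(h_2,h_1)-\id)$ reduces this to the case $h_1\leq h_2$, which I handle by applying Lemma \ref{lem: abstract}(3) to an exhausting sequence $(\Xc_n)$ of $P(h_1,h_2)$ and passing to the limit.

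Next, I would establish the key geometric inequality: there exists a constant $C>0$, depending only on $g_1,g_2$, such that
\[
d_\infty(x_{T,1},x_{T,3})\leq C\,d_\infty(h_1,h_2)\qquad\text{for every }T\in P(h_1,h_2).
\]
Coherently orienting $h_1,h_2$, their four endpoints partition $\partial\widetilde{S}$ into two arcs. Any $T\in P(h_1,h_2)$ has two edges $h_{T,-},h_{T,+}$ lying in $Q(h_1,h_2)$ with coherent orientations, so each of them has exactly one endpoint in each arc. Since these two leaves share the vertex $x_{T,2}$, the remaining vertices $x_{T,1}$ and $x_{T,3}$ must both lie in the other arc. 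Therefore $d_\infty(x_{T,1},x_{T,3})$ is bounded by the $d_\infty$-diameter of that arc, which in turn is comparable to $d_\infty(h_1,h_2)$; the comparison relies on the fact that the visual metric and the arc-length metric on $\partial\widetilde{S}$ are biLipschitz on any compact region, together with the observation that the endpoints of leaves in $Q(g_1,g_2)$ stay in a compact subset of $\partial\widetilde{S}$.

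Finally, I would combine the first two steps through the split $t^\nu=t^{\nu/2}\cdot t^{\nu/2}$ applied to $t=d_\infty(x_{T,1},x_{T,3})$, which gives
\[
\sum_{T\in P(h_1,h_2)}d_\infty(x_{T,1},x_{T,3})^\nu\leq C^{\nu/2}\,d_\infty(h_1,h_2)^{\nu/2}\sum_{T\in P(g_1,g_2)}d_\infty(x_{T,1},x_{T,3})^{\nu/2}.
\]
The right-hand sum is finite by Lemma \ref{lem: Cauchy} applied with exponent $\nu/2$, yielding the estimate with $\nu'':=\nu/2$ and an explicit constant $A''$. I expect the main obstacle to be the geometric estimate in the second step: verifying that $x_{T,1}$ and $x_{T,3}$ always lie in the same boundary arc is a case analysis in the coherent orientations of $h_{T,\pm}$, and translating the arc $d_\infty$-diameter into $d_\infty(h_1,h_2)$ requires the biLipschitz comparison above. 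Everything else is a clean limit argument plus the convergence bounds already packaged in Lemmas \ref{lem: abstract} and \ref{lem: Cauchy}.
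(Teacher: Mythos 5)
Your proposal is correct and takes a genuinely different route from the paper. The paper's proof passes from $\sum_{T\in P(h_1,h_2)}d_\infty(x_{T,1},x_{T,3})^\nu$ to $\sum_{T\in P(h_1,h_2)}\ell(k\cap T)^\nu$ via Lemma \ref{lem: hyperbolic geometry}, and then invokes the train-track function $r:P(g_1,g_2)\to\Zb^+$ from Lemma \ref{lem: lamination property} (Bonahon's estimate) to compress the sum $\sum_T \ell(k\cap T)^\nu$ into a power $\ell(k\cap T_0)^{\nu''}$ of the largest term, with $\nu'':=\nu\mu'/\mu$ depending on the lamination constants $\mu,\mu'$; it then bounds the single term by $\sum_{T\in P(h_1,h_2)}\ell(k\cap T)\lesssim d_\infty(h_1,h_2)$. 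You avoid that combinatorial compression entirely: splitting $t^\nu=t^{\nu/2}\cdot t^{\nu/2}$, bounding one factor by the single-term geometric estimate $d_\infty(x_{T,1},x_{T,3})\leq C\,d_\infty(h_1,h_2)$, and controlling the remaining $\nu/2$-sum over all of $P(g_1,g_2)$ by the absolute constant supplied by Lemma \ref{lem: Cauchy}. This is more elementary (the function $r$ never appears explicitly, though it is hidden inside the proof of Lemma \ref{lem: Cauchy}) and yields the clean exponent $\nu''=\nu/2$, at the cost of being slightly less sharp than the paper's $\nu\mu'/\mu$. Your reduction of the $h_2\leq h_1$ case to $h_1\leq h_2$ is correct (the paper does the same via $\norm{\overrightarrow{M}(h_1,h_2)}\norm{\overrightarrow{M}(h_2,h_1)-\id}$; your $D^{d-1}$ bound on the inverse via singular values is fine). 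Your geometric claim is also sound, and is actually weaker than the sum bound $\sum_{T\in P(h_1,h_2)}d_\infty(x_{T,1},x_{T,3})\leq 2\,d_\infty(h_1,h_2)$ that the paper uses in display \eqref{eqn: Holder 3}: both rest on the observation that $x_{T,1}$ and $x_{T,3}$ lie in a common boundary arc cut out by the endpoints of $h_1$ and $h_2$, together with the fact that the visual metric on $\partial\wt S\cong\Sb^1$ is globally biLipschitz to arc-length (so your caveat about compact regions is unnecessary but harmless).
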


\begin{proof}
	Fix a geodesic arc $k$ that intersects both $g_1$ and $g_2$, and let $C$ be the constant given in Lemma \ref{lem: hyperbolic geometry}. By Lemma \ref{lem: abstract} Claim (2), we may define 
	\[D:=\prod_{T\in P(g_1,g_2)}\norm{M(T)}.\] 
	If $h_1\leq h_2$ with respect to the total order of $Q(g_1,g_2)$, it follows from Lemma~\ref{lem: abstract} Claim (3) that
	\begin{align*}
		\norm{\overrightarrow{M}(h_1,h_2)-\id}\leq AD\sum_{T \in P(h_1,h_2)}d_\infty(x_{T,1}, x_{T,3})^{\nu}.
	\end{align*} 
	If $h_2\leq h_1$, then by Lemma~\ref{lem: abstract} Claims (2) and (3),
	\begin{align*}
		\norm{\overrightarrow{M}(h_1,h_2)-\id}\leq \norm{\overrightarrow{M}(h_1,h_2)}\norm{\overrightarrow{M}(h_2,h_1)-\id}\leq AD^2\sum_{T \in P(h_1,h_2)}d_\infty(x_{T,1}, x_{T,3})^{\nu}.
	\end{align*}
	By Lemma \ref{lem: abstract} Claim (1), $D\geq 1$. Thus, by Lemma \ref{lem: hyperbolic geometry}, we have
	\begin{align}\label{eqn: Holder 1}
		\norm{\overrightarrow{M}(h_1,h_2)-\id}&\leq AD^2\sum_{T\in P(h_1,h_2)}d_\infty(x_{T,1},x_{T,3})^{\nu}\\
		&\leq AD^2C^{\nu}\sum_{T\in P(h_1,h_2)}\ell(k\cap T)^{\nu}\nonumber
	\end{align}
	for arbitary leaves $h_1,h_2\in Q(g_1,g_2)$. 
	
	Let $r\co P(g_1,g_2)\to\mathbb{Z}^+$ be the function and $E,E',E'',\mu,\mu'>0$ be the constants given by Lemma~\ref{lem: lamination property}. Let $a\in\mathbb{Z}^+$ be the minimum value of the function $r$ over the set $P(h_1,h_2)$, let $T_0\in P(h_1,h_2)$ be a plaque that satisfies $r(T_0)=a$, and let ${\nu''}:=\frac{\nu\mu'}{\mu}$. Then 
	\begin{align}
		\sum_{T\in P(h_1,h_2)}\ell(k\cap T)^{\nu}&\leq (E')^{\nu}\sum_{T\in P(h_1,h_2)}e^{-\nu\mu'r(T)}\leq (E')^{\nu} E''\sum_{i=a}^\infty e^{-\nu\mu'i}\nonumber\\
		&\leq (E')^{\nu} E''\frac{e^{-\nu \mu'a}}{1-e^{-\nu\mu'}}= (E')^{\nu} \frac{E''}{1-e^{-\nu\mu'}}e^{-\nu''\mu a}\nonumber\\
		&\leq \frac{(E')^{\nu}}{E^{\nu''}} \frac{E''}{1-e^{-\nu\mu'}}\ell(k\cap T_0)^{\nu''}\label{eqn: Holder 2}\\
		&\leq \frac{(E')^{\nu}}{E^{\nu''}} \frac{E''}{1-e^{-\nu\mu'}}\left(\sum_{T\in P(h_1,h_2)}\ell(k\cap T)\right)^{\nu''}.\nonumber
	\end{align}
	
	Furthermore, Lemma \ref{lem: hyperbolic geometry} implies that 
	\begin{align}\label{eqn: Holder 3}
		\sum_{T\in P(h_1,h_2)}\ell(k\cap T)&\leq C\sum_{T\in P(h_1,h_2)}d_\infty(x_{T,1},x_{T,3})\leq 2C d_\infty(h_1,h_2).
	\end{align}
	Combining inequalities \eqref{eqn: Holder 1}, \eqref{eqn: Holder 2}, and \eqref{eqn: Holder 3} we have that there exists a constant $A''>0$ such that
	\[\norm{\overrightarrow{M}(h_1,h_2)-\id}\leq A''d_\infty(h_1,h_2)^{\nu''}\]
	and the lemma holds.
\end{proof}

\begin{proof} [Proof of Proposition \ref{prop: Holder productable}]
	As before, let 
	\[D:=\prod_{T\in P(g_1,g_2)}\norm{M(T)}\]
	and note that $D\geq 1$ by Lemma \ref{lem: abstract} Claim (1). Since $Q(g_1,g_2)$ is compact, it suffices to prove the proposition for pairs $(\ell_1,\ell_2)$ and $(h_1,h_2)$ such that $d_\infty((h_1,h_2),(\ell_1,\ell_2))<1$.
	By Proposition \ref{prop:cocycle} Claim (2), we have that
	\[\overrightarrow{M}(\ell_1,\ell_2)=\overrightarrow{M}(\ell_1,h_1)\overrightarrow{M}(h_1,h_2)\overrightarrow{M}(h_2,\ell_2).\]
	By Lemma~\ref{lem: abstract} Claim (2), $\norm{\overrightarrow{M}(h_1,h_2)},\norm{\overrightarrow{M}(\ell_1,h_2)}\leq D$. Furthermore, Lemma~\ref{lem: Holder estimate} implies that there are constants $A'',\nu''>0$ such that 
	\[\norm{\overrightarrow{M}(h_2,\ell_2)-\id}\le A''d_\infty(h_2,\ell_2)^{\nu''}\quad\text{and}\quad\norm{\overrightarrow{M}(\ell_1,h_1)-\id}\le A''d_\infty(\ell_1,h_1)^{\nu''}.\]
	Since $d_\infty(h_2,\ell_2)$ and $d_\infty(\ell_1,h_1)$ are bounded above by $d_\infty((h_1,h_2),(\ell_1,\ell_2))$ which is less than $1$, we may assume that $\nu''<1$. Then
	\begin{align*}
		\norm{\overrightarrow{M}(\ell_1,\ell_2)-\overrightarrow{M}(h_1,h_2)} &\leq \norm{\overrightarrow{M}(h_1,h_2)}\norm{\overrightarrow{M}(\ell_1,h_1)}\norm{\overrightarrow{M}(h_2,\ell_2)-\id}\\
		&\hspace{1cm}+\norm{\overrightarrow{M}(h_1,h_2)}\norm{\overrightarrow{M}(\ell_1,h_1)-\id}\\
		&\leq D^2\left(A''d_\infty(h_2,\ell_2)^{\nu''}+A''d_\infty(\ell_1,h_1)^{\nu''}\right)\\
		&\leq 2A''D^2d_\infty((h_1,h_2),(\ell_1,\ell_2))^{\nu''}.
	\end{align*}
	Thus, the inequality holds with $A':=2A''D^2$ and $\nu':=\nu''$.
\end{proof}

\subsection{Combining H\"older extendable maps}\label{sec:combine productable}
We will now prove Proposition \ref{prop: renormalization}.

\begin{proof}[Proof of Proposition \ref{prop: renormalization}]
	Since $M$ and $N$ are H\"older extendable, there are constants $A,\nu>0$ such that 
	\[\norm{M(T)^{-1}-\id},\,\,\norm{N(T)-\id}\leq A\,d_\infty(x_{T,1},x_{T,3})^\nu\]
	for all plaques $T\in P(g_1,g_2)$, where $(x_{T,1},x_{T,2},x_{T,3})$ is a coherent labeling of the vertices of $T$ with respect to $(g_1,g_2)$. Since $M$ is H\"older extendable, Lemma \ref{lem: Holder estimate} implies that by enlarging $A$ and $\nu$ if necessary, we may assume that 
	\[\norm{\overrightarrow{M}(h_1,h_2)-\id}\leq A\,d_\infty(h_1,h_2)^{\nu}\]
	for all $h_1,h_2\in Q(g_1,g_2)$. Also, by Proposition \ref{prop:cocycle} parts (1) and (2),
	\[\overrightarrow{M}(g_1,h_{T,\pm})\overrightarrow{M}(h_{T,\pm},g_1)=\id\,\,\text{ and }\,\,\overrightarrow{M}(g_1,h_{T,-})M(T)=\overrightarrow{M}(g_1,h_{T,+})\]
	for all plaques $T\in P(g_1,g_2)$. It follows that 
	\[
	N'(T)=\overrightarrow{M}(g_1,h_{T,-})N(T)M(T)^{-1}\overrightarrow{M}(h_{T,-},g_1).
	\]
	Thus, if we set $B:=A\,d_\infty(g_1,g_2)^{\nu}+1$, then
	\begin{align*}
		\norm{N'(T)-\id}&\leq \norm{\overrightarrow{M}(g_1,h_{T,-})}\norm{\overrightarrow{M}(h_{T,-},g_1)}\left(\norm{N(T)}\norm{M(T)^{-1}-\id}+\norm{N(T)-\id}\right)\\
		&\leq 2B^2(1+B)A \, d_\infty(x_{T,1},x_{T,3})^\nu.
	\end{align*}
	This proves that the map $N'$ is H\"older extendable.
	
	Next, we show that the extension of $N'$ satisfies
	\[\overrightarrow{N}'(g_1,h)=\overrightarrow{N}(g_1,h)\overrightarrow{M}(h,g_1)\]
	for all $h\in Q(g_1,g_2)$. Let $(\mathcal{X}_n)_{n=1}^\infty$ be an exhausting sequence in $P(g_1,h)$. For every positive integer $n$, we enumerate $\mathcal{X}_n=\{T_{n,1},\dots,T_{n,m_n}\}$ so that $T_{n,1}<\dots<T_{n,m_n}$ in $P(g_1,g_2)$, we denote $h_{n,i,\pm}:=h_{T_{n,i},\pm}$ for all $i\in\{1,\dots,m_n\}$, and we set $h_{n,0,+}:=g_1$. Then
	\begin{align*}
		\overrightarrow{N'}^F(\mathcal{X}_n)&=N'(T_{n,1})\ldots N'(T_{n,m_n})\\
		&=\overrightarrow{M}(h_{n,0,+},h_{n,1,-})N(T_{n,1})\overrightarrow{M}(h_{n,1,+},h_{n,2,-})N(T_{n,2})\overrightarrow{M}(h_{n,2,+},h_{n,3,-})\\
		&\hspace{.5cm} N(T_{n,3})\dots\overrightarrow{M}(h_{n,m_n-1,+},h_{n,m_n,-})N(T_{n,m_n})\overrightarrow{M}(h_{n,m_n,+},g_1).
	\end{align*}
	Since $\overrightarrow{M}(h_{n,m_n,+},g_1)\to\overrightarrow{M}(h,g_1)$ as $n\to\infty$, it suffices to show that
	\begin{align}\label{eqn: alt prod}\lim_{n\to\infty}\overrightarrow{M}(h_{n,0,+},h_{n,1,-})N(T_{n,1})\dots\overrightarrow{M}(h_{n,m_n-1,+},h_{n,m_n,-})N(T_{n,m_n})=\overrightarrow{N}(g_1,h).
	\end{align}
	
	Lemma \ref{lem: abstract} part (2) implies that there is some $D>0$ such that for all $n$,
	\[\prod_{i=0}^{m_n-1}\norm{\overrightarrow{M}(h_{n,i,+},h_{n,i+1,-})},\,\,\prod_{T\in \mathcal{X}_n}\norm{N(T)}\leq D.\]
	As such,
	\begin{align*}
		&\hspace{.5cm}\norm{\overrightarrow{M}(h_{n,0,+},h_{n,1,-})N(T_{n,1})\dots\overrightarrow{M}(h_{n,m_n-1,+},h_{n,m_n,-})N(T_{n,m_n})-\overrightarrow{N}(g_1,h)}\\
		&\leq\sum_{j=0}^{m_n-1}\left(\left(\prod_{T\in \mathcal{X}_n}\norm{N(T)}\right)\norm{\overrightarrow{M}(h_{n,j,+},h_{n,j+1,-})-\id}\left(\prod_{i=j+1}^{m_n-1}\norm{\overrightarrow{M}(h_{n,i,+},h_{n,i+1,-})}\right)\right)\\
		&\hspace{.5cm}+\norm{\overrightarrow{N}^F(\mathcal{X}_n)-\overrightarrow{N}(g_1,h)}\\
		&\leq D^2A\sum_{j=0}^{m_n-1}d_\infty(h_{n,j,+},h_{n,j+1,-})^\nu+\norm{\overrightarrow{N}^F(\mathcal{X}_n)-\overrightarrow{N}(g_1,h)}.
	\end{align*}
	Since $(\mathcal{X}_n)_{n=1}^\infty$ exhausts $P(g_1,h)$, 
	\[\lim_{n\to\infty}\norm{\overrightarrow{N}^F(\mathcal{X}_n)-\overrightarrow{N}(g_1,h)}= 0,\] 
	and Lemma \ref{lem: Cauchy} implies that
	\[\lim_{n\to\infty}\sum_{j=0}^{m_n-1}d_\infty(h_{n,j,+},h_{n,j+1,-})^\nu=0.\]
	It follows that equation \eqref{eqn: alt prod} holds.
\end{proof}

\section{Slithering maps}\label{sec: slithering}

		Recall that in Section \ref{shear-bend} we introduced the slithering map to associate a $\lambda$--cocylic pair to a $d$--pleated surface with pleating locus $\lambda$. More precisely, given a $\lambda$--transverse and locally $\lambda$--H\"older continuous map $\xi\co\partial\wt\lambda\to\mathcal{F}(\mathbb{K}^d)$ we defined its compatible slithering map $\Sigma\co \wt \Lambda^2\to\mathsf{SL}_d(\mathbb{K})$ (see Definition \ref{def: slithering map}). This section is dedicated to the proof of Theorem \ref{thm: slitherable and slithering}, that is the existence and uniqueness of the slithering map compatible to $\xi$. In addition, we show that if $\xi$ is equivariant with respect to a representation $\rho\co \Gamma\to\mathsf{PGL}_d(\mathbb{K})$, then so is $\Sigma$. We divide the proof of Theorem \ref{thm: slitherable and slithering} into four broad steps:
		\begin{enumerate}
			\item[\underline{Step 1:}] For any leaves $g_1$ and $g_2$ of $\wt\lambda$, use $\xi$ to define a map $M_{{\rm slither}}\co P(g_1,g_2)\to\mathsf{SL}_d(\mathbb{K})$ and show that it is H\"older extendable (see Definition \ref{def_productable}).
			\item[\underline{Step 2:}] Using the extensions of $M_{{\rm slither}}$, define a map $\Sigma\co \wt\Lambda^2\to\mathsf{SL}_d(\mathbb{K})$ and show that it is a slithering map compatible with $\xi$.
			\item[\underline{Step 3:}] Prove that the map $\Sigma$ is uniquely characterized by its properties.
			\item[\underline{Step 4:}] Show that if there exists $\rho\co\Gamma\to\mathsf{PGL}_d(\mathbb{K})$ such that $\xi$ is $\rho$--equivariant, then $\Sigma$ is also $\rho$--equivariant.
		\end{enumerate}
		These steps are proved in the following four subsections.

		\subsection{Define a H\"older extendable map \texorpdfstring{$M_{{\rm slither}}$}{Mslither}.} \label{subsec:define productable}
		Fix leaves $g_1$ and $g_2$ of $\wt\lambda$. Recall that for any plaque $T\in P(g_1,g_2)$, we denote by $h_{T,-}$ and $h_{T,+}$ the two leaves of $T$ that separate $g_1$ and $g_2$, so that $h_{T,-}$ separates $g_1$ and $h_{T,+}$. Then we say that $(x_{T,1}, x_{T,2}, x_{T,3})$ is a coherent labeling of the vertices of $T$ if $x_{T,2}$ is the common endpoint of $h_{T,-}$ and $h_{T,+}$, and $x_{T,1}$ and $x_{T,3}$ are respectively the endpoints of $h_{T,-}$ and $h_{T,+}$ that are not $x_{T,2}$.
		
		Recall that $\mathcal{F}(\mathbb{K}^d)$ denotes the space of flags in $\mathbb{K}^d$. Let
		\[\mathfrak{T}:=\{(G_1,F,G_2)\in\mathcal{F}(\mathbb{K}^d)^3: F\text{ and }G_i\text{ are transverse for both }i=1,2\},\]
		and let 
		\[U:\mathfrak{T}\to\mathsf{SL}_d(\mathbb{K})\] 
		be the map that sends each triple of flags $(G_1,F,G_2)$ to the unique unipotent element that fixes $F$ and that sends $G_2$ to $G_1$. Then define
		\[M_{\rm slither}:=M_{{\rm slither},g_1,g_2}\co P(g_1,g_2)\to\mathsf{SL}_d(\mathbb{K})\] 
		by setting $M_{{\rm slither}}(T):=U\left(\xi(\mathbf{x}_T)\right)$ for all plaques $T \in P(g_1,g_2)$, where $\mathbf{x}_T = (x_{T,1}, x_{T,2}, x_{T,3})$ is the coherent labeling of the vertices of $T$ with respect to $(g_1, g_2)$.
		
		\begin{proposition}\label{prop: slithering estimate}
			The map 
			\[M_{\rm slither}\co P(g_1,g_2)\to\mathsf{SL}_d(\mathbb{K})\] 
			is H\"older extendable. 
		\end{proposition}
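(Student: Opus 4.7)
The plan is to rewrite $M_{\rm slither}(T)$ as the value of the smooth map $U$ on a triple that is close to a ``diagonal'' triple $(G,F,G)$ on which $U$ vanishes (i.e. equals $\id$), and to measure this closeness using the locally $\lambda$-H\"older continuity of $\xi$.

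Fix coherent orientations $\gsf_1, \gsf_2$ on $g_1, g_2$, and for each $T \in P(g_1,g_2)$ orient the separating edges $h_{T,-}, h_{T,+}$ as $\hsf_{T,-}, \hsf_{T,+} \in Q(\gsf_1,\gsf_2)$. A short inspection of the cyclic order on $\partial \wt S$ shows that $\hsf_{T,-}$ and $\hsf_{T,+}$ share a common endpoint, namely $x_{T,2}$, while their remaining endpoints are $x_{T,1}$ and $x_{T,3}$. Hence, under the product metric on $\wt \Lambda^o$,
\[
d_\infty(\hsf_{T,-}, \hsf_{T,+}) \ \text{ is comparable to }\ d_\infty(x_{T,1}, x_{T,3}).
\]
Applying the locally $\lambda$-H\"older continuity of $\xi$, in the equivalent formulation recorded after Definition \ref{def: lambda continuous}, to the coherently oriented pair $(\gsf_1,\gsf_2)$ yields constants $A_1,\nu>0$ (depending on $g_1,g_2$ but not on $T$) such that
\[
d_{\Fc}(\xi(x_{T,1}), \xi(x_{T,3})) \ \leq \ A_1\, d_\infty(x_{T,1}, x_{T,3})^\nu
\]
for every $T \in P(g_1,g_2)$.

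Next, I will argue that the triples $(\xi(x_{T,1}), \xi(x_{T,2}), \xi(x_{T,3}))$ lie in a \emph{fixed} compact subset $K$ of $\mathfrak{T}$. They lie in $\mathfrak{T}$ by $\lambda$-transversality of $\xi$. For compactness, observe that the set $Q(\gsf_1,\gsf_2)$ is contained in a compact subset of $\wt\Lambda^o$ (its closure inside the space of oriented geodesics with forward and backward endpoints in the two compact arcs of $\partial\wt S$ bounding the strip between $\gsf_1$ and $\gsf_2$), and hence its image under $\hsf \mapsto (\xi(\hsf^+),\xi(\hsf^-))$ is a compact subset of the transverse pairs in $\Fc(\Kb^d)^2$ by the locally H\"older continuity of $\xi$. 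The triples $(\xi(x_{T,1}), \xi(x_{T,2}), \xi(x_{T,3}))$ are built from two such transverse pairs, so they lie in a compact $K \subset \mathfrak{T}$ as claimed.

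Finally, since $U \colon \mathfrak{T} \to \SL_d(\Kb)$ is a smooth (in fact rational) map and $U(G, F, G) = \id$ whenever $(G,F,G) \in \mathfrak{T}$, the mean value inequality on the compact set $K$ provides a constant $C>0$ such that
\[
\|U(G_1,F,G_2) - \id\|\ =\ \|U(G_1,F,G_2) - U(G_2,F,G_2)\|\ \leq\ C\, d_{\Fc}(G_1,G_2)
\]
for every $(G_1,F,G_2) \in K$. Specializing this to the triple $(\xi(x_{T,1}), \xi(x_{T,2}), \xi(x_{T,3}))$ and combining with the H\"older estimate above gives
\[
\|M_{\rm slither}(T) - \id\|\ \leq\ C A_1\, d_\infty(x_{T,1},x_{T,3})^\nu,
\]
establishing productability of $M_{\rm slither}$ with constants $A:=CA_1$ and exponent $\nu$. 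The main technical point is the compactness argument: confirming that $Q(\gsf_1,\gsf_2)$ is relatively compact in $\wt\Lambda^o$ and thereby upgrading the local H\"older hypothesis on $\xi$ to a uniform H\"older bound over all plaques in $P(g_1,g_2)$, as well as ensuring that the corresponding triples stay in a fixed compact subset of $\mathfrak{T}$ where $U$ has a uniform Lipschitz constant.
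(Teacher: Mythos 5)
Your proof is correct and follows essentially the same route as the paper's: both apply the locally $\lambda$-H\"older continuity of $\xi$ on the compact set $Q(\gsf_1,\gsf_2)$ to obtain a uniform H\"older estimate on $d_{\Fc}(\xi(x_{T,1}),\xi(x_{T,3}))$, and then exploit the continuous differentiability of $U$ together with compactness to get a uniform Lipschitz constant, with $U(G,F,G)=\id$ as the anchoring identity. Your explicit remark that $d_\infty(\hsf_{T,-},\hsf_{T,+})$ reduces to $d_\infty(x_{T,1},x_{T,3})$ because the two edges share the endpoint $x_{T,2}$ spells out a step the paper passes over silently, but the underlying argument is the same.
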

		
		\begin{proof}
			The case $g_1=g_2$ is trivial, so we may assume $g_1\neq g_2$. We need to prove that there are constants $A,\nu>0$ such that 
			\[\norm{M_{\rm slither}(T)-\id}\le A \, d_\infty(x_{T,1}, x_{T,3})^\nu\]
			for all $T\in P(g_1,g_2)$, where ${\bf x}_T=(x_{T,1},x_{T,2},x_{T,3})$ is a coherent labeling of the vertices of $T$ with respect to $(g_1, g_2)$. 
			
			Let $\mathsf{g}_1, \mathsf{g}_2\in\wt\Lambda^o$ be oriented in parallel and satisfy $\pi_\Lambda(\mathsf{g}_j) = g_j$ for $j = 1, 2$. Since $\xi$ is locally $\lambda$--H\"older continuous, there exist constants $A',\nu'>0$ such that 
			\[d_{\mathcal{F}}(\xi(\mathsf{h}^+_1),\xi(\mathsf{h}^+_2)), d_{\mathcal{F}}(\xi(\mathsf{h}^-_1),\xi(\mathsf{h}^-_2)) \leq A'\,d_\infty(\mathsf{h}_1,\mathsf{h}_2)^{\nu'}\]
			for all pairs of leaves $\mathsf{h}_1,\mathsf{h}_2$ oriented in parallel that separate $g_1$ from $g_2$. In particular, if $T \in P(g_1,g_2)$, then
			\[d_{\mathcal{F}}(\xi(x_{T,1}),\xi(x_{T,3}))\leq A'\,d_\infty(x_{T,1},x_{T,3})^{\nu'}\]
			
			Observe that the domain of definition $\mathfrak{T}$ of $U$ is an open subset of the manifold $\mathcal{F}(\mathbb{K}^d)^3$. Since $U\co\mathfrak{T}\to\mathsf{SL}_d(\mathbb{K})$ is continuously differentiable and $Q(g_1, g_2)$ is compact, there is some constant $C>0$ such that
			\[\norm{U(G_1,F,G_2)-\id}\leq Cd_{\mathcal{F}}(G_1,G_2)\]
			whenever the pairs of flags $(F,G_1)$ and $(F,G_2)$ are both inside $\xi(Q(g_1, g_2))$. Thus, for any plaque $T\in P(g_1,g_2)$,
			\[\norm{M_{\rm slither}(T)-\id}\leq Cd_{\mathcal{F}}(\xi(x_{T,1}),\xi(x_{T,3}))\leq CA'd_\infty(x_{T,1}, x_{T,3})^{\nu'}.\qedhere\]
		\end{proof}
		
		\subsection{Define a slithering map \texorpdfstring{$\Sigma$}{S} compatible with \texorpdfstring{$\xi$}{xi}.}
		
		By Proposition \ref{prop: existence of envelopes} and Proposition \ref{prop: slithering estimate}, we may now define 
		\[\Sigma\co \wt\Lambda^2\to\mathsf{SL}_d(\mathbb{K})\] 
		to be the map given by $\Sigma(g_1,g_2):=\overrightarrow{M}_{{\rm slither},h_1,h_2}(g_1,g_2)$ for some (equivalently, any) leaves $h_1,h_2\in\widetilde\Lambda$ such that $g_1,g_2\in Q(h_1,h_2)$. The fact that $\Sigma$ does not depend on the choice of $h_1$ and $h_2$ is a consequence of Remark \ref{rem: obvious} and the observation that $U(G_1,F,G_2) = U(G_2,F,G_1)^{-1}$ for all triples of flags $(G_1,F,G_2)\in\mathfrak T$.
		
		\begin{proposition}\label{prop: sigma slithering}
			The map $\Sigma$ is a slithering map compatible with $\xi$.
		\end{proposition}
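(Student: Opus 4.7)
To prove $\Sigma = \overrightarrow{M}_{\rm slither}$ is a slithering map compatible with $\xi$, I would check the four defining conditions of Definition \ref{def: slithering map}. The productable-map machinery from Section \ref{sec:productable}, together with Proposition \ref{prop: slithering estimate}, handles (1)--(3) with minimal extra work; the flag-transport condition (4) is the substantive content and requires a specific geometric analysis.

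\textbf{Conditions (1)--(3).} For (1): the equality $\Sigma(g,g) = \id$ is immediate since $P(g,g) = \emptyset$; the symmetry $\Sigma(g_1,g_2) = \Sigma(g_2,g_1)^{-1}$ is built into the definition of the envelope; and the cocycle identity when $g_2$ separates $g_1, g_3$ follows by applying Proposition \ref{prop:cocycle}(2) to $M_{\rm slither, g_1, g_3}$, combined with Remark \ref{rem: obvious} to identify its restrictions to $Q(g_1,g_2)$ and $Q(g_2,g_3)$ with $M_{\rm slither, g_1, g_2}$ and $M_{\rm slither, g_2, g_3}$, respectively. For (2): this is exactly Proposition \ref{prop: Holder productable} applied to $M_{\rm slither, g_1, g_2}$. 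For (3): when $g_1, g_2$ share an endpoint $y$, every $T \in P(g_1,g_2)$ has $y$ as its coherent middle vertex $x_{T,2}$, so each $M_{\rm slither}(T) = U(\xi(x_{T,1}), \xi(y), \xi(x_{T,3}))$ lies in the closed subgroup $U_{\xi(y)} \subset \SL_d(\Kb)$ of unipotent elements fixing the flag $\xi(y)$; all finite partial products $\overrightarrow{M}^F_{\rm slither}(\Xc)$ stay in $U_{\xi(y)}$, and so does the limit $\Sigma(g_1,g_2)$.

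\textbf{Condition (4).} The key observation is that, by definition of $U$, each $M_{\rm slither}(T) = U(\xi(x_{T,1}), \xi(x_{T,2}), \xi(x_{T,3}))$ fixes $\xi(x_{T,2})$ and sends $\xi(x_{T,3})$ to $\xi(x_{T,1})$. In the special case $P(g_1,g_2) = \{T\}$ (equivalently, $g_1 = h_{T,-}$ and $g_2 = h_{T,+}$ share the endpoint $x_{T,2}$), one checks directly from the definition of coherent orientation that $M_{\rm slither}(T)$ sends the ordered pair $\xi(\gsf_2)$ to the ordered pair $\xi(\gsf_1)$. For the general case with $g_1\neq g_2$, I would use maximality of $\wt\lambda$ to pick the unique plaque $T_2^* \in P(g_1,g_2)$ with $h_{T_2^*, +} = g_2$. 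With the coherent orientation, the endpoint $\gsf_2^+$ is either $x_{T_2^*, 2}$ or $x_{T_2^*, 3}$; in the former case $\xi(\gsf_2^+)$ is already fixed by $M_{\rm slither}(T_2^*)$, and Proposition \ref{prop: productable fixed}(3) then yields the reduction $\Sigma(g_1, g_2) \cdot \xi(\gsf_2^+) = \Sigma(g_1, h_{T_2^*,-}) \cdot \xi(\hsf_{T_2^*,-}^+)$; in the latter case, applying $M_{\rm slither}(T_2^*)$ first converts $\xi(\gsf_2^+) = \xi(x_{T_2^*, 3})$ to $\xi(x_{T_2^*, 1})$, and one reduces similarly after the first factor is peeled off via Proposition \ref{prop:cocycle}(2). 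Iterating this ``peeling'' produces a nested sequence of leaves in $Q(g_1,g_2)$ converging to $g_1$, and the desired identity $\Sigma(g_1,g_2) \cdot \xi(\gsf_2^\pm) = \xi(\gsf_1^\pm)$ is obtained in the limit using local $\lambda$-H\"older continuity of $\xi$ together with the H\"older continuity of $\Sigma$ (Proposition \ref{prop: Holder productable}), which ensure that $\Sigma(g_1, h) \to \id$ and $\xi(\hsf^\pm) \to \xi(\gsf_1^\pm)$ as $h \to g_1$ through the constructed sequence.

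\textbf{Main obstacle.} The delicate point is condition (4) in the generic case where $P(g_1, g_2)$ is infinite: the inductive peeling argument must be passed through a limit, and one has to verify that the sequence of intermediate leaves obtained by repeatedly chopping off the plaque adjacent to the current right endpoint indeed accumulates at $g_1$ (rather than at some intermediate leaf), and that the coherent orientations of these intermediate leaves remain consistent. A fallback if the peeling does not accumulate at $g_1$ is to combine it with its mirror procedure starting from the $g_1$-side, and glue at an arbitrary intermediate leaf using the cocycle identity already established in (1); the H\"older estimates from Section \ref{sec:productable} then control the residual error.
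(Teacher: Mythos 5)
Your treatment of conditions (1)--(3) is correct and matches the paper's: (1) follows from Proposition \ref{prop: productable fixed}, (2) is Proposition \ref{prop: Holder productable}, and (3) follows because each factor $M_{\rm slither}(T)$ lies in the closed unipotent subgroup fixing $\xi(y)$. (For the symmetry $\Sigma(g_1,g_2) = \Sigma(g_2,g_1)^{-1}$ you do need the small observation that $U(G_1,F,G_2) = U(G_2,F,G_1)^{-1}$, so that the two productable maps $M_{{\rm slither},g_1,g_2}$ and $M_{{\rm slither},g_2,g_1}$ have the same envelope; but this is a minor point.)

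Condition (4) is where the gap is. Your peeling argument begins by ``using maximality of $\wt\lambda$ to pick the unique plaque $T_2^*\in P(g_1,g_2)$ with $h_{T_2^*,+}=g_2$.'' Such a plaque does not exist in general: maximality of $\lambda$ guarantees that the complementary regions are ideal triangles, but it does \emph{not} guarantee that a given leaf is an edge of a plaque. For a maximal geodesic lamination on a closed surface there are only countably many plaques in $\wt S$, hence only countably many boundary leaves, while the set of leaves of $\wt\lambda$ is uncountable; so a generic leaf $g_2$ bounds no plaque on either side (it is a two-sided accumulation point of leaves). In that situation the peeling procedure cannot even begin, and the fallback of ``peeling from the $g_1$ side and gluing at an arbitrary intermediate leaf'' has the same difficulty, since the arbitrary intermediate leaf is again typically not the edge of a plaque. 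More fundamentally, the obstruction you flag in the last paragraph is real: even when one can repeatedly strip off a boundary plaque, the resulting nested sequence of leaves can converge to a leaf strictly between $g_1$ and $g_2$, and one is left with the same untouched problem there.

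The paper circumvents this by replacing the local peeling with a \emph{global approximation}. Fix an exhausting sequence $(\Xc_n)$ of finite subsets of $P(g_1,g_2)$ and enumerate $\Xc_n=\{T_{n,1}<\dots<T_{n,m_n}\}$. For $n$ large, one defines for each gap between consecutive plaques (and between the plaques and $g_1$, $g_2$) a bridging element $L_{n,i}=U_{n,i,-}U_{n,i,+}$, a product of two unipotents, that sends $\xi(\hsf_{n,i+1,-})$ to $\xi(\hsf_{n,i,+})$. The transversality needed for these $U_{n,i,\pm}$ to be defined is the content of Lemma \ref{lem: close transverse}, which uses the $\lambda$-continuity of $\xi$ and compactness of $Q(\gsf_1,\gsf_2)$. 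By construction, the corrected product
\[
L(\Xc_n)=L_{n,0}\,M_{\rm slither}(T_{n,1})\,L_{n,1}\cdots M_{\rm slither}(T_{n,m_n})\,L_{n,m_n}
\]
sends $\xi(\gsf_2)$ to $\xi(\gsf_1)$ for every $n$. One then estimates, via Lemma \ref{lem: gap products} and the H\"older continuity of $\xi$, that the accumulated deviation $\sum_i\|L_{n,i}-\id\|$ tends to zero as $n\to\infty$, so that $L(\Xc_n)$ and $\overrightarrow{M}^F_{\rm slither}(\Xc_n)$ have the same limit $\Sigma(g_1,g_2)$. This is the step that your approach is missing: rather than trying to transport the flag exactly plaque-by-plaque (which would require each intermediate leaf to bound a plaque), the paper transports it exactly along each approximating finite configuration and then shows the correction factors wash out in the limit.
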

		
		\begin{proof}
			We will verify that $\Sigma$ satisfies the conditions (1) -- (4) of Definition \ref{def: slithering map}.
			
			Condition (1): This follows easily from properties (1) and (2) of Proposition~\ref{prop:cocycle}. 
			
			Condition (2): This is a special case of Proposition \ref{prop: Holder productable}.
			
			Condition (3): If $g_1,g_2\in\wt\Lambda$ share a common endpoint $p$, then $x_{T,2}=p$ for all plaques $T\in P(g_1,g_2)$, where ${\bf x}_T=(x_{T,1},x_{T,2},x_{T,3})$ is a coherent labeling of the vertices of $T$ with respect to $(g_1, g_2)$. As such, for all $T\in P(g_1,g_2)$, the transformation $M_{\rm slither}(T)=U(\xi({\bf x}_T))$ is a unipotent element of $\mathsf{SL}_d(\mathbb{K})$ that fixes $\xi(p)$, so the same is true for $\Sigma(g_1,g_2)$.
			
			Condition (4): Let $(\mathcal{X}_n)_{n=1}^\infty$ be an exhausting sequence of $P(g_1,g_2)$ (see the beginning of Section \ref{sec:productable} for the necessary terminology). For all positive integers $n$, enumerate
			\begin{align*}\mathcal{X}_n=\{T_{n,1},\dots,T_{n,m_n}\}
			\end{align*}
			so that $T_{n,1}<\dots<T_{n,m_n}$ in $P(g_1,g_2)$. Then for all $i=1,\dots,m_n$, we simplify notation by setting
			\begin{align}h_{n,i,\pm}:=h_{T_{n,i},\pm},\label{eqn: hni}\end{align}
			where as usual, $h_{T_{n,i},-}<h_{T_{n,i},+}$ are the two edges of $T_{n,i}$ that separate $g_1$ and $g_2$. Set $h_{n,0,+}:=g_1$ and $h_{n,m_n+1,-}:=g_2$. Let
			$\mathsf{h}_{n,i, \pm}\in\widetilde\Lambda^o$ be the oriented leaf such that $\pi_{\wt\Lambda}(\mathsf{h}_{n,i, \pm})=h_{n,i, \pm}$ and such that the pair $\mathsf{g}_1,\mathsf{h}_{n,i, \pm}$ (equivalently, $\mathsf{g}_2,\mathsf{h}_{n,i, \pm}$) are oriented in parallel.

			The strategy of the proof of Condition (4) is to find (for sufficiently large $n$) elements $L_{n,i}\in\mathsf{SL}_d(\mathbb{K})$ that send $\xi(\mathsf{h}_{n,i+1,-})$ to $\xi(\mathsf{h}_{n,i,+})$, such that the product
			\[L(\mathcal{X}_n):=L_{n,0} M_{\rm slither}(T_{n,1}) L_{n,1} \dots M_{\rm slither}(T_{n,m_n-1}) L_{n,m_n-1} M_{\rm slither}(T_{n,m_n})L_{n,m_n} \]
			converges to $\Sigma(g_1,g_2)$ as $n$ goes to $\infty$. If we can do so, then the fact that $M_{\rm slither}(T_{n,i})$ sends $\xi(\mathsf{h}_{n,i,+})$ to $\xi(\mathsf{h}_{n,i,-})$, implies that $L(\mathcal{X}_n)$ sends $\xi(\mathsf{g}_2)$ to $\xi(\mathsf{g}_1)$, so Condition (4) holds. 
			
			To define $L_{n,i}$, we use the following transversality result. Let $Q(\mathsf{g}_1,\mathsf{g}_2)$ denote the set of oriented leaves $\mathsf{g}\in\widetilde\Lambda^o$ such that the unoriented leaf $\pi_{\widetilde\Lambda}(\mathsf{g})$ belongs to $Q(g_1,g_2)$ and the pair $\mathsf{g},\mathsf{g}_1$ (equivalently, $\mathsf{g},\mathsf{g}_2$) are oriented in parallel. Let 
			\[Q(\mathsf{g}_1,\mathsf{g}_2)_\pm:=\{\mathsf{g}^\pm:\mathsf{g}\in Q(\mathsf{g}_1,\mathsf{g}_2)\}.\] 
			
			\begin{lemma}\label{lem: close transverse}
				There exists $\delta>0$ such that for any oriented leaf $\mathsf{h}\in Q(\mathsf{g}_1, \mathsf{g}_2)$ and for any endpoints $p\in Q(\mathsf{g}_1, \mathsf{g}_2)_+$ and $q\in Q(\mathsf{g}_1, \mathsf{g}_2)_-$ that satisfy $d_\infty(( \mathsf{h}^+,\mathsf{h}^-),(p,q))\leq\delta$, the flags $\xi(p)$ and $\xi(q)$ are transverse. 
			\end{lemma}
			
			\begin{proof}
				Suppose not, then there are sequences $(\mathsf{h}_i)_{i=1}^\infty$ in $Q(\mathsf{g}_1, \mathsf{g}_2)$, $(p_i)_{i=1}^\infty$ in $Q(\mathsf{g}_1,\mathsf{g}_2)_+$, and $(q_i)_{i=1}^\infty$ in $Q(\mathsf{g}_1,\mathsf{g}_2)_-$ such that for all $i$, $d_\infty((\mathsf{h}_i^+,\mathsf{h}_i^-),(p_i,q_i))\leq\frac{1}{i}$ and $\xi(p_i)$ and $\xi(q_i)$ are not transverse. Since $Q(\mathsf{g}_1,\mathsf{g}_2)$ is compact, by taking a subsequence, we may assume that $(\mathsf{h}_i)_{i=1}^\infty$ converges to some $\mathsf{h}_\infty\in Q(\mathsf{g}_1,\mathsf{g}_2)$. Then $p_i\to \mathsf{h}_\infty^+$ and $q_i\to \mathsf{h}_\infty^-$, so the $\lambda$--continuity of $\xi$ implies that $\xi(p_i)\to \xi(\mathsf{h}_\infty^+)$ and $\xi(q_i)\to \xi(\mathsf{h}_\infty^-)$. Since the flags $\xi(\mathsf{h}_\infty^+)$ and $\xi(\mathsf{h}_\infty^-)$ are transverse (which is an open condition), $\xi(p_i)$ and $\xi(q_i)$ are transverse for sufficiently large $i$, which is a contradiction. 
			\end{proof}
			
			Let $\delta>0$ be the constant given in Lemma \ref{lem: close transverse}. By  Lemma \ref{lem: lamination property} (with $\nu = 1$) and Lemma~\ref{lem: hyperbolic geometry}, there exists some positive integer $N$ such that
			\begin{align}\label{eqn: 4.2}
				\sum_{T\in P(g_1,g_2)-\mathcal{X}_N}d_\infty(x_{T,1}, x_{T,3})\leq\delta.
			\end{align}
			Hence, by Lemma \ref{lem: close transverse}, the flags $\xi(\mathsf{h}_{n,i, +}^+)$ and $\xi(\mathsf{h}_{n,i+1, -}^-)$ are transverse, so we may define 
			\begin{align*}
				U_{n,i,-}&:=U\left(\xi(\mathsf{h}_{n,i, +}^-),\xi(\mathsf{h}_{n,i, +}^+),\xi(\mathsf{h}_{n,i+1, -}^-)\right),\\
				U_{n,i,+}&:=U\left(\xi(\mathsf{h}_{n,i, +}^+),\xi(\mathsf{h}_{n,i+1, -}^-),\xi(\mathsf{h}_{n,i+1, -}^+)\right),\\
				L_{n,i}&:=U_{n,i,-}U_{n,i,+}
			\end{align*}
			for all $n>N$ and $i=0,\dots,m_n$. Observe that $L_{n,i}$ sends $\xi(\mathsf{h}_{n,i+1,-})$ to $\xi(\mathsf{h}_{n,i,+})$, see Figure \ref{fig:extendplaques}.
			\begin{figure}[h!]
				\includegraphics[width=\textwidth]{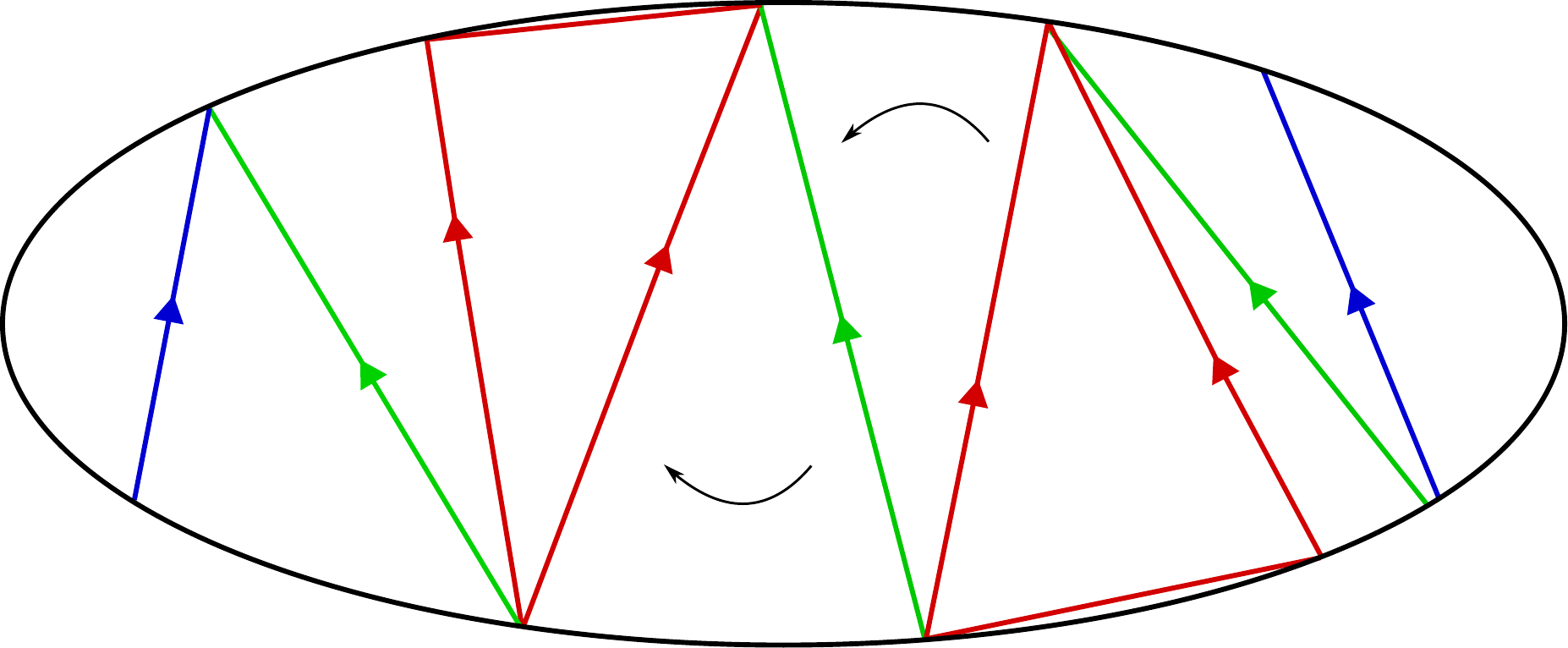}
				\put(-327,90){\small $\sf g_1$}
				\put(-241,110){\small $T_{n,1}$}
				\put(-251,70){\small $\sf h_{n,1,-}$}
				\put(-198,153){\small $\sf h_{n,1,+}^+$}
				\put(-241,-4){\small $\sf h_{n,1,+}^-$}
				\put(-200,43){\small $U_{n,1,-}$}
				\put(-160,112){\small $U_{n,1,+}$}
				\put(-148,-6){\small $\mathsf h_{n,2,-}^-$}
				\put(-125,149){\small $\sf h_{n,2,-}^+$}
				\put(-115,35){\small $T_{n,2}$}
				\put(-101,60){\small $\sf h_{n,2,+}$}
				\put(-45,80){\small $\mathsf g_2$}
				\caption{{\small In blue the oriented geodesics $\sf g_1,\sf g_2$. In red, the plaques $T_{n,1}$ and $T_{n,2}$. The green geodesics are {\em not} leaves of $\wt \lambda^o$. However, they illustrate the main idea behind the proof of Proposition \ref{prop: sigma slithering} and the role of the unipotent matrices $U_{n,i,\pm}$.}}
				\label{fig:extendplaques}
			\end{figure}
			
			It remains to show that $L(\mathcal{X}_n)$ converges to $\Sigma(g_1,g_2)$, or equivalently,
			\begin{align}\label{eqn: slither limit}
				\lim_{n\to\infty}\norm{L(\mathcal{X}_n)-\overrightarrow{M}_{\rm slither}^F(\mathcal{X}_n)}=0.
			\end{align}
			(Recall that $\overrightarrow{M}_{\rm slither}^F(\mathcal{X}_n)=M_{\rm slither}(T_{n,1})\dots M_{\rm slither}(T_{n,m_n}) $.) By enlarging $N$ if necessary, we may assume that $d_{\infty}\left(\mathsf{h}_{n,i, +},\mathsf{h}_{n,i+1, -}\right)\leq 1$ for all $n>N$ and $i=0,\dots,m_n$. Since $\xi$ is locally $\lambda$--H\"older continuous, there exist constants $A,\nu>0$ such that 
			\begin{align}\label{eqn: standard assumption}
				d_{\mathcal{F}}(\xi(\mathsf{h}_1),\xi(\mathsf{h}_2))\leq A\, d_\infty(\mathsf{h}_1,\mathsf{h}_2)^\nu
			\end{align}
			for all oriented leaves $\mathsf{h}_1,\mathsf{h}_2\in Q(\mathsf{g}_1, \mathsf{g}_2)$. Since $Q(\mathsf{g}_1, \mathsf{g}_2)$ has finite diameter, by enlarging $A$ if necessary we may and will assume that $\nu \in (0,1)$. 
			
			The following lemma gives the necessary estimates to prove identity \eqref{eqn: slither limit}.
			
			\begin{lemma}\label{lem: gap products}\
				\begin{enumerate}
					\item There exists $A'>0$ such that 
					\[\norm{L_{n,i}-\id}\leq A'd_{\infty}\left(\mathsf{h}_{n,i, +},\mathsf{h}_{n,i+1, -}\right)^\nu\]
					for all $n>N$ and all $i=0,\dots,m_n$. In particular, 
					\[\sum_{i=0}^{m_n}\norm{L_{n,i}-\id}\leq A'\sum_{T\in P(g_1,g_2)-\mathcal{X}_N}d_\infty(x_{T,1},x_{T,3})^\nu\]
					\item There exists $C>0$ such that for all $n>N$, we have
					\[\prod_{i=0}^{m_n}\norm{L_{n,i}}\leq C.\]
				\end{enumerate}
			\end{lemma}
			
			\begin{proof}
				Part (1): Since the map $U\co\mathfrak{T}\to\mathsf{SL}_d(\mathbb{K})$ is continuously differentiable (see Section \ref{subsec:define productable} for the definition of $U$) and $Q(\mathsf{g}_1,\mathsf{g}_2)$ is compact, there exists $C'>0$ such that
				\[\norm{U_{n,i,-}-\id}<C'd_{\mathcal{F}}\left(\xi(\mathsf{h}_{n,i, +}^-),\xi(\mathsf{h}_{n,i+1, -}^-)\right)\leq C'd_{\mathcal{F}}\left(\xi(\mathsf{h}_{n,i, +}),\xi(\mathsf{h}_{n,i+1, -})\right)\]
				and
				\[\norm{U_{n,i,+}-\id}<C'd_{\mathcal{F}}\left(\xi(\mathsf{h}_{n,i, +}^+),\xi(\mathsf{h}_{n,i+1, -}^+)\right)\leq C'd_{\mathcal{F}}\left(\xi(\mathsf{h}_{n,i, +}),\xi(\mathsf{h}_{n,i+1, -})\right)\]
				for all $n>N$ and $i=0,\dots,m_n$. Then by inequality \eqref{eqn: standard assumption},
				\begin{align*}
					\norm{L_{n,i}-\id}&\leq\norm{L_{n,i}-U_{n,i,-}}+\norm{U_{n,i,-}-\id}\\
					&\leq \norm{U_{n,i,+}-\id}\norm{U_{n,i,-}}+\norm{U_{n,i,-}-\id}\\
					&\leq AC'd_{\infty}\left(\mathsf{h}_{n,i, +},\mathsf{h}_{n,i+1, -}\right)^\nu\left(AC'd_{\infty}\left(\mathsf{h}_{n,i, +},\mathsf{h}_{n,i+1, -}\right)^\nu+1\right)\\
					&\leq AC'(AC'+1)d_{\infty}\left(\mathsf{h}_{n,i, +},\mathsf{h}_{n,i+1, -}\right)^\nu.
				\end{align*}
				The first statement of part (1), then holds with $A':=AC'(AC'+1)$. Now,
				\[\sum_{i=0}^{m_n}\norm{L_{n,i}-\id}\leq A'\sum_{i=0}^{m_n}d_{\infty}\left(\mathsf{h}_{n,i, +},\mathsf{h}_{n,i+1, -}\right)^\nu\leq A'\sum_{T\in P(g_1,g_2)-\mathcal{X}_N}d_\infty(x_{T,1},x_{T,3})^\nu,\]
				where the second inequality holds because $x\mapsto x^\nu$ is a concave function on $\mathbb{R}^+$ whenever $\nu\in(0,1)$.

				Part (2): By Lemma \ref{lem: Cauchy},
				\[C:=\prod_{T\in P(g_1,g_2)}\left(1+A'd_\infty(x_{T,1}, x_{T,3})^\nu\right)<\infty.\]
				Thus, by part (1),
				\[\prod_{i=1}^{m_n}\norm{L_{n,i}}\leq\prod_{i=1}^{m_n}\left(1+A'd_{\infty}\left(\mathsf{h}_{n,i, +},\mathsf{h}_{n,i+1, -}\right)^\nu\right)\leq C.\]
				The last inequality holds because $x\mapsto 1+A'x^\nu$ is a concave function on $\mathbb{R}^+$ when $\nu\in(0,1)$.
			\end{proof}

			Pick any $0<\epsilon<\min\{1,\delta\}$, where $\delta > 0$ denotes the constant guaranteed by Lemma \ref{lem: close transverse}. By Lemma \ref{lem: lamination property} and Lemma~\ref{lem: hyperbolic geometry}, we may assume
			\begin{align}\label{eqn: 4.99}
				\sum_{T\in P(g_1,g_2)-\mathcal{X}_N}d_\infty(x_{T,1},x_{T,3})^\nu<\epsilon ,
			\end{align}
			up to further enlarging $N$. Also, part (2) of Lemma \ref{lem: abstract}  gives
			\begin{align}\label{eqn: 4.88}
				\prod_{T\in P(g_1,g_2)}\norm{M_{\rm slither}(T)}=:D<\infty.
			\end{align}
			For every $n>0$ and $i \in\{ 1, \dots, m_n - 1\}$, let
			\begin{align*}V_{n,i}:=\big(L_{n,0}M_{\rm slither}(T_{n,1})L_{n,1}  \ldots M_{\rm slither}&(T_{n,m_n - i}) L_{n,m_n - i}\big) \\
				&\cdot\big(M_{\rm slither}(T_{n,m_n - i + 1})\dots M_{\rm slither}(T_{n,m_n})\big),
			\end{align*}
			and let
			\begin{align*}
				V_{n,m_n}&:=L_{n,0}M_{\rm slither}(T_{n,1})M_{\rm slither}(T_{n,2}) \ldots  M_{\rm slither}(T_{n,m_n}).
			\end{align*}
			Then Lemma \ref{lem: gap products} and relations \eqref{eqn: 4.99} and \eqref{eqn: 4.88} imply that there exist $A',C>0$ such that for all $n>N$,
			\begin{align*}
				\norm{L(\mathcal{X}_n)-\overrightarrow{M}_{\rm slither}^F(\mathcal{X}_n)}&\leq \norm{L(\mathcal{X}_n)-V_{n,1}}+\sum_{i=1}^{m_n-1}\norm{V_{n,i}-V_{n,i+1}}+\norm{V_{n,m_n}-\overrightarrow{M}_{\rm slither}^F(\mathcal{X}_n)} \\
				&\leq\left(\prod_{T\in\mathcal{X}_n}\norm{M_{\rm slither}(T)}\right)\sum_{i=0}^{m_n}\left(\norm{L_{n,m_n - i}-\id}\prod_{j = 0}^{m_n - i - 1}\norm{ L_{n,j}}\right) \\
				&\leq DA'C \epsilon.
			\end{align*}
			Since $\epsilon$ can be chosen to be arbitrarily small, equation \eqref{eqn: slither limit} follows.
		\end{proof}
		
		\subsection{The uniqueness of \texorpdfstring{$\Sigma$}{S}}
		We now prove the uniqueness of $\Sigma$.
		
		\begin{proposition}\label{prop: uniqueness}
			If $\Sigma$ and $\Sigma'$ are slithering maps that are compatible with $\xi$, then $\Sigma=\Sigma'$. 
		\end{proposition}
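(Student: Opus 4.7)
The plan is to show that every slithering map compatible with $\xi$ agrees with the envelope $\overrightarrow{M}_{\rm slither}$ constructed in the proof of Proposition~\ref{prop: sigma slithering}; applying this to both $\Sigma$ and $\Sigma'$ will then give $\Sigma = \Sigma'$.

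The first step is to observe that conditions (3) and (4) of Definition~\ref{def: slithering map} pin down any slithering map on the distinguished pairs $(h_{T,-}, h_{T,+})$ for every plaque $T \in \widetilde{\Delta}$. Indeed, with coherent orientations $\hsf_{T,\pm}$ of $h_{T,\pm}$, condition (3) forces $\Sigma(h_{T,-}, h_{T,+})$ to be unipotent, while condition (4) forces it to fix $\xi(x_{T,2})$ and to send $\xi(x_{T,3})$ to $\xi(x_{T,1})$. The unique such element of $\SL_d(\Kb)$ is $U\bigl(\xi(x_{T,1}), \xi(x_{T,2}), \xi(x_{T,3})\bigr) = M_{\rm slither}(T)$; in particular, both $\Sigma(h_{T,-}, h_{T,+})$ and $\Sigma'(h_{T,-}, h_{T,+})$ equal $M_{\rm slither}(T)$.

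Next, I fix distinct leaves $g_1, g_2$ of $\widetilde{\lambda}$ (the case $g_1 = g_2$ being immediate from condition (1)) and an exhausting sequence $(\Xc_n)_{n=1}^\infty$ of $P(g_1,g_2)$ with $\Xc_n = \{T_{n,1} < \dots < T_{n,m_n}\}$ in the total order on $P(g_1,g_2)$. Setting $h_{n,0,+} := g_1$, $h_{n,m_n+1,-} := g_2$, and $h_{n,i,\pm} := h_{T_{n,i},\pm}$ as in~\eqref{eqn: hni}, the cocycle identity (1) combined with the previous step yields the decomposition
\[\Sigma(g_1,g_2) = L_{n,0}^\Sigma \, M_{\rm slither}(T_{n,1}) \, L_{n,1}^\Sigma \, M_{\rm slither}(T_{n,2}) \cdots M_{\rm slither}(T_{n,m_n}) \, L_{n,m_n}^\Sigma,\]
where $L_{n,i}^\Sigma := \Sigma(h_{n,i,+}, h_{n,i+1,-})$, together with the analogous formula for $\Sigma'$. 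Applying condition (2) to both $\Sigma$ and $\Sigma'$ on $Q(g_1,g_2)$ and using $\Sigma(h,h)=\id=\Sigma'(h,h)$, I obtain constants $A,\nu>0$ (with $\nu \in (0,1)$, by Proposition~\ref{prop:holder stuff}) such that $\|L_{n,i}^\Sigma-\id\|$ and $\|L_{n,i}^{\Sigma'}-\id\|$ are both bounded by $A\,d_\infty(h_{n,i,+}, h_{n,i+1,-})^\nu$, uniformly in $n$ and $i$.

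The main obstacle will be to show that both decompositions converge to $\overrightarrow{M}_{\rm slither}(g_1,g_2)$ as $n\to\infty$, i.e.\ that the ``gap'' factors $L_{n,i}^\Sigma$ and $L_{n,i}^{\Sigma'}$ contribute negligibly in the limit. I plan to handle this by repeating, essentially verbatim, the telescoping argument at the end of the proof of condition (4) of Proposition~\ref{prop: sigma slithering}, with $L_{n,i}^\Sigma$ and $L_{n,i}^{\Sigma'}$ playing the role of the unipotents $L_{n,i} = U_{n,i,-}U_{n,i,+}$ there. The estimates needed are the uniform product bound
\[\prod_{i=0}^{m_n}\|L_{n,i}^\Sigma\|,\,\prod_{i=0}^{m_n}\|L_{n,i}^{\Sigma'}\| \leq \prod_{T\in P(g_1,g_2)}\bigl(1+A\,d_\infty(x_{T,1},x_{T,3})^\nu\bigr)<\infty\]
(finite by Lemma~\ref{lem: Cauchy}) and the inequality
\[\sum_{i=0}^{m_n}d_\infty(h_{n,i,+}, h_{n,i+1,-})^\nu\leq\sum_{T\in P(g_1,g_2)-\Xc_n}d_\infty(x_{T,1},x_{T,3})^\nu\xrightarrow{n\to\infty}0,\]
which follows from the concavity of $x\mapsto x^\nu$ on $\Rb^+$ for $\nu \in (0,1)$ and Lemma~\ref{lem: Cauchy}. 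Combining these bounds with the telescoping estimate from Lemma~\ref{lem: abstract}(3), exactly as in the last displayed computation in the proof of Proposition~\ref{prop: sigma slithering}, will yield $\Sigma(g_1,g_2)=\overrightarrow{M}_{\rm slither}(g_1,g_2)=\Sigma'(g_1,g_2)$, and hence $\Sigma=\Sigma'$.
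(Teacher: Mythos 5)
Your proof is correct and follows essentially the same argument as the paper's: conditions (3) and (4) of Definition~\ref{def: slithering map} determine $\Sigma$ on the two edges of each separating plaque, condition (1) gives the finite product decomposition over an exhausting sequence of $P(g_1,g_2)$, condition (2) controls the gap factors via H\"older continuity, and a telescoping estimate handles the limit. The only organizational difference is that you compare each of $\Sigma(g_1,g_2)$ and $\Sigma'(g_1,g_2)$ with the envelope $\overrightarrow{M}_{\rm slither}(g_1,g_2)$, whereas the paper compares $\Sigma$ and $\Sigma'$ directly through a hybrid telescope (via Lemma~\ref{lem: hyp geom}); both routes rest on the same estimates.
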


		\begin{proof}
			Let $g_1,g_2\in\wt\Lambda$ and let $(\mathcal{X}_n)_{n=1}^\infty$ be an exhausting sequence of $P(g_1,g_2)$. For all $n>0$, let $m_n$ denote the cardinality of the set $\mathcal{X}_n$, and for all indices $i\in\{1,\dots,m_n\}$, let $h_{n,i,\pm}$ be the leaf defined in Step 2 (see equation \eqref{eqn: hni}), with $h_{n,0,+}:=g_1$ and $h_{n,m_n+1,-}:=g_2$. By Condition (1) of Definition \ref{def: slithering map}, we may write
			\begin{align*}
				\Sigma(g_1,g_2)= \Sigma(h_{n,0, +},h_{n,1,-}) \Sigma&(h_{n,1, -},h_{n,1, +}) \dots \\
				&\Sigma(h_{n,m_n, -},h_{n,m_n, +})\Sigma(h_{n,m_n, +},h_{n,m_n+1, -})
			\end{align*}
			and 
			\begin{align*}
				\Sigma'(g_1,g_2)= \Sigma'(h_{n,0, +},h_{n,1,-}) \Sigma'&(h_{n,1, -},h_{n,1, +}) \dots \\
				&\Sigma'(h_{n,m_n, -},h_{n,m_n, +})\Sigma'(h_{n,m_n, +},h_{n,m_n+1, -}).
			\end{align*}
			
			It follows from conditions (3) and (4) of Definition \ref{def: slithering map} that for all indices $i\in\{1,\dots,m_n\}$,
			\begin{align}\label{eqn: unique 3}
				\Sigma(h_{n,i,-},h_{n,i,+})=\Sigma'(h_{n,i,-},h_{n,i,+}).
			\end{align} 
			On the other hand, to compare the transformations $\Sigma(h_{n,i+1,+},h_{n,i,-})$ and $\Sigma'(h_{n,i+1,+},h_{n,i,-})$, we apply the following lemma. 
			
			\begin{lemma}\label{lem: hyp geom}\
				For every $\epsilon>0$, there exists some positive integer $N$ such that if $n>N$, then
				\[\sum_{i=0}^{m_n}\norm{\Sigma(h_{n,i+1,+},h_{n,i,-})-\Sigma'(h_{n,i+1,+},h_{n,i,-})}\leq \epsilon.\]
			\end{lemma}
			
			\begin{proof}
				By Definition \ref{def: slithering map} Condition (2), there exist constants $A,\nu>0$ such that
				\begin{align}\label{eqn: 4.3}
					\norm{\Sigma(h_1,h_2)-\id},\norm{\Sigma'(h_1,h_2)-\id}\leq A \,d_\infty(h_1,h_2)^\nu
				\end{align}
				for all leaves $h_1,h_2\in Q(g_1,g_2)$. We may assume that $\nu\in(0,1)$ by enlarging $A$ if necessary. By Lemma \ref{lem: Cauchy}, there exists a sufficiently large integer $N$ such that 
				\[\sum_{T\in P(g_1,g_2)-\mathcal{X}_N}A\,d_\infty(x_{T,1},x_{T,3})^\nu\leq\frac{\epsilon}{2}.\]
				Thus, for all $n>N$, 
				\[\sum_{i=0}^{m_n}\norm{\Sigma(h_{n,i+1,+},h_{n,i,-})-\Sigma'(h_{n,i+1,+},h_{n,i,-})}\leq 2\sum_{i=0}^{m_n}A\,d_\infty(h_{n,i, +},h_{n,i+1, -})^\nu\leq \epsilon,\]
				where the last inequality uses the concavity of the function $x\mapsto x^\nu$ on $\mathbb{R}^+$ given that $\nu\in(0,1)$.
			\end{proof}

			For all positive integers $n$ and all indices $i\in\{1,\dots,m_n\}$, let
			\begin{align*}
				V_{n,i}&:=\Sigma(h_{n,0, +},h_{n,1,-})\Sigma(h_{n,1, -},h_{n,1, +}) \dots \Sigma(h_{n,i-1, +},h_{n,i,-}) \Sigma(h_{n,i, -},h_{n,i, +})  , \\
				V_{n,i}'&:=\Sigma'(h_{n,i, +},h_{n,i+1, -})\Sigma'(h_{n,i+1, -},h_{n,i+1, +})\dots\\
				&\hspace{5cm}\Sigma'(h_{n,m_n, -},h_{n,m_n,+})  \Sigma'(h_{n,m_n, +},h_{n,m_n+1, -}) ,
			\end{align*}
			and let $W_{n,i}:=V_{n,i} V_{n,i}'$. Then by equation \eqref{eqn: unique 3},
			\begin{align*}
				&\norm{\Sigma(g_1,g_2)-\Sigma'(g_1,g_2)}\\
				\leq&\norm{\Sigma'(g_1,g_2)-W_{n,1}}+\norm{W_{n,1}-W_{n,2}}+\dots+\norm{W_{n,m_n}-\Sigma(g_1,g_2)}\\
				\leq&\sum_{i=0}^{m_n}\left(\prod_{j=0}^{i-1}\norm{\Sigma(h_{n,j,+},h_{n,j+1,-})}\right)\left(\prod_{j=i+1}^{m_n}\norm{\Sigma'(h_{n,j,+},h_{n,j+1,-})}\right)\\
				&\hspace{2cm}\cdot\norm{\Sigma(h_{n,i, +},h_{n,i+1, -})-\Sigma'(h_{n,i, +},h_{n,i+1, -})}\left(\prod_{j=1}^{m_n}\norm{\Sigma(h_{n,j,-},h_{n,j,+}))}\right).
			\end{align*}
			
			Lemma \ref{lem: Cauchy} implies that 
			\[B:=\prod_{T\in P(g_1,g_2)}\left(1+A\,d_\infty(x_{T,1},x_{T,3})^\nu\right)<\infty,\]
			so it follows from inequality \eqref{eqn: 4.3} and the concavity of the function $x\mapsto 1+Ax^\nu$ on $\mathbb{R}^+$ that for all $n\in\mathbb{Z}^+$ and all $i\in\{0,\dots,m_n\}$, we have
			\begin{align*}
				&\prod_{j=0}^{i-1}\norm{\Sigma(h_{n,j, +},h_{n,j+1, -})}\prod_{j=i+1}^{m_n}\norm{\Sigma'(h_{n,j, +},h_{n,j+1, -})}\prod_{j=1}^{m_n}\norm{\Sigma(h_{n,j, -},h_{n,j,+})}\\
				\leq&\prod_{j=0}^{m_n}(1+A\,d_\infty(h_{n,j, +},h_{n,j+1, -})^\nu)\prod_{j=1}^{m_n}(1+A\,d_\infty(h_{n,j, -},h_{n,j, +})^\nu) \leq B.
			\end{align*}
			Thus, by Lemma \ref{lem: hyp geom}, there exists a positive integer $N$ such that for all $n>N$, 
			\[\norm{\Sigma(g_1,g_2)-\Sigma'(g_1,g_2)}\leq B\epsilon.\]
			Since $\epsilon$ can be chosen to be arbitrarily small, $\Sigma(g_1,g_2)=\Sigma'(g_1,g_2)$.
		\end{proof}
		
		\subsection{The \texorpdfstring{$\rho$}{r}-equivariance of \texorpdfstring{$\Sigma$}{S}}
		
		Finally, we deduce that if $\xi$ is $\rho$--equivariant for some representation $\rho:\Gamma\to\mathsf{PGL}_d(\mathbb{K})$, then $\Sigma$ is also $\rho$--equivariant.

		To see this, observe that for all projective transformations $A\in\mathsf{PGL}_d(\mathbb{K})$, the map $\xi_A:\partial\wt\lambda\to\mathcal{F}(\mathbb{K}^d)$ given by
		\[\xi_A:p\mapsto A\cdot\xi(p)\]
		is also $\lambda$--transverse and locally $\lambda$--H\"older continuous, and the map 
		\[\Sigma_A:\wt\Lambda^2\to\mathsf{SL}_d(\mathbb{K})\] 
		given by $\Sigma_A:(g_1,g_2)\mapsto A\Sigma(g_1,g_2) A^{-1}$ is a slithering map compatible with $\xi_A$. In the case where $A=\rho(\gamma)$ for some $\gamma\in\Gamma$, the $\rho$--equivariance of $\xi$ implies that $\xi_A=\xi$. Thus, $\Sigma_A$ and $\Sigma$ are both slithering maps compatible with $\xi$, so Proposition~\ref{prop: uniqueness} implies that $\Sigma_A=\Sigma$. Since this is true for all $\gamma\in\Gamma$, $\Sigma$ is $\rho$--equivariant. This finishes the proof of Theorem \ref{thm: slitherable and slithering}.

\section{Complex eruptions, complex shears and the injectivity of \texorpdfstring{$\mathcal{sb}_{d,{\bf x}}$}{sb-d-x}} \label{sec:eruption and shear}
Recall that $\partial\widetilde\lambda$ denotes the set of endpoints of the leaves of $\widetilde\lambda$, and $\wt\Delta^o$ denotes the set of labelings, i.e. triples of points in $\partial\wt\lambda$ that are the vertices of a plaque of $\wt\lambda$. The main objective of the current section is the proof of the following statement, which constitutes Step 1 in the outline of the proof of Theorem~\ref{thm: main}, see Section~\ref{subsec:proof structure}.

\begin{theorem} \label{thm: injectivity}
	For all labelings ${\bf x}\in\wt\Delta^o$, the map
	\[\mathcal{sb}_{d,{\bf x}}:\mathcal{R}_d(\lambda)\to \mathcal{Y}_d(\lambda;\mathbb{C}/2\pi i\mathbb{Z})\times\mathcal{N}(\mathbb{C}^d)\] 
	defined by identity \eqref{eqn: main thm} is injective.
\end{theorem}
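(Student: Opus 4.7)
The strategy is to reduce injectivity to an explicit reconstruction of the $\lambda$-limit map from the shear-bend data.

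First, observe that $\rho$ is completely determined by its $\lambda$-limit map $\xi$ together with the base data: for any $\gamma \in \Gamma$, the element $\rho(\gamma)$ is the unique element of $\PSL_d(\Cb)$ sending $(\xi(x_1), \xi(x_2), \xi(x_3)^1) \in \Nc(\Cb^d)$ to $(\xi(\gamma x_1), \xi(\gamma x_2), \xi(\gamma x_3)^1) \in \Nc(\Cb^d)$, by the simply transitive $\PSL_d(\Cb)$-action on $\Nc(\Cb^d)$. Moreover, $\xi$ on all of $\partial \wt\lambda$ is determined by the slithering map $\Sigma$ compatible with $\xi$ (Theorem \ref{thm: slitherable and slithering}) together with the flags at the three vertices of the base plaque $T_{\bf x}$: indeed, by Proposition \ref{prop_invariants}(2), the base triple $(\xi(x_1), \xi(x_2), \xi(x_3)^1)$ together with the triple ratios $\theta_\rho({\bf x}, \cdot)$ determines $\xi(x_3)$ in full, and then for any leaf $g$ of $\wt\lambda$, the relation $\xi(\gsf) = \Sigma_{g, g'} \cdot \xi(\gsf')$, with $g'$ an edge of $T_{\bf x}$ and $\gsf, \gsf'$ coherently oriented (Definition \ref{def: slithering map}(4)), recovers $\xi$ at the endpoints of $g$.

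It thus suffices to show that $\Sigma$ is determined by $\mathcal{sb}_{d,{\bf x}}(\rho)$. I will do so by exhibiting, for any pair of leaves $g_1, g_2 \in \wt\Lambda$, an explicit formula for $\Sigma_{g_1, g_2}$ as an infinite product of matrices whose entries depend only on the shear-bend pair $(\alpha_\rho, \theta_\rho)$ and on the base flag data $(\xi(x_1), \xi(x_2), \xi(x_3)^1)$. The approach is to build $\xi$ plaque-by-plaque starting from $T_{\bf x}$: whenever $\xi$ has been reconstructed at the vertices of a plaque $T$ and $T' \in \wt\Delta$ separates from $T$ by a single edge, Proposition \ref{prop_invariants}(2) together with the triple ratios $\theta_\rho$ on $T'$ fixes the triple of flags on $T'$ up to a $\PSL_d(\Cb)$-ambiguity, which is then removed by Proposition \ref{prop_invariants}(4) using the double ratios $\alpha_\rho((T, T'), \cdot)$ transported by the already-constructed portion of the slithering map. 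Iterating this construction and inserting the result into the definition of $M_{\rm slither}$ from Section \ref{sec: slithering} expresses $\Sigma_{g_1, g_2}$ as the envelope of a productable family whose entries are prescribed by shear-bend data alone. Convergence of the resulting infinite products is handled by the general theory of Section \ref{sec:productable} (Propositions \ref{prop: existence of envelopes} and \ref{prop: Holder productable}), applied to the $M_{\rm slither}$-type family built from the shear-bend data.

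The main obstacle is the bookkeeping required to verify that the inductive reconstruction of the triples of flags along an arbitrarily deep chain of plaques really does combine into a well-defined closed-form product representing $\Sigma_{g_1, g_2}$, and that this product is invariant under the choice of intermediate plaques used (so that the formula is compatible with Proposition \ref{prop:cocycle}(2)). Once this is in place, injectivity is immediate: two representations $\rho, \rho' \in \Rc(\lambda, d)$ with $\mathcal{sb}_{d,{\bf x}}(\rho) = \mathcal{sb}_{d,{\bf x}}(\rho')$ share the same base flag data and the same $\lambda$-cocyclic pair, hence the same slithering map, hence the same $\lambda$-limit map, hence (by the opening paragraph) the same representation.
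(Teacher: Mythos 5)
Your reduction is exactly the one the paper makes: $\rho$ is determined by $\xi$ together with the base projective frame, $\xi$ is determined by its slithering map $\Sigma$ together with the base data, so injectivity follows once $\Sigma$ is shown to be recoverable from $\mathcal{sb}_{d,{\bf x}}(\rho)$. Up to this point you are in complete agreement with the opening of the paper's proof of Theorem \ref{thm: injectivity} and with Lemma \ref{lem: injxi}.

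Where your proposal diverges from the paper---and where it leaves a genuine gap---is in how you propose to recover $\Sigma$. You suggest reconstructing $\xi$ plaque-by-plaque from $T_{\bf x}$ by walking along chains of adjacent plaques (using Proposition \ref{prop_invariants} parts (2) and (4) at each step), and then inserting the reconstructed flags into the definition of $M_{\rm slither}$ to obtain a productable map whose envelope is $\Sigma$. There are two problems with this. First, for an arbitrary $T \in P(g_{\bf x}^h,h)$ there need not be a finite chain of pairwise adjacent plaques from $T_{\bf x}$ to $T$: infinitely many plaques (and a Cantor set of leaves) may intervene, so the induction terminates only after a transfinite limiting argument that you have not supplied (and that would itself require invoking local $\lambda$-H\"older continuity of a map $\xi$ whose existence you are in the middle of establishing). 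Second, even granting the reconstruction, the entry $M_{\rm slither}(T)$ obtained this way would depend on the entire chain leading up to $T$, not just on the data at $T$ and at $T_{\bf x}$, so you would not have a \emph{closed-form} productable family; your last paragraph flags exactly this bookkeeping as ``the main obstacle'' but does not resolve it.

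The paper's Lemma \ref{lem: injectivity} is precisely the resolution. Rather than iterating, it writes down a \emph{different} productable map $L_h : P(g_{\bf x}^h,h) \to \SL_d(\Cb)$, $L_h(T) = W_T U_T$, whose entry at $T$ is given by a genuinely local closed formula: $W_T$ is a unipotent matrix determined by the base flags $(\xi(x_1),\xi(x_2),\xi(x_3)^1)$ and the base triangle invariant $\theta_\xi({\bf x})$, while $U_T$ is a unipotent representative of the unipotent eruption $u_{{\bf F}_T}(\theta_T, \alpha_T)$ depending only on $\theta_\xi({\bf x}_T)$, $\alpha_\xi(T_{\bf x},T)$, and the (finitely many values of the permuted) base flags ${\bf F}_T$. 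No iteration is needed, and $L_h(T)$ is visibly a function of the shear-bend pair and the frame alone. The identity $L_h(T) = \Sigma(g_{\bf x}^h, h_{T,-})\Sigma(h_{T,+}, g_{\bf x}^h)$ is then verified directly from the properties of the triple and double ratios (including Lemma \ref{lem: Pascal} on the double ratio of a ``Pascal-triangle'' configuration), and Proposition \ref{prop: renormalization} turns this plaque-level identity into the statement $\overrightarrow{L_h}(h_1,h_2) = \Sigma(h_2,h_1)$. This is what you would need to build the complex eruption/shear machinery of Section \ref{subs:blocks} for; without it, your ``iterate and insert into $M_{\rm slither}$'' plan does not assemble into a proof.
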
 

The key step in the proof of Theorem \ref{thm: injectivity} is the construction, for each locally $\lambda$--H\"older continuous, $\lambda$--transverse, and $\lambda$--hyperconvex map, $\xi:\partial\widetilde\lambda\to\mathcal{F}(\mathbb{C}^d)$, of a certain H\"older extendable map $L$ whose dependence on the shear-bend coordinates of $\xi$ is explicit, and whose extension gives the slithering map compatible with $\xi$, see Lemma \ref{lem: injectivity} for a more precise statement. The map $L$ will also be used later to prove the holomorphicity of $\mathcal{sb}_{d,{\bf x}}$.

In order to construct the map $L$, we will introduce the notions of \emph{complex eruption} and \emph{complex shear}, which are the natural complex analogues of two classes of projective transformations previously used by Sun-Wienhard-Zhang \cite{SWZ} (and Wienhard-Zhang \cite{WZ} in the $d=3$ case) to define the eruption flows and shearing flows on the space of Frenet curves. The map $L$ will then be defined using a suitable combinations of eruptions and shears, which we call \emph{unipotent eruptions}. Complex shears, complex eruptions, and unipotent eruptions are also essential ingredients used later in the construction of the generalized bending map in Section~\ref{sec:bending}.

The definitions and basic properties of the complex eruptions, complex shears, and unipotent eruptions are discussed in Sections \ref{sec: complex eruption}, \ref{sec: complex shear}, and \ref{sec: unipotent eruption}, respectively. Section \ref{sec: explicit expression} is dedicated to the construction of $L$ and a discussion of some of its important properties, which will be later applied in Section \ref{sec: injectivity proof} to prove Theorem~\ref{thm: injectivity}.

\subsection{Complex eruptions}\label{sec: complex eruption}
The first type of elementary deformations are the \emph{complex eruptions}. These transformations allow us to deform a triple of flags in general position into one with prescribed triangle invariants. 

Recall that $\mathcal{B}$ denotes the set of triples of positive integers that sum to $d$. For any triple of flags ${\bf F}=(F_1,F_2,F_3)$ in $\mathcal{F}(\mathbb{C}^d)$ that is in general position, for any triple of integers ${\bf j}=(j_1,j_2,j_3)\in\mathcal{B}$, and for any $\delta\in\mathbb{C}/2\pi i\mathbb{Z}$, we define the \emph{${\bf j}$--elementary eruption of ${\bf F}$ by $\delta$}, denoted
\[a^{\bf j}_{\bf F}(\delta)\in\mathsf{PGL}_d(\mathbb{C}),\]
to be the projectivization of the linear map $A^{\bf j}_{\bf F}(\delta)\in\mathsf{GL}_d(\mathbb{C})$ that acts as scaling by $e^\delta$ on $F_2^{j_2}$ and the identity on $F_{1}^{j_1}+F_{3}^{j_3}$.

If ${\bf F}=(F_1,F_2,F_3)$ is a triple of flags in $\mathcal{F}(\mathbb{C}^d)$ and ${\bf j}=(j_1,j_2,j_3) \in \mathcal{B}$, we denote
\[{\bf F}_+:=(F_2,F_3,F_1),\quad{\bf F}_-:=(F_3,F_1,F_2),\quad{\bf j}_+:=(j_2,j_3,j_1),\,\,\text{ and }\,\,{\bf j}_-:=(j_3,j_1,j_2).\]
The following lemma summarizes the properties of elementary eruptions:
\begin{lemma}\label{lem: properties simple eruption}
	Let ${\bf F}=(F_1,F_2,F_3)$ be a triple of flags in general position. Then for any triple of integers ${\bf j}=(j_1,j_2,j_3)\in\mathcal{B}$ and $\delta\in\mathbb{C}/2\pi i \mathbb{Z}$, we have:
	\begin{enumerate}
		\item $a^{{\bf j}}_{{\bf F}}(\delta)$, $a^{{\bf j}_+}_{{\bf F}_+}(\delta)$, $a^{{\bf j}_-}_{{\bf F}_-}(\delta)$ pairwise commute, and $a^{{\bf j}}_{{\bf F}}(\delta) \, a^{{\bf j}_+}_{{\bf F}_+}(\delta) \, a^{{\bf j}_-}_{{\bf F}_-}(\delta)=\id$.
		\item If $G_3 \in\mathcal{F}(\mathbb{C}^d)$ is a flag such that ${\bf G}:=(F_1,F_2,G_3)$ is in general position and $G_3^{j_3}=F_3^{j_3}$, then $a^{\bf j}_{{\bf G}}(\delta)=a^{\bf j}_{\bf F}(\delta)$.
		\item The transformation $a^{{\bf j}}_{{\bf F}}(\delta)$ fixes the flag $F_2$.
		\item The triples $(F_1,F_2,a_{{\bf F}}^{{\bf j}}(\delta)\cdot F_3)$, $(a_{{\bf F}_+}^{{\bf j}_+}(\delta)\cdot F_1,F_2,F_3)$, and $(F_1,a_{{\bf F}_-}^{{\bf j}_-}(\delta)\cdot F_2,F_3)$ are projectively equivalent.
		\item The triple of flags $(F_1,F_2,a_{\bf F}^{\bf j}(\delta)\cdot F_3)$ is in general position, and
		\[\tau^{\bf k}(F_1,F_2,a_{\bf F}^{\bf j}(\delta) \cdot F_3)=\left\{\begin{array}{ll}
			\tau^{\bf k}(F_1,F_2,F_3)&\text{if }{\bf k}\neq{\bf j},\\
			\tau^{\bf k}(F_1,F_2,F_3)+\delta&\text{if }{\bf k}={\bf j}.
		\end{array}\right.\]
		for all triples of integers ${\bf k}\in\mathcal{B}$.
		\item For any $\delta'\in\mathbb{C}/2\pi i \mathbb{Z}$ and for any triple of integers ${\bf k}=(k_1,k_2,k_3)\in\mathcal{B}$ such that $k_3=j_3$, we have
		\[a^{{\bf k}}_{\bf F}(\delta')\,a_{\bf F}^{\bf j}(\delta)=a_{\bf F}^{\bf j}(\delta)\,a^{{\bf k}}_{\bf F}(\delta').\]
	\end{enumerate}
\end{lemma}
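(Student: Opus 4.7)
The plan is to proceed through properties (1)--(6) essentially in order, since several of the later properties build on the earlier ones. Throughout, I will exploit the fundamental linear-algebraic fact that general position of ${\bf F}=(F_1,F_2,F_3)$ gives the direct sum decomposition $\Cb^d=F_1^{j_1}\oplus F_2^{j_2}\oplus F_3^{j_3}$, with respect to which $a^{\bf j}_{\bf F}(\delta)$ is manifestly diagonal with eigenvalues $e^{-\delta/3}, e^{2\delta/3}, e^{-\delta/3}$ on the three factors.

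For (1), I would observe that $a^{\bf j}_{\bf F}(\delta)$, $a^{{\bf j}_+}_{{\bf F}_+}(\delta)$, $a^{{\bf j}_-}_{{\bf F}_-}(\delta)$ are \emph{all} diagonal in this common decomposition (they merely differ in which of the three factors receives the eigenvalue $e^{2\delta/3}$). Commutativity is then immediate, and the product acts on each factor as $e^{-\delta/3}\cdot e^{-\delta/3}\cdot e^{2\delta/3}=1$, giving the identity in $\PGL_d(\Cb)$. Property (2) is immediate from the definition since only $F_1^{j_1}$, $F_2^{j_2}$, $F_3^{j_3}$ enter. For (3), I will verify that each $F_2^k$ is preserved by splitting into cases: when $k\le j_2$, $F_2^k\subset F_2^{j_2}$ is contained in an eigenspace; when $k>j_2$, a dimension count using general position gives $F_2^k=F_2^{j_2}\oplus (F_2^k\cap (F_1^{j_1}+F_3^{j_3}))$, a direct sum of subspaces of the two eigenspaces. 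Property (4) then falls out of (1) and (3): setting $a_1=a^{{\bf j}_-}_{{\bf F}_-}(\delta)$, $a_2=a^{\bf j}_{\bf F}(\delta)$, $a_3=a^{{\bf j}_+}_{{\bf F}_+}(\delta)$ (which pairwise commute with $a_1a_2a_3=\id$, and where $a_i$ fixes $F_i$ by (3)), one checks that $a_2$ maps the second triple onto the first and $a_1^{-1}$ maps the third triple onto the first.

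Property (5) is the main obstacle, and the one requiring real computation. The strategy is to track how each of the six wedge products appearing in the defining formula for $T^{\bf k}(F_1,F_2,a\cdot F_3)$ transforms under $a=a^{\bf j}_{\bf F}(\delta)$. Using the eigenspace decomposition $\Cb^d = F_2^{j_2}\oplus (F_1^{j_1}+F_3^{j_3})$, I will expand the vectors appearing in each wedge $a(f_3^m)$ into their two eigen-components and use multilinearity of the exterior product. Since $a$ acts on $\bigwedge^p V\otimes\bigwedge^q W$ as the scalar $e^{(2p-q)\delta/3}$, each summand picks up an explicit exponential factor depending on how many $F_2^{j_2}$-components it contains. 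A careful bookkeeping then shows that the scaling contributions from numerator and denominator cancel exactly, except along the unique wedge pattern corresponding to ${\bf k}={\bf j}$, which contributes the net factor $e^{\delta}$; taking logarithms yields the stated formula. The main difficulty is organizing this expansion cleanly so that the cancellations are visible, and identifying the decomposition of $F_3^{k_3\pm 1}$ relative to $F_2^{j_2}\oplus (F_1^{j_1}+F_3^{j_3})$ using general position. This is a direct complex analog of the computation performed by Sun--Wienhard--Zhang in the real setting.

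Finally, for (6), I would give a conceptual argument. Setting $b=a^{\bf j}_{\bf F}(\delta)$ and $c=a^{\bf k}_{\bf F}(\delta')$ with $k_3=j_3$, the claim reduces to showing $c$ preserves both eigenspaces of $b$. For $F_1^{j_1}+F_3^{j_3}$, note that $k_1+k_2=j_1+j_2$ (from $k_3=j_3$ and both triples summing to $d$); assuming WLOG $k_1\ge j_1$, one has $F_1^{j_1}+F_3^{j_3}\subset F_1^{k_1}+F_3^{j_3}$, which is an eigenspace of $c$. For $F_2^{j_2}$, a general-position dimension count yields $F_2^{j_2}= F_2^{k_2}\oplus (F_2^{j_2}\cap(F_1^{k_1}+F_3^{j_3}))$, each summand lying in a $c$-eigenspace. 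Since $b$ acts as a scalar on each of its own eigenspaces and $c$ preserves them, $b$ and $c$ commute.
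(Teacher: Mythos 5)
Your proposal is correct and follows the same overall approach as the paper. Two small points of comparison are worth noting.

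For part (5), you propose to expand each wedge $a\cdot f_3^m$ directly into its two eigen-components and track the exponential scalings. This works, but the paper first uses part (4) to reduce to the case where the relevant exponent on $f_3$ is at most $j_3$: since $\sum k_\ell = \sum j_\ell = d$, some $\ell$ has $k_\ell \le j_\ell$, and by part (4) one may relabel so that $k_3 \le j_3$ (with strict inequality when ${\bf k} \ne {\bf j}$). Then every $f_3^m$ appearing in the six wedges of $T^{\bf k}$ lies entirely in the $e^{-\delta/3}$-eigenspace $F_1^{j_1}+F_3^{j_3}$, so the factors cancel immediately with no expansion needed at all; only the diagonal case ${\bf k}={\bf j}$ requires decomposing $f_3^{j_3+1}=f_3^{j_3}\wedge w$ with $w$ split into its two eigen-components. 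If you skip this reduction, you will have to handle $m>j_3$ across several of the six wedges, which is doable but considerably messier than you may anticipate — I would incorporate the reduction.

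For part (6), your argument that $c=a^{\bf k}_{\bf F}(\delta')$ preserves both eigenspaces of $b=a^{\bf j}_{\bf F}(\delta)$ (using the inclusion $F_1^{j_1}+F_3^{j_3}\subset F_1^{k_1}+F_3^{j_3}$ and the general-position splitting $F_2^{j_2}=F_2^{k_2}\oplus(F_2^{j_2}\cap(F_1^{k_1}+F_3^{j_3}))$) is correct and slightly more conceptual than the paper's, which exhibits an explicit common eigenbasis $(b_1,\dots,b_d)$ adapted to $F_1^{j_1}$, $F_1^{k_1}\cap F_2^{j_2}$, $F_2^{k_2}$, and $F_3^{j_3}$; the underlying dimension counts are the same.
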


\begin{proof}
	Parts (1) and (2) follow directly from the definition. We focus on the proof of the remaining parts.
	
	\textit{Part (3).} Let $t\in\{1,\dots,d-1\}$. For any vector $v\in F_2^{t}$, we may decompose $v=w+u$, where $w\in F_2^{j_2}$ and $u\in F_1^{j_1}+F_3^{j_3}$. (If $t\leq j_2$, then $w=v$ and $u=0$.) Then
	\[A^{{\bf j}}_{{\bf F}}(\delta)\cdot v=e^\delta w+u=(e^\delta-1)w+v.\] 
	It follows that $a^{{\bf j}}_{{\bf F}}(\delta)\cdot F_2^{t}=F_2^{t}$. Since $t$ is arbitrary, part (3) holds. 
	
	\textit{Part (4).} We will only prove that $(F_1,a_{{\bf F}_-}^{{\bf j}_-}(\delta)\cdot F_2,F_3)$ and $(F_1,F_2,a_{{\bf F}}^{{\bf j}}(\delta)\cdot F_3)$ are projectively equivalent; the other cases are similar. By parts (1) and (3), 
	\begin{align*}
		(F_1,a_{{\bf F}_-}^{{\bf j}_-}(\delta)\cdot F_2,F_3)
		&=a_{{\bf F}_-}^{{\bf j}_-}(\delta)\cdot (F_1,F_2,a_{{\bf F}}^{{\bf j}}(\delta)a_{{\bf F}_+}^{{\bf j}_+}(\delta)\cdot F_3)\\
		&=a_{{\bf F}_-}^{{\bf j}_-}(\delta)\cdot (F_1,F_2,a_{{\bf F}}^{{\bf j}}(\delta)\cdot F_3).
	\end{align*}
	
	\textit{Part (5).} To prove that the triple $(F_1,F_2,a_{\bf F}^{\bf j}(\delta)\cdot F_3)$ is in general position, we need to show that 
	\begin{align}\label{eqn: wedge}
		F_1^{k_1}+ F_2^{k_2}+ a_{\bf F}^{\bf j}(\delta) \cdot F_3^{k_3}
	\end{align}
	is a direct sum for all ${\bf k}=(k_1,k_2,k_3)\in\mathcal{B}$. 
	Choose an identification $\mathsf{\Lambda}^d\mathbb{C}^d\simeq\mathbb{C}$. For all indices $\ell\in\{1,2,3\}$ and $m\in\{1,\dots,d-1\}$, select a nonzero vector
	\[f_\ell^{m}\in \mathsf{\Lambda}^m F_\ell^{m}.\] 
	Fix a triple of integers ${\bf k}=(k_1,k_2,k_3)\in\mathcal{B}$, and observe that there is some $\ell\in\{1,2,3\}$ such that $k_\ell\leq j_\ell$. By part (4), we may assume that $k_3\leq j_3$ up to relabeling $F_1$, $F_2$, and $F_3$ if necessary. Then
	\begin{align}\label{eqn: wedge2}
		f_1^{k_1}\wedge f_2^{k_2}\wedge\left(A_{{\bf F}}^{{\bf j}}(\delta)\cdot f_3^{k_3}\right)=f_1^{k_1}\wedge f_2^{k_2}\wedge f_3^{k_3}.
	\end{align}
	Since the triple of flags ${\bf F}$ is in general position, this implies that 
	\[f_1^{k_1}\wedge f_2^{k_2}\wedge\left(A_{{\bf F}}^{{\bf j}}(\delta)\cdot  f_3^{k_3}\right)\neq 0,\]
	which in turn implies that identity \eqref{eqn: wedge} holds.
	
	Next, we calculate $\tau^{\bf k}(F_1,F_2,a_{\bf F}^{\bf j}(\delta)\cdot F_3)$ when ${\bf k}\neq{\bf j}$. In this case, there is some index $\ell\in\{1,2,3\}$ such that $k_\ell<j_\ell$. Again by part (4), we may assume that $k_3<j_3$. Under these hypotheses, the equality
	\[
	\tau^{\bf k}(F_1,F_2,a_{\bf F}^{\bf j}(\delta)\cdot F_3) = \tau^{\bf k}(F_1,F_2,F_3)
	\]
	follows from a straightforward computation using equation \eqref{eqn: wedge2} and the definition of the triple ratio. 
	
	Finally, we calculate $\tau^{\bf j}(F_1,F_2,a_{\bf F}^{\bf j}(\delta)\cdot F_3)$. Observe that the $(j_3 + 1)$--form $f_3^{j_3 + 1}$ can be expressed as $f_3^{j_3} \wedge w$, for some vector $w \in F_3^{j_3 + 1} \cap (F_1^{j_1}+F_2^{j_2})$ and we can decompose $w=w_1+w_2$, where $w_1\in F_1^{j_1}$ and $w_2\in F_2^{j_2}$. Then 
	\begin{align*}
		f_1^{j_1}\wedge f_2^{j_2-1}\wedge (A_{{\bf F}}^{{\bf j}}(\delta)\cdot  f_3^{j_3+1})&=f_1^{j_1}\wedge f_2^{j_2-1}\wedge (A_{{\bf F}}^{{\bf j}}(\delta)\cdot  f_3^{j_3})\wedge (A_{{\bf F}}^{{\bf j}}(\delta)\cdot w)\\
		&=f_1^{j_1}\wedge f_2^{j_2-1}\wedge f_3^{j_3}\wedge (e^\delta w_2)\\
		&=e^\delta f_1^{j_1}\wedge f_2^{j_2-1}\wedge f_3^{j_3+1}.
	\end{align*}
	Similarly,
	\[		f_1^{j_1-1}\wedge f_2^{j_2}\wedge \left(A_{{\bf F}}^{{\bf j}}(\delta)\cdot  f_3^{j_3+1}\right)=f_1^{j_1-1}\wedge f_2^{j_2}\wedge f_3^{j_3+1}.
	\]
	Therefore, by definition the quantity $T^{{\bf j}}(F_1,F_2,a_{\bf F}^{\bf j}(\delta)\cdot F_3)$ coincides with
	\begin{align*}
		\frac{f_1^{j_1+1}\wedge f_2^{j_2}\wedge f_3^{j_3-1}}{f_1^{j_1-1}\wedge  f_2^{j_2}\wedge f_3^{j_3+1}} \frac{e^\delta f_1^{j_1}\wedge f_2^{j_2-1}\wedge f_3^{j_3+1}}{f_1^{j_1}\wedge f_2^{j_2+1}\wedge f_3^{j_3-1}}  \frac{f_1^{j_1-1}\wedge f_2^{j_2+1}\wedge f_3^{j_3}}{f_1^{j_1+1}\wedge f_2^{j_2-1}\wedge f_3^{j_3}} = e^{\delta} \, T^{{\bf j}}(F_1,F_2,F_3)
	\end{align*}
	as desired.	
	
	\textit{Part (6).} Since $j_3 = k_3$, we may assume that $j_1\leq k_1$ and $j_2\geq k_2$; the other case is similar. Consider a basis $(b_1,b_2,\dots, b_d)$ of $\mathbb C^d$ that satisfies
	\begin{align*}
		\mathrm{Span}(b_1,\dots, b_{j_1})=F_1^{j_1}, & \qquad \mathrm{Span}(b_{j_1+1},\dots, b_{k_1})=(F_1^{k_1}+F_3^{k_3})\cap F_2^{j_2}, \\
		\mathrm{Span}(b_{k_1+1},\dots, b_{j_1+j_2})=F_2^{k_2},&\qquad  \mathrm{Span}(b_{j_1+j_2+1},\dots, b_d)=F_3^{j_3}=F_3^{k_3}.
	\end{align*}
	Such a basis exists because $(F_1,F_2,F_3)$ is in general position. It follows from their definitions that $A^{\bf k}_{\bf F}(\delta')$ and $A_{\bf F}^{\bf j}(\delta)$ are diagonal in this basis, so they commute.
\end{proof}

Given any triple of flags ${\bf F}=(F_1,F_2,F_3)$ in $\mathcal{F}(\mathbb{C}^d)$ that is in general position, and any ${\zeta}:=(\zeta^{\bf j})_{{\bf j}\in\mathcal{B}}\in(\mathbb{C}/2\pi i\mathbb{Z})^{\mathcal{B}}$, Lemma \ref{lem: properties simple eruption} part (6) implies that
\[a^n_{\bf F}({\zeta}):=\prod_{{\bf j}\in\mathcal{B}:j_3=n}a^{\bf j}_{\bf F}(\zeta^{\bf j})\quad\text{and}\quad A^n_{\bf F}({\zeta}):=\prod_{{\bf j}\in\mathcal{B}:j_3=n}A^{\bf j}_{\bf F}(\zeta^{\bf j})\]
is a commuting product for all $n\in\{1,\dots,d-2\}$. We then define the \emph{complex eruption of ${\bf F}$ by $\zeta$} as 
\[a_{\bf F}({\zeta}):=a^1_{\bf F}({\zeta})\cdot\ldots\cdot a^{d-2}_{\bf F}({\zeta})\in\mathsf{PGL}_d(\mathbb{C}),\]
which is the projectivization of 
\[A_{\bf F}({\zeta}):=A^1_{\bf F}({\zeta})\cdot\ldots\cdot A^{d-2}_{\bf F}({\zeta})\in\mathsf{GL}_d(\mathbb{C}).\]

If ${\bf F}=(F_1,F_2,F_3)$ is a triple of flags in $\mathcal{F}(\mathbb{C}^d)$ and ${\bf j}=(j_1,j_2,j_3) \in \mathcal{B}$, we denote
\[\wh{\bf F}:=(F_2,F_1,F_3),\quad{\bf F}':=(F_3,F_2,F_1),\quad\wh{\bf j}:=(j_2,j_1,j_3),\,\,\text{ and }\,\,{\bf j}':=(j_3,j_2,j_1).\]
Then for any ${\zeta}=(\zeta^{\bf j})_{{\bf j}\in\mathcal{B}}\in(\mathbb{C}/2\pi i \mathbb{Z})^{\mathcal{B}}$, let $\zeta_+,\zeta_-,\wh{\zeta},\zeta'\in(\mathbb{C}/2\pi i \mathbb{Z})^{\mathcal{B}}$ be defined by
\begin{equation}\label{eqn: permute}
	(\zeta_+)^{\bf j}:= \zeta^{{\bf j}_+},\quad  (\zeta_-)^{\bf j}:= \zeta^{{\bf j}_-},\quad (\wh{\zeta})^{\bf j}:= - \zeta^{\wh{\bf j}} , \,\,\text{ and }\,\,  (\zeta')^{\bf j}:= - \zeta^{{\bf j}'}
\end{equation}
for all ${\bf j}  \in \mathcal{B}$. The complex eruption $a_{\bf F}({\zeta})$ satisfies the following properties.

\begin{proposition}\label{prop: eruption combination}
	Let ${\bf F}=(F_1,F_2,F_3)$ be a triple of flags in $\mathcal{F}(\mathbb{C}^d)$ that are in general position, and let ${\zeta}=(\zeta^{\bf j})_{{\bf j}\in\mathcal{B}}\in(\mathbb{C}/2\pi i \mathbb{Z})^{\mathcal{B}}$. Then:
	\begin{enumerate}
		\item The transformation $a_{\bf F}({\zeta})$ is the unique element in $\mathsf{PGL}_d(\mathbb{C})$ that fixes the flag $F_2$, the lines $F_1^1$ and $F_3^1$, and satisfies the property that the triple of flags $(F_1,F_2,a_{\bf F}({\zeta}) \cdot F_3)$ is in general position and
		\[\tau^{\bf j}(F_1,F_2,a_{\bf F}({\zeta})\cdot F_3)=\tau^{\bf j}(F_1,F_2,F_3)+\zeta^{\bf j}\]
		for every triple of integers ${\bf j}\in\mathcal{B}$.
		\item Let $(b_1,\dots,b_d)$ be a basis of $\mathbb{C}^d$ such that $F_2^{m}=\Span_{\mathbb{C}}(b_1,\dots,b_m)$ for all $m\in\{1,\dots,d-1\}$. The linear representative $A_{\bf F}({\zeta})\in\mathsf{GL}_d(\mathbb{C})$ of $a_{\bf F}({\zeta})$, when written in this basis, is an upper triangular matrix whose $n$--th diagonal entry is 
		\[\prod_{{\bf j}\in\mathcal{B}:j_2\geq n}\exp\left(\zeta^{\bf j}\right),\]
		and whose $(i,j)$--th entry for all $i\le j$ is of the form
		\[P_{i,j}(\{\exp(\zeta^{\bf j}):{\bf j}\in\mathcal{B}\})\]
		where $P_{i,j}$ is a polynomial of $\abs{\mathcal{B}}$ variables that is of degree at most $1$ in each variable.
		\item $a_{\widehat{\bf F}}(\widehat{\zeta}) \cdot F_3 = a_{\bf F}(\zeta) \cdot F_3$.	
		\item $a_{\bf F'}(\zeta') = a_{\bf F}(\zeta)^{-1}$.
		\item $a_{\bf F}(\zeta) \, a_{{\bf F}_+}(\zeta_+) \, a_{{\bf F}_-}(\zeta_-) = \mathrm{id}$.
	\end{enumerate}
\end{proposition}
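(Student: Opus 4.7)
My plan is to prove the five parts in the order $(2)$, $(1)$, $(3)$ and $(4)$, and $(5)$, using $(2)$ as the main computational input and the uniqueness derived in $(1)$ to obtain $(3)$ and $(4)$. For $(2)$, I would argue by induction on the elementary eruptions in the definition of $a_{\bf F}(\zeta)$: since each $a^{\bf j}_{\bf F}(\zeta^{\bf j})$ fixes $F_2$ by Lemma 6.4 (3), each factor is upper triangular in a basis $(b_1,\dots,b_d)$ adapted to $F_2$, and so is the product. The diagonal entry on $F_2^n/F_2^{n-1}$ of the factor $a^{\bf j}_{\bf F}(\zeta^{\bf j})$ equals $e^{2\zeta^{\bf j}/3}$ when $n \leq j_2$ (since then $F_2^n \subset F_2^{j_2}$ on which the eruption acts as the scalar $e^{2\zeta^{\bf j}/3}$) and $e^{-\zeta^{\bf j}/3}$ when $n > j_2$ (identifying $\Cb^d/F_2^{j_2}$ with $F_1^{j_1}+F_3^{j_3}$ via the direct sum coming from general position); multiplying diagonal contributions over $\bf j$ gives the stated formula for $\lambda_n$.

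For $(1)$, the invariance of $F_2$, $F_1^1$, and $F_3^1$ is inherited from each elementary eruption: Lemma 6.4 (3) handles $F_2$, and each $a^{\bf j}_{\bf F}(\zeta^{\bf j})$ acts as the scalar $e^{-\zeta^{\bf j}/3}$ on $F_1^{j_1} \supset F_1^1$ and on $F_3^{j_3} \supset F_3^1$. The triple ratio shift is established by inducting on the elementary eruptions, applying Lemma 6.4 (5) one factor at a time from the innermost outward. The crucial point is that each $a^{\bf j}_{\bf F}(\zeta^{\bf j})$ acts as a scalar on $F_3^{j_3}$, and hence preserves every $F_3^k$ with $k \leq j_3$; the specific ordering in the definition of $a_{\bf F}(\zeta)$ (innermost factors have the largest $j_3$) therefore allows each subsequent factor with smaller $j_3'$ to be identified, via Lemma 6.4 (2), with the corresponding elementary eruption on the partially updated third flag, so that Lemma 6.4 (5) shifts only $\tau^{\bf j'}$ by $\zeta^{\bf j'}$. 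Uniqueness then follows from Proposition 2.3 (2) combined with the observation that the Cartan of $(F_1,F_2)$ acts freely on any line in general position with $(F_1,F_2)$: in a Frenet basis for $(F_1,F_2)$ the line $F_3^1$ is $\Span(\sum_i c_i b_i)$ with all $c_i \neq 0$, forcing any diagonal stabilizer to be scalar.

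For $(3)$ and $(4)$, I would first upgrade $(1)$ to a flag-level uniqueness statement: the flag $a_{\bf F}(\zeta) \cdot F_3$ is the unique flag $G$ with $G^1 = F_3^1$ and $\tau^{\bf j}(F_1,F_2,G) = \tau^{\bf j}(F_1,F_2,F_3) + \zeta^{\bf j}$ for every $\bf j$, a consequence of Proposition 2.3 (2) together with the same free Cartan action. Property $(3)$ then follows by checking, using the hat symmetry $\tau^{\bf j}({\bf G}) = -\tau^{\widehat{\bf j}}(\widehat{\bf G})$ from Proposition 2.3 (1) and the defining relation $\widehat{\zeta}^{\bf j} = -\zeta^{\widehat{\bf j}}$, that $a_{\widehat{\bf F}}(\widehat{\zeta}) \cdot F_3$ satisfies both characterizing conditions. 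Property $(4)$ follows from the uniqueness in $(1)$ applied to $\bf F'$: both $a_{\bf F'}(\zeta')$ and $a_{\bf F}(\zeta)^{-1}$ fix the triple $(F_2, F_3^1, F_1^1)$, and $a_{\bf F}(\zeta)^{-1}$ realizes the correct triple-ratio shift on $F_1$ after applying the symmetry $\tau^{\bf k}(F_3,F_2,F_1) = -\tau^{{\bf k}'}(F_1,F_2,F_3)$ obtained by composing the hat and cyclic symmetries of Proposition 2.3 (1).

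Property $(5)$ is the main obstacle. The identity $a_{\bf F}(\zeta) \, a_{{\bf F}_+}(\zeta_+) \, a_{{\bf F}_-}(\zeta_-) = \id$ does not reduce to a direct application of Lemma 6.4 (1), because the parameter values fail to align: after re-indexing, the factor $a^{{\bf j}_+}_{{\bf F}_+}$ appears inside $a_{{\bf F}_+}(\zeta_+)$ with value $\zeta^{{\bf j}_-}$, whereas Lemma 6.4 (1) would require it to be paired at value $\zeta^{\bf j}$. I would approach this by expressing all three factors in a basis simultaneously compatible with the three-flag weight decomposition $\Cb^d = \bigoplus V_{i_1 i_2 i_3}$ and then showing that the total scaling along each weight subspace multiplies to one. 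The $d = 3$ case illustrates the mechanism: in the Frenet basis $(e_1,e_2,e_3)$ with $e_i \in F_i^1$, the three factors become $\mathrm{diag}(1,e^\zeta,1)$, $\mathrm{diag}(1,1,e^\zeta)$, and $\mathrm{diag}(e^\zeta,1,1)$, whose product equals the scalar $\mathrm{diag}(e^\zeta,e^\zeta,e^\zeta) = \id \in \PSL_3(\Cb)$; the general case should reduce to an analogous weight-counting identity, but the combinatorial bookkeeping across the $|\mathcal{B}|$ summands of the weight decomposition is the principal technical difficulty.
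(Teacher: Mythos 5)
Parts (1)--(4) of your proposal follow the same strategy as the paper's proof and are essentially correct: (2) is the diagonal computation in a basis adapted to $F_2$; (1) establishes the triple-ratio shift by peeling off the elementary eruptions from the inside (largest $j_3$) outward via Lemma 6.4 (2) and (5), with uniqueness coming from the fact that $\mathrm{Stab}(F_1,F_2)$ acts freely on lines in general position; and (3), (4) are then deduced from the uniqueness characterization in (1) together with the triple-ratio symmetries of Proposition 2.3.

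Part (5), however, is a genuine gap. Your observation that the identity does not follow from Lemma 6.4 (1) directly, because the parameters fail to align after cyclic re-indexing, is correct and worth saying. But your proposed weight-decomposition fix does not extend past $d=3$. The three eruptions $a_{\bf F}(\zeta)$, $a_{{\bf F}_+}(\zeta_+)$, $a_{{\bf F}_-}(\zeta_-)$ stabilize three \emph{different} flags $F_2$, $F_3$, $F_1$, so there is no basis making them simultaneously upper triangular, let alone diagonal. Even inside a single $a_{\bf F}(\zeta)$, the elementary factors $a^{\bf j}_{\bf F}$ have eigenspace pairs $\bigl(F_2^{j_2},\ F_1^{j_1}+F_3^{j_3}\bigr)$ that vary with $\bf j$, so they are not simultaneously diagonal either. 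And $\Cb^d$ carries no canonical decomposition into $|\Bc|$ lines: in your $d=3$ example the Frenet basis has $d=3$ vectors while $|\Bc|=1$, so the bookkeeping you anticipate never materializes into a coherent scheme.

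The paper sidesteps the combinatorics entirely, proving (5) from (1), (3), (4): both $a_{{\bf F}_-}(\zeta_-)^{-1} a_{{\bf F}_+}(\zeta_+)^{-1}$ and $a_{\bf F}(\zeta)$ fix $F_1^1$ by (1), and the equality of their images of $F_2$ and of $F_3$ is obtained by chaining (3) and (4) through the identities $\wh{({\bf F}_+)'}={\bf F}_-$, $\wh{({\bf F}_-)'}={\bf F}$ and their analogues on $\zeta$. Since $\PSL_d(\Cb)$ acts simply transitively on $\Nc(\Cb^d)$, matching the actions on $(F_2,F_3,F_1^1)$ forces the two transformations to coincide. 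You should prove (5) last, by this route, after (1), (3), (4) are in hand.
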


\begin{proof}
	
	\noindent \textit{Part (1).} We first check that the transformation $a_{\bf F}(\zeta)$ fixes $F_2$, $F_1^1$, and $F_3^1$. Notice that for every ${\bf j} \in \mathcal{B}$ the transformation $a_{\bf F}^{\bf j}(\zeta^{\bf j})$ fixes the lines $F_1^1, F_3^1$ and, by Lemma \ref{lem: properties simple eruption} part (3), the flag $F_2$. In particular the same holds for $a_{\bf F}(\zeta)$. 
	
	We now show that $(F_1,F_2,a_{\bf F}(\zeta) \cdot F_3)$ is in general position and we determine the value of $\tau^{\bf j}(F_1,F_2,a_{\bf F}(\zeta)\cdot F_3)$ for any triple of integers ${\bf j} \in \mathcal{B}$. Set
	\[H_n:=a^n_{\bf F}(\zeta)\, a^{n+2}_{\bf F}(\zeta)\, \cdots \, a^{d-2}_{\bf F}(\zeta)\cdot F_3,\,\, \text{ if } n \in\{1,\dots, d-2\}\]
	and $H_{d-1}:=F_3$, and denote ${\bf H}_n:=(F_1,F_2,H_n)$ for all $n\in\{1,\dots,d-1\}$. Observe from the definition of the elementary eruptions that for all $k\in\{1,\dots,d-2\}$ and for all triples of integers ${\bf j}=(j_1,j_2,j_3)\in\mathcal{B}$ such that $j_3\ge k$, we have that $a_{\bf F}^{\bf j}(\zeta^{\bf j})$ fixes $F_3^k$. It follows that $H_{n}^{k}=F_3^{k}$ for every $k\in\{1,\dots,n\}$ and all $n\in\{1,\dots,d-1\}$. Thus, for all integers $n\in\{1,\dots,d-2\}$, Lemma~\ref{lem: properties simple eruption} part (2) gives $a_{{\bf H}_{n+1}}^{n}(\zeta)=a_{{\bf F}}^{n}(\zeta)$, and iterative applications of Lemma~\ref{lem: properties simple eruption} part (5) imply that 
	\[{\bf H}_n=(F_1,F_2,a_{{\bf H}_{n+1}}^{n}(\zeta)\cdot H_{n+1})\] 
	is in general position. In particular, ${\bf H}_1 = (F_1,F_2,a_{\bf F}(\zeta) \cdot F_3)$ is in general position and
	\[a_{\bf F}(\zeta)\cdot F_3=a^1_{{\bf H}_2}(\zeta) \, a^2_{{\bf H}_3}(\zeta)\,\cdots \, a^{d-2}_{{\bf H}_{d-1}}(\zeta)\cdot H_{d-1}.\]
	Thus, to prove that $\tau^{\bf j}(F_1,F_2,a_{\bf F}({\zeta})\cdot F_3)=\tau^{\bf j}(F_1,F_2,F_3)+\zeta^{\bf j}$ for every ${\bf j}\in\mathcal{B}$, it now suffices to show that for any $n\in\{1,\dots,d-2\}$
	\begin{align*}
		\tau^{\bf j}(F_1,F_2,H_n) & = \tau^{\bf j}(F_1,F_2,a_{{\bf H}_{n+1}}^{n}(\zeta)\cdot H_{n+1}) \\
		& = \begin{cases}
			\tau^{\bf j}(F_1,F_2,H_{n+1})+\zeta^{\bf j} & \text{if $j_3=n$,}\\
			\tau^{\bf j}(F_1,F_2,H_{n+1}) & \text{if $j_3\neq n$.}
		\end{cases}
	\end{align*}
	This follows easily from Lemma \ref{lem: properties simple eruption} parts (2) and (5).
	
	For the uniqueness, suppose that $a , b \in \mathsf{PGL}_d(\mathbb{C})$ are projective transformations that fix the flag $F_2$, the lines $F_1^1$, $F_3^1$, and such that the triples of flags $(F_1,F_2,a \cdot F_3)$ and $(F_1,F_2,b \cdot F_3)$ are in general position and satisfy
	\[
	\tau^{\bf j}(F_1,F_2, a\cdot F_3)  = \tau^{\bf j}(F_1, F_2, b\cdot F_3)
	\]
	for all ${\bf j} \in \mathcal{B}$. Then $(F_1,F_2,a \cdot F_3)$ and $(F_1,F_2,b \cdot F_3)$ are projectively equivalent and 
	\[
	(F_1,F_2,a \cdot F_3^1) = (F_1,F_2,F_3^1) = (F_1,F_2,b \cdot F_3^1) ,
	\]
	so $(F_1,F_2,a \cdot F_3)=(F_1,F_2,b \cdot F_3)$. In particular, $a \cdot F_3 = b \cdot F_3$. Hence the transformation $a^{-1} b$ fixes the flags $F_2$, $F_3$, and the line $F_1^1$, so $a^{-1} b = \mathrm{id} \in \mathsf{PGL}_d(\mathbb{C})$, as desired.

	\textit{Part (2).} For each ${\bf j}\in\mathcal{B}$, choose a basis $(b_{1,{\bf j}},\dots,b_{d,{\bf j}})$ such that 
	\begin{itemize}
		\item $F_2^m=\Span_{\mathbb{C}}(b_{1,{\bf j}},\dots,b_{m,{\bf j}})$ for all $m\in\{1,\dots,j_2\}$, 
		\item $b_{m,{\bf j}}\in F_2^m\cap (F_1^{j_1+j_2+1-m}+F_3^{j_3})$ for all $m\in\{j_2+1,\dots,j_1+j_2\}$, and 
		\item $b_{m,{\bf j}}\in F_2^m\cap (F_3^{d+1-m})$ for all $m\in\{j_1+j_2+1,\dots,d\}$.
	\end{itemize}
	Notice that in this basis, the linear transformation $A^{\bf j}_{\bf F}(\zeta^{\bf j})$ is a diagonal matrix whose first $j_2$ entries along the diagonal are $\exp\zeta^{\bf j}$ and whose remaining $j_1+j_3$ entries along the diagonal are $1$. Also, the linear map that sends $(b_1,\dots,b_d)$ to $(b_{1,{\bf j}},\dots,b_{d,{\bf j}})$ is upper triangular (in either basis). Thus, in the basis $(b_1,\dots,b_d)$, $A^{\bf j}_{\bf F}(\zeta^{\bf j})$ is an upper triangular matrix whose first $j_2$ entries along the diagonal are $\exp\zeta^{\bf j}$, whose remaining $j_1+j_3$ entries along the diagonal are $1$, and whose strict upper triangular entries are polynomials of $\exp\zeta^{\bf j}$ of degree at most $1$. The claim now follows from the definition of $A_{\bf F}(\zeta)$.	
	
	\textit{Part (3).} Since $a_{\widehat{\bf F}}(\widehat{\zeta})$ and $a_{\bf F}(\zeta)$ both fix $F_3^1$, to prove that $a_{\widehat{\bf F}}(\widehat{\zeta}) \cdot F_3 = a_{\bf F}(\zeta) \cdot F_3$, it suffices to verify that the triples $(F_1,F_2,a_{\widehat{\bf F}}(\widehat{\zeta}) \cdot F_3)$ and $(F_1,F_2,a_{\bf F}(\zeta) \cdot F_3)$ are projectively equivalent or, equivalently, that they have the same triple ratios. For every ${\bf j} \in \mathcal{B}$, we have:
	\begin{align*}
		\tau^{\bf j}(F_1,F_2,a_{\widehat{\bf F}}(\widehat{\zeta}) \cdot F_3) & = - \tau^{\widehat{\bf j}}(F_2,F_1,a_{\widehat{\bf F}}(\widehat{\zeta}) \cdot F_3) \\
		& = - \tau^{\widehat{\bf j}}(F_2,F_1,F_3) - (\widehat{\zeta})^{\widehat{\bf j}} \\
		& = \tau^{\bf j}(F_1,F_2,F_3) + \zeta^{\bf j} \\
		& = \tau^{\bf j}(F_1,F_2,a_{\bf F}(\zeta) \cdot F_3),
	\end{align*}
	where the first and third equalities are a consequence of the symmetries of the triple ratio, and the second and fourth equalities follow from part (1).
	
	\textit{Part (4).} By part (1), it suffices to show that the transformation $a_{\bf F'}(\zeta')^{-1}$ satisfies the following properties:
	\begin{enumerate}[label=(\alph*)]
		\item $a_{\bf F'}(\zeta')^{-1}\cdot F_1^1 = F_1^1$, $a_{\bf F'}(\zeta')^{-1}\cdot F_3^1 = F_3^1$, and $a_{\bf F'}(\zeta')^{-1}\cdot F_2 = F_2$,
		\item $(F_1,F_2,a_{\bf F'}(\zeta')^{-1}\cdot F_3)$ is in general position and 
		\[\tau^{\bf j}(F_1,F_2,a_{\bf F'}(\zeta')^{-1} \cdot F_3) = \tau^{\bf j}(F_1,F_2,F_3) + \zeta^{\bf j}\] 
		for every ${\bf j} \in \mathcal{B}$.
	\end{enumerate}	
	The equalities in (a) follow from part (1) applied to the transformation $a_{{\bf F}'}(\zeta')$. For (b), observe that by Lemma \ref{lem: properties simple eruption} part (3),
	\[a_{\bf F'}(\zeta')\cdot(F_1,F_2,a_{\bf F'}(\zeta')^{-1}\cdot F_3)=(a_{\bf F'}(\zeta')\cdot F_1,F_2, F_3).\]
	Thus, by part (1), $(F_1,F_2,a_{\bf F'}(\zeta')^{-1}\cdot F_3)$ is in general position, and 
	\begin{align*}
		\tau^{\bf j}(F_1,F_2,a_{\bf F'}(\zeta')^{-1}\cdot F_3) 		& = \tau^{\bf j}(a_{\bf F'}(\zeta') \cdot F_1,F_2,F_3) \\
		& = - \tau^{{\bf j}'}(F_3,F_2,a_{\bf F'}(\zeta') \cdot F_1) \\
		& = - \tau^{{\bf j}'}(F_3,F_2,F_1) - (\zeta')^{{\bf j}'} \\
		& = \tau^{\bf j}(F_1,F_2,F_3) + \zeta^{\bf j} 
	\end{align*}
	for every ${\bf j} \in \mathcal{B}$. Here, the first equality is a consequence of the projective invariance of the triple ratio, the second and fourth equality hold by the symmetries of the triple ratio, while the third equality holds by part (1).
	
	\textit{Part (5).}  By part (1), both transformations $a_{{\bf F}_-}(\zeta_-)^{-1} a_{{\bf F}_+}(\zeta_+)^{-1}$ and $a_{\bf F}(\zeta)$ fix $F_1^1$. Thus, it suffices to show that 	
	\[a_{{\bf F}_-}(\zeta_-)^{-1} a_{{\bf F}_+}(\zeta_+)^{-1}\cdot (F_2,F_3)=a_{\bf F}(\zeta)\cdot (F_2,F_3).\]
	
	Note that $\wh{({\bf F}_+)'}={\bf F}_-$ and $\widehat{(\zeta_+)'} = \zeta_-$.
	Thus,
	\begin{align*}
		a_{{\bf F}_-}(\zeta_-)^{-1} a_{{\bf F}_+}(\zeta_+)^{-1}\cdot F_2 & = a_{{\bf F}_-}(\zeta_-)^{-1} \, a_{({\bf F}_+)'}((\zeta_+)') \cdot F_2 \\
		& = a_{{\bf F}_-}(\zeta_-)^{-1} \, a_{(\wh{{\bf F}_+)'}}(\wh{(\zeta_+)'}) \cdot F_2 \\
		&=F_2\\
		& =a_{\bf F}(\zeta)\cdot F_2,
	\end{align*}
	where the first, second, and fourth equalities hold by parts (4), (3), and (1) respectively.

	Similarly, $\wh{({\bf F}_-)'}={\bf F}$ and $\widehat{(\zeta_-)'} = \zeta$. Thus,
	\begin{align*}
		a_{{\bf F}_-}(\zeta_-)^{-1} a_{{\bf F}_+}(\zeta_+)^{-1}\cdot F_3 & = a_{{\bf F}_-}(\zeta_-)^{-1} \cdot F_3 \\
		& = a_{({\bf F}_-)'}((\zeta^-)') \cdot F_3 \\
		& = a_{\wh{({\bf F}_-)'}}(\wh{(\zeta^-)'}) \cdot F_3 \\
		& = a_{\bf F}(\zeta) \cdot F_3 .
	\end{align*}
	where the first, second, and third equalities hold by parts (1), (4), and (3) respectively.
\end{proof}

\subsection{Complex shears} \label{sec: complex shear}
The second kind of building blocks are the \emph{complex shears}. Recall that $\mathcal{A}$ denotes the set of pairs of positive integers that sum to $d$.
For any pair of transverse flags ${\bf E}=(E_1,E_2)$ in $\mathcal{F}(\mathbb{C}^d)$, any pair of integers ${\bf i}=(i_1,i_2)\in\mathcal{A}$, and any $\varepsilon\in\mathbb{C}/2\pi i \mathbb{Z}$, the \emph{${\bf i}$--elementary shear of ${\bf E}$ by $\varepsilon$}, denoted by
\[c^{\bf i}_{\bf E}(\varepsilon)\in\mathsf{PGL}_d(\mathbb{C}).\] 
is the projectivization of the linear map $C^{\bf i}_{\bf E}(\epsilon)$ that acts as scaling by $e^{\varepsilon}$ on $E_1^{i_1}$ and the identity on $E_2^{i_2}$.

If ${\bf E}=(E_1,E_2)$ is a pair of flags in $\mathcal{F}(\mathbb{C}^d)$ and ${\bf i}=(i_1,i_2) \in \mathcal{A}$, we denote
\[\wh{\bf E}:=(E_2,E_1)\,\,\text{ and }\,\,\wh{\bf i}:=(i_2,i_1).\]
The elementary shears satisfy the following properties. 
\begin{lemma}\label{prop: shear double ratio}
	Let ${\bf E}=(E_1,E_2)$ be a transverse pair of flags in $\mathcal{F}(\mathbb{C}^d)$, and let $F,G \in \mathcal{F}(\mathbb{C}^d)$ be two flags such that the triples $(E_1, E_2, F)$ and $(E_1,E_2,G)$ are both in general position. Then:
	\begin{enumerate}
		\item $c^{\mathbf{i}}_{\mathbf{E}}(\varepsilon) = c^{\hat{\mathbf{i}}}_{\widehat{\mathbf{E}}}(-\varepsilon) = c^{\mathbf{i}}_{\mathbf{E}}(- \varepsilon)^{-1}$ for any ${\bf i}\in \mathcal{A}$ and $\varepsilon\in\mathbb{C}/2\pi i \mathbb{Z}$.
		\item For all ${\bf i}\in \mathcal{A}$ and $\varepsilon\in\mathbb{C}/2\pi i \mathbb{Z}$, the triple of flags $(E_1,E_2,c_{{\bf E}}^{{\bf i}}(\varepsilon)\cdot G)$ is in general position, and
		\[\sigma^{\bf k}(E_1,E_2,F,c_{{\bf E}}^{{\bf i}}(\varepsilon)\cdot G)=\left\{\begin{array}{ll}
			\sigma^{\bf k}(E_1,E_2,F,G)&\text{if }{\bf k}\neq{\bf i},\\
			\sigma^{\bf k}(E_1,E_2,F,G)+\varepsilon&\text{if }{\bf k}={\bf i}.
		\end{array}\right.\]
		for all ${\bf k}\in\mathcal{A}$.
	\end{enumerate}
\end{lemma}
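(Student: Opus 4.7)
Part (1) follows at once from the definitions. The transformations $c^{\bf i}_{\bf E}(\varepsilon)$, $c^{\widehat{\bf i}}_{\widehat{\bf E}}(-\varepsilon)$, and $c^{\bf i}_{\bf E}(-\varepsilon)^{-1}$ are all projectivizations of the same linear map, namely scaling by $e^{\varepsilon/2}$ on $E_1^{i_1}$ and by $e^{-\varepsilon/2}$ on $E_2^{i_2}$, as one sees by unraveling the defining formula in each of the three cases.

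The core of the argument is part (2). Using the transversality of $E_1$ and $E_2$, I would choose a basis $(b_1,\ldots,b_d)$ of $\Cb^d$ simultaneously adapted to both flags, so that $E_2^j = \Span(b_1,\ldots,b_j)$ and $E_1^j = \Span(b_{d-j+1},\ldots,b_d)$ for every $j$. In this basis $c := c^{\bf i}_{\bf E}(\varepsilon)$ is diagonal with eigenvalues $e^{-\varepsilon/2}$ in positions $1,\ldots,i_2$ and $e^{\varepsilon/2}$ in positions $i_2+1,\ldots,d$. Consequently $c$ preserves every subspace $E_1^j$ and every subspace $E_2^j$, so it fixes both full flags $E_1$ and $E_2$; since general position of a flag triple is $\PGL_d(\Cb)$-invariant, the triple $(E_1,E_2,c\cdot G)$ is still in general position.

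To compute $\sigma^{\bf k}(E_1,E_2,F,c\cdot G)$, I would pick nonzero vectors $f\in F^1$ and $g\in G^1$, write $f=\sum f_j b_j$ and $g=\sum g_j b_j$ (each $f_j,g_j\neq 0$ by the hyperconvexity assumptions on the triples $(E_1,E_2,F)$ and $(E_1,E_2,G)$), and substitute into the defining formula for $D^{\bf k}$. Since $e_1^{k_1}\wedge e_2^{k_2-1}$ is, up to a nonzero scalar, the wedge of the basis vectors $b_j$ with $j\neq k_2$, the numerator of $D^{\bf k}$ isolates the $b_{k_2}$-coefficient of each $h_j$; likewise the denominator isolates the $b_{k_2+1}$-coefficient. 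Under the action of $c$ the coefficient of $b_{k_2}$ in $c\cdot g$ is rescaled by $e^{-\varepsilon/2}$ or $e^{\varepsilon/2}$ according to whether $k_2\leq i_2$ or $k_2>i_2$, and similarly for $b_{k_2+1}$. Hence
\[
D^{\bf k}(E_1,E_2,F,c\cdot G) = \lambda_{\bf k}\cdot D^{\bf k}(E_1,E_2,F,G),
\]
where $\lambda_{\bf k}$ is the ratio between the scaling factor applied to $b_{k_2+1}$ and that applied to $b_{k_2}$.

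When $k_2<i_2$ or $k_2>i_2$ the two scaling factors coincide, so $\lambda_{\bf k}=1$; only in the case ${\bf k}={\bf i}$ (i.e.\ $k_2=i_2$) does the pair $(b_{k_2},b_{k_2+1})$ straddle the splitting $E_2^{i_2}\oplus E_1^{i_1}$, giving $\lambda_{\bf i}=e^{\varepsilon/2}/e^{-\varepsilon/2}=e^{\varepsilon}$. Taking logarithms yields the formula in (2). There is no genuine obstacle here: the whole argument reduces to careful bookkeeping of the scaling factors in the basis adapted to both $E_1$ and $E_2$, with the key point being the diagonal form of $c$ and the fact that the pair of ``missing'' indices $(k_2,k_2+1)$ crosses the splitting exactly when ${\bf k}={\bf i}$.
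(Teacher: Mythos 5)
Your proof is correct and follows essentially the same route the paper intends; the authors omit the proof of this lemma and instruct the reader to mimic the argument for part (5) of Lemma \ref{lem: properties simple eruption}, which is exactly the kind of adapted-basis, coefficient-bookkeeping computation you carry out. Your general-position step is in fact a small simplification over a direct transcription of the eruption argument: because the shear $c^{\bf i}_{\bf E}(\varepsilon)$ fixes both full flags $E_1$ and $E_2$ (whereas the elementary eruption fixes only $F_2$), you can conclude immediately that $(E_1,E_2,c\cdot G)=c\cdot(E_1,E_2,G)$ lies in the $\PGL_d(\Cb)$-orbit of a triple in general position, rather than checking each direct-sum condition via wedge products as the eruption proof must.
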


We omit the proof of Lemma \ref{prop: shear double ratio} because part $(1)$ is immediate from the definition of the elementary shears, while the proof of part $(2)$ is very similar to the proof of part (5) of Lemma \ref{lem: properties simple eruption}.

Next, given a pair of transverse flags ${\bf E}=(E_1,E_2)$ in $\mathcal{F}(\mathbb{C}^d)$ and $\upsilon:=(\upsilon^{\bf i})_{{\bf i}\in\mathcal{A}}\in(\mathbb{C}/2\pi\mathbb{Z})^{\mathcal{A}}$, we may define the \emph{complex shear of ${\bf E}$ by $\upsilon$} to be the commuting product
\[c_{\bf E}({\upsilon}):=\prod_{{\bf i}\in\mathcal{A}}c^{\bf i}_{\bf E}(\upsilon^{\bf i}),\]
which is the projectivization of 
\[C_{\bf E}({\upsilon}):=\prod_{{\bf i}\in\mathcal{A}}C^{\bf i}_{\bf E}(\upsilon^{\bf i}).\]

For any $\upsilon:=(\upsilon^{\bf i})_{{\bf i}\in\mathcal{A}}\in(\mathbb{C}/2\pi\mathbb{Z})^{\mathcal{A}}$, let $\wh\upsilon\in(\mathbb{C}/2\pi\mathbb{Z})^{\mathcal{A}}$ be defined by
\begin{equation} \label{eqn: permute 2}
	(\wh\upsilon)^{\bf i}:=-\upsilon^{\wh{\bf i}}
\end{equation}
for all ${\bf i}\in\mathcal{A}$. The following proposition is the analog of Proposition~\ref{prop: eruption combination}. We omit its proof because the proofs of parts (1) and (2) are very similar to the proofs of part (1) and (2) of Proposition \ref{prop: eruption combination} (using Lemma \ref{prop: shear double ratio} in place of Lemma~\ref{lem: properties simple eruption}), and part (3) follows immediately from Lemma \ref{prop: shear double ratio} part (1).

\begin{proposition}\label{prop: shear combination}
	Let ${\bf E}=(E_1,E_2)$ be a transverse pair of flags in $\mathcal{F}(\mathbb{C}^d)$, and let ${\upsilon}:=(\upsilon^{\bf i})_{{\bf i}\in\mathcal{A}}\in(\mathbb{C}/2\pi i \mathbb{Z})^{\mathcal{A}}$. Then:
	\begin{enumerate}
		\item The projective transformation $c_{\bf E}({\upsilon})$ is the unique element of $\mathsf{PGL}_d(\mathbb{C})$ that fixes the flags $E_1$ and $E_2$, and satisfies the property that if $F,G \in \mathcal{F}(\mathbb{C}^d)$ are two flags such that $(E_1,E_2,F)$ and $(E_1,E_2,G)$ are in general position, then the triple of flags $(E_1,E_2,c_{\bf E}({\upsilon}) \cdot G)$ is in general position and 
		\[\sigma^{\bf i}(E_1,E_2,F,c_{\bf E}({\upsilon})\cdot G)=\sigma^{\bf i}(E_1,E_2,F,G)+\upsilon^{\bf i}\]
		for every ${\bf i}\in\mathcal{A}$.
		\item Let $(b_1,\dots,b_d)$ be a basis of $\mathbb{C}^d$ such that $E_1^{m}=\Span_{\mathbb{C}}(b_1,\dots,b_m)$ for all $m\in\{1,\dots,d-1\}$. The linear representative $C_{\bf E}({\upsilon})\in\mathsf{GL}_d(\mathbb{C})$ of $c_{\bf E}({\upsilon})$, when written in this basis, is an upper triangular matrix whose $n$--th diagonal entry is 
		\[\prod_{{\bf i}\in\mathcal{A}:i_1\geq n}\exp\left(\upsilon^{\bf i}\right),\]
		and whose $(i,j)$--th entry for all $i\le j$ is of the form
		\[P_{i,j}(\{\exp(\upsilon^{\bf i})\mid {\bf i}\in\mathcal{A}\})\]
		where $P_{i,j}$ is a polynomial of $\abs{\mathcal{A}}$ variables that is of degree at most $1$ in each variable. Furthermore, if $E_2^{(m)}=\Span_{\mathbb{C}}(b_{d-m+1},\dots,b_d)$ for all $m\in\{1,\dots,d-1\}$, then  $C_{\bf E}({\upsilon})$ is a diagonal matrix in the basis $(b_1,\dots,b_d)$.

		\item $c_{\mathbf{E}}(\upsilon) = c_{\widehat{\mathbf{E}}}(\widehat{\upsilon}) = c_{\mathbf{E}}(-\upsilon)^{-1}$ and $c_{\mathbf{E}}(\upsilon + \omega) = c_{\mathbf{E}}(\upsilon) \, c_{\mathbf{E}}(\omega)$ for every $\omega \in (\mathbb{C}/2\pi i \mathbb{Z})^{\mathcal{A}}$.
	\end{enumerate}
\end{proposition}

\subsection{Unipotent eruptions} \label{sec: unipotent eruption}
Using the complex eruptions and the complex shears, we may now define the unipotent eruptions.

Given ${\zeta}=(\zeta^{{\bf j}})_{{\bf j}\in\mathcal{B}}\in(\mathbb{C}/2\pi i \mathbb{Z})^{\mathcal{B}}$, let
\[
s_\zeta = (s_\zeta^\mathbf{i})_{\mathbf{i} \in \mathcal{A}} \in (\mathbb{C}/2\pi i \mathbb{Z})^{\mathcal{A}}
\]
be defined by
\begin{equation}\label{eqn: sum zeta}
	s_\zeta^\mathbf{i}:=\sum_{{\bf j}=(j_1,j_2,j_3) \in \mathcal{B} : j_2=i_1}\zeta^{\bf j} \in \mathbb{C}/2\pi i \mathbb{Z} 
\end{equation}
for all pairs of integers $\mathbf{i}=(i_1,i_2) \in \mathcal{A}$.
Then for any triple of flags ${\bf F}=(F_1,F_2,F_3)$ in $\mathcal{F}(\mathbb{C}^d)$ that is in general position, any ${\zeta}\in(\mathbb{C}/2\pi i \mathbb{Z})^{\mathcal{B}}$, and any ${\upsilon}\in(\mathbb{C}/2\pi i \mathbb{Z})^{\mathcal{A}}$, we may define the \emph{unipotent eruption of ${\bf F}$ by $(\zeta,\upsilon)$} to be the product
\begin{equation}\label{eq: unipotent eruption}
	u_{\bf F}(\zeta,\upsilon):=c_{\bf E}({\upsilon})\, a_{{\bf F}}({\zeta})\, c_{{\bf G}}(-{\upsilon}-s_\zeta) = c_{\widehat{\mathbf{E}}}(\widehat{\upsilon})\, a_{{\bf F}}({\zeta})\, c_{\widehat{\mathbf{G}}}(-\widehat{\upsilon}-\widehat{s_\zeta}),
\end{equation}
where ${\bf E} :=(F_2,F_1) $ and $ {\bf G}:=(F_2,F_3)$. The equality here follows from the relations of Proposition \ref{prop: shear combination} part (3).  

Recall that an element in $\mathsf{PGL}_d(\mathbb{C})$ is \emph{unipotent} if it has a linear representative in $\mathsf{SL}_d(\mathbb{C})$ that is a unipotent. The following proposition gives a geometric description of the unipotent eruption. These unipotent eruptions play an important role in our study of the generalized bending map in Section \ref{sec:bending}. 

\begin{proposition}\label{prop: lift}
	Let ${\bf F}=(F_1,F_2,F_3)$ be a triple of flags in $\mathcal{F}(\mathbb{C}^d)$ that is in general position, let ${\bf E}:=(F_2,F_1)$, let ${\zeta}\in(\mathbb{C}/2\pi i \mathbb{Z})^{\mathcal{B}}$, and let ${\upsilon}\in(\mathbb{C}/2\pi i \mathbb{Z})^{\mathcal{A}}$. The unipotent eruption $u_{\bf F}(\zeta,\upsilon)$ is the unique unipotent projective transformation that fixes the flag $F_2$, sends $F_3^1$ to $c_{\bf E}(\upsilon)\cdot F_3^1$, and satisfies the property that the triple of flags $(F_1,F_2,u_{\bf F}(\zeta,\upsilon)\cdot F_3)$ is in general position and
	\[
	\tau^{\bf j}(F_1,F_2,u_{\bf F}(\zeta,\upsilon)\cdot F_3) = \tau^{\bf j}(F_1,F_2,F_3) + \zeta^{\bf j}
	\]
	for every ${\bf j} \in \mathcal{B}$.
\end{proposition}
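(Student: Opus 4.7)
The plan is to verify the three defining properties (fixing $F_2$, sending $F_3^1$ to $c_{\bf E}(\upsilon)\cdot F_3^1$, and producing the prescribed triple ratios) directly from the definition of $u_{\bf F}(\zeta,\upsilon) = c_{\bf E}(\upsilon)\,a_{\bf F}(\zeta)\,c_{\bf G}(-\upsilon-s_\zeta)$, then check unipotency by a diagonal-entries calculation, and finally establish uniqueness using Proposition \ref{prop_invariants}(2). For the first three properties: each of the factors preserves $F_2$ (by Propositions \ref{prop: eruption combination}(1) and \ref{prop: shear combination}(1)), so $u_{\bf F}(\zeta,\upsilon)$ does too. Since $c_{\bf G}(-\upsilon-s_\zeta)$ fixes the flag $F_3$ and $a_{\bf F}(\zeta)$ fixes $F_3^1$, we get $u_{\bf F}(\zeta,\upsilon)\cdot F_3^1 = c_{\bf E}(\upsilon)\cdot F_3^1$. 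For the triple ratios, $u_{\bf F}(\zeta,\upsilon)\cdot F_3 = c_{\bf E}(\upsilon)\,a_{\bf F}(\zeta)\cdot F_3$, and since $c_{\bf E}(\upsilon)$ fixes both $F_1$ and $F_2$, the projective invariance of $\tau^{\bf j}$ together with Proposition \ref{prop: eruption combination}(1) gives the desired identity.

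The main technical step is proving unipotency, which is where the otherwise mysterious correction $s_\zeta$ in the definition pays off. I will fix a basis $(b_1,\dots,b_d)$ of $\Cb^d$ with $F_2^m = \Span(b_1,\dots,b_m)$. By Proposition \ref{prop: eruption combination}(2) and Proposition \ref{prop: shear combination}(2) (noting that both ${\bf E}$ and ${\bf G}$ have $F_2$ as their first entry), all three factors are upper triangular in this basis, so $u_{\bf F}(\zeta,\upsilon)$ is upper triangular. Multiplying the diagonal entries, the $\upsilon$-contributions from $c_{\bf E}(\upsilon)$ and $c_{\bf G}(-\upsilon-s_\zeta)$ cancel, and the $n$-th diagonal entry becomes
\[
\exp\!\left( \sum_{{\bf j}\in\Bc:\, j_2\ge n}\tfrac{2\zeta^{\bf j}}{3} - \sum_{{\bf j}\in\Bc:\, j_2<n}\tfrac{\zeta^{\bf j}}{3} - \sum_{{\bf i}\in\Ac:\, i_1\ge n}\tfrac{s_\zeta^{\bf i}}{2} + \sum_{{\bf i}\in\Ac:\, i_1<n}\tfrac{s_\zeta^{\bf i}}{2} \right).
\]
Using $s_\zeta^{\bf i} = \sum_{{\bf j}:\,j_2=i_1}\zeta^{\bf j}$, the index switch yields $\sum_{{\bf i}:\,i_1\ge n}s_\zeta^{\bf i} = \sum_{{\bf j}:\,j_2\ge n}\zeta^{\bf j}$ (and analogously for $i_1<n$). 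A short arithmetic simplification shows the exponent collapses to $\tfrac{1}{6}\sum_{{\bf j}\in\Bc}\zeta^{\bf j}$, independent of $n$. Thus the diagonal is constant, so some lift in $\SL_d(\Cb)$ (scaling by a $d$-th root of unity) has all diagonal entries equal to $1$, making $u_{\bf F}(\zeta,\upsilon)$ unipotent in $\PSL_d(\Cb)$.

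For uniqueness, suppose $u,u'\in\PSL_d(\Cb)$ both satisfy the listed properties. Since the triples $(F_1,F_2,u\cdot F_3)$ and $(F_1,F_2,u'\cdot F_3)$ are in general position with equal triple ratios, Proposition \ref{prop_invariants}(2) produces $A\in\PGL_d(\Cb)$ fixing $F_1,F_2$ and sending $u\cdot F_3$ to $u'\cdot F_3$. But $A$ also fixes the line $u\cdot F_3^1 = u'\cdot F_3^1 = c_{\bf E}(\upsilon)\cdot F_3^1$, which lies in general position with $F_1$ and $F_2$; in the basis diagonalizing both flags, $A$ is diagonal, and fixing a line in general position forces $A$ to be scalar. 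Hence $u\cdot F_3 = u'\cdot F_3$, and $u'^{-1}u$ fixes both $F_2$ and $F_3$. Choosing a basis adapted to the transverse pair $(F_2,F_3)$, unipotent lifts of $u,u'$ are upper triangular with $1$s on the diagonal (from fixing $F_2$ and being unipotent), so their product $u'^{-1}u$ is also upper triangular unipotent; fixing $F_3$ simultaneously forces it to be lower triangular, hence diagonal, hence the identity. Thus $u=u'$ in $\PSL_d(\Cb)$. The main obstacle is the diagonal-entry computation and the non-obvious cancellation producing a constant diagonal — in essence, the definition of $s_\zeta$ is reverse-engineered precisely to make this cancellation occur.
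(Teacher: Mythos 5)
Your proof is correct and follows essentially the same route as the paper: upper-triangular representation in a basis adapted to $F_2$, the diagonal-entry cancellation that the $s_\zeta$ term is built to produce, and the projective-equivalence-plus-shared-line argument for uniqueness. The only difference is cosmetic — you extract "fixes $F_2$" from each factor separately rather than as a byproduct of the diagonal computation, and you spell out the uniqueness argument (the scalar/identity step and the unipotent-fixing-two-transverse-flags step) in a bit more detail than the paper does, both correctly.
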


\begin{proof}
	First, we prove that $u_{\bf F}(\zeta,\upsilon)$ satisfies the stated properties. Notice that
	\[U_{\bf F}(\zeta,\upsilon):=C_{\bf E}(\upsilon)A_{\bf F}(\zeta)C_{\bf G}(-\upsilon)C_{\bf G}(-s_\zeta)\]
	is a linear representative of $u_{\bf F}(\zeta,\upsilon)$. It follows from the descriptions of the linear map $A_{\bf F}(\zeta)$ in Proposition \ref{prop: eruption combination} part (2) and the linear maps $C_{\bf E}(\upsilon)$, $C_{\bf G}(-\upsilon)$, and $C_{\bf G}(-s_\zeta)$ in Proposition \ref{prop: shear combination} part (2) that $u_{\bf F}(\zeta,\upsilon)$ is unipotent and fixes $F_2$.
	The fact that $u_{\bf F}(\zeta,\upsilon)\cdot F_3^1=c_{\bf E}({\upsilon})\cdot F_3^1$ is an immediate consequence of the observation that $c_{{\bf G}}(-\upsilon-s_\zeta)$ and $a_{{\bf F}}({\zeta})$ both fix $F_3^1$. Since $c_{{\bf G}}(-\upsilon-s_\zeta)$ fixes $F_3$, while $c_{{\bf E}}(\upsilon)$ fixes $F_2$ and $F_1$, it follows that
	\[(F_1,F_2,u_{\bf F}(\zeta,\upsilon)\cdot F_3)=c_{\bf E}({\upsilon})\cdot (F_1,F_2,a_{{\bf F}}({\zeta})\cdot F_3).\]
	Thus, by Proposition~\ref{prop: eruption combination} part (1), $(F_1,F_2,u_{\bf F}(\zeta,\upsilon)\cdot F_3)$ is in general position, and 
	\[
	\tau^{\bf j}(F_1,F_2,u_{\bf F}(\zeta,\upsilon)\cdot F_3)=\tau^{\bf j}(F_1,F_2,a_{\bf F}(\zeta)\cdot F_3) = \tau^{\bf j}(F_1,F_2,F_3) + \zeta^{\bf j}
	\]
	for every ${\bf j} \in \mathcal{B}$.
	
	Next, we prove the uniqueness of $u_{\bf F}(\zeta,\upsilon)$. Suppose that there is some $u\in\mathsf{PGL}_d(\mathbb{C})$ that also fixes $F_2$, sends $F_3^1$ to $c_{{\bf E}}(\upsilon)\cdot F_3^1$, and satisfies the property that the triple of flags $(F_1,F_2,u\cdot F_3)$ is in general position and
	\[
	\tau^{\bf j}(F_1,F_2,u_\cdot F_3) = \tau^{\bf j}(F_1,F_2,F_3) + \zeta^{\bf j}
	\]
	for every ${\bf j} \in \mathcal{B}$. Then $(F_1,F_2,u\cdot F_3)$ and $(F_1,F_2,u_{\bf F}(\zeta,\upsilon)\cdot F_3)$ are projectively equivalent and $u\cdot F_3^1=u_{\bf F}(\zeta,\upsilon)\cdot F_3^1$, so $u\cdot F_3=u_{\bf F}(\zeta,\upsilon)\cdot F_3$. But this means that $u^{-1}u_{\bf F}(\zeta,\upsilon)$ is unipotent and fixes both $F_3$ and $F_2$, so $u=u_{\bf F}(\zeta,\upsilon)$. 
\end{proof}

For any triple of flags ${\bf F}=(F_1,F_2,F_3)$ in $\mathcal{F}(\mathbb{C}^d)$ that is in general position, recall from Section \ref{subsec:define productable} that $U({\bf F})\in\mathsf{SL}_d(\mathbb{C})$ denotes the unipotent linear map that fixes $F_2$ and sends $F_1$ to $F_3$. Using unipotent eruptions, one can also obtain a formula for $U({\bf F})$.

\begin{proposition}\label{formula}
	Let ${\bf F}=(F_1,F_2,F_3)$ be a triple of flags in $\mathcal{F}(\mathbb{C}^d)$ that is in general position, and let $(b_1,\dots,b_d)$ of $\mathbb{C}^d$ be a basis such that $b_m\in F_2^{m}\cap F_3^{d-m+1}$ for all $m\in\{1,\dots,d\}$. For any $\zeta\in(\mathbb{C}/2\pi i \mathbb{Z})^{\mathcal{B}}$ and $\upsilon\in(\mathbb{C}/2\pi i \mathbb{Z})^{\mathcal{A}}$, let $F=F(\zeta,\upsilon)\in\mathcal{F}(\mathbb{C}^d)$ be the flag such that $(F_3,F_2,F)$ is in general position, $\tau^{\bf j}(F_3,F_2,F)=\zeta^{\bf j}$ for all ${\bf j}\in\mathcal{B}$, and $\sigma^{\bf i}(F_2,F_3,F_1,F)=\upsilon^{\bf i}$ for all ${\bf i}\in\mathcal{A}$. In the basis $(b_1,\dots,b_d)$, the unipotent linear transformation $U(F_3,F_2,F)$ is an upper triangular matrix whose $(i,j)$--th entry for all $i<j$ is of the form
	\[\prod_{{\bf i}\in\mathcal{A}\colon i\le i_1<j}\exp(\upsilon^{\bf i})R_{i,j}(\{\exp(\zeta^{\bf j})\mid{\bf j}\in\mathcal{B}\}),\]
	where $R_{i,j}$ is a Laurent polynomial of $|\mathcal{B}|$ variables.
\end{proposition}

\begin{proof}
	Observe that the proposition uses only $F_1^1$ and not the rest of the flag $F_1$, so we may assume without loss of generality that $\tau^{\bf j}({\bf F})=0$ for all ${\bf j}\in\mathcal{B}$. Then let $W_{\bf F}\in\mathsf{SL}_d(\mathbb{C})$ be the unipotent linear map that fixes $F_2$ and sends $F_1$ to $F_3$. The following lemma is a well-known fact whose proof we will give a brief sketch of.

	\begin{lemma}\label{lem: Pascal}
		For all ${\bf i}\in\mathcal{A}$, $\sigma^{\bf i}(F_2,F_3,F_1,W_{\bf F}\cdot F_3)=0\in\mathbb{C}/2\pi i\mathbb{Z}$.
	\end{lemma}
	
	\begin{proof}[Sketch of proof] The standard irreducible representation $\iota_d:\mathsf{PGL}_2(\mathbb{R})\to\mathsf{PGL}_d(\mathbb{R})$ admits a $\iota_d$--equivariant map $V_d:\mathbb{RP}^1\cong\mathbb{R}\cup\{\infty\}\to\mathcal{F}(\mathbb{R}^d)$ that has the following well-known properties:
		\begin{enumerate}
			\item A triple of flags ${\bf H}=(H_1,H_2,H_3)$ in $\mathcal{F}(\mathbb{C}^d)$ is in general position and satisfies $\tau^{\bf j}({\bf H})=0$ for all ${\bf j}\in\mathcal{B}$ if and only if there is some $g\in\mathsf{PGL}_d(\mathbb{C})$ such that $g({\bf H})=(V_d(-1),V_d(\infty),V_d(0))$.
			\item For all ${\bf i}\in\mathcal{A}$, $\sigma^{\bf i}(V_d(\infty),V_d(0),V_d(-1),V_d(1))=0$.
		\end{enumerate}
		
		By (1), we may assume that ${\bf F}=(V_d(-1),V_d(\infty),V_d(0))$. Since 
		\[
		u:=\begin{bmatrix} 1&1\\0&1\end{bmatrix}\in\mathsf{PGL}_2(\mathbb{R})
		\]
		fixes $\infty$ and sends $--1$ to $0$, it follows that (the projectivization of) $W_{\bf F}$ is $\tau_d(u)$. Thus, for all ${\bf i}\in\mathcal{A}$,
		\begin{align*}
			\sigma^{\bf i}(F_2,F_3,F_1,W_{\bf F}\cdot F_3)&=\sigma^{\bf i}(V_d(\infty),V_d(0),V_d(-1),\tau_d(u)\cdot V_d(0))\\
			&=\sigma^{\bf i}(V_d(\infty),V_d(0),V_d(-1),V_d(1))=0.\qedhere
		\end{align*}
	\end{proof}
	
	Let $U_{\bf F}(\zeta,\upsilon)\in\mathsf{SL}_d(\mathbb{C})$ be the unipotent linear representative of $u_{\bf F}(\zeta,\upsilon)$. Using Lemma \ref{lem: Pascal}, we may relate $U(F_3,F_2,F)$ to $U_{\bf F}(\zeta,\upsilon)$.
	
	\begin{lemma}\label{lem: U description}
		$U(F_3,F_2,F)=W_{\bf F}U_{\bf F}(\zeta,\upsilon)$.
	\end{lemma}
	
	\begin{proof}
		By Proposition \ref{prop: lift}, $W_{\bf F}U_{\bf F}(\zeta,\upsilon)$ is a unipotent linear map that fixes $F_2$. Since the same is true for $U(F_3,F_2,F)$, it suffices to show that $W_{\bf F}U_{\bf F}(\zeta,\upsilon)\cdot F_3=F$.
		
		By Proposition \ref{prop: lift}, the triple of flags $\left(F_1,F_2,U_{{\bf F}}(\zeta,\upsilon)\cdot F_3\right)$ is in general position, and its triangle invariants satisfy
		\[\tau^{\bf j}\left(F_1,F_2,U_{{\bf F}}(\zeta,\upsilon)\cdot F_3\right)=\tau^{\bf j}({\bf F})+\zeta^{\bf j}=\zeta^{\bf j}.\]
		Thus, the triple 
		\[\left(F_3,F_2,W_{\bf F}U_{\bf F}(\zeta,\upsilon)\cdot F_3\right)=W_{\bf F}\cdot \left(F_1,F_2,U_{\bf F}(\zeta,\upsilon)\cdot F_3\right)\] 
		is in general position, and 
		\[\tau^{\bf j}\left(F_3,F_2,W_{\bf F}U_{\bf F}(\zeta,\upsilon)\cdot F_3\right)=\zeta^{\bf j}=\tau^{\bf j}\left(F_3,F_2,F\right),\] 
		so the triples $\left(F_3,F_2,W_{\bf F}U_{\bf F}(\zeta,\upsilon)\cdot F_3\right)$ and $\left(F_3,F_2,F\right)$ are projectively equivalent. Hence, it suffices to verify that $W_{\bf F}U_{\bf F}(\zeta,\upsilon)\cdot F_3^1=F^1$, or equivalently, that
		\[\sigma^{\bf i}\left(F_2,F_3,F_1,W_{\bf F}U_{\bf F}(\zeta,\upsilon)\cdot F_3\right)=\sigma^{\bf i}\left(F_2,F_3,F_1,F\right)\]
		for all ${\bf i}\in\mathcal{A}$. 
		
		As before, set ${\bf E}:=(F_2,F_1)$ and ${\bf G}:=(F_2,F_3)$. Then we have
		\begin{align*}
			\sigma^{\bf i}\left(F_2,F_3,F_1,W_{\bf F}U_{\bf F}(\zeta,\upsilon)\cdot F_3\right)
			&=\sigma^{\bf i}\left(F_2,F_1,W_{\bf F}^{-1}\cdot F_1,U_{{\bf F}}(\zeta,\upsilon)\cdot F_3\right)\\
			&=\sigma^{\bf i}\left(F_2,F_1,W_{\bf F}^{-1}\cdot F_1,C_{{\bf E}}(\upsilon)\cdot F_3\right)\\
			&=\sigma^{\bf i}\left(F_2,F_1,W_{\bf F}^{-1}\cdot F_1,F_3\right)+\upsilon^{\bf i}\\
			&=\upsilon^{\bf i}=\sigma^{\bf i}\left(F_2,F_3,F_1,F\right)
		\end{align*}
		for all ${\bf i}\in\mathcal{A}$. Here, the first equality follows from the equality
		\begin{align*}
			\left(F_2,F_3,F_1,W_{\bf F}U_{\bf F}(\zeta,\upsilon)\cdot F_3\right)=W_{\bf F}\cdot\left(F_2,F_1,W_{\bf F}^{-1}\cdot F_1,U_{{\bf F}}(\zeta,\upsilon)\cdot F_3\right),
		\end{align*}
		the second equality holds because $C_{{\bf G}}(-\upsilon-s_\zeta)$ and $A_{{\bf F}}({\zeta})$ both fix $F_3^1$, the third holds by Proposition~\ref{prop: shear combination} part (1), and the fourth is Lemma~\ref{lem: Pascal}.
	\end{proof}
	
	For the remainder of this proof, all matrices are written in the basis $(b_1,\dots,b_d)$. Since $U(F_3,F_2,F)$ fixes $F_2$, it is upper triangular, so it suffices to prove the given description of the upper triangular entries of $U(F_3,F_2,F)$. By Lemma \ref{lem: U description}, we may do so for $W_{\bf F}U_{\bf F}(\zeta,\upsilon)$ in place of $U(F_3,F_2,F)$.

	Set ${\bf E}:=(F_2,F_1)$ and ${\bf G}:=(F_2,F_3)$, and notice that $W_{\bf F}\cdot{\bf E}={\bf G}$, so
	\[W_{\bf F}U_{\bf F}(\zeta,\upsilon)=W_{\bf F}C_{\bf E}(\upsilon)A_{\bf F}(\zeta)C_{\bf G}(-\upsilon-s_\zeta)=C_{\bf G}(\upsilon)W_{\bf F}A_{\bf F}(\zeta)C_{\bf G}(-s_\zeta)C_{\bf G}(-\upsilon).\]	
	By the descriptions of the linear maps $A_{\bf F}(\zeta)$ in Proposition \ref{prop: eruption combination} part (2) and $C_{\bf G}(-s_\zeta)$ in Proposition \ref{prop: shear combination} part (2), we see that $A_{\bf F}(\zeta)C_{\bf G}(-s_\zeta)$ is an upper triangular matrix, and all the upper triangular entries of $A_{\bf F}(\zeta)C_{\bf G}(-s_\zeta)$ are Laurent polynomial of the variables $\{\exp(\zeta^{\bf j}):{\bf j}\in\mathcal{B}\}$. Since $W_{\bf F}$ does not depend on $\zeta$ and $\upsilon$, the same is also true for $W_{\bf F}A_{\bf F}(\zeta)C_{\bf G}(-s_\zeta)$. 
	
	Again by Proposition \ref{prop: shear combination} part (2), $C_{\bf G}(\upsilon)=C_{\bf G}(-\upsilon)^{-1}$ is diagonal, and its $n$--th diagonal entry is 
	\[\prod_{{\bf i}\in\mathcal{A}:i_1\geq n}\exp\left(\upsilon^{\bf i}\right).\] 
	It follows that for all $i<j$, the $(i,j)$--th entry of $W_{\bf F}U_{\bf F}(\zeta,\upsilon)$ is the $(i,j)$--th $W_{\bf F}A_{\bf F}(\zeta)C_{\bf G}(-s_\zeta)$ rescaled by
	\[\prod_{{\bf i}\in\mathcal{A}:i\le i_1<j}\exp\left(\upsilon^{\bf i}\right).\qedhere\]
\end{proof}

\subsection{Explicit formula for the slithering map}\label{sec: explicit expression}
For each locally $\lambda$--H\"older continuous, $\lambda$--transverse, and $\lambda$--hyperconvex map $\xi:\partial\widetilde\lambda\to\mathcal{F}(\mathbb{C}^d)$, let $(\alpha_\xi,\theta_\xi)$ be its shear-bend coordinate (see Section \ref{sec:shear_bend}) and let $\Sigma=\Sigma_\xi:\widetilde\Lambda^2\to\mathsf{SL}_d(\mathbb{C})$ be its compatible slithering map (see Theorem~\ref{thm: slitherable and slithering}). As we mentioned at the start of the section, the key result needed to prove Theorem~\ref{thm: injectivity} is a way to realize $\Sigma_\xi$ as extensions of H\"older extendable maps whose dependence on $(\alpha_\xi,\theta_\xi)$ is explicit. The objective of this subsection is to construct said map, and to establish some of its properties that we will use in the proof of the injectivity and holomorphicity of the shear-bend map.

For the remainder of this section, fix a labelling ${\bf x}=(x_1,x_2,x_3)\in\widetilde\Delta^o$, and let $T_{\bf x}$ denote the plaque of $\widetilde\lambda$ whose vertices are $x_1$, $x_2$, and $x_3$.  Recall that $\widetilde\Lambda$ denotes the set of leaves of the geodesic lamination $\widetilde\lambda$. Also, for any pair of leaves $g_1$ and $g_2$ of $\widetilde\lambda$, recall that $P(g_1,g_2)$ denotes the set of plaques of $\widetilde\lambda$ that separate $g_1$ and $g_2$.

\begin{figure}[h!]
	\includegraphics[width=10cm]{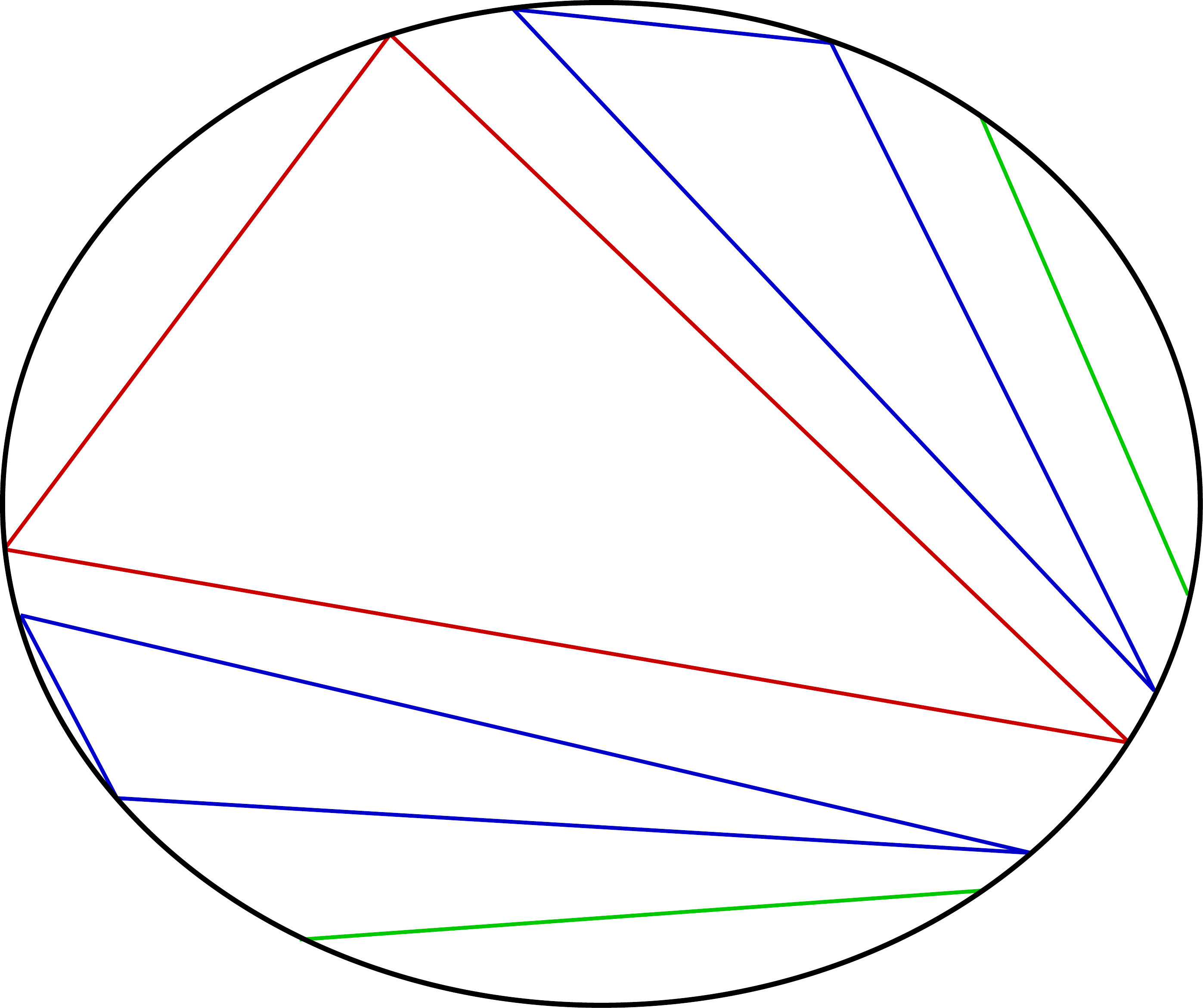}
	\put(-19,135){\small $h$}
	\put(-80,165){\small $T$}
	\put(-8,75){\small $x_{T,2}$}
	\put(-16,60){\small $y_1^h=y_2^T=y_2^{T'}=y_2^{h'}$}
	\put(-39,33){\small $x_{T',2}$}
	\put(-120,15){\small $h'$}
	\put(-160,50){\small $T'$}
	\put(-275,45){\small $x_{T',3}$}
	\put(-300,90){\small $x_{T',1}$}
	\put(-185,135){\small $T_{\bf x}$}
	\put(-185,95){\small $g^{h'}_{\bf x}$}
	\put(-115,140){\small $g^{h}_{\bf x}$}
	\put(-365,109){\small $y_3^h=y_3^T=y_1^{T'}=y_1^{h'}$}
	\put(-270,234){\small $y_2^h=y_1^T=y_3^{T'}=y_3^{h'}$}
	\put(-170,240){\small $x_{T,1}$}
	\put(-90,230){\small $x_{T,3}$}
	\caption{\small The notation needed in the proof of Lemma \ref{lem: injectivity}.}
	\label{fig:unipotenteruption9}
\end{figure}

For each leaf $h$ of $\widetilde\lambda$, let $g_{\bf x}^h$ denote the edge of $T_{\bf x}$ that separates $T_{\bf x}$ from $h$. Then for each plaque $T\in P(g_{\bf x}^h,h)$, and each  locally $\lambda$--H\"older continuous, $\lambda$--transverse, and $\lambda$--hyperconvex map $\xi:\partial\widetilde\lambda\to\mathcal{F}(\mathbb{C}^d)$, we will use the following notation:
\begin{itemize}
	\item Let ${\bf y}^h=(y_1^h,y_2^h,y_3^h)\in\widetilde\Delta^o$ be the labelling of the vertices of $T_{\bf x}$ such that $y_1^h\prec y_2^h\prec y_3^h$ and such that $y_1^h$ and $y_2^h$ are the endpoints of $g_{\bf x}^h$, see Figure~\ref{fig:unipotenteruption9}.
	\item Let ${\bf y}^T=(y_1^T,y_2^T,y_3^T)\in\widetilde\Delta^o$ be the labeling of the vertices of $T_{\bf x}$ such that $y_2^T$ and $y_1^T$ are the endpoints of $g_{\bf x}^h$, and $y_1^T\prec y_2^T\prec y_3^T$ if $x_{T,3}\prec x_{T,2}\prec x_{T,1}$ while $y_3^T\prec y_2^T\prec y_1^T$ if $x_{T,1}\prec x_{T,2}\prec x_{T,3}$, see Figure \ref{fig:unipotenteruption9}. Here, recall that ${\bf x}_T=(x_{T,1},x_{T,2},x_{T,3})$ denotes the coherent labeling of the vertices of $T$ i.e. if $h_{T,-}$ and $h_{T,+}$ are the two edges of $T$ that separate $g^h_{\bf x}$ and $h$, so that $h_{T,-}$ separates $g^h_{\bf x}$ and $h_{T,+}$, then $x_{T,2}$ is the common endpoint of $h_{T,-}$ and $h_{T,+}$, and $x_{T,1}$ and $x_{T,3}$ are respectively the endpoints of $h_{T,-}$ and $h_{T,+}$ that are not $x_{T,2}$, see Figure \ref{fig:notations3}. Notice that $y_3^T=y_3^h$, and $(y_1^T,y_2^T)=(y_1^h,y_2^h)$ if  $x_{T,3}\prec x_{T,2}\prec x_{T,1}$ while $(y_1^T,y_2^T)=(y_2^h,y_1^h)$ if $x_{T,1}\prec x_{T,2}\prec x_{T,3}$.
	\item Let $B_h$ be a basis of vectors in $\mathbb{C}^d$ whose $i$--th basis vector lies in $\xi(y_2^h)^i\cap\xi(y_1^h)^{d-i+1}$, and so that $\xi(y_3^h)^1$ is the span of the sum of all the basis vectors in $B_h$.
	\item Let $H_T\in\mathcal{F}(\mathbb{C}^d)$ be the flag such that $(\xi(y_1^T),\xi(y_2^T),H_T)$ is in general position, $\tau^{\bf j}(\xi(y_1^T),\xi(y_2^T),H_T)=\theta_\xi^{\bf j}({\bf x}_T)$ for all ${\bf j}\in\mathcal{B}$, and 
	\[\sigma^{\bf i}(\xi(y_2^T),\xi(y_1^T),\xi(y_3^T),H_T)=\left\{\begin{array}{ll}\alpha_\xi^{\bf i}(T_{\bf x},T)&\text{if }x_{T,3}\prec x_{T,2}\prec x_{T,1}\\
		-\alpha_\xi^{\wh{\bf i}}(T_{\bf x},T)&\text{if }x_{T,1}\prec x_{T,2}\prec x_{T,3}\\
	\end{array}\right.\] 
	for all ${\bf i}\in\mathcal{A}$.
\end{itemize}
\begin{remark}\label{fjkfjsjkfskdj}
	One should keep in mind that $B_h$ is unique up to scaling, and depends on $\xi({\bf x})$, while $H_T$ depends on $\xi({\bf x})$, $\alpha_\xi(T_{\bf x},T)$, and $\theta_\xi({\bf x}_T)$. However, we suppress these dependences from the notation, but instead record all other dependences.
\end{remark} 
Recall again that for any triple of flags ${\bf F}=(F_1,F_2,F_3)$ in $\mathcal{F}(\mathbb{C}^d)$ that is in general position, $U({\bf F})\in\mathsf{SL}_d(\mathbb{C})$ denotes the unipotent linear map that fixes $F_2$ and sends $F_1$ to $F_3$. Then define
\[L=L_{(\alpha_\xi,\theta_\xi),\xi({\bf x}),h}:P(g_{\bf x}^h,h)\to \mathsf{SL}_d(\mathbb{C}) \] 
by $L(T)= U(\xi(y_1^T),\xi(y_2^T),H_T)$. Notice from Remark \ref{fjkfjsjkfskdj} that for each $T\in P(g_{\bf x}^h,h)$, $L(T)$ depends on $\xi({\bf x})$, $\alpha_\xi(T_{\bf x},T)$, and $\theta_\xi({\bf x}_T)$. 

The following lemma implies that $L$ is the required H\"older extendable map whose extension gives the slithering map.

\begin{lemma}\label{lem: injectivity}
	For each locally $\lambda$--H\"older continuous, $\lambda$--transverse, and $\lambda$--hyperconvex map $\xi:\partial\wt\lambda\to\mathcal{F}(\mathbb{C}^d)$, the map
	\[
	L=L_{(\alpha_\xi,\theta_\xi),\xi({\bf x}),h}:P(g_{\bf x}^h,h)\to \mathsf{SL}_d(\mathbb{C})
	\] 
	has the following properties:
	\begin{enumerate}
		\item Fix a leaf $h\in\widetilde\lambda$ and let $T\in P(g_{\bf x}^h,h)$. If $x_{T,3}\prec x_{T,2}\prec x_{T,1}$ (resp. $x_{T,1}\prec x_{T,2}\prec x_{T,3}$), then in the basis $B_h$, the linear map $L(T)\in\mathsf{SL}_d(\mathbb{C})$ is an upper (resp. lower) triangular unipotent matrix whose $(i,j)$--th entry for all $i< j$ (resp. $i> j$) is of the form
		\begin{align*}
			&\left(\prod_{{\bf i}:i\le i_1<j-1}\exp({\alpha_\xi^{\bf i}(T_{\bf x},T)})\right)R_{i,j}(\{\exp(\theta_\xi^{\bf j}({\bf x}_T))\mid{\bf j}\in\mathcal{B}\})\\
			\Bigg(\text{resp.}\quad &\left(\prod_{{\bf i}: j\le i_1<i-1}\exp(-\alpha_\xi^{\bf i}(T_{\bf x},T))\right)R_{i,j}(\{\exp(\theta_\xi^{\bf j}({\bf x}_T))\mid{\bf j}\in\mathcal{B}\})\Bigg)
		\end{align*}
		where $R_{i,j}$ does not depend on $T$, and is a Laurent polynomial of $\abs{\mathcal B}$ variables. 
		\item The map $L$ is H\"older extendable, and its extension $\overrightarrow{L}$ of $L$ satisfies
		\[\overrightarrow{L}(g_{\bf x}^h,h)=\Sigma(h,g_{\bf x}^h),\]
		where $\Sigma=\Sigma_{\xi}:\wt\Lambda^2\to\mathsf{SL}_d(\mathbb{C})$ denotes the slithering map associated to $\xi$. In particular, for fixed $h$, the linear transformation $\Sigma(g_{\bf x}^h,h)\in\mathsf{SL}_d(\mathbb{C})$ is determined by $(\alpha_{\xi},\theta_\xi)$, and $\left(\xi(x_1),\xi(x_2),\xi(x_3)^1\right)$. 
	\end{enumerate}
\end{lemma}
\begin{proof} \textit{Part (1).} In the case when $T$ satisfies $x_{T,3}\prec x_{T,2}\prec x_{T,1}$, we have $(y_1^T,y_2^T)=(y_1^h,y_2^h)$. Then the basis $B_h=(b_1,\dots,b_d)$ satisfies $b_m\in \xi(y_2^T)^{m}\cap \xi(y_1^T)^{d-m+1}$ for all $m\in\{1,\dots,d-1\}$. Since $L(T)=U(\xi(y_2^T),\xi(y_1^T),H_T)$, Proposition~\ref{formula} (and the definition of $H_T$) implies the required claim. 
	
	On the other hand, when $T$ satisfies $x_{T,1}\prec x_{T,2}\prec x_{T,3}$, we have $(y_1^T,y_2^T)=(y_2^h,y_1^h)$. Unlike the previous case, we have that the basis $B_h=(b_1,\dots,b_d)$ satisfies $b_m\in \xi(y_1^T)^{m}\cap \xi(y_2^T)^{d-m+1}$ for all $m\in\{1,\dots,d-1\}$. Since $L(T)=U(\xi(y_2^T),\xi(y_1^T),H_T)$,  the claim in this case follows from first applying Proposition~\ref{formula}, and then conjugating by the matrix with $1$'s along the anti-diagonal and $0$'s everywhere else. 
	
	\textit{Part (2).} Notice that $H_T=\Sigma(g_{\bf x}^h,h_{T,-})\cdot \xi(x_{T,3})$. It thus follows that 
	\[\Sigma(g_{\bf x}^h,h_{T,-})\,\Sigma(h_{T,+},g_{\bf x}^h)=\Sigma(g_{\bf x}^h,h_{T,-})\,\Sigma(h_{T,+},h_{T,-})\,\Sigma(h_{T,-},g_{\bf x}^h)\]
	is unipotent, fixes $\xi(y_2^T)$, and sends $\xi(y_1^T)$ to $H_T$. Thus,
	\[\Sigma(g_{\bf x}^h,h_{T,-})\,\Sigma(h_{T,+},g_{\bf x}^h)=U(\xi(y_1^T),\xi(y_2^T),H_T)=L(T).\]
	The claim now follows from Proposition \ref{prop: renormalization}. \end{proof}

\begin{remark}
	Notice that in the proof of Theorem \ref{thm: injectivity}, we have already constructed a H\"older extendable map 
	\[M=M_{\rm slither}:P(g_{\bf x}^h,h)\to\mathsf{SL}_d(\mathbb{C})\] 
	whose extension $\overrightarrow{M}$ satisfies $\overrightarrow{M}(g_{\bf x}^h,h)=\Sigma(g_{\bf x}^h,h)$. However, given Lemma~\ref{lem: injectivity}, it is much easier to deduce injectivity and holomorphicity of $\mathcal{sb}_{{\bf x},d}$ from $L$ instead of $M$. 
\end{remark}

In the case when $\xi$ is the $\lambda$--limit map of a $d$--pleated surface $\rho$, recall that we denote $(\alpha_\xi,\theta_\xi)=(\alpha_\rho,\theta_\rho)$, which is now an element in $\mathcal{Y}_d(\lambda;\mathbb{C}/2\pi i\mathbb{Z})$. Before turning to the proof of Theorem \ref{thm: injectivity}, we prove the following lemma, which gives uniformity properties of $L=L_{(\alpha_\rho,\theta_\rho),\xi({\bf x}),h}$ when $h$ is fixed while $(\alpha_\rho,\theta_\rho)$ and $\xi({\bf x})$ varies compactly. This will not be used in the proof of Theorem \ref{thm: injectivity}, but will be key in the proof of the holomorphicity of $\mathcal{sb}_{d,{\bf x}}$, see Proposition \ref{prop:sb is holomorphic}.

\begin{lemma}\label{lem: uniform}
	Fix a leaf $h\in\widetilde\lambda$. Let $E\subset \mathcal{Y}_d(\lambda;\mathbb{C}/2\pi i\mathbb{Z})$ and $E'\subset\mathcal N(\mathbb{C}^d)$ be compact sets.
	\begin{enumerate}
		\item The H\"older constants for the H\"older extendable maps
		\[L_{(\alpha_\rho,\theta_\rho),\xi({\bf x}),h}:P(g_{\bf x}^h,h)\to \mathsf{SL}_d(\mathbb{C})\] 
		can be chosen to be uniform as $(\alpha_\rho,\theta_\rho)$ varies in $E$ and $\xi({\bf x})$ varies in $E'$.
		\item For any exhausting sequence $(\mathcal X_n)_{n=1}^\infty$ of $P(g_{\bf x}^h,h)$, the convergence
		\[\overrightarrow{L}_{(\alpha_\rho,\theta_\rho),\xi({\bf x}),h}^F(\mathcal X_n)\to \overrightarrow{L}_{(\alpha_\rho,\theta_\rho),\xi({\bf x}),h}(g_{\bf x}^h,h)\]
		is uniform over all $(\alpha_\rho,\theta_\rho)\in E$ and $\xi({\bf x})\in E'$.
	\end{enumerate}
\end{lemma}

\begin{proof}\textit{Part (1).}  Since there are only finitely many $\Gamma$--orbits of plaques of $\wt\lambda$, there is an upper bound on the set
	\[\{\exp(\theta_\rho^{\bf j}({\bf x}_T))\mid(\alpha_\rho,\theta_\rho)\in E, \,T\in P(g_{\bf x}^h,h),\text{ and }{\bf j}\in\mathcal{B}\}.\]
	This implies the finiteness of the supremum
	\[C:=\sup\{R_{i,j}(\{\exp(\theta_\rho^{\bf j}({\bf x}_T)) \mid {\bf j}\in\mathcal{B}\})\mid 1\le i\neq j\le d,(\alpha_\rho,\theta_\rho)\in E, T\in P(g^h_{\bf x},h)\},\]
	where $R_{i,j}$ are the Laurent polynomials with finitely many terms given by Lemma~\ref{lem: injectivity} part (1). It then follows from said lemma that if $T$ satisfies $x_{T,3}\prec x_{T,2}\prec x_{T,1}$ (resp. $x_{T,1}\prec x_{T,2}\prec x_{T,3}$), then in the basis $B_h$, the upper (resp. lower) triangular entries of $L(T)$ have absolute values that are uniformly bounded above by 
	\[C\max_{{\bf i}\in\mathcal{A}}\exp({\rm Re}\;\alpha_\rho^{{\bf i}}(T_{\bf x},T))^{d-1}\quad\bigg(\text{resp.}\quad C\max_{{\bf i}\in\mathcal{A}}\exp(-{\rm Re}\;\alpha_\rho^{{\bf i}}(T_{\bf x},T))^{d-1}\bigg)\]
	We may now apply Lemma 8.8 of \cite{BoD} to deduce that there are uniform constants $A',B>0$ (as $(\alpha_\rho,\theta_\rho)$ varies in $E$ and as $T$ varies in $P(g^h_{\bf x},h)$) such that these entries are all bounded above by $A'e^{-Br(T)}$, where $r(T)$ is the function given by Lemma~\ref{lem: lamination property}. Thus, by Lemmas \ref{lem: lamination property} and \ref{lem: hyperbolic geometry},
	\[\norm{L(T)-\text{id}}_{\xi({\bf x})}\leq  A \, d(x_{T,1},x_{T,3})^\nu\]
	for all $(\alpha_\rho,\theta_\rho)\in E$ and $T\in P(g^h_{\bf x},h)$, where $\norm{\cdot}_{\xi({\bf x})}$ is the matrix norm with respect to the basis $B_h$ and $A:=A'\frac{d(d-1)}{2}$. 
	
	Pick any norm $\norm{\cdot}$ on $\mathbb{C}^d$. Since the basis $B_h$ varies continuously with $\xi({\bf x})$, it follows that $\norm{\cdot}$ and $\norm{\cdot}_{\xi({\bf x})}$ are uniformly bi-Lipschitz as $\xi({\bf x})$ varies in $E'$. Thus, by increasing the constant $A$ if necessary, we have that
	\[\norm{L(T)-\text{id}}\leq  A \, d(x_{T,1},x_{T,3})^\nu\]
	for all $(\alpha_\rho,\theta_\rho)\in E$, $\xi({\bf x})\in E'$, and $T\in P(g^h_{\bf x},h)$.
	
	\textit{Part (2).} Let $(\rho_k,\xi_k)_k$ be a sequence of $d$--pleated surfaces that converges to a limiting $d$--pleated surface $(\rho_\infty,\xi_\infty)$ such that $(\alpha_k,\theta_k):=(\alpha_{\rho_k},\theta_{\rho_k})\in E$ and $\xi_k({\bf x})\in E'$ for all positive integers $k$. Then for each $k\in\mathbb{N}\cup\{\infty\}$, denote 
	\[\overrightarrow{L}^F_k:=\overrightarrow{L}^F_{(\alpha_k,\theta_k),\xi_k({\bf x})}\quad\text{and}\quad\overrightarrow{L}_k:=\overrightarrow{L}_{(\alpha_k,\theta_k),\xi_k({\bf x})}.\]
	It suffices to show that 
	\begin{align}\label{eqn: limit}
		\overrightarrow{L}^F_n(\mathcal X_n)\to \overrightarrow{L}_\infty(h,g^h_{\bf x})\quad\text{as }n\to\infty.
	\end{align}
	
	To do so, fix a norm $\norm{\cdot}$ on $\mathbb{C}^d$. Together, part (1) and Lemma \ref{lem: abstract} part (2) imply that there exists some a constant $D>1$ such that for all positive integers $k$ and $n$, 
	\[
	\|\overrightarrow{L}^F_k(\mathcal X_n)\|\leq D.
	\]
	Also, Lemma \ref{lem: abstract} part (3) implies that there exists some constant $A>0$ such that for all positive integers $k$ and $m\leq n$, 
	\begin{equation*}\label{eq: alphathetan}
		\|\overrightarrow{L}^F_k(\mathcal X_n)-\overrightarrow{L}^F_k(\mathcal X_m)\|\leq AD\sum_{T\in\mathcal X_n-\mathcal X_m}d_{\infty}(x_{T,1},x_{T,3})^\nu.
	\end{equation*}
	Note that for any finite set $\mathcal X\subset P(g^h_{\bf x},h)$, $\overrightarrow{L}^F_k(\mathcal X)\to \overrightarrow{L}^F_\infty(\mathcal X)$ as $k\to\infty$. Thus, taking the limit as $k\to\infty$ and then the limit as $n\to\infty$ gives
	\[
	\|\overrightarrow{L}_\infty(h,g_{\bf x}^h)-\overrightarrow L^{F}_\infty(\mathcal X_m)\|\leq AD\sum_{T\in P(g^h_{\bf x},h)-\mathcal X_m}d_{\infty}(x_{T,1},x_{T,3})^\nu.
	\]
	
	Combining everything, we have
	\begin{align*}
		\|\overrightarrow{L}_\infty(h,g^h_{\bf x})-\overrightarrow L^{F}_n(\mathcal X_n)\|&\leq  \|\overrightarrow{L}_\infty(h,g^h_{\bf x})-\overrightarrow L^{F}_\infty(\mathcal X_m)\|+\|\overrightarrow L^{F}_\infty(\mathcal X_m)-\overrightarrow L^{F}_n(\mathcal X_m)\|\\
		&\qquad+\|\overrightarrow L^F_n(\mathcal X_m)-\overrightarrow L^F_n(\mathcal X_n)\|\\
		&\leq 2AD\sum_{T\in P(g^h_{\bf x},h)-\mathcal X_m}d(x_{T,1},x_{T,3})^\nu\\
		&\qquad+\|\overrightarrow L^{F}_\infty(\mathcal X_m)-\overrightarrow L^F_n(\mathcal X_m)\|.
	\end{align*}
	Since $\|\overrightarrow L^{F}_\infty(\mathcal X_m)-\overrightarrow L^F_n(\mathcal X_m)\|\to 0$ as $n\to\infty$, for fixed $m\in\mathbb N$ we have:
	\begin{equation*}\label{eq: last}
		\limsup_{n\to \infty}\|\overrightarrow{L}_\infty(h,g^h_{\bf x})-\overrightarrow L^{F}_n(\mathcal X_n)\|\leq  2AD\sum_{T\in P(g^h_{\bf x},h)-\mathcal X_m}d(x_{T,1},x_{T,3})^\nu.
	\end{equation*}
	Finally, taking the limit as $m\to\infty$, we conclude that the limit \eqref{eqn: limit} holds.
\end{proof}

\subsection{Injectivity of \texorpdfstring{$\mathcal{sb}_{d,{\bf x}}$}{sbdx}}\label{sec: injectivity proof}
We may now use Lemma \ref{lem: injectivity} to prove Theorem \ref{thm: injectivity}. As an intermediate step, we prove the following lemma.

\begin{lemma}\label{lem: injxi}
	If $\xi,\xi':\partial\wt\lambda\to\mathcal{F}(\mathbb{C}^d)$ are locally $\lambda$--H\"older continuous, $\lambda$--transverse, and $\lambda$--hyperconvex maps such that 
	\begin{enumerate}[label=(\Roman*)]
		\item $\big(\xi(x_1),\xi(x_2),\xi(x_3)^1\big)=\big(\xi'(x_1),\xi'(x_2),\xi'(x_3)^1\big)$, 
		\item $\alpha_{\xi}(T_{\bf x},T)=\alpha_{\xi'}(T_{\bf x},T)$ for all plaques $T\in\wt\Delta-\{T_{\bf x}\}$, and 
		\item $\theta_\xi({\bf y})=\theta_{\xi'}({\bf y})$ for all labelings ${\bf y}\in\wt\Delta^o$, 
	\end{enumerate}
	then $\xi=\xi'$.
\end{lemma}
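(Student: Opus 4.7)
The plan is to first deduce from the hypotheses that the slithering maps compatible with $\xi$ and $\xi'$ coincide, and then propagate the equality $\xi=\xi'$ from the base plaque $T_{\bf x}$ to every other plaque using the slithering map together with the triple and double ratio invariants.

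First, observe that the hypotheses (I)--(III) are precisely the data appearing in Lemma \ref{lem: injectivity}. That lemma states that for every leaf $h$ of $\wt\lambda$, the value $\Sigma(g_{\bf x}^h,h)$ of the slithering map compatible with $\xi$ is explicitly determined, via the envelope $\overrightarrow{L}_h$ of a productable map built from unipotent eruptions, by the triple $\big(\xi(x_1),\xi(x_2),\xi(x_3)^1\big)$, the values of $\alpha_\xi$ on pairs $(T_{\bf x},T)$, and the values of $\theta_\xi$. By (I)--(III), the analogous data for $\xi'$ agree with those for $\xi$, so the same formula yields $\Sigma(g_{\bf x}^h,h)=\Sigma'(g_{\bf x}^h,h)$ for every leaf $h$. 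Combining this with Condition (1) of Definition \ref{def: slithering map}, we conclude that $\Sigma(g_1,g_2)=\Sigma'(g_1,g_2)$ for every pair $g_1,g_2\in\wt\Lambda$.

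Next, I first verify $\xi({\bf x})=\xi'({\bf x})$. By (I), $\xi$ and $\xi'$ agree at $x_1$ and $x_2$ and on the line $\xi(x_3)^1$. By (III), the triples $\xi({\bf x})$ and $\xi'({\bf x})$ share all triple ratios, so by Proposition~\ref{prop_invariants}(2) there exists $A\in\PGL_d(\Cb)$ with $A\cdot\xi({\bf x})=\xi'({\bf x})$. This $A$ fixes the element $\big(\xi(x_1),\xi(x_2),\xi(x_3)^1\big)$ of $\Nc(\Cb^d)$ and $\PSL_d(\Cb)$ acts simply transitively on $\Nc(\Cb^d)$, so $A=\id$ and $\xi(x_3)=\xi'(x_3)$.

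Now let $T\neq T_{\bf x}$ be any plaque of $\wt\lambda$ and let $(g_{\bf x}^T,g_T^{\bf x})$ be its separating pair of edges with $T_{\bf x}$. Let $y_1,y_2$ be the endpoints of $g_{\bf x}^T$ (vertices of $T_{\bf x}$) and $z_1,z_2$ the endpoints of $g_T^{\bf x}$ (vertices of $T$), with coherent orientations, and let $w$ be the remaining vertex of $T$. By Condition (4) of Definition \ref{def: slithering map},
\[
\xi(z_i)=\Sigma(g_T^{\bf x},g_{\bf x}^T)\cdot\xi(y_i),\qquad \xi'(z_i)=\Sigma'(g_T^{\bf x},g_{\bf x}^T)\cdot\xi'(y_i),
\]
and since $\Sigma=\Sigma'$ and $\xi(y_i)=\xi'(y_i)$ (established above), we obtain $\xi(z_i)=\xi'(z_i)$ for $i=1,2$. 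To recover the line $\xi(w)^1$, I invoke (II): the definition of $\alpha_\xi^{\bf i}(T_{\bf x},T)$ is the double ratio $\sigma^{\bf i}\big(\xi(y_2),\xi(y_1),\xi(y_3),\Sigma(g_{\bf x}^T,g_T^{\bf x})\cdot\xi(w)\big)$, and similarly for $\xi'$. Since the first three flags agree and $\Sigma=\Sigma'$, Proposition~\ref{prop_invariants}(4) combined with simple transitivity of $\PSL_d(\Cb)$ on $\Nc(\Cb^d)$ forces $\xi(w)^1=\xi'(w)^1$. Finally, by (III) the triples $\xi({\bf x}_T)$ and $\xi'({\bf x}_T)$ share all triple ratios, and the triple $(\xi(z_1),\xi(z_2),\xi(w)^1)$ coincides with $(\xi'(z_1),\xi'(z_2),\xi'(w)^1)$, so a second application of Proposition~\ref{prop_invariants}(2) and simple transitivity yields $\xi(w)=\xi'(w)$. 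Since every point of $\partial\wt\lambda$ is a vertex of some plaque of $\wt\lambda$, it follows that $\xi=\xi'$. The main technical input is Lemma \ref{lem: injectivity}; the rest of the argument is a purely combinatorial/algebraic propagation using the symmetries of the double and triple ratios.
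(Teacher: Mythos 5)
Your proof starts out on the same track as the paper's: you correctly invoke Lemma~\ref{lem: injectivity} to deduce $\Sigma(g_{\bf x}^h,h)=\Sigma'(g_{\bf x}^h,h)$ for every leaf $h$, and correctly establish $\xi({\bf x})=\xi'({\bf x})$ from (I) and (III). From that point, however, you switch to a plaque-by-plaque propagation argument (vertices of $T_{\bf x}$, then vertices of each neighboring plaque via double and triple ratios) and conclude with the claim that ``every point of $\partial\wt\lambda$ is a vertex of some plaque of $\wt\lambda$.'' That claim is false in general: $\partial\wt\lambda$ is the set of endpoints of \emph{all} leaves of $\wt\lambda$, whereas the set of plaque vertices corresponds only to the countably many boundary leaves of the (finitely many, mod~$\Gamma$) plaques. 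For a generic maximal lamination $\lambda$ has uncountably many leaves, most of which are not edges of any plaque, so $\partial\wt\lambda$ contains uncountably many points that are not vertices of any plaque. Your argument therefore determines $\xi$ only on a countable subset of $\partial\wt\lambda$ and does not close the loop.

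The irony is that you already have everything you need by the end of your first paragraph. Having shown $\Sigma=\Sigma'$ on all pairs of leaves and $\xi({\bf x})=\xi'({\bf x})$, the paper simply notes that an arbitrary $z\in\partial\wt\lambda$ is by definition an endpoint of some leaf $h\in\wt\Lambda$, and Condition~(4) of Definition~\ref{def: slithering map} then transports the flag at the appropriate vertex $x\in\{x_1,x_2,x_3\}$ of $T_{\bf x}$ along $\Sigma$ to give $\xi(z)$, and likewise for $\xi'$; equality of slithering maps and of $\xi|_{\bf x}$ forces $\xi(z)=\xi'(z)$. This reaches \emph{all} of $\partial\wt\lambda$ directly and makes the entire double-ratio/triple-ratio propagation in your second half unnecessary. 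To repair your proof, replace the final paragraph with this direct slithering-map argument for arbitrary $z$; the structure of the rest is sound.
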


\begin{proof}
	The assumptions (I) and (III) imply that 
	\begin{align}\label{eqn: frame equal}
		\xi({\bf x})=\xi'({\bf x}).
	\end{align} 
	(Compare with Proposition \ref{prop_invariants} part (2).)
	Also, if we let $\Sigma$ and $\Sigma'$ respectively denote the slithering maps compatible to $\xi$ and $\xi'$ respectively, then by Lemma \ref{lem: injectivity} part (2),
	\begin{align}\label{eqn: sigma equal}
		\Sigma(g_{\bf x}^h,h)=\Sigma'(g_{\bf x}^h,h)
	\end{align}
	for any $h\in\wt\Lambda$.
	
	Now, pick any $z\in\partial\wt\lambda$, let $h\in\wt\Lambda$ be a leaf that has $z$ as its endpoint, and let $g_{\bf x}^h$ be the edge of $T_{\bf x}$ that separates $T_{\bf x}$ and $h$. Equip $h$ and $g_{\bf x}^h$ with orientations so that they are oriented in parallel, and that $z$ is the forward endpoint of $h$, and let $x\in\{x_1,x_2,x_3\}$ denote the forward endpoint of $g_{\bf x}^h$. Then 
	\[\xi(z)=\Sigma(g_{\bf x}^h,h)\cdot \xi(x)\,\,\text{ and }\,\,\xi'(z)=\Sigma'(g_{\bf x}^h,h)\cdot \xi'(x),\]
	so equations \eqref{eqn: frame equal} and \eqref{eqn: sigma equal} imply that $\xi(z)=\xi'(z)$. Since $z$ is arbitrary, $\xi=\xi'$.
\end{proof}

\begin{proof}[Proof of Theorem \ref{thm: injectivity}]
	Let $\rho$ and $\rho'$ be representations in $\mathcal{R}_d(\lambda)$, and let $\xi$ and $\xi'$ be their $\lambda$--limit maps respectively. By the definition of $\mathcal{R}_d(\lambda)$ and Theorem \ref{thm: Wang}, $\xi$ and $\xi'$ are both locally $\lambda$--H\"older continuous, $\lambda$--transverse, and $\lambda$--hyperconvex. Also, by the definition of the map $\mathcal{sb}_{d,{\bf x}}$, the assumption that $\mathcal{sb}_{d,{\bf x}}(\rho)=\mathcal{sb}_{d,{\bf x}}(\rho')$ implies that assumptions (I), (II), and (III) of Lemma \ref{lem: injxi} hold for the maps $\xi$ and $\xi'$. So, by Lemma~\ref{lem: injxi}, $\xi=\xi'$.
	
	Pick any $\gamma\in\Gamma$. Observe that $\rho(\gamma)\in\mathsf{PGL}_d(\mathbb{C})$ is the unique element that sends $\xi({\bf x})$ to $\xi(\gamma\cdot{\bf x})$. Since $\xi({\bf x})=\xi'({\bf x})$ and $\xi(\gamma\cdot{\bf x})=\xi'(\gamma\cdot{\bf x})$, it follows that $\rho'(\gamma)$ also sends $\xi({\bf x})$ to $\xi(\gamma\cdot{\bf x})$, so $\rho'(\gamma)=\rho(\gamma)$. Since $\gamma$ is arbitrary, $\rho'=\rho$.
\end{proof}

\section{Generalized bending} \label{sec:bending}

The main goal of this section is to define a procedure to deform Hitchin representations that we call \emph{generalized bending}. Recall that $\widetilde\Delta^o$ denotes the set of labelings of the vertices of plaques of $\widetilde\lambda$ (see Section \ref{ssec:geodlam}), and for any Abelian group $G$, $\mathcal{Y}_d(\lambda;G)$ denotes the Abelian group of $\lambda$--cocyclic pairs of dimension $d$ with values in $G$ (see Definition~\ref{def_cocycle}). Then generalized bending is a procedure that takes as inputs 
\begin{itemize}
	\item a $d$--Hitchin representation $\rho_0:\Gamma\to\mathsf{PGL}_d(\mathbb{R})$ with limit map $\xi_0:\partial\widetilde S\to\mathcal{F}(\mathbb{R}^d)$, 
	\item a $\lambda$--cocyclic pair $(\beta,\eta)\in\mathcal Y_d(\lambda;\mathbb R/2\pi\mathbb{Z})$,
	\item a labeling ${\bf x}=(x_1,x_2,x_3)\in\widetilde\Delta^o$,
\end{itemize}
and returns as output a $d$--pleated surface $\rho:\Gamma\to\mathsf{PGL}_d(\mathbb{C})$ with $\lambda$--limit map $\xi:\partial\widetilde\lambda\to\mathcal{F}(\mathbb{C}^d)$ such that 
\[(\xi(x_1),\xi(x_2),\xi(x_3)^1)=(\xi_0(x_1),\xi_0(x_2),\xi_0(x_3)^1)\quad\text{and}\quad\mathfrak{sb}_d([\rho])=\mathfrak{s}_d([\rho_0])+(i\beta,i\eta),\] 
where $\mathfrak{sb}_d:\mathfrak R_d(\lambda)\to \mathcal{Y}_d(\lambda;\mathbb C/2\pi i\mathbb{Z})=\mathcal{Y}_d(\lambda;\mathbb{R})+i\mathcal{Y}_d(\lambda;\mathbb R/2\pi \mathbb{Z})$ is the shear-bend map defined in Section~\ref{sec: statement of main theorem}, and $\mathfrak{s}_d:{\rm Hit}_d(S)\to \mathcal C_d(\lambda)$ is the Bonahon-Dreyer parameterization of the Hitchin component by a convex polytope $\mathcal C_d(\lambda)\subset\mathcal{Y}_d(\lambda;\mathbb{R})$, see Theorem \ref{thm: BD par}. Using the map 
\[\mathcal{sb}_{d,{\bf x}}:\mathcal{R}_d(\lambda)\to \mathcal{Y}_d(\lambda;\mathbb{C}/2\pi i\mathbb{Z})\times\mathcal{N}(\mathbb{C}^d)\] 
defined by identity \eqref{eqn: main thm} in Section~\ref{sec: statement of main theorem}, the conditions on $\rho$ can be re-written  as
\[\mathcal{sb}_{d,{\bf x}}(\rho)=(\mathfrak s_d([\rho_0])+(i\beta,i\eta),(\xi_0(x_1),\xi_0(x_2),\xi_0(x_3)^1)).\]

The existence of generalized bending implies the following theorem, which provide us information about the images of the maps $\mathcal{sb}_{d,{\bf x}}$ and $\mathfrak{sb}_d$. This proves Step~2 in the outline of the proof of Theorem \ref{thm: main}, see Section \ref{subsec:proof structure}.

\begin{theorem} \label{thm: surjectivity}
	For all labelings ${\bf x}\in\wt\Delta^o$, the image of
	$\mathcal{sb}_{d,{\bf x}}$ contains
	\[\left(\mathcal{C}_d(\lambda)+i\mathcal{Y}_d(\lambda;\mathbb R/2\pi\mathbb{Z})\right)\times\mathcal{N}(\mathbb{C}^d).\]
	In particular, the image of $\mathfrak{sb}_d$ contains $\mathcal{C}_d(\lambda)+i\mathcal{Y}_d(\lambda;\mathbb R/2\pi\mathbb{Z})$.
\end{theorem} 

\begin{proof}
	Pick a $\lambda$--cocyclic pair
	\[(\alpha,\theta)=(\mathrm{Re}\,\alpha+i\:\mathrm{Im}\;\alpha,\mathrm{Re}\,\theta+i\:\mathrm{Im}\;\theta)\in \mathcal{C}_d(\lambda)+i\mathcal{Y}_d(\lambda;\mathbb{R}/2\pi\mathbb{Z})\]
	and a point $(F_1,F_2,p)\in\mathcal{N}(\mathbb{C}^d)$. By Theorem \ref{thm: BD par}, there is a $d$--Hitchin representation $\rho_0:\Gamma\to\mathsf{PGL}_d(\mathbb{R})$ with limit map $\xi_0:\widetilde S\to\mathcal{F}(\mathbb{R}^d)$ such that $\mathfrak{s}_{d}([\rho_0])=(\mathrm{Re}\,\alpha,\mathrm{Re}\,\theta)$. We may then apply generalized bending with inputs $(\rho_0,\xi_0,\:\mathrm{Im}\;\alpha,\:\mathrm{Im}\;\theta,{\bf x})$ to get a $d$--pleated surface $(\rho,\xi)$ that satisfies
	\begin{align*}
		\mathcal{sb}_{d,{\bf x}}(\rho)&=(\mathfrak s_d([\rho_0])+(i\:\mathrm{Im}\;\alpha,i\:\mathrm{Im}\;\theta),(\xi_0(x_1),\xi_0(x_2),\xi_0(x_3)^1))\\
		&=((\alpha,\theta),(\xi_0(x_1),\xi_0(x_2),\xi_0(x_3)^1)).
	\end{align*}
	Since $\mathsf{PGL}_d(\mathbb{C})$ acts transitively on $\mathcal{N}(\mathbb{C}^d)$, there is some $g\in\mathsf{PGL}_d(\mathbb{C})$ such that $g\,(\xi_0(x_1),\xi_0(x_2),\xi_0(x_3)^1)=(F_1,F_2,p)$. Then the conjugate of $\rho$ by $g$ is sent by $\mathcal{sb}_{d,\mathbf x}$ to $((\alpha,\theta),(F_1,F_2,p))$.
\end{proof}

The construction of generalized bending is motivated by Bonahon's construction \cite{bonahon-toulouse} of the bending deformations in $\mathsf{PGL}_2(\mathbb{C})$, which was described in the proof of Theorem \ref{prop:2-pleated is pleated}. The idea is to deform the Hitchin representation $\rho_0$ by deforming its limit map $\xi_0$ in a $\Gamma$--invariant manner using the \emph{purely imaginary eruptions} along each plaque of $\widetilde\lambda$ by a magnitude specified by $\eta$, and \emph{purely imaginary shears} along each leaf of $\widetilde\lambda$ by a magnitude specified by $\beta$. These purely imaginary eruptions and shears are special cases of the complex eruptions and shears described in Section \ref{sec:eruption and shear}, where we restrict the variables to take values in $i\mathbb{R}/2\pi\mathbb{Z}$ instead of $\mathbb{C}/2\pi i\mathbb{Z}$.

More specifically, we define the notion of a \emph{$(\beta,\eta)$--generalized bending map for $(\rho_0,\xi_0)$ with base ${\bf x}$}
\[\Psi= \Psi_{\rho_0, \xi_0, \beta, \eta, {\bf x}}\co \wt\Lambda \to\mathsf{PGL}_d(\mathbb{C}),\]
see Definition \ref{def: bending map}. This notion, while technical, is designed so that the map
\[\xi:\partial\wt\lambda\to\mathcal{F}(\mathbb{C}^d)\]
given by $\xi(z):=\Psi(h)\cdot\xi_0(z)$, where $h$ is some (any) leaf of $\wt\lambda$ that has $z$ as an endpoint, is well-defined and has the following properties:
\begin{itemize}
	\item $\xi$ is locally $\lambda$--H\"older continuous and $\lambda$--transverse. In particular, there is a slithering map $\Sigma:\wt\Lambda^2\to\mathsf{SL}_d(\mathbb{C})$ compatible with $\xi$.
	\item $(\xi(x_1),\xi(x_2),\xi(x_3)^1)=(\xi_0(x_1),\xi_0(x_2),\xi_0(x_3)^1)$.
	\item $\xi$ is $\lambda$--hyperconvex, and 
	\[(\alpha_\xi,\theta_\xi)=\mathfrak{s}_d([\rho_0])+(i\beta,i\eta).\]
\end{itemize}
Here, recall that the pair $(\alpha_\xi,\theta_\xi)$ was defined in Section \ref{shear-bend} (it is the analog of $\lambda$--cocyclic pairs that one can associate to any locally $\lambda$--H\"older continuous, $\lambda$--transverse, and $\lambda$--hyperconvex map).

We then show that one can uniquely construct such generalized bending maps by taking the extension of a certain H\"older extendable map defined from purely imaginary eruptions and shears:

\begin{theorem}\label{thm: bending Hitchin}
	For all $(\rho_0, \xi_0, \beta, \eta, {\bf x})$, there exists a unique $(\beta,\eta)$--generalized bending map for $(\rho_0,\xi_0)$ with base ${\bf x}$. 
\end{theorem}

Then, using the uniqueness of generalized bending maps, we prove that the bent limit maps are limit maps of $d$--pleated surfaces in $\mathcal{R}_d(\lambda)$:

\begin{theorem}\label{thm: bending Hitchin properties}
	For every generalized bending map $\Psi$, there exists a (necessarily unique) $d$--pleated surface $\rho\in\mathcal{R}_d(\lambda)$ such that the $\Psi$--bent limit map is the $\lambda$--limit map of $\rho$.
\end{theorem}

The $d$--pleated surface $\rho$ given by Theorem \ref{thm: bending Hitchin properties} is the required output of generalized bending applied to the inputs $(\rho_0,\xi_0,\beta,\eta,{\bf x})$. 

In Section \ref{ssec: def}, we will define generalized bending maps and prove that they are unique, see Proposition \ref{prop: uniqueness of bending}. Then, we prove Theorem \ref{thm: bending Hitchin} in Section~\ref{sec:existence}, and Theorem \ref{thm: bending Hitchin properties} in Section \ref{sec: bending cocycle}.

\subsection{The generalized bending map} \label{ssec: def}
We will now define the generalized bending map. As before, $\mathcal{A}$ denotes the set of pairs of positive integers that sum to $d$ and $\mathcal{B}$ denotes the set of triples of positive integers that sum to $d$. Recall that for any $d$--pleated surface $\rho:\Gamma\to\mathsf{PGL}_d(\mathbb{C})$ with pleating locus $\lambda$, 
\[(\alpha_\rho,\theta_\rho):=\mathfrak{sb}_d([\rho])\in\mathcal{Y}_d(\lambda;\mathbb{C}/2\pi i\mathbb{Z})\] 
denotes the $\lambda$--cocyclic pair associated to $\rho$, and $\widetilde\Delta^o$ denotes the set of labelings, i.e. triples of points in $\partial\wt\lambda$ that are the vertices of a plaque of $\wt\lambda$. In the rest of this section, we will also use the following notation: 

\begin{notation}\label{not:section10} Let ${\bf x} = (x_1,x_2,x_3), {\bf y} = (y_1,y_2,y_3) \in \widetilde{\Delta}^o$.
	\begin{enumerate}
		\item Denote ${\bf x}_-:=(x_3,x_1,x_2)$, ${\bf x}_+:=(x_2,x_3,x_1)$, and $\wh{\bf x}:=(x_2,x_1,x_3)$. 
		\item Denote by
		\begin{itemize}
			\item $T_{\bf x}$ the plaque of $\widetilde{\lambda}$ with vertices $x_1,x_2,x_3$.
			\item $h_{\bf x}$ the edge of $T_{\bf x}$ with endpoints $x_1$ and $x_2$.
		\end{itemize}
		In this notation, $h_{\bf x}$, $h_{{\bf x}_-}$, and $h_{{\bf x}_+}$ are the three edges of $T_{\bf x}$. 
		\item 
		Given any leaf $h$ and any plaque $T$ of $\widetilde{\lambda}$, we denote by $g_T^h$ the edge of $T$ that separates $T$ and $h$. Also, given a pair of distinct plaques $T$ and $T'$ of $\widetilde{\lambda}$, we let $(g_T^{T'},g_{T'}^T)$ denote a separating pair for $(T,T')$. In the case when $T=T_{\bf x}$, we also sometimes ease notation as follows:
		\[g_{\bf x}^T:=g_{T_{\bf x}}^T, \quad g_T^{\bf x}:=g_T^{T_{\bf x}}\,\,\text{ and }\,\,g_{\bf x}^h:=g_{T_{\bf x}}^h.\]
		Similarly, if $T=T_{\bf x}$ and $T'=T_{\bf y}$, we denote
		\[g_{\bf x}^{\bf y}:=g_{T_{\bf x}}^{T_{\bf y}}.\]
		
		\begin{figure}[h!]
			\includegraphics[width=10cm]{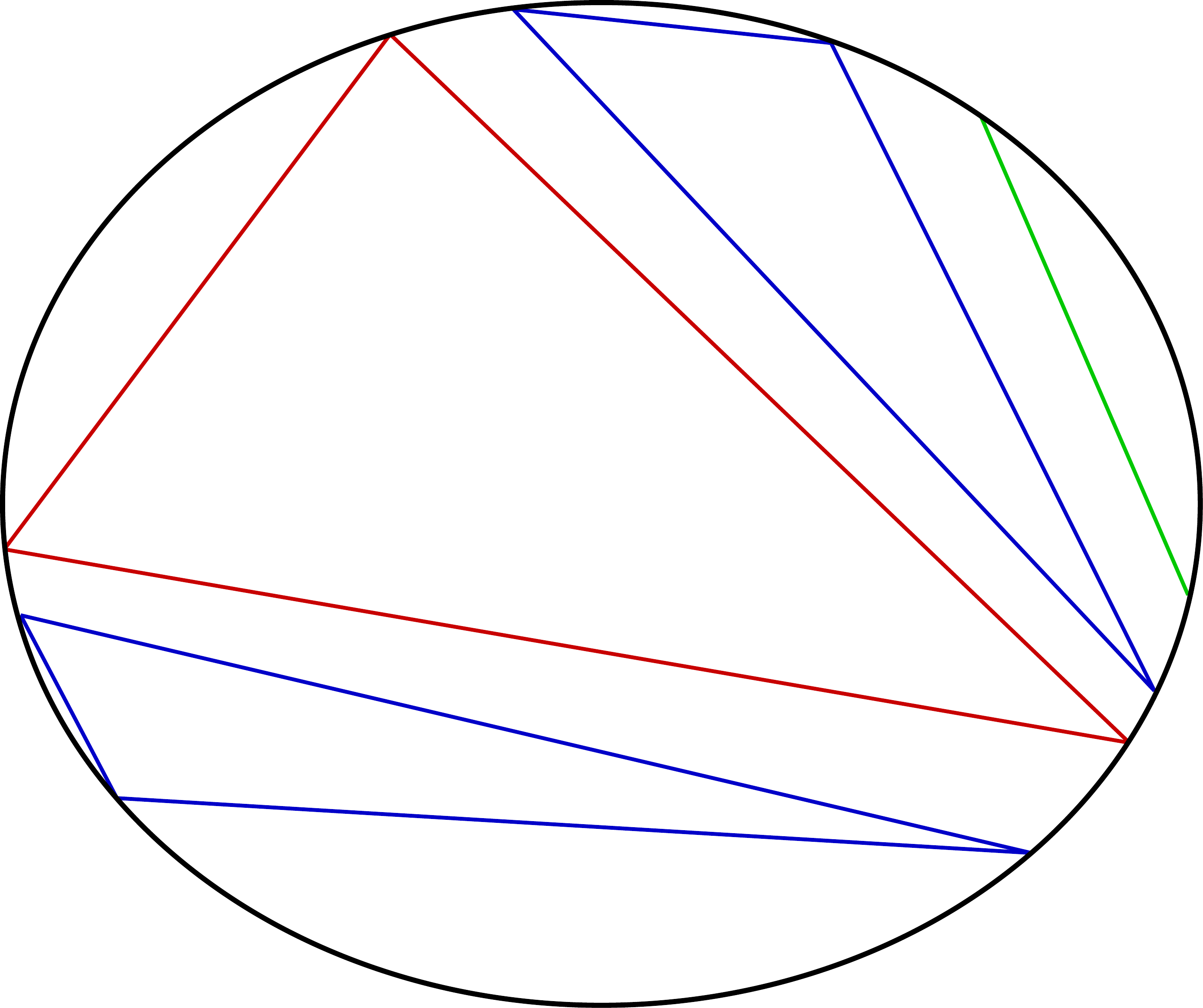}
			\put(-19,135){\small $h$}
			\put(-80,165){\small $T$}
			\put(-83,190){\small $g_T^h$}
			\put(-17,60){\small $x_1$}
			\put(-160,50){\small $T'$}
			\put(-185,135){\small $T_{\bf x}$}
			\put(-185,95){\small $g_{\bf x}^{T'}=h_{\bf x}$}
			\put(-197,63){\small $g_{T'}^{T}$}
			\put(-135,140){\small $g_{\bf x}^h=h_{\bf x_-}$}
			\put(-295,109){\small $x_2$}
			\put(-200,234){\small $x_3$}
			\caption{\small The notation introduced in Notation \ref{not:section10}.}
			\label{fig:unipotenteruption10}
		\end{figure}
	\end{enumerate}
\end{notation}

Let $(\rho_0, \xi_0)$ be  a $d$--pleated surface with pleating locus $\lambda$, let $(\beta, \eta)$ be a $\lambda$--cocyclic pair in $\mathcal{Y}_d(\lambda; \mathbb{R} / 2\pi \mathbb{Z})$, and let ${\bf x} \in \wt\Delta^o$. Denote 
\[\eta({\bf y}):=(\eta^{\bf j}({\bf y}))_{{\bf j}\in\mathcal{B}}\,\,\text{ and }\,\,\beta({\bf T}):=(\beta^{\bf i}({\bf T}))_{{\bf i}\in\mathcal{A}}\]
for all labelings ${\bf y}=(y_1,y_2,y_3)\in\wt\Delta^o$ and all distinct pairs of plaques ${\bf T}=(T_1,T_2)$ of $\wt\lambda$. 

\begin{definition}\label{def: bending map}
	A \emph{$(\beta, \eta)$--generalized bending map} for $(\rho_0, \xi_0)$ with base ${\bf x}=(x_1,x_2,x_3)$ is a map
	\[\Psi = \Psi_{\rho_0, \xi_0, \beta, \eta, {\bf x}}\co \wt\Lambda \to\mathsf{PGL}_d(\mathbb{C})\] 
	that satisfies the following properties:
	\begin{enumerate}[label=(\roman*)]
		\item For any two leaves $h_1, h_2 \in \wt\Lambda$ that share an endpoint $z \in \partial \wt\lambda$, we have that 
		$$\Psi(h_1) \cdot \xi_0(z) = \Psi(h_2)\cdot\xi_0(z).$$
	\end{enumerate}
	This allows us to define the \emph{$\Psi$--bent limit map} 
	\[\xi = \xi_{\Psi}\co\partial\wt\lambda\to\mathcal{F}(\mathbb{C}^d)\] 
	by setting $\xi(z):= \Psi(h)\cdot\xi_0(z)$, where $h \in \wt{\Lambda}$ is some (any) leaf that has $z$ as an endpoint. 
	\begin{enumerate}[resume, label=(\roman*)]
		\item The map $\xi$ is locally $\lambda$--H\"older continuous. 
	\end{enumerate}
	Since the map $\xi_0$ is $\lambda$--transverse, so is $\xi$. Then, Theorem \ref{thm: slitherable and slithering} and this assumption implies that there is a unique slithering map $\Sigma$ associated to $\xi$.
	\begin{enumerate}[resume, label=(\roman*)]
		
		\item For any leaf $h$ of $\wt\lambda$, 
		$$\Psi(h) = \Sigma(h,g^h_{\bf x})\, v({\bf x},h)\, \Sigma^0(g^h_{\bf x}, h),$$ 
		where $\Sigma^0$ denotes the slithering map associated to $\xi_0$, and \begin{equation}\label{eq: definition of v}
			v({\bf x},h):= \begin{cases}
				a_{\xi_0(\widehat{\bf x})}(i\eta(\widehat{\bf x})) & \text{if $g_{\bf x}^h = h_{{\bf x}_-}$,} \\
				a_{\xi_0({\bf x})}(i\eta({\bf x})) & \text{if $g_{\bf x}^h = h_{{\bf x}_+}$,} \\
				\mathrm{id} & \text{if $g_{\bf x}^h =  h_{\bf x}$.}
			\end{cases}
		\end{equation}
		\item $(\xi(x_1),\xi(x_2),\xi(x_3)^1) = (\xi_0(x_1),\xi_0(x_2),\xi_0(x_3)^1)$.
		\item For any labeling ${\bf y}\in\wt\Delta^o$ and ${\bf j} \in \mathcal{B}$, the triple of flags $\xi({\bf y})$ is in general position and satisfies
		\begin{align*}
			\theta_\xi^{\bf j}({\bf y}) 
			&= \theta_{\rho_0}^{\bf j}(\mathbf{y}) + i \eta^{\bf j}(\mathbf{y}).
		\end{align*}

		\item For any pair of distinct plaques ${\bf T} = (T_1,T_2)$ of $\wt\lambda$ and any ${\bf i} \in \mathcal{A}$, 
		\begin{align*}
			\alpha_{\xi}^{\bf i}({\bf T})
			&= \alpha_{\rho_0}^{\bf i}({\bf T})+ i \beta^{\bf i}({\bf T}).
		\end{align*}
		
	\end{enumerate}
	We refer to $\xi$ as the \emph{$\lambda$--limit map associated to $\Psi$}.
\end{definition}

Note that in Definition \ref{def: bending map}, Property (i) ensures that we can use the bending map $\Psi$ to define its associated $\lambda$--limit map $\xi$, while Properties (ii), (iv), (v), and (vi) are the required properties of $\xi$ described before the statement of Theorem \ref{thm: bending Hitchin}. Condition (iii) on the other hand, is a natural condition on the bending map itself that guarantees its uniqueness (if it exists), as we now show.

\begin{proposition}\label{prop: uniqueness of bending}
	If $\Psi$ and $\Psi'$ are two $(\beta,\eta)$--bending maps for $(\rho_0,\xi_0)$ with base ${\bf x}=(x_1,x_2,x_3)$, then $\Psi=\Psi'$.
\end{proposition}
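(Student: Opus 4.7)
The proof will be essentially an assembly of tools already in hand. Let me denote $\xi := \xi_\Psi$ and $\xi' := \xi_{\Psi'}$ the $\lambda$-limit maps associated to $\Psi$ and $\Psi'$, respectively, and let $\Sigma, \Sigma'$ be the slithering maps compatible with $\xi$ and $\xi'$. The key observation is that once $\xi = \xi'$, the formula in property (iii) forces $\Psi = \Psi'$: the right-hand side
\[
\Sigma(h, g_{\bf x}^h) \, v({\bf x},h) \, \Sigma^0(g_{\bf x}^h, h)
\]
involves $\Sigma^0$ and $v({\bf x},h)$ (which depend only on $\rho_0, \xi_0, \eta, {\bf x}$ and hence agree for $\Psi$ and $\Psi'$) together with the slithering map of the associated limit map, and the slithering map is uniquely determined by its compatible limit map by Theorem \ref{thm: slitherable and slithering}.

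Thus the plan reduces to showing $\xi = \xi'$, and I would do this by invoking Lemma \ref{lem: injxi}. That lemma requires $\xi$ and $\xi'$ both to be locally $\lambda$-H\"older continuous, $\lambda$-transverse, and $\lambda$-hyperconvex, and to satisfy the three matching conditions (I), (II), (III) of that lemma. Local $\lambda$-H\"older continuity for each is property (ii) in Definition \ref{def: bending map}; $\lambda$-transversality is immediate from the $\lambda$-transversality of $\xi_0$ via property (i); and $\lambda$-hyperconvexity is built into the statement of property (v) (which asserts that the triple $\xi({\bf y})$ is in general position for every ${\bf y} \in \wt\Delta^o$). Conditions (I), (II), (III) of Lemma \ref{lem: injxi} then follow term-by-term from properties (iv), (vi), (v) of Definition \ref{def: bending map}: both $(\xi(x_1),\xi(x_2),\xi(x_3)^1)$ and $(\xi'(x_1),\xi'(x_2),\xi'(x_3)^1)$ equal $(\xi_0(x_1),\xi_0(x_2),\xi_0(x_3)^1)$; both $\alpha_\xi$ and $\alpha_{\xi'}$ equal $\alpha_{\rho_0} + i\beta$; and both $\theta_\xi$ and $\theta_{\xi'}$ equal $\theta_{\rho_0} + i\eta$.

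Having verified the hypotheses, Lemma \ref{lem: injxi} yields $\xi = \xi'$. By the uniqueness part of Theorem \ref{thm: slitherable and slithering}, the slithering maps compatible with a given locally $\lambda$-H\"older continuous and $\lambda$-transverse map coincide, so $\Sigma = \Sigma'$. Substituting into property (iii) of Definition \ref{def: bending map} for both $\Psi$ and $\Psi'$ then gives $\Psi(h) = \Psi'(h)$ for every $h \in \wt\Lambda$, completing the proof. There is no genuine obstacle here: the difficulty has already been absorbed by Lemma \ref{lem: injxi} and Theorem \ref{thm: slitherable and slithering}, and the role of condition (iii) in the definition of a bending map is precisely to make the passage from ``same limit map'' to ``same bending map'' tautological.
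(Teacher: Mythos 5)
Your proof is correct and takes essentially the same approach as the paper. The only small difference is in which injectivity lemma is invoked: the paper applies Lemma~\ref{lem: injectivity} directly to conclude $\Sigma(g_{\bf x}^h,h) = \Sigma'(g_{\bf x}^h,h)$ for every leaf $h$, while you route through the slightly stronger conclusion $\xi = \xi'$ via Lemma~\ref{lem: injxi} (itself a corollary of Lemma~\ref{lem: injectivity}) and then invoke uniqueness of the slithering map; both yield the same substitution into property~(iii).
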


\begin{proof}
	Let $\xi$ and $\xi'$ denote the $\lambda$--limit maps associated to $\Psi$ and $\Psi'$ respectively, and let $\Sigma$ and $\Sigma'$ denote the slithering maps compatible with $\xi$ and $\xi'$ respectively. By Properties (iv), (v), and (vi) of Definition \ref{def: bending map}, we have
	\[(\xi(x_1),\xi(x_2),\xi(x_3)^1) = (\xi'(x_1),\xi'(x_2),\xi'(x_3)^1),\quad \theta_\xi^{\bf j} = \theta_{\xi'}^{\bf j},\,\,\text{ and }\,\,\alpha_\xi^{\bf i}=\alpha_{\xi'}^{\bf i}.\]
	So, we may apply Part (2) of Lemma \ref{lem: injectivity} to deduce that $\Sigma(g_{\bf x}^h,h)=\Sigma'(g_{\bf x}^h,h)$ for any leaf $h\in\wt\Lambda$. Then by Property (iii) of Definition \ref{def: bending map}, 
	\[\Psi(h) = \Sigma(h,g^h_{\bf x})\, v({\bf x},h)\, \Sigma^0(g^h_{\bf x}, h)=\Sigma'(h,g^h_{\bf x})\, v({\bf x},h)\, \Sigma^0(g^h_{\bf x}, h)=\Psi'(h).\]
	for any leaf $h\in\wt\Lambda$.
\end{proof}

\subsection{Existence of the bending map}\label{sec:existence}
In this section, we will prove Theorem~\ref{thm: bending Hitchin}. In fact, we will prove a more general version of Theorem \ref{thm: bending Hitchin} (see Theorem \ref{thm_exist-psi}) where $(\rho_0,\xi_0)$ is allowed to be any $d$--pleated surface with pleating locus $\lambda$ that satisfies the following genericity condition. 

\begin{definition}\label{def:prop_star}
	A triple of flags ${\bf F} = (F_1,F_2,F_3)$ in $\mathcal{F}(\mathbb{C}^d)$ satisfies \textit{property-$\star$} if the following hold:
	\begin{enumerate}
		\item The pairs of flags $F_1,F_2$ and $F_2, F_3$ are transverse. 
		\item For any $i \in \{1, \dots, d\}$, let $L_i:=F_2^i\cap F_1^{d-i+1}$ and let $\pi_i : \mathbb{C}^d \to L_i$ be the linear projection with kernel $\bigoplus_{j \neq i} L_j$. Then for every $i \in \{1, \dots, d - 1\}$ the restriction of $\pi_i$ to the line $L_{i+1}' = F_2^{i+1} \cap F_3^{d - i}$ is a linear isomorphism.
	\end{enumerate}
	We say that a $d$--pleated surface $(\rho_0, \xi_0)$ with pleating locus $\lambda$ \textit{satisfies property-$\star$} if for all ${\bf y} \in \wt\Delta^o$ the triple of flags $\xi_0({\bf y})$ satisfies property-$\star$. 
\end{definition}

Note that property-$\star$ is invariant under conjugation, i.e. if $(\rho_0,\xi_0)$ is a $d$--pleated surface with pleating locus $\lambda$ that satisfies property-$\star$, then the same is true for $(g\cdot \rho_0\cdot g^{-1},g\circ \xi_0)$ for any $g\in\mathsf{PGL}_d(\mathbb{C})$.

\begin{remark}
	It is possible for a triple of flags in $\mathcal{F}(\mathbb{C}^d)$ to be in general position but not satisfy property-$\star$. For example, if $e_1$, $e_2$, and $e_3$ is a basis of $\mathbb{C}^3$, then for all $i=1,2,3$, let $F_i\in\mathcal{F}(\mathbb{C}^3)$ be the flag such that 
	\[F_i^1=[e_i]\,\,\text{ and }\,\,F_i^2=\Span_{\mathbb{C}}(e_i,e_1+e_2+e_3).\] 
	It is straightforward to verify that the triple of flags $(F_1,F_2,F_3)$ is in general position but does not satisfy property-$\star$. Also, requiring a $d$--pleated surface to satisfy property-$\star$ is a non-trivial condition. For example, in Proposition \ref{lem: non-Hausdorff 1} we construct a $3$--pleated surface $\rho$ such that $\theta_\rho({\bf x})=i\pi$ for all ${\bf x}\in\wt{\Delta}^o$, and thus, $\rho$ does not satisfy property-$\star$. 
\end{remark}

\begin{remark}\label{rem: star reformulation}
	Condition (1) of Definition \ref{def:prop_star} implies that there is a unique unipotent transformation $u_{\bf F}\in\mathsf{PGL}_d(\mathbb{C})$ that fixes the flag $F_2$ and sends $F_1$ to $F_3$. Then
	Condition (2) of Definition \ref{def:prop_star} is equivalent to the following: If $(b_1, \dots, b_d)$ is a basis of $\mathbb{C}^d$ such that $b_i \in L_i$ for all $i \in \{1, \dots, d\}$, then the $(i,i+1)$--entry of the upper triangular unipotent matrix that represents $u_{\bf F}$ in this basis is non-zero for every $i \in \{1, \dots, d - 1\}$. 
\end{remark}

The following theorem is the main result of this subsection.

\begin{theorem}\label{thm_exist-psi}
	Let $(\rho_0, \xi_0)$ be a $d$--pleated surface with pleating locus $\lambda$, let $(\beta, \eta) \in \mathcal{Y}_d(\lambda; \mathbb{R} / 2\pi \mathbb{Z})$, and let ${\bf x} \in \wt\Delta^o$ be a labeling of a plaque of $\widetilde{\lambda}$. If $(\rho_0,\xi_0)$ satisfies property-$\star$, then there is a unique $(\beta, \eta)$--generalized bending map for $(\rho_0,\xi_0)$ with base ${\bf x}$.
\end{theorem}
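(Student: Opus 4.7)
The plan is to generalize the construction from the proof of Proposition \ref{prop:2-pleated is pleated} to higher $d$, replacing the rotations $c_{\gsf}(\vartheta)$ appearing in formula \eqref{eqn: bending map dim 2} by lifts to $\SL_d(\Cb)$ of appropriately chosen unipotent eruptions from Section \ref{sec: unipotent eruption}. The key observation is that the formula of Lemma \ref{lem: injectivity}---which expresses the slithering map of a $\lambda$-transverse, locally $\lambda$-Hölder continuous, $\lambda$-hyperconvex map $\xi$ as the envelope of a productable map depending only on $(\alpha_\xi, \theta_\xi)$ and $(\xi(x_1), \xi(x_2), \xi(x_3)^1)$---can be read as a \emph{definition} of the candidate slithering map for the yet-to-be-constructed bent limit map. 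Property-$\star$ of $\xi_0$ will guarantee, via Remark \ref{rem: star reformulation} together with Proposition \ref{prop: lift}, that all the unipotent eruptions involved in the construction are well-defined elements of $\SL_d(\Cb)$.

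First I would define, for every leaf $h \in \wt\Lambda$, a map $M_h \co P(g^h_{\bf x}, h) \to \SL_d(\Cb)$ modeled on the map $L$ of Lemma \ref{lem: injectivity}: to each plaque $T \in P(g^h_{\bf x}, h)$ I assign $M_h(T) = W_T\, U_T$, where $W_T$ is the unipotent determined by the zero-triangle-ratios frame ${\bf F}_T$ extracted from $\xi_0$, and $U_T$ is a unipotent eruption lift encoding the target triangle invariants $\theta_{\rho_0}({\bf x}_T) + i\eta({\bf x}_T)$ and the target shearing data $\alpha_{\rho_0}(T_{\bf x},T) + i\beta(T_{\bf x},T)$ with the sign conventions of Definition \ref{def: bending map}. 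Productability of $M_h$ will follow from the local $\lambda$-Hölder continuity of $\xi_0$, the Lipschitz behaviour of the eruption and shear families near the identity, and the geometric estimate of Lemma \ref{lem: hyperbolic geometry}. Setting $\Sigma(h, g^h_{\bf x}) := \overrightarrow{M_h}(g^h_{\bf x}, h)$ and extending to all of $\wt\Lambda^2$ via the cocycle rule $\Sigma(h_1, h_2) = \Sigma(h_1, g^{h_1}_{\bf x})\,\Sigma(g^{h_1}_{\bf x}, g^{h_2}_{\bf x})\,\Sigma(g^{h_2}_{\bf x}, h_2)$, I then define
\[
\Psi(h) \,:=\, \Sigma(h, g^h_{\bf x})\, v({\bf x}, h)\, \Sigma^0(g^h_{\bf x}, h),
\]
thereby enforcing Definition \ref{def: bending map}(iii) by construction.

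Next I would check that $\Psi$ satisfies the gluing condition (i) at shared endpoints, so that $\xi(z) := \Psi(h) \cdot \xi_0(z)$ is well-defined. This splits into two cases according to whether $h_1$ and $h_2$ lie on the same side of $T_{\bf x}$ or on opposite sides. In the first case the desired equality reduces, via Proposition \ref{prop:cocycle}(3), to the statement that the unipotent eruptions $U_T$ corresponding to plaques $T$ strictly separating $h_1$ and $h_2$ all fix the relevant point $\xi_0(z)$---this is a direct consequence of Proposition \ref{prop: lift} applied to the common vertex $z$. In the opposite-side case I would exploit Proposition \ref{prop: 3 leaves} to identify the unique plaque separating $h_1, h_2$, and then use the cocycle boundary condition for $(\beta, \eta)$ together with Proposition \ref{prop: eruption combination}(5) and Proposition \ref{prop: shear combination}(3) to absorb the change-of-edge factor $v({\bf x}, \cdot)$. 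Once (i) is established, (ii) is immediate from Proposition \ref{prop: Holder productable} applied to $M_h$; (iv) is a direct computation since all contributions near $T_{\bf x}$ fix $(\xi_0(x_1), \xi_0(x_2), \xi_0(x_3)^1)$; (v) follows from the triangle-ratio behaviour of the eruption factor in Proposition \ref{prop: lift}; and (vi) is obtained from Lemma \ref{prop: shear double ratio}(2) applied to the shear portion of $U_T$, after one establishes that the candidate $\Sigma$ actually is the slithering map of $\xi$, which follows from the uniqueness statement of Theorem \ref{thm: slitherable and slithering} once one verifies that $\Sigma$ meets the defining axioms of a slithering map compatible with $\xi$.

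The main obstacle will be the gluing property (i): verifying it requires a case-by-case analysis balancing infinite product contributions from the slithering factors against the discrete bending data $v({\bf x}, h)$ carried across the edges of $T_{\bf x}$, and it is here that the cocycle boundary condition on $(\beta,\eta)$ must be used in an essential way. A closely related difficulty will be showing that the candidate $\Sigma$ extracted from the productable maps $M_h$ is in fact the slithering map of $\xi$; this will be addressed by comparing $M_h$ with the productable map $M_{\rm slither}$ of Proposition \ref{prop: slithering estimate} associated to $\xi$ and invoking uniqueness in Theorem \ref{thm: slitherable and slithering}.
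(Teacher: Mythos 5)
Your proposal takes a genuinely different route from the paper's and contains a real gap in the productability step.

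The paper defines the bending map directly by building a productable map $N_h : P(g_{\bf x}^h,h) \to \SL_d(\Cb)$ where each $N_h(T)$ is the unipotent eruption based at the \emph{unbent, degenerating} flags $\xi_0({\bf x}_T)$ and with parameters given only by the imaginary bending data $(\eta({\bf x}_T), \beta(T_{\bf x},T))$; productability of $N_h$ is proven from scratch, and this is precisely where property-$\star$ enters, via Lemma \ref{lem:some linear algebra}, which controls how fast such eruptions tend to $\id$ as the plaque $T$ shrinks. You instead want to define the candidate slithering map $\Sigma$ first as the envelope of $M_h(T) = W_T U_T$ modeled on the map $L$ of Lemma \ref{lem: injectivity}, and then declare $\Psi$ via Property (iii). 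The two approaches really are different: in the paper's $N_h(T)$, the eruption/shear parameters stay bounded but the base triple degenerates; in your $M_h(T)$, the base triple ${\bf F}_T$ is essentially fixed (finitely many possibilities) but the shear parameter $\alpha_T = \alpha_{\rho_0}(T_{\bf x},T) + i\beta(T_{\bf x},T)$ diverges as $T$ gets far from $T_{\bf x}$.

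This is where your argument breaks. You claim productability of $M_h$ follows from ``the Lipschitz behaviour of the eruption and shear families near the identity,'' but neither factor is near the identity in this construction: $c_{{\bf E}_T}(\alpha_T)$ and $c_{{\bf G}_T}(-\alpha_T - s_{\theta_T})$ both blow up with $\Re\alpha_{\rho_0}(T_{\bf x},T)$. The convergence $M_h(T) \to \id$ is a cancellation between $W_T$ and the conjugated shear/eruption product, not a small-parameter estimate. In the paper, productability of $L$ is established in Lemma \ref{lem: injectivity} by exhibiting $L(T) = \Sigma(g_{\bf x}^h,h_{T,-})\,\Sigma(h_{T,+},g_{\bf x}^h)$ and invoking Proposition \ref{prop: renormalization} together with the \emph{already known} local H\"older continuity of the slithering map of $\xi$ -- which is circular if you are trying to use $M_h$ to \emph{construct} $\xi$. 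The paper's independent productability argument for this type of map (Lemma \ref{lem:uniformly productable} in Section~\ref{sec:holomorphicity}) instead quotes a Bonahon--Dreyer growth estimate for Hitchin shear coordinates, an argument tied to the Hitchin cone $\Cc(\lambda,d)$ rather than to property-$\star$, and which would need extra work to cover general property-$\star$ $(\rho_0,\xi_0)$. You need to either supply such an estimate or revert to the paper's $N_h$ construction; neither the Lipschitz-near-identity reasoning nor a vague appeal to Remark \ref{rem: star reformulation} plus Proposition \ref{prop: lift} fills this hole.

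There is a secondary imprecision worth flagging: you write that $W_T$ comes from ``the zero-triangle-ratios frame ${\bf F}_T$ extracted from $\xi_0$,'' but in Lemma \ref{lem: injectivity} the triple ${\bf F}_T$ is built from the flags $\xi(y_1^T), \xi(y_2^T), \xi(y_3^T)^1$ of the \emph{bent} map at the base plaque. When $g_{\bf x}^h \neq h_{\bf x}$, this requires the full flag $\xi(x_3)$, which differs from $\xi_0(x_3)$ whenever $\eta({\bf x}) \neq 0$ (only the line $\xi(x_3)^1$ is preserved). As Remark \ref{rmk:dependence of L} notes, ${\bf F}_T$ is determined by the projective frame together with $\theta({\bf x}) = \theta_{\rho_0}({\bf x}) + i\eta({\bf x})$, not by $\xi_0$ alone; the factor $v({\bf x},h)$ is exactly what converts between these two, and your formulation conflates them. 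The rest of your outline -- the case split for the gluing condition (i) using Proposition \ref{prop: 3 leaves} and the cocycle boundary condition, and the derivation of (ii), (iv), (v), (vi) -- is at the right level of generality and matches what the paper does, but it is contingent on first having the envelope $\overrightarrow{M_h}$, so the productability gap is load-bearing.
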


Before proving Theorem \ref{thm_exist-psi}, we remark that the positivity of Hitchin representations implies that they satisfy the property-$\star$.  Given a basis $(b_1,\dots,b_d)$ of $\mathbb{R}^d$, we denote by $\mathcal{U}_{>0}(b_1,\dots,b_d)$ the set of unipotent elements in $\mathsf{SL}_d(\mathbb{R})$ that are represented in the basis $(b_1,\dots,b_d)$ by an upper triangular matrix $M_u$ that is \emph{totally positive}, i.e. the minors of $M_u$ (except those that are forced to be zero by virtue of $M_u$ being upper triangular) are positive. Fock and Goncharov \cite{fock-goncharov-1} proved that a triple of flags $(F_1,F_2,F_3)$ in $\mathcal{F}(\mathbb{R}^d)$ is positive (see Definition \ref{def: positivity via invariants}) if and only if there is a basis $(b_1,\dots,b_d)$ of $\mathbb{R}^d$ such that $b_i\in F_2^i\cap F_1^{d-i+1}$ and  $u\in \mathcal{U}_{>0}(b_1,\dots,b_d)$ such that $F_3=u\cdot  F_1$. Thus, Theorem~\ref{thm: bending Hitchin} follows immediately from Theorem \ref{thm_exist-psi} and Remark \ref{rem: star reformulation}.

The remainder of this subsection is dedicated to the proof of Theorem \ref{thm_exist-psi}. Note that the uniqueness of the generalized bending map was already proven in Proposition \ref{prop: uniqueness of bending}, so we focus on the existence. Fix a $d$--pleated surface $(\rho_0, \xi_0)$ with pleating locus $\lambda$ that satisfies property-$\star$, a $\lambda$--cocyclic pair $(\beta, \eta) \in \mathcal{Y}_d(\lambda; \mathbb{R} / 2\pi \mathbb{Z})$, and a labeling ${\bf x} = (x_{1}, x_{2}, x_3) \in \wt\Delta^o$. 

Recall that:
\begin{itemize}
	\item $\wt\Lambda^o$ denotes the set of leaves of $\wt\lambda$ equipped with the choice of an orientation. 
	\item For any leaves $g_1$ and $g_2$ of $\widetilde\lambda$, we denote by $P(g_1,g_2)$ the set of plaques of $\widetilde\lambda$ that separate $g_1$ and $g_2$, and by $Q(g_1,g_2)$ the set of leaves of $\widetilde\lambda$ that separate $g_1$ and $g_2$. 
	\item For any plaque $T\in P(g_1,g_2)$, $h_{T,-}$ and $h_{T,+}$ denote the edges of $T$ that separate $g_1$ and $g_2$ such that $h_{T,-}$ separates $g_1$ and $h_{T,+}$, and ${\bf x}_T = (x_{T,1}, x_{T,2}, x_{T,3}) \in \widetilde{\Delta}^o$ denotes the coherent labeling of the vertices of $T$ with respect to the pair $(g_1,g_2)$, i.e. $x_{T,2}$ is the common endpoint of $h_{T,-}$ and $h_{T,+}$, and $x_{T,1}$ and $x_{T,3}$ are respectively the endpoints of $h_{T,-}$ and $h_{T,+}$ that are not $x_{T,2}$. 
\end{itemize}

We will also use the following simplification of notation.

\begin{notation}\label{not: notations and identities}\
	Given any oriented edge $\mathsf{g}\in\wt\Lambda^o$ any labeling ${\bf y}\in\wt\Delta^o$, any ${\upsilon}\in(\mathbb{R}/2\pi \mathbb{Z})^{\mathcal{A}}$, and any ${\zeta}\in(\mathbb{R}/2\pi \mathbb{Z})^{\mathcal{B}}$, we set
	\[
	c_{\mathsf{g}}(\upsilon):= c_{\xi_0({\mathsf{g}})}(i \upsilon),\quad a_{\bf y}(\zeta):= a_{\xi_0({\bf y})}(i \zeta), \,\,\text{ and }\,\,u_{{\bf y}}(\zeta,\upsilon):=u_{\xi_0({\bf y})}(i \zeta,i \upsilon),
	\] 
	where $c_{\bullet}(\bullet)$, $a_{\bullet}(\bullet)$, and $u_{\bullet}(\bullet,\bullet)$ are the complex shear, complex eruption, and unipotent eruption defined in Section \ref{sec:eruption and shear}.
\end{notation}

If $h$ is a leaf in $Q(g_1,g_2)$, the \emph{right-to-left orientation of $h$ with respect to $(g_1,g_2)$} is the orientation on $h$ that passes from the right to the left of some (any) arc that intersects every leaf of $\widetilde\lambda$ transversely and at most once, and whose backward and forward endpoints lie in $g_1$ and $g_2$ respectively.

For any leaf $h$ of $\widetilde{\lambda}$, let  
\begin{equation}\label{eqn: Nh}
N_h : P(g_{\bf x}^h,h) \to \mathsf{SL}_d(\mathbb{C})
\end{equation}
be the map that assigns to each plaque $T\in P(g_{\bf x}^h,h)$ the unipotent linear map in $\mathsf{SL}_d(\mathbb{C})$ that represents the unipotent eruption 
\begin{align*}
u_T&:=\left\{\begin{array}{ll}
	u_{{\bf x}_T}\big(\eta({\bf x}_T), \beta(T_{\bf x},T)\big)&\text{if }x_{T,3}\prec x_{T,2}\prec x_{T,1},\\
	u_{{\bf x}_T}\big(\eta({\bf x}_T),\wh{\beta(T_{\bf x},T)}\big)&\text{if }x_{T,1}\prec x_{T,2}\prec x_{T,3},
\end{array}\right.\\
&=c_{\mathsf{h}_{T,-}}(\beta(T_{\bf x},T))\,a_{{\bf x}_T}(\eta({\bf x}_T))\,c_{\mathsf{h}_{T,+}}(-\beta(T_{\bf x},T)-\nu({\bf x}_T)),
\end{align*} 
where 
\begin{itemize}
\item ${\bf x}_T=(x_{T,1},x_{T,2},x_{T,3})$ is the coherent labeling of the vertices of $T$ with respect to $(g_{\bf x}^h,h)$, 
\item $\mathsf{h}_{T,-}$ and $\mathsf{h}_{T,+}$ are the two edges of $T$ that separate $g_{\bf x}^h$ and $h$, equipped with the right-to-left orientation with respect to $(g_{\bf x}^h,h)$, and so that $\mathsf{h}_{T,-}$ separates $g_{\bf x}^h$ and $\mathsf{h}_{T,+}$, 
\item $\widehat{\cdot}:\mathbb{R}^{\mathcal{A}}\to\mathbb{R}^{\mathcal{A}}$ is as defined by identity \eqref{eqn: permute 2},
\item $\nu:\wt\Delta^o\to(\mathbb{R}/2\pi\mathbb Z)^{\mathcal{A}}$ is the map defined by
\begin{align}\label{eqn: nu}
	\nu({\bf y}) := 
	\begin{cases}
		s_{\eta({\bf y})} & \text{if $y_{3} \prec y_{2} \prec y_{1}$,} \\
		\widehat{s_{\eta({\bf y})}} & \text{if $y_{1} \prec y_{2} \prec y_{3}$,}
	\end{cases}
\end{align}
and $s_{\eta({\bf y})}$ is as defined by equation \eqref{eqn: sum zeta}.
\end{itemize}
For convenience, we will often abuse notation and also denote 
\[N_h(T):=u_T.\] 
The following proposition, which we prove in Section \ref{subsub:bending productable}, is a key result needed to define the required bending map.

\begin{proposition}\label{prop:bending productable}
The map $N_h$ is H\"older extendable 
for all leaves $h$ of $\wt\lambda$.
\end{proposition}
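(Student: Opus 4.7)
My plan is to establish the productability inequality
\[
\|N_h(T) - \id\| \le A \, d_\infty(x_{T,1}, x_{T,3})^\nu \qquad \text{for every } T \in P(g_{\bf x}^h, h) ,
\]
for constants $A, \nu > 0$ depending on $h, {\bf x}, \rho_0, \xi_0, \beta, \eta$ but independent of $T$.

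The first step exploits compactness. As $T$ ranges over $P(g_{\bf x}^h, h)$, the vertices $x_{T,i}$ all lie in a compact subset of $\partial \wt\lambda$, namely the arc between the endpoints of $g_{\bf x}^h$ and $h$. By local $\lambda$-H\"older continuity of $\xi_0$ (Theorem \ref{thm: Wang}), there exist constants $C, \mu > 0$ such that $d_{\Fc}(\xi_0(y), \xi_0(y')) \le C \, d_\infty(y, y')^\mu$ for every $y, y'$ in this set, and the triples $\xi_0({\bf x}_T)$ therefore vary over a compact family in $\Fc(\Cb^d)^{(3)}$ all satisfying property-$\star$. Combined with the compactness of the parameter tori $(\Rb/2\pi\Zb)^{\Ac}$ and $(\Rb/2\pi\Zb)^{\Bc}$ in which $\beta(T_{\bf x}, T), \eta({\bf x}_T)$ take values, this yields a uniform upper bound on $\|u_T - \id\|$ and reduces the problem to the regime where $d_\infty(x_{T,1}, x_{T,3})$ is small.

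The heart of the argument is to upgrade the uniform bound to a Lipschitz estimate of the form $\|u_T - \id\| = O\bigl( d_\Fc(\xi_0(x_{T,1}), \xi_0(x_{T,3})) \bigr)$ uniformly in $T$. By Proposition \ref{prop: lift}, $u_T$ is characterized as the unique unipotent element of $\SL_d(\Cb)$ that (a) fixes the flag $\xi_0(x_{T,2})$, (b) sends $\xi_0(x_{T,3})^1$ to $c_{(\xi_0(x_{T,2}), \xi_0(x_{T,1}))}(i \beta(T_{\bf x}, T)) \cdot \xi_0(x_{T,3})^1$, and (c) shifts the triangle invariants $\tau^{\bf j}(\xi_0(x_{T,1}), \xi_0(x_{T,2}), \cdot)$ by $i \eta^{\bf j}({\bf x}_T)$. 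The complex shear in (b) fixes the line $\xi_0(x_{T,1})^1$, and since $\xi_0(x_{T,1})^1$ is $O(d_\Fc(\xi_0(x_{T,1}), \xi_0(x_{T,3})))$-close to $\xi_0(x_{T,3})^1$, the displacement of $\xi_0(x_{T,3})^1$ under $u_T$ is of the same order. An implicit-function argument applied to the characterization of Proposition \ref{prop: lift}, with uniformity provided by property-$\star$, then propagates this estimate on the first line to the full unipotent element $u_T$. Chaining the two H\"older inequalities yields the productability bound with $\nu = \mu$.

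The main obstacle will be the implicit-function step: making the Lipschitz bound on $u_T$ uniform over the compact family of triples $\{\xi_0({\bf x}_T)\}_T$. One must control the derivative, at the identity, of the map $u \mapsto \bigl(u \cdot \xi_0(x_{T,3})^1, \, \tau^{\bf j}(\xi_0(x_{T,1}), \xi_0(x_{T,2}), u \cdot \xi_0(x_{T,3}))\bigr)$ restricted to the unipotent subgroup stabilizing $\xi_0(x_{T,2})$, and ensure this control degenerates no faster than $d_\Fc(\xi_0(x_{T,1}), \xi_0(x_{T,3}))$ as the triples $\xi_0({\bf x}_T)$ approach a degenerate configuration. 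Property-$\star$ is precisely what prevents a pathological degeneration of this derivative, but quantifying this in a way that survives the passage to arbitrarily thin plaques $T$ is the key technical point.
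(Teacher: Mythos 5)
Your proposal contains a genuine gap, and it begins already at the compactness step. You claim that "the triples $\xi_0({\bf x}_T)$ therefore vary over a compact family in $\Fc(\Cb^d)^{(3)}$," but this is false: as $T$ ranges over $P(g_{\bf x}^h, h)$ and becomes arbitrarily thin, the flags $\xi_0(x_{T,1})$ and $\xi_0(x_{T,3})$ converge to each other, so the triples escape every compact subset of the space of general-position triples. There is no uniform bound on $\|u_T - \id\|$ available from compactness of the triple family; in fact the content of the proposition is precisely that $\|u_T-\id\|$ tends to $0$ at a H\"older rate, which one would not even know a priori without controlling the degenerating limit. Equally, your implicit-function step is not just technically delicate but unformed: the parameterization of unipotent elements by the pair (image of $F_3^1$, triangle invariants of $(F_1,F_2,u\cdot F_3)$) is only defined away from the diagonal $F_3 = F_1$, and its derivative genuinely degenerates there, so there is no fixed chart in which to invoke the implicit function theorem with uniform control. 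You acknowledge this is the key point, but nothing in the sketch indicates how property-$\star$ would actually be brought to bear to save it.

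The paper's proof handles these issues by a quite different mechanism, which you have not touched. After conjugating by the slithering map $\Sigma^0(g_{\bf x}^h, g_T^{\bf x})$, the first two flags $(F_{T,1},F_{T,2})$ become one of two fixed pairs of transverse flags (by coherence of labelling), so the triples ${\bf F}_T$ lie in the orbit of a single triple under the stabilizer of a flag pair, which acts diagonally in a suitable basis. Crucially, the $\rho_0$-equivariance of $\xi_0$ and $\Gamma$-invariance of $(\beta,\eta)$ reduce the problem, after passing to a subsequence, to triples of the form $B_n\cdot {\bf F}_1$ with $B_n$ diagonal, so that $U_{{\bf F}_n}(\eta_n) = B_n U_{{\bf F}_1}(\eta_1) B_n^{-1}$. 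Property-$\star$ then enters through an explicit linear-algebra lemma (Lemma \ref{lem:some linear algebra}): the nonvanishing of the superdiagonal entries of the unipotent conjugator (Remark \ref{rem: star reformulation}) is exactly what forces the eigenvalue ratios $\mu_{i+1}(B_n)/\mu_i(B_n)$ to decay at a rate comparable to $d_{\Fc}(F_1, F_{n,3})$, both above and below, and this in turn bounds $\|B_n U B_n^{-1} - \id\|$ by the same quantity. This eigenvalue-ratio argument is the concrete substitute for the implicit function step you gesture at; without some version of it, and without using the equivariance to reduce to a single conjugacy class, the uniform estimate does not close.
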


By Proposition \ref{prop:bending productable}, we may then define for any leaf $h$ of $\widetilde\lambda$, the extension 
\[\overrightarrow{N_h}:Q(g_{\bf x}^h,h)\to\mathsf{SL}_d(\mathbb{C})\] 
of the H\"older extendable map $N_h$. 
For convenience, we will often abuse notation and also denote by $\overrightarrow{N_h}$ the map
\[\pi\circ \overrightarrow{N_h}:Q(g_{\bf x}^h,h) \to \mathsf{PGL}_d(\mathbb{C}),\]
where $\pi:\mathsf{SL}_d(\mathbb{C})\to\mathsf{PGL}_d(\mathbb{C})$ is the obvious projection. Then define 
\begin{align}\label{eqn: Psi}
\Psi:\wt\Lambda\to\mathsf{PGL}_d(\mathbb{C}) , \quad \Psi(h):= v({\bf x}, h) \, \overrightarrow{N_h}(g_{\bf x}^h,h) ,
\end{align}
where $v({\bf x},h)$ was given by identity \eqref{eq: definition of v}.

\begin{remark}
In order to justify geometrically the definition of $\Psi$, it is instructive to consider the following (non-generic) example. Assume that:
\begin{itemize}
	\item $h$ is a leaf of $\widetilde{\lambda}$ that is not an edge of $T_{\bf x}$ such that $g_{\bf x}^h = h_{\bf x}$, and
	\item the set $P(g_{\bf x}^h,h)$ is finite. 
\end{itemize}
The first condition implies that $v({\bf x},h)=\id$, so $\Psi(h)=\overrightarrow{N_h}(h_\mathbf x,h)$. The second condition implies that we can enumerate $P(g_{\bf x}^h,h)=\{T_1, T_2, \dots, T_N\}$ according to the order on $P(g_{\bf x}^h,h)$, in which case $\Psi(h)=N_h(T_1)\dots N_h(T_N)$. We would like to understand the reason for this expression.

Set $T_0:= T_{\bf x}$, and observe that for every integer $i \in \{0, \dots, N-1\}$, the plaques $T_i$ and $T_{i + 1}$ share a common edge $g_i$. Set $g_{N} = h$, and for each integer $i\in\{1,\dots,N\}$, denote by $\mathsf{g}_i$ the right-to-left orientation of $g_i$ with respect to the pair $(g_{\bf x}^h, h)$. We now want to modify the limit map $\xi_0$ to build a new $d$--pleated surface $(\rho,\xi)$ with pleating locus $\lambda$  such that 
\[(\alpha_\rho,\theta_\rho)=(\alpha_{\rho_0} + i  \beta, \theta_{\rho_0} + i  \eta).\] 

Following Sun-Wienhard-Zhang \cite{SWZ}, one may expect to be able to ``bend'' the map $\xi_0$ along the edges and plaques using the complex shear and complex eruption. More precisely, the new limit map $\xi$ on the endpoints of the leaves $g_i$ for all integers $i\in\{1,\dots,N\}$ should be given by
\begin{equation}\label{eqn: bending justification}
	\xi(\mathsf{g}_i) = c_{\mathsf{g}_0}(\beta(T_0,T_1)) \, a_{{\bf x}_1}(\eta({\bf x}_1))\dots c_{\mathsf{g}_{i-1}}(\beta(T_{i-1},T_i)) \, a_{{\bf x}_i}(\eta({\bf x}_i)) \cdot \xi_0(\mathsf{g}_i).
\end{equation}
where ${\bf x}_j = (x_{j,1}, x_{j,2}, x_{j,3})$ denotes the coherent labeling of the vertices of $T_j$ with respect to the pair $(g_{\bf x}^h, h)$. One should think of the above formula as applying each of the complex eruptions $a_{{\bf x}_j}(\eta({\bf x}_j))$ and each of the complex shears $c_{\mathsf{g}_{j-1}}(\beta(T_{j-1},T_j))$ to $\xi_0(\mathsf{g}_k)$ for $k\ge j$, while not changing $\xi_0(\mathsf{g}_k)$ for $k<j$.

By the properties of the complex eruption and complex shear discussed in Section~\ref{sec:eruption and shear}, we see
\[\alpha_\rho(T_{i-1},T_i)=\alpha_{\rho_0}(T_{i-1},T_i)+i\beta(T_{i-1},T_i)\,\,\text{ and }\,\,\theta_\rho({\bf x}_i)=\theta_{\rho_0}({\bf x}_i)+i\eta({\bf x}_i)\]
for all integers $i\in\{1,\dots,N\}$.	

On the other hand, since $(\beta,\eta)$ is a $\lambda$--cocyclic pair of dimension $d$, it satisfies
\[
\beta(T_0,T_{j+1}) = \beta(T_0,T_j) + \beta(T_j,T_{j+1}) + \nu({\bf x}_j),
\]
for all integers $j\in\{1,\dots,N-1\}$, where $\nu({\bf x}_j)$ is defined by identity \eqref{eqn: nu}. This identity, combined with Proposition \ref{prop: shear combination} part (3) and the observation that  $c_{\mathsf{g}_i}(-\beta(T_0,T_i)-\nu({\bf x}_{T_i}))$ fixes the flag $\xi_0(\mathsf{g}_i)$, implies that equation \eqref{eqn: bending justification} can be written as 
\[\xi(\mathsf{g}_i)=N_h(T_1)\dots N_h(T_i)\cdot \xi_0(\mathsf{g}_i),\]
where recall that (with the usual abuse of notation)
\[N_h(T_j)=c_{\mathsf{g}_{j-1}}(\beta(T_0,T_j))\, a_{{\bf x}_j}(\eta({\bf x}_j))\,c_{\mathsf{g}_j}(-\beta(T_0,T_j)-\nu({\bf x}_{j})).\]
This rearrangement of the right-hand side of equation \eqref{eqn: bending justification} has the following convenience: each $N_h(T_j)$ depends only on the flags $\xi_0$ evaluated on the vertices of $T_j$, the values of $\eta({\bf x}_j)$, and the values $\beta(T_0,T_j)$. In particular, the definition of each term appearing in the product on the right-hand side can be generalized to any plaque that separates $g_{\bf x}^h$ and $h$, even without the assumption that the set $P(g_{\bf x}^h,h)$ is finite.
\end{remark}

To finish the proof of Theorem \ref{thm_exist-psi}, it now suffices to verify that the map $\Psi$ defined above is indeed a $(\beta, \eta)$--bending map for $(\rho_0,\xi_0)$ with base ${\bf x}$, and prove Proposition \ref{prop:bending productable}. These will be done in Sections \ref{sec: psi is bending map} and \ref{subsub:bending productable} respectively.

\subsubsection{$\Psi$ is a bending map}\label{sec: psi is bending map}
We will verify that the map $\Psi$ defined in equation \eqref{eqn: Psi} satisfies every property appearing in Definition \ref{def: bending map}.

We will need the following observation. Let $h$ be a leaf of $\wt\lambda$. Observe that if $h'\in Q(g_{\bf x}^h,h)$, then $g_{\bf x}^{h'}=g_{\bf x}^h$, so $v({\bf x},h)=v({\bf x},h')$. Furthermore, $\overrightarrow{N_h}(h_1,h_2)=\overrightarrow{N_{h'}}(h_1,h_2)$ for all leaves $h_1,h_2\in Q(g_{\bf x}^h,h')$, see Remark \ref{rem: obvious}. Thus, if we further assume that $h$ and $h'$ are the distinct edges of a plaque $T\neq T_{\bf x}$ of $\wt\lambda$, then by Proposition~\ref{prop: productable fixed} part (2), we have
\begin{equation}\label{eq: bending map on triangles}
\Psi(h) = v({\bf x}, h) \, \overrightarrow{N_h}(g_{\bf x}^h,h)=v({\bf x}, h') \, \overrightarrow{N_{h'}}(g_{\bf x}^h,h')\,\overrightarrow{N_h}(h',h)=\Psi(h') \, N_h(T).
\end{equation}

\noindent \textbf{Property (i).} We need to show that if $h_1$ and $h_2$ are leaves of $\wt\lambda$ that share an endpoint $z \in \partial \widetilde{\lambda}$, then $\Psi(h_1)^{-1} \Psi(h_2)$ fixes the flag $\xi_0(z)$. The proof will proceed by considering the following three possible cases separately:
\begin{itemize}
\item[Case 1:] $g_{\bf x}^{h_1}=g_{\bf x}^{h_2}$, and $h_1\in Q(g_{\bf x}^{h_1},h_2)$ up to switching the roles of $h_1$ and $h_2$,
\item[Case 2:] $g_{\bf x}^{h_1}=g_{\bf x}^{h_2}$, $h_1\notin Q(g_{\bf x}^{h_1},h_2)$, and $h_2\notin Q(g_{\bf x}^{h_1},h_1)$,
\item[Case 3:] $g_{\bf x}^{h_1}\neq g_{\bf x}^{h_2}$.
\end{itemize}

{\it Case 1.} Since $h_1\in Q(g_{\bf x}^{h_1},h_2)$, every plaque $T$ that separates $h_1$ and $h_2$ has $z$ as a common vertex. Hence, by Proposition \ref{prop: lift} part (1), $N_h(T) \cdot \xi_0(z) = \xi_0(z)$ for every plaque $T \in P(h_1,h_2)$, so Proposition \ref{prop: productable fixed} part (3) implies that $\overrightarrow{N_{h_2}}(h_1,h_2)$ fixes the flag $\xi_0(z)$.  At the same time, Proposition \ref{prop: productable fixed} part (2) implies that
\begin{align*}
\Psi(h_1)^{-1} \Psi(h_2)&= \overrightarrow{N_{h_1}}(g_{\bf x}^{h_1},h_1)^{-1}v({\bf x}, h_1)^{-1}v({\bf x}, h_2)\,\overrightarrow{N_{h_2}}(g_{\bf x}^{h_2},h_2), \\
&= \overrightarrow{N_{h_1}}(g_{\bf x}^{h_1},h_1)^{-1}\overrightarrow{N_{h_2}}(g_{\bf x}^{h_2},h_1)\,  \overrightarrow{N_{h_2}}(h_1,h_2)\\
&=\overrightarrow{N_{h_2}}(h_1,h_2),
\end{align*}
so $\Psi(h_1)^{-1} \Psi(h_2)$ fixes $\xi_0(z)$. 

{\it Case 2.} By Proposition~\ref{prop: 3 leaves}, there exists a plaque $T$ that separates $g_{\bf x}^{h_1}$ from $h_1$, separates $g_{\bf x}^{h_2}$ from $h_2$, and separates $h_1$ from $h_2$. See Figure \ref{fig:case 2 bending}. Note that $g_T^{h_1}$, $g_T^{h_2}$, and $g_T^{\bf x}$ are the three edges of $T$, so equation \eqref{eq: bending map on triangles} implies that $\Psi(g_T^{h_i})=\Psi(g_T^{\bf x})\, N_{h_i}(T)$ for both $i=1,2$. It follows that
\begin{align*}
\Psi(h_1)^{-1} \Psi(h_2) & = \Psi(h_1)^{-1} \Psi(g_T^{h_1})\, \Psi(g_T^{h_1})^{-1} \Psi(g_T^{h_2}) \,\Psi(g_T^{h_2})^{-1} \Psi(h_2) \\
& = \Psi(h_1)^{-1} \Psi(g_T^{h_1})\, N_{h_1}(T)^{-1} N_{h_2}(T)\, \Psi(g_T^{h_2})^{-1} \Psi(h_2).
\end{align*}
For both $i = 1, 2$, $z$ is the common endpoint of $g_T^{h_i}$ and $h_i$, and $g_T^{h_i} \in Q(g_{\bf x}^{h_i},h_i)$, so Case 1 implies that $\Psi(h_i)^{-1}\Psi(g_T^{h_i})$ fixes the flag $\xi_0(z)$. Thus, it suffices to show that $N_{h_1}(T)^{-1} N_{h_2}(T)$ fixes the flag $\xi_0(z)$. 
\begin{figure}[h!]
\includegraphics[width=10cm]{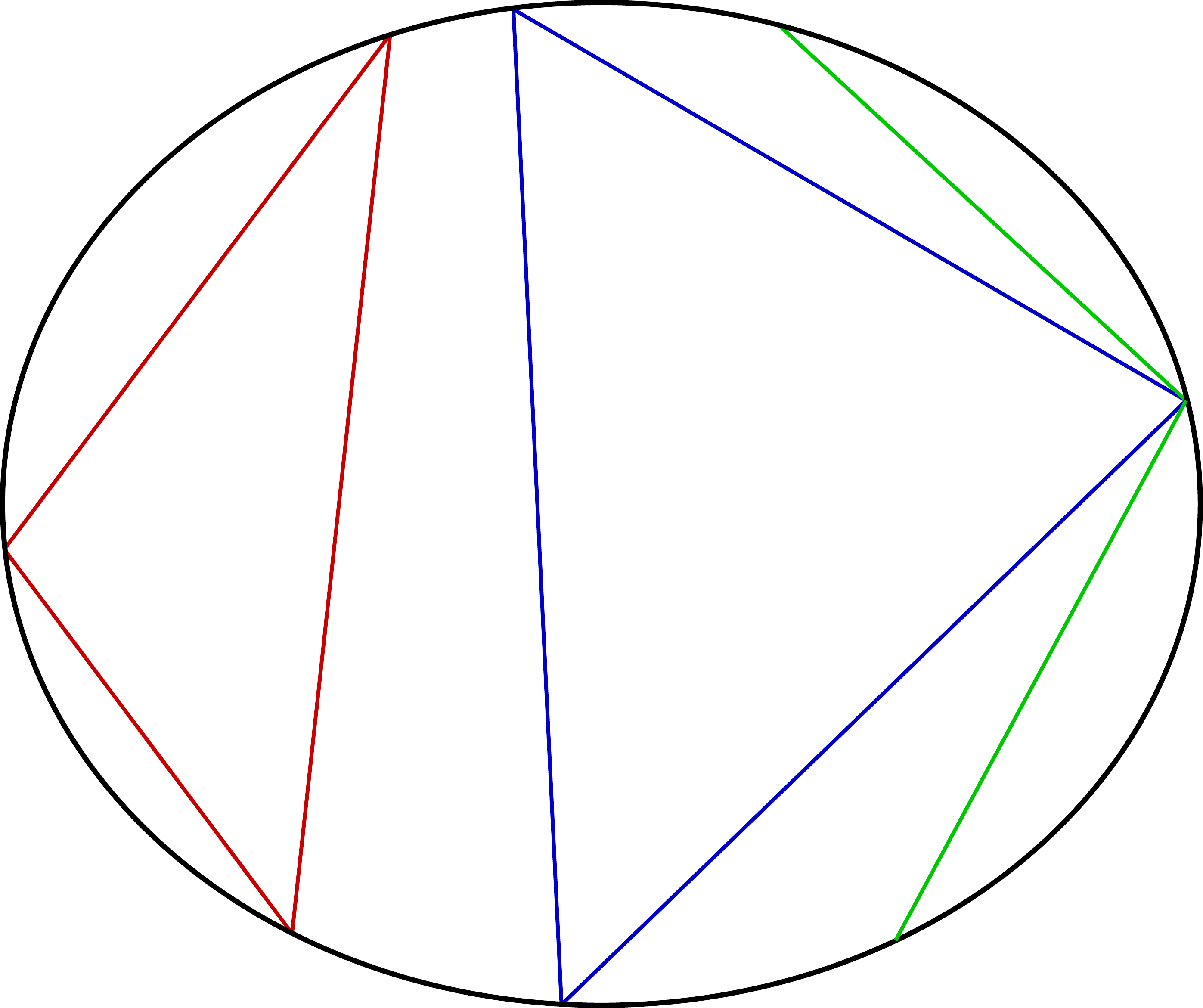}
\put(-50,187){\small $h_1$}
\put(-40,70){\small $h_2$}
\put(-100,130){\small $T$}
\put(-2,141){\small $z$}
\put(-83,175){\small $g_T^{h_1}$}
\put(-220,95){\small $g_{\bf x}^{h_1}$}
\put(-155,105){\small $g^{\bf x}_{T}$}
\put(-240,120){\small $T_{\bf x}$}
\put(-99,63){\small $g_{T}^{h_2}$}
\caption{\small The setup for Case 2 in the proof of the first property of the bending map.}
\label{fig:case 2 bending}
\end{figure}

For both $i=1,2$, let $\mathsf{g}_i$ be the leaf $g_T^{h_i}$ equipped with the right-to-left orientation with respect to the pair of leaves $(g_{\bf x}^{h_i},h_i)$, and let $\mathsf{g}$ be the leaf $g_T^{\bf x}$ equipped with the right-to-left orientation with respect to both pairs $(g_{\bf x}^{h_1},h_1)$ and $(g_{\bf x}^{h_2},h_2)$. Let ${\bf z}=(z_1,z_2,z_3)$ be the coherent labeling of the vertices of $T$ with respect to $(h_1,h_2)$. Then observe that $z_2=z$, ${\bf z}_-=(z_3,z_1,z_2)$ is the coherent labeling of the vertices of $T$ with respect to $(g_{\bf x}^{h_1},h_1)$, and $\wh{{\bf z}_-}=(z_1,z_3,z_2)$ is the coherent labeling of the vertices of $T$ with respect to $(g_{\bf x}^{h_2},h_2)$. By Proposition \ref{prop: eruption combination} part (3),
\begin{align*}
a_{{\bf z}_-}(\eta({\bf z}_-))^{-1}a_{\wh{{\bf z}_-}}(\eta(\wh{{\bf z}_-}))\cdot\xi(z)=a_{{\bf z}_-}(\eta({\bf z}_-))^{-1} a_{{\bf z}_-}(\eta({\bf z}_-))\cdot\xi(z)=\xi(z),
\end{align*}
and, since $z$ is an endpoint of both $\mathsf{g}_1$ and $\mathsf{g}_2$, the transformations $c_{\mathsf{g}_1}(-\beta(T_{\bf x},T)-\nu({\bf z}_-))$ and $c_{\mathsf{g}_2}(-\beta(T_{\bf x},T)-\nu(\wh{{\bf z}_-}))$ both fix the flag $\xi(z)$ as well ($\nu$ was defined by identity \eqref{eqn: nu}). By definition,
\begin{align*}
&\quad N_{h_1}(T)^{-1} N_{h_2}(T)\\
& = c_{\mathsf{g}_1}(-\beta(T_{\bf x},T)-\nu({\bf z}_-))^{-1}a_{{\bf z}_-}(\eta({\bf z}_-))^{-1}a_{\wh{{\bf z}_-}}(\eta(\wh{{\bf z}_-}))\,c_{\mathsf{g}_2}(-\beta(T_{\bf x},T)-\nu(\wh{{\bf z}_-})),
\end{align*}
so it follows that $N_{h_1}(T)^{-1} N_{h_2}(T)$ fixes the flag $\xi_0(z)$.

{\it Case 3.} Observe that for both $i=1,2$, every plaque $T$ that separates $g_{\bf x}^{h_i}$ and $h_i$ has $z$ as a common vertex. Hence, by Proposition \ref{prop: lift} part (1), $N_h(T) \cdot \xi_0(z) = \xi_0(z)$ for every plaque $T \in P(g_{\bf x}^{h_i},h_i)$, so Proposition \ref{prop: productable fixed} part (3) implies that $\overrightarrow{N_{h_i}}(g_{\bf x}^{h_i},h_i)$ fixes the flag $\xi_0(z)$. Since
\[\Psi(h_1)^{-1} \Psi(h_2)= \overrightarrow{N_{h_1}}(g_{\bf x}^{h_1},h_1)^{-1}v({\bf x}, h_1)^{-1}v({\bf x}, h_2)\,\overrightarrow{N_{h_2}}(g_{\bf x}^{h_2},h_2),\]
it suffices to show that $v({\bf x}, h_1)^{-1}v({\bf x}, h_2)$ fixes the flag $\xi_0(z)$. Since $v({\bf x}, h_{{\bf x}})=\id$, this is equivalent to verifying the identities
\begin{align*}
&v({\bf x}, h_{{\bf x}_-}) \cdot \xi_0(x_1) = \xi_0(x_1)\quad\text{if }z=x_1,\\
&v({\bf x}, h_{{\bf x}_+}) \cdot \xi_0(x_2) = \xi_0(x_2)\quad\text{if }z=x_2, \,\,\text{ and }\\
&v({\bf x}, h_{{\bf x}_-}) \cdot \xi_0(x_3) = v({\bf x}, h_{{\bf x}_+}) \cdot \xi_0(x_3)\quad\text{if }z=x_3.
\end{align*}
The first two identities follow from Proposition \ref{prop: eruption combination} part (1), while the third is implied by Proposition \ref{prop: eruption combination} part (3).\\

\noindent \textbf{Property (ii).}
Pick a leaf $\ell_0 \in \widetilde{\Lambda}$, and observe that there is some leaf $\ell\in\wt\Lambda$ such that the subset of leaves $Q(g_{\bf x}^{\ell},\ell)$ contains an open neighborhood of $\ell_0$ in $\wt \Lambda$. Thus, to prove Property (ii), it suffices to show that there are constants $A,\nu>0$ such that
\begin{align*}
d_{\mathcal{F}}\left(\xi(\mathsf{h}_1), \xi(\mathsf{h}_2)\right)  \leq A \, d_\infty(\mathsf{h}_1, \mathsf{h}_2)^{\nu}
\end{align*}
for all leaves $\mathsf{h}_1$ and $\mathsf{h}_2$ that are oriented in parallel and separate $g_{\bf x}^{\ell}$ and $\ell$.

First, we show that $\Psi|_{Q(g_{\bf x}^{\ell},\ell)}$ is H\"older continuous. Let $\norm{\cdot}$ denote the operator norm on $\End(\mathbb{C}^d)$ with respect to the standard Hermitian inner product on $\mathbb{C}^d$.
\begin{lemma}\label{lem: Holder Psi}
There are constants $A',\nu'>0$ such that 
\[\norm{\Psi(h_1)-\Psi(h_2)}\leq A' d_\infty(h_1,h_2)^{\nu'}\]
for all leaves $h_1,h_2\in Q(g_{\bf x}^{\ell},\ell)$.
\end{lemma}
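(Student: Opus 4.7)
The plan is to reduce the claim directly to the general machinery from Section \ref{sec:productable} applied to the productable map $N_\ell$ whose productability is asserted in Proposition \ref{prop:bending productable}. The first observation is that for every $h \in Q(g_{\bf x}^{\ell},\ell)$ one has $g_{\bf x}^h = g_{\bf x}^{\ell}$, since any edge of $T_{\bf x}$ that separates $T_{\bf x}$ from $h$ must coincide with the edge that separates $T_{\bf x}$ from $\ell$ (as $h$ lies between $g_{\bf x}^{\ell}$ and $\ell$). Consequently, the factor $v({\bf x},h)$ in the definition of $\Psi(h)$ is constant on $Q(g_{\bf x}^{\ell},\ell)$ and equals $v({\bf x},\ell)$. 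In particular, writing $V:=v({\bf x},\ell)$, we have
\[
\Psi(h_1)-\Psi(h_2) \;=\; V \cdot \bigl( \overrightarrow{N_{h_1}}(g_{\bf x}^{\ell},h_1) - \overrightarrow{N_{h_2}}(g_{\bf x}^{\ell},h_2) \bigr)
\]
for all $h_1,h_2 \in Q(g_{\bf x}^{\ell},\ell)$.

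Next, the definition of $N_h$ in \eqref{eqn: Nh} is intrinsic to the plaque $T$ and the ambient cocycle $(\beta,\eta)$, so the map $N_\ell$ restricts on $P(g_{\bf x}^{\ell},h_i) \subseteq P(g_{\bf x}^{\ell},\ell)$ to the map $N_{h_i}$ for $i=1,2$. Combining this with Remark \ref{rem: obvious}, we conclude
\[
\overrightarrow{N_{h_i}}(g_{\bf x}^{\ell},h_i) \;=\; \overrightarrow{N_{\ell}}(g_{\bf x}^{\ell},h_i), \qquad i=1,2,
\]
so that
\[
\norm{\Psi(h_1)-\Psi(h_2)} \;\leq\; \norm{V}\cdot \bigl\|\overrightarrow{N_{\ell}}(g_{\bf x}^{\ell},h_1)-\overrightarrow{N_{\ell}}(g_{\bf x}^{\ell},h_2)\bigr\|.
\]

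By Proposition \ref{prop:bending productable}, $N_{\ell}\co P(g_{\bf x}^{\ell},\ell)\to\SL_d(\Cb)$ is productable, so Proposition \ref{prop: Holder productable} furnishes constants $A'',\nu''>0$ such that
\[
\bigl\|\overrightarrow{N_{\ell}}(h_1',h_2')-\overrightarrow{N_{\ell}}(\ell_1',\ell_2')\bigr\| \;\leq\; A''\,d_\infty\bigl((h_1',h_2'),(\ell_1',\ell_2')\bigr)^{\nu''}
\]
for all $h_1',h_2',\ell_1',\ell_2' \in Q(g_{\bf x}^{\ell},\ell)$. Specializing to $(h_1',h_2')=(g_{\bf x}^{\ell},h_1)$ and $(\ell_1',\ell_2')=(g_{\bf x}^{\ell},h_2)$ the right-hand side collapses to $A''\,d_\infty(h_1,h_2)^{\nu''}$, and the desired inequality follows by setting $A':=\norm{V}\,A''$ and $\nu':=\nu''$. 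The one point deserving a brief check is that Remark \ref{rem: obvious} applies in the mildly degenerate case where $h_i$ happens to equal $g_{\bf x}^{\ell}$ (so that $P(g_{\bf x}^{\ell},h_i)=\emptyset$); but then $\overrightarrow{N_{h_i}}(g_{\bf x}^{\ell},h_i)=\id=\overrightarrow{N_{\ell}}(g_{\bf x}^{\ell},h_i)$ by Proposition \ref{prop: productable fixed}(1), so no obstruction arises. The main content of the argument is therefore hidden in Proposition \ref{prop:bending productable}, whose proof will be carried out separately; granted productability, the H\"older estimate is essentially immediate from the general theory.
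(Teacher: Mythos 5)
Your proposal is correct and follows essentially the same route as the paper's proof: identify that $v({\bf x},h)=v({\bf x},\ell)$ is constant on $Q(g_{\bf x}^{\ell},\ell)$, reduce to the envelope $\overrightarrow{N_{\ell}}$, and then invoke the H\"older continuity of envelopes of productable maps (Proposition \ref{prop: Holder productable}) together with the productability of $N_\ell$ (Proposition \ref{prop:bending productable}). The only difference is that you spell out the intermediate identifications a bit more explicitly, including the degenerate case, which the paper elides.
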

\begin{proof} Note that for all leaves $h\in Q(g_{\bf x}^{\ell},\ell)$, 
\[\Psi(h)=	v({\bf x},h)\,\overrightarrow{N_{h}}(g_{\bf x}^{h},h)=v({\bf x},\ell)\,\overrightarrow{N_{\ell}}(g_{\bf x}^{\ell},h).\]
By the H\"older continuity of extensions of H\"older extendable maps, see Proposition \ref{prop: Holder productable}, there are constants $A'',\nu'>0$ such that
\[\norm{\overrightarrow{N_{\ell}}(g_{\bf x}^{\ell},h_1)-\overrightarrow{N_{\ell}}(g_{\bf x}^{\ell},h_2)}\leq A'' d_\infty(h_1,h_2)^{\nu'}\]
for all $h_1,h_2\in Q(g_{\bf x}^{\ell},\ell)$. The lemma now follows by setting $A':= A''\norm{v({\bf x},\ell)}$.
\end{proof}

Let $A',\nu'>0$ be the constants given by Lemma \ref{lem: Holder Psi}. Since $Q(g_{\bf x}^{\ell},\ell)$ is compact, the continuity of $\Psi$ implies that 
$$K :=\{ \Psi(g) \mid g \in Q(g_{\bf x}^{\ell},\ell)\} \subset \mathsf{PGL}_d(\mathbb{C})$$
is compact. Since $\mathsf{SL}_d(\mathbb{C})$ acts smoothly on the compact flag variety $\mathcal{F}(\mathbb{C}^d)$, it follows that the map 
\[G\co K \times \mathcal{F}(\mathbb{C}^d) \to \mathcal{F}(\mathbb{C}^d)\]
given by $G(H,F)=H\cdot F$ is Lipschitz in both entries. Thus, by enlarging $A'$ if necessary, we may assume that
\[d_{\mathcal{F}}\left(\Psi(h_1)\cdot F_1, \Psi(h_1)\cdot F_2\right)\leq A' \, d_{\mathcal{F}}\left(F_1, F_2\right)\]
and
\[d_{\mathcal{F}}\left(\Psi(h_1)\cdot F_1, \Psi(h_2)\cdot F_1\right)\leq A' \, \norm{\Psi(h_1)-\Psi(h_2)}.\]
for all leaves $h_1, h_2 \in Q(g_{\bf x}^{\ell},\ell)$ and all flags $F_1, F_2 \in \mathcal{F}(\mathbb{C}^d)$.

Recall from Theorem \ref{thm: Wang} part (1) that $\xi_0$ is locally $\lambda$--H\"older continuous, see Definition \ref{def: lambda continuous}. Since $Q(g_{\bf x}^{\ell},\ell)$ is compact, by further enlarging $A'$ and shrinking $\nu'$ if necessary, we may assume that 
\[
d_{\mathcal{F}}(\xi_0(\mathsf{h}_1), \xi_0(\mathsf{h}_2)) \leq A' \, d_\infty(\mathsf{h}_1,\mathsf{h}_2)^{\nu'}
\]
for all leaves $\mathsf{h}_1$ and $\mathsf{h}_2$ that are oriented in parallel and separate $g_{\bf x}^{\ell}$ and $\ell$. Then by Lemma~\ref{lem: Holder Psi},
\begin{align*}
d_{\mathcal{F}}\left(\xi(\mathsf{h}_1), \xi(\mathsf{h}_2)\right) &= d_{\mathcal{F}}\left(\Psi(h_1)\cdot \xi_0(\mathsf{h}_1), \Psi(h_2)\cdot \xi_0(\mathsf{h}_2)\right) \\
&\leq d_{\mathcal{F}}\left(\Psi(h_1)\cdot \xi_0(\mathsf{h}_1), \Psi(h_1)\cdot \xi_0(\mathsf{h}_2)\right) + d_{\mathcal{F}}\left(\Psi(h_1) \cdot \xi_0(\mathsf{h}_2), \Psi(h_2) \cdot \xi_0(\mathsf{h}_2)\right)\\
& \leq A' \, d_{\mathcal{F}}(\xi_0(\mathsf{h}_1), \xi_0(\mathsf{h}_2)) + A' \, \| \Psi(h_1) - \Psi(h_2)\|  \\
& \leq A \, d_\infty(\mathsf{h}_1, \mathsf{h}_2)^{\nu},
\end{align*}
for some $A>0$, where for both $i=1,2$, $h_i$ is the oriented leaf $\mathsf{h}_i$ without its orientation and $\nu:=\nu'$. \\

\noindent \textbf{Property (iii).} This is obvious if $h=g_{\bf x}^h$ (i.e. if $h$ is an edge of $T_{\bf x}$), so we may assume that $h\neq g_{\bf x}^h$. Then define the map
\[M:P(g_{\bf x}^h,h)\to\mathsf{SL}_d(\mathbb{C})\]
by 
\[M(T):= \overrightarrow{N_h}(g_{\bf x}^h,g_T^{\bf x})\,\Sigma^0(g_T^{\bf x},g_T^h)\,\overrightarrow{N_h}(g_{\bf x}^h,g_T^h)^{-1}=\overrightarrow{N_h}(g_{\bf x}^h,g_T^{\bf x})\,\Sigma^0(g_T^{\bf x},g_T^h)\,\overrightarrow{N_h}(g_T^h,g_{\bf x}^h).\] 
Proposition \ref{prop: renormalization} implies that $M$ is H\"older extendable, and its extension $\overrightarrow{M}$ satisfies
\begin{align}\label{eqn: 69}
\overrightarrow{M}(g_{\bf x}^h,h)=\Sigma^0(g_{\bf x}^h,h)\,\overrightarrow{N_h}(h,g_{\bf x}^h)=\Sigma^0(g_{\bf x}^h,h)\,\overrightarrow{N_h}(g_{\bf x}^h,h)^{-1}.
\end{align}

Let $T\in P(g_{\bf x}^h,h)$ be a plaque, and let ${\bf x}_T=(x_{T,1},x_{T,2},x_{T,3})$ be the coherent labeling of the vertices of $T$ with respect to $(g_{\bf x}^h,h)$. Observe that 
\[v({\bf x},h)\,M(T)\,v({\bf x},h)^{-1}=\Psi(g_T^{\bf x})\,\Sigma^0(g_T^{\bf x},g_T^h)\,\Psi(g_T^h)^{-1},\]
so $v({\bf x},h)\,M(T)\,v({\bf x},h)^{-1}$ fixes the flag $\xi(x_{T,2})$ and sends the flag $\xi(x_{T,3})$ to the flag $\xi(x_{T,1})$. Also, by equation \eqref{eq: bending map on triangles},
\begin{align*}
v({\bf x},h)\,M(T)\,v({\bf x},h)^{-1}=\Psi(g_T^{\bf x})\,\Sigma^0(g_T^{\bf x},g_T^h)\,N_h(T)^{-1}\Psi(g_T^{\bf x})^{-1}.
\end{align*}
Since $\Sigma^0(g_T^{\bf x},g_T^h)$ and $N_h(T)$ are both unipotent and fix the flag $\xi_0(x_{T,2})$, it follows that $v({\bf x},h)\,M(T)\,v({\bf x},h)^{-1}$ is unipotent, so
\[\Sigma(g_T^{\bf x},g_T^h)=v({\bf x},h)\,M(T)\,v({\bf x},h)^{-1}\]
because $\Sigma$ is the slithering map associated to $\xi$. By taking the extensions on both sides and applying equation \eqref{eqn: 69}, we have
\begin{align*}
\Sigma(g_{\bf x}^h,h)&=v({\bf x},h)\,\overrightarrow{M}(g_{\bf x}^h,h)\,v({\bf x},h)^{-1}\\
&=v({\bf x},h)\,\Sigma^0(g_{\bf x}^h,h)\,\overrightarrow{N_h}(g_{\bf x}^h,h)^{-1}v({\bf x},h)^{-1}\\
&=v({\bf x},h)\,\Sigma^0(g_{\bf x}^h,h)\,\Psi(h)^{-1},
\end{align*}
so Property (iii) holds.\\

\noindent \textbf{Property (iv).} This is immediate from Proposition \ref{prop: eruption combination} part (1). \\

\noindent \textbf{Property (v).} By the symmetries of the triple ratio, it suffices to verify this property for the labeling ${\bf y}=(y_1,y_2,y_3) \in \wt{\Delta}^o$ such that $y_1\prec y_2\prec y_3$, and the leaf $g_{\bf y}^{\bf x}$ has  $y_1$ and $y_2$ as its endpoints. If $h$ is the edge of $T_{\bf y}$ with endpoints $y_2, y_3$, then identity \eqref{eq: bending map on triangles} implies that
\begin{align*}
(\xi(y_1), \xi(y_2), \xi(y_3)) & = \Psi(g_{\bf y}^{\bf x}) \cdot (\xi_0(y_1), \xi_0(y_2), N_{h}(T_{\bf y}) \cdot \xi_0(y_3)).
\end{align*}
Then by the projective invariance of the triple ratios and Proposition \ref{prop: lift} part (1),
\begin{align*}
\theta_\xi^{\bf j}({\bf y})&=\tau^{\bf j}(\xi(y_1), \xi(y_2), \xi(y_3))\\
& = \tau^{\bf j}(\xi_0(y_1), \xi_0(y_2), N_{h}(T)\cdot \xi_0(y_3)) \\
& = \tau^{\bf j}(\xi_0(y_1), \xi_0(y_2), \xi_0(y_3)) + i \eta^{\bf j}({\bf y}) \\
& = \theta^{\bf j}_{\rho_0}({\bf y}) + i \eta^{\bf j}({\bf y}).
\end{align*}

\noindent \textbf{Property (vi).} The proof of Property (vi) proceeds in the following four cases:
\begin{itemize}
\item[Case 1:] $T_1=T_{\bf x}$,
\item[Case 2:] $T_2=T_{\bf x}$,
\item[Case 3:] One of $T_1$, $T_2$, and $T_{\bf x}$ separate the other two,
\item[Case 4:] $T_1$, $T_2$, $T_{\bf x}$ are pairwise distinct and none of them separate the other two.
\end{itemize}

\smallskip

\noindent {\it Case 1.} Fix ${\bf i}\in\mathcal{A}$, and let $h$ be an edge of the plaque $T:=T_2$ that is not $g_T^{\bf x}$, and let $z$ be the vertex of the plaque $T$ that is not an endpoint of the edge $g_T^{\bf x}$. Then 
\begin{align}\label{eqn: vi1}
\Sigma(g_{\bf x}^T, g_T^{\bf x})\cdot\xi(z)&= v({\bf x},h)\,\Sigma^0(g_{\bf x}^T, g_T^{\bf x})\,\Psi(g_T^{\bf x})^{-1}\cdot\xi(z)\nonumber\\
& = v({\bf x},h)\,\Sigma^0(g_{\bf x}^T, g_T^{\bf x})\,\Psi(g_T^{\bf x})^{-1}\Psi(h)\cdot\xi_0(z)\\
&= v({\bf x},h)\,\Sigma^0(g_{\bf x}^T, g_T^{\bf x})\,N_h(T) \cdot\xi_0(z)\nonumber
\end{align}
where the first equality follows from Property (iii), the second equality is the observation that $\xi(z)  = \Psi(h) \cdot \xi_0(z)$ because $z$ is an endpoint of $h$, and the third equality follows from equation \eqref{eq: bending map on triangles}.

Let $\mathsf{g}_T^{\bf x}$ and $\mathsf{g}_{\bf x}^T$ be respectively the leaves $g_T^{\bf x}$ and $g_{\bf x}^T$ equipped with the right-to-left orientation with respect to $(g_{\bf x}^h,h)$, let $z_1$ and $z_2$ be the backward and forward endpoints of $\mathsf{g}_T^{\bf x}$ respectively, let $y_1$ and $y_2$ be the backward and forward endpoints of $\mathsf{g}_{\bf x}^T$ respectively, 
and let $y_3$ be the vertex of $T_{\bf x}$ that is not an endpoint of $g_{\bf x}^T$. Observe from the definition of $N_h(T)$ that
\[N_h(T)\cdot\xi_0(z)^1=c_{\mathsf{g}_T^{\bf x}}(\beta(T_{\bf x},T))\cdot\xi_0(z)^1.\]
Since the double ratio $\sigma^{\bf i}(F_1,F_2,F_3,F_4)$ is a projective invariant that only depends on the flags $F_1$ and $F_2$ and the lines $F_3^1$ and $F_4^1$, identity \eqref{eqn: vi1} implies that
\begin{align*}
\alpha^{\bf i}_{\xi}(T_{\bf x},T)& = \sigma^{\bf i}\left(\xi(y_2),\xi(y_1),\xi(y_3), v({\bf x},h)\,\Sigma^0(g_{\bf x}^T, g_T^{\bf x})\,N_h(T) \cdot \xi_0(z)\right)\\
& = \sigma^{\bf i}\left(\xi_0(y_2),\xi_0(y_1),\xi_0(y_3), \Sigma^0(g_{\bf x}^T, g_T^{\bf x})\,c_{\mathsf{g}_T^{\bf x}}(\beta(T_{\bf x},T)) \cdot \xi_0(z)\right)\\
& = \sigma^{\bf i}\left(\xi_0(z_2),\xi_0(z_1),\Sigma^0(g_T^{\bf x},g_{\bf x}^T)\cdot\xi_0(y_3), c_{\mathsf{g}_T^{\bf x}}(\beta(T_{\bf x},T))\cdot \xi_0(z)\right)\\
& = \sigma^{\bf i}\left(\xi_0(z_2),\xi_0(z_1),\Sigma^0(g_T^{\bf x},g_{\bf x}^T)\cdot\xi_0(y_3), \xi_0(z)\right)+i\beta^{\bf i}(T_{\bf x},T)\\
& = \sigma^{\bf i}\left(\xi_0(y_2),\xi_0(y_1),\xi_0(y_3), \Sigma^0(g_{\bf x}^T,g_T^{\bf x})\cdot\xi_0(z)\right)+i\beta^{\bf i}(T_{\bf x},T)\\
&= \alpha_{\rho_0}^{\bf i}(T_{\bf x},T)+i\beta^{\bf i}(T_{\bf x},T),
\end{align*}
where the fourth equality is a consequence of Part (1) of Proposition \ref{prop: shear combination}. This proves Property (vi) in the case when $T_1=T_{\bf x}$.

\smallskip

\noindent {\it Case 2.} By Part (1) of Proposition~\ref{prop: shear bend map} and Case 1,
\[\alpha_{\xi}(T_1,T_2)=-\wh{\alpha_{\xi}(T_{\bf x},T_1)}=-\wh{\alpha_{\rho_0}(T_{\bf x},T_1)}-i\wh{\beta(T_{\bf x},T_1)}=\alpha_{\rho_0}(T_1,T_2)+i\beta(T_1,T_2),\]
where $\wh{\alpha_{\xi}(T_{\bf x},T_1)}$, $\wh{\alpha_{\rho_0}(T_{\bf x},T_1)}$, and $\wh{\beta(T_{\bf x},T_1)}$ are defined by equation \eqref{eqn: permute 2}. 

\smallskip

\noindent {\it Case 3.} We will give the proof only in the situation when the plaque $T_{\bf x}$ separates the plaques $T_1$ and $T_2$; the other two possibilities are similar. 

For any $\lambda$--cocyclic pair $(\alpha,\theta)\in\mathcal{Y}_d(\lambda;G)$ ($G$ is any Abelian group) and any labeling ${\bf x}\in\widetilde\Delta^o$, we set $s_{\theta({\bf x})} = (s_{\theta({\bf x})}^{\bf i})_{{\bf i}\in \mathcal{A}}$, where
\[s_{\theta({\bf x})}^{\bf i} :=\left\{\begin{array}{ll}
\displaystyle\sum_{{\bf j} \in \mathcal{B} \mid j_2 = i_1} \theta({\bf x},{\bf j})&\text{if }x_3'\prec x_2'\prec x_1',\\
\displaystyle-\sum_{{\bf j} \in \mathcal{B} \mid j_2 = i_2} \theta({\bf x},{\bf j})&\text{if }x_1'\prec x_2'\prec x_3'.
\end{array}
\right. , \]
for any ${\bf i }=(i_1,i_2) \in \mathcal{A}$. Let ${\bf x}'=(x_1',x_2',x_3')$ be the coherent labeling of the vertices of $T_{\bf x}$ with respect to $(g_{T_1}^{\bf x},g_{T_2}^{\bf x})$. 
Then by Part (3) of Proposition \ref{prop: shear bend map}, the previous two cases, and Property (v),
\begin{align*}
\alpha_\xi(T_1,T_2)&=\alpha_{\xi}(T_1,T_{\bf x})+\alpha_{\xi}(T_{\bf x},T_2)+ s_{\theta_{\xi}({\bf x}')}\\
&=\alpha_{\rho_0}(T_1,T_{\bf x})+i\beta(T_1,T_{\bf x})+\alpha_{\rho_0}(T_{\bf x},T_2)+i\beta(T_{\bf x},T_2)+ s_{\theta_{\rho_0}({\bf x}')}+ i s_{\eta(\mathbf{x'})}\\
&=\alpha_{\rho_0}(T_1,T_2)+i\beta(T_1,T_2).
\end{align*}

\smallskip

\noindent {\it Case 4.} Proposition \ref{prop: 3 leaves} implies that there is some plaque $T$ that separates every pair of plaques in $\{T_1, T_{\bf x}, T_2\}$. By Case 3, we have
\[\alpha_\xi(T_1,T)=\alpha_{\rho_0}(T_1,T)+i\beta(T_1,T)\quad\text{and}\quad\alpha_\xi(T,T_2)=\alpha_{\rho_0}(T,T_2)+i\beta(T,T_2).\]
Let ${\bf x}_T$ denote the coherent labeling of the vertices of $T$ with respect to $(g_{T_1}^{T},g_{T_2}^T)$. Then by Part (3) of Proposition \ref{prop: shear bend map} and Property (v),
\begin{align*}
\alpha_\xi(T_1,T_2)&=\alpha_{\xi}(T_1,T)+\alpha_{\xi}(T,T_2)+ s_{\theta_{\xi}({\bf x}_T)}\\
&=\alpha_{\rho_0}(T_1,T)+i\beta(T_1,T)+\alpha_{\rho_0}(T,T_2)+i\beta(T,T_2)+ s_{\theta_{\rho_0}({\bf x}_T)}+ i s_{\eta(\mathbf{x}_T)}\\
&=\alpha_{\rho_0}(T_1,T_2)+i\beta(T_1,T_2).
\end{align*}

This concludes the proof that $\Psi$ is a bending map.

\subsubsection{The map $N$ is H\"older extendable}\label{subsub:bending productable}
To prove Proposition \ref{prop:bending productable}, we first prove the following linear algebra result, which will shed light on the reason behind the hypothesis in Theorem \ref{thm_exist-psi} that $(\rho_0,\xi_0)$ satisfy property-$\star$.

\begin{lemma}\label{lem:some linear algebra}
Let ${\bf F} = (F_1,F_2,F_3)$ be a triple of flags in  $\mathcal{F}(\mathbb{C}^d)$ that is in general position and satisfies property-$\star$, and let ${\bf E}:= (F_1,F_2)$. Then:
\begin{enumerate}
	\item A sequence $(B_n)_n \subset \mathrm{Stab}({\bf E})$ verifies $B_n^{-1} \cdot F_3 \to F_1$ if and only if
	\[
	\lim_{n\to\infty} \max_{i = 1, \dots, d} \abs{\frac{\mu_{i+1}(B_n)}{\mu_i(B_n)}} = 0 ,
	\]
	where $\mu_i(B_n)$ denotes the eigenvalue of $B_n$ corresponding to the eigenline $F_2^i \cap F_1^{d - i + 1}$, for any $i \in \{1,\dots, d\}$.
	
	\item There exists a constant $C > 0$ such that for any sequence $(B_n)_n \subset \mathrm{Stab}({\bf E})$ that satisfies $B_n^{-1}\cdot F_3 \to F_1$ it holds that
	\[
	C^{-1} \max_{i = 1, \dots, d} \abs{\frac{\mu_{i+1}(B_n)}{\mu_i(B_n)}} \leq d_\mathcal{F}(B^{-1}_n\cdot F_3, F_1) \leq C \max_{i = 1, \dots, d} \abs{\frac{\mu_{i+1}(B_n)}{\mu_i(B_n)}}
	\]
	for every $n$ sufficiently large.
\end{enumerate}
\end{lemma}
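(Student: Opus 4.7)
The plan is to reduce the lemma to an explicit computation in Pl\"ucker coordinates adapted to ${\bf E} = (F_1, F_2)$. Fix a basis $(b_1, \dots, b_d)$ of $\Cb^d$ with $b_i \in L_i$ for all $i$, so that $F_2^k = \Span(b_1, \dots, b_k)$, $F_1^k = \Span(b_{d-k+1}, \dots, b_d)$, and every $B \in \mathrm{Stab}({\bf E})$ is diagonal in this basis with $B \, b_i = \mu_i(B) \, b_i$. Let $u_{\bf F} \in \SL_d(\Cb)$ be the unique unipotent element that fixes $F_2$ and sends $F_1$ to $F_3$. Then $u_{\bf F}$ is represented in this basis by an upper triangular matrix with $1$s on the diagonal, and by Remark \ref{rem: star reformulation}, property-$\star$ forces every superdiagonal entry $(u_{\bf F})_{i,i+1}$ to be non-zero.

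I will work on the Pl\"ucker embedding $\Gr_k(\Cb^d) \hookrightarrow \Pb(\bigwedge^k \Cb^d)$ for each $k$, using the basis $\{b_I : I \subset \{1, \dots, d\}, \, |I|=k\}$ of $\bigwedge^k \Cb^d$, where $b_I := b_{i_1} \wedge \cdots \wedge b_{i_k}$ for $I = \{i_1 < \cdots < i_k\}$. Setting $I_0^k := \{d-k+1, \dots, d\}$, the subspace $F_1^k$ corresponds to $[b_{I_0^k}]$, while $F_3^k = u_{\bf F}(F_1^k)$ corresponds to $[\omega_k]$ with $\omega_k = u_{\bf F}(b_{d-k+1}) \wedge \cdots \wedge u_{\bf F}(b_d) = \sum_I c_I^k \, b_I$. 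The crucial observation is that $c_{I_0^k}^k = 1$, and that the coefficient on the ``near-neighbor'' index $I_1^k := \{d-k, d-k+2, \dots, d\}$ equals $(u_{\bf F})_{d-k, d-k+1}$, which is non-zero. Since $B^{-1}$ acts diagonally on the Pl\"ucker basis by $B^{-1} b_I = \big(\prod_{i \in I} \mu_i(B)^{-1}\big) b_I$, the ratio of the $b_I$-coefficient of $B^{-1} \omega_k$ to its $b_{I_0^k}$-coefficient equals
\[
r_I^k(B) := \frac{c_I^k}{c_{I_0^k}^k} \cdot \frac{\prod_{i \in I_0^k} \mu_i(B)}{\prod_{i \in I} \mu_i(B)} .
\]
For $I \neq I_0^k$, pairing up the elements of $I_0^k \setminus I \subset \{d-k+1, \dots, d\}$ and $I \setminus I_0^k \subset \{1, \dots, d-k\}$ in increasing order produces $r \geq 1$ pairs $(a_j, b_j)$ with $a_j < b_j$, and the second factor of $r_I^k(B)$ above rewrites as $\prod_{j=1}^r \prod_{l=a_j}^{b_j - 1} \mu_{l+1}(B)/\mu_l(B)$; in particular the case $I = I_1^k$ yields the single factor $\mu_{d-k+1}(B)/\mu_{d-k}(B)$.

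The main obstacle that remains is only to assemble these computations into the required estimates; this is essentially combinatorial. Set $\epsilon(B) := \max_{1 \le i \le d-1} |\mu_{i+1}(B)/\mu_i(B)|$. Whenever $\epsilon(B) \le 1$ (which holds as soon as $B^{-1} \cdot F_3$ is sufficiently close to $F_1$) every factor $\mu_{l+1}(B)/\mu_l(B)$ has modulus $\le \epsilon(B)$, so for any $I \neq I_0^k$ the product above contains at least one such factor and hence $|r_I^k(B)| \le |c_I^k/c_{I_0^k}^k| \cdot \epsilon(B)$. Summing the normalized Pl\"ucker coefficients over the finitely many $I \neq I_0^k$ and over $k \in \{1, \dots, d-1\}$, this yields the upper bound $d_\Fc(B^{-1} \cdot F_3, F_1) \le C \, \epsilon(B)$ for a constant $C > 0$ depending only on ${\bf F}$. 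For the lower bound, $|r_{I_1^k}^k(B)| = |(u_{\bf F})_{d-k, d-k+1}| \cdot |\mu_{d-k+1}(B)/\mu_{d-k}(B)|$ provides a lower bound (via the local affine chart of $\Gr_k(\Cb^d)$ at $F_1^k$) for the Grassmannian distance between $B^{-1} \cdot F_3^k$ and $F_1^k$; maximizing over $k \in \{1, \dots, d-1\}$ gives $d_\Fc(B^{-1} \cdot F_3, F_1) \ge c \, \epsilon(B)$. These two bounds combine to give (2), and (1) follows at once since $d_\Fc(B_n^{-1} \cdot F_3, F_1) \to 0$ if and only if $\epsilon(B_n) \to 0$.
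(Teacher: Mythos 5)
Your proof is correct, and the argument is closely parallel to the paper's, though you package the computation differently. The paper transfers the problem to the unipotent subgroup $\mathcal{U}$ fixing $F_2$: it introduces $V$ (the unipotent taking $F_1$ to $F_3$), observes that $B_n^{-1} V B_n$ is the unipotent taking $F_1$ to $B_n^{-1}\cdot F_3$, exploits the diffeomorphism $\mathcal{U}\to\mathcal{O}$ of property (I) to translate Riemannian distance on $\mathcal{F}(\Cb^d)$ into a matrix norm on $\mathcal{U}$, and then reads the ratios $\mu_j(B_n)/\mu_i(B_n)$ off the entries of $B_n^{-1} V B_n$. You instead work Grassmannian by Grassmannian in Pl\"ucker coordinates, and the diffeomorphism from an affine chart at $F_1^k$ plays the same role as the paper's diffeomorphism $\mathcal{U}\to\mathcal{O}$. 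Your identity $r_I^k(B) = (c_I^k/c_{I_0^k}^k)\prod_{j}\prod_{l=a_j}^{b_j-1}\mu_{l+1}(B)/\mu_l(B)$ is the Pl\"ucker-coordinate analogue of the paper's property (III), $(M_n)_{i,j} = \frac{\mu_j(B_n)}{\mu_i(B_n)} M_{i,j}$, and in both cases property-$\star$ enters precisely to guarantee that the relevant ``first-order'' coefficient --- $M_{i,i+1}$ in the paper, $(u_{\bf F})_{d-k,d-k+1} = c_{I_1^k}^k$ in yours --- is nonzero. The substance is the same; yours is arguably more elementary in that it avoids introducing the action of $\mathcal{U}$ on $\mathcal{O}$, at the cost of bookkeeping over index sets $I$. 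One small point worth making explicit in a polished write-up: the lower bound $d_{\mathcal{F}}(B_n^{-1}\cdot F_3, F_1) \gtrsim \epsilon(B_n)$ via the affine chart is only valid once $B_n^{-1}\cdot F_3^k$ lies in a fixed small ball around $F_1^k$ (where the chart is uniformly biLipschitz); you should state that this is guaranteed for $n$ large by the hypothesis $B_n^{-1}\cdot F_3 \to F_1$, and that the resulting constant is independent of the sequence --- this matches the paper's ``for every $n$ sufficiently large'' clause.
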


\begin{proof}
Let $\mathcal{O} = \mathcal{O}_{F_2}$ be the set of flags that are transverse to $F_2$, and let $\mathcal{U} = \mathcal{U}_{F_2}$ be the group of unipotent elements of $\mathsf{SL}_d(\mathbb{C})$ that fix $F_2$. Denote by $V \in \mathcal{U}$ the unique unipotent transformation that fixes $F_2$ and that sends $F_1$ into $F_3$, and let $\mathsf{b}=(b_1, \dots, b_d)$ be a basis of $\mathbb{C}^d$ such that $b_i \in F_2^i \cap F_1^{d - i + 1}$ for every integer $i \in \{1, \dots, d\}$. Then the following properties hold:
\begin{enumerate}[label=(\Roman*)]
	\item The natural action $\mathcal{U} \times \mathcal{O} \to \mathcal{O}$ is smooth, free, and transitive. Thus, the orbit map $U \mapsto U\cdot F_1$ is a diffeomorphism between $\mathcal{U}$ and $\mathcal{O}$.
	\item The unipotent transformation $B_n^{-1} V B_n$ fixes $F_2$ and sends $F_1$ into $B_n^{-1} \cdot F_3$ for every positive integer $n$.
	\item Let $M$ and $M_n$ denote the upper triangular matrices that represent $V$ and $B_n^{-1} V B_n$, respectively, with respect to the basis $\mathsf{b}$. Then for every $i,j \in \{1, \dots, d\}$ we have
	\[
	(M_n)_{i,j} = \frac{\mu_j(B_n)}{\mu_i(B_n)} M_{i,j} .
	\]
\end{enumerate}
\noindent We now have all the elements to address the desired properties. \\

\textit{Part (1).}  By properties (I) and (II), $B_n^{-1} V B_n \cdot F_1=B_n^{-1} \cdot F_3\to F_1$ if and only if $B_n^{-1} V B_n\to\id$, so it suffices to show that 
\[\frac{\mu_{i+1}(B_n)}{\mu_i(B_n)} \to 0\,\,\text{ for all $i\in\{1,\dots,d-1\}$ if and only if }\,\,B_n^{-1} V B_n\to\id.\]

First, suppose that $\frac{\mu_{i+1}(B_n)}{\mu_i(B_n)} \to 0$ for all integers $i\in\{1,\dots,d-1\}$. It follows that $\frac{\mu_j(B_n)}{\mu_i(B_n)} \to 0$ for all $j>i$.
By property (III), this implies that $(M_n)_{i,j}\to 0$ for all $j>i$. Since $M_n$ is upper triangular, it follows that $B_n^{-1} V B_n\to\id$.

Conversely, suppose that $B_n^{-1} V B_n\to\id$. Then property (III) implies that 
\[\frac{\mu_{i+1}(B_n)}{\mu_i(B_n)} M_{i,i+1}=(M_n)_{i,i+1}\to 0\]
for all integers $i\in\{1,\dots,d-1\}$. Since $(F_1,F_2,F_3)$ satisfy property-$\star$, $M_{i,i + 1} \neq 0$ for all integers $i\in\{1,\dots,d-1\}$ (see Remark~\ref{rem: star reformulation}), so
$$ \frac{\mu_{i+1}(B_n)}{\mu_i(B_n)} \to 0$$
for all integers $i \in \{1, \dots, d-1\}$. \\

\textit{Part (2).} 
Define a norm $\norm{\cdot}_{\mathsf{b}}'$ on $\End(\mathbb{C}^d)$ by
\[
\norm{A}_{\mathsf{b}}':= \sum_{i,j} \abs{M(A)_{i,j}}
\]
where $M(A)$ denotes the matrix that represents $A$ in the basis $\mathsf{b}$. Fix some compact neighborhood $\mathcal{W}$ of $\mathrm{id} \in \mathsf{SL}_d(\mathbb{C})$ and some Riemannian distance $d_\mathcal{F}$ on $\mathcal{F}(\mathbb{C}^d)$. By property (I), there exists $C_0 > 1$ such that
\begin{equation}\label{eq:bbla}
	C_0^{-1} \norm{U - \mathrm{id}}_{\mathsf{b}}' \leq d_{\mathcal{F}}(U \cdot F_1, F_1) \leq C_0 \, \norm{U - \mathrm{id}}_{\mathsf{b}}'
\end{equation}
for all $U \in \mathcal{W}$.

Let $(B_n)_n$ be a sequence in $\mathrm{Stab}({\bf E})$ such that $B_n^{-1}\cdot F_3 \to F_1$. Then properties (I) and (II) imply that for all sufficiently large $n$, the unipotent element $B_n^{-1} V B_n$ lies in $\mathcal{W}$, and so equation \eqref{eq:bbla} specializes to
\begin{equation}\label{eq:bla}
	C_0^{-1} \norm{B_n^{-1} V B_n - \mathrm{id}}_{\mathsf{b}}' \leq d_{\mathcal{F}}(B_n^{-1} \cdot F_3, F_1) \leq C_0 \, \norm{B_n^{-1} V B_n - \mathrm{id}}_{\mathsf{b}}'.
\end{equation}

Since $(F_1,F_2,F_3)$ satisfy property-$\star$, we see by Remark~\ref{rem: star reformulation} that
\[C_1:= \min_{i\in\{1,\dots,d-1\}} \abs{M_{i,i+1}} > 0.\] 
Thus, for all sufficiently large $n$, we have
\begin{align}\label{eqn: blabla}
	\max_{i = 1, \dots, d-1}\abs{\frac{\mu_{i+1}(B_n)}{\mu_i(B_n)}}=\max_{i = 1, \dots, d-1}\abs{\frac{(M_n)_{i,i+1}}{M_{i,i+1}}}  &\leq C_1^{-1} \, \norm{B_n^{-1} V B_n - \mathrm{id}}_{\mathsf{b}}'
\end{align}
where the equality follows from property (III), and the inequality is a consequence of the observation that 
\[\norm{B_n^{-1} V B_n - \mathrm{id}}_{\mathsf{b}}'=\sum_{j>i}\abs{(M_n)_{i,j}}.\]

By part (1), $\abs{\frac{\mu_{i+1}(B_n)}{\mu_i(B_n)}}<1$ for all integers $i\in\{1,\dots,d-1\}$ when $n$ is sufficiently large. Together with property (III), this implies that
\begin{align} \label{eq:cacca}
	\norm{B_n^{-1} V B_n - \mathrm{id}}_{\mathsf{b}}'&\leq\sum_{j>i}\abs{\frac{\mu_{i+1}(B_n)}{\mu_i(B_n)}\,M_{i,j}}\leq \norm{V}_{\mathsf{b}}' \left( \max_{i = 1, \dots, d-1} \abs{\frac{\mu_{i+1}(B_n)}{\mu_i(B_n)}} \right)
\end{align}
for all sufficiently large $n$. 

Set $C:= \max\{C_0 \norm{V}_{\mathsf{b}}', C_0 C_1^{-1} \}$, and observe that inequalities \eqref{eq:bla}, \eqref{eqn: blabla}, and \eqref{eq:cacca} together give the required bounds.
\end{proof}

Using Lemma \ref{lem:some linear algebra}, we can now prove Proposition \ref{prop:bending productable}.

\begin{proof}[Proof of Proposition \ref{prop:bending productable}]
Fix a leaf $h$ of $\wt\lambda$. We first set up some notation that we will use in the rest of this proof.
\begin{itemize}
	\item For any plaque $T \in P(g_{\bf x}^h,h)$, denote
	\[{\bf F}_T=(F_{T,1},F_{T,2},F_{T,3}):= \Sigma^0(g_{\bf x}^h, g_T^{\bf x}) \cdot \xi_0(\mathbf{x}_{T}),\]
	where ${\bf x}_T=(x_{T,1},x_{T,2},x_{T,3})$ is the coherent labeling of the vertices of $T$ with respect to $(g_{\bf x}^h,h)$. To simplify notation, we will also denote 
	\[\eta_T:=\eta({\bf x}_T)\,\,\text{ and }\,\,\beta_T:=\beta(T_{\bf x},T).\] 
	\item Let ${\bf y}=(y_1,y_2,y_3)\in\wt\Delta^o$ be the labeling of the vertices of $T_{\bf x}$ such that $y_1\prec y_2\prec y_3$, and $y_1$ and $y_2$ are the endpoints of $g_{\bf x}^h$. Fix a basis 
	\[\mathsf{b}=(b_1,\dots,b_d)\] 
	of $\mathbb{C}^d$ such that $b_j\in L_j:= \xi_0(y_1)^j \cap \xi_0(y_2)^{d-j+1}$ for all $j\in\{1,\dots,d\}$. Let $\norm{\cdot}_{\mathsf{b}}$ be the operator norm on $\End(\mathbb{C}^d)$ induced by the Hermitian inner product on $\mathbb{C}^d$ for which $\mathsf{b}$ is an orthonormal basis. Let $\norm{\cdot}_{\mathsf{b}}'$ be the norm on $\End(\mathbb{C}^d)$ defined by 
	\[
	\norm{A}_{\mathsf{b}}':= \sum_{i,j} \abs{M(A)_{i,j}} ,
	\]
	where $M(A)$ denotes the matrix that represents $A$ in the basis $\mathsf{b}$.	
	\item Recall that $u_{{\bf F}}(\zeta,\upsilon)$ denotes the unipotent eruption of ${\bf F}$ by $(\zeta,\upsilon)$, see Section \ref{sec: unipotent eruption}. Let $U_{{\bf F}}(\zeta,\upsilon)\in\mathsf{SL}_d(\mathbb{C})$ be the unipotent representative of $u_{\bf F}(\zeta,\upsilon)$, and set 
	\[U_{{\bf F}}(\zeta):=U_{{\bf F}}(\zeta,{\bf 0}),\]
	where ${\bf 0}$ denotes the identity element in the abelian group $(\mathbb{C}/2\pi i \mathbb{Z})^{\mathcal{A}}$.
\end{itemize} 

We divide the rest of the proof into two main steps:
\begin{enumerate}
	\item[\it{Step 1.}] Show that if
	\begin{equation}\label{eq:supremum}
		\sup_{T \in P(g_{\bf x}^h,h)} \frac{\norm{U_{{\bf F}_T}(i\eta_T) - \mathrm{id}}_{\mathsf{b}}}{d_\mathcal{F}(F_{T,1}, F_{T,3})}<\infty,
	\end{equation}
	then the map $N_h$ is H\"older extendable. 
	\item[\it{Step 2.}] Show that condition \eqref{eq:supremum} holds. 
\end{enumerate}
We will proceed in order.\\

{\it Proof of Step 1.} 
First, observe that the identity map on $\End(\mathbb{C}^d)$ is bi-Lipschitz with respect to any pair of norms on $\End(\mathbb{C}^d)$. Thus, to show that $N_h$ is H\"older extendable, it suffices to prove that there exists constants $A,\nu>0$ such that
\[\norm{N_h(T)-\id}_{\mathsf{b}}\le A\,d_\infty(x_{T,1},x_{T,3})^\nu\]
for all $T\in P(g_{\bf x}^h,h)$.

Observe from Proposition \ref{prop: lift} that 
\begin{align}\label{eqn: junk0}
	N_h(T)=\Sigma^0(g_{\bf x}^h, g_T^{\bf x})^{-1} \, U_{{\bf F}_T}(i\eta_T,i\beta_T) \, \Sigma^0(g_{\bf x}^h, g_T^{\bf x}),
\end{align}
(recall that $\Sigma^0$ denotes the slithering map associated to the $\lambda$--limit map $\xi_0$ of $\rho_0$). Furthermore, if we set ${\bf E}_T:=(F_{T,1},F_{T,2})$, ${\bf G}_T:=(F_{T,2},F_{T,3})$, and
\[V_T:= \Sigma^0(g_{\bf x}^h, g_T^{\bf x}) \, \Sigma^0(g_T^h, g_T^{\bf x}) \, \Sigma^0(g_{\bf x}^h,g_T^{\bf x})^{-1}= \Sigma^0(g_{\bf x}^h, g_T^{\bf x}) \, \Sigma^0(g_T^h, g_{\bf x}^h)\in\mathsf{SL}_d(\mathbb{C}),\]
then $V_T$ is the unipotent linear transformation that sends ${\bf E}_T$ to ${\bf G}_T$. Thus, by the definition of unipotent eruptions, there are linear representatives $C_{{\bf E}_T}(i\beta_T)$ and $C_{{\bf G}_T}(-i\beta_T)$ in $\mathsf{SL}_d(\mathbb{C})$ of $c_{{\bf E}_T}(i\beta_T)$ and $c_{{\bf G}_T}(-i\beta_T)$ respectively, such that
\begin{align}\label{eqn: junk1}
	U_{{\bf F}_T}(i\eta_T,i\beta_T)&=C_{{\bf E}_T}(i\beta_T)\, U_{{\bf F}_T}(i\eta_T)\, C_{{\bf G}_T}(-i\beta_T)\\
	&=C_{{\bf E}_T}(i\beta_T)\, U_{{\bf F}_T}(i\eta_T)\, V_T\,C_{{\bf E}_T}(i\beta_T)^{-1}V_T^{-1}.\nonumber
\end{align}

Since $ Q(g_{\bf x}^h,h)$ is compact and $\Sigma^0$ is continuous,
\[\{\Sigma^0(g, g') \mid g, g' \in Q(g_{\bf x}^h,h)\} \subset \mathsf{SL}_d(\mathbb{C})\] 
is compact, so there is some constant $M>0$ such that
$$\norm{\Sigma^0(g, g')}_{\mathsf{b}}\leq M$$
for all leaves $g,g' \in Q(g_{\bf x}^h,h)$. Also, 
\begin{align*}
	\norm{C_{{\bf E}_T}(i\beta_T)}_{\mathsf{b}}=\norm{C_{{\bf E}_T}(i\beta_T)^{-1}}_{\mathsf{b}} = 1
\end{align*}
because $C_{{\bf E}_T}(i\beta_T)$ has eigenvalues of unit modulus, and preserves the line decomposition $L_1,\dots,L_d$. Thus, identities \eqref{eqn: junk0} and \eqref{eqn: junk1} imply that     	
\begin{align}\label{eqn: junk2}
	\norm{N_h(T) - \mathrm{id}}_{\mathsf{b}}&\leq M^2\norm{U_{{\bf F}_T}(i\eta_T,i\beta_T) - \mathrm{id}}_{\mathsf{b}}\nonumber \\
	& \leq  M^2 \left(\norm{C_{{\bf E}_T}(i\beta_T) \, (U_{{\bf F}_T}(i\eta_T) - \mathrm{id}) \, V_T \, C_{{\bf E}_T}(i\beta_T)^{-1} V_T^{-1}}_{\mathsf{b}}\right. \nonumber \\
	& \quad \left.+ \norm{C_{{\bf E}_T}(i\beta_T) \, (V_T - \mathrm{id}) \, C_{{\bf E}_T}(i\beta_T)^{-1} V_T^{-1}}_{\mathsf{b}}  + \norm{V_T^{-1} - \mathrm{id}}_{\mathsf{b}}\right) \\
	& \leq M^6 \norm{U_{{\bf F}_T}(i\eta_T) - \mathrm{id}}_{\mathsf{b}} + M^4 \norm{V_T - \mathrm{id}}_{\mathsf{b}} + M^2\norm{V_T^{-1} - \mathrm{id}}_{\mathsf{b}} \nonumber
\end{align}

Since $ Q(g_{\bf x}^h,h)$ is compact and $\xi_0$ is locally $\lambda$--H\"older continuous, see Definition~\ref{def: lambda continuous}, there exist $A', \nu' > 0$ such that 
\[
d_{\mathcal{F}}(\xi_0(\mathsf{g}),\xi_0(\mathsf{g}')) \leq A' \, d_\infty(\mathsf{g},\mathsf{g}')^{\nu'}
\]
for every pair of leaves $\mathsf{g}$, $\mathsf{g}'$ of $\widetilde\lambda$ that are oriented in parallel and separates the pair of leaves $g_{\bf x}^h$ and $h$. 
Furthermore, since $\{\Sigma^0(g, g') \mid g, g' \in Q(g_{\bf x}^h,h)\}$ is compact and the $\mathsf{SL}_d(\mathbb{C})$--action on the compact flag variety $\mathcal{F}(\mathbb{C}^d)$ is smooth, there exists a constant $L > 0$ such that $\Sigma^0(g,g')$ acts as an $L$--bi-Lipschitz diffeomorphism on $\mathcal{F}(\mathbb{C}^d)$ for all leaves $g,g' \in Q(g_{\bf x}^h,h)$. Together, these imply that for every plaque $T \in P(g_{\bf x}^h,h)$
\begin{align}\label{eq:bound1}
	\begin{split}
		d_\mathcal{F}(F_{T,1},F_{T,3})
		&= d_\mathcal{F}(\Sigma^0(g_{\bf x}^h, g_T^{\bf x}) \cdot \xi_0(\mathsf{g}_T^{\bf x}), \Sigma^0(g_{\bf x}^h, g_T^{\bf x}) \cdot \xi_0(\mathsf{g}_T^h)) \\
		& \leq L \, d_{\mathcal{F}}(\xi_0(\mathsf{g}_T^{\bf x}),\xi_0(\mathsf{g}_T^h))\\
		&\leq LA'\,d_\infty(x_{T,1},x_{T,3})^{\nu'} ,
	\end{split}
\end{align}
where $\mathsf{g}_T^{\bf x}$ and $\mathsf{g}_T^h$ respectively denote the edges $g_T^{\bf x}$ and $g_T^h$ of $T$ equipped with the right-to-left orientation with respect to a path with backward and forward endpoints in $g_{\bf x}^h$ and $h$ respectively. It follows from assumption \eqref{eq:supremum} that there is some constant $M'>0$ such that
\begin{align}\label{eqn: junk4}
	\norm{U_{{\bf F}_T}(i\eta_T) - \mathrm{id}}_{\mathsf{b}}\leq M'd_\infty(x_{T,1},x_{T,3})^{\nu'}
\end{align}
for all plaques $T\in P(g_{\bf x}^h,h)$.

At the same time, since $\Sigma^0|_{Q(g_{\bf x}^h,h)}$ is H\"older continuous, by enlarging $A'$ and shrinking $\nu'$ if necessary, we may assume that 
\[\norm{\Sigma^0(\ell_1,\ell_2)-\id}_{\mathsf{b}}\leq A'd_\infty(\ell_1,\ell_2)^{\nu'}\]
for all leaves $\ell_1,\ell_2\in Q(g_{\bf x}^h,h)$. Thus, for every plaque $T \in P(g_{\bf x}^h,h)$ 
\begin{align}\label{eqn: junk5}
	\norm{V_T - \mathrm{id}}_{\mathsf{b}} \leq M^2 \norm{\Sigma^0(g_T^h, g_T^{\bf x}) - \mathrm{id}}_{\mathsf{b}}\leq M^2A'd_\infty(x_{T,1},x_{T,3})^{\nu'}.
\end{align}
Similarly, 
\begin{align}\label{eqn: junk6}
	\norm{V_T^{-1} - \mathrm{id}}_{\mathsf{b}} \leq M^2A'd_\infty(x_{T,1},x_{T,3})^{\nu'}.
\end{align} 

Together, inequalities \eqref{eqn: junk2}, \eqref{eqn: junk4}, \eqref{eqn: junk5} and \eqref{eqn: junk6} imply that
\begin{align*}
	\norm{N_h(T) - \mathrm{id}}_{\mathsf{b}}&\leq M^6 \norm{U_{{\bf F}_T}(i\eta_T) - \mathrm{id}}_{\mathsf{b}} + M^4 \norm{V_T - \mathrm{id}}_{\mathsf{b}} + M^2\norm{V_T^{-1} - \mathrm{id}}_{\mathsf{b}}\\
	&\leq A\,d_\infty(x_{T,1},x_{T,3})^{\nu},
\end{align*}
where $A:=M^6M'+(M^4+M^2)M^2A'$ and $\nu:=\nu'$. This finishes Step 1.\\

{\it Proof of Step 2.} 
Since the identity map on $\End(\mathbb{C}^d)$ is bi-Lipschitz with respect to any pair of norms on $\End(\mathbb{C}^d)$, it suffices to show that 
\[\sup_{T \in P(g_{\bf x}^h,h)} \frac{\norm{U_{{\bf F}_T}(i\eta_T) - \mathrm{id}}'_{\mathsf{b}}}{d_\mathcal{F}(F_{T,1}, F_{T,3})}<\infty.\]   
Also, we may assume that there exists a sequence of pairwise distinct plaques $(T_n)_{n=1}^\infty$ in $P(g_{\bf x}^h,h)$ such that 
\[
\lim_{n \to \infty} \frac{\norm{U_{{\bf F}_{T_n}}(\eta_{T_n}) - \mathrm{id}}'_{\mathsf{b}}}{d_\mathcal{F}(F_{T_n,1}, F_{T_n,3})} = \sup_{T \in P(g_{\bf x}^h,h)} \frac{\norm{U_{{\bf F}_T}(i\eta_T) - \mathrm{id}}'_{\mathsf{b}}}{d_\mathcal{F}(F_{T,1}, F_{T,3})}.
\]
Indeed, if no such sequence exists, then the supremum above must be realized by some plaque $T \in P(g_{\bf x}^h,h)$ and hence is finite. 

For the rest of this proof, we will simplify notation and write 
\begin{itemize}
	\item $\eta_{T_n}=i\eta({\bf x}_{T_n})$ as $\eta_n$,
	\item$ {\bf F}_{T_n}=(F_{T_n,1},F_{T_n,2},F_{T_n,3})$ as ${\bf F}_n=(F_{n,1},F_{n,2},F_{n,3})$, and
	\item ${\bf x}_{T_n}=(x_{T_n,1},x_{T_n,2},x_{T_n,3})$ as ${\bf x}_n=(x_{n,1},x_{n,2},x_{n,3})$.
\end{itemize}	

Observe that for all plaques $T\in P(g_{\bf x}^h,h)$,
\[(F_{T,1},F_{T,2})=\left\{\begin{array}{ll}
	\left(\xi_0(y_1),\xi_0(y_2)\right)&\text{if }x_{T,3}\prec x_{T,2}\prec x_{T,1},\\
	\left(\xi_0(y_2),\xi_0(y_1)\right)&\text{if }x_{T,1}\prec x_{T,2}\prec x_{T,3},
\end{array}\right.\]
so by taking a subsequence of $(T_n)_{n=1}^\infty$, we may assume that there are flags $(F_1,F_2)$ such that $(F_{n,1},F_{n,2})=(F_1,F_2)$ for all $n=1,\dots,\infty$. Let 
\[\mathsf{f}:=(f_1,\dots,f_d)\] 
be the basis of $\mathbb{C}^d$ defined by 
\[f_j=\left\{\begin{array}{ll}
	b_{d-j+1}&\text{if }(F_1,F_2)=\left(\xi_0(y_1),\xi_0(y_2)\right),\\
	b_j&\text{if }(F_1,F_2)=\left(\xi_0(y_2),\xi_0(y_1)\right)
\end{array}\right. \]
for all integers $j\in\{1,\dots,d\}$. Then observe that $f_j\in F_2^j\cap F_1^{d-j+1}$ for all integers $j\in\{1,\dots,d\}$, and that for any $A\in\mathsf{SL}_d(\mathbb{C})$, we have
\[
\norm{A}_{\mathsf{b}}' = \sum_{i,j} \abs{R(A)_{i,j}} ,
\]
where $R(A)$ is the matrix representing $A$ in the basis $\mathsf{f}$.	

Since the limit map $\xi_0$ is $\rho_0$--equivariant, the set 
\[\{{\bf F}_n\in\mathcal{F}(\mathbb{C}^d)^3\mid n\in\mathbb{Z}^+\}\]
lies in finitely many projectively equivalence classes in $\mathcal{F}(\mathbb{C}^d)^3$. Also, since the $\lambda$--cocyclic pair $(\beta,\eta)$ is $\Gamma$--invariant, the set
\[\{\eta_n\mid n\in\mathbb{Z}^+\}\] 
is finite. Thus, by taking a further subsequence of $(T_n)_{n=1}^\infty$, we can assume that $\eta_i=\eta_j$ for all positive integers $i$ and $j$. Hence, there exists $B_n \in \mathsf{SL}_d(\mathbb{C})$ such that $B_n \cdot {\bf F}_1 = {\bf F}_n$ for all positive integers $n$. By Proposition \ref{prop: lift}, this implies that
\begin{equation}\label{eq:cacca2}
	U_{{\bf F}_n}(\eta_n) = B_n U_{{\bf F}_1}(\eta_1) B_n^{-1}.
\end{equation}
Observe also that each $B_n$ is represented in the basis $\mathsf{f}$ by a diagonal matrix. Let $\mu_i(B_n)$ denote the $i$--th diagonal entry of this matrix.

Since the set of plaques
\[
\{ T \in P(g_{\bf x}^h,h) \mid d_{\mathcal{F}}(\xi_0(x_{T,1}),\xi_0(x_{T,3})) \geq \varepsilon \}
\]
is finite for all $\varepsilon > 0$, it follows that $d_{\mathcal{F}}(\xi_0(x_{n,1}),\xi_0(x_{n,3}))\to 0$. Then by inequality \eqref{eq:bound1}, $B_n\cdot F_{1,3}=F_{n,3} \to F_1$, so Lemma \ref{lem:some linear algebra} part (1) implies that
\[
\lim_{n\to\infty} \max_{i = 1, \dots, d} \abs{\frac{\mu_{i+1}(B_n)}{\mu_i(B_n)}} = 0.
\]
In particular, if $n$ is sufficiently large, then $\abs{\frac{\mu_{i+1}(B_n)}{\mu_i(B_n)}}<1$ for all integers $i\in\{1,\dots,d-1\}$.

Since $U_{{\bf F}_1}(\eta_1)$ is a unipotent element that fixes $F_2$, the matrix $R$ that represents $U_{{\bf F}_1}(\eta_1)$ in the basis $\mathsf{f}$ is upper triangular. Then by identity \eqref{eq:cacca2},
\begin{align*}
	\norm{U_{{\bf F}_n}(\eta_n) - \mathrm{id}}_{\mathsf{b}}' &= \norm{B_n\, U_{{\bf F}_1}(\eta_1)\, B_n^{-1} - \mathrm{id}}_{\mathsf{b}}' \\
	&= \sum_{j>i}\abs{R_{i,j}\frac{\mu_j(B_n)}{\mu_i(B_n)}}\\
	&\leq \sum_{j>i}\abs{R_{i,j}\frac{\mu_{i+1}(B_n)}{\mu_i(B_n)}}\\
	&\leq \norm{U_{{\bf F}_1}(\eta_1)}_{\mathsf{b}}' \left( \max_{i = 1, \dots, d} \abs{\frac{\mu_{i+1}(B_n)}{\mu_i(B_n)}} \right)
\end{align*}
for all sufficiently large $n$. Since Lemma \ref{lem:some linear algebra} part (2) implies that there is some $C>0$ such that
\[
\max_{i = 1, \dots, d} \abs{\frac{\mu_{i+1}(B_n)}{\mu_i(B_n)}} \leq C \, d_{\mathcal{F}}(F_1, F_{n,3})
\]
for all sufficiently large $n$, it follows that
\[
\sup_{T \in P(g_{\bf x}^h,h)} \frac{\norm{U_{{\bf F}_T}(i\eta_T) - \mathrm{id}}'_{\mathsf{b}}}{d_\mathcal{F}(F_{T,1}, F_{T,3})}=\lim_{n\to\infty}\frac{\norm{U_{{\bf F}_n}(\eta_n) - \mathrm{id}}_{\mathsf{b}}'}{d_\mathcal{F}(F_1, F_{n,3})} \leq C \norm{U_{{\bf F}_1}(\eta_1)}_{\mathsf{b}}'.
\]
This concludes the proof of Step 2.
\end{proof}

\subsection{Equivariance of the bent limit map} \label{sec: bending cocycle}
Our next goal is to prove Theorem~\ref{thm: bending Hitchin properties}. For the rest of this section, fix a $d$--Hitchin representation $\rho_0:\Gamma\to\mathsf{PGL}_d(\mathbb{R})\subset\mathsf{PGL}_d(\mathbb{C})$ with $\lambda$--limit map $\xi_0:\partial\wt\lambda\to\mathcal{F}(\mathbb{C}^d)$, and fix a $\lambda$--cocylic pair $(\beta, \eta)\in\mathcal{Y}_d(\lambda;\mathbb{R}/2\pi\mathbb{Z})$. For each labeling ${\bf x}\in\wt\Delta^o$, we will denote by:
\begin{itemize}
\item $\Psi_{\bf x}:= \Psi_{\rho_0, \xi_0, \beta, \eta, {\bf x}} : \widetilde{\Lambda} \to \mathsf{PGL}_d(\mathbb{C})$ the $(\beta, \eta)$--bending map for $(\rho_0,\xi_0)$ with base ${\bf x} \in \widetilde{\Delta}^o$. (This exists uniquely by Theorem \ref{thm: bending Hitchin}),
\item $\xi_{\bf x} : \partial \widetilde{\lambda} \to \mathcal{F}(\mathbb{C}^d)$ the $\Psi_{\bf x}$--bent limit map.
\end{itemize}

To prove Theorem \ref{thm: bending Hitchin properties}, it is convenient to introduce the notion of a \emph{bending cocycle}.

\begin{definition}\label{def:bending cocycle in PSLd}
The \emph{$\mathsf{PGL}_d(\mathbb{C})$--valued bending cocycle} associated to $(\rho_0,\xi_0,\beta,\eta)$ is the  map
\[
B : \widetilde{\Delta}^o \times \widetilde{\Delta}^o \longrightarrow \mathsf{PGL}_d(\mathbb{C})
\]
with the defining property that for any labeling ${\bf y}=(y_1,y_2,y_3)\in \widetilde{\Delta}^o$
\[
B({\bf x}, {\bf y}) \cdot \big(\xi_0(y_1), \xi_0(y_2), \xi_0(y_3)^1\big) = \big(\xi_{\bf x}(y_1), \xi_{\bf x}(y_2), \xi_{\bf x}(y_3)^1\big).
\]
\end{definition}

The terminology for the map $B$ was chosen as a reference to the notion of \emph{quakebend cocycle} from \cite[Section~II.3.5]{EM06}, developed in the context of classical pleated surfaces. This choice is motivated by the properties (2), (3), and (4) described in Proposition \ref{prop:properties of bending cocycle} (compare in particular with \cite[Definition~II.3.5.2]{EM06}). In order to state this result, we recall the necessary notations.

Given any labeling ${\bf y} = (y_1,y_2,y_3) \in \widetilde{\Delta}^o$, we denote: the plaque of $\wt\lambda$ with vertices $y_1, y_2, y_3$ by $T_{\bf y}$; the edge of $T_{\bf y}$ with vertices $y_1, y_2$ by $h_{\bf y}$; the edge of $T_{\bf y}$ that separates $T_{\bf y}$ and $T_{\bf x}$ by $g_{\bf y}^{\bf x}$ (whenever $T_{\bf x} \neq T_{\bf y}$). If $h_{\bf y}\neq g_{\bf y}^{\bf x}$, let ${\bf x}_{T_{\bf y}}$ be the coherent labeling of the vertices of $T_{\bf y}$ with respect to the pair of leaves $(g_{\bf y}^{\bf x},h_{\bf y})$, and recall that the element $\nu({\bf y})  \in (\mathbb{C}/2\pi i \mathbb{Z})^{\mathcal{A}}$, introduced in equation \eqref{eqn: nu}, satisfies 
\begin{align}\label{eq:recall nu}
\nu({\bf y})^{\bf i} = 
\begin{cases}
	\sum_{{\bf j} \in \mathcal{B} : j_2=i_1}\eta^{\bf j}({\bf y}) & \text{if $y_{3} \prec y_{2} \prec y_{1}$,} \\
	- \sum_{{\bf j} \in \mathcal{B} : j_2=i_2}\eta^{\bf j}({\bf y}) & \text{if $y_{1} \prec y_{2} \prec y_{3}$,}
\end{cases}
\end{align}
for any pairs ${\bf i} \in \mathcal{A}$.

\begin{proposition}\label{prop:properties of bending cocycle}
The $\mathsf{PGL}_d(\mathbb{C})$--valued bending cocycle associated to $(\rho_0,\xi_0,\beta,\eta)$ satisfies the following identities: for all labelings  ${\bf x},{\bf y}, {\bf z} \in \wt{\Delta}^o$, we have
\begin{enumerate}
	\item $B({\bf x}, {\bf y})=\Psi_{\bf x}( h_{\bf y}) \, c_{\mathsf{h}_{\bf y}}(\delta_{{\bf x},{\bf y}})$
	where
	\[
	\delta_{{\bf x}, {\bf y}} =
	\begin{cases}
		0&\text{if }T_{\bf x}=T_{\bf y},\\
		\beta(T_{\bf x},T_{\bf y}) & \text{if $T_{\bf x}\neq T_{\bf y}$ and $ h_{\bf y} = g_{\bf y}^{\bf x}$,} \\
		\beta(T_{\bf x},T_{\bf y})+\nu({\bf x}_{T_{\bf y}})& \text{if $T_{\bf x}\neq T_{\bf y}$ and $ h_{\bf y} \neq g_{\bf y}^{\bf x}$,} \\
	\end{cases}
	\]
	\item $B({\bf x}, {\bf z}) = B({\bf x}, {\bf y}) \, B({\bf y}, {\bf z})$,
	\item $B({\bf x}, {\bf y}) = B({\bf y}, {\bf x})^{-1}$,
	\item $B(\gamma {\bf x}, \gamma {\bf y}) = \rho_0(\gamma) B({\bf x}, {\bf y}) \, \rho_0(\gamma)^{-1}$ for every $\gamma \in \Gamma$.
\end{enumerate}
\end{proposition}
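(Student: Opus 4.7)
The plan is to prove Part~(1) first by direct computation, and then to deduce Parts~(2)--(4) via uniqueness arguments. Throughout I will use that $\PSL_d(\Cb)$ acts simply transitively on $\Nc(\Cb^d)$, so $B({\bf x},{\bf y})$ is uniquely determined by its action on a single projective frame.

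For Part~(1), the key observation is that $c_{\hsf_{\bf y}}(\delta_{{\bf x},{\bf y}})$ fixes both $\xi_0(y_1)$ and $\xi_0(y_2)$ (these are the flags at the endpoints of $\hsf_{\bf y}$), while $\Psi_{\bf x}(h_{\bf y}) \cdot \xi_0(y_i) = \xi_{\bf x}(y_i)$ for $i=1,2$ by Definition~\ref{def: bending map}(i). Hence the substance of Part~(1) lies entirely in matching the image of the line $\xi_0(y_3)^1$, which I would verify case-by-case according to the three subcases in the definition of $\delta_{{\bf x},{\bf y}}$. When $T_{\bf y} = T_{\bf x}$, one has $g_{\bf x}^{h_{\bf y}} = h_{\bf y}$, so Definition~\ref{def: bending map}(iii) reduces $\Psi_{\bf x}(h_{\bf y})$ to the complex eruption $v({\bf x}, h_{\bf y})$, and the identity follows from Proposition~\ref{prop: eruption combination} together with Definition~\ref{def: bending map}(iv)--(v). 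When $T_{\bf x} \neq T_{\bf y}$, one unfolds Definition~\ref{def: bending map}(iii) into its slithering and eruption factors and uses the equality $\alpha_{\xi_{\bf x}}(T_{\bf x}, T_{\bf y}) = \alpha_{\rho_0}(T_{\bf x}, T_{\bf y}) + i\,\beta(T_{\bf x}, T_{\bf y})$ (Definition~\ref{def: bending map}(vi)) to identify the shear parameter; the extra $\nu({\bf x}_{T_{\bf y}})$ correction when $h_{\bf y} \neq g_{\bf y}^{\bf x}$ tracks the triangle bending across $T_{\bf y}$ and arises from comparing the two natural coherent labellings of $T_{\bf y}$.

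For Part~(4), I would consider $\Psi'(h) := \rho_0(\gamma)\,\Psi_{\bf x}(\gamma^{-1} h)\,\rho_0(\gamma)^{-1}$. The $\rho_0$-equivariance of $\xi_0$ and of the slithering maps (Theorem~\ref{thm: slitherable and slithering}), the $\Gamma$-invariance of $(\beta, \eta)$, and the projective invariance of the triple and double ratios together imply that $\Psi'$ satisfies all six axioms of Definition~\ref{def: bending map} with base $\gamma \cdot {\bf x}$. By Proposition~\ref{prop: uniqueness of bending}, $\Psi' = \Psi_{\gamma {\bf x}}$, and hence $\xi_{\gamma {\bf x}} = \rho_0(\gamma) \circ \xi_{\bf x} \circ \gamma^{-1}$; comparing the defining relations of $B(\gamma {\bf x}, \gamma {\bf y})$ and $\rho_0(\gamma)\,B({\bf x}, {\bf y})\,\rho_0(\gamma)^{-1}$ then yields Part~(4).

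For Parts~(2) and (3), the crucial step is the identity $\xi_{\bf y} = B({\bf x}, {\bf y})^{-1} \circ \xi_{\bf x}$. The right-hand side is locally $\lambda$-H\"older continuous, $\lambda$-transverse, and $\lambda$-hyperconvex (these properties are preserved by post-composition with an element of $\PSL_d(\Cb)$); it shares the $\lambda$-cocyclic pair $(\alpha_{\rho_0} + i\beta,\, \theta_{\rho_0} + i\eta)$ with $\xi_{\bf y}$ (triple and double ratios are projective invariants); and it sends the base frame at ${\bf y}$ to $(\xi_0(y_1), \xi_0(y_2), \xi_0(y_3)^1)$ by construction. Lemma~\ref{lem: injxi} (applied with base ${\bf y}$) then forces equality, and Part~(2) follows from
\begin{align*}
B({\bf x},{\bf y})\, B({\bf y},{\bf z}) \cdot \big(\xi_0(z_1), \xi_0(z_2), \xi_0(z_3)^1\big)
&= B({\bf x},{\bf y}) \cdot \big(\xi_{\bf y}(z_1), \xi_{\bf y}(z_2), \xi_{\bf y}(z_3)^1\big) \\
&= \big(\xi_{\bf x}(z_1), \xi_{\bf x}(z_2), \xi_{\bf x}(z_3)^1\big)
\end{align*}
combined with simple transitivity; Part~(3) is the special case ${\bf z} = {\bf x}$, together with the equality $B({\bf x},{\bf x}) = \id$, which is immediate from Part~(1) since $\Psi_{\bf x}(h_{\bf x}) = \id$ by Proposition~\ref{prop: productable fixed}(1) and $\delta_{{\bf x},{\bf x}} = 0$. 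The hard part will be the case analysis in Part~(1): Parts~(2)--(4) are essentially formal consequences of Part~(1) and the uniqueness results already established, whereas Part~(1) requires careful bookkeeping of how the unipotent eruption factors in $\Psi_{\bf x}$ combine with the slithering maps $\Sigma^{\bf x}$ and $\Sigma^0$ to yield exactly the complex shear $c_{\hsf_{\bf y}}(\delta_{{\bf x},{\bf y}})$ on the line $\xi_0(y_3)^1$. Notably, the $\nu({\bf x}_{T_{\bf y}})$ term is the same correction appearing in the cocycle boundary condition of Definition~\ref{def_cocycle}(5), which is ultimately why $(\beta,\eta)$ must be a $\lambda$-cocyclic pair rather than arbitrary data.
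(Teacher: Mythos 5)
Your proposal is correct and follows essentially the same strategy as the paper: Part~(1) by direct case analysis (reducing to the line $\xi_0(y_3)^1$ after noting $c_{\hsf_{\bf y}}(\delta_{{\bf x},{\bf y}})$ fixes the first two flags), and Parts~(2)--(3) via the key identity $\xi_{\bf y} = B({\bf x},{\bf y})^{-1}\circ\xi_{\bf x}$ proved with Lemma~\ref{lem: injxi}, exactly as in the paper. The one genuine (though minor) departure is Part~(4): you route the uniqueness through Proposition~\ref{prop: uniqueness of bending}, verifying that $\Psi'(h) := \rho_0(\gamma)\,\Psi_{\bf x}(\gamma^{-1}h)\,\rho_0(\gamma)^{-1}$ satisfies all six bending-map axioms, whereas the paper sidesteps the bending map entirely and compares the associated limit maps $\rho_0(\gamma)\cdot\xi_{\bf x}$ and $\xi_{\gamma{\bf x}}\circ\gamma$ directly via Lemma~\ref{lem: injxi}; both paths reach $\xi_{\gamma{\bf x}} = \rho_0(\gamma)\circ\xi_{\bf x}\circ\gamma^{-1}$, but the paper's version asks for fewer verifications (three conditions rather than six axioms, one of which, axiom~(iii), itself involves the transformed slithering map).
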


Delaying the proof of Proposition \ref{prop:properties of bending cocycle}, we will first use it to prove Theorem~\ref{thm: bending Hitchin properties}.

\begin{proof}[Proof of Theorem~\ref{thm: bending Hitchin properties}]
Using the $\mathsf{PGL}_d(\mathbb{C})$--valued bending cocycle associated to $(\rho_0,\xi_0,\beta, \eta)$, define the map 
\[\rho=\rho_{\bf x}:\Gamma\to\mathsf{PGL}_d(\mathbb{C})\] 
by $\rho(\gamma):=B({\bf x},\gamma\cdot{\bf x})\,\rho_0(\gamma)$. We first show that $\rho$ is a homomorphism. By Proposition \ref{prop:properties of bending cocycle} parts (2) and (4), 
\begin{align*}
	\rho(\gamma_1 \gamma_2) & = B({\bf x}, \gamma_1 \gamma_2 \cdot {\bf x})\, \rho_0(\gamma_1 \gamma_2)\\
	& = B({\bf x}, \gamma_1 \cdot {\bf x})\, B(\gamma_1 \cdot {\bf x}, \gamma_1 \gamma_2 \cdot {\bf x}) \,\rho_0(\gamma_1)\, \rho_0(\gamma_2)\\
	& = B({\bf z}, \gamma_1 \cdot {\bf z}) \rho_0(\gamma_1) B({\bf z}, \gamma_2 \cdot {\bf z}) \rho_0(\gamma_2)\\
	& = \rho(\gamma_1) \rho(\gamma_2) .
\end{align*}
for every $\gamma_1, \gamma_2 \in \Gamma$.

Next, we show that the map $\xi=\xi_{\bf x}$ is $\rho$--equivariant, i.e.
$$\xi(\gamma \cdot x) = \rho(\gamma)\cdot \xi(x)$$
for any $\gamma \in \Gamma$ and point $x \in \partial \wt \lambda$, or equivalently,
\begin{equation}\label{eq:equivariant on leaves}
	\xi(\gamma \cdot \mathsf{g}) = \rho(\gamma) \cdot \xi(\mathsf{g})
\end{equation}
for all oriented geodesics $\mathsf{g} \in \widetilde{\Lambda}^o$ and all $\gamma \in \Gamma$. Since the lamination $\widetilde{\lambda}$ is a zero-measure subset of $\widetilde{S} \cong \mathbb{H}^2$, every leaf $g\in\wt\Lambda$ can be approximated by a sequence $(g_n)_n \subset \widetilde{\Lambda}$ consisting entirely of edges of plaques of $\widetilde{\lambda}$ (see e.g. \cite[Section~I.4.1]{CEG06}). Hence, the fact that $\xi$ is locally $\lambda$--H\"older continuous (see Definition \ref{def: bending map} part (ii)), implies that it is sufficient to show that relation \eqref{eq:equivariant on leaves} holds in the case when $\mathsf{g}$ is an edge of some plaque of $\widetilde{\lambda}$.

Pick a plaque $T$ of $\wt\lambda$, let $g$ be an edge of $T$, let $\mathsf{g}$ be the edge $g$ equipped with an orientation, and let ${\bf y} = (y_1,y_2,y_3) \in \wt{\Delta}^o$ be a labelling of the vertices of $T$, such that the backward and forward endpoints of $\mathsf{g}$ are $y_1$ and $y_2$, respectively. By Proposition~\ref{prop:properties of bending cocycle} parts (1), (2), and (4), we deduce that
\begin{align*}
	\rho(\gamma) \cdot \xi(\mathsf{g}) & = B({\bf x}, \gamma \cdot {\bf x}) \, \rho_0(\gamma) \, \Psi_{\bf x}(g) \cdot \xi_0(\mathsf{g}) \\
	& = B({\bf x}, \gamma \cdot {\bf x}) \, \rho_0(\gamma) \, B({\bf x}, {\bf y}) \cdot \xi_0(\mathsf{g}) \\
	& = B({\bf x}, \gamma \cdot {\bf x}) \, B(\gamma \cdot {\bf x}, \gamma \cdot {\bf y}) \, \rho_0(\gamma)  \cdot \xi_0(\mathsf{g}) \\
	& = B({\bf x}, \gamma \cdot {\bf y}) \cdot \xi_0(\gamma \cdot \mathsf{g}) \\
	& = \Psi_{\bf x}(\gamma\cdot g) \cdot \xi_0(\gamma \cdot \mathsf{g}) \\
	& = \xi(\gamma \cdot \mathsf{g}) 
\end{align*}
for every $\gamma \in \Gamma$. (For the second and fifth equality, recall that $c_{\mathsf{g}}(\delta_{\bf x,\bf y})\cdot\xi_0(\mathsf{g})=\xi_0(\mathsf{g})$.) Thus, the map $\xi$ is $\rho$--equivariant.

Finally, we prove that $\rho$ is $\lambda$--Borel Anosov. Let 
\[
T^1\wt\lambda\times\mathbb{C}^d = L^0_1+\dots+L^0_d\,\,\text{ and }\,\, T^1\wt\lambda\times\mathbb{C}^d = L_1+\dots+L_d,
\]
be the splittings induced by $\xi_0$ and $\xi$ respectively. Then for all pairs ${\bf i}\in\mathcal{A}$, let $\wh H^0_{\bf i}$ and $\wh H_{\bf i}$ denote the ${\bf i}$--th homomorphism bundles induced by $\xi_0$ and $\xi$ respectively, see Section~\ref{ssec:BorelAnosov}. Since the bending map $\Psi=\Psi_{\bf x}$ has the property that $\xi(y) = \Psi(h)\cdot\xi_0(y)$ for all leaves $h\in\wt\Lambda$ that have $y$ as an endpoint, it holds that
\[
\Psi(h)\cdot L_i^0|_v=L_i|_v
\]
for all integers $i\in\{1,\dots,d\}$ and $v \in T^1 \wt\lambda$, where $h = h_v$ is the leaf of $\wt\lambda$ that $v$ is tangent to. Thus, for all pairs ${\bf i}=(i_1,i_2)\in\mathcal{A}$, we may define the isomorphism of line bundles
\[
j_{\bf i}: H_{\bf i}:= \Hom(L_{i_1+1},L_{i_1}) \to H_{\bf i}^0:= \Hom(L_{i_1+1}^0,L_{i_1}^0) ,
\]
given by $$j_{\bf i}(f):= \Psi(g)^{-1} \circ f \circ \Psi(g).$$
By construction, the map $j_{\bf i}$ intertwines the action of the lift of the geodesic flow $\varphi_t$ on $H_{\bf i}$ and $H_{\bf i}^0$. However, in general, it does not intertwine the $\Gamma$--actions on $H_{\bf i}$ and $H_{\bf i}^0$.

Fix a pair ${\bf i}=(i_1,i_2) \in \mathcal{A}$, and choose a continuous family of Hermitian norms $(\norm{\cdot}_{0,v})_{v\in T^1\lambda}$ on the fibers of $\wh H_{\bf i}^0$. This lifts to a continuous family of Hermitian norms $(\norm{\cdot}_{0,v})_{v\in T^1\wt\lambda}$ on the fibers of $H_{\bf i}^0$, which then pulls-back via $j_{\bf i}$ to a continuous family of Hermitian norms $(\norm{\cdot}_{v})_{v\in T^1\wt\lambda}$ on the fibers of $H_{\bf i}$, i.e. 
\[
\norm{f}_v:= \norm{j_{\bf i}(f)}_{0,v}
\]
for every $v \in T^1 \widetilde{\lambda}$ and $f \in H_{\bf i}\big|_v$. Despite the fact that $j_{\bf i}$ does not necessarily intertwine the actions of $\Gamma$ on $H_{\bf i}$ and $H_{\bf i}^0$, the following lemma holds.

\begin{lemma}\label{norm-equal}
	For all $\gamma\in\Gamma$, $v\in T^1\wt\lambda$, and $f\in H_{\bf i}\big|_v$ we have
	\[\norm{\gamma\cdot f}_{\gamma\cdot v}=\norm{f}_v.\]
	In particular, $(\norm{\cdot}_v)_{v \in T^1 \widetilde{\lambda}}$ descends to a continuous family of Hermitian norms $(\norm{\cdot}_v)_{v \in T^1\lambda}$ on the fibers of the line bundle $\widehat{H}_{\bf i} \to T^1 \lambda$. 
\end{lemma}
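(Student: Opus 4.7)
The plan is to compute the discrepancy between the $\Gamma$-action on $H_{\bf i}$ (conjugation via $\rho$) and on $H_{\bf i}^0$ (conjugation via $\rho_0$), and to show that, although $j_{\bf i}$ does not intertwine them, the discrepancy preserves the pullback norm. Concretely, for each $\gamma \in \Gamma$ and each leaf $g$ of $\wt\lambda$, I will set
\[
A(\gamma, g) := \rho_0(\gamma)^{-1} \Psi(\gamma \cdot g)^{-1} \rho(\gamma) \Psi(g) \in \PSL_d(\Cb) ,
\]
and verify using the $\rho$-equivariance of $\xi$ and the $\rho_0$-equivariance of $\xi_0$ that $A(\gamma,g)$ fixes both flags $\xi_0(\gsf^+)$ and $\xi_0(\gsf^-)$ for any orientation $\gsf$ of the leaf $g$ containing $v$. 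Consequently $A(\gamma,g)$ preserves every line $L_i^0|_v$ and acts on it by a scalar $\lambda_i \in \Cb^\times$. The identity $j_{\bf i}(\gamma \cdot f) = \rho_0(\gamma)\bigl(A(\gamma,g)\, j_{\bf i}(f)\, A(\gamma,g)^{-1}\bigr) \rho_0(\gamma)^{-1}$, combined with the $\rho_0$-invariance of the norm on $H_{\bf i}^0$ (which is lifted from a well-defined norm on the quotient $\wh H_{\bf i}^0 \to T^1 \lambda$), then reduces the lemma to showing that $|\lambda_{i_1}/\lambda_{i_1+1}| = 1$.

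To identify $A(\gamma, g)$ explicitly, one picks a plaque $T$ of $\wt\lambda$ with $g$ as an edge (such a plaque exists by maximality of $\lambda$), and labels its vertices ${\bf y} = (y_1,y_2,y_3)$ so that $g = h_{\bf y}$. Part (1) of Proposition \ref{prop:properties of bending cocycle} gives
\[
\Psi(g) = B({\bf x}, {\bf y})\, c_{\hsf_{\bf y}}(-\delta_{{\bf x}, {\bf y}}), \qquad \Psi(\gamma g) = B({\bf x}, \gamma {\bf y})\, c_{\hsf_{\gamma {\bf y}}}(-\delta_{{\bf x}, \gamma {\bf y}}) ,
\]
and combining parts (2) and (4) of the same proposition with the definition $\rho(\gamma) = B({\bf x}, \gamma {\bf x})\, \rho_0(\gamma)$ yields the telescoping identity $B({\bf x}, \gamma {\bf y})^{-1}\, \rho(\gamma)\, B({\bf x}, {\bf y}) = \rho_0(\gamma)$. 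Using also that $c_{\hsf_{\gamma {\bf y}}}(\bullet) = \rho_0(\gamma)\, c_{\hsf_{\bf y}}(\bullet)\, \rho_0(\gamma)^{-1}$ (by $\rho_0$-equivariance of $\xi_0$), together with the additivity of complex shears from Proposition \ref{prop: shear combination} (3), a short manipulation yields
\[
A(\gamma, g) = c_{\hsf_{\bf y}}\bigl(\delta_{{\bf x}, \gamma {\bf y}} - \delta_{{\bf x}, {\bf y}}\bigr) .
\]

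Since $\delta_{{\bf x}, {\bf y}}$ and $\delta_{{\bf x}, \gamma {\bf y}}$ both take values in $(\Rb/2\pi\Zb)^{\Ac}$, so does their difference $\upsilon := \delta_{{\bf x}, \gamma {\bf y}} - \delta_{{\bf x}, {\bf y}}$. Applying Proposition \ref{prop: shear combination} (2) to $c_{\xi_0(\hsf_{\bf y})}(i \upsilon)$ in any basis $(b_1, \dots, b_d)$ of $\Cb^d$ with $b_n \in \xi_0(\hsf_{\bf y}^+)^n \cap \xi_0(\hsf_{\bf y}^-)^{d - n + 1}$, this shear is diagonal with entries of the form $\exp(i \vartheta)$, $\vartheta \in \Rb$, hence of unit modulus. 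Since the lines $\Cb \cdot b_n$ coincide (up to a reordering determined by the relative orientations of $\hsf_{\bf y}$ and $v$) with the lines $L_i^0|_v$, each scalar $\lambda_i$ has unit modulus, giving $|\lambda_{i_1}/\lambda_{i_1+1}| = 1$ as required.

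The main technical step is the telescoping computation that collapses $A(\gamma, g)$ to a single complex shear; once this is achieved, the unit-modulus property is an immediate consequence of the hypothesis that $(\beta, \eta)$ takes values in $\Rb/2\pi\Zb$, which makes the shear parameter purely imaginary. The second statement of the lemma then follows at once, since a $\Gamma$-invariant continuous norm on the fibers of $H_{\bf i}$ descends to a well-defined continuous norm on the fibers of the quotient bundle $\wh H_{\bf i} \to T^1\lambda$.
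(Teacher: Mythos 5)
Your proof is correct and uses essentially the same ingredients and route as the paper: reduce to $v$ tangent to an edge of a plaque, invoke Proposition \ref{prop:properties of bending cocycle} part~(1) to relate $\Psi$ to the bending cocycle $B$ modulo a complex shear, telescope via the cocycle properties (2)--(4) of $B$ together with $\rho(\gamma)=B({\bf x},\gamma\cdot{\bf x})\rho_0(\gamma)$, and finish with the observation that a complex shear with parameter in $i\,\Rb$ acts by unit-modulus scalars on the lines $L^0_j$, hence preserves the Hermitian norm. The only difference is packaging: the paper folds the unit-modulus argument into the auxiliary identity $\norm{f}_v=\norm{B({\bf x},{\bf y})^{-1}\circ f\circ B({\bf x},{\bf y})}_{0,v}$ (applied at both $v$ and $\gamma\cdot v$) and then telescopes the whole norm expression, whereas you isolate the single ``discrepancy'' transformation $A(\gamma,g)=\rho_0(\gamma)^{-1}\Psi(\gamma g)^{-1}\rho(\gamma)\Psi(g)$, identify it explicitly as $c_{\hsf_{\bf y}}(\delta_{{\bf x},\gamma{\bf y}}-\delta_{{\bf x},{\bf y}})$, and apply the unit-modulus observation once. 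Both are correct and computationally equivalent; your version is perhaps slightly more transparent about \emph{why} the naive conjugation-intertwining fails but the norm is still preserved.
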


\begin{proof}
	Since the leaves of $\widetilde{\lambda}$ that are edges of plaques are dense inside $\wt\Lambda$, it is sufficient to verify the statement when $v$ is tangent to the edge of some plaque of $\wt\lambda$. Pick any such $v$, and let ${\bf y} = (y_1, y_2, y_3) \in \wt\Delta^o$ be a labeling so that $v$ is tangent to $h_{\bf y}$, which is the leaf whose endpoints are $y_1$ and $y_2$. Proposition \ref{prop:properties of bending cocycle} part (1) then implies
	\begin{align}\label{eq:Bs}
		B({\bf x}, {\bf y}) & = \Psi(h_{\bf y}) \, c_{\mathsf{h}_{\bf y}}(\delta_{{\bf x},{\bf y}}) \,\,\text{ and }\,\, B({\bf x}, \gamma \cdot {\bf y}) = \Psi(\gamma\cdot h_{\bf y}) \, c_{\gamma\cdot \mathsf{h}_{\bf y}}(\delta_{{\bf x},\gamma\cdot {\bf y}}).
	\end{align}
	
	Observe that  every linear representative in $\mathsf{SL}_d(\mathbb{C})$ of $c_{\mathsf{h}_{\bf y}}(\delta_{{\bf x},{\bf y}})$ (respectively, $c_{\gamma\cdot \mathsf{h}_{\bf y}}(\delta_{{\bf x},\gamma\cdot {\bf y}})$) acts on the line $L_{i_1}^0\big|_v$ (respectively, $L_{i_1}^0\big|_{\gamma\cdot v}$) by scaling by a complex number of modulus $1$. Since $\norm{\cdot}_{0,v}$ and $\norm{\cdot}_{0,\gamma\cdot v}$ are Hermitian norms, 
	equation \eqref{eq:Bs} implies that
	\begin{align}\label{eq:norm is preserved by cs}
		\norm{f}_{v} = \norm{j_{\bf i}(f)}_{0,v} & = \norm{B({\bf x},{\bf y})^{-1} \circ f \circ B({\bf x},{\bf y})}_{0,v} , 
	\end{align}
	for any $f \in H_{\bf i}\big|_v$, and
	\begin{align}\label{eq:norm is preserved by cs1}
		\norm{f'}_{\gamma \cdot v} = \norm{j_{\bf i}(f')}_{0,\gamma \cdot v} & = \norm{B({\bf x}, \gamma \cdot {\bf y})^{-1} \circ f' \circ B({\bf x}, \gamma \cdot {\bf y})}_{0,\gamma \cdot v} ,
	\end{align}
	for any $f' \in H_{\bf i}\big|_{\gamma \cdot v}$. 
	
	Thus, for every $f \in H_{\bf i}\big|_v$ and $\gamma \in \Gamma$, the following identities hold:
	\begin{align*}
		\norm{\gamma \cdot f}_{\gamma \cdot v} & = \norm{\rho(\gamma) \circ f \circ \rho(\gamma)^{-1}}_{\gamma \cdot v} \\
		& = \norm{B({\bf x},\gamma \cdot {\bf x}) \, \rho_0(\gamma) \circ f \circ \rho_0(\gamma)^{-1} B({\bf x},\gamma \cdot {\bf x})^{-1}}_{\gamma \cdot v} \\
		& = \norm{B({\bf x}, \gamma \cdot {\bf y})^{-1} B({\bf x},\gamma \cdot {\bf x}) \, \rho_0(\gamma) \circ f \circ \rho_0(\gamma)^{-1} B({\bf x},\gamma \cdot {\bf x})^{-1} B({\bf x}, \gamma \cdot {\bf y})}_{0, \gamma \cdot v} \\
		& = \norm{B(\gamma \cdot {\bf x}, \gamma \cdot {\bf y})^{-1} \, \rho_0(\gamma) \circ f \circ \rho_0(\gamma)^{-1} B(\gamma \cdot {\bf x}, \gamma \cdot {\bf y})}_{0, \gamma \cdot v} \\
		& = \norm{\rho_0(\gamma) \, B({\bf x}, {\bf y})^{-1} \circ f \circ B({\bf x}, {\bf y}) \, \rho_0(\gamma)^{-1}}_{0, \gamma \cdot v} \\
		& = \norm{ B({\bf x}, {\bf y})^{-1} \circ f \circ B({\bf x}, {\bf y}) }_{0,v} = \norm{f}_{v} .
	\end{align*}
	Here, the third equality holds by relation \eqref{eq:norm is preserved by cs1}; the fourth equality holds by Proposition \ref{prop:properties of bending cocycle} parts (2) and (3), the fifth equality holds by Proposition \ref{prop:properties of bending cocycle} part (4), the sixth equality holds by the invariance of $\norm{\cdot}_0$ with respect to the $\Gamma$--action induced by $\rho_0$, and the last equality holds by relation \eqref{eq:norm is preserved by cs}.
\end{proof}

By Theorem \ref{thm: Labourie}, $\rho_0$ is $\lambda$--Borel Anosov, so there exist $C,\alpha>0$ such that 
\begin{equation}\label{eq:rho0 is anosov}
	\|\varphi_t(f)\|_{0,\varphi_t(v)}\leq Ce^{-\alpha t}\|f\|_{0,v}
\end{equation}
for all $f\in\wh H_{\bf i}^0\big|_v$, $v\in T^1\lambda$, and $t \geq 0$. Here, recall the usual abuse of notation: $\varphi_t$ denotes the flow on $T^1\lambda$, as well as the induced flows on $\wh H^0_{\bf i}$, and $\wh H_{\bf i}$. Then
\begin{align*}
	\norm{\varphi_t(f)}_{\varphi_t(v)} & = \norm{j_{\bf i}(\varphi_t(f))}_{0,\varphi_t(v)} = \norm{\varphi_t(j_{\bf i}(f))}_{0,\varphi_t(v)}  \leq Ce^{-\alpha t}\norm{j_{\bf i}(f)}_{0,v} = Ce^{-\alpha t}\norm{f}_v
\end{align*}
for all $v\in T^1\lambda$, $f\in H_{\bf i}$ and $t \geq 0$, where the second equality holds because $j_{\bf i}$ intertwines the actions of the flows on $\wh H^0_{\bf i}$ and $\wh H_{\bf i}$, and the inequality is due to relation \eqref{eq:rho0 is anosov}. This proves that $\rho$ is also $\lambda$--Borel Anosov, and thus proves the first statement of the theorem.
\end{proof}

It remains to prove Proposition \ref{prop:properties of bending cocycle}.

\begin{proof}[Proof of Proposition \ref{prop:properties of bending cocycle}]
{\it Part (1).} We need to verify that
\[\big(\xi_{\bf x}(y_1),\xi_{\bf x}(y_2),\xi_{\bf x}(y_3)^1\big)=\Psi_{\bf x}(h_{\bf y})\,c_{\mathsf{h}_{\bf y}}(\delta_{{\bf x},{\bf y}})\cdot\big(\xi_0(y_1),\xi_0(y_2),\xi_0(y_3)^1\big).\]
Since $c_{\mathsf{h}_{\bf y}}(\delta_{{\bf x},{\bf y}})$ fixes both $\xi_0(y_1)$ and $\xi_0(y_2)$, it follows that
\[\Psi_{\bf x}(h_{\bf y})\,c_{\mathsf{h}_{\bf y}}(\delta_{{\bf x},{\bf y}})\cdot\big(\xi_0(y_1),\xi_0(y_2)\big)=\Psi_{\bf x}(h_{\bf y})\cdot\big(\xi_0(y_1),\xi_0(y_2)\big)=\big(\xi_{\bf x}(y_1),\xi_{\bf x}(y_2)\big).\]
Thus, it remains to show that $\xi_{\bf x}(y_3)^1=\Psi_{\bf x}(h_{\bf y})\,c_{\mathsf{h}_{\bf y}}(\delta_{{\bf x},{\bf y}})\cdot\xi_0(y_3)^1$.

First, suppose that $T_{\bf x}=T_{\bf y}$. Then by definition, $c_{\mathsf{h}_{\bf y}}(\delta_{{\bf x},{\bf y}})=\id$, so
\[\xi_{\bf x}(y_3)^1=\xi_0(y_3)^1=v({\bf x},h_{\bf y})\cdot\xi_0(y_3)^1=\Psi_{\bf x}(h_{\bf y})\,c_{\mathsf{h}_{\bf y}}(\delta_{{\bf x},{\bf y}})\cdot\xi_0(y_3)^1,\]
where $v({\bf x},h_{\bf y})$ was defined by equation \eqref{eq: definition of v}.

Next, suppose that $T_{\bf x}\neq T_{\bf y}$ and $h_{\bf y}=g_{\bf y}^{\bf x}$. Let $h$ be an edge of the plaque $T_{\bf y}$ that is not $h_{\bf y}$. Then let $\mathsf{h}$ and $\mathsf{h}_{\bf y}$ respectively denote the right-to-left orientation on $h$ and $h_{\bf y}$ with respect to $(g_{\bf x}^{\bf y},h)$, and let ${\bf x}_{T_{\bf y}}$ be the coherent labeling of the vertices of the plaque $T_{\bf y}$ with respect to $(g_{\bf x}^{\bf y},h)$. By equation \eqref{eq: bending map on triangles}, we have
\begin{align*}
	\Psi_{\bf x}(h)=\Psi_{\bf x}(h_{\bf y})\,N_h(T_{\bf y})=\Psi_{\bf x}(h_{\bf y})\,c_{\mathsf{g}_{\bf y}^{\bf x}}(\delta_{{\bf x},{\bf y}})\,a_{{\bf x}_{T_{\bf y}}}(\eta({\bf x}_{T_{\bf y}}))\,c_{\mathsf{h}}(-\delta_{{\bf x},{\bf y}}-\nu({\bf x}_{T_{\bf y}})).
\end{align*}
Since $y_3$ is an endpoint of $h$, it follows that $a_{{\bf x}_{T_{\bf y}}}(\eta({\bf x}_{T_{\bf y}}))$ and $c_{\mathsf{h}}(-\delta_{{\bf x},{\bf y}}-\nu({\bf x}_{T_{\bf y}}))$ both fix $\xi_0(y_3)^1$, so
\begin{align*}
	\xi_{\bf x}(y_3)^1=\Psi_{\bf x}(h)\cdot\xi_0(y_3)^1=\Psi_{\bf x}(h_{\bf y})\,c_{\mathsf{h}_{\bf y}}(\delta_{{\bf x},{\bf y}})\cdot\xi_0(y_3)^1.
\end{align*}

Finally, suppose that $T_{\bf x}\neq T_{\bf y}$ and $h_{\bf y}\neq g_{\bf y}^{\bf x}$. Then let $\mathsf{h}_{\bf y}$ and $\mathsf{g}_{\bf y}^{\bf x}$ respectively denote the right-to-left orientation on $h_{\bf y}$ and $g_{\bf y}^{\bf x}$ with respect to $(g_{\bf x}^{\bf y},h_{\bf y})$, and let ${\bf x}_{T_{\bf y}}$ be the coherent labeling of the vertices of the plaque $T_{\bf y}$ with respect to $(g_{\bf x}^{\bf y},h_{\bf y})$. By equation \eqref{eq: bending map on triangles}, we have 
\begin{align*}
	\Psi_{\bf x}(g_{\bf y}^{\bf x})=\Psi_{\bf x}(h_{\bf y})N_{h_{\bf y}}(T_{\bf y})^{-1}=\Psi_{\bf x}(h_{\bf y})\,c_{\mathsf{h}_{\bf y}}(\delta_{{\bf x},{\bf y}})\,a_{{\bf x}_{T_{\bf y}}}(\eta({\bf x}_{T_{\bf y}}))^{-1}\,c_{\mathsf{g}_{\bf y}^{\bf x}}(-\delta_{{\bf x},{\bf y}}+\nu({\bf x}_{T_{\bf y}})).
\end{align*}
Since $y_3$ is an endpoint of the edge $g_{\bf y}^{\bf x}$, it follows that $a_{{\bf x}_{T_{\bf y}}}(\eta({\bf x}_{T_{\bf y}}))$ and $c_{\mathsf{g}_{\bf y}^{\bf x}}(-\delta_{{\bf x},{\bf y}}+\nu({\bf x}_{T_{\bf y}}))$ both fix $\xi_0(y_3)^1$, so
\begin{align*}
	\xi_{\bf x}(y_3)^1=\Psi_{\bf x}(g_{\bf y}^{\bf x})\cdot\xi_0(y_3)^1=\Psi_{\bf x}(h_{\bf y})\,c_{\mathsf{h}_{\bf y}}(\delta_{{\bf x},{\bf y}})\cdot\xi_0(y_3)^1.
\end{align*}

{\it Part (2).} Fix labelings ${\bf x} , {\bf y} \in \widetilde{\Delta}^o$, let $\xi':=B({\bf x},{\bf y})^{-1}\cdot\xi_{\bf x}:\partial\wt\lambda\to\mathcal{F}(\mathbb{C}^d)$, and let $\Sigma^{\bf x}$ denote the slithering map compatible with $\xi_{\bf x}$. Then observe that $\xi'$ is locally $\lambda$--H\"older continuous, $\lambda$--transverse, and $\lambda$--hyperconvex, and the map given by
\[(h_1,h_2)\mapsto B({\bf x},{\bf y})^{-1}\,\Sigma^{\bf x}(h_1,h_2)\,B({\bf x},{\bf y})\] 
is the slithering map compatible with $\xi'$. From this, we may deduce that
\[\alpha_{\xi'}=\alpha_{\xi_{\bf x}}=\alpha_{\xi_{\bf y}}\,\,\text{ and }\,\,\theta_{\xi'}=\theta_{\xi_{\bf x}}=\theta_{\xi_{\bf y}}.\]
Furthermore, observe that 
\begin{align*}
	\big(\xi'(y_1),\xi'(y_2),\xi'(y_3)^1\big)&=B({\bf x},{\bf y})^{-1}\cdot \big(\xi_{\bf x}(y_1),\xi_{\bf x}(y_2),\xi_{\bf x}(y_3)^1\big)\\
	&=\big(\xi_0(y_1),\xi_0(y_2),\xi_0(y_3)^1\big)\\
	&=\big(\xi_{\bf y}(y_1),\xi_{\bf y}(y_2),\xi_{\bf y}(y_3)^1\big),
\end{align*}
so Lemma \ref{lem: injxi} implies that $\xi'=\xi_{\bf y}$. Therefore,
\begin{align*}
	B({\bf x},{\bf y})^{-1}B({\bf x},{\bf z})\cdot\big(\xi_0(z_1),\xi_0(z_2),\xi_0(z_3)^1\big)&=B({\bf x},{\bf y})^{-1}\cdot\big(\xi_{\bf x}(z_1),\xi_{\bf x}(z_2),\xi_{\bf x}(z_3)^1\big)\\
	&=\big(\xi'(z_1),\xi'(z_2),\xi'(z_3)^1\big)\\
	&=\big(\xi_{\bf y}(z_1),\xi_{\bf y}(z_2),\xi_{\bf y}(z_3)^1\big)\\
	&=B({\bf y},{\bf z})\cdot \big(\xi_0(z_1),\xi_0(z_2),\xi_0(z_3)^1\big),
\end{align*}
which implies that $B({\bf x},{\bf z})=B({\bf x},{\bf y})\, B({\bf y},{\bf z})$.

{\it Part (3).} This is immediate from (2) and the observation that $B({\bf x},{\bf x})=\id$ for all labelings ${\bf x}\in\wt\Delta^o$.

{\it Part (4).} Fix $\gamma \in \Gamma$ and let $\xi':=\rho_0(\gamma)\cdot \xi_{\bf x}:\partial\wt\lambda\to\mathcal{F}(\mathbb{C}^d)$, and let $\xi'':=\xi_{\gamma\cdot{\bf x}}\circ\gamma:\partial\wt\lambda\to\mathcal{F}(\mathbb{C}^d)$. Observe that both $\xi'$ and $\xi''$ are locally $\lambda$--H\"older continuous, $\lambda$--transverse, and $\lambda$--hyperconvex. Furthermore, the map given by
\[(h_1,h_2)\mapsto \rho_0(\gamma)\,\Sigma^{\bf x}(h_1,h_2)\,\rho_0(\gamma)^{-1}\] 
is the slithering map compatible with $\xi'$, while the map given by
\[(h_1,h_2)\mapsto \Sigma^{\gamma\cdot \bf x}(\gamma\cdot h_1,\gamma\cdot h_2)\] 
is the slithering map compatible with $\xi''$. This, combined with the $\Gamma$--invariance of $(\alpha,\theta)$, imply that
\[\alpha_{\xi'}=\alpha_{\xi_{\bf x}}=\alpha_{\xi_{\gamma\cdot{\bf x}}}=\alpha_{\xi''}\,\,\text{ and }\,\,\theta_{\xi'}=\theta_{\xi_{\bf x}}=\theta_{\xi_{\gamma\cdot {\bf x}}}=\theta_{\xi''}.\]
Furthermore, 
\begin{align*}
	\big(\xi'(x_1),\xi'(x_2),\xi'(x_3)^1\big)&=\rho_0(\gamma)\cdot\big(\xi_{\bf x}(x_1),\xi_{\bf x}(x_2),\xi_{\bf x}(x_3)^1\big)\\
	&=\rho_0(\gamma)\cdot\big(\xi_0(x_1),\xi_0(x_2),\xi_0(x_3)^1\big) \\
	&=\big(\xi_0(\gamma\cdot x_1),\xi_0(\gamma\cdot x_2),\xi_0(\gamma\cdot x_3)^1\big) \\
	&=\big(\xi_{\gamma\cdot{\bf x}}(\gamma\cdot x_1),\xi_{\gamma\cdot{\bf x}}(\gamma\cdot x_2),\xi_{\gamma\cdot{\bf x}}(\gamma\cdot x_3)^1\big)\\
	&=\big(\xi''(x_1),\xi''(x_2),\xi''(x_3)^1\big), 
\end{align*}
so Lemma \ref{lem: injxi} implies that $\xi'=\xi''$. As such,
\begin{align*}
	\rho_0(\gamma)\, B({\bf x}, {\bf y})\cdot\big(\xi_0(y_1),\xi_0(y_2),\xi_0(y_3)^1\big)&=\rho_0(\gamma)\cdot\big(\xi_{\bf x}(y_1),\xi_{\bf x}(y_2),\xi_{\bf x}(y_3)^1\big)\\
	&=\big(\xi'(y_1),\xi'(y_2),\xi'(y_3)^1\big)\\
	&=\big(\xi''(y_1),\xi''(y_2),\xi''(y_3)^1\big)\\
	&=\big(\xi_{\gamma\cdot{\bf x}}(\gamma\cdot y_1),\xi_{\gamma\cdot{\bf x}}(\gamma\cdot y_2),\xi_{\gamma\cdot{\bf x}}(\gamma\cdot y_3)^1\big)\\
	&=B(\gamma\cdot{\bf x},\gamma\cdot{\bf y})\cdot\big(\xi_0(\gamma\cdot y_1),\xi_0(\gamma\cdot y_2),\xi_0(\gamma\cdot y_3)^1\big)\\
	&=B(\gamma\cdot{\bf x},\gamma\cdot{\bf y})\rho_0(\gamma)\cdot\big(\xi_0(y_1),\xi_0(y_2),\xi_0(y_3)^1\big),
\end{align*}
which implies that $\rho_0(\gamma)\, B({\bf x}, {\bf y})\,\rho_0(\gamma)^{-1}=B(\gamma\cdot{\bf x},\gamma\cdot{\bf y})$.
\end{proof}

\section{Homological interpretation of \texorpdfstring{$\lambda$}{l}--cocylic pairs}\label{sec: intersection}

The main goal of this section is to define the natural complex structure (see Section \ref{sec: complex structure}) on the set $\mathcal{Y}_d(\lambda;\mathbb{C}/2\pi i\mathbb{Z})$ of $\mathbb{C}/2\pi i\mathbb{Z}$--valued $\lambda$--cocyclic pairs. This is necessary for our discussion of the holomorphicity of the shear-bend map in Section~\ref{sec:holomorphicity}. To do so, we require a homological interpretation of $\lambda$--cocyclic pairs, which was first described by Bonahon-Dreyer \cite{BoD} over $\mathbb{R}$ (see Section \ref{sub:complex}). This homological interpretation also allows us to apply the usual intersection pairing on homology to $\lambda$--cocyclic pairs, which is used in Section~\ref{length} to characterize the image of the shear-bend map. In Sections~\ref{sec: train tracks}, \ref{sec: homology background} and \ref{sec:representatives} we collect necessary preliminaries on train track neighborhoods and the homology of train track neighborhoods relative their boundaries.

\subsection{Train-track neighborhoods}\label{sec: train tracks}
We now recall the notion of a train track neighborhood for the fixed maximal geodesic lamination $\lambda$, and related terminology, following the work of Bonahon and Dreyer \cite[Section~4.2]{BoD}. Other classical references on the subject (which use slightly different definitions) include Thurston \cite[Section~8.9]{thurston-notes} and Penner-Harer \cite{penner-harer}.

Let $r\co[0,1]\times[0,1]\to S$ (respectively, $r\co[0,1]\times[0,1]\to \wt S$) be the restriction of a smooth embedding from a neighborhood of $[0,1]\times [0,1]$ in $\mathbb{R}^2$ to $S$ (respectively $\wt S$). We refer to the image $R$ of $r$ as a \emph{rectangle} in $S$ (respectively, $\wt S$). The boundary $\partial R$ of the rectangle $R$ can be divided into the \emph{horizontal boundary} $\partial_hR:=r([0,1]\times\{0,1\})$ and the {\em vertical boundary} $\partial_vR:=r(\{0,1\}\times[0,1])$. A \emph{tie} of $R$ is a subset of the form $r(\{x\}\times[0,1])$ for some $x\in [0,1]$. The points
\[r(0,0),\, r(0,1),\, r(1,0),\text{ and }r(1,1)\]
are called the \emph{vertices} of $R$.

A {\em(trivalent) train track neighborhood} for $\lambda$ is a closed neighborhood $N\subset S$ of $\lambda$ which can be written as a union of finitely many rectangles $\{R_1,\dots,R_n\}$, such that the following conditions are satisfied:
\begin{enumerate}
	\item If two rectangles $R_i$ and $R_j$ intersect, then, up to switching the roles of $i$ and $j$, every component of $R_i\cap R_j$ is a vertical boundary component of $R_i$, lies in a vertical boundary component of $R_j$, and contains exactly one vertex of $R_j$. Every vertical component of a rectangle $R_j$ that satisfies the properties above (with respect to some rectangle $R_i$) is called a \emph{switch} of $N$. 
	\item For each rectangle $R_i$, every vertex of $R_i$ is contained in some $R_j$ different from $R_i$.
	\item For each rectangle $R_i$, every tie of $R_i$ intersects some leaf of $\lambda$, and any such intersection is transverse.
	\item Each component of $S-N$ is a topological cell, and its boundary is the union of six smooth curves, three of which each lie in some switch, and the other three are each a union of horizontal boundary components of rectangles. The union of all (closed) segments in $\partial N$ that satisfy the former (respectively, latter) is the \emph{vertical boundary} (respectively, \emph{horizontal boundary}) of $N$, which we denote by $\partial_h N$ (respectively, $\partial_v N$). 
\end{enumerate}

Observe that every switch of the train track neighborhood $N$ contains exactly three vertical boundary components of rectangles in $N$. Of these three, the switch is equal to exactly one vertical boundary component, and properly contains the other two, which are disjoint. 

Any train track neighborhood $N$ of $\lambda$ lifts to a closed neighborhood $\wt N\subset \wt S$ of $\wt\lambda$, which we refer to as a $\Gamma$--invariant \emph{train track neighborhood} of $\wt\lambda$. Clearly, $\wt N$ can be written as countable union of rectangles, each of which is a lift of one of the rectangles of $\wt N$. Also, the boundary of $\wt N$ can be written as the union of the \emph{vertical boundary} $\partial_v\wt N$ and \emph{horizontal boundary} $\partial_h\wt N$, which descend via the covering map to $\partial_v N$ and $\partial_h N$ respectively.

\subsubsection{Orientation covers}\label{or-cov-tt}

If $N$ is a train track neighborhood of the geodesic lamination $\lambda$ in $S$, note that the ties of the rectangles in $N$ define a foliation of $N$, so we may define the \emph{orientation cover} $N^o$ of $N$ to be the set of pairs $(x,u)$, where $x\in N$ and $u$ is an orientation of the tie of $N$ that contains $x$. We may endow $N^o$ with a topology via the embedding of $N^o$ into $T^1S$ which sends $(x,u)$ to the unit tangent vector at $x$ to the tie containing $x$ in the direction $u$.

With this topology, the natural covering map $\pi_N \co N^o\to N$ given by $\pi_N\co (x,u)\mapsto x$ is a $2$--fold cover.  We denote $\partial_h N^o:=\pi_N^{-1}(\partial_h N)$ and $\partial_v N^o:=\pi_N^{-1}(\partial_v N)$, and refer to these as the \emph{horizontal boundary} and \emph{vertical boundary} of $N^o$ respectively.

Recall that $\lambda^o$ is the orientation cover of $\lambda$, see Section \ref{subsec:length_functions} for its definition. There is a natural embedding of $\lambda^o\to N^o$ which sends each pair $(x,\ast)\in\lambda^o$ to the pair $(x,u)\in N^o$, where $u$ is the orientation on the tie containing $x$ so that it passes from the left to the right (with respect to the orientation on $N^o$ induced by the orientation on $S$) of the leaf of $\lambda$ that contains $x$ equipped with the orientation $\ast$. Via this embedding, $\lambda^o$ is a geodesic lamination in $N^o$, $N^o$ is a train track neighborhood of $\lambda^o$, and the $2$--fold cover $N^o\to N$ restricts to the $2$--fold cover $\lambda^o\to\lambda$. Note that the leaves of $\lambda^o$ are naturally oriented, and denote the set of leaves of $\lambda^o$ by $\Lambda^o$, see Section \ref{back}. Similarly, the ties of $N^o$ are naturally oriented, and the preimage of every tie $h$ of $N$ under the $2$--fold cover $N^o\to N$ are two ties whose natural orientations induce opposite orientations on $h$.

\subsection{Relative homology of \texorpdfstring{$N^o$}{No}}\label{sec: homology background}

Let $N$ be a train track neighborhood of $\lambda$, and let $\lambda^o$ and $N^o$ be the orientation covers of $\lambda$ and $N$ respectively. Let $G$ be either an Abelian group or a real vector space, i.e. $G$ is an $\mathbb{A}$--module, where $\mathbb{A}=\mathbb{Z}$ or $\mathbb{A}=\mathbb{R}$. We will now collect some basic facts about the relative homology groups $H_1(N^o,\partial_vN^o;G)$ and $H_1(N^o,\partial_h N^o;G)$. For our purposes, we will only use the results in this section for the Abelian groups $(\mathbb R/2\pi\mathbb{Z})^k$ and $(\mathbb C/2\pi i\mathbb{Z})^k$ for some positive integer $k$, and for the vector spaces $\mathbb{R}^k$ for some positive integer $k$.

\subsubsection{Bases for \texorpdfstring{$H_1(N^o,\partial_vN^o;\mathbb{A})$}{H1(No,partial-v-No,G)} and \texorpdfstring{$H_1(N^o,\partial_h N^o;\mathbb{A})$}{H1(No,partial-h-No,G}}\label{sec: bases} 

Consider the subdivision of $N$ into rectangles $R_1,\dots,R_n$ (see Section \ref{sec: train tracks}), and let $R^o_1,\dots, R^o_{2n}$ be the rectangles of $N^o$ such that for each $l\in\{1,\dots,n\}$, $R^o_l$ and $R^o_{n+l}$ both project to $R_l$. Choose a tie $k_l$ in $R_l$, and let $\mathsf{k}_l$ and $\mathsf{k}_{n+l}$ be the lifts of $k_l$ in $R^o_l$ and $R^o_{n+l}$ respectively. Since the ties of $N^o$ admit natural orientations, for each $l\in\{1,\dots,2n\}$, we may view $\mathsf{k}_l$ as a $1$--cycle of $N^o$ relative to $\partial_hN^o$, and so it represents a relative homology class $[\mathsf{k}_l]$ in $H_1(N^o, \partial_h N^o; \mathbb{A})$, see Figure \ref{fig:homology}. 

Similarly, for each $l\in\{1,\dots,n\}$, choose an unoriented path $r_l^*$ in $R_l$ that is transverse to every tie of $R_l$, and has endpoints in each of the two boundary ties of $R_l$. For each endpoint $p$ of $r_l^*$, let $s_p$ be a subsegment of the switch containing $p$, so that one of its endpoints is $p$ and its other endpoint lies in the vertical boundary component of $N$ contained in this switch. Then let $k_l^*:= r_l^*\cup s_{p_1}\cup s_{p_2}$, where $p_1$ and $p_2$ are the endpoints of $r_l^*$, and let $\mathsf{k}_l^*$ and $\mathsf{k}_{n+l}^*$ be the lifts in $N^o$ of $k_l^*$ that intersect the interior of the rectangles $R^o_l$ and $R^o_{n+l}$, respectively. Since $\mathsf{k}_l^*$ intersects $\mathsf{k}_l$ transversely for all $l\in\{1,\dots,2n\}$, $\mathsf{k}_l^*$ can be oriented so that it passes from the right to the left of $\mathsf{k}_l$. Thus, we may also view $\mathsf{k}_l^*$ as a $1$--cycle of $N^o$ relative to $\partial_v N^o$, and so it represents a relative homology class $[\mathsf{k}_l^*]$ in $H_1(N^o, \partial_v N^o; \mathbb{A})$, see Figure \ref{fig:homology}.

\begin{figure}[h!]
	\includegraphics[width=10cm]{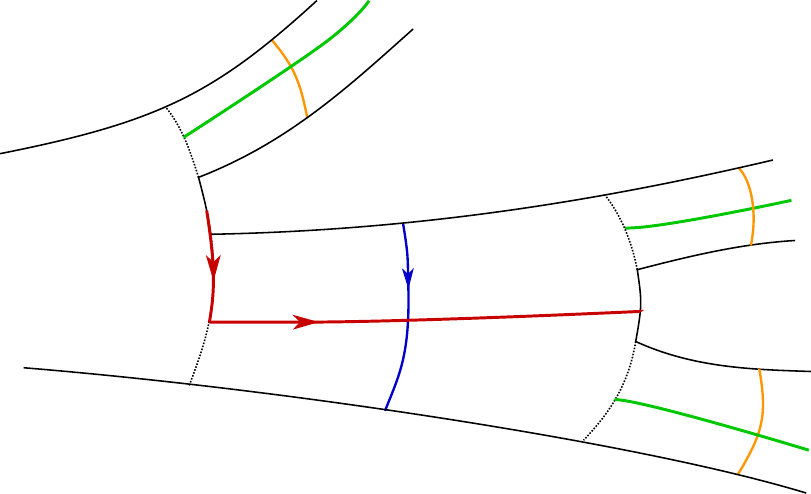}
	\put(-206, 129){\small $r_j^*$}
	\put(-178, 141){\small $k_j$}
	\put(-81,54){\small $\mathsf{k}_l^*$}
	\put(-145,32){\small $\mathsf k_l$}
	\put(-222,86){\small $s_{p_1}$}
	\put(-220,60){\small $p_1$}
	\put(-59,60){\small $p_2$}
	\caption{\small The unoriented paths $r_j^*$ (in green), $\mathsf{k}_l \in H_1(N^o, \partial_v N^o; \mathbb{A})$ (in blue) and $\mathsf{k}^*_l\in H_1(N^o, \partial_v N^o; \mathbb{A})$ (in red) in $N^o$.}
	\label{fig:homology}
\end{figure}

It is straightforward to see that for all $l\in\{1,\dots,2n\}$, $[\mathsf{k}_l]$ and $[\mathsf{k}_l^*]$ do not depend on any of the choices made in their construction. The following proposition is a consequence of repeated applications of the relative Mayer-Vietoris exact sequence (see end of \cite[Section 3.1]{hat_alg}). 

\begin{proposition}\label{prop:G_intersection}
	The homology group $H_1(N^o,\partial_h N^o; \mathbb{A})$ is freely generated (as an $\mathbb{A}$--module) by $[\mathsf{k}_1],\dots,[\mathsf{k}_{2n}]$, and $H_1(N^o,\partial_v N^o; \mathbb{A})$ is freely generated by $[\mathsf{k}_1^*],\dots,[\mathsf{k}_{2n}^*]$.
\end{proposition}

Henceforth, we will refer to $\left\{[\mathsf{k}_1],\dots,[\mathsf{k}_{2n}]\right\}$ and $\left\{[\mathsf{k}_1^*],\dots,[\mathsf{k}_{2n}^*]\right\}$ as the \emph{standard generating sets} of the relative homology groups $H_1(N^o,\partial_h N^o; \mathbb{A})$ and $H_1(N^o,\partial_v N^o; \mathbb{A})$, respectively.

Since $H_0(N^o, \partial_v N^o; \mathbb{A})$ is trivial, the following proposition is a consequence of the universal coefficient theorem for homology and cohomology groups, see \cite[Corollary 3A.4]{hat_alg} and Proposition \ref{prop:G_intersection}.

\begin{proposition} \label{prop:universal_coeff} Let $G$ be an $\mathbb{A}$--module. Then as $\mathbb{A}$--modules,
	\[H_1(N^o, \partial_v N^o; G) = H_1(N^o, \partial_v N^o; \mathbb{A}) \otimes_{\mathbb{A}} G \cong\bigoplus_{l=1}^{2n}G\cdot[\mathsf{k}_l^*].\]
\end{proposition}

\subsubsection{Intersection pairing in homology}\label{int-pair}

The orientation on $N^o$ gives us a generator $[N^o]\in H_2(N^o,\partial N^o;\mathbb{A})$ called the \emph{relative fundamental class}. Since $\partial_v N^o\cup\partial_h N^o=\partial N^o$ and $\partial_v N^o\cap\partial_h N^o$ is equal to the boundaries of both $\partial_vN^o$ and $\partial_hN^o$, we may apply Lefschetz-Poincar\'e duality (see \cite[Theorem 3.43]{hat_alg}) to deduce the following. 

\begin{proposition}  \label{prop: duality}
	Let $G$ be an $\mathbb{A}$--module. The cap product with $[N^o]$ gives isomorphisms 
	\[D\colon H^1(N^o,\partial_v N^o;G)\to H_1(N^o,\partial_h N^o;G),\]
	and
	\[
	D'\colon H^1(N^o,\partial_h N^o;G)\to H_1(N^o,\partial_v N^o;G).\]
	Furthermore, the isomorphisms $D, D'$ and the cup product $\smile$ on cohomology classes determine a morphism of $G$--modules
	\[
	\mathbb{I}_G: H_1(N^o, \partial_h N^o; \mathbb{A}) \times H_1(N^o, \partial_v N^o; G) \longrightarrow G
	\]
	given by $\mathbb{I}_G(c,c') := D^{-1}(c) \smile (D')^{-1}(c') \in G= H^2(N^o,\partial N^o;G)$.
\end{proposition}

In terms of the standard generating sets of $H_1(N^o, \partial_h N^o; \mathbb{A})$ and $H_1(N^o, \partial_v N^o; \mathbb{A})$, if 
\[c'=\sum_{j=1}^{2n}a_j[\mathsf{k}_j]\in H_1(N^o, \partial_h N^o; \mathbb{A})\,\,\text{ and }\,\,c=\sum_{l=1}^{2n}b_l[\mathsf{k}_l^*]\in H_1(N^o, \partial_v N^o; G),\] 
then
\[\mathbb{I}_G(c',c)=\sum_{j=1}^{2n}a_j \cdot b_j\in G.\]

\subsection{Relative \texorpdfstring{$\lambda^o$}{lo}--cocycle representatives of \texorpdfstring{$H_1(N^o,\partial_v N^o;G)$}{H1(No,G)}}\label{sec:representatives}

Let $G$ be an $\mathbb{A}$--module and $N$ a train track neighborhood of $\lambda$. Following Bonahon-Dreyer \cite[Sections 4.4 and 4.5]{BoD}, we now describe, for each relative cohomology class in $H_1(N^o,\partial_v N^o;G)$, a canonical relative cocycle representative. 

Recall that
\begin{itemize}
	\item $\wt\Delta^{2*}$ denotes the set of distinct pairs of plaques of $\wt\lambda$,
	\item $\wt{\Delta}^o$ denotes the set of labelings, i.e. triples of points in $\partial\wt\lambda$ that are the vertices of a plaque of $\wt\lambda$,
	\item $\preceq$ denotes the counter-clockwise cyclic order on $\partial\wt\lambda \subset \partial\wt S$ (with respect to the orientation on $S$), and
	\item For any plaques $T_1$ and $T_2$ of $\wt\lambda$ and any plaque $T$ that separates $T_1$ and $T_2$, $(x_{T,1},x_{T,2},x_{T,3})$ denotes the coherent labeling of the vertices of $T$ with respect to $(T_1,T_2)$, i.e. if $h_{T,-}$ and $h_{T,+}$ are the two edges of $T$ that separate $T_1$ and $T_2$, so that $h_{T,-}$ separates $T_1$ and $h_{T,+}$, then $x_{T,2}$ is the common endpoint of $h_{T,-}$ and $h_{T,+}$, and $x_{T,1}$ and $x_{T,3}$ are respectively the endpoints of $h_{T,-}$ and $h_{T,+}$ that are not $x_{T,2}$, see Figure \ref{fig:notations3}.
\end{itemize}

\begin{definition}[$\mathcal{Z}(\lambda^o, \mathrm{slits};G)$]\label{def:oriented relative cocycle}
	A \textit{relative $\lambda^o$--cocycle with values in $G$} is a functiuon $\eta \co \wt\Delta^{2*} \to G$ satisfying the following properties:
	\begin{enumerate}
		\item($\Gamma$--equivariance) for all pairs of plaques ${\bf T} \in \wt\Delta^{2*}$ and all elements $\gamma\in\Gamma$, 
		$$\eta\left(\gamma\cdot{\bf T}\right) = \eta\left({\bf T}\right);$$
		\item(Cocycle boundary condition) there exists a $\Gamma$--invariant function
		\[
		\partial \eta : \wt{\Delta}^o \longrightarrow G,
		\]
		called the \emph{boundary} of $\eta$, such that for any pairwise distinct plaques $T_1$, $T_2$, $T$ of $\wt\lambda$ with $T$ separating $T_1$ and $T_2$, we have
		\[
		\begin{cases}
			\eta(T_1,T_2) = \eta(T_1,T) + \eta(T,T_2) -\partial \eta(\mathbf{x}_T),  &\quad\text{if }x_{T,3} \prec x_{T,2} \prec x_{T,1},\\
			\eta(T_1,T_2) = \eta(T_1,T) + \eta(T,T_2) + \partial \eta(\mathbf{x}_T), &\quad\text{if } x_{T,1} \prec x_{T,2} \prec x_{T,3},\\
		\end{cases}
		\]		
		where ${\bf x}_T=(x_{T,1},x_{T,2},x_{T,3})$ denotes the coherent labeling of the vertices of $T$ with respect to $(T_1,T_2)$.
	\end{enumerate}
	We denote the $\mathbb{A}$--module of relative $\lambda^o$--cocycles with values in $G$ by $\mathcal{Z}(\lambda^o, \mathrm{slits}; G)$.
\end{definition} 

Although the notion of a slit does not appear in this paper, we use the notation $\mathcal{Z}(\lambda^o, \mathrm{slits}; G)$ to be consistent with the notation in Bonahon-Dreyer \cite{BoD}. 

To relate $\mathcal{Z}(\lambda^o, \mathrm{slits};G)$ to $H_1(N^o,\partial_v N^o;G)$, we need to recall the notion of a \emph{tightly transverse path} from \cite{BoD}. If $\pi_N : N^o \to N$ is the orientation cover of $N$ and $\pi_S : \widetilde{S} \to S$ is the universal cover of $S$, then we denote $\widetilde{N}:= \pi_S^{-1}(N)$ and define $\widetilde{N}^o$ to be the set of pairs in $\widetilde{N} \times N^o$ that project to the same point, that is 
\[
\widetilde{N}^o:= \{ (p,q) \in \widetilde{N} \times N^o \mid \pi_S(p) = \pi_N(q) \}.
\]
We also denote by $\pi_{\widetilde{N}} : \widetilde{N}^o \to \widetilde{N}$ the natural projection $\pi_{\widetilde{N}}(p,q):= p$.

\begin{definition}\label{def:tightly transverse}
	An unoriented path $k$  in $\wt N^o$ is \emph{tightly transverse} to $\wt \lambda^o$ if $k$ intersects $\wt \lambda^o$ transversely at least once, and every connected component $e$ of $k-\wt\lambda^o$ satisfies one of the following properties, see Figure \ref{fig:tightlytransverse}:
	\begin{itemize}
		\item the component $e$ contains an endpoint of $k$,
		\item the projection $\pi_{\widetilde{N}}(e)$ is contained inside the intersection $\widetilde{N} \cap T$ for some plaque $T \in \widetilde{\Delta}$, and the endpoints of $\pi_{\widetilde{N}}(e)$ lie in a pair of distinct edges $g_1, g_2$ of $T$. Moreover, $\pi_{\widetilde{N}}(e)$ separates $\widetilde{N} \cap T$ into two connected components, one of which contains no vertical boundary components of $\widetilde{N}$ and has boundary equal to $\pi_{\widetilde{N}}(e) \cup r_1 \cup r_2$, where $r_1$ and $r_2$ are asymptotic geodesic rays contained inside $g_1$ and $g_2$ respectively.
	\end{itemize}
	
	An unoriented path $k$ in $N^o$ is \emph{tightly transverse} if it lifts to a tightly transverse path in $\wt N^o$. Any connected component of $k-\lambda^o$ (respectively, $k-\wt\lambda^o$) that contains an endpoint of $k$ is called an \emph{exterior subsegment} of $k$. Then the \emph{interior subsegment} of $k$, denoted $k'$, is the complement in $k$ of the exterior subsegments of $k$.
\end{definition}

\begin{figure}[h!]
	\includegraphics[width=7cm]{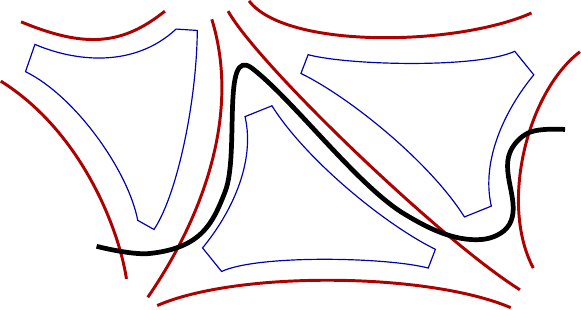}
	\put(-3,60){\small $\pi_{\wt N}(k)$}
	\caption{\small The projection to $\wt N$ of a tightly transverse path $k$. In this case, $k-\wt\lambda^o$ projects to five connected components $\pi_{\wt N}(e)$.}
	\label{fig:tightlytransverse}
\end{figure}

\subsubsection{Homological interpretation of relative \texorpdfstring{$\lambda^o$}{lambda-o}--cocycles}\label{sec: cohom int}
Let $\pi_N:N^o\to N$ denote the double cover. 

In Bonahon-Dreyer \cite[Section 4.4]{BoD}, a relative $\lambda^o$--cocycle $\eta$ is interpreted as a function 
\[
F_\eta : \{\text{unoriented paths in $N^o$ that are tightly transverse to $\lambda^o$}\} \longrightarrow G,
\]
which is defined as follows. (In \cite{BoD}, the authors consider only the case where $\mathbb{A}=\mathbb{R}$, but their argument applies to the case where $\mathbb{A}=\mathbb{Z}$.)
Any path $k$ in $N^o$ that is tightly transverse to $\lambda^o$ admits a natural orientation $\mathsf{k}$, with respect to which all the leaves of $\lambda^o$ that intersect $k$ are oriented from right to left with respect to $\mathsf{k}$. Select now $\widetilde{\mathsf{k}}$, a lift of $\pi_N(\mathsf{k})$ inside $\widetilde{S}$, and let $T_1$ and $T_2$ be the plaques of $\widetilde{\lambda}$ that contain the backward and forward endpoints of $\widetilde{\mathsf{k}}$. The function $F_\eta$ then satisfies
\[
F_\eta(k) = \eta(T_1, T_2) \in G .
\]
Notice that $F_\eta(k)$ does not depend on the choice of the lift $\widetilde{\mathsf{k}}$ since $\eta$ is $\Gamma$--equivariant. Also, the function $F_\eta$ descends to a function on the set of unoriented paths in $N$ that are tightly transverse to $\lambda$ only when $\eta$ satisfies $\eta(T_1,T_2) = \eta(T_2, T_1)$ for every pair of distinct plaques $T_1, T_2$ of $\widetilde{\lambda}$.

Viewing the relative $\lambda^o$--cocycle $\eta$ as the $G$--valued function $F_\eta$ allows us to think of them as cocycles for the cohomology classes in $H^1(N^o,\partial_h N^o;G)$, which we can then relate to homology classes in $H_1(N^o,\partial_v N^o;G)$ via Poincar\'e duality. More precisely, consider the morphism of $\mathbb{A}$--modules
\begin{equation}\label{eqn: cocycle homology} \varsigma_G:\mathcal{Z}(\lambda^o,\mathrm{slits};G) \to H_1(N^o,\partial_v N^o;G)\end{equation}
that sends every $\eta \in \mathcal{Z}(\lambda^o,\mathrm{slits};G)$ to the homology class 
\begin{equation}\label{eqn: intersection}
	[\eta]:= \sum_{j = 1}^{2 n} F_\eta(k_j) [\mathsf{k}_j^*]
	\in H_1(N^o, \partial_v N^o; G),
\end{equation}
where $\left\{[\mathsf{k}_1], \dots, [\mathsf{k}_{2n}]\right\}$ and $\left\{[\mathsf{k}_1^*], \dots, [\mathsf{k}_{2 n}^*]\right\}$ are standard generating sets of $H_1(N^o,\partial_h N^o;\mathbb{Z})$ and $H_1(N^o,\partial_v N^o;\mathbb{Z})$ respectively (see Proposition~\ref{prop:G_intersection}), and $k_j$ is the unoriented path in $N^o$ obtained by forgetting the orientation on the path $\mathsf{k}_j$.

Observe that by construction, the element $[\eta]\in H_1(N^o, \partial_v N^o; G)$ defined above satisfies
\[
\mathbb{I}_G([\mathsf{k}_j], [\eta])= F_\eta(k_j)
\]
for every $j \in \{1, \dots, 2n\}$.

The next proposition states that the morphism $\varsigma_G$ is in fact an isomorphism. 

\begin{proposition}\label{prop: cocycles and cohomology}
	The map
	\[\varsigma_G:\mathcal{Z}(\lambda^o,\mathrm{slits};G) \longrightarrow H_1(N^o,\partial_v N^o;G)\]
	is an isomorphism of $\mathbb{A}$--modules, and is uniquely characterized by the following property. Consider a path $k$ in $N^o$ that is tightly transverse to $\lambda^o$, has endpoints inside $\partial_h N^o$, and whose exterior subsegments are subpaths of ties of $N^o$. Then for all $\eta\in \mathcal{Z}(\lambda^o,\mathrm{slits};G)$, we have
	\[
	\mathbb{I}_G([\mathsf{k}], [\eta]) = F_\eta(k) ,
	\]
	where $\mathsf{k}$ denotes the orientation of $k$ with respect to which the leaves of $\lambda^o$ cross $\mathsf{k}$ from right to left. 
\end{proposition}

Proposition \ref{prop: cocycles and cohomology} was proven by Bonahon and Dreyer \cite[Proposition 4.5]{BoD} in the case when $\mathbb{A}=\mathbb{R}=G$, but their proof generalizes essentially verbatim. For this reason, we omit the proof, and we refer to their paper for the details.

Via the isomorphism $\mathcal{Z}(\lambda^o,\mathrm{slits};G) \cong H_1(N^o,\partial_v N^o;G)$, Bonahon and Dreyer \cite{BoD} also observed that the boundary map $\partial\eta:\widetilde\Delta^o\to G$ associated to every relative $\lambda^o$--cocycle $\eta\in\mathcal{Z}(\lambda^o,\mathrm{slits};G)$ has a homological interpretation. (Indeed, this is the main advantage one gains from thinking of $\mathcal{Z}(\lambda^o,\mathrm{slits};G)$ as associated to homology classes instead of cohomology classes.) To describe this, we introduce the following terminology. For every vertical boundary component $\mathsf t$ of $N^o$, let $\wt{\mathsf t}$ be a lift to $\wt S$ of $\pi_N(\mathsf t)$. Since $\mathsf t$ lies in a tie of $N^o$, it has a natural orientation, which induces an orientation on $\wt{\mathsf t}$. Note that $\wt{\mathsf t}$ lies in an oriented tie $\wt{\mathsf k}$ of $\wt N$ and a plaque $T$ of $\wt\lambda$, and the endpoints of $\wt{\mathsf k}\cap T$ lie in two different edges of $T$. Thus, we may label the vertices of $T$ by ${\bf x}=(x_1,x_2,x_3)$ so that the edge of $T$ with endpoints $x_1$ and $x_2$ contains the backward endpoint of $\wt{\mathsf k}\cap T$, while the edge of $T$ with endpoints $x_2$ and $x_3$ contains the forward endpoint of $\wt{\mathsf k}\cap T$. We refer to any triple in the $\Gamma$--orbit of ${\bf x}$ as a \emph{triple associated to $\mathsf t$}. Note that the set of triples associated to $\mathsf t$ does not depend on the choice of $\wt{\mathsf t}$. See Figure~\ref{fig:triple associated}.

\begin{figure}[h!]
	\includegraphics[width=7cm]{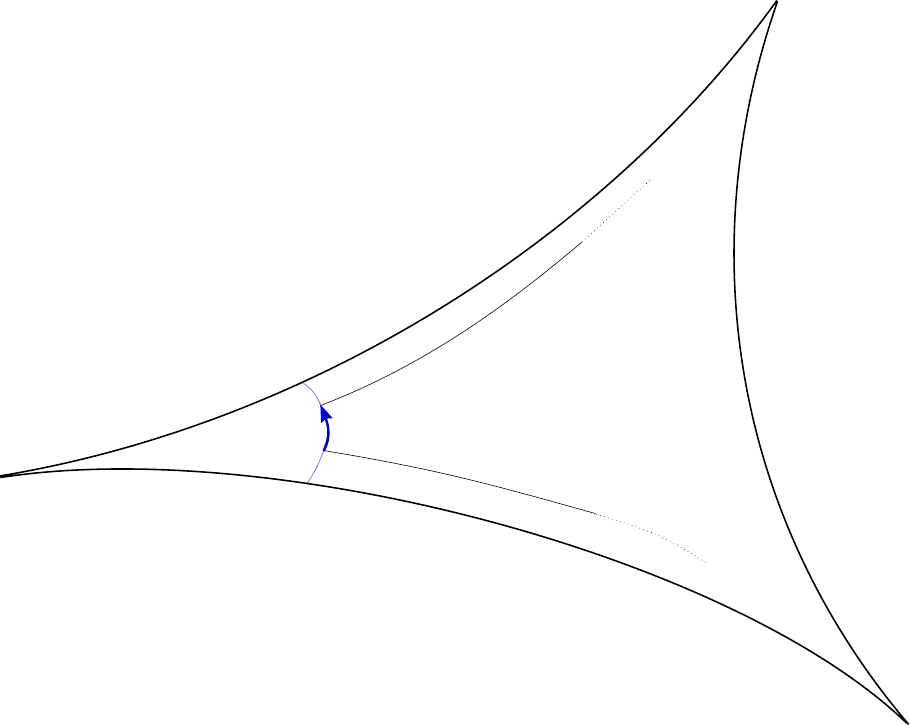}
	\put(-210,55){\small $x_2$}
	\put(-135,62){\small $\wt{\mathsf t}$}
	\put(-0,-4){\small $x_1$}
	\put(-32,160){\small $x_3$}
	\caption{\small The lift $\wt{\mathsf t}$ of the oriented vertical boundary component $\mathsf t$ and the triple $(x_1,x_2,x_3)$ associated to $\mathsf t$.}
	\label{fig:triple associated}
\end{figure}

The following proposition is a consequence of Equation \eqref{eqn: intersection} and the cocycle boundary condition in Definition \ref{def:oriented relative cocycle}. 

\begin{proposition}\label{prop: homological boundary}
	If the map $\partial:H_1(N^o,\partial_v N^o;G)\to H_0(\partial_v N^o;G)$ denotes the natural boundary map, then for all $\eta\in \mathcal{Z}(\lambda^o,\mathrm{slits};G)$, we have
	\[\partial[\eta]=\sum_{\mathsf t\in\pi_0(\partial_v N^o)}\partial\eta({\bf x}_{\mathsf t})\cdot [\mathsf t]\in H_0(\partial_v N^o;G),\] 
	where ${\bf x}_{\mathsf t}$ is some (any) triple associated to the vertical boundary component $\mathsf t$ of $N^o$. 
\end{proposition}

\subsubsection{\texorpdfstring{$\lambda^o$}{lambda-o}--cocycles}\label{sec: cocycles}
We will also use the following specialization of relative $\lambda^o$--cocycles with values in $G$.

\begin{definition}[$\mathcal{Z}(\lambda^o; G)$]\label{def: non-relative cocycles}
	A \emph{$\lambda^o$--cocycle with values in $G$} is an element $\eta\in\mathcal{Z}(\lambda^o,\mathrm{slits};G)$ with the property that $\partial\eta=0$. We denote the group of $\lambda^o$--cocycles with values in $G$ by $\mathcal{Z}(\lambda^o; G)$.
\end{definition}

As a consequence of Propositions \ref{prop: cocycles and cohomology} and \ref{prop: homological boundary}, $\varsigma_G$ restricts to an isomorphism
\[\varsigma_G|_{\mathcal{Z}(\lambda^o;G)}:\mathcal{Z}(\lambda^o;G)\to\{c\in H_1(N^o,\partial_v N^o;G):\partial c=0\}.\]
Since $H_1(\partial_v N^o;G)=0$ and $H_1(\partial_h N^o;G)=0$, the long exact sequences of relative homology give the short exact sequences
\[0\to H_1(N^o;G)\xrightarrow[]{I_v} H_1(N^o,\partial_v N^o;G)\xrightarrow[]{\partial} H_0(\partial_v N^o;G)\to 0\]
and
\[0\to H_1(N^o;G)\xrightarrow[]{I_h} H_1(N^o,\partial_h N^o;G)\xrightarrow[]{\partial} H_0(\partial_h N^o;G)\to 0,\]
where $I_v$ and $I_h$ are induced by the natural inclusion maps of pairs $(N^o,\emptyset) \subset (N^o,\partial_v N^o)$ and $(N^o,\emptyset) \subset (N^o,\partial_h N^o)$, respectively. The first short exact sequence implies that 
\[I_v^{-1}\circ \varsigma_G|_{\mathcal{Z}(\lambda^o;G)}:\mathcal{Z}(\lambda^o;G)\to H_1(N^o;G).\]
is a well-defined isomorphism, while the second short exact sequence implies that 
\[I_h\circ I_v^{-1}\circ \varsigma_G|_{\mathcal{Z}(\lambda^o;G)}:\mathcal{Z}(\lambda^o;G)\to H_1(N^o,\partial_h N^o;G).\]
is an embedding.

In light of the discussion above, given $\mu\in\mathcal{Z}(\lambda^o;G)$, we will abuse notation and denote its images in $H_1(N^o,\partial_v N^o;G)$, $H_1(N^o;G)$, and $H_1(N^o,\partial_h N^o;G)$ all by $[\mu]$; it will be clear from context which we are referring to. 

\subsubsection{Intersection pairing between relative cocycles}\label{length_cyclic} 

By Proposition \ref{prop: cocycles and cohomology}, we have identified $\mathcal{Z}(\lambda^o,{\rm slits},G)$ with $H_1(N^o,\partial_vN^o;G)$ as $\mathbb{A}$--modules. Also, we have described in Section~\ref{sec: cocycles} how one can identify $\mathcal{Z}(\lambda^o;\mathbb{A})$ with an $\mathbb{A}$--submodule of $H_1(N^o,\partial_hN^o;\mathbb{A})$. Notice however, that $\mathcal{Z}(\lambda^o,{\rm slits},G)$ and $\mathcal{Z}(\lambda^o;G)$ do not depend on the choice of a train track neighborhood $N$ of $\lambda$. We will now verify that neither does the map 
\begin{align}\label{eqn: no N}\mathcal{Z}(\lambda^o;\mathbb{A})\times \mathcal{Z}(\lambda^o,{\rm slits},G)\to\mathbb{A}\end{align}
given by $(\mu,\eta)\mapsto \mathbb{I}_G([\mu],[\eta])$, where 
\begin{itemize}
	\item $[\eta]:=\varsigma_G(\eta)\in H_1(N^o, \partial_v N^o; G)$, and $\varsigma_G$ is the map given by Equation \eqref{eqn: cocycle homology},
	\item $[\mu]\in H_1(N^o, \partial_h N^o; \mathbb{A})$ is the relative homology class determined by the $\lambda^o$--cocycle $\mu\in\mathcal{Z}(\lambda^o,{\rm slits},\mathbb{A})$ described in Section \ref{sec: cocycles} (with $G=\mathbb{A}$), and
	\item $\mathbb{I}_G :  H_1(N^o, \partial_h N^o; \mathbb{A}) \times H_1(N^o, \partial_v N^o; G) \to G$ is the $G$--valued intersection pairing given by Proposition \ref{prop: duality}.
\end{itemize}

		\begin{proposition}
			The map given by \eqref{eqn: no N} does not depend on the train track neighborhood $N$.
		\end{proposition}
		
		\begin{proof}
			Assume that $N_1$ and $N_2$ are two train track neighborhoods that carry the lamination $\lambda$ and such that $N_1 \subset N_2$. For both $i=1,2$ and for each vertical boundary component $t$ of $N_i$, let $k_i(t)$ denote the tie of $N_i$ that contains $t$. We can then find a deformation retraction $r : N_2 \to N_1$ that satisfies the following properties:
			\begin{enumerate}
				\item $r(\partial_v N_2) = \partial_v N_1$ and $r(\partial_h N_2) = \partial_h N_1$;
				\item for each plaque $T$ of $\widetilde\lambda$, $r$ sends each component $t$ of $\partial_vN_2\cap T$ to the unique component $t'$ in $\partial_v N_1 \cap T$ for which the points $k_1(t) \cap T$ and $k_2(t') \cap T$ lie in the same pair of distinct edges of $T$.
			\end{enumerate}
			Such a retraction determines homotopy equivalences on the associated orientation covers
			\[
			r^o : (N_2^o,\partial_v N_2^o) \longrightarrow (N_1^o, \partial_v N_1^o), \quad r^o : (N_2^o, \partial_h N_2^o) \longrightarrow (N_1^o, \partial_h N_1^o), 
			\]
			and induces a natural bijective correspondence between the vertical boundary components of $N_1^o$ and of $N_2^o$. 
			
			By the naturality of the intersection pairings
			\[
			\mathbb I_{G, j} : H_1(N_j^o, \partial_h N_j^o;\mathbb{A}) \times H_1(N_j^o, \partial_v N_j^o;G) \longrightarrow G
			\]
			for $j = 1, 2$, we have $\mathbb I_{G,2}(c,c') = \mathbb I_{G,1}(r^o_*(c), r^o_*(c'))$, where $r^o_*$ denote the maps in homology induced by $r^o$. For any $\lambda^o$--cocycle $\mu \in \mathcal{Z}(\lambda^o;\mathbb{A})$, let
			\[
			[\mu]_1 \in H_1(N_1^o, \partial_h N_1^o;\mathbb{R}), \quad [\mu]_2 \in H_1(N_2^o, \partial_h N_2^o;\mathbb{R})
			\]
			be the corresponding relative homology classes introduced in Section \ref{sec: cocycles}. Since the maps $\mu \mapsto [\mu]_1$ and $\mu \mapsto [\mu]_2$ are natural, they satisfy $[\mu]_1 = r_*^o([\mu]_2)$. It follows from the characterization in Proposition \ref{prop: cocycles and cohomology} of the isomorphisms
			\begin{gather*}
				\varsigma_{G,1} : \mathcal{Z}(\lambda^o,\text{slits};G) \to H_1(N_1^o, \partial_v N_1^o;G) , \\ 
				\varsigma_{G,2} : \mathcal{Z}(\lambda^o,\text{slits};G) \to H_1(N_2^o, \partial_v N_2^o;G)
			\end{gather*}
			that $r_*^o \circ \varsigma_{G, 2} = \varsigma_{G, 1}$. As such, if we set $[\eta]_j:= \varsigma_{G, j}(\eta)$ for both $j=1,2$, then combining what we just observed we deduce the identities
			\begin{align*}
				\mathbb I_{G,2}([\mu]_2,[\eta]_2) & = \mathbb I_{G,1}(r^o_*([\mu]_2),r^o_*([\eta]_2)) \\
				& = \mathbb I_{G,1}([\mu]_1, [\eta]_1) .
			\end{align*}
			
			To conclude, it suffices to observe that, for any pair of train track neighborhoods $N$ and $N'$ that carry the lamination $\lambda$, we can find a train track neighborhood $M$ contained in both $N$ and $N'$. The above discussion then proves that both length functions defined through the choices of $N$ and $N'$ coincide with the length functions defined via $M$.
		\end{proof}
		
		\subsection{Homological interpretation of \texorpdfstring{$\lambda$}{l}-cocyclic pairs}\label{sub:complex}
		Recall that $\mathcal{Y}_d(\lambda;G)$ denotes the space of $\lambda$--cocyclic pairs of dimension $d$ with values in $G$. We previously observed (see Remark \ref{rmk: lambda cocyclic modules}) that $\mathcal{Y}_d(\lambda;G)$ is an $\mathbb{A}$--module. We now relate $\mathcal{Y}_d(\lambda;G)$ to the $\mathbb{A}$--modules $H_1(N^o,\partial_v N^o;G^{\mathcal{A}})$ and $H_0(\partial_v N^o;G^{\mathcal{A}})$, where $G^{\mathcal{A}}$ is the $\mathbb{A}$--module given by
		\[G^\mathcal{A}: = \{ (x_\mathbf{i})_{\mathbf{i}\in\mathcal{A}} \mid x_{\mathbf{i}} \in G \text{ for all } \mathbf{i} \in \mathcal{A} \},\] 
		and $\mathcal{A}$ denotes pairs of positive integers that sum to $d$. 
		
		To do so, it is convenient to define the notion of a $\lambda$--triangle data function. Recall that $\mathcal{B}$ denotes triples of positive integers that sum to $d$, while $\wt\Delta^o$ is the set of labelings, i.e. ordered triples of points in $\partial\wt\lambda$ that are the vertices of a plaque of $\wt\lambda$. Recall also that given the labeling ${\bf x}=(x_1,x_2,x_3)\in\wt\Delta^o$, we defined the labelings
		\[\widehat{\bf x}:=(x_2,x_1,x_3),\quad{\bf x}_+:=(x_2,x_3,x_1),\quad\text{ and }\quad{\bf x}_-:=(x_3,x_1,x_2).\] 
		
		\begin{definition}\label{def:triangle data}
			A $\lambda$--\textit{triangle data function of dimension $d$ with values in $G$} is a map
			$$\theta \co \wt\Delta^o  \times \mathcal{B} \to G$$
			satisfying the following conditions: for all labelings ${\bf x}\in \wt\Delta^o$, all triples ${\bf j} \in \mathcal{B}$, and all elements $\gamma\in\Gamma$, we have
			\begin{enumerate}
				\item(Symmetry) 
				$$\theta\left({\bf x}, {\bf j}\right) = \theta\left({\bf x}_+, {\bf j}_+\right)= \theta\left({\bf x}_-, {\bf j}_-\right)=-\theta(\widehat{\bf x},\widehat{\bf j});$$
				\item($\Gamma$--equivariance)
				$$\theta\left(\gamma\cdot {\bf x}, {\bf j}\right) = \theta\left({\bf x}, {\bf j}\right).$$
			\end{enumerate}	
			We denote by $\mathcal{T}_d(\lambda;G)$ the $\mathbb{A}$--module of $\lambda$--triangle data functions of dimension $d$ with values in $G$.
		\end{definition} 
		
		Recall that 
		\[\widehat{\cdot}:\mathcal{A}\to\mathcal{A},\quad\widehat{\cdot}:\wt\Delta^{2*}\to\wt\Delta^{2*}, \quad\text{and}\quad\widehat{\cdot}:G^{\mathcal{A}}\to G^{\mathcal{A}}\] 
		are the involutions respectively defined by $\widehat{(i_1,i_2)}=(i_2,i_1)$, $\widehat{(T_1,T_2)}=(T_2,T_1)$, and $\widehat{g}_{\bf i}=g_{\widehat{\bf i}}$ for all ${\bf i}\in\mathcal{A}$. Using these, define the involution 
		\[\iota':\mathcal{Z}(\lambda^o,\mathrm{slits};G^{\mathcal{A}})\to\mathcal{Z}(\lambda^o,\mathrm{slits};G^{\mathcal{A}})\] 
		by $\iota'(\eta)(\widehat{\bf T})=\widehat{\eta({\bf T})}$. Also, for any $\lambda$--triangle data function $\theta\in\mathcal{T}_d(\lambda;G)$, define  \[s_\theta:\widetilde{\Delta}^o\to G^{\mathcal{A}}\] 
		by
		\[
		s_\theta({\bf x})^{\mathbf{i}} = 
		\begin{cases}
			\sum_{\mathbf{j} \in \mathcal{B} : j_2 = i_1} \theta(\mathbf{x},\mathbf{j}) &  \text{if $x_3 \prec x_2 \prec x_1$,}  \\
			\sum_{\mathbf{j} \in \mathcal{B} : j_2 = i_2} \theta(\mathbf{x},\mathbf{j}) & \text{if $x_1 \prec x_2 \prec x_3$},
		\end{cases}
		\]
		for all ${\bf i}\in\mathcal{A}$.
		
		Now, given any $\lambda$--cocyclic pair $(\alpha,\theta)\in\mathcal{Y}_d(\lambda;G)$, we may view $\alpha$ as the function $\alpha:\widetilde\Delta^{2*}\to G^{\mathcal{A}}$ which sends every pair ${\bf T}\in\widetilde\Delta^{2*}$ to the vector $(\alpha({\bf T},{\bf i}))_{{\bf i}\in\mathcal{A}}\in G^{\mathcal{A}}$. Notice then that Conditions (2) and (4) of Definition \ref{def_cocycle} are equivalent to requiring that $\theta\in\mathcal{T}_d(\lambda;G)$, Conditions (3) and (5) of Definition \ref{def_cocycle} are equivalent to requiring that $\alpha\in \mathcal{Z}(\lambda^o,{\rm slits},G^{\mathcal{A}})$ and $\partial\alpha=-s_\theta$, and Condition (1) of Definition~\ref{def_cocycle} is equivalent to requiring that $\iota'(\alpha)=\alpha$. In other words, 
		\[\mathcal{Y}_d(\lambda;G)=\{ (\alpha, \theta) \in \mathcal{Z}(\lambda^o,\mathrm{slits};G^\mathcal A) \times \mathcal{T}_d(\lambda;G) :\iota'(\alpha)=\alpha\text{ and }\partial \alpha = - s_\theta \}.\]
		
			Since $\mathcal{Y}_d(\lambda;G)$ is a subset of $\mathcal{Z}(\lambda^o,\mathrm{slits};G^\mathcal A) \times \mathcal{T}_d(\lambda;G)$, we may now define the morphism of $\mathbb{A}$--modules
			\begin{align}\label{eqn: E}
				E:\mathcal{Y}_d(\lambda;G)\to H_1(N^o,\partial_v N^o;G^{\mathcal{A}})\times \mathcal{T}_d(\lambda;G)
			\end{align}
			by $E(\alpha,\theta):=([\alpha],\theta)$. It follows from Proposition \ref{prop: cocycles and cohomology} that $E$ is an embedding. We will now describe the image of $E$. To do so, we will need the following maps. 
			
			\begin{itemize}
				\item Let $\iota: N^o \to N^o$ be the involution of the $2$--fold cover $\pi_N:N^o \to N$, and let 
				\begin{align}\label{eqn: iota}
					\iota_*:H_1(N^o, \partial_v N^o; G^\mathcal{A})\to H_1(N^o, \partial_v N^o; G^\mathcal{A})
				\end{align}
				be the induced involution on homology. If $\left\{[\mathsf{k}^*_1],\dots,[\mathsf{k}^*_{2n}]\right\}$ is the standard generating set for $H_1(N^o, \partial_v N^o; G^\mathcal{A})$, then $\iota_*(g\cdot [\mathsf{k}^*_l])=-g\cdot [\mathsf{k}^*_{l+n}]$ and $\iota_*(g\cdot [\mathsf{k}^*_{l+n}])=-g\cdot [\mathsf{k}^*_l]$ for all $l\in\{1,\dots,n\}$ and all $g\in G^{\mathcal{A}}$.
				\item Let 
				\begin{align}\label{eqn: hat}
					\widehat{\cdot}:H_1(N^o, \partial_v N^o; G^\mathcal{A})\to H_1(N^o, \partial_v N^o; G^\mathcal{A})
				\end{align}
				be the involution that sends $\sum_{j=1}^m g_j\cdot [\mathsf{h}_j]$ to $\sum_{j=1}^m {\widehat g_j}\cdot [\mathsf{h}_j]$, where $g_j\in G^{\mathcal{A}}$ and $[\mathsf{h}_j]\in H_1(N^o, \partial_v N^o; \mathbb{Z})$.
				\item Let \[K : \mathcal{T}_d(\lambda;G) \to H_0(\partial_v N^o;G^\mathcal{A})\] be the morphism of $\mathbb{A}$--modules defined by
				\begin{align}\label{eqn: K def}
					K(\theta):= - \sum_{\mathsf t\in\pi_0(\partial_v N^o)} s_\theta({\bf x}_{\mathsf t}) \cdot [\mathsf t] \in H_0(\partial_v N^o; G^\mathcal{A}),
				\end{align}
				where $\mathbf{x}_{\mathsf t}=(x_{\mathsf t,1}, x_{\mathsf t,2}, x_{\mathsf t,3})\in\wt\Delta^o$ is some (any) triple associated to the vertical boundary component $\mathsf t$ of $ N^o$, as defined in Section \ref{sec: cohom int}.
			\end{itemize}
			Recall that
			\[\partial:H_1(N^o,\partial_v N^o;G^{\mathcal{A}})\to H_0(\partial_v N^o;G^{\mathcal{A}})\] 
			is the usual boundary map.
			
			\begin{proposition}\label{prop: E injective}
				The image of $E$ given by Equation \eqref{eqn: E} is
				\[\{ (c, \theta) \in H_1(N^o,\partial_v N^o;G^{\mathcal{A}}) \times \mathcal{T}_d(\lambda;G) :\iota_*(c)=-\widehat{c}\text{ and }\partial c = K(\theta) \}.\]
			\end{proposition}
			
			\begin{proof}
				We established above that 
				\[\mathcal{Y}_d(\lambda;G)=\{ (\alpha, \theta) \in \mathcal{Z}(\lambda^o,\mathrm{slits};G^\mathcal A) \times \mathcal{T}_d(\lambda;G) :\iota'(\alpha)=\alpha\text{ and }\partial \alpha = - s_\theta \}.\]
				By the definition of $E$, it now suffices to verify that for any $\alpha\in\mathcal{Z}(\lambda^o,\mathrm{slits};G^\mathcal A)$ and $\theta\in\mathcal{T}_d(\lambda;G)$,
				\begin{itemize}
					\item  $\iota'(\alpha)=\alpha$ if and only if $\widehat{\iota_*}([\alpha])=-[\alpha]$, and
					\item $\partial \alpha = - s_\theta$ if and only if $\partial[\alpha] = K(\theta)$.
				\end{itemize} 
				
				For the former, observe that we have the following commuting diagram:
				\begin{displaymath}
					\xymatrix{
						{\mathcal{Z}(\lambda^o,\mathrm{slits};G^{\mathcal{A}})} \ar[r]^{\varsigma_{G^\mathcal A}} \ar[d]_{\iota'}
						& {H_1(N^o,\partial_vN^o;G^{\mathcal{A}})}\ar[d]^{-\widehat{\iota_*}}\\
						\mathcal{Z}(\lambda^o,\mathrm{slits};G^{\mathcal{A}}) \ar[r]^{\varsigma_{G^\mathcal A}} 
						&  {H_1(N^o,\partial_vN^o;G^{\mathcal{A}})}.
					}
				\end{displaymath}
				As such, for any $\alpha\in\mathcal{Z}(\lambda^o,\mathrm{slits};G^\mathcal A)$, $\iota'(\alpha)=\alpha$ if and only if $\widehat{\iota_*}([\alpha])=-[\alpha]$. For the latter, note that by Proposition~\ref{prop: homological boundary}, we have that for any $\alpha\in\mathcal{Z}(\lambda^o,\mathrm{slits};G^\mathcal A)$,
				\[\partial [\alpha]=\sum_{\mathsf t\in\pi_0(\partial_v N^o)}\partial\alpha({\bf x}_{\mathsf t})\cdot [\mathsf t].\]
				By definition, 
				\[- \sum_{\mathsf t\in\pi_0(\partial_v N^o)} s_\theta({\bf x}_{\mathsf t})\cdot [\mathsf t]= K(\theta).\]
				It follows that $\partial\alpha= - s_\theta$ if and only if $\partial [\alpha]=K(\theta)$.
			\end{proof}
			
			\subsection{Complex structure on \texorpdfstring{$\mathcal{Y}_d(\lambda;\mathbb{C}/2\pi i\mathbb{Z})$}{Y(l,d,C)}} \label{sec: complex structure}
			
			Using Proposition \ref{prop: E injective}, we define a complex structure on $\mathcal{Y}_d(\lambda;\mathbb{C}/2 \pi i \mathbb{Z})$. More precisely, we prove the following.
			
			\begin{proposition}\label{complex_cocyclic}
				The Abelian group $\mathcal{Y}_d(\lambda;\mathbb{C}/2 \pi i \mathbb{Z})$ carries the structure of a complex Lie group with the following property: For any open set $U \subseteq \mathbb{C}^k$, a map
				\[
				\begin{matrix}
					U & \longrightarrow & \mathcal{Y}_d(\lambda;\mathbb{C}/2 \pi i \mathbb{Z}) \\
					z & \longmapsto & (\alpha_z, \theta_z)
				\end{matrix}
				\]
				is holomorphic if and only if for any pair of plaques ${\bf T}\in\wt\Delta^{2*}$, labeling ${\bf x} \in \wt{\Delta}^o$, pair $\mathbf{i} \in \mathcal{A}$, and triple $\mathbf{j} \in \mathcal{B}$, the functions 
				\[
				z \longmapsto \alpha_z ({\bf T},{\bf i}) \in \mathbb{C}/2 \pi i \mathbb{Z} \qquad\text{ and }\qquad z \longmapsto \theta_z(\mathbf{x},\mathbf{j}) \in \mathbb{C}/2 \pi i \mathbb{Z}
				\]
				are holomorphic.
			\end{proposition}
			
			\begin{proof}
				In this proof, we set $G=\mathbb{C} /2 \pi i \mathbb{Z}$. First, observe that as Lie groups, $\mathcal{T}_d(\lambda;G)\cong G^{\Delta \times \mathcal{B}}$, where $\Delta$ is the set of plaques of $\lambda$. In fact, for any plaque $\check{T}$ of $\lambda$, choose a lift $T$ to $\wt S$ of $\check{T}$, and choose a counterclockwise labeling $\mathbf{x}_T \in \wt{\Delta}^o$ of the vertices of $T$. By the symmetries of the map $\theta$ and its $\Gamma$--invariance, the map
				\[
				\begin{matrix}
					\mathcal{T}_d(\lambda;G) & \longrightarrow & G^{\Delta \times \mathcal{B}} \\
					\theta & \longmapsto & (\theta(\mathbf{x}_T, \mathbf{j}))_{(\check{T},\mathbf{j}) \in \Delta \times \mathcal{B}}
				\end{matrix}
				\]
				is a well-defined group isomorphism. In particular, $\mathcal{T}_d(\lambda;G)$, and hence $H_1(N^o,\partial_v N^o;G^\mathcal{A})\times\mathcal{T}_d(\lambda;G)$, carries the structure of an Abelian complex Lie group. 
				
				Proposition \ref{prop:universal_coeff} gives the equality
				\[H_1(N^o,\partial_v N^o; G^\mathcal{A})=\bigoplus_{l=1}^{2n}G^\mathcal{A}\cdot[\mathsf{k}_l^*],\]
				where $\left\{[\mathsf{k}_l^*]\right\}^{2n}_{l=1}$ is the standard generating set of $H_1(N^o,\partial_vN^o;G^\mathcal A)$. Thus, the involutions $\iota_*$ and $\widehat{\cdot}$ defined in Equations \eqref{eqn: iota} and \eqref{eqn: hat} respectively are holomorphic, so the homomorphism 
				\[f_1:H_1(N^o,\partial_v N^o; G^\mathcal{A})\times\mathcal{T}_d(\lambda;G)\to H_1(N^o,\partial_v N^o; G^\mathcal{A})\]
				given by $f_1(c,\theta)=\widehat{c}+\iota_*(c)$ is holomorphic. Also, observe that the boundary map 
				\[\partial:H_1(N^o,\partial_v N^o;G^{\mathcal{A}})\to H_0(\partial_v N^o;G^\mathcal{A}) \cong G^{12 \abs{\mathcal{A}} \abs{\chi(S)}}\]
				and the map 
				\[K : \mathcal{T}_d(\lambda;G) \longrightarrow H_0(\partial_v N^o;G^\mathcal{A})\]
				defined by Equation \eqref{eqn: K def} are holomorphic group homomorphisms, so the homomorphism
				\[f_2:H_1(N^o,\partial_v N^o;G^\mathcal{A})\times\mathcal{T}_d(\lambda;G)\to H_0(\partial_v N^o; G^\mathcal{A})\]
				given by $f_2(c,\theta)=\partial c-K(\theta)$ is holomorphic. It follows that
				\[\mathcal{H}:=\{ (c, \theta) \in H_1(N^o,\partial_v N^o;G^\mathcal{A}) \times \mathcal{T}_d(\lambda;G) :f_1(c,\theta)=0\text{ and }f_2(c,\theta)=0\},\]
				is a complex Lie subgroup of $H_1(N^o,\partial_v N^o;G^\mathcal{A}) \times \mathcal{T}_d(\lambda;G)$.
				
				Since Proposition \ref{prop: E injective} implies that the map $E:\mathcal{Y}_d(\lambda;G)\to\mathcal{H}$ is an isomorphism of groups, we may pull back the complex structure on $\mathcal{H}$ onto $\mathcal{Y}_d(\lambda;G)$. By Equation \eqref{eqn: intersection}, we see that
				\[E(\alpha,\theta)=\left(\sum_{j=1}^{2n}F_\alpha(k_j)[\mathsf{k}_j^*],\theta\right),\]
				where $\left\{[\mathsf{k}_1], \dots, [\mathsf{k}_{2n}]\right\}$ and $\left\{[\mathsf{k}_1^*], \dots, [\mathsf{k}_{2 n}^*]\right\}$ are standard generating sets of $H_1(N^o,\partial_h N^o;\mathbb{Z})$ and $H_1(N^o,\partial_v N^o;\mathbb{Z})$ respectively, and $k_j$ is the unoriented path in $N^o$ obtained by forgetting the orientation on the path $\mathsf{k}_j$. The claimed description of the complex structure on $\mathcal{Y}_d(\lambda;G)$ then follows from this, the observation that $F_\alpha(k_j)$ varies holomorphically with $\alpha$ for each $j\in\{1,\dots,2n\}$,  and the cocycle boundary condition in Definition~\ref{def_cocycle}.
			\end{proof}
			
			\begin{remark}
				Notice that the second part of the assertion in Proposition \ref{complex_cocyclic} uniquely determines the complex structure on $\mathcal{Y}_d(\lambda;\mathbb{C}/2 \pi i \mathbb{Z})$. For any choice of a train track neighborhood $N$ carrying $\lambda$, the components of the isomorphism
				\[H_1(N^o,\partial_v N^o; G^\mathcal{A})\longrightarrow\bigoplus_{l=1}^{2n}G^\mathcal{A}\cdot[\mathsf{k}_l^*]\]
				are of the form $\alpha \mapsto \alpha({\bf T})$ for some ${\bf T} \in \widetilde{\Delta}^{2*}$, and similarly for the components of the isomorphism $\mathcal{T}_d(\lambda;G) \to G^{\Delta \times \mathcal{B}}$. In particular, the structure described above is independent of the choice of the train track neighborhood $N$.
			\end{remark}

\section{Lengths and the surjectivity of \texorpdfstring{$\mathcal{sb}_{d,{\bf x}}$}{sbdx}} \label{length}

Recall that $\mathcal{A}$ denotes the set of pairs of positive integers that sum to $d$. In Section \ref{subsec:length_functions} we defined, for every pair ${\bf i}\in\mathcal{A}$ and every $\lambda$--Borel Anosov representation $\rho:\Gamma\to\mathsf{PGL}_d(\mathbb{C})$, a real-valued length function 
\[\ell_{\bf i}^\rho:\mathsf{M}(\lambda^o)\to\mathbb{R},\]
where $\mathsf{M}(\lambda^o)$ denotes the set of transverse measures whose support lies in $\lambda^o$ (see Section~\ref{subsec:length_functions}). The goal of this section is to use the homological interpretation of $\lambda$--cocyclic pairs described in Section \ref{sub:complex} to give a homological description of the length functions $\ell^\rho_{\bf i}$. This will in turn allow us to characterize the image of the shear-bend map defined in Section \ref{sec: statement of main theorem}.

To give a more precise statement, first recall that $\mathcal{Z}(\lambda^o;\mathbb{R})$ denotes the vector space of relative $\lambda^o$--cocycles (see Definition \ref{def: non-relative cocycles}). Notice that every transverse measure $\mu\in \mathsf{M}(\lambda^o)$ determines a $\lambda^o$--cocycle in $\mathcal{Z}(\lambda^o;\mathbb{R})$, which we also denote by $\mu$, as follows. For any distinct pair $(T_1,T_2)$ of plaques of $\lambda$, let $\mathsf{k}$ be some (any) geodesic in $\widetilde S$ with backward and forward endpoints in $T_1$ and $T_2$ respectively, and let $\mathcal{G}(\mathsf{k})$ denote the set of oriented geodesics in $\widetilde S$ that intersect $\mathsf{k}$ transversely and pass from the left to the right of $\mathsf{k}$. Then
\begin{equation}\label{fsdjknkjsdf} \mu(T_1,T_2):=\mu(\mathcal{G}(\mathsf{k})),\end{equation}
where $\mu$ on the right is a transverse measure in $\mathsf{M}(\lambda^o)$ and $\mu$ on the left is the $\lambda^o$--cocycle in $\mathcal{Z}(\lambda^o;\mathbb{R})$.

Next, recall that for any Abelian group $G$, $\mathcal{Y}_d(\lambda;G)$ denotes the space of $G$--valued $\lambda$--cocyclic pairs, see Definition \ref{def_cocycle}. We previously observed (see Section \ref{sub:complex}) that for all $\lambda$--cocyclic pairs $(\alpha,\theta)\in \mathcal{Y}_d(\lambda;\mathbb{C}/2\pi i\mathbb{Z})$, we may view $\alpha$ as a relative $\lambda^o$--cocycle with values in $(\mathbb{C}/2\pi i\mathbb{Z})^{\mathcal{A}}$, and so its real part $\mathrm{Re}\;\alpha$ is a relative $\lambda^o$--cocycle with values in $\mathbb{R}^{\mathcal{A}}$. The following is the main theorem of this section.

\begin{theorem}\label{thm: two lengths}
	Let $\rho$ be a $d$--pleated surface with pleating locus $\lambda$, and let $(\alpha_\rho,\theta_\rho)\in\mathcal{Y}_d(\lambda;\mathbb{C}/2\pi i\mathbb{Z})$ be its shear bend coordinate. Then for any transverse measure $\mu\in\mathsf M(\lambda^o)$ and any pair ${\bf i}\in\mathcal{A}$, 
	\[\ell_{\bf i}^\rho(\mu)=\mathbb{I}([\mu],[\mathrm{Re}\;\alpha_\rho])_{\bf i},\]
	where for some (any) choice of train track neighborhood $N$ of $\lambda$,
	\begin{itemize}
		\item $[\mu]\in H_1(N^o,\partial_h N^o;\mathbb{R})$ is the relative homology class determined by $\mu$ viewed as a $\lambda^o$--cocycle, see Section \ref{sec: cocycles};
		\item $[\mathrm{Re}\;\alpha_\rho]\in H_1(N^o,\partial_v N^o;\mathbb{R}^{\mathcal{A}})$ is the relative homology class determined by $\alpha_\rho$ viewed as a relative $\lambda^o$--cocycle, see Section \ref{sub:complex}; and
		\item $\mathbb{I}=\mathbb{I}_{\mathbb{R}^{\mathcal{A}}}:H_1(N^o,\partial_h N^o;\mathbb{R})\times H_1(N^o,\partial_v N^o;\mathbb{R}^{\mathcal{A}})\to\mathbb{R}^{\mathcal{A}}$ is the intersection pairing defined by Proposition \ref{prop: duality}.
	\end{itemize}
\end{theorem}

Bonahon and Dreyer \cite[Theorem 7.5]{BoD} proved Theorem \ref{thm: two lengths} in the special case when $\rho$ is a $d$--Hitchin representation. This was a key step they used to characterize the image $\mathcal C_d(\lambda)$ of their parameterization $\mathfrak s_d$ of the $d$--Hitchin component ${\rm Hit}_d(S)$, see Theorem \ref{thm: BD par}. In fact, they showed that
\begin{align}\label{polytope description}
	\mathcal{C}_d(\lambda)=\left\{(\alpha,\theta)\in\mathcal{Y}_d(\lambda;\mathbb{R}):
	\begin{array}{l}
		\mathbb{I}([\mu],[\alpha])_{\bf i}>0\text{ for all }{\bf i}\in\mathcal{A}\\
		\text{ and all non-zero }\mu\in\mathsf M(\lambda^o)
	\end{array}\right\}.
\end{align}
Since the intersection pairing $\mathbb{I}$ is $\mathbb{R}$--bilinear, it that $\mathcal C_d(\lambda)\subset \mathcal{Y}_d(\lambda;\mathbb{R})$ is an open convex polyhedral cone.

Similarly, as a consequence of Theorem \ref{thm: two lengths} and Lemma \ref{lem:lengths positive}, we deduce the following corollary. Recall that $\mathfrak R_d(\lambda)$ denotes the space of conjugacy classes of $d$--pleated surfaces with pleating locus $\lambda$, and $\mathfrak{sb}_d:\mathfrak R_d(\lambda)\to \mathcal{Y}_d(\lambda;\mathbb{C}/2\pi i\mathbb{Z})$ is the shear-bend map.

\begin{corollary}\label{cor: surject} The image of $\mathfrak{sb}_d$ lies in $\mathcal{C}_d(\lambda)+i\mathcal{Y}_d(\lambda;\mathbb{R}/2\pi\mathbb{Z})$.
\end{corollary}

This finishes Step 3 in the outline of the proof of Theorem \ref{thm: main}, see Section \ref{subsec:proof structure}.
	
	The remainder of this section is dedicated to the proof of Theorem \ref{thm: two lengths}. The proof largely follows Bonahon and Dreyer's argument \cite[Section 7]{BoD} for the special case of Hitchin representations. However, we have to be much more careful in our arguments since the $\lambda$--limit maps of $d$--pleated surfaces have much weaker hyperconvexity and regularity properties than the limit maps of Hitchin representations.  See the description of the proof of Step 3 in Section \ref{subsec:proof structure} for more details. 
	
	An important tool used to prove Theorem \ref{thm: two lengths} is the notion of a topological closed $1$--form. We will discuss this notion below in Section \ref{sec: forms}, before describing a sketch of the argument and the organization of the remainder of this section.
	
	\subsection{Topological closed \texorpdfstring{$1$}{1}-forms}\label{sec: forms}
	
	To describe the proof of Theorem \ref{thm: two lengths}, we need to discuss the notion of a topological closed $1$--form. 
	
	\begin{definition}
		A \emph{topological closed $1$--form} $\Omega$ on a topological space $X$ is the data of a family $\{(V_i, F_i)\}_{i \in I}$ satisfying the following conditions:
		\begin{enumerate}
			\item $V_i$ is an open set of $X$ for every $i \in I$, and $\bigcup_{i \in I} V_i = X$;
			\item $F_i : V_i \rightarrow \mathbb{R}$ is a continuous function for every $i \in I$;
			\item For every $i, j \in I$ such that $V_i \cap V_j \neq \emptyset$, the function $F_i - F_j : V_i \cap V_j \rightarrow \mathbb{R}$ is locally constant. 
		\end{enumerate}
		We refer to $\{V_i\}_{i\in I}$ as the \emph{open cover associated to $\Omega$}. 
	\end{definition}
	
	Two topological closed $1$--forms $\{(V_i, F_i)\}_{i \in I}$ and $\{(U_j, H_j)\}_{j \in J}$ are \emph{equivalent} if for every $i \in I$ and $j \in J$ such that $V_i \cap U_j \neq \emptyset$, the function $F_i - H_j : V_i \cap U_j \rightarrow \mathbb{R}$ is locally constant. If $X$ is endowed with the structure of metric space, then a topological closed $1$--form $\{ (V_i, F_i)\}_{i \in I}$ is \emph{H\"older} if $F_i$ is H\"older continuous for every $i \in I$.
	
	A topological closed $1$--form can be thought of as a generalization of the notion of a closed $1$--form on a smooth manifold, which allows for weaker regularity properties. Indeed, if $X$ is a smooth manifold and $\omega$ is a closed $1$--form on $X$, then $\omega$ defines a topological closed $1$--form: We may take $\{V_i\}_{i\in I}$ to be a cover of $X$ by contractible open sets. Then choose $p_i\in V_i$ for each $i\in I$, and define $F_i:V_i\to\mathbb{R}$ by
	\[F_i(x)=\int_{\mathsf{k}}\omega,\]
	where $\mathsf{k}$ is any differentiable path in $V_i$ from $p_i$ to $x$. (The assumption that $\omega$ is closed implies that $F_i$ is well-defined.) 
	
	Given a topological closed $1$--form $\Omega = \{(V_i, F_i)\}_{i \in I}$ on $X$ and a continuous path $\mathsf{k} : [a,b] \rightarrow X$, let 
	\[a = t_0 < t_1 < \cdots < t_n < t_{n + 1} = b\] 
	be a partition of $[a,b]$ such that for all $j$, $\mathsf{k}([t_j, t_{j + 1}])$ is contained in $V_{i_j}$ for some $i_j \in I$, and define
	\[
	\int_{\mathsf{k}} \Omega:= \sum_{j = 0}^n \Big(F_{i_j}(\mathsf{k}(t_{j + 1})) - F_{i_j}(\mathsf{k}(t_{j}))\Big).
	\]
	It is straightforward to verify that $\int_{\mathsf{k}} \Omega$ does not depend on the choices of $V_{i_j}$ containing $\mathsf{k}([t_j, t_{j + 1}])$, nor on the selected partition of the interval. 
	
	Observe that the integral of the topological closed $1$--form $\Omega$ along any continuous path $\mathsf{k}$ in $X$ is invariant under homotopy of $\mathsf{k}$ relative to its endpoints. This implies that $\int_{\mathsf{c}}\Omega=\int_{\mathsf{c}'}\Omega$ for any pair of homotopic, oriented loops $\mathsf{c}$ and $\mathsf{c}'$ in $X$.  In particular, if we fix a train track neighborhood $N$ of the geodesic lamination $\lambda$, then every topological closed $1$--form $\Omega$ on $N^o$ defines a cohomology class 
	\[[\Omega]\in H^1(N^o;\mathbb{R})=\Hom(H_1(N^o;\mathbb{R}),\mathbb{R})\]
	by $\langle [\Omega],[\mathsf{c}]\rangle:=\int_{\mathsf{c}} \Omega$ for every $[\mathsf{c}]\in H_1(N^o;\mathbb{R})$ and some (any) oriented loop $\mathsf{c}$ in $N^o$ that represents $[\mathsf{c}]$. Note that if $\Omega$ and $\Omega'$ are equivalent topological closed $1$--forms, then their integrals along any continuous path agree, so $[\Omega]=[\Omega']$.
	
	\begin{remark}
		Recall that an \emph{$\mathbb{R}$--local system} on a topological space $X$ is a flat $\mathbb{R}$--principal bundle over $X$. While this notion will not be used in this paper, we remark that topological closed $1$--forms on $X$ are closely related to $\mathbb{R}$--local systems on $X$ by a natural bijection
		\[\left\{\begin{array}{l}\text{topological closed}\\ \text{$1$--forms on }X\end{array}\right\}_{\big/\sim}\:\:\longleftrightarrow\:\:\left\{(\mathcal{L},\sigma)\bigg|\begin{array}{l}\mathcal{L}\text{ is a $\mathbb{R}$--local system on }X,\\  \sigma \text{ is a continuous section of $\mathcal L$}\end{array}\right\}_{\big/\sim},\]
		where the equivalence on the topological closed $1$--forms on the left is as described above, and two pairs $(\mathcal{L},\sigma)$ and $(\mathcal{L}',\sigma')$ on the right are equivalent if there exists an isomorphism $F : \mathcal{L} \to \mathcal{L}'$ of $\mathbb{R}$--local systems that sends $\sigma$ to  $\sigma'$. 	
		
		The above bijection can be described as follows. For any $[(\mathcal{L},\sigma)]$ in the right, choose a representative $(\mathcal{L},\sigma)$ in this equivalence class, and a collection of flat, local trivializations $\{\phi_i : \mathcal{L}|_{U_j} \to U_j \times \mathbb{R}\}_{i \in I}$ that verify $\bigcup_{i \in I} U_i = X$. Then define $F_i : U_i \to \mathbb{R}$ so that $\phi_i(\sigma(x)) = (x,F_i(x))$ for every $x \in U_i$ and $i \in I$, and set $\Omega:=\{(U_i,F_i)\}_{i\in I}$. One can verify that $\Omega$ is a topological closed $1$--form, and that the assignment $[(\mathcal{L},\sigma)]\mapsto[\Omega]$ is a well-defined bijection.
	\end{remark}
	
	By Lefschetz-Poincar\'e duality, we have an isomorphism
	\[D:H_1(N^o;\mathbb{R})\to H^1(N^o,\partial N^o;\mathbb{R}).\]
	Furthermore, the pairing 
	\[\mathbb{I}_\ast:H^1(N^o,\partial N^o;\mathbb{R})\times H^1(N^o;\mathbb{R})\xrightarrow[]{\smile} H^2(N^o,\partial N^o;\mathbb{R})\xrightarrow[]{\langle\cdot,[N^o]\rangle} \mathbb{R},\]
	where the first map is the cup product and the second map is the pairing with the relative fundamental class $[N^o]$, satisfies
	\[\mathbb{I}_\ast(D([\alpha]),[\beta])=\langle [\alpha],[\beta]\rangle\]
	for all $[\alpha]\in H_1(N^o;\mathbb{R})$ and $[\beta]\in H^1(N^o;\mathbb{R})$. In particular, if $\mu\in \mathsf{M}(\lambda^o)$ is a transverse measure and $\Omega$ is a topological closed $1$--form on $N^o$, then
	\begin{equation}\label{eqn: two pairings}
		\mathbb{I}_\ast(D([\mu]),[\Omega])=\langle [\mu],[\Omega]\rangle,
	\end{equation}
	where $[\mu]\in H_1(N^o;\mathbb{R})$ is the homology class associated to $\mu$ viewed as a $\lambda^o$--cocycle (see Section \ref{sec: cocycles} and Equation \eqref{fsdjknkjsdf}) and $[\Omega]\in H^1(N^o;\mathbb{R})$ is the cohomology class associated to $\Omega$ described above.
	
	In light of Equation (\ref{eqn: two pairings}), we prove Theorem \ref{thm: two lengths} via the following strategy. 
	
	\begin{itemize}
		\item First, for every $\lambda$--Borel Anosov representation $\rho:\Gamma\to\mathsf{PGL}_d(\mathbb{C})$ and ${\bf i}\in\mathcal{A}$, we construct a topological closed $1$--form $\Omega^\rho_{\bf i}$ on $N^o$ so that 
		\begin{equation}\label{eqn: required condition 1}
			\mathbb{I}_\ast(D([\mu]),[\Omega^{\rho}_{\bf i}])=\ell^\rho_{\bf i}(\mu)
		\end{equation}
		for all $\mu\in\mathsf{M}(\lambda^o)$. 
		\item Then, we prove that if $\rho\in\mathcal{R}_d(\lambda)$, then
		\begin{equation}\label{eqn: required condition 2}
			\langle [\mu],[\Omega^{\rho}_{\bf i}]\rangle=\mathbb{I}([\mu],[\mathrm{Re}\;\alpha_\rho])_{\bf i}
		\end{equation}
		for all transverse measures $\mu\in\mathsf{M}(\lambda^o)$, where on the right, the intersection pairing $\mathbb{I}$ and the homology classes $[\mu]\in H_1(N^o,\partial_h N^o;\mathbb{R})$ and $[\mathrm{Re}\;\alpha_\rho]\in H_1(N^o,\partial_v N^o;\mathbb{R}^{\mathcal{A}})$ were defined in the statement of Theorem \ref{thm: two lengths}.
	\end{itemize}
	Together, Equations \eqref{eqn: two pairings}, \eqref{eqn: required condition 1}, and \eqref{eqn: required condition 2} imply Theorem~\ref{thm: two lengths}.
	
	The construction of $\Omega^{\rho}_{\bf i}$ is quite technical, and will be done in Section \ref{sec: construction 1 form} following these steps:
	\begin{enumerate}
		\item[Step 1:] Define a deformation retract $r:N\to N_{\mathrm{ret}}$ so that $N_{\mathrm{ret}}$ contains $\lambda$, and admits a foliation $\mathsf{F}$ by (subsegments of) geodesics such that the leaves of $\lambda$ are leaves of $\mathsf{F}$. Then $\mathsf{F}$ induces foliations $\wt{\mathsf{F}}$ on $\wt N_{\mathrm{ret}}:=\pi_S^{-1}(N_{\mathrm{ret}})$ and $\mathsf{F}^o$ on $N^o_{\mathrm{ret}}:=\pi_N^{-1}(N_{\mathrm{ret}})$, where $\pi_S:\wt S\to S$ and $\pi_N:N^o\to N$ are the covering maps, see Section \ref{or-cov-tt}.
		\item[Step 2:] Extend the slithering map $\Sigma:\wt\Lambda^2\to\mathsf{SL}_d(\mathbb{C})$ that is compatible with the $\lambda$--limit map of $\rho$ to a map $\widehat{\Sigma}:\wt{\mathsf{F}}{}^{(2)}\to\mathsf{SL}_d(\mathbb{C})$, where $\wt{\mathsf{F}}{}^{(2)}$ is a certain subset of $\wt{\mathsf{F}}{}^2$ that contains $\wt\Lambda^2$, see Definition \ref{def: well-positioned}.
		\item[Step 3:] Extend, using Step 2, the ${\bf i}$--th homomorphism bundle $\wh H_{\bf i}\to\lambda^o\cong T^1\lambda$ defined in Section \ref{ssec:BorelAnosov} to a line bundle $\widehat{H}_{\bf i,\mathrm{ret}}\to N^o_{\mathrm{ret}}$.
		\item[Step 4:] Define a topological closed $1$--form on $N^o_{\mathrm{ret}}$ using Step 3, which we then pullback via $r$ to get the topological closed $1$--form $\Omega^\rho_{\bf i}$ on $N^o$.
	\end{enumerate} 
	Then in Section \ref{sec: properties of 1 form} and Section \ref{sec: properties of 1 form 2}, we prove Equation \eqref{eqn: required condition 1} and Equation \eqref{eqn: required condition 2} respectively.

	\subsection{The construction of \texorpdfstring{$\Omega^\rho_{\bf i}$}{Ori}}\label{sec: construction 1 form}
	
	For the remainder of this section, fix a $\lambda$--Borel Anosov representation $\rho:\Gamma\to\mathsf{PGL}_d(\mathbb{C})$, let $\xi\colon\partial\wt \lambda\to \mathcal{F}(\mathbb{C}^d)$ denote the $\lambda$--limit map of $\rho$, and fix ${\bf i}\in\mathcal{A}$. We now construct the topological closed 1-form $\Omega=\Omega^{\rho}_{\bf i}$ following the steps outlined above.
	
	\subsubsection{Defining the deformation retract of \texorpdfstring{$N$}{N}}\label{sec: retract}
	
	First, we define the subset $N_{\mathrm{ret}}\subset N$ and describe the deformation retract 
	\[r:N\to N_{\mathrm{ret}}.\] 
	The set $N_{\mathrm{ret}}$ is constructed as the union of the lamination $\lambda$ with a collection of regions, three per each plaque $T$ of $\lambda$, delimited by the sides of $T$ and three horocyclic segments joining distinct pairs of edges.
	
	More precisely, let $\pi_N:N^o\to N$ and $\pi_S:\wt S\to S$ be the covering maps, and recall $\widetilde N:=\pi_S^{-1}(N)$. For any plaque $T$ of $\wt \lambda$, choose pairwise disjoint closed horoballs $H_{T,1}$, $H_{T,2}$, $H_{T,3}$ centered at the vertices of $T$, such that $T\cap H_{T,i}\subset T\cap \wt N$ for all $i$, and 
	\[\gamma\cdot (H_{T,1}\cup H_{T,2}\cup H_{T,3})=H_{\gamma\cdot T,1}\cup H_{\gamma\cdot T,2}\cup H_{\gamma\cdot T,3}\]
	for all $\gamma\in\Gamma$. Then set $U_T:=(\wt N\cap T)- (H_{T,1}\cup H_{T,2}\cup H_{T,3})$ and define
	\[\widetilde{N}_{\rm ret}:= \widetilde N-\bigcup_{T\in\widetilde\Delta}U_T,\]
	see Figure \ref{fig: retract}. It is clear that $\widetilde N_{\rm ret}$ is a $\Gamma$--invariant subset of $\widetilde N$, so it descends to a subset $N_{\rm ret}\subset N$. Define $N^o_{\mathrm{ret}}:=\pi_N^{-1}(N_{\mathrm{ret}})$ and 
	\[\wt N^o_{\mathrm{ret}}:=\{(p,q)\in\wt N_{\rm ret}\times N^o_{\rm ret}:\pi_S(p)=\pi_N(q)\}.\]
	Then the projection
	\[\pi:\wt N^o_{\rm ret}\to N^o_{\rm ret}\]
	given by 
	$\pi(p,q)=q$ is also a covering map. Observe that $\wt\lambda\subset\wt N_{\rm ret}$, so $\lambda\subset N_{\mathrm{ret}}$, $\lambda^o=\pi_N^{-1}(\lambda)\subset N^o_{\mathrm{ret}}$, and we may define $\wt\lambda^o:=\pi^{-1}(\lambda^o)\subset\wt N^o_{\mathrm{ret}}$.

	\begin{figure}[h!]
		\includegraphics[scale=1]{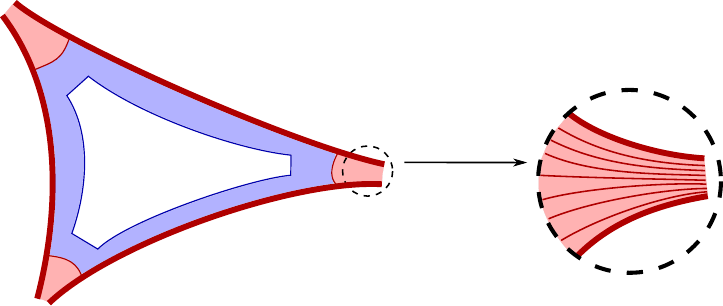}
		\put(-330,140){\small $N_{\rm ret}\cap T$}
		\put(-306,75){\small $U_T$}
		\put(-182,62){\small $H_{T,i}$}
		\caption{\small On the left, a plaque $T$ and the boundary of the horocycles $H_{T,i} \cap T$ for $i=1,2,3$. The red region corresponds to $N_{\rm ret}\cap T$, whereas the blue region is $U_T$. On the right, a wedge and its foliation in $\widetilde{\mathsf F}$.}\label{fig: retract}
	\end{figure}
	
	For each plaque $T$ of $\widetilde\lambda$, let $e_1$, $e_2$, and $e_3$ be the edges of $T$, and set 
	\[K_T:=\bigcup_{i=1}^3 (H_{T,i}\cap T)\cup\bigcup_{i=1}^3e_i\subset\widetilde N_{\rm ret}.\]
	Note that $U_T\cup K_T=\wt N\cap \overline{T}$,  where $\overline{T}$ denotes the closure of $T$, and there is a deformation retract 
	\[\wt r_T:\wt N\cap \overline T\to K_T.\] 
	We may choose a collection $\{\wt r_T:T\in\widetilde\Delta\}$ such that $\gamma\cdot \wt r_T(x)=\wt r_{\gamma\cdot T}(\gamma\cdot x)$ for all $\gamma\in\Gamma$, all plaques $T$ of $\wt\lambda$, and all $\wt N\cap \overline T$. Then the map
	\[\wt r:\wt N\to\wt N_{\rm ret}\]
	defined by $\wt r(x)=x$ if $x\in \wt N_{\rm ret}$ and $\wt r(x)=\wt r_T(x)$ if $x\in U_T$ is a $\Gamma$--equivariant deformation retract, so it descends to the required deformation retract
	\[r\colon N\to N_{\text{ret}}.\] 
	
	We refer to any connected component of $\wt N_{\mathrm{ret}}-\wt \lambda$ as a \emph{wedge} of $\wt N_{\mathrm{ret}}$. Observe that every plaque of $\wt\lambda$ contains exactly three wedges of $\wt N_{\rm ret}$, each of which is the intersection of $T$ with a horoball centered at a vertex of $T$. As such, the boundary of every wedge of $N_{\mathrm{ret}}$ is the union of a subsegment of a horocycle and a pair of asymptotic geodesic rays. 
	Each of the geodesic rays lies in a leaf of $\lambda$, which we call the \emph{boundary leaves} of the wedge. We define \emph{wedges} and {\em boundary leaves} analogously for the sets $N_{\mathrm{ret}}$, $N^o_{\mathrm{ret}}$, and $\wt N^o_{\mathrm{ret}}$. 
	
	Note that there is a unique foliation $\wt{\mathsf{F}}$ of $\wt N_{\mathrm{ret}}$ that contains $\wt{\Lambda}$, and has the property that each leaf $h\in\wt{\mathsf{F}}$ that is not in $\wt{\Lambda}$ lies in a geodesic of $\wt S$ that is asymptotic to both boundary leaves of the wedge that contains $h$. The foliation $\wt{\mathsf{F}}$ is $\Gamma$--invariant, and so descends to a foliation $\mathsf{F}$ on $N_{\mathrm{ret}}$. This in turn gives the foliations $\mathsf{F}^o:=\pi_N^{-1}(\mathsf{F})$ of $N^o_{\rm ret}$ and $\wt{\mathsf{F}}{}^o:=\pi^{-1}(\mathsf{F}^o)$ of $\wt N^o_{\rm ret}$. 
	
	\begin{proposition}\label{prop: Lipschitz foliation}
		The following properties hold:
		\begin{enumerate}
			\item The map $\mathsf{F}$ is Lipschitz, i.e. for every $x \in N_{\mathrm{ret}}$, there is a Lipschitz vector field on a neighborhood of $x$ in $N_{\mathrm{ret}}$ that is tangent to the leaves of $\mathsf{F}$.
			\item The leaves of $\mathsf{F}^o$ admit a continuous family of orientations that extend the natural orientations on the leaves in $\lambda^o$. \end{enumerate}
	\end{proposition}
	
	\begin{proof} Claim (1) follows from Epstein and Marden \cite[Corollary~II.2.5.2]{EM06}. To prove claim (2), observe that for any wedge $W$ of $N^o_{\rm ret}$, the natural orientation on the two boundary leaves of $W$ are either both oriented towards their common endpoint, or both oriented away from their common endpoint. Thus, we may orient every leaf $l$ of $\mathsf{F}^o$ that lies in $W$ towards the common endpoint of the boundary leaves of $W$ if the former holds, and away from this common endpoint if the latter holds. This, together with the natural orientations of the leaves in $\Lambda^o$, specify a continuous orientation on the leaves of $\mathsf{F}^o$.
	\end{proof}
	
	We refer to the continuous family of orientations on the leaves of $\mathsf{F}^o$ given in  Proposition \ref{prop: Lipschitz foliation} Claim (2) as the \emph{natural orientations}.
	
	\subsubsection{Extending the slithering map} Next, we extend the slithering map $\Sigma:\wt\Lambda^2\to\mathsf{SL}_d(\mathbb{C})$ associated to the limit map $\xi$ so that it is defined on a larger subset of $\wt{\mathsf{F}}{}^2$. To do so, we introduce the following notation that will be used in the rest of this section.
	
	For each leaf $h\in\wt{\mathsf{F}}$, we will associate to it a geodesic $l_h$ of $\wt S$, a pair of leaves $l_{h,1},l_{h,2}\in\wt\Lambda$, and a number $t_h\in (0,1)$ as follows. If $h\in\wt{\mathsf{F}}-\wt\Lambda$, then $l_h$ is the geodesic that contains $h$, and $l_{h,1}, l_{h,2}\in \wt\Lambda$ are the two boundary leaves of the wedge that contains $h$. Since $l_{h,1}$ and $l_{h,2}$ are asymptotic, there is a unique unipotent element $u_h\in\mathsf{SL}_2(\mathbb{R})$ that sends $l_{h,1}$ to $l_{h,2}$. Since $l_h$ is asymptotic to $l_{h,1}$ and $l_{h,2}$, we may set $t_h\in(0,1)$ to be the number such that $u_h^{t_h}$ sends $l_{h,1}$ to $l_h$. (Here, if $n$ is the unique nilpotent matrix such that $\exp(n)=u_h$, then $u_h^{t_h}:=\exp(t_hn)$.) 
	On the other hand, if $h\in\wt\Lambda$, we set $l_h:=l_{h,1}:=l_{h,2}:=h$ and $t_h:=\frac{1}{2}$ for notational convenience.
	
	\begin{remark}\label{rem: order}
		In the case when $h\in\wt{\mathsf{F}}-\wt\Lambda$, we have to choose which of the two boundary leaves of the wedge containing $h$ is $l_{h,1}$ and which is $l_{h,2}$. The quantity $t_h$ depends on this choice; if we switch the roles of $l_{h,1}$ and $l_{h,2}$, then $t_h$ will be replaced with $1-t_h$. In situations where we do not make this choice explicit, all statements we make will hold for both possible choices. The choice of $t_h=\frac{1}{2}$ when $h\in\widetilde\Lambda$ is to ensure that the above also holds in this case.
	\end{remark}
	
	\begin{definition}\label{def: well-positioned}
		A pair of leaves $(h,h')\in\wt{\mathsf{F}}{}^2$ is \emph{not well-positioned} if there exists a plaque $T$ of $\wt \lambda$ such that both $h$ and $h'$ are contained in the closure of $T$ in $\widetilde{S}$ and $h, h'$ do not share an endpoint at infinity. If this does not occur, then we say that the pair $(h,h')\in\wt{\mathsf{F}}{}^2$ is \emph{well-positioned}. Let $\wt{\mathsf{F}}{}^{(2)}$ be the set of well-positioned pairs of leaves in $\widetilde{\mathsf{F}}{}^2$.
	\end{definition} 
	
	Note that $\wt{\mathsf{F}}{}^{(2)}$ is a $\Gamma$--invariant subset of $\wt{\mathsf{F}}{}^2$ containing $\wt\Lambda^2$. Moreover, for any leaves $g_1,g_2\in\wt\Lambda$, we denote 
	\[Q_{\mathsf{F}}(g_1,g_2):=\{h\in\wt{\mathsf{F}}:l_{h,1},l_{h,2}\in Q(g_1,g_2)\}.\]
	Then observe that any pair of leaves in $Q_{\mathsf{F}}(g_1,g_2)$ is well-positioned.
	Furthermore, $Q_{\mathsf{F}}(g_1,g_2)\cap\wt\Lambda=Q(g_1,g_2)$, which is the set of leaves of $\wt\lambda$ that separate $g_1$ and $g_2$, see Section \ref{section: separation and coherence}.
	
	\begin{proposition} \label{prop: slithering_extension}
		There is a $\rho$--equivariant map
		\[\widehat{\Sigma}:\wt{\mathsf{F}}{}^{(2)}\to\mathsf{SL}_d(\mathbb{C})\]
		satisfying the following properties:
		\begin{enumerate}
			\item $\widehat{\Sigma}(h,h)=\id$ for all leaves $h\in \wt{\mathsf{F}}$, $\widehat{\Sigma}(h_2,h_1)=\widehat{\Sigma}(h_1,h_2)^{-1}$ for all pairs of leaves $(h_1,h_2)\in \wt{\mathsf{F}}{}^{(2)}$, and $\widehat{\Sigma}(h_3,h_2)\circ\widehat{\Sigma}(h_2,h_1)=\widehat{\Sigma}(h_3,h_1)$ if there are leaves $g_1$ and $g_2$ of $\wt\lambda$ such that $h_1,h_2,h_3\in Q_{\mathsf{F}}(g_1,g_2)$.
			\item For all leaves $g_1,g_2\in \wt\Lambda$, the restriction $\widehat{\Sigma}|_{Q_\mathsf{F}(g_1,g_2)}$ is H\"older continuous, i.e. there are constants $M,\mu>0$ (depending on $g_1$ and $g_2$) such that 
			\[\norm{\widehat{\Sigma}(h_2,h_1)-\widehat{\Sigma}(h_4,h_3)}\leq Md_\infty((l_{h_1},l_{h_2}),(l_{h_3},l_{h_4}))^\mu\]
			for all geodesics $h_1,h_2,h_3,h_4\in Q_\mathsf{F}(g_1,g_2)$.
			\item If $(h_1,h_2)\in\wt{\mathsf{F}}{}^{(2)}$ and the geodesics $l_{h_1}$ and $l_{h_2}$ share an endpoint, then $\widehat{\Sigma}(h_2,h_1)$ is unipotent.
			\item If $g_1,g_2\in\wt\Lambda$, then $\widehat{\Sigma}(g_2,g_1) = \Sigma(g_2,g_1)$, where $\Sigma$ is the slithering map compatible with the $\lambda$--limit map $\xi$ of $\rho$. (Recall that $\Sigma$ exists by Theorem~\ref{thm: slitherable and slithering}.)
		\end{enumerate}
	\end{proposition}
	
	We denote $\widehat{\Sigma}(h_2,h_1):=\widehat{\Sigma}_{h_2,h_1}$ when convenient.
	
	\begin{proof}
		First, we define the map $\widehat{\Sigma}$. For all $(h_1,h_2)\in\wt{\mathsf{F}}{}^{(2)}$, define 
		\[\widehat{\Sigma}(h_2,h_1):=\Sigma(l_{h_2,2},l_{h_2,1})^{t_{h_2}}\circ \Sigma(l_{h_2,1},l_{h_1,1})\circ\Sigma(l_{h_1,2},l_{h_1,1})^{-t_{h_1}}.\]
		By Condition (1) of Definition \ref{def: slithering map}, we have
		\begin{align*}
			&\hspace{.45cm}\Sigma(l_{h_2,2},l_{h_2,1})^{t_{h_2}}\Sigma(l_{h_2,1},l_{h_1,1})\Sigma(l_{h_1,2},l_{h_1,1})^{-t_{h_1}}\\
			&=\Sigma(l_{h_2,2},l_{h_2,1})^{t_{h_2}}\Sigma(l_{h_2,1},l_{h_1,2})\Sigma(l_{h_1,1},l_{h_1,2})^{t_{h_1}-1}\\
			&=\Sigma(l_{h_2,1},l_{h_2,2})^{1-t_{h_2}} \Sigma(l_{h_2,2},l_{h_1,1})\Sigma(l_{h_1,2},l_{h_1,1})^{-t_{h_1}}\\
			&=\Sigma(l_{h_2,1},l_{h_2,2})^{1-t_{h_2}} \Sigma(l_{h_2,2},l_{h_1,2})\Sigma(l_{h_1,1},l_{h_1,2})^{t_{h_1}-1},
		\end{align*}
		so we see that $\widehat{\Sigma}(h_2,h_1)$ is well-defined, i.e. it is invariant under switching the roles of $l_{h_i,1}$ with $l_{h_i,2}$ for both $i=1,2$ (compare with Remark \ref{rem: order}).  
		
		From this definition, it is obvious that (4) holds, and it is straightforward to verify that (1) and (3) follow from Conditions (1) and (3) of Definition \ref{def: slithering map} respectively. It remains to prove (2). 
		
		Let $g_1,g_2\in\wt\Lambda$ and fix a compact geodesic arc $k$ that is transverse to the lamination $\wt \lambda$, and intersects both $g_1$ and $g_2$. If $l_1$ and $l_2$ are geodesics in $\wt S$ that separate $g_1$ and $g_2$, denote the subarc of $k$ with endpoints in $k\cap l_1$ and $k\cap l_2$ by $k(l_1,l_2)$. To prove (2), it suffices to show that there are some $M_1,\mu>0$ such that 
		\begin{align}\label{eqn: reduced equation}
			\norm{\widehat{\Sigma}(h',h) - \id} \leq M_1 \ell(k(l_{h},l_{h'}))^{\mu}
		\end{align}
		for all leaves $h,h'\in Q_{\mathsf{F}}(g_1,g_2)$. Indeed, this implies that
		\[\norm{\widehat{\Sigma}(h',h)} \leq M_1 \ell(k)^{\mu}+1=:M_2\]
		for all $h,h'\in Q_{\mathsf{F}}(g_1,g_2)$, and applying Lemma \ref{lem: hyperbolic geometry} to Equation \eqref{eqn: reduced equation} gives a constant $C>0$ such that
		\[
		\norm{\widehat{\Sigma}(h',h) - \id} \leq M_1\left(2C\right)^{\mu}d_\infty(l_{h},l_{h'})^{\mu}
		\]
		for all $h,h'\in Q_{\mathsf{F}}(g_1,g_2)$. Then we have
		\begin{align*}
			\norm{\widehat{\Sigma}(h_2,h_1)-\widehat{\Sigma}(h_4,h_3)} &\leq \norm{\widehat{\Sigma}(h_2,h_4)}\norm{\widehat{\Sigma}(h_4,h_3)}\norm{\widehat{\Sigma}(h_3,h_1)-\id}\\
			&\hspace{1cm}+\norm{\widehat{\Sigma}(h_4,h_3)}\norm{\widehat{\Sigma}(h_2,h_4)-\id}\\
			&\leq M_2^2M_1(2C)^{\mu}\left(d_\infty(l_{h_1},l_{h_3})^{\mu}+d_\infty(l_{h_2},l_{h_4})^{\mu}\right)\\
			&\leq 2^{\mu+1}M_2^2M_1C^{\mu}d_\infty((l_{h_1},l_{h_2}),(l_{h_3},l_{h_4}))^{\mu},
		\end{align*}
		and Condition (2) follows by setting $M:=2^{\mu+1}M_2^2M_1C^\mu$.
		
		To prove Equation \eqref{eqn: reduced equation}, we will use the following two lemmas, whose proofs are given at the end of this section. The first estimates ``how far" two leaves $h_1$ and $h_2$ of $\wt{\mathsf F}$ are from each other in terms of the quantities $t_{h_1}$ and $t_{h_2}$.
		
		\begin{lemma}\label{lem:compare_lenghts}
			Fix a compact geodesic arc $k$ that is transverse to the geodesic lamination $\wt \lambda$. There exists a constant $B=B(k) > 0$ such that the following holds: Let $w_1$ and $w_2$ be the boundary leaves of a wedge of $\wt N_{\rm ret}$ such that $w_1$ and $w_2$ both intersect $k$. For any pair of leaves $h,h'\in Q_{\mathsf{F}}(w_1,w_2)$, if we choose $l_{h,1}=l_{h',1}$ (or equivalently, $l_{h,2}=l_{h',2}$), then we have
			\[  B^{-1} \abs{t_{h} - t_{h'}} \ell(k(w_1,w_2)) \leq \ell(k(l_{h},l_{h'})) \leq B \abs{t_{h} - t_{h'}} \ell(k(w_1,w_2)).\]
		\end{lemma}
		
		\begin{remark}
			Note that in the lemma above, even though the quantities $t_{h}$ and $t_{h'}$ depend on whether we choose $l_{h,1}=l_{h',1}$ to be $w_1$ or $w_2$, the quantity $|t_{h}-t_{h'}|$ does not depend on that choice, see Remark \ref{rem: order}.
		\end{remark}
		
		The second concerns the exponential map of $\mathsf{GL}_d(\mathbb{C})$ evaluated on nilpotent elements in $\mathfrak{gl}_d(\mathbb{C})$. Let $\mathcal{N}_d$ denote the set of nilpotent linear transformations of $\mathbb{C}^d$.
		
		\begin{lemma} \label{lem:nilp_unip}
			For any compact subset $K \subset \mathcal{N}_d$ and for any norm $\norm{\cdot}$ on the space $\End(\mathbb{C}^d)$ of endomorphisms of $\mathbb{C}^d$, there exists a constant $C = C(K, \norm{\cdot}) > 0$ satisfying 
			\[
			\norm{\exp(t X) - \id} \leq C t \norm{\exp(X) - \id}
			\]
			for every $t \in [0,1]$ and for every $X \in K$.
		\end{lemma}
		
		Assuming Lemmas \ref{lem:compare_lenghts} and \ref{lem:nilp_unip}, we first prove that Equation \eqref{eqn: reduced equation} holds for all leaves $h,h'\in Q_{\mathsf{F}}(g_1,g_2)$ that lie in the closure of the same wedge of $\wt N_{\mathrm{ret}}$. Fix a geodesic arc $k$ in $\widetilde S$ that intersects both $g_1$ and $g_2$. By Lemma \ref{lem: hyperbolic geometry} and Condition (2) of Definition~\ref{def: slithering map}, there exist $A, \nu > 0$ such that 
		\begin{equation}\label{eq:holder continuity}
			\norm{\Sigma(g',g) - \id} \leq A \ell(k(g,g'))^\nu
		\end{equation}
		for all $g,g' \in Q(g_1,g_2)$. Observe that there is a compact subset $K \subset \mathsf{SL}_d(\mathbb{C})$ that contains the unipotent elements $\Sigma(w_1,w_2)$ for all $w_1,w_2\in Q(g_1,g_2)$ that are the two boundary leaves of a wedge of $\wt N_{\mathrm{ret}}$. Thus, by Equation \eqref{eq:holder continuity} and Lemma~\ref{lem:nilp_unip}, there is some $C > 0$ such that the following holds: For any $h,h'\in Q_{\mathsf{F}}(g_1,g_2)$ that lie in the same wedge $W$ of $\wt N_{\mathrm{ret}}$ with boundary leaves $w_1,w_2$, if we choose $l_{h,1}=l_{h',1}=w_1$, then
		\begin{align}\label{eqn: reduce to lengths}
			\norm{\widehat{\Sigma}(h',h) - \id}= \norm{\widehat{\Sigma}(w_2,w_1)^{t_{h'}-t_{h}} - \id}  \leq A C\abs{t_{h}-t_{h'}} \ell(k(w_1, w_2))^\nu.
		\end{align}
		
		Set $\mu:=\min\{1,\nu\}$. Since the arc $k$ has finite length and $\mu \leq \nu$, there exists a constant $D > 0$ such that 
		\begin{equation} \label{eq:control_exponents}
			\ell(k')^\nu \leq D \ell(k')^{\mu}
		\end{equation}
		for any subarc $k' \subset k$. Also,  for any $h',h\in Q_{\mathsf{F}}(g_1,g_2)$ that lie in the same wedge of $\wt{N}_{\mathrm{ret}}$, we have $\abs{t_h-t_{h'}}\le 1$, so $\abs{t_h-t_{h'}} \leq \abs{t_h-t_{h'}}^{\mu}$. Thus, by Equation \eqref{eqn: reduce to lengths} and Lemma~\ref{lem:compare_lenghts}, there is some $B>0$ such that
		\begin{equation} \label{eq:extension_holder}
			\norm{\widehat{\Sigma}(h',h) - \id} \leq ACDB^{\mu} \ell(k(l_h, l_{h'}))^{\mu}.
		\end{equation}
		By taking limits, Equation \eqref{eq:extension_holder} also holds when $h',h\in Q_{\mathsf{F}}(g_1,g_2)$ lie in the closure of the same wedge. This proves Equation \eqref{eqn: reduced equation} in this case.
		
		Next, we prove Equation \eqref{eqn: reduced equation} when the geodesics $h,h'\in Q_{\mathsf{F}}(g_1,g_2)$ do not lie in the closure of the same wedge of $\wt{N}_{\mathrm{ret}}$. For any such pair $h,h'$, we may choose $l_{h,1}$ and $l_{h',1}$ so that $k$ intersects $l_h$, $l_{h,1}$, $l_{h',1}$, and $l_{h'}$ in this (weak) order with respect to some orientation on $k$. 
		Since $Q(g_1,g_2)$ is compact and $\Sigma$ is continuous (see Condition (2) of Definition~\ref{def: slithering map}), there is some $E>0$ such that 
		\begin{equation*}
			\norm{\Sigma(g,g')}\leq E\end{equation*}
		for all $g',g\in Q_{\mathsf{F}}(g_1,g_2)$. By enlarging $E$ if necessary, we can ensure that 
		\begin{equation*}
			ACDB^{\mu}\ell(k)^{\mu}+1\leq E.\end{equation*} 
		Then
		\begin{align*}
			\norm{\widehat{\Sigma}(h',h) - \id}
			& \leq \norm{\Sigma(l_{h',1},l_{h,1})}\norm{\widehat{\Sigma}(l_{h,1},h)} \norm{\widehat{\Sigma}(h',l_{h',1}) - \id} \\
			& \hspace{.5cm} + \norm{\widehat{\Sigma}(l_{h,1},h)} \norm{\Sigma(l_{h',1},l_{h,1}) - \id} + \norm{\widehat{\Sigma}(l_{h,1},h) - \id} \\
			& \leq E^2 \left(  \norm{\widehat{\Sigma}(h',l_{h',1}) - \id}  +\norm{\Sigma(l_{h',1},l_{h,1}) - \id}\right.\\
			& \hspace{.5cm} \left.+ \norm{\widehat{\Sigma}(l_{h,1},h) - \id}\right).
		\end{align*}
		By applying Equation \eqref{eq:holder continuity} to the term $\norm{\Sigma(l_{h',1},l_{h,1}) - \id}$ and Equation \eqref{eq:extension_holder} to the remaining terms, we deduce that
		\begin{align*}
			\norm{\widehat{\Sigma}(h',h) - \id} & \leq E^2 \left( A'\ell(k(l_{h'},l_{h',1}))^{\mu} + A \ell(k(l_{h',1},l_{h,1}))^{\nu} + A'\ell(k(l_{h,1},l_h))^{\mu} \right) \\
			& \leq E^2 \left( A'\ell(k(l_{h'},l_{h',1}))^{\mu} + AD\ell(k(l_{h',1},l_{h,1}))^{\mu} + A'\ell(k(l_{h,1},l_h))^{\mu} \right)  \\
			& \leq 3 E^2 \max\{A',AD\} \, \ell(k(l_{h'},l_h))^{\mu} ,
		\end{align*}
		where $A':= ACDB^{\mu}$. Here, the second inequality follows from Equation \eqref{eq:control_exponents}, and the third inequality is a consequence of the fact that $k(l_{h'},l_{h',1}),k(l_{h',1},l_{h,1}),k(l_{h,1},l_h)\subset k(l_{h'},l_h)$. This proves Equation \eqref{eqn: reduced equation} in this case.
	\end{proof}
	
	\begin{proof}[Proof of Lemma \ref{lem:compare_lenghts}]
		For the purpose of this proof, we will identify the universal cover $\widetilde{S}$ of $S$ with the upper half-space model of the hyperbolic plane $\mathbb{H}^2$, and its boundary at infinity $\partial\widetilde{S}$ with $\partial \mathbb{H}^2 = \mathbb{R} \cup \{\infty\}$ accordingly. We denote by $g_0$ the geodesic of $\mathbb{H}^2$ with endpoints $0, \infty \in \partial \mathbb{H}^2$, and we select a basepoint $p_0 \in g_0$.
		
		Every element $a\in\mathsf{PGL}_2(\mathbb{R})$ acts smoothly on $\partial\mathbb{H}^2$, so we may define
		\[m(a) :=\max_{z\in\partial\mathbb{H}^2}\norm{{\rm d}a_z},\] 
		where ${\rm d}a_z:T_z\partial\mathbb{H}^2\to T_{a(z)}\partial\mathbb{H}^2$ is the derivative of $a$ at $z$ and $\norm{\cdot}$ is the operator norm with respect to the Riemannian metric we fixed on $\partial\wt S$ (which we used to define $d_\infty$).
		
		Let $w_1$ and $w_2$ be the boundary leaves of a wedge of $\wt N_{\rm ret}$ such that both $w_1$ and $w_2$ intersect $k$. Then denote by $x$ the common endpoint of $w_1$ and $w_2$, and by $y_1$ and $y_2$ the other endpoints of $w_1$ and $w_2$ respectively. We also set $p_{w_1} \in \mathbb{H}^2$ to be the point of intersection between $w_1$ and $k$. Since the action of $\mathsf{PGL}_2(\mathbb{R})$ on the set of geodesics with basepoint in $\mathbb{H}^2$ is simply transitive, for every such $w_1, w_2$ there exists a unique isometry $a_{w_1} \in \mathsf{PGL}_2(\mathbb{R})$ that sends $p_{w_1}$ into $p_0 \in \mathbb{H}^2$, and the ordered pair of ideal points $(y_1,x)$ to $(0,\infty) \in \partial \mathbb{H}^2$. By construction, $a_{w_1}(w_1) = g_0$ and $a_{w_1}(w_2)$ has endpoints $\infty$ and $s_{w_1,w_2}$, for some $s_{w_1,w_2} \in \mathbb{R} \subset \partial \mathbb{H}^2$. Observe that the numbers $\abs{s_{w_1,w_2}}$ are bounded above by a constant that is independent of the wedge considered. Indeed, the geodesic $a_{w_1}(w_2)$ crosses the segment $a_{w_1}(k)$, which passes through the basepoint $p_0$. In particular,  $a_{w_1}(w_2)$ must intersect the closed metric ball of radius $\ell(k)$ and center $p_0$ in $\mathbb{H}^2$. For this to be possible, the endpoint of  $a_{w_1}(w_2)$ different from $\infty$ must vary in a compact region of $\mathbb{R} \subset \partial \mathbb{H}^2$. 
		
		Since $k$ is compact and since the action of $\mathsf{PGL}_2(\mathbb{R})$ on the set of oriented geodesics with basepoint of $\mathbb{H}^2$ is simply transitive, there exists a constant $D=D(k)>1$ (that is uniform over all possible choices of $w_1$ and $w_2$), such that
		\[\frac{1}{D}\le m(a_{w_1})\le D.\]
		
		For all $h,h'\in Q_{\mathsf{F}}(w_1,w_2)$, the geodesics $l_{h}$ and $l_{h'}$ share $x$ as a common endpoint. Denote the other endpoint of $l_{h}$ and $l_{h'}$ by $x$ and $x'$ respectively, and choose $l_{h,1}=l_{h',1}=w_1$. Then observe that $u_h=u_{h'}$, and that
		\[a_{w_1} \, u_h^{t_h} \, a_{w_1}^{-1} = 
		\begin{bmatrix}
			1&t_h \, s_{w_1,w_2} \\
			0&1
		\end{bmatrix}\,\,\text{ and }\,\,a_{w_1} \, u_{h'}^{t_{h'}} \, a_{w_1}^{-1}=\begin{bmatrix}
			1&t_{h'} \, \, s_{w_1,w_2} \\
			0&1
		\end{bmatrix}.\]
		In particular, $a_{w_1}(x)=t_h \, s_{w_1,w_2}$ and $a_{w_1}(x')=t_{h'} \, s_{w_1,w_2}$, so
		\begin{equation*}
			\frac{1}{D^2}\frac{d_\infty(t_h \, s_{w_1,w_2}, t_{h'} \, s_{w_1,w_2})}{d_\infty(0,s_{w_1,w_2})}\leq \frac{d_\infty(x,x')}{d_\infty(y_1,y_2)}\leq D^2\frac{d_\infty(t_h \, s_{w_1,w_2}, t_{h'} \, s_{w_1,w_2})}{d_\infty(0,s_{w_1,w_2})}.
		\end{equation*}
		
		Since the metric $d_\infty$ is induced by a Riemannian metric, it is bi-Lipschitz to the Euclidean metric when restricted to any closed and bounded interval. Since the numbers $\abs{s_{w_1,w_2}}$ are uniformly bounded, we can find some $E>1$ that is uniform over all $w_1$ and $w_2$, such that
		\begin{equation}\label{eqn: blahdiblah}
			\frac{1}{D^2E}\abs{t_h-t_{h'}}\leq \frac{d_\infty(x,x')}{d_\infty(y_1,y_2)}\leq D^2E\abs{t_h-t_{h'}}.
		\end{equation}
		By Lemma \ref{lem: hyperbolic geometry} there exists a  constant $C=C(k)>1$ such that 
		\begin{align*}
			\frac{1}{C}\frac{d_\infty(x,x')}{d_\infty(y_1,y_2)}&\leq \frac{\ell(k(l_{h},l_{h'}))}{\ell(k(w_1,w_2))}\leq C\frac{d_\infty(x,x')}{d_\infty(y_1,y_2)}.
		\end{align*}
		Thus, we have
		\[
		\frac{1}{CD^2E}\abs{t_{h}-t_{h'}}\leq \frac{\ell(k(l_{h},l_{h'}))}{\ell(k(w_1,w_2))} \leq CD^2E\abs{t_{h}-t_{h'}},
		\]
		so the lemma holds with $B=CD^2E$.
	\end{proof}
	
	\begin{proof}[Proof of Lemma \ref{lem:nilp_unip}]
		Let $\mathcal{U}_d$ denote the set of unipotent matrices in $\mathsf{GL}_d(\mathbb{C})$. First, we observe that the exponential map 
		\[\exp:\mathcal{N}_d\to\mathcal{U}_d\]
		is a real-analytic diffeomorphism between the space $\mathcal{N}_d$ of nilpotent linear transformations of $\mathbb{C}^d$ and $\mathcal{U}_d$. For every $X \in \mathcal{N}_d$, define $F(X):= \exp(X) - \id$. Since $\exp(X)$ is unipotent, the linear transformation $F(X)$ is nilpotent. Making use of the power series expansion of the function $x' \mapsto \log(1 + x')$ (the inverse of $x \mapsto e^x - 1$) for $|x'| < 1$, we can describe explicitly the inverse of the map $F : \mathcal{N}_d \to \mathcal{N}_d$ as polynomial with vanishing constant term in a variable $X' \in \mathcal{N}_d$. Thus $F$ is a real-analytic diffeomorphism of  $\mathcal{N}_d$ with itself. By definition, the set of unipotent matrices $\mathcal{U}_d$ coincides with the translate of $\mathcal{N}_d$ by the identity matrix, so $\exp = F + \id$ is a real-analytic diffeomorphism from $\mathcal{N}_d$ to $\mathcal{U}_d$.
		
		Since $K\subset\mathcal{N}_d$ is compact, there is some $R > 0$ such that $K$ and $F(K)$ are both contained in the ball of radius $R$ in $\End(\mathbb{C}^d)$ centered at the zero endomorphism. Since the polynomial expansions of $F$ and $F^{-1}$ both have vanishing constant terms, for any $R > 0$ we can find a constant $D=D(R)$ such that
		\[
		\norm{F(X)} \leq D \norm{X}\,\,\text{ and }\,\, \norm{F^{-1}(X)} \leq D \norm{X}
		\]
		for every $X\in K$. Thus, for all $t\in[0,1]$ and $X\in K$, 
		\begin{align*}
			\norm{\exp(t X) - \id}&=\norm{F(t X)} \leq D \norm{t X} \leq D^2 t \norm{F(X)} = D^2 t \norm{\exp(X) - \id},
		\end{align*}
		and the desired statement holds with $C=D^2$.
	\end{proof}
	
	\subsubsection{Extending the homomorphism bundle} 
	
	We next describe an extension of the ${\bf i}$--th homomorphism bundle $\wh H=\wh H_{\bf i}\to T^1\lambda\cong\lambda^o$ to a line bundle over 
	\[\widehat{H}_{\mathrm{ret}}=\widehat{H}_{{\bf i},\mathrm{ret}}\to N^o_{\mathrm{ret}}\supset\lambda^o.\] 
	
	Let $\pi_{\wt N}:\wt N^o\to\wt N$ denote the covering map. Recall that the $\lambda$--limit map $\xi$ of $\rho$ induces a $\Gamma$--invariant splitting 
	\[\wt\lambda^o\times\mathbb{C}^d= T^1\wt\lambda\times\mathbb{C}^d=L_1\oplus\dots\oplus L_d,\] 
	where $L_i|_v=\xi^i(v^+)\cap \xi^{d-i+1}(v^-)$ for all $v\in T^1\wt\lambda$ and all $i$. For any element $u\in \wt N^o_{\rm ret}$, 
	\begin{itemize}
		\item if $u$ lies in a wedge $W_u$ of $\wt N^o_{\rm ret}$, let $\mathsf{l}_{u,1}$ and $\mathsf{l}_{u,2}$ be the two boundary geodesics of $W_u$, equipped with the natural orientation as leaves of $\widetilde\lambda^o$;
		\item if $u$ lies in a leaf $\mathsf{l}_u$ of $\wt\lambda^o$, let $\mathsf{l}_{u,1}=\mathsf{l}_{u,2}=\mathsf{l}_u$,
		\item let $h_u$ be the leaf of $\wt{\mathsf{F}}$ that contains $\pi_{\wt N}(u)$, let $l_{u,1}:=\pi_{\wt N}(\mathsf{l}_{u,1})$ and let $l_{u,2}:=\pi_{\wt N}(\mathsf{l}_{u,2})$.
	\end{itemize}
	Then for all $i$, let $L_{i,\mathrm{ret}}$ be the continuous sub-bundle of $\wt N^o_{\mathrm{ret}}\times\mathbb{C}^d\to\wt N^o_{\mathrm{ret}}$ defined by
	\[L_{i,\mathrm{ret}}|_{u}:=\widehat{\Sigma}_{h_u,l_{u,1}}(L_i|_{v})=\widehat{\Sigma}_{h_u,l_{u,2}}(L_i|_{v'}),\]
	where $v$ and $v'$ are some (any) points in $\mathsf{l}_{u,1}$ and $\mathsf{l}_{u,2}$ respectively. Notice that the second equality above holds by properties (1) and (4) from Proposition \ref{prop: slithering_extension}:
	\[\widehat{\Sigma}_{h_u,l_{u,1}}(L_i|_{v})=\widehat{\Sigma}_{h_u,l_{u,2}}\circ\Sigma_{l_{u,2},l_{u,1}}(L_i|_{v})=\widehat{\Sigma}_{h_u,l_{h_u,2}}(L_i|_{v'})\] 
	for all $v\in \mathsf{l}_{\mathsf{h},1}$ and all $v'\in\mathsf{l}_{\mathsf{h},2}$.
	
	The sub-bundles $L_{1,{\rm ret}},\dots,L_{d,{\rm ret}}$ define a splitting
	\[\wt N^o_{\rm ret}\times\mathbb{C}^d=L_{1,{\rm ret}}\oplus\dots\oplus L_{d,{\rm ret}}\] 
	which in turn defines a splitting 
	\begin{align}\label{eqn: splitting big}
		\wt N^o_{\rm ret}\times\End(\mathbb{C}^d)=\bigoplus_{1\le i,j\le d}\Hom(L_{i,{\rm ret}},L_{j,{\rm ret}}).
	\end{align}
	In particular, the line bundle 
	\[H_{\mathrm{ret}}=H_{\bf i,\mathrm{ret}}:=\Hom(L_{i_1,\mathrm{ret}},L_{i_1+1,\mathrm{ret}})\to\wt N^o_{\rm ret}\]  
	is a sub-bundle of the product bundle $\wt N^o_{\rm ret}\times\End(\mathbb{C}^d)$.
	
	By the $\rho$--equivariance of $\widehat{\Sigma}$, the natural $\Gamma$--action on $\wt N^o_{\rm ret}\times\End(\mathbb{C}^d)$ (by deck transformations in the first factor and conjugation by $\rho(\Gamma)$ in the second factor) leaves $\Hom(L_{i,{\rm ret}},L_{j,{\rm ret}})$  invariant for all $1\le i,j\le d$. Thus, the line bundle $H_{\mathrm{ret}}\to\wt N^o_{\rm ret}$  
	descends to the required line bundle 
	\[\widehat{H}_{\mathrm{ret}}=\widehat{H}_{{\bf i},\mathrm{ret}}:= H_{\mathrm{ret}}/\Gamma\to N^o_{\mathrm{ret}},\]
	which is a sub-bundle of 
	\[
	\left(\wt N^o_{\rm ret}\times\End(\mathbb{C}^d)\right)/\Gamma\to N^o_{\rm ret}.\] 
	By definition, the restriction of $\wh H_{\mathrm{ret}}$ to $\lambda^o$ is $\wh H$.

	\begin{proposition} \label{prop: extended_bundle}
		Every point $u\in N^o_{\mathrm{ret}}$, has a contractible open neighborhood $V_u\subset N^o_{\mathrm{ret}}$ such that the line bundle $\wh H_{\mathrm{ret}}|_{V_u}$ admits a trivialization 
		\[\psi_u : V_u\times \wh H_{\mathrm{ret}}|_u \longrightarrow \wh H_{\mathrm{ret}}|_{V_u}\]
		with the following properties:
		\begin{enumerate}
			\item The transition functions of $\{\psi_u\}_{u\in N^o_\mathrm{ret}}$ are locally constant, i.e. $\wh H_{\mathrm{ret}}$ is flat.
			\item When viewed as a map from $V_u$ to $\End(\mathbb{C}^d)$, $\psi_u$ is H\"older continuous with respect to some (any) norm on $\End(\mathbb{C}^d)$.
			\item If $w\in V_u$ lies in a leaf of $\lambda^o$ and $a>0$ is a number such that $\varphi_t(w)\in V_u$ for all $t\in [0,a]$, then for all element $Z\in\wh H_{\mathrm{ret}}|_u$ and $t\in [0,a]$ we have
			\[\varphi_t(\psi_u(w,Z))=\psi_u(\varphi_t(w),Z).\]
		\end{enumerate}
	\end{proposition}
	
	\begin{remark}
		Proposition \ref{prop: extended_bundle} shows that the complex line bundle $\widehat{H}_{{\bf i}, \mathrm{ret}} \to N^o_{\mathrm{ret}}$ admits a structure of flat bundle. The restriction of the associated flat connection $\nabla$ to $\widehat{H}_{\bf i} \to T^1 \lambda$ generalizes the \emph{slithering connection} considered in \cite{bobb-farre} from the setting of Hitchin representations to the context of $d$--pleated surfaces. 
	\end{remark}
	
	\begin{proof}[Proof of Proposition \ref{prop: extended_bundle}]
		Since the claims in this proposition can all be verified locally, it suffices to show that the analogous statements hold for the line bundle $H_{\mathrm{ret}}\to \wt N^o_{\mathrm{ret}}$. Given a point in $u\in \wt N^o_{\mathrm{ret}}$, let $V_{u}\subset \wt N^o_{\mathrm{ret}}$ be a contractible neighborhood of $u$ with compact closure. We can find two leaves $g_1$, $g_2$ of $\wt \lambda$ such that every leaf of the foliation $\wt{\mathsf{F}}{}^o$ that intersects $V_{u}$ projects to a leaf of $\wt{\mathsf{F}}$ in $Q_\mathsf{F}(g_1, g_2)$. By the definition of $H_{\mathrm{ret}}$ and Proposition~\ref{prop: slithering_extension} Claim (1), we have
		\begin{equation}\label{eq:bundle and slithering}
			H_{\mathrm{ret}} |_{w} = \widehat{\Sigma}_{h_{w}, h_{u}} \circ H_{\mathrm{ret}}|_{u} \circ \widehat{\Sigma}_{h_{u}, h_{w}} 
		\end{equation}
		for every $w \in V_{u}$. Thus, we may define a local trivialization of $H_{\mathrm{ret}}|_{V_{u}}$ by
		\[
		\begin{matrix}
			\psi_{u} : & V_{u}\times H_{\mathrm{ret}}|_{u} & \longrightarrow & H_{\mathrm{ret}}|_{V_{u}} \\
			& (w, f) & \longmapsto & \left(w, \widehat{\Sigma}_{h_{w}, h_{u}} \circ f \circ \widehat{\Sigma}_{h_{u}, h_{w}}\right).
		\end{matrix}
		\]
		
		By Proposition \ref{prop: slithering_extension} Claim (1),  $\psi_{u'}^{-1}\circ\psi_{u}(w,f)= \left(w, \widehat{\Sigma}_{h_{u'}, h_{u}} \circ f \circ \widehat{\Sigma}_{h_{u}, h_{u'}}\right)$ for all $w\in V_{u}\cap V_{u'}$, so (1) holds. Since the map $T^1\wt S\to\partial\wt S^2$ given by $v\mapsto (v^-,v^+)$ is smooth, Proposition~\ref{prop: Lipschitz foliation} Claim (1) implies that the map $\wt N^o_{\rm ret}\to \wt{\mathsf{F}}$ given by $w\mapsto h_w$ is locally Lipschitz, so by Proposition \ref{prop: slithering_extension} Claim (2), the map $w\mapsto \widehat{\Sigma}_{h_{u}, h_{w}}$ is H\"older continuous on $V_u$. This implies that (2) holds.  Finally, (3) is obvious from the definition of the local trivializations.
	\end{proof}

	\subsubsection{Defining \texorpdfstring{$\Omega^\rho_{\bf i}$}{Omega-rho-i}}
	
	Choose a continuous family of norms $\norm{\cdot}=\norm{\cdot}_{u\in N^o_{\rm ret}}$ on the fibers of the bundle $\wh H_{\mathrm{ret}}\to N^o_{\mathrm{ret}}$ defined above. For any point $u \in  N^o_{\mathrm{ret}}$, let 
	\[\psi_u : V_u\times \wh H_{\mathrm{ret}}|_u \longrightarrow \wh H_{\mathrm{ret}}|_{V_u}\] 
	be the local trivialization given in Proposition \ref{prop: extended_bundle}. Choose a non-zero vector $Z_u\in \wh H_{\mathrm{ret}}|_u$, and define the continuous section $\sigma_u:V_u\to\wh H_{\mathrm{ret}}|_{V_u}$ by
	\begin{equation}\label{eqn: sigma} 
		\sigma_u(w):=\psi_u(w,Z_u).
	\end{equation}
	Since $\norm{ \sigma_u(w) }_w>0$ for all $w\in V_u$, we may define the continuous function
	\[F_u : V_u \to \mathbb{R}\] 
	by $F_u(w):= - \log \norm{ \sigma_u(w) }_w$, and define
	\[\Omega|_{N^o_{\mathrm{ret}}}: = \{ (V_u, F_u) \}_{u \in N^o_{\mathrm{ret}}}.\]
	
	\begin{proposition}
		$\Omega|_{N^o_{\mathrm{ret}}}$ as defined above is topological closed $1$--form on $N^o_{\mathrm{ret}}$.
	\end{proposition}
	
	\begin{proof}
		It suffices to verify that if $u$ and $u'$ are points in $N^o_{\rm ret}$ such that $V_u\cap V_{u'}$ is non-empty, then $F_u-F_{u'}$ is constant on each connected component of $V_u\cap V_{u'}$. By Property (1) of $\psi_u$ given in Proposition \ref{prop: extended_bundle} and the fact that $\wh H_{\mathrm{ret}}$ is a line bundle, it follows that for each connected component $U$ of $V_u \cap V_{u'}$ there is some $k=k(U)\in\mathbb{R}-\{0\}$ such that $k\sigma_u(w)=\sigma_{u'}(w)$ for all $w\in U$. Thus by definition, $F_u(w)-F_{u'}(w)=  \log |k|$ for all $w\in U$.
	\end{proof}
	
	Now, choose a deformation retract $r:N^o\to N^o_{\mathrm{ret}}$. Then the required topological closed $1$--form on $N^o$ is
	\[\Omega=\Omega^{\rho}_{\bf i}:=\{(r^{-1}(V_u),F_u\circ r)\}_{u \in N^o_{\mathrm{ret}}}.\] 
	
	In the above construction, even though we suppress the dependence in the notation, $\Omega$ depends on the choice of a family of local trivializations $\{\psi_u\}_{u\in N^o_{\mathrm{ret}}}$ satisfying Proposition \ref{prop: extended_bundle}, the choice of vectors $\{Z_u\}_{u\in N^o_{\mathrm{ret}}}$, the choice of a continuous family of norms $\norm{\cdot}$ on the fibers of $\wh H_{\mathrm{ret}} \to N^o_{\mathrm{ret}}$, and the choice of a deformation retract $r:N^o\to N^o_{\mathrm{ret}}$. Observe that changing the choice of $\{\psi_u\}_{u\in N^o_{\mathrm{ret}}}$ and $\{Z_u\}_{u\in N^o_{\mathrm{ret}}}$ results in equivalent topological closed $1$--forms. On the other hand, a different choice of $\norm{\cdot}$ and $r$ might give non-equivalent topological closed $1$--forms on $N^o$. However, the next proposition implies that all such topological closed $1$--forms are cohomologous as elements of $H^1(N^o;\mathbb R)$.
	
	\begin{proposition}\label{prop: norm independence}
		Let $r_1,r_2:N^o\to N^o_{\mathrm{ret}}$ be deformation retracts, let $\norm{\cdot}_1$ and $\norm{\cdot}_2$ be continuous family of norms on the fibers of $\wh H_{\mathrm{ret}} \to N^o_{\mathrm{ret}}$, and let $\Omega_1$ and $\Omega_2$ be the topological closed $1$--forms constructed as above using $\norm{\cdot}_1$ and $r_1$, and using $\norm{\cdot}_2$ and $r_2$ respectively. If $\mathsf{c}$ is an oriented loop in $N^o$, then
		\[\int_{\mathsf{c}}\Omega_1=\int_{\mathsf{c}}\Omega_2.\]
		In particular, the cohomology class $[\Omega]=[\Omega^\rho_{\bf i}]\in H^1(N^o;\mathbb{R})$ does not depend on the choice of a continuous norm $\norm{\cdot}$ on the fibers of $\wh H_{\mathrm{ret}} \to N^o_{\mathrm{ret}}$, nor on the choice of a deformation retract $r:N^o\to N^o_{\mathrm{ret}}$.
	\end{proposition}
	
	\begin{proof}
		Since equivalent topological closed $1$--forms are cohomologous, we may assume that we use the same choices of $\{\psi_u\}_{u\in N^o_{\mathrm{ret}}}$ and $\{Z_u\}_{u\in N^o_{\mathrm{ret}}}$ to define both $\Omega_1$ and $\Omega_2$. This implies that in the definition of $\Omega_1$ and $\Omega_2$, we use the same family of sections $\{\sigma_u\}_{u\in N^o_{\mathrm{ret}}}$ defined by Equation \eqref{eqn: sigma}.
		
		Note that for any oriented loop $\mathsf{c}$ in $N^o$, $r_1(\mathsf{c})$ and $r_2(\mathsf{c})$ are homotopic in $N^o_{\rm ret}$. We previously observed that for any topological closed $1$--form on $N^o_{\rm ret}$, its integral along homotopic oriented loops in $N^o_{\rm ret}$ agree. Furthermore, by definition, we have
		\[\int_{\mathsf{c}}\Omega_1=\int_{r_1(\mathsf{c})}\Omega_1\,\,\text{ and }\,\,\int_{\mathsf{c}}\Omega_2=\int_{r_2(\mathsf{c})}\Omega_2.\]
		Thus, it suffices to prove that
		\[\int_{\mathsf{c}}\Omega_1=\int_{\mathsf{c}}\Omega_2\]
		for all oriented loops $\mathsf{c}$ in $N^o_{\mathrm{ret}}$. 
		
		By definition,
		\[\Omega_1|_{N^o_{\mathrm{ret}}}=\{ (V_u, F_u) \}_{u \in N^o_{\mathrm{ret}}}\,\,\text{ and }\,\,\Omega_2|_{N^o_{\mathrm{ret}}}=\{ (V_u, H_u) \}_{u \in N^o_{\mathrm{ret}}},\]
		where $F_u,H_u : V_u \to \mathbb{R}$ are given by 
		\[F_u(v):= - \log \norm{ \sigma_u(v) }_{1,v}\,\,\text{ and }\,\,H_u(v):= - \log \norm{ \sigma_u(v) }_{2,v}.\]
		Let $\mathsf{c}$ be an oriented loop in $N^o_{\rm ret}$, and let $\mathsf{c}_1,\dots,\mathsf{c}_n$ be paths in $N^o_{\rm ret}$ such that $\mathsf{c}$ is the concatenation $\mathsf{c}_1\cdot\ldots\cdot\mathsf{c}_n$, and each $\mathsf{c}_k$ lies in $V_{u_k}$ for some $u_k\in N^o_{\mathrm{ret}}$. By replacing $\Omega_j|_{N^o_{\mathrm{ret}}}$ with an equivalent topological closed $1$--form if necessary, we may assume that for all $k\in\{1,\dots,n-1\}$, $F_{u_k}=F_{u_{k+1}}$ and $H_{u_k}=H_{u_{k+1}}$ on the connected component of on $V_{u_k}\cap V_{u_{k+1}}$ containing $\mathsf{c}_k$. Since the forward and backward endpoints of $\mathsf{c}$ agree, we may denote it by $v_0$ and get
		\begin{align*}
			\int_{\mathsf{c}}\Omega_1-\int_{\mathsf{c}}\Omega_2&=F_{u_n}(v_0)-F_{u_1}(v_0)-H_{u_n}(v_0)+H_{u_1}(v_0)\\
			&=\log \frac{\norm{ \sigma_{u_1}(v_0) }_{1,v_0}}{\norm{ \sigma_{u_1}(v_0) }_{2,v_0}}\frac{\norm{ \sigma_{u_n}(v_0) }_{2,v_0}}{\norm{ \sigma_{u_n}(v_0) }_{1,v_0}}=0,
		\end{align*}
		where the last equality holds because there is some constant $k>0$ such that $\norm{X}_{2,v_0}=k\norm{X}_{1,v_0}$ for all $X\in\wh H_{\mathrm{ret}}|_{v_0}$.
	\end{proof}
	
	Finally, we observe that by choosing $\norm{\cdot}$ and $r$ appropriately, we may ensure that $\Omega$ is H\"older continuous. 
	
	\begin{proposition}\label{prop: Holder topological closed form}
		If we choose the continuous family of norms $\norm{\cdot}$ on the fibers of $\wh H_{\rm ret}$ to be the restriction of a smooth family of norms on the fibers of the vector bundle $\left(\wt N^o\times\End(\mathbb{C}^d)\right)/\Gamma\to N^o$, and we choose the deformation retract $r:N^o\to N^o_{\rm ret}$ to be H\"older continuous, then $\Omega$ is H\"older. 
	\end{proposition}
	
	\begin{proof} By Proposition \ref{prop: extended_bundle} Claim (2), we see that for all $u\in N^o_{\rm ret}$, the map $\sigma_u$ is H\"older continuous when viewed as a map from $V_u$ to $\End(\mathbb{C}^d)$. Then our choice of $\norm{\cdot}_{u\in N^o_{\rm ret}}$ ensures that $\Omega|_{N^o_{\mathrm{ret}}}$ is H\"older continuous, in which case our choice of $r$ ensures that $\Omega$ is H\"older continuous.
	\end{proof}
	
	\subsection{Proof of Equation \texorpdfstring{\eqref{eqn: required condition 1}}{n}}\label{sec: properties of 1 form}
	
	We now prove that the cohomology class in $H^1(N^o;\mathbb{R})$ of the closed topological $1$--form $\Omega=\Omega^\rho_{\bf i}$ satisfies Equation \eqref{eqn: required condition 1}. 
	
	First, recall that in Section \ref{subsec:length_functions} we defined the function $f=f^\rho_{\bf i}:T^1\lambda\to\mathbb{R}$ as 
	\begin{align}\label{eqn: f}
		f(v):= - \left. \frac{\mathrm{d}}{\mathrm{d} t}\right|_{t = 0} \log \norm{\varphi_t(X)}_{\varphi_t(v)} 
	\end{align}
	for some (any) non-zero $X \in \wh H|_v$, where $\norm{\cdot}$ is a continuous family of norms on the fibers of $\wh H \to T^1\lambda\cong\lambda^o$ and 
	$\varphi_t$ is a flow on $\wh H$. 
	Then, the length function $\ell=\ell^\rho_{\bf i}$ is defined by
	\[
	\ell(\mu)=\int_{T^1\lambda} f \ \mathrm{d}t\times \mu =\sum_{R\text{ a rectangle of }N^o}\int_{\{\mathsf{h}\in\Lambda^o:\mathsf{h}\cap R\neq\emptyset\}}\left(\int_{\mathsf{h}\cap R} f\ \mathrm{d}t\right)\mu,
	\]
	where $\mathrm{d}t$ is the $1$--form along the leaves of $T^1\lambda$ given by the arc length. Recall that $\Lambda^o$ is the set of oriented leaves $\mathsf{h}$ of $\lambda^o$.
	
	Extend $\norm{\cdot}$ to a continuous family of norms on the fibers of $N^o_{\mathrm{ret}}$, which we also denote by $\norm{\cdot}$. By Proposition \ref{prop: norm independence}, we may assume that the topological closed $1$--form $\Omega$ was defined using this continuous family. Recall that $\mathsf{M}(\lambda^o)$ denotes the set of transverse measures whose support lies in $\lambda^o$.
	
	\begin{proposition} \label{prop: second_extension_forms}
		If $\mathsf{k}$ is an oriented geodesic arc that lies in a leaf of $\lambda^o$, then
		\[
		\int_{\mathsf{k}} \Omega = \int_{\mathsf{k}} f \,\mathrm{d}t .
		\]
		In particular, 
		\[\ell(\mu)=\sum_{R\text{ a rectangle of }N^o}\int_{\{\mathsf{h}\in\Lambda^o:\mathsf{h}\cap R\neq\emptyset\}}\left(\int_{\mathsf{h}\cap R}\Omega\right)\mu\] 
		for all $\mu\in\mathsf{M}(\lambda^o)$.
	\end{proposition}
	\begin{proof}
		By definition, we have
		\[\Omega|_{N^o_{\mathrm{ret}}}=\{ (V_u, F_u) \}_{u \in N^o_{\mathrm{ret}}}\] 
		where $F_u: V_u \to \mathbb{R}$ is given by $F_u(w):= - \log \norm{ \sigma_u(w) }_{w}$ and $\sigma_u:V_u\to\wh H_{\mathrm{ret}}|_{V_u}$ is the section defined by Equation \eqref{eqn: sigma} (it  depends on the choice of a family of local trivializations $\{\psi_u\}_{u\in N^o_{\mathrm{ret}}}$ and vectors $\{Z_u\}_{u\in N^o_{\mathrm{ret}}}$). We may assume without loss of generality that $\mathsf{k}$ lies in $V_u$ for some $u\in N^o_{\mathrm{ret}}$. Choose a parameterization $\alpha : [- \varepsilon, \varepsilon] \to \lambda^o$ of $\mathsf{k}$ by arc length. It follows from Property (3) of Proposition~\ref{prop: extended_bundle} and the definition of $\sigma_u$ that 
		\[\varphi_h(\sigma_{u}(\alpha(s)))=\sigma_{u}(\alpha(h+s)).\]
		Thus,
		\begin{align*}
			\int_{\mathsf{k}} f \,\mathrm{d}t 
			& = - \int_{- \varepsilon}^{\varepsilon} \left. \frac{\mathrm{d}}{\mathrm{d} h}\right|_{h = 0} \log \norm{\sigma_u(\alpha(s + h))}  \textrm{d}s  = F_u(\alpha(\varepsilon)) - F_u(\alpha(- \varepsilon)) = \int_{\mathsf{k}} \Omega|_{N^o_{\mathrm{ret}}}.\qedhere
		\end{align*}
	\end{proof}
	
	Via the de Rham isomorphism
	\[H^1_{\rm dR}(N^o;\mathbb{R})\cong H^1(N^o;\mathbb{R})\]
	there is a closed $1$--form $\omega$ on $N^o$ such that $[\omega]\in H^1_{\rm dR}(N^o;\mathbb{R})$ corresponds to $[\Omega]\in H^1(N^o;\mathbb{R})$. Equivalently, 
	\[\int_{\mathsf{c}}\omega=\int_{\mathsf{c}}\Omega\]
	for all oriented loops $\mathsf{c}$ in $N^o$. Since any $\mu\in\mathsf{M}(\lambda^o)$ can be realized as a limit (in the space of $\Gamma$--invariant Radon measure on the space of oriented geodesics in $\wt S$) of a sequence of point masses supported on oriented closed geodesics in $N^o$, it follows from Proposition \ref{prop: second_extension_forms} that 
	\begin{align}\label{eqn: length rep 1}
		\ell(\mu)=\sum_{R\text{ a rectangle of }N^o}\int_{\{\mathsf{h}\in\Lambda^o:\mathsf{h}\cap R\neq\emptyset\}}\left(\int_{\mathsf{h}\cap R}\omega\right)d\mu.
	\end{align}
	
	Again, via the de Rham isomorphism
	\[H^1_{\rm dR}(N^o,\partial N^o;\mathbb{R})\cong H^1(N^o,\partial N^o;\mathbb{R}),\]
	there is a closed $1$--form $\nu$ on $N^o$ whose support lies in the interior of $N^o$, such that $[\nu]\in H^1_{\rm dR}(N^o,\partial N^o;\mathbb{R})$ corresponds to $D([\mu])\in H^1(N^o,\partial N^o;\mathbb{R})$. Here, recall that
	\begin{itemize}
		\item $D:H_1(N^o;\mathbb{R})\to H^1(N^o,\partial N^o;\mathbb{R})$ is the isomorphism given by Lefschetz-Poincar\'e duality, and 
		\item $[\mu]\in H_1(N^o;\mathbb{R})$ is the homology class determined by the transverse measure $\mu\in\mathsf{M}(\lambda^o)$ viewed as a $\lambda^o$--cocycle, see Section \ref{sec: cocycles} and the discussion at the start of this section.
	\end{itemize}
	Then, by the discussion in Ruelle-Sullivan \cite[Pages 320--321]{RuSul}, we have \begin{align}\label{eqn: length rep 2}
		\sum_{R\text{ a rectangle of }N^o}\int_{\{\mathsf{h}\in\Lambda^o:\mathsf{h}\cap R\neq\emptyset\}}\left(\int_{\mathsf{h}\cap R}\omega\right)\mu=\int_{N^o}\nu\wedge\omega.
	\end{align}
	
	Finally, notice that the cohomology class $[\nu\wedge\omega]\in H^2_{\rm dR}(N^o,\partial N^o;\mathbb{R})$ corresponds, via the de Rham isomorphism 
	\[H^2_{\rm dR}(N^o,\partial N^o;\mathbb{R})\cong H^2(N^o,\partial N^o;\mathbb{R}),\] 
	to the cup product $D([\mu])\smile [\Omega]\in H^2(N^o,\partial N^o;\mathbb{R})$. It follows that
	\begin{align}\label{eqn: length rep 3}
		\int_{N^o}\nu\wedge\omega=\mathbb I_*(D([\mu]),[\Omega]),
	\end{align}
	where $\mathbb I_*$ is the intersection pairing in cohomology defined in Section \ref{sec: forms}.
	Together, Equation \eqref{eqn: length rep 1}, Equation \eqref{eqn: length rep 2}, and Equation \eqref{eqn: length rep 3} imply Equation \eqref{eqn: required condition 1}.
	
	\begin{remark}\label{rem: independence from norm}
		As a consequence of Equation \eqref{eqn: required condition 1}, we see that $\ell^\rho_{\bf i}=\ell$ does not depend on the choice of $\norm{\cdot}$, because by Proposition \ref{prop: norm independence}, the cohomology class $[\Omega]\in H^1(N^o;\mathbb{R})$ does not depend on $\norm{\cdot}$. 
	\end{remark}
	
	\subsection{Proof of Equation \texorpdfstring{\eqref{eqn: required condition 2}}{n}}\label{sec: properties of 1 form 2}
	
	Let $(\alpha_\rho,\theta_\rho)\in\mathcal{Y}_d(\lambda;\mathbb{C}/2\pi i\mathbb{Z})$ denote the shear-bend coordinates of the fixed $d$--pleated surface $\rho\in\mathcal{R}_d(\lambda)$. Now, we prove that Equation \eqref{eqn: required condition 2} holds for all transverse measures $\mu\in\mathsf M(\lambda^o)$. The argument here uses ideas from Bonahon and Dreyer \cite[Lemma 7.7 and 7.8]{BoD}.
	
	Let $\mathsf{k}$ be a path in $\wt N^o$ that is tightly transverse to $\wt\lambda^o$ (see Definition \ref{def:tightly transverse}), oriented so that the leaves of $\wt\lambda^o$, when equipped with their natural orientations, cross from the right to the left of $\mathsf{k}$. Recall that the connected components of $\mathsf{k} - \wt \lambda^o$ that contain an endpoint of $\mathsf{k}$ are the \emph{exterior subsegments} of $\mathsf{k}$. We denote by $\mathsf{k}'$ the complement in $\mathsf{k}$ of the exterior subsegments of $\mathsf{k}$. Then let $\mathsf{k}_-$ and $\mathsf{k}_+$ (respectively, $\mathsf{k}'_-$ and $\mathsf{k}'_+$) denote the backward and forward endpoints of $\mathsf{k}$ (respectively, $\mathsf{k}'$) respectively. Recall that $\pi_{\wt N}:\wt N^o\to \wt N$ is the covering map, and let $T_{\mathsf{k},1}$ (respectively, $T_{\mathsf{k},2}$) be the plaque of $\wt \lambda$ that contains $\pi_{\wt N}(\mathsf{k}_-)$ (respectively, $\pi_{\wt N}(\mathsf{k}_+)$). Then let $g^{\mathsf{k}}_j$ be the edge of $T_{\mathsf{k},j}$ that contains an endpoint of $\pi_{\wt N}(\mathsf{k}')$, and let $z^{\mathsf{k}}_j \in \partial \wt \lambda$ be the vertex of $T_{\mathsf{k},j}$ that is not an endpoint of $g^{\mathsf{k}}_j$. Since $\mathsf{k}'_-$ and $\mathsf{k}'_+$ lie in $\wt\lambda^o\cong T^1\wt\lambda$, they determine orientations on $g_1^{\mathsf{k}}$ and $g_2^{\mathsf{k}}$ respectively. Let $x_j^{\mathsf{k}},y_j^{\mathsf{k}}\in\partial\wt\lambda$ denote the backward and forward endpoints of $g_j^{\mathsf{k}}$. As usual, we denote ${\bf T}_{\mathsf{k}}:=(T_{\mathsf{k},1},T_{\mathsf{k},2})$.
	
	For now, fix $j\in\{1,2\}$. For any $i\in\{1,\dots,d\}$, set 
	\[L_{j,i}:=\xi^i(x_j^{\mathsf{k}})\cap \xi^{d-i+1}(y_j^{\mathsf{k}}),\] 
	and note that we have the line decomposition $\mathbb{C}^d=L_{j,1}+\dots+L_{j,d}$. For any pair of positive integers ${\bf i}=(i_1,i_2)$ that sum to $d$, let 
	\[X_{{\bf i},j}^{\mathsf{k}}=X_j^{\mathsf{k}}\in\Hom(L_{j,i_1+1},L_{j,i_1})\] 
	be defined as follows. Pick any nonzero vector $V \in \xi^1(z^{\mathsf{k}}_j)$, let $V_i$ be the projection of $V$ onto $L_{j,i}$  with respect to the line decomposition $\mathbb{C}^d=L_{j,1}+\dots+L_{j,d}$, and let $X_j^{\mathsf{k}}$ be the unique linear map that sends $V_{i_1 + 1}$ into $V_{i_1}$. Observe that $X_j^{\mathsf{k}}$ does not depend on the choice of $V$, and $X_j^{\mathsf{k}}$ is non-zero because $\xi$ is $\lambda$--hyperconvex. 
	
	Pick a smooth, $\Gamma$--invariant family of norms on the fibers of the product bundle $\wt N^o\times\End(\mathbb{C}^d)\to\wt N^o$, and let $\norm{\cdot}$ denote its restriction to the fibers of the bundle $H_{\rm ret} \to\wt N^o_{\rm ret}$. Also, let the retract $r:N^o\to N^o_{\rm ret}$ be a Lipschitz retract. By Proposition \ref{prop: norm independence}, we may assume that the topological closed $1$--form $\Omega=\Omega^\rho_{\bf i}$ is constructed using $\norm{\cdot}$ and $r$, in which case Proposition \ref{prop: Holder topological closed form} implies that $\Omega$ is H\"older. The next proposition is the key result needed to establish Equation \eqref{eqn: required condition 2}--it relates the integral of $\wt\Omega$ (which is the lift of the topological closed $1$--form $\Omega$ to $\wt N^o$) along any path $\mathsf{k}$ in $\wt N^o$ that is tightly transverse to $\wt\lambda^o$ to the quantity $\mathrm{Re}\;\alpha_\rho({\bf T}_{\mathsf{k}},{\bf i})$.
	
	\begin{proposition}\label{prop: the rise of the shear}
		If $\mathsf{k}$ is a path in $\wt N^o$ that is tightly transverse to $\wt \lambda^o$, then
		\[
		\mathrm{Re}\; \alpha_\rho({\bf T}_{\mathsf{k}},\mathbf{i}) = \int_{\mathsf{k}'} \wt\Omega - \log  \norm{X_1^{\mathsf{k}}}_{\mathsf{k}'_-} + \log  \norm{X_2^{\mathsf{k}}}_{\mathsf{k}'_+} .
		\]
	\end{proposition}
	
	\begin{proof}
		Let $\wt\Omega=\{ (V_u, F_u) \}_{u \in \wt N^o_{\mathrm{ret}}}$ and let $\mathsf{k}_1',\dots,\mathsf{k}_n'$ be paths in $\wt N^o_{\rm ret}$ such that $\mathsf{k}'$ is the concatenation $\mathsf{k}_1'\cdot\ldots\cdot\mathsf{k}_n'$, and each $\mathsf{k}_i'$ lies in $V_{u_i}$ for some $u_i\in \wt N^o_{\mathrm{ret}}$. By replacing $\wt\Omega$ with an equivalent topological closed $1$--form if necessary, we may assume that for all $i\in\{1,\dots,n-1\}$, $F_{u_i}=F_{u_{i+1}}$ on the connected component of $V_{u_i}\cap V_{u_{i+1}}$ that contains $\mathsf{k}_i'$. Then the collection of functions $\{F_{u_1},\dots,F_{u_k}\}$ combine into a single H\"older continuous function $F$ which is defined along $\mathsf{k}'$, and by definition, 
		\[\int_{\mathsf{k}'}\wt\Omega=F(\mathsf{k}'_+)-F(\mathsf{k}'_-).\]
		
		For any connected component $\mathsf{e}$ of $\mathsf{k}'-\wt\lambda^o$, let $\mathsf{e}_-$ and $\mathsf{e}_+$ be the backward and forward endpoints of $\mathsf{e}$ respectively, and note that 
		\[\int_{\mathsf{e}}\wt\Omega=F(\mathsf{e}_+)-F(\mathsf{e}_-).\]
		Since the lamination $\wt \lambda^o$ has Hausdorff dimension $1$ \cite[Theorem I]{BIRMAN1985217}, its intersection with $\mathsf{k}'$ has Hausdorff dimension $0$. Then by the H\"older continuity of $F$, we may apply the gap lemma \cite[Lemma 3]{bon_tra} to deduce that
		\begin{align}\label{eqn: sum well-defined}
			\int_{\mathsf{k}'} \wt\Omega= \sum_{\mathsf{e}} \int_{\mathsf{e}} \wt\Omega,
		\end{align}
		and the sum on the right, which is over all connected components $\mathsf{e}$ of $\mathsf{k}'-\wt\lambda^o$, converges.

		Let $g^{\mathsf{e}}_1$ and $g^{\mathsf{e}}_2$ be the leaves of $\wt\lambda$ that contain $\pi_{\wt N}(\mathsf{e}_-)$ and $\pi_{\wt N}(\mathsf{e}_+)$ respectively. A key calculation needed in this proof is the following lemma.
		
		\begin{lemma} \label{lem:integral_on_component_of_k}
			If $\mathsf{e}$ is a connected component of $\mathsf{k} - \wt\lambda^o$ whose closure lies in the interior of $\mathsf{k}$, then we have
			\[
			\int_{\mathsf{e}} \wt\Omega = \log \frac{\norm{X}_{{\mathsf{e}}_-}}{\norm{\Sigma(g^{\mathsf{e}}_2, g^{\mathsf{e}}_1) \circ X \circ \Sigma(g^{\mathsf{e}}_1, g^{\mathsf{e}}_2)}_{{\mathsf{e}}_+}},
			\]
			where $X$ is any non-zero vector in $H |_{{\mathsf{e}}_-}$.
		\end{lemma}
		
		\begin{proof}
			Let $\mathsf{g}^{\mathsf{e}}_1$ and $\mathsf{g}^{\mathsf{e}}_2$ be the leaves of $\wt\lambda^o$ that contain $\mathsf{e}_-$ and $\mathsf{e}_+$ respectively, and let $(\mathsf{e}_n)_{n=1}^\infty$ be a sequence of geodesic arcs in $\wt N^o$ whose backward endpoints $(\mathsf{e}_n)_-$ and forward endpoints $(\mathsf{e}_n)_+$ lie in $\mathsf{g}^{\mathsf{e}}_1$ and $\mathsf{g}^{\mathsf{e}}_2$ respectively, and whose lengths converge to zero as $n$ grows to $\infty$. Such a sequence exists because $\mathsf{g}^{\mathsf{e}}_1$ and $\mathsf{g}^{\mathsf{e}}_2$ are asymptotic as oriented geodesics in $S$. Let $\mathsf{p}_n$ (respectively, $\mathsf{q}_n$) be the geodesic arc whose backward and forward endpoints are $\mathsf{e}_-$ and $(\mathsf{e}_n)_-$ (respectively, $\mathsf{e}_+$ and $(\mathsf{e}_n)_+$). Then for all $n$, we have
			\begin{align}\label{eqn: tiny segment}
				\int_{\mathsf{e}} \wt\Omega = \int_{\mathsf{p}_n} \wt\Omega+\int_{\mathsf{e}_n} \wt\Omega-\int_{\mathsf{q}_n} \wt\Omega.
			\end{align}
			
			Let $\wt f$ be the lift to $\wt\lambda^o=T^1\wt\lambda$ of the function $f$ on $\lambda^o=T^1\lambda$ defined by Equation \eqref{eqn: f}. By Proposition \ref{prop: second_extension_forms}, 
			\begin{align*}
				\int_{\mathsf{p}_n} \wt\Omega-\int_{\mathsf{q}_n} \wt\Omega&= \int_{\mathsf{p}_n}\wt f-\int_{\mathsf{q}_n}\wt f\\
				&= \log\frac{\norm{X}_{\mathsf{e}_-}}{\norm{X}_{(\mathsf{e}_n)_-}}-\log\frac{\norm{\Sigma(g^{\mathsf{e}}_2, g^{\mathsf{e}}_1) \circ X \circ \Sigma(g^{\mathsf{e}}_1, g^{\mathsf{e}}_2)}_{\mathsf{e}_+}}{\norm{\Sigma(g^{\mathsf{e}}_2, g^{\mathsf{e}}_1) \circ X \circ \Sigma(g^{\mathsf{e}}_1, g^{\mathsf{e}}_2)}_{(\mathsf{e}_n)_+}}\\
				&= \log\frac{\norm{X}_{\mathsf{e}_-}}{\norm{\Sigma(g^{\mathsf{e}}_2, g^{\mathsf{e}}_1) \circ X \circ \Sigma(g^{\mathsf{e}}_1, g^{\mathsf{e}}_2)}_{\mathsf{e}_+}}-\log\frac{\norm{X}_{(\mathsf{e}_n)_-}}{\norm{\Sigma(g^{\mathsf{e}}_2, g^{\mathsf{e}}_1) \circ X \circ \Sigma(g^{\mathsf{e}}_1, g^{\mathsf{e}}_2)}_{(\mathsf{e}_n)_+}}.
			\end{align*}
			
			Let $K\subset \wt N^o$ be a compact set such that $\Gamma\cdot K=\wt N^o$. Since the length of $\mathsf{e}_n$ converges to $0$ as $i$ grows to $\infty$, by enlarging $K$ if necessary, we can ensure that for all $n$, there is some $\gamma_n\in\Gamma$ such that $\gamma_n\cdot \mathsf{e}_n\subset K$. By taking a subsequence, we can then ensure that $\gamma_n\cdot (\mathsf{e}_n)_\pm$ converge in $\wt N^o$ and $\gamma_n\cdot g_j^{\mathsf{e}}$ converges in the space of leaves of $\wt\lambda$ for both $j=1,2$. Then 
			\[\lim_{n\to\infty} \gamma_n\cdot (\mathsf{e}_n)_+=\lim_{n\to\infty}\gamma_n\cdot (\mathsf{e}_n)_-\,\,\text{ and }\,\,\lim_{n\to\infty}\gamma_n\cdot g_1^{\mathsf{e}}=\lim_{n\to\infty}\gamma_n\cdot g_2^{\mathsf{e}}.\]
			Since
			\[\frac{\norm{X}_{(\mathsf{e}_n)_-}}{\norm{\Sigma(g^{\mathsf{e}}_2, g^{\mathsf{e}}_1) \circ X \circ \Sigma(g^{\mathsf{e}}_1, g^{\mathsf{e}}_2)}_{(\mathsf{e}_n)_+}}=\frac{\norm{\rho(\gamma_n)\cdot X}_{\gamma_n\cdot(\mathsf{e}_n)_-}}{\norm{\Sigma(\gamma_n\cdot g^{\mathsf{e}}_2, \gamma_n\cdot g^{\mathsf{e}}_1) \circ (\rho(\gamma_n)\cdot X) \circ \Sigma(\gamma_n\cdot g^{\mathsf{e}}_1, \gamma_n\cdot g^{\mathsf{e}}_2)}_{\gamma_n\cdot(\mathsf{e}_n)_+}},\]
			it follows that
			\begin{align}\label{eqn: tiny segment 2}
				\lim_{n\to\infty} \left(\int_{\mathsf{p}_n} \wt\Omega-\int_{\mathsf{q}_n} \wt\Omega\right)=\log\frac{\norm{X}_{\mathsf{e}_-}}{\norm{\Sigma(g^{\mathsf{e}}_2, g^{\mathsf{e}}_1) \circ X \circ \Sigma(g^{\mathsf{e}}_1, g^{\mathsf{e}}_2)}_{\mathsf{e}_+}}.
			\end{align}
			At the same time, the continuity and $\Gamma$--invariance of $\wt\Omega$ implies that 
			\begin{align}\label{eqn: tiny segment 3}
				\lim_{n\to\infty}\int_{\mathsf{e}_n} \wt\Omega=\lim_{n\to\infty}\int_{\gamma_n\cdot \mathsf{e}_n} \wt\Omega=0.
			\end{align}
			Together, Equation \eqref{eqn: tiny segment}, Equation \eqref{eqn: tiny segment 2}, and Equation \eqref{eqn: tiny segment 3} imply the lemma.
		\end{proof}

		By Equation \eqref{eqn: sum well-defined} and Lemma \ref{lem:integral_on_component_of_k},we can see
		\[
		\int_{\mathsf{k}'} \wt\Omega= \sum_{\mathsf{e}}\log \frac{\norm{X_{\mathsf{e}}}_{{\mathsf{e}}_-}}{\norm{\Sigma(g^{\mathsf{e}}_2, g^{\mathsf{e}}_1) \circ X_{\mathsf{e}} \circ \Sigma(g^{\mathsf{e}}_1, g^{\mathsf{e}}_2)}_{{\mathsf{e}}_+}},
		\]
		for any non-zero $X_{\mathsf{e}}\in H|_{\mathsf{e}_-}$. If we choose $X_{\mathsf{e}}$ to be $\Sigma(g^{\mathsf{e}}_1, g^{\mathsf{k}}_1) \circ X^{\mathsf{k}}_1 \circ \Sigma(g^{\mathsf{k}}_1,g^{\mathsf{e}}_1)$ for all $\mathsf{e}$, then the composition rule of the slithering map gives
		\begin{align*}
			\int_{\mathsf{k}'} \wt\Omega = \sum_{\mathsf{e}} \log \frac{\norm{\Sigma(g^{\mathsf{e}}_1, g^{\mathsf{k}}_1) \circ X^{\mathsf{k}}_1 \circ \Sigma(g^{\mathsf{k}}_1,g^{\mathsf{e}}_1)}_{\mathsf{e}_-}}{\norm{\Sigma(g^{\mathsf{e}}_2, g^{\mathsf{k}}_1) \circ X^{\mathsf{k}}_1 \circ \Sigma(g^{\mathsf{k}}_1,g^{\mathsf{e}}_2)}_{\mathsf{e}_+}}.
		\end{align*}
		
		For any $u \in \mathsf{k} \cap \wt \lambda^o$, let $\mathsf{g}_u$ be the leaf of $\wt \lambda^o$ passing through $u$, and let $g_u:=\pi_{\wt N}(\mathsf{g}_u)$. Again by Proposition~\ref{prop: Lipschitz foliation} Claim (1), the assignment $u\mapsto g_u$ is Lipschitz. Since the slithering map is locally H\"older continuous  (Definition \ref{def: slithering map} Property (2)), the function 
		\[F:\mathsf{k} \cap \wt N^o_{\rm ret}\to\mathbb{R}\] 
		given by $F : u \mapsto -\log \norm{\widehat{\Sigma}(g_u, g^{\mathsf{k}}_1) \circ X^{\mathsf{k}}_1 \circ \widehat{\Sigma}(g^{\mathsf{k}}_1,g_u)}_u$ is H\"older continuous. Observe that $\mathsf{k} \cap \wt N^o_{\rm ret}$ is a finite union of subsegments of $\mathsf{k}$, and the intersection of $\wt\lambda^o$ with each of these subsegments has measure $0$. Thus, we may again apply the gap lemma to deduce that 
		\[
		\int_{\mathsf{k}'} \wt\Omega=\sum_{\mathsf{e}}F(\mathsf{e}_+)-F(\mathsf{e}_-)=F(\mathsf{k}'_+)-F(\mathsf{k}'_-)=\log  \frac{\norm{ X^{\mathsf{k}}_1}_{\mathsf{k}'_-}}{\norm{\Sigma(g^{\mathsf{k}}_2, g^{\mathsf{k}}_1) \circ X^{\mathsf{k}}_1 \circ \Sigma(g^{\mathsf{k}}_1,g^{\mathsf{k}}_2)}_{\mathsf{k}'_+}}.
		\]
		
		Since $H|_{\mathsf{k}'_+}$ is a $1$--dimensional vector space, there exists a $\kappa \in \mathbb{C}^*$ such that $\Sigma(g^{\mathsf{k}}_2, g^{\mathsf{k}}_1) \circ X^{\mathsf{k}}_1 \circ \Sigma(g^{\mathsf{k}}_1,g^{\mathsf{k}}_2) = \kappa  X^{\mathsf{k}}_2$. Therefore,
		\[
		\int_{\mathsf{k}'} \wt\Omega = \log  \frac{\norm{X^{\mathsf{k}}_1}_{\mathsf{k}'_-}}{\norm{X^{\mathsf{k}}_2}_{\mathsf{k}'_+}} - \log \abs{\kappa}.
		\]
		The proposition now follows once we show that $\mathrm{Re}\; \alpha_\rho({\bf T}_{\mathsf{k}}, \mathbf{i}) = - \log \abs{\kappa}$. 
		
		Let $V$ be a non-zero vector in $\Sigma_{g_2^{\mathsf{k}},g_1^{\mathsf{k}}} (\xi^1(z^{\mathsf{k}}_1))$ and let $U$ be a non-zero vector in $\xi^1(z^{\mathsf{k}}_2)$. For each $i=1,\dots,d$, let $V_i$ and $U_i$ respectively be the projections of $V$ and $U$ onto the line $L_i|_{\mathsf{k}'_+}$ with respect to the line decomposition 
		\[\mathbb{C}^d=L_1|_{\mathsf{k}'_+}+\dots+L_d|_{\mathsf{k}'_+}.\] 
		Again, since $\rho\in\mathcal{R}_d(\lambda)$, $V_i\neq 0\neq U_i$ for all $i=1,\dots,d$, so we can identify $\frac{V_i}{U_i}$ and $\frac{U_i}{V_i}$ with elements of $\mathbb K$. It is a straightforward computation to verify that
		\[
		\abs{D^{\mathbf{i}}\left(\xi(x^{\mathsf{k}}_2), \xi(y^{\mathsf{k}}_2), \Sigma_{ g_2^{\mathsf{k}},g_1^{\mathsf{k}}}(\xi(z^{\mathsf{k}}_1)),  \xi(z^{\mathsf{k}}_2)\right)} = \abs{\frac{V_{i_1 + 1}}{U_{i_1 + 1}}} \abs{\frac{U_{i_1}}{V_{i_1}}},
		\]
		where $D^{\mathbf{i}}$ denotes the double ratio defined in Section \ref{xflag}.
		
		Now pick a norm $\vertiii{\cdot}$ on $\mathbb{C}^d$. Since $H|_{u^{\mathsf{k}}_2}$ is $1$--dimensional, we can see
		\[\frac{1}{|\kappa|}=\frac{\norm{X^{\mathsf{k}}_2}_{u^{\mathsf{k}}_2}}{\norm{\Sigma(g^{\mathsf{k}}_2, g^{\mathsf{k}}_1) \circ X^{\mathsf{k}}_1 \circ \Sigma(g^{\mathsf{k}}_1,g^{\mathsf{k}}_2)}_{u^{\mathsf{k}}_2}}=\frac{\vertiii{V_{i_1+1}}}{\vertiii{V_{i_1}}}\frac{\vertiii{U_{i_1}}}{\vertiii{U_{i_1+1}}}= \abs{\frac{V_{i_1 + 1}}{U_{i_1 + 1}}} \abs{\frac{U_{i_1}}{V_{i_1}}}.\]
		It follows that
		\begin{align*}
			\mathrm{Re}\; \alpha_\rho(\mathbf{T}_{\mathsf{k}}, \mathbf{i}) & = \mathrm{Re}\; \left( \sigma^{\mathbf{i}}\left(\xi(x^{\mathsf{k}}_1), \xi(y^{\mathsf{k}}_1), \xi(z^{\mathsf{k}}_1), \Sigma_{g_1^{\mathsf{k}}, g_2^{\mathsf{k}}} (\xi(z^{\mathsf{k}}_2))\right) \right) \\
			& = \log \abs{D^{\mathbf{i}}\left(\xi(x^{\mathsf{k}}_2), \xi(y^{\mathsf{k}}_2), \Sigma_{ g_2^{\mathsf{k}},g_1^{\mathsf{k}}}(\xi(z^{\mathsf{k}}_1)),  \xi(z^{\mathsf{k}}_2)\right)}=- \log \abs{\kappa}.\qedhere
		\end{align*}
	\end{proof}
	
	Let $\mathsf{k}_1,\dots,\mathsf{k}_{2n}$ be ties of $N^o$ such that $[\mathsf{k}_1],\dots,[\mathsf{k}_{2n}]$ are the standard generators of $H_1(N^o,\partial_h N^o;\mathbb{R})$. Also, let $[\mathsf{k}_1^*],\dots,[\mathsf{k}_{2n}^*]$ be the standard generators of $H_1(N^o,\partial_v N^o;\mathbb{R})$. For each $j\in\{1,\dots,2n\}$, we may view $\mathsf{k}_j$ as an oriented tie of $N$. Choose a lift $\widetilde{\mathsf{k}}_j$ of $\mathsf{k}_j$ to $\widetilde N$ (recall that $\widetilde N:=\pi_S^{-1}(N)$, where $\pi_S:\widetilde S\to S$ is the universal cover), and let ${\bf T}_j=(T_{j,1},T_{j,2})$ be the pair of plaques of $\wt\lambda$ that contains the forward and backward endpoints of $\widetilde{\mathsf{k}}_j$. By Equation \eqref{eqn: intersection}, the homology class $[{\rm Re}\;\alpha_\rho]\in H_1(N^o,\partial_v N^o;\mathbb{R}^{\mathcal{A}})$ can be written as 
	\[[{\rm Re}\;\alpha_\rho]_{\bf i}=\sum_{j=1}^{2n}\mathrm{Re}\;\alpha_\rho({\bf T}_j)[\mathsf{k}_j^*],\]
	where ${\rm Re}\;\alpha_\rho$ in the above equation is viewed as a relative $\lambda^o$--cocycle in $\mathcal{Z}(\lambda^o;{\rm slits};\mathbb{R}^{\mathcal{A}})$. 
	
	Let $\mu\in\mathsf{M}(\lambda^o)$ be a transverse measure, let $[\mu]\in H_1(N^o,\partial_h N^o;\mathbb{R})$ be the  homology class associated to $\mu$ viewed as a $\lambda^o$--cocycle (see Section \ref{sec: cocycles} and Equation \eqref{fsdjknkjsdf}), and let $\mu_j\in\mathbb{R}$ be defined by
	\[[\mu]=\sum_{j=1}^{2n}\mu_j[\mathsf{k}_j].\]
	Then observe from Section \ref{int-pair} that
	\begin{align}\label{eqn: cocycle length formula}
		\mathbb{I}([\mu],[\alpha_\rho])=\sum_{j=1}^{2n}\mu_j\mathrm{Re}\;\alpha_\rho({\bf T}_j)\in\mathbb{R}^{\mathcal{A}}.
	\end{align}

	For each horizontal boundary component $\mathsf{c}$ of $N^o$, fix once and for all a tie of $N^o$ that has one endpoint in $\mathsf{c}$. Let $\mathsf{d}_{\mathsf{c}}$ denote the subsegment of this tie that has one endpoint $\mathsf{c}$ and whose interior lies entirely in $N^o-\lambda^o$. Note that the natural orientation on this tie induces an orientation on $\mathsf{d}_{\mathsf{c}}$. Via a homotopy relative to the leaves in $\mathsf{F}^o$, we may replace each $\mathsf{k}_j$ with a path $\mathsf{h}_j$ in $N^o$ such that 
	\begin{itemize}
		\item $\mathsf{h}_j$ is homotopic to $\mathsf{k}_j$ relative $\partial_h N^o$,
		\item $\mathsf{h}_j$ is tightly transverse to $\lambda^o$,
		\item if an endpoint of $\mathsf{h}_j$ lies in a horizontal boundary component $\mathsf{c}$ of $N^o$, then $\mathsf{d}_{\mathsf{c}}$ is a subsegment of $\mathsf{h}_j$ (that necessarily contains that endpoint).
	\end{itemize}
	Note that for all $j$, the orientation on $\mathsf{k}_j$ defines an orientation on $\mathsf{h}_j$. Let $\mathsf{h}_j'$ denote the interior subsegment of $\mathsf{h}_j$, and let $\mathsf{a}_j$ and $\mathsf{b}_j$ denote the exterior subsegments of $\mathsf{h}_j$ that contain the backward and forward endpoints of $\mathsf{h}_j$ respectively. For all $j\in\{1,\dots,2n\}$, choose lifts $\wt{\mathsf{h}}_j$ and $\wt{\mathsf{h}}_j'$ in $\wt N^o$ of $\mathsf{h}_j$ and $\mathsf{h}_j'$ respectively, such that $\wt{\mathsf{h}}_j'\subset\wt{\mathsf{h}}_j$.  
	
	By Proposition~\ref{prop: the rise of the shear} and Equation \eqref{eqn: cocycle length formula}, we have that 
	\begin{align}\label{eqn: sum is zero 0}
		\mathbb{I}([\mu],[\alpha_\rho])_{\bf i}
		&=\sum_{j=1}^{2n}\mu_j\int_{\wt{\mathsf{h}}_j'} \wt\Omega-\sum_{j=1}^{2n}\mu_j\log  \norm{X_1^{\wt{\mathsf{h}}_j}}_{(\wt{\mathsf{h}}_j')_-}+\sum_{j=1}^{2n}\mu_j\log  \norm{X_2^{\wt{\mathsf{h}}_j}}_{(\wt{\mathsf{h}}_j')_+}.
	\end{align}
	(Notice that on the right hand side, $\widetilde\Omega$, $X_1$, $X_2$, and the norms depend on $\rho$ and ${\bf i}$ even though this dependence is suppressed in the notation.) Thus, to prove Equation \eqref{eqn: required condition 2}, it now suffices to show that 
	\begin{align}\label{eqn: final reduction length}
		\sum_{j=1}^{2n}\mu_j\int_{\mathsf{a}_j}\Omega=0=\sum_{j=1}^{2n}\mu_j\int_{\mathsf{b}_j}\Omega
	\end{align}
	and 
	\begin{align}\label{eqn: final reduction length'}
		\sum_{j=1}^{2n}\mu_j\log  \norm{X_1^{\wt{\mathsf{h}}_j}}_{(\wt{\mathsf{h}}_j')_-}=0=\sum_{j=1}^{2n}\mu_j\log  \norm{X_2^{\wt{\mathsf{h}}_j}}_{(\wt{\mathsf{h}}_j')_+}.
	\end{align}
	Indeed, if we can do so, then Equation \eqref{eqn: sum is zero 0} implies that
	\[{\mathbb I}([\mu],[\alpha_\rho])_{\bf i}=\sum_{j=1}^{2n}\mu_j\int_{\mathsf{h}_j'} \Omega=\sum_{j=1}^{2n}\mu_j\int_{\mathsf{h}_j} \Omega=\langle [\mu],[\Omega]\rangle.\]
	
	For each horizontal boundary component $\mathsf{c}$ of $N^o$, denote
	\[I_{\mathsf{c}}:=\{j\in\{1,\dots,2n\}:\mathsf{d}_{\mathsf{c}}\subset\mathsf{h}_j\}.\]
	Observe that for orientation reasons, either $\mathsf{d}_{\mathsf{c}}=\mathsf{a}_j$ for  all $j\in I_{\mathsf{c}}$ or $\mathsf{d}_{\mathsf{c}}=\mathsf{b}_j$ for  all $j\in I_{\mathsf{c}}$. We say that $\mathsf{c}$ is of \emph{backward type} or \emph{forward type} if the former or latter holds respectively. Since $\partial\mu=0$, we see that
	\[\sum_{j\in I_{\mathsf{c}}}\mu_j=0\]
	for any horizontal boundary components $\mathsf{c}$ of $N^o$. It now follows that
	\begin{align*}
		\sum_{j=1}^{2n}\mu_j\int_{\mathsf{a}_j}\Omega=\sum_{\mathsf{c}\text{ backward type}}\left(\sum_{j\in I_{\mathsf{c}}}\mu_j\int_{\mathsf{d}_{\mathsf{c}}}\Omega\right)=0
	\end{align*}
	and
	\begin{align*}
		\sum_{j=1}^{2n}\mu_j\int_{\mathsf{b}_j}\Omega=\sum_{\mathsf{c}\text{ forward type}}\left(\sum_{j\in I_{\mathsf{c}}}\mu_j\int_{\mathsf{d}_{\mathsf{c}}}\Omega\right)=0,
	\end{align*}
	so Equation \eqref{eqn: final reduction length} holds.
	
	Similarly, if $\mathsf{c}$ is of backward (resp. forward) type, then $\log  \norm{X_1^{\wt{\mathsf{h}}_j}}_{(\wt{\mathsf{h}}_j')_-}$ (resp. $\log  \norm{X_1^{\wt{\mathsf{h}}_j}}_{(\wt{\mathsf{h}}_j')_+}$) agree for all $j\in I_{\mathsf{c}}$. Therefore, we may denote
	\[m_{\mathsf{c}}:=\left\{\begin{array}{ll}
		\log  \norm{X_1^{\wt{\mathsf{h}}_{j}}}_{(\wt{\mathsf{h}}_j')_-}&\text{if $\mathsf{c}$ is of backward type},\\
		\log  \norm{X_1^{\wt{\mathsf{h}}_{j}}}_{(\wt{\mathsf{h}}_j')_+}&\text{if $\mathsf{c}$ is of forward type},
	\end{array}
	\right.\]
	for all $j\in I_{\mathsf{c}}$. Then we have
	\[\sum_{j=1}^{2n}\mu_j\log  \norm{X_1^{\wt{\mathsf{h}}_j}}_{(\wt{\mathsf{h}}_j')_-}=\sum_{\mathsf{c}\text{ backward type}}\left(\sum_{j\in I_{\mathsf{c}}}\mu_jm_{\mathsf{c}}\right)=0\]
	and 
	\[\sum_{j=1}^{2n}\mu_j\log  \norm{X_2^{\wt{\mathsf{h}}_j}}_{(\wt{\mathsf{h}}_j')_+}=\sum_{\mathsf{c}\text{ forward type}}\left(\sum_{j\in I_{\mathsf{c}}}\mu_jm_{\mathsf{c}}\right)=0,\]
	so Equation \eqref{eqn: final reduction length'} holds. This concludes the proof of Theorem \ref{thm: two lengths}.

\section{The map \texorpdfstring{$\mathcal{sb}_{d,\bf x}$}{sb-x-d} is holomorphic} \label{sec:holomorphicity}

Recall that $\wt{\Delta}^o$ denotes the set of labelings, i.e. ordered triples of points in $\partial\wt\lambda$ that are the vertices of a plaque of $\wt\lambda$. Fix a labeling ${\bf x}=(x_1,x_2,x_3)\in\wt\Delta^o$, and recall also that 
\[\mathcal{sb}_{d,{\bf x}}:\mathcal{R}_d(\lambda)\to \mathcal{Y}_d(\lambda;\mathbb{C}/2\pi i\mathbb{Z})\times\mathcal{N}(\mathbb{C}^d)\] 
is the map defined by \eqref{eqn: main thm} in Section~\ref{sec: statement of main theorem}. In this section we prove: 

\begin{proposition}\label{prop:sb is holomorphic}
	The map $\mathcal{sb}_{d,\bf x}$ is a biholomorphism onto its image.
\end{proposition}

This establishes Step 4 in the outline of the proof of Theorem \ref{thm: main}, see Section~\ref{subsec:proof structure}.

\begin{proof}[Proof of Proposition \ref{prop:sb is holomorphic}]
	By Theorem \ref{thm: injectivity}, the map $\mathcal{sb}_{d,\bf x} $ is injective, and by Theorem \ref{thm: surjectivity} and Corollary \ref{cor: surject}, its image coincides with
	\[
	\left(\mathcal{C}_d(\lambda)+i\mathcal{Y}_d(\lambda;\mathbb{R}/2\pi\mathbb{Z})\right)\times\mathcal{N}(\mathbb{C}^d).
	\]
	Thus, it suffices to show that $\mathcal{sb}_{d,\bf x}^{-1}$ is holomorphic. (Recall that the complex structure on $\mathcal{C}_d(\lambda)+i\mathcal{Y}_d(\lambda;\mathbb{R}/2\pi\mathbb{Z})$ is the one inherited from being an open subset of $\mathcal{Y}_d(\lambda;\mathbb C/2\pi i\mathbb Z))$, see Section \ref{sec: complex structure}.)
	
	By definition, given an element
	\[((\alpha,\theta),(F,G,p))\in (\mathcal C_d(\lambda)+i\, \mathcal Y_d(\lambda;\mathbb R/2\pi \mathbb Z))\times \mathcal N(\mathbb C^d),\] 
	its preimage $\mathcal{sb}_{d,\bf x}^{-1}((\alpha,\theta),(F,G,p))$ coincides with the unique $d$--pleated surface $(\rho,\xi)$ whose shear-bend coordinates are equal to $(\alpha,\theta)$ and whose $\lambda$--limit map $\xi$ satisfies
	\[
	(\xi(x_1),\xi(x_2),\xi^1(x_3))=(F,G,p).
	\]
	For a fixed $(\alpha,\theta)$, the fact that $(\rho,\xi)$ varies holomorphically with respect to $(F,G,p)$ is obvious: if $(F',G',p')$ is a holomorphic variation of $(F,G,p)$ in $\mathcal{N}(\mathbb{C}^d)$ and $g\in\mathsf{PGL}_d(\mathbb{C})$ is the unique projective transformation such that $g\cdot(F,G,p)=(F',G',p')$, then $g$ is a holomorphic variation of $\id$ in $\mathsf{PGL}_d(\mathbb{C})$ and, hence, $(c_g\circ\rho,g\cdot\xi)=\mathcal{sb}_{d,\bf x}^{-1}((\alpha,\theta),(F',G',p'))$ is a holomorphic variation of $(\rho,\xi)$.
	
	Thus, it suffices to fix $(F,G,p)$ and show that $(\rho,\xi)$ varies holomorphically with respect to $(\alpha,\theta)$. Since for every $\gamma\in\Gamma$, the element $\rho(\gamma)$ is uniquely determined by its action on a projective frame, it is enough to show that for every $z\in\partial \wt\lambda$, the flag $\xi(z)$ varies holomorphically with $(\alpha,\theta)$. 
	
	Let $\Sigma$ be the slithering map compatible with $\xi$. Since $(\xi(x_1),\xi(x_2),\xi^1(x_3))$ is fixed, Lemma \ref{lem: injectivity} part (2) implies that $\Sigma$ depends only on $(\alpha,\theta)$. We write 
	\[\Sigma=\Sigma^{(\alpha,\theta)}\] 
	when we want to emphasize this dependence. Let $h$ be a leaf of $\wt\Lambda$ that has $z$ as one of its endpoints, and let $T_{\bf x}$ be the plaque of $\wt\lambda$ whose vertices are $x_1$, $x_2$, and $x_3$. Then by definition, 
	\[\xi(z)=\Sigma(h,g^h_{\bf x})\cdot\xi(z_0),\] 
	where $g^h_{\bf x}$ is the edge of $T_{\bf x}$ that separates $T_{\bf x}$ from $h$ and $z_0$ is the unique endpoint of $g^h_{\bf x}$ for which there exist orientations on the leaves $h$ and $g^h_{\bf x}$ so that $z$ and $z_0$ are their forward endpoints, respectively. Since $(F,G,p)$ is fixed, the flag $\xi(z_0)$ is constant in the cocyclic pair $(\alpha,\theta)$ when $z_0=x_1$ or $z_0=x_2$. Furthermore, if $z_0=x_3$, then $\xi(z_0)$ varies holomorphically with the triangle invariants of the base plaque $\theta({\bf x})$. Thus, it suffices to show that for any edge $h$ of $\wt\lambda$ that is not a side of $T_{\bf x}$, the transformation $\Sigma^{(\alpha,\theta)}(h,g^h_{\bf x})$ varies holomorphically with $(\alpha,\theta)$. 
		
	To do so, we use the H\"older extendable map $L=L_{(\alpha,\theta)}:P(g_{\bf x}^h,h)\to\mathsf{SL}_d(\mathbb{C})$ constructed in Section~\ref{sec: explicit expression}. Pick an exhausting sequence $(\mathcal X_n)_n$ for $P(g_{\bf x}^h,h)$. For each positive integer $n$, Lemma \ref{lem: injectivity} part (1) implies that $\overrightarrow{L}_{(\alpha,\theta)}^F(\mathcal X_n)$ varies holomorphically with $(\alpha,\theta)$. At the same time, Lemma~\ref{lem: injectivity} part (2) and Lemma \ref{lem: uniform} part (4) imply that $\overrightarrow{L}_{(\alpha,\theta)}^F(\mathcal X_n)$ converges uniformly to $\Sigma^{(\alpha,\theta)}(h,g_{\bf x}^h)$. Therefore, $\Sigma^{(\alpha,\theta)}(h,g^h_{\bf x})$ varies holomorphically with $(\alpha,\theta)$. 
	\end{proof}

\section{Non-Hausdorff points and pleated surfaces}\label{sec: nonHausdorff}
In this section, we give examples of $d$--pleated surfaces whose $\mathsf{PGL}_d(\mathbb{C})$--conjugacy class corresponds to a non-Hausdorff point in $\Hom(\Gamma,\mathsf{PGL}_d(\mathbb{C}))/\mathsf{PGL}_d(\mathbb{C})$. This justifies why, unlike Bonahon and Dreyer's parameterization theorem of the $\mathsf{PGL}_d(\mathbb{R})$--Hitchin component ${\rm Hit}_d(S)\subset\Hom(\Gamma,\mathsf{PGL}_d(\mathbb{R}))/\mathsf{PGL}_d(\mathbb{R})$ (see Theorem \ref{thm: BD par}), our main parameterization theorem (see Theorem \ref{thm: main}) is written for the space $\mathcal{R}_d(\lambda)\subset\Hom(\Gamma,\mathsf{PGL}_d(\mathbb{C}))$ of $d$--pleated surfaces with pleating locus $\lambda$  instead of the space $\mathfrak R_d(\lambda)$ of conjugacy classes of the representations in $\mathcal{R}_d(\lambda)$.

\subsection{The \texorpdfstring{$d=2$}{d=2} case}

The first examples of a conjugacy class of $2$--pleated surfaces that are not Hausdorff points in $\Hom(\Gamma,\mathsf{PGL}_d(\mathbb{C}))/\mathsf{PGL}_d(\mathbb{C})$ arise from the following theorem due to Gu\'eritaud, Kassel, and Wolff \cite{GKW}. 

\begin{theorem}[{\cite[Theorem 1.2]{GKW}}]\label{GKW}
	A representation $\rho:\Gamma\to\mathsf{PGL}_2(\mathbb{R})\subset\mathsf{PGL}_2(\mathbb{C})$ is a $2$--pleated surface if and only if its image is not abelian.
\end{theorem}

Theorem \ref{GKW}, together with a standard construction of non-Hausdorff points in $\Hom(\Gamma,\mathsf{PGL}_d(\mathbb{C}))/\mathsf{PGL}_d(\mathbb{C})$, yields the following consequence.

\begin{corollary}\label{cor: GKW}
	Let $\rho:\Gamma\to\mathsf{PGL}_2(\mathbb{R})$ be a reducible representation whose image is not abelian. Then $\rho$ is a $2$--pleated surface whose conjugacy class in $\Hom(\Gamma,\mathsf{PGL}_2(\mathbb{C}))/\mathsf{PGL}_2(\mathbb{C})$ is a non-Hausdorff point.
\end{corollary}

\begin{proof}
	By Theorem \ref{GKW}, $\rho$ is a $2$--pleated surface, so it suffices to check that its conjugacy class is a non-Hausdorff point in $\Hom(\Gamma,\mathsf{PGL}_2(\mathbb{C}))/\mathsf{PGL}_2(\mathbb{C})$. This is a standard argument that we repeat here for completeness.
	
	Since $\rho$ is reducible, by conjugating, we may assume that $\rho(\Gamma)$ lies in the subgroup  of upper triangular matrices in $\mathsf{PGL}_2(\mathbb{R})$, i.e. there are maps $a:\Gamma\to\mathbb{R}^*$ and $b:\Gamma\to\mathbb{R}$ such that for all $\gamma\in\Gamma$,
	\[\rho(\gamma)=\begin{bmatrix}
		a(\gamma)&b(\gamma)\\
		0&\frac{1}{a(\gamma)}
	\end{bmatrix}.\]
	Then for all $s\in\mathbb{R}$, set
	\[a(s):=\begin{bmatrix}
		s&0\\
		0&s^{-1}
	\end{bmatrix},\]
	and set $\rho_s:=c_{a(s)}\circ\rho$, i.e. for all $\gamma\in\Gamma$,
	\[\rho_s(\gamma)=\begin{bmatrix}
		a(\gamma)&s^2b(\gamma)\\
		0&\frac{1}{a(\gamma)}
	\end{bmatrix}.\]
	Notice that as $s\to0$, $\rho_s$ converges to a representation $\rho_0$ that lies in the diagonal subgroup of $\mathsf{PGL}_2(\mathbb{R})$, and hence is abelian. Since $\rho$ is not abelian, $\rho$ is not conjugate to $\rho_0$, but $\rho_0$ lies in the closure of the orbit of $\rho$ by conjugation. Hence, the conjugacy class of $\rho$ is not closed in $\Hom(\Gamma,\mathsf{PGL}_2(\mathbb{C}))$, and so corresponds to a non-Hausdorff point in $\Hom(\Gamma,\mathsf{PGL}_2(\mathbb{C}))/\mathsf{PGL}_2(\mathbb{C})$.
\end{proof}

By Corollary \ref{cor: GKW}, to construct examples of $2$--pleated surfaces whose conjugacy class are non-Hausdorff points in $\Hom(\Gamma,\mathsf{PGL}_2(\mathbb{C}))/\mathsf{PGL}_2(\mathbb{C})$, it now suffices to construct reducible representations from $\Gamma$ to $\mathsf{PGL}_2(\mathbb{C})$ that are not abelian. As described in the next example, these are easy to find.

\begin{example}
	Let $\Gamma=\langle \gamma_1,\eta_1,\dots,\gamma_g,\eta_g\mid[\gamma_1,\eta_1]\dots[\gamma_g,\eta_g]\rangle$, and define
	\[\rho(\gamma_i)=\begin{bmatrix}
		\sqrt{2}&0\\
		0&\frac{1}{\sqrt{2}}
	\end{bmatrix},\quad\rho(\eta_i)=\begin{bmatrix}
		1&1\\
		0&1
	\end{bmatrix}\]
	for all $i=1,\dots,g-1$, and
	\[\rho(\gamma_g)=\begin{bmatrix}
		\sqrt{g}&0\\
		0&\frac{1}{\sqrt{g}}
	\end{bmatrix},\quad\rho(\eta_g)=\begin{bmatrix}
		1&-1\\
		0&1
	\end{bmatrix}.\]
	One can then verify that $[\rho(\gamma_1),\rho(\eta_1)]\dots[\rho(\gamma_g),\rho(\eta_g)]=\id$, so this defines a representation $\rho:\Gamma\to\mathsf{PGL}_2(\mathbb{R})$. Furthermore, $\rho$ is reducible as it leaves the line $[e_1]$ invariant, and is not abelian, because $\rho(\gamma_1)$ and $\rho(\eta_1)$ do not commute. 
\end{example}

\begin{remark}
	The above examples show that there exist maximal geodesic laminations $\lambda$ such that the space $\mathfrak R_2(\lambda)$ of conjugacy classes of $2$--pleated surfaces with pleating locus $\lambda$ does not embed into the smooth part of the corresponding $\mathsf{PGL}_2(\mathbb{C})$ character variety, contrary to what is claimed in \cite[p. 280]{bonahon-toulouse}. Nevertheless, the proof in \cite{bonahon-toulouse} essentially shows that the shear-bend coordinates define a bilomorphic parameterization of $\mathcal R_2(\lambda)$ rather than of $\mathfrak R_2(\lambda)$ as stated in \cite[Theorem~D]{bonahon-toulouse}. That said, the argument of \cite[p. 280]{bonahon-toulouse} does apply to some geodesic laminations, for example to minimal maximal geodesic laminations $\lambda$ of $S$ (see \cite[Chapter 4]{CEG06}), and shows that, for this class of laminations, every point in $\mathfrak R_2(\lambda)$ is a Hausdorff point of $\Hom(\Gamma,\mathsf{PGL}_2(\mathbb{C}))/\mathsf{PGL}_2(\mathbb{C})$. 
\end{remark}

The construction of $2$--pleated surfaces whose conjugacy class are non-Hausdorff points in $\Hom(\Gamma,\mathsf{PGL}_2(\mathbb{C}))/\mathsf{PGL}_2(\mathbb{C})$ does not provide us any control of their pleating loci. This is in contrast to the case when $d=3$ which we will now discuss.

\subsection{The \texorpdfstring{$d=3$}{d=3} case}

Unlike the $d=2$ case, one can prove that, when $d=3$, the space $\mathfrak R_3(\lambda)$ does contain a non-Hausdorff point of $\Hom(\Gamma,\mathsf{PGL}_3(\mathbb{C}))/\mathsf{PGL}_3(\mathbb{C})$ \emph{for every} maximal geodesic lamination $\lambda$ of $S$. The proof of this result has two steps. The first is the following proposition, which gives a sufficient condition for  the conjugacy class of a $3$--pleated surface $\rho$ to be a non-Hausdorff point of $\Hom(\Gamma,\mathsf{PGL}_3(\mathbb{C}))/\mathsf{PGL}_3(\mathbb{C})$.

\begin{proposition}\label{nonHausdorffd=3}
	If $\rho\in\mathcal{R}_3(\lambda)$ such that $\theta_\rho({\bf x})=i\pi$ for all ${\bf x}\in\widetilde{\Delta}^o$, then the conjugacy class $[\rho]\in\Hom(\Gamma,\mathsf{PGL}_3(\mathbb{C}))/\mathsf{PGL}_3(\mathbb{C})$ is a non-Hausdorff point.
\end{proposition}

\begin{notation}
	Note that since $d=3$, the set $\mathcal{B}$ of positive triples of integers that sum to $3$ consists of only one element, namely $(1,1,1)$. As such, in the above proposition, we will write $\theta_\rho({\bf x},(1,1,1))$ simply as $\theta_\rho({\bf x})$ and $T^{(1,1,1)}({\bf F})$ simply as $T({\bf F})$. This simplification of notation will be used throughout this section.
\end{notation}

The proof of Proposition \ref{nonHausdorffd=3} relies on the following lemma.

\begin{lemma}\label{lem: non-Hausdorff 2}
	Let $\xi:\partial\widetilde\lambda\to\mathcal{F}(\mathbb{C}^3)$ be a locally $\lambda$--H\"older continuous, $\lambda$--transverse, and $\lambda$--hyperconvex map, and let $(\alpha_\xi,\theta_\xi)$ be its shear-bend coordinate. If $\theta_\xi({\bf y})=i\pi$ for all ${\bf y}\in\widetilde{\Delta}^o$, then there is a point $L\in\mathbb{C}\mathrm{P}^2$ such that if $a,b\in\partial\widetilde\lambda$ are endpoints of a leaf of $\widetilde\lambda$, then $\xi^2(a)\cap\xi^2(b)=L$. 
\end{lemma}

\begin{proof}
	First, given a triple of flags ${\bf F}=(F_1,F_2,F_3)$ in general position, we observe that the triple ratio $T({\bf F})=-1$ if and only if $\dim F_1^2 \cap F_2^2 \cap F_3^2 = 1$. To see this, first verify that if $\dim F_1^2 \cap F_2^2 \cap F_3^2 = 1$, then $T({\bf F})=-1$. This can be done by explicit computation, using a basis $\{e_1, e_2, e_3\}$ of $\mathbb{C}^3$ such that $F_i^1 = \mathrm{Span}(e_i)$ and $F_i^2 = \mathrm{Span}(e_i, e_1 + e_2 + e_3)$ for every $i = 1, 2, 3$. On the other hand, since the triple ratio is a complete invariant for projective classes of triples of flags in general position, the converse is also true.
	
	Let $\Sigma$ denote the slithering map compatible with $\xi$. Fix a base labelling ${\bf x}=(x_1,x_2,x_3)\in\widetilde\Delta^o$, and let $T_{\bf x}$ be the plaque of $\widetilde\Delta$ whose vertices are $x_1$, $x_2$, and $x_3$. By assumption, $T(\xi(x_1),\xi(x_2),\xi(x_3))=e^{\theta_\xi({\bf x})}=-1$, so we may set $L:=\xi(x_1)^2\cap\xi(x_2)^2\cap\xi(x_3)^2$. Let $h$ be any leaf of $\widetilde\lambda$, and let $a$ and $b$ be its endpoints. Up to relabelling the vertices of $T_{\bf x}$, we may assume that the edge $g_{\bf x}^h$ of $T_{\bf x}$ that separates $h$ and $T_{\bf x}$ has endpoints $x_1$ and $x_2$. Then observe that $\Sigma(h,g_{\bf x}^h)$ sends $L=\xi^2(x_1)\cap\xi^2(x_2)$ to $\xi^2(a)\cap\xi^2(b)$, so it suffices to show that $\Sigma(h,g_{\bf x}^h)$ fixes $L$. 
	
	Suppose that $F\in\mathcal{F}(\mathbb{C}^3)$ is a flag such that $(\xi(x_1),\xi(x_2),F)$ is in general position, and $T(\xi(x_1),\xi(x_2),F)=-1$. The first paragraph implies that 
	\[\xi(x_1)^2\cap\xi(x_2)^2=\xi(x_1)^2\cap F^2=\xi(x_2)^2\cap F^2.\] 
	As such, for both $i=1,2$, the unipotent linear map $U(\xi(x_i),\xi(x_{3-i}),F)$ that fixes $\xi(x_{3-i})$ and sends $\xi(x_i)$ to $F$ fixes $L$. By Proposition \ref{lem: injectivity} part (2), this implies that $\Sigma(h,g_{\bf x}^h)$ fixes $L$.
\end{proof}

\begin{proof}[Proof of Proposition \ref{nonHausdorffd=3}]
	Let $\xi:\partial\widetilde\lambda\to\mathcal{F}(\mathbb{C}^3)$ be the $\lambda$--limit map of $\rho$. By Lemma \ref{lem: non-Hausdorff 2}, there is a point $L\in\mathbb{C}\mathrm{P}^2$ such that if $a,b\in\partial\widetilde\lambda$ are endpoints of a leaf of $\widetilde\lambda$, then $\xi^2(a)\cap\xi^2(b)=L$. Since $\Gamma$ preserves the set of leaves of $\widetilde\lambda$, the $\rho$--equivariance of $\xi$ implies that $L$ is a fixed point of the group $\rho(\Gamma)$.
	
	Select arbitrarily a $2$--dimensional subspace $P\subset\mathbb{C}^3$ that is transverse to $L$, and express $\rho$ in the form
	\[
	\rho(\gamma) = \begin{bmatrix}
		t(\gamma) & b(\gamma) \\
		0 & \rho_0(\gamma)
	\end{bmatrix} \in \mathsf{PGL}(L \oplus P) = \mathsf{PGL}_3(\mathbb{C}) ,
	\]
	for some functions $t : \Gamma \to \mathsf{GL}(L), \rho_0 : \Gamma \to \mathsf{GL}(P)$, and $b : \Gamma \to \Hom(P,L)$.
	
	Let $\rho_\infty:\Gamma\to\mathsf{PGL}_3(\mathbb{C})$ be the semisimplification of $\rho$, i.e. with respect to the decomposition $\mathbb{C}^3=L+P$, $\rho_\infty$ is given by
	\[
	\rho_\infty(\gamma):= \begin{bmatrix}
		t(\gamma) & 0 \\
		0 & \rho_0(\gamma)
	\end{bmatrix} \in \mathsf{PGL}(L \oplus P) = \mathsf{PGL}_3(\mathbb{C}).
	\]
	Note that for every $s\in\mathbb{C}^*$, 
	\[
	A_s = \begin{bmatrix}
		s^{-2} & 0 \\
		0 & s \, \mathrm{id}_P
	\end{bmatrix} \in \mathsf{PGL}(L \oplus P) ,
	\]
	centralizes $\rho_\infty(\Gamma)$, so Proposition \ref{prop: pleated surfaces smooth} implies that $\rho_\infty\notin\mathcal{R}_d(\lambda)$. In particular, $\rho$ is not conjugate to $\rho_\infty$. On the other hand, we see by direct computation that if we define $\rho_s:\Gamma\to\mathsf{PGL}_3(\mathbb{C})$ by $\rho_s(\gamma)=A^n_s \circ \rho(\gamma) \circ A^{-n}_s$ for all $s\in\mathbb{R}$, then $\rho_s$ converges to $\rho_\infty$ as $s\to\infty$. This shows that in $\Hom(\Gamma, \mathsf{PGL}_3(\mathbb{C}))$, $\rho_\infty$ does not lie in the conjugacy class of $\rho$, but is in the closure of the conjugacy class of $\rho$. Thus, the conjugacy class $[\rho]$ is a non-Hausdorff point of $\Hom(\Gamma,\mathsf{PGL}_3(\mathbb{C}))/\mathsf{PGL}_3(\mathbb{C})$ as it cannot be separated from $[\rho_\infty]$ by open sets.
\end{proof}

The second step is to construct a $3$--pleated surface that satisfies the hypothesis of Proposition \ref{nonHausdorffd=3}. We will not prove this step for every maximal lamination, as it will require further understanding of the defining equations of the space $\mathcal{Y}_d(\lambda;\mathbb{C}/2\pi i\mathbb{Z})$, see Proposition \ref{prop: E injective}, which we explore in \cite{MMMZ2}. Instead, we will specify a particular maximal lamination $\lambda_{\mathcal{P}}$ for which we can construct a $3$--pleated surface with pleating locus $\lambda_{\mathcal{P}}$ whose conjugacy class is a non-Hausdorff point of $\Hom(\Gamma,\mathsf{PGL}_3(\mathbb{C}))/\mathsf{PGL}_3(\mathbb{C})$.

Given an oriented hyperbolic pair of pants $P$, orient its three boundary components so that $P$ lies to their left. Let $\mathsf{g}_1$, $\mathsf{g}_2$, and $\mathsf{g}_3$ be lifts of the three boundary geodesics of $P$ to oriented geodesics in the universal cover $\widetilde{P}$ of $P$, so that if $\gamma_i$ is the primitive element in $\pi_1(P)$ whose axis is $\mathsf{g}_i$, then $\gamma_1\gamma_2\gamma_3=\id$. For all $i=1,2,3$, let $h_i$ be the geodesic in $\widetilde{P}$ with $\mathsf{g}_{i-1}^-$ and $\mathsf{g}_{i+1}^-$ as its endpoints (arithmetic in the subscripts are done modulo $3$). Note that $\{\pi_P(h_1),\pi_P(h_2),\pi_P(h_3)\}$ are the leaves of a maximal geodesic lamination of $P$, where $\pi_P:\widetilde{P}\to P$ is the covering map. We refer to this lamination as the \emph{standard lamination} on $P$, and note that it does not depend on the choice of $\mathsf{g}_1$, $\mathsf{g}_2$ or $\mathsf{g}_3$.

Now, choose a pants decomposition $\mathcal{P}$ on $S$, i.e. a maximal collection of pairwise non-intersecting simple closed geodesics in $S$. Then note that $\mathcal{P}$ together with the standard lamination of each connected component of $S-\mathcal{P}$, form a maximal geodesic lamination, denoted $\lambda_{\mathcal{P}}$, on $S$. 

\begin{proposition}\label{lem: non-Hausdorff 1}
	There exists $\rho\in\mathcal{R}(\lambda_{\mathcal{P}},3)$ such that $\theta_\rho({\bf x})=i\pi$ for all ${\bf x}\in\widetilde{\Delta}^o$. In particular, the conjugacy class $[\rho]\in\Hom(\Gamma,\mathsf{PGL}_3(\mathbb{C}))/\mathsf{PGL}_3(\mathbb{C})$ is a non-Hausdorff point.
\end{proposition}
\begin{proof}
	The second statement follows from the first and Proposition \ref{nonHausdorffd=3}, so we will only focus on the first. We ease notations for this proof by writing $\lambda$ for $\lambda_{\mathcal{P}}$. 
	
	Let $\rho_0:\Gamma\to\mathsf{PGL}_3(\mathbb{R})$ be a Hitchin representation that lies in the Fuchsian locus. A straightforward computation shows that $\theta_{\rho_0}({\bf x})=0$ for all ${\bf x}\in\widetilde{\Delta}^o$, so Theorem \ref{thm: BD par} implies that $(\alpha_{\rho_0},0)\in\mathcal{C}_3(\lambda)$. It suffices to show that there exists a $\lambda$--cocyclic pair $(\alpha, \theta) \in \mathcal{Y}_3(\lambda;\mathbb{R}/2\pi\mathbb{Z})$ such that $\theta(\mathbf{x}) = \pi$ for every $\mathbf{x} \in \widetilde{\Delta}^o$; indeed, if that were the case, then 
	\[(\alpha_{\rho_0}+i\alpha,i\theta)\in\mathcal{C}_3(\lambda)+i\mathcal{Y}_3(\lambda;\mathbb{R}/2\pi\mathbb{Z}),\] 
	so Theorem \ref{thm: main} implies that there is some pleated surface $\rho\in\mathcal{R}_3(\lambda)$ such that 
	\[(\alpha_\rho,\theta_\rho)=(\alpha_{\rho_0}+i\alpha,i\theta).\]
	
	Choose a train track neighborhood $N$ of $\lambda$, such that every closed curve in $\mathcal{P}$ lies in two rectangles as drawn in Figure \ref{fig:traintrack}, and every component of $S-\mathcal{P}$ contains six rectangles as drawn in Figure \ref{fig:traintrack}.
	
	\begin{figure}[h!]
		\begin{minipage}{0.47\textwidth}
			\includegraphics[width=\textwidth]{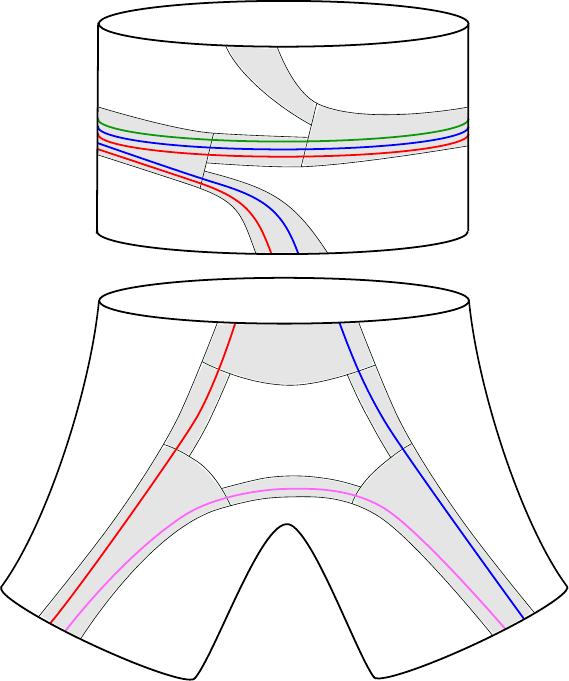}
		\end{minipage}
		\hfill
		\begin{minipage}{0.47\textwidth}
			\includegraphics[width=\textwidth]{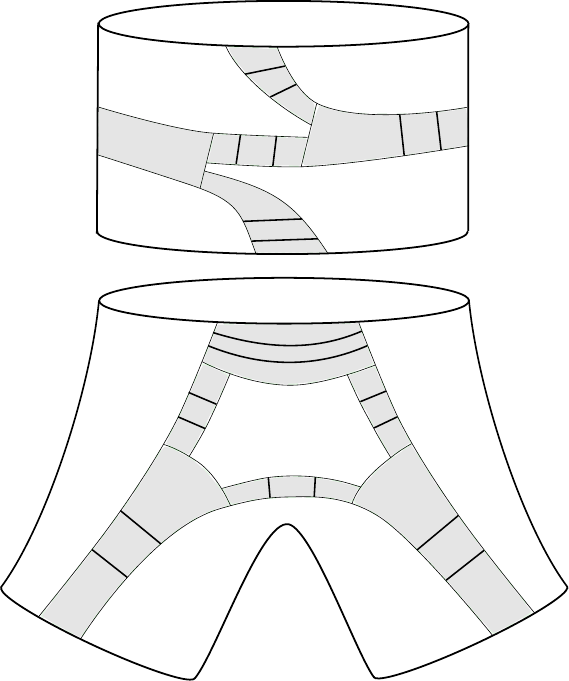}
			\put(-84,182){\small $\pi$}
			\put(-74,130){\small $\pi$}
			\put(-59,98){\small $\pi$}
			\put(-33,50){\small $\pi$}
			\put(-139,50){\small $\pi$}
		\end{minipage}
		\caption{\small {\em Left figure.} Part of the lamination $\lambda_\mathcal P$ and a train track neighborhood $N$. {\em Right figure.} Weights (in $\mathbb R/2\pi\mathbb Z$) associated to the $\lambda$--cocylic pair $(\alpha,\rho)$ from the proof of Lemma \ref{lem: non-Hausdorff 1}. Unlabeled ties have weight 0 and each plaque has triangle invariant $i\pi$.}
		\label{fig:traintrack}
	\end{figure}
	
	Let $\{[\mathsf{k}_1],\dots,[\mathsf{k}_{2n}]\}$ denote the standard generating set of $H_1(N^o,\partial_h N^o;\mathbb{Z})$ and pick representatives $\{\mathsf{k}_1,\dots,\mathsf{k}_{2n}\}$ for these homology classes. Recall that if $\pi_N:N^o\to N$ is the two-fold cover, then for all $j\in\{1,\dots,n\}$, $k_j:=\pi_N(\mathsf{k}_j)=\pi_N(\mathsf{k}_{n+j})$ is a tie of $N$, but the natural orientations on $\mathsf{k}_j$ and $\mathsf{k}_{n+j}$ induce opposite orientations on $k_j$. For each $j\in\{1,\dots,2n\}$, choose a lift $\wt k_j$ to $\wt S$ of $k_j$, and let ${\bf T}_j=(T_{j,1},T_{j,2})$ be the pair of plaques of $\wt\lambda_{\mathcal{P}}$ such that $T_{j,1}$ and $T_{j,2}$ respectively contain the backward and forward endpoint of $\wt k_j$ equipped with the orientation induced by $\mathsf{k}_j$. 
	
	Recall that 
	\[\widehat{\cdot}:\mathcal{A}\to\mathcal{A},\quad\widehat{\cdot}:\wt\Delta^{2*}\to\wt\Delta^{2*}, \quad\text{and}\quad\widehat{\cdot}:G^{\mathcal{A}}\to G^{\mathcal{A}}\] 
	are the involutions respectively defined by $\widehat{(i_1,i_2)}=(i_2,i_1)$, $\widehat{(T_1,T_2)}=(T_2,T_1)$, and $\widehat{g}_{\bf i}=g_{\widehat{\bf i}}$ for all ${\bf i}\in\mathcal{A}$, and that 
	\[\iota':\mathcal{Z}(\lambda^o,\mathrm{slits};G^{\mathcal{A}})\to\mathcal{Z}(\lambda^o,\mathrm{slits};G^{\mathcal{A}})\] 
	is the involution defined by $\iota'(\eta)(\widehat{\bf T})=\widehat{\eta({\bf T})}$. To show that the required pair $(\alpha, \theta) \in \mathcal{Y}_3(\lambda;\mathbb{R}/2\pi\mathbb{Z})$ exists, it suffices to construct $\eta \in \mathcal{Z}(\lambda^o,\mathrm{slits};(\mathbb{R}/2\pi\mathbb{Z})^2)$ such that $\iota'(\eta)=\eta$, and 
	\begin{align*}
		\partial\eta({\bf x})_{(1,2)}=\left\{\begin{array}{ll}
			\pi&\text{if $x_3 \prec x_2 \prec x_1$,} \\
			0&\text{if $x_1 \prec x_2 \prec x_3$}\end{array}\right.\,\,\text{ and }\,\,\partial\eta({\bf x})_{(2,1)}=\left\{\begin{array}{ll}
			0&\text{if $x_3 \prec x_2 \prec x_1$,} \\
			\pi&\text{if $x_1 \prec x_2 \prec x_3$}\end{array}\right.
	\end{align*}
	for every ${\bf x}\in\widetilde{\Delta}^o$. To do so, we will construct $2n$ pairs ${\bf v}_1,\dots,{\bf v}_{2n}\in (\mathbb{R}/2\pi\mathbb{Z})^2$ such that the following holds:
	\begin{enumerate}
		\item[(i)] For all $j\in\{1,\dots,n\}$, the first and second entries of ${\bf v}_j$ are the second and first entries of ${\bf v}_{n+j}$ respectively.
		\item[(ii)] If $i,j,l\in\{1,\dots,2n\}$ satisfies $T_{i,1}=T_{l,1}$, $T_{j,2}=T_{l,2}$, and $T:=T_{i,2}=T_{j,1}$, then
		\begin{align*}
			{\bf v}_l=\left\{\begin{array}{ll}
				{\bf v}_i+{\bf v}_j+(\pi,0)&\text{if $x_{T,3} \prec x_{T,2} \prec x_{T,1}$,} \\
				{\bf v}_i+{\bf v}_j+(0,\pi)&\text{if $x_{T,1} \prec x_{T,2} \prec x_{T,3}$}.\end{array}\right.
		\end{align*}
	\end{enumerate}
	It follows from Proposition \ref{prop: cocycles and cohomology} that there is a unique $\eta\in\mathcal{Z}(\lambda^o,\mathrm{slits};(\mathbb{R}/2\pi\mathbb{Z})^2)$ such that for all $j\in\{1,\dots,2n\}$, $\eta({\bf T}_j)_{(1,2)}$ and $\eta({\bf T}_j)_{(2,1)}$ are the first and second entries of ${\bf v}_j$ respectively, and it is straightforward to see that (i) and (ii) imply that $\eta$ has the required properties.
	
	We refer to the rectangles that contain the closed curves in $\mathcal{P}$ as \emph{boundary rectangles}. Of the six rectangles in each component of $S-\mathcal{P}$, notice that three of them are adjacent to boundary rectangles, while the other three are not. We refer to the former as \emph{bridging rectangles} and the latter as \emph{internal rectangles}.
	
	We now construct the pairs $\bf v_j$. If $i\in\{1,\dots,n\}$ is such that $\pi_N(\mathsf{k}_i)=\pi_N(\mathsf{k}_{n+i})$ is a tie in a boundary rectangle or internal rectangle, set both ${\bf v}_i$ and ${\bf v}_{n+i}$ to be $(0,0)$. On the other hand, if $i\in\{1,\dots,n\}$ is such that $\pi_N(\mathsf{k}_i)=\pi_N(\mathsf{k}_{n+i})$ is a tie in a bridging rectangle, notice that there is exactly one plaque $T$ that separates ${\bf T}_i$ (equivalently, separates ${\bf T}_{n+i}$). Then for both $j\in\{i,n+i\}$, if ${\bf x}_T=(x_{T,1},x_{T,2},x_{T,3})$ is the coherent labeling of the vertices of $T$ with respect to ${\bf T}_j$, set
	\begin{align*}
		{\bf v}_j:=\left\{\begin{array}{ll}
			(\pi,0)&\text{if $x_{T,3} \prec x_{T,2} \prec x_{T,1}$,} \\
			(0,\pi)&\text{if $x_{T,1} \prec x_{T,2} \prec x_{T,3}$}.\end{array}\right.
	\end{align*}
	
	It is clear that $\{{\bf v}_1,\dots,{\bf v}_{2n}\}$ as defined above satisfies (i). To finish the proof, we need only to show that they satisfy (ii). 
	
	For each $i\in\{1,\dots,2n\}$, let $R_i$ be the rectangle of $N$ that contains $\pi_N(\mathsf{k}_i)$. Observe that requiring $i,j,l\in\{1,\dots,2n\}$ to satisfy $T_{i,1}=T_{l,1}$, $T_{j,2}=T_{l,2}$, and $T:=T_{i,2}=T_{j,1}$ is equivalent to requiring that a vertical boundary component of $R_l$ is a switch of $N$ that properly contains vertical boundary components of $R_i$ and $R_j$, and $T$ is the plaque that has an edge intersecting $R_i$ and another edge intersecting $R_j$. From the definition of $\lambda$, note that there are two types of switches: 
	\begin{enumerate}
		\item[Type 1:] $R_l$ is a bridging rectangle while $R_i$ and $R_j$ are internal rectangles, 
		\item[Type 2:] $R_l$ is a boundary rectangle while one of $R_i$ and $R_j$ is a bridging rectangle and the other is a boundary rectangle.
	\end{enumerate}
	For the Type 1 switches, since $R_i$ and $R_j$ are internal rectangles, (ii) simplifies to
	\begin{align*}
		{\bf v}_l=\left\{\begin{array}{ll}
			(\pi,0)&\text{if $x_{T,3} \prec x_{T,2} \prec x_{T,1}$,} \\
			(0,\pi)&\text{if $x_{T,1} \prec x_{T,2} \prec x_{T,3}$},\end{array}\right.
	\end{align*}
	which holds because $R_l$ is a bridging rectangle. For the Type 2 switches, since one of $R_i$ and $R_j$ is a bridging rectangle and the other is a boundary rectangle, (ii) simplifies to
	\begin{align*}
		{\bf v}_l=\left\{\begin{array}{ll}
			(2\pi,0)&\text{if $x_{T,3} \prec x_{T,2} \prec x_{T,1}$,} \\
			(0,2\pi)&\text{if $x_{T,1} \prec x_{T,2} \prec x_{T,3}$}\end{array}\right.=(0,0).
	\end{align*}
	This holds because $R_l$ is a boundary rectangle.
\end{proof}

In light of these examples, we make the following conjecture.

\begin{conjecture}
	If $d\ge 3$, then for every maximal geodesic lamination $\lambda$ of $S$, $\mathfrak R_d(\lambda)$ contains a non-Hausdorff point of $\Hom(\Gamma,\mathsf{PGL}_d(\mathbb{C}))/\mathsf{PGL}_d(\mathbb{C})$.
\end{conjecture}

\bibliographystyle{amsalpha}
\bibliography{biblio}

@article {GKW,
    AUTHOR = {Gu\'{e}ritaud, Fran\c{c}ois and Kassel, Fanny and Wolff,
              Maxime},
     TITLE = {Compact anti--de {S}itter 3-manifolds and folded hyperbolic
              structures on surfaces},
   JOURNAL = {Pacific J. Math.},
  FJOURNAL = {Pacific Journal of Mathematics},
    VOLUME = {275},
      YEAR = {2015},
    NUMBER = {2},
     PAGES = {325--359},
      ISSN = {0030-8730,1945-5844},
   MRCLASS = {57M07 (20H10 30F45 30F60 32G15 53C50)},
  MRNUMBER = {3347373},
MRREVIEWER = {Anthony\ Weaver},
       DOI = {10.2140/pjm.2015.275.325},
       URL = {https://doi.org/10.2140/pjm.2015.275.325},
}

@book {Katok_Hasselblatt,
    AUTHOR = {Katok, Anatole and Hasselblatt, Boris},
     TITLE = {Introduction to the modern theory of dynamical systems},
    SERIES = {Encyclopedia of Mathematics and its Applications},
    VOLUME = {54},
      NOTE = {With a supplementary chapter by Katok and Leonardo Mendoza},
 PUBLISHER = {Cambridge University Press, Cambridge},
      YEAR = {1995},
     PAGES = {xviii+802},
      ISBN = {0-521-34187-6},
   MRCLASS = {58Fxx (34Cxx 34Dxx 58-01 58F11 58F15)},
  MRNUMBER = {1326374},
MRREVIEWER = {Edoh\ Amiran},
       DOI = {10.1017/CBO9780511809187},
       URL = {https://doi-org.shsu.idm.oclc.org/10.1017/CBO9780511809187},
}

@ARTICLE{MMMZ2,
	author = {{Maloni}, Sara and {Martone}, Giuseppe and {Mazzoli}, Filippo and {Zhang}, Tengren},
	title = "{Topology of the space of $d$-pleated surfaces}",
	journal = {arXiv e-prints},
	year = 2025,
	month = aug,
	eid = {arXiv:2508.04813},
	doi = {10.48550/arXiv.2508.04813},
	archivePrefix = {arXiv},
	eprint = {2508.04813},
	primaryClass = {math.GT},
	adsurl = {https://ui.adsabs.harvard.edu/abs/2025arXiv250804813M},
}

@ARTICLE{bobb-farre,
	author = {{Bobb}, M.~D. and {Farre}, James},
	title = "{Affine laminations and coaffine representations}",
	journal = {arXiv e-prints},
	year = {2024},
	month = apr,
	eid = {arXiv:2404.14284},
	doi = {10.48550/arXiv.2404.14284},
	archivePrefix = {arXiv},
	eprint = {2404.14284},
	primaryClass = {math.GT},
}

@article {Katok_measures,
    AUTHOR = {Katok, A. B.},
     TITLE = {Invariant measures of flows on orientable surfaces},
   JOURNAL = {Dokl. Akad. Nauk SSSR},
  FJOURNAL = {Doklady Akademii Nauk SSSR},
    VOLUME = {211},
      YEAR = {1973},
     PAGES = {775--778},
      ISSN = {0002-3264},
   MRCLASS = {58F99},
  MRNUMBER = {331438},
MRREVIEWER = {D.\ Newton},
}

@article{dreyer,
	author = {Guillaume Dreyer},
	doi = {10.2140/agt.2013.13.3153},
	journal = {Algebraic \& Geometric Topology},
	keywords = {Anosov representation, Hitchin representation, H{\"o}lder geodesic current, length function},
	number = {6},
	pages = {3153 -- 3173},
	publisher = {MSP},
	title = {{Length functions of Hitchin representations}},
	url = {https://doi.org/10.2140/agt.2013.13.3153},
	volume = {13},
	year = {2013},
	Bdsk-Url-1 = {https://doi.org/10.2140/agt.2013.13.3153}}

@incollection{matheus91,
	author = {Math\'{e}us, Fr\'{e}d\'{e}ric},
	booktitle = {S\'{e}minaire de {T}h\'{e}orie {S}pectrale et {G}\'{e}om\'{e}trie},
	date-modified = {2023-05-10 10:11:17 -0400},
	doi = {10.5802/tsg.85},
	mrclass = {53C23 (53D25)},
	mrnumber = {1715930},
	pages = {67--87},
	publisher = {Univ. Grenoble I, Saint-Martin-d'H\`eres},
	title = {Flot g\'{e}od\'{e}sique et groupes hyperboliques d'apr\`es {M}. {G}romov},
	url = {https://doi-org.proxy1.library.virginia.edu/10.5802/tsg.85},
	volume = {9},
	year = {1991},
	Bdsk-Url-1 = {https://doi-org.proxy1.library.virginia.edu/10.5802/tsg.85},
	Bdsk-Url-2 = {https://doi.org/10.5802/tsg.85}}

@article{WZ,
	author = {Wienhard, Anna and Zhang, Tengren},
	doi = {10.1007/s10711-017-0243-z},
	fjournal = {Geometriae Dedicata},
	issn = {0046-5755},
	journal = {Geom. Dedicata},
	mrclass = {57M50 (20H10 51A05)},
	mrnumber = {3749433},
	mrreviewer = {Ken-ichi Ohshika},
	pages = {327--360},
	title = {Deforming convex real projective structures},
	url = {https://doi-org.yale.idm.oclc.org/10.1007/s10711-017-0243-z},
	volume = {192},
	year = {2018},
	Bdsk-Url-1 = {https://doi-org.yale.idm.oclc.org/10.1007/s10711-017-0243-z},
	Bdsk-Url-2 = {https://doi.org/10.1007/s10711-017-0243-z}}

@article{gromov3remarks,
	author = {Gromov, Mikha\"{\i}l},
	fjournal = {L'Enseignement Math\'{e}matique. Revue Internationale. 2e S\'{e}rie},
	issn = {0013-8584},
	journal = {Enseign. Math. (2)},
	mrclass = {53D25 (20F65 37C35 37D40 53C24)},
	mrnumber = {1805410},
	mrreviewer = {Michel Coornaert},
	number = {3-4},
	pages = {391--402},
	title = {Three remarks on geodesic dynamics and fundamental group},
	volume = {46},
	year = {2000}}

@article {MV22,
	AUTHOR = {Mazzoli, Filippo and Viaggi, Gabriele},
	TITLE = {{${\rm SO}_0(2,n+1)$}-maximal representations and hyperbolic
	surfaces},
	JOURNAL = {Mem. Amer. Math. Soc.},
	FJOURNAL = {Memoirs of the American Mathematical Society},
	VOLUME = {313},
	YEAR = {2025},
	NUMBER = {1587},
	PAGES = {v+119},
	ISSN = {0065-9266,1947-6221},
	ISBN = {978-1-4704-7648-9; 978-1-4704-8454-5},
	MRCLASS = {20H10 (22E40 53C50 57M50)},
	MRNUMBER = {4962280},
	DOI = {10.1090/memo/1587},
	URL = {https://doi.org/10.1090/memo/1587},
}

@article{thu_stretch,
       author = {{Thurston}, William P.},
        title = "{Minimal stretch maps between hyperbolic surfaces}",
      journal = {arXiv Mathematics e-prints},
     keywords = {Geometric Topology, Differential Geometry, 57m50},
         year = 1998,
        month = jan,
          eid = {math/9801039},
        pages = {math/9801039},
          doi = {10.48550/arXiv.math/9801039},
archivePrefix = {arXiv},
       eprint = {math/9801039},
 primaryClass = {math.GT},
       adsurl = {https://ui.adsabs.harvard.edu/abs/1998math......1039T},
      adsnote = {Provided by the SAO/NASA Astrophysics Data System}}

@phdthesis{tengren_thesis,
	author = {Zhang, Tengren},
	date-added = {2015-09-30 14:24:58 +0000},
	date-modified = {2015-09-30 14:24:58 +0000},
	school = {University of Michigan, Ann Arbor},
	title = {Geometry of the Hitchin Component.},
	year = {2015}}

@article{RuSul,
	author = {Ruelle, David and Sullivan, Dennis},
	doi = {10.1016/0040-9383(75)90016-6},
	fjournal = {Topology. An International Journal of Mathematics},
	issn = {0040-9383},
	journal = {Topology},
	mrclass = {58F10 (58A20)},
	mrnumber = {415679},
	mrreviewer = {J. W. Robbin},
	number = {4},
	pages = {319--327},
	title = {Currents, flows and diffeomorphisms},
	url = {https://doi.org/10.1016/0040-9383(75)90016-6},
	volume = {14},
	year = {1975},
	Bdsk-Url-1 = {https://doi.org/10.1016/0040-9383(75)90016-6}}

@incollection{EM06,
	author = {Epstein, David B. A. and Marden, Albert},
	booktitle = {Fundamentals of hyperbolic geometry: selected expositions},
	mrclass = {52A55 (32G15 37D40 37F99 57N10 57R25)},
	mrnumber = {2235711},
	pages = {117--266},
	publisher = {Cambridge Univ. Press, Cambridge},
	series = {London Math. Soc. Lecture Note Ser.},
	title = {Convex hulls in hyperbolic space, a theorem of {S}ullivan, and measured pleated surfaces [MR0903852]},
	volume = {328},
	year = {2006}}

@article{SWZ,
	author = {Sun, Zhe and Wienhard, Anna and Zhang, Tengren},
	date-added = {2022-03-22 10:44:14 -0400},
	date-modified = {2022-03-22 10:44:34 -0400},
	doi = {10.1007/s00039-020-00534-4},
	fjournal = {Geometric and Functional Analysis},
	issn = {1016-443X},
	journal = {Geom. Funct. Anal.},
	mrclass = {57M50 (20F34 37E35 53E99 57K20)},
	mrnumber = {4108617},
	mrreviewer = {Song Dai},
	number = {2},
	pages = {588--692},
	title = {Flows on the {${\rm PGL}(V)$}-{H}itchin component},
	url = {https://mathscinet.ams.org/mathscinet-getitem?mr=4108617},
	volume = {30},
	year = {2020},
	Bdsk-Url-1 = {https://mathscinet.ams.org/mathscinet-getitem?mr=4108617}}

@article{BoD,
	author = {Bonahon, Francis and Dreyer, Guillaume},
	date-added = {2018-07-13 12:36:31 +0000},
	date-modified = {2021-10-08 10:13:07 -0400},
	doi = {10.4310/ACTA.2017.v218.n2.a1},
	fjournal = {Acta Mathematica},
	issn = {0001-5962},
	journal = {Acta Math.},
	mrclass = {57R30 (37C85)},
	mrnumber = {3733100},
	number = {2},
	pages = {201--295},
	title = {Hitchin characters and geodesic laminations},
	url = {https://doi.org/10.4310/ACTA.2017.v218.n2.a1},
	volume = {218},
	year = {2017},
	Bdsk-Url-1 = {https://mathscinet.ams.org/mathscinet-getitem?mr=3733100}}

@article{bonahon-toulouse,
	author = {Bonahon, Francis},
	date-added = {2017-02-12 17:00:09 +0000},
	date-modified = {2017-02-12 17:00:09 +0000},
	fjournal = {Toulouse. Facult\'e des Sciences. Annales. Math\'ematiques. S\'erie 6},
	journal = {Ann. Fac. Sci. Toulouse Math. (6)},
	number = {2},
	pages = {233--297},
	title = {Shearing hyperbolic surfaces, bending pleated surfaces and {T}hurston's symplectic form},
	volume = {5},
	year = {1996}}

@article{maximal,
	author = {Bonsante, Francesco and Schlenker, Jean-Marc},
	coden = {INVMBH},
	date-added = {2017-02-12 17:00:09 +0000},
	date-modified = {2017-02-12 17:00:09 +0000},
	doi = {10.1007/s00222-010-0263-x},
	fjournal = {Inventiones Mathematicae},
	issn = {0020-9910},
	journal = {Invent. Math.},
	mrclass = {30Cxx (53Axx)},
	mrnumber = {2729269},
	number = {2},
	pages = {279--333},
	title = {Maximal surfaces and the universal {T}eichm\"uller space},
	url = {http://dx.doi.org/10.1007/s00222-010-0263-x},
	volume = {182},
	year = {2010},
	Bdsk-Url-1 = {http://dx.doi.org/10.1007/s00222-010-0263-x}}

@book{Casson,
	address = {Cambridge},
	author = {Casson, Andrew J. and Bleiler, Steven A.},
	date-added = {2017-02-12 17:00:09 +0000},
	date-modified = {2017-02-12 17:00:09 +0000},
	isbn = {0-521-34203-1},
	mrclass = {57N05 (32G15 57R30 58F15)},
	mrnumber = {MR964685 (89k:57025)},
	mrreviewer = {William Dunbar},
	pages = {iv+105},
	publisher = {Cambridge University Press},
	series = {London Mathematical Society Student Texts},
	title = {Automorphisms of surfaces after {N}ielsen and {T}hurston},
	volume = {9},
	year = {1988}}

@incollection{goncharov94,
	author = {Goncharov, Alexander B.},
	booktitle = {Motives ({S}eattle, {WA}, 1991)},
	date-modified = {2023-05-10 10:07:46 -0400},
	mrclass = {19F15 (11G09 11R42 11R70 19E20)},
	mrnumber = {1265551},
	mrreviewer = {Richard M. Hain},
	pages = {43--96},
	publisher = {Amer. Math. Soc., Providence, RI},
	series = {Proc. Sympos. Pure Math.},
	title = {Polylogarithms and motivic {G}alois groups},
	volume = {55},
	year = {1994}}

@article{fock-goncharov-1,
	author = {Fock, Vladimir and Goncharov, Alexander},
	date-added = {2017-02-12 17:00:09 +0000},
	date-modified = {2017-02-12 17:00:09 +0000},
	fjournal = {Publications Math\'ematiques. Institut de Hautes \'Etudes Scientifiques},
	issn = {0073-8301},
	journal = {Publ. Math. Inst. Hautes \'Etudes Sci.},
	mrclass = {32Gxx (20F34 57Mxx)},
	mrnumber = {MR2233852},
	number = {103},
	pages = {1--211},
	title = {Moduli spaces of local systems and higher {T}eichm\"uller theory},
	year = {2006}}

@article{goldman-symplectic,
	author = {Goldman, William M.},
	coden = {ADMTA4},
	date-added = {2017-02-12 17:00:09 +0000},
	date-modified = {2017-02-12 17:00:09 +0000},
	fjournal = {Advances in Mathematics},
	issn = {0001-8708},
	journal = {Adv. in Math.},
	mrclass = {32G15 (57M05)},
	mrnumber = {MR762512 (86i:32042)},
	mrreviewer = {C. Earle},
	number = {2},
	pages = {200--225},
	title = {The symplectic nature of fundamental groups of surfaces},
	volume = {54},
	year = {1984}}

@article{GR,
	author = {M. Gromov and V. Rochlin},
	date-added = {2017-02-12 17:00:09 +0000},
	date-modified = {2017-02-12 17:00:09 +0000},
	journal = {Uspechi},
	pages = {3-62},
	title = {Embeddings and Immersions in Riemannian Geometry},
	volume = {XXV},
	year = {1970}}

@article{H,
	author = {R. S. Hamilton},
	date-added = {2017-02-12 17:00:09 +0000},
	date-modified = {2017-02-12 17:00:09 +0000},
	journal = {Bull. Amer. Math. Soc. (N. S.)},
	pages = {65-222},
	title = {The Inverse Function Theorem of {Nash} and {Moser}},
	volume = {7},
	year = {1982}}

@incollection{johnson-millson,
	address = {Boston, MA},
	author = {Johnson, Dennis and Millson, John J.},
	booktitle = {Discrete groups in geometry and analysis ({N}ew {H}aven, {C}onn., 1984)},
	date-added = {2017-02-12 17:00:09 +0000},
	date-modified = {2017-02-12 17:00:09 +0000},
	mrclass = {22E40 (22E41 32G13 32M15)},
	mrnumber = {900823 (88j:22010)},
	mrreviewer = {T. N. Venkataramana},
	pages = {48--106},
	publisher = {Birkh\"auser Boston},
	series = {Progr. Math.},
	title = {Deformation spaces associated to compact hyperbolic manifolds},
	volume = {67},
	year = {1987}}

@article{labourie-anosov,
	author = {Labourie, Fran{\c{c}}ois},
	coden = {INVMBH},
	date-added = {2017-02-12 17:00:09 +0000},
	date-modified = {2017-02-12 17:00:09 +0000},
	fjournal = {Inventiones Mathematicae},
	issn = {0020-9910},
	journal = {Invent. Math.},
	mrclass = {20F65 (37D20 37F30)},
	mrnumber = {MR2221137 (2007c:20101)},
	mrreviewer = {Richard Kenyon},
	number = {1},
	pages = {51--114},
	title = {Anosov flows, surface groups and curves in projective space},
	volume = {165},
	year = {2006}}

@article{N,
	author = {L. Nirenberg},
	date-added = {2017-02-12 17:00:09 +0000},
	date-modified = {2017-02-12 17:00:09 +0000},
	journal = {Comm. Pure Appl. Math},
	pages = {337-394},
	title = {The {Weyl} and {Minkowski} Problem in Differential Geometry in the Large},
	volume = {6},
	year = {1953}}

@book{O,
	author = {B. O'Neill},
	date-added = {2017-02-12 17:00:09 +0000},
	date-modified = {2017-02-12 17:00:09 +0000},
	publisher = {Academic Press},
	title = {Semi-Riemannian Geometry},
	year = {1983}}

@book{penner-harer,
	address = {Princeton, NJ},
	author = {Penner, R. C. and Harer, J. L.},
	date-added = {2017-02-12 17:00:09 +0000},
	date-modified = {2017-02-12 17:00:09 +0000},
	isbn = {0-691-08764-4; 0-691-02531-2},
	mrclass = {57M99 (30F60 57N05 57R30)},
	mrnumber = {MR1144770 (94b:57018)},
	mrreviewer = {Y. Minsky},
	pages = {xii+216},
	publisher = {Princeton University Press},
	series = {Annals of Mathematics Studies},
	title = {Combinatorics of train tracks},
	volume = {125},
	year = {1992}}

@unpublished{biblio,
	author = {Schlenker, Jean-Marc},
	date-added = {2017-02-12 17:00:09 +0000},
	date-modified = {2017-05-22 13:29:38 +0000},
	note = {Gazette des math\'ematiciens 115, 1/2008, pp 73-79},
	title = {Les enjeux de la bibliom\'etrie pour les math\'{\'e}matiques},
	year = {2008}}

@book{S,
	author = {M. Spivak},
	date-added = {2017-02-12 17:00:09 +0000},
	date-modified = {2017-02-12 17:00:09 +0000},
	publisher = {Publish or perish},
	title = {A comprehensive introduction to geometry, Vol.I-V},
	year = {1970-1975}}

@unpublished{thurston-notes,
	author = {Thurston, William P.},
	date-added = {2017-02-12 17:00:09 +0000},
	date-modified = {2017-02-12 17:00:09 +0000},
	note = {Originally notes of lectures at Princeton University, 1979. Recent version available on http://www.msri.org/publications/books/gt3m/},
	title = {Three-dimensional geometry and topology.},
	year = {1980}}

@article {wang2021anosov,
	AUTHOR = {Wang, Tianqi},
	TITLE = {Anosov representations over closed subflows},
	JOURNAL = {Trans. Amer. Math. Soc.},
	FJOURNAL = {Transactions of the American Mathematical Society},
	VOLUME = {376},
	YEAR = {2023},
	NUMBER = {9},
	PAGES = {6177--6214},
	ISSN = {0002-9947,1088-6850},
	MRCLASS = {53C30 (20H10 22E40 37D20 37D40)},
	MRNUMBER = {4630773},
	MRREVIEWER = {Sadayoshi\ Kojima},
	DOI = {10.1090/tran/8920},
	URL = {https://doi.org/10.1090/tran/8920},
}

@article{wei64,
	author = {Weil, Andr{\'e}},
	date-added = {2017-02-12 17:00:09 +0000},
	date-modified = {2017-02-12 17:00:09 +0000},
	fjournal = {Annals of Mathematics. Second Series},
	issn = {0003-486X},
	journal = {Ann. of Math. (2)},
	mrclass = {22.70 (32.65)},
	mrnumber = {0169956 (30 \#199)},
	mrreviewer = {S. Murakami},
	pages = {149--157},
	title = {Remarks on the cohomology of groups},
	volume = {80},
	year = {1964}}

@article{BIRMAN1985217,
title = {Geodesics with bounded intersection number on surfaces are sparsely distributed},
journal = {Topology},
volume = {24},
number = {2},
pages = {217-225},
year = {1985},
issn = {0040-9383},
doi = {https://doi.org/10.1016/0040-9383(85)90056-4},
url = {https://www.sciencedirect.com/science/article/pii/0040938385900564},
author = {Joan S. Birman and Caroline Series}
}

@article{bon_tra,
	author = {Bonahon, Francis},
	coden = {TPLGAF},
	date-added = {2017-02-12 14:52:52 +0000},
	date-modified = {2017-02-12 14:52:52 +0000},
	doi = {10.1016/0040-9383(96)00001-8},
	fjournal = {Topology. An International Journal of Mathematics},
	issn = {0040-9383},
	journal = {Topology},
	mrclass = {57M50 (46F99 57N05)},
	mrnumber = {1410466 (97j:57015)},
	mrreviewer = {Mark Brittenham},
	number = {1},
	pages = {103--122},
	title = {Transverse {H}\"older distributions for geodesic laminations},
	url = {http://dx.doi.org/10.1016/0040-9383(96)00001-8},
	volume = {36},
	year = {1997},
	Bdsk-Url-1 = {http://dx.doi.org/10.1016/0040-9383(96)00001-8}}

@incollection{CEG06,
	address = {Cambridge},
	author = {Canary, R. D. and Epstein, D. B. A. and Green, P. L.},
	booktitle = {Fundamentals of hyperbolic geometry: selected expositions},
	date-added = {2017-02-12 14:52:52 +0000},
	date-modified = {2017-02-12 14:52:52 +0000},
	mrclass = {57N10 (32G15 37D40 37F99 54A20 57M99)},
	mrnumber = {2235710},
	note = {With a new foreword by Canary},
	pages = {1--115},
	publisher = {Cambridge Univ. Press},
	series = {London Math. Soc. Lecture Note Ser.},
	title = {Notes on notes of {T}hurston [MR0903850]},
	volume = {328},
	year = {2006}}

@article{gui_com,
	author = {Guichard, Olivier},
	date-added = {2017-02-12 14:52:52 +0000},
	date-modified = {2023-05-10 10:08:39 -0400},
	journal = {Journal of Differential Geometry},
	number = {3},
	pages = {391--431},
	publisher = {Lehigh University},
	title = {Composantes de Hitchin et repr{\'e}sentations hyperconvexes de groupes de surface},
	volume = {80},
	year = {2008}}

@book{hat_alg,
	address = {Cambridge},
	author = {Hatcher, Allen},
	date-added = {2017-02-12 14:52:52 +0000},
	date-modified = {2017-02-12 14:52:52 +0000},
	isbn = {0-521-79160-X; 0-521-79540-0},
	mrclass = {55-01 (55-00)},
	mrnumber = {1867354 (2002k:55001)},
	mrreviewer = {Donald W. Kahn},
	pages = {xii+544},
	publisher = {Cambridge University Press},
	title = {Algebraic topology},
	year = {2002},
	Bdsk-Url-1 = {http://www.ams.org/mathscinet-getitem?mr=1867354}}

@article{hit_lie,
	author = {Hitchin, Nigel J.},
	date-added = {2017-02-12 14:52:52 +0000},
	date-modified = {2023-05-10 10:09:31 -0400},
	journal = {Topology},
	number = {3},
	pages = {449--473},
	publisher = {Elsevier},
	title = {Lie groups and {T}eichm{\"u}ller space},
	volume = {31},
	year = {1992}}

\end{document}